\def\csname opt@stmaryrd.sty\endcsname
\newcommand{\addQEDstyle}[2]{\AtBeginEnvironment{#1}{\pushQED{\qed}\renewcommand{\qedsymbol}{#2}}
    \AtEndEnvironment{#1}{\popQED}}
\apptocmd{\sloppy}{\hbadness 10000\relax}{}{}
\title[Twisted local $G$-WMCGs]{Twisted local $G$-wild mapping class groups}
\author[J. Douçot]{Jean Douçot\thanks{J.~D. is funded by the PNRR Grant CF 44/14.11.2022, 
	`Cohomological Hall Algebras of Smooth Surfaces and Applications'.}}
	\address[J.~Douçot]{`Simion Stoilow' Institute of Mathematics of the Romanian Academy,
	Calea Griviței 21,
	010702-Bucharest, 
	Sector 1, 
	Romania}
	\email{jeandoucot@gmail.com}
\author[G. Rembado]{Gabriele Rembado\thanks{G.~R. is supported by the European Commission,
	under the grant agreement n.~101108575 (HORIZON-MSCA project~\href{https://cordis.europa.eu/project/id/101108575}{QuantMod});
more recently,
also by the \emph{Ministero de Ciencia},
under the \emph{Innovación y Universidades}' grant PID2024-155686NB-I00.}
}
	\address[G.~Rembado]{Institut Montpelliérain Alexander Grothendieck,
	University of Montpellier, 
	Place Eugène Bataillon,
	34090 Montpellier,
	France}
	\email{gabriele.rembado@umontpellier.fr}
\author[D.~Yamakawa]{Daisuke Yamakawa\thanks{D.~Y. is supported by JSPS KAKENHI Grant Number 24K06695.}
}
	\address[D.~Yamakawa]{Department of Mathematics,
		Faculty of Science Division I,
		Tokyo University of Science, 
		1-3 Kagurazaka, 
		Shinjuku-ku, 
		Tokyo 162-8601, 
		Japan}
	\email{yamakawa@rs.tus.ac.jp}
\begin{document}

\frontmatter

\begin{abstract}
	We consider the isomonodromic deformations of irregular-singular connections defined on principal bundles over complex curves:
	for any complex reductive structure group $G$,
	and any polar divisor;
	allowing for a twisted/ramified formal normal form at each pole,
	and for twists in the interior of the curve.
	(This covers the general case in 2-dimensional meromorphic gauge theory.)

	We focus on the irregular moduli of the connections,
	studying the fundamental groups of the spaces of admissible deformations of their irregular types/classes,
	i.e.,
	the local wild mapping class groups in the title.
	To describe them,
	we first take the viewpoint of (nonsplit) reflections cosets in Springer/Lehrer--Springer theory,
	which yields in particular new modular interpretations of complex reflection groups---%
	and their braid groups.
	Then we introduce new `fission' trees to treat structure groups of any (simple) classical type,
	leading to a complete classification of the corresponding hyperplane arrangements,
	and singling out an infinite family of noncrystallographic examples in type $D$.
	Moreover,
	we reinterpret Bessis' lift of Springer theory as the study of `quasi-generic' deformations,
	corresponding to irregular singularities whose leading coefficient is regular semisimple upon pullback along a local cyclic covering of the base curve.
	Finally,
	we rephrase much of this material in terms of generalized root-valuation stratifications.
\end{abstract}

{\let\newpage\relax\maketitle}

\setcounter{tocdepth}{1}
\tableofcontents

\mainmatter

\section{Introduction,
  main results,
  layout}
\label{sec:intro}

\subsection{Main aim}

This is a text about the irregular moduli of irregular-singular connections over a fixed pointed curve.
They complement the moduli of pointed curves,
and behave in the same way,
leading to the definitions of \emph{wild curves/character varieties}~\cite{boalch_2014_geometry_and_braiding_of_stokes_data_fission_and_wild_character_varieties,boalch_yamakawa_2015_twisted_wild_character_varieties}---%
which underlie all this work.
One overarching goal is the construction of moduli stacks of wild curves,
whose fundamental groups lead to `wild' generalizations of the mapping class groups of pointed surfaces,
and to `cabled' generalizations of (generalized) braid groups.
Importantly,
the latter act in Poisson/symplectic fashion on the wild character varieties,
and in certain examples lead to (projective) linear actions after quantization.

Here we consider the most general local setup in the definition of wild curves.\fn{
	In addition,
	the first-named author has been interested in the classification of irregular isomonodromy systems in genus zero~\cite{doucot_2021_diagrams_and_irregular_connections_on_the_riemann_sphere,
		doucot_hohl_2024_a_topological_algorithm_for_the_fourier_transform_of_stokes_data_at_infinity,
		doucot_2024_basic_representations_of_genus_zero_nonabelian_hodge_spaces,
		doucot_2025_simplification_of_exponential_factors_of_irregular_connections_on_p1};
	while the second- and third-named authors have been interested in their quantization~\cite{rembado_2019_simply_laced_quantum_connections_generalising_kz,
		rembado_2020_symmetries_of_the_simply_laced_quantum_connections_and_quantisation_of_quiver_varieties,
		yamakawa_2022_quantization_of_simply_laced_isomonodromy_systems_by_the_quantum_spectral_curve_method,
		felder_rembado_2023_singular_modules_for_affine_lie_algebras_and_applications_to_irregular_wznw_conformal_blocks,
		calaque_felder_rembado_wentworth_2024_wild_orbits_and_generalised_singularity_modules_stratifications_and_quantisation,chaffe_rembado_yamakawa_genus_zero_wild_quantum_de_rham_spaces} (cf.~\cite{andersen_malusa_rembado_2022_genus_one_complex_quantum_chern_simons_theory,
		andersen_malusa_rembado_2024_sp_1_symmetric_hyperkaehler_quantisation} in the nonsingular case).}

\subsubsection{}
\label{sec:ehresmann}

In some more detail,
let $G$ be a connected complex reductive algebraic group.
The main motivation comes from the theory of \emph{isomonodromic} deformations of irregular-singular connections on principal $G$-bundles,
viz.,
those deformations which locally preserve monodromy/Stokes data.
This is a vast generalization of the `generic' setup~\cite{jimbo_miwa_ueno_1981_monodromy_preserving_deformation_of_linear_ordinary_differential_equations_with_rational_coefficients_i_general_theory_and_tau_function},
where the leading irregular coefficient at each pole is an $m$-by-$m$ diagonalizable matrix with simple spectrum (whence $G = \GL_m(\mb C)$ for some integer $m \geq 1$;
strictly speaking,
op.~cit.~is about trivial vector bundles over $\mb CP^1$).

Namely,
isomonodromic deformations amount to flat fibre bundles $\ul{\mc M}_{\on B} \to \bm B$ of complex Poisson/symplectic varieties.
(Cf.~\cite{hitchin_1997_frobenius_manifolds} in the regular-singular case;
the wild archetype is~\cite{boalch_2001_symplectic_manifolds_and_isomonodromic_deformations}.)
The base spaces $\bm B$ provide intrinsic time-variables for the corresponding nonlinear PDEs,
and the fundamental groups $\pi_1(\bm B,b)$ act on each fibre---%
over $b \in \bm B$---%
by Poisson/symplectic automorphisms,
`braiding' Stokes data~\cite{boalch_2014_geometry_and_braiding_of_stokes_data_fission_and_wild_character_varieties}.
Moreover,
the bases $\bm B$ correspond to \emph{admissible families} of wild curves,
i.e.,
admissible deformations of the \emph{irregular type/class} of an irregular-singular connection,
at each pole,
on top of usual deformations of the pointed curve underlying the polar divisor.

Please refer particularly to~\cite[\S~1]{boalch_doucot_rembado_2025_twisted_local_wild_mapping_class_groups_configuration_spaces_fission_trees_and_complex_braids} for more details,
and for references to the past work of many people dating back to~\cite{schlesinger_1912_ueber_eine_klasse_von_differentialsystemen_beliebiger_ordnung_mit_festen_kritischen_punkten,
	garnier_1912_sur_des_equations_differentielles_du_troisieme_ordre_dont_l_integrale_generale_est_uniforme_et_sur_une_classe_d_equations_nouvelles_d_ordre_superieur_dont_l_integrale_generale_a_ses_points_critiques_fixes},
and passing through the aforementioned seminal contribution of Jimbo--Miwa--Ueno.
What matters here is that the topology of isomonodromic deformations thus involves the dynamics of discrete groups,
the \emph{wild mapping class groups} (= WMCGs),
generalizing the much-studied representations of surface groups.
E.g.,
in the regular-singular case,
the finite orbits for the standard mapping-class-group actions on (tame) complex character varieties are intimately related with the algebraic solutions of the corresponding isomonodromy equations,
notably including the Schlesinger system and---%
as a particular case---%
Painlevé VI~\cite{malgrange_1982_sur_les_deformations_isomonodromiques_i_singularites_regulieres,
	dubrovin_mazzocco_2000_monodromy_of_certain_painleve_vi_transcendents_and_reflection_groups,
	iwasaki_2002_a_modular_group_action_on_cubic_surfaces_and_the_monodromy_of_the_painleve_vi_equation,
	boalch_2005_from_klein_to_painleve_via_fourier_laplace_and_jimbo,
	boalch_2006_list_of_the_known_algebraic_solutions_of_painleve_vi,
	lisovyy_tykhyy_2014_algebraic_solutions_of_the_sixth_painleve_equation,
	landesman_litt_2024_canonical_representations_of_surface_groups}.
(Cf.~\cite{paul_ramis_2015_dynamics_on_wild_character_varieties,
	klimes_2024_wild_monodromy_of_the_fifth_painleve_equation_and_its_action_on_wild_character_variety} in the irregular-singular case,
relating the dynamics of WMCGs with other Painlevé equations.)

\subsubsection{}

In brief,
this article deals with \emph{arbitrary} irregular types/classes.
The main aim is to describe the topological spaces $\bm B$ of their admissible deformations,
as explicitly as possible,
thereby understanding the local pieces of WMCGs in full generality.
This opens to several generalizations of the \emph{nonlinear} monodromy actions of braid/mapping class groups in $2d$ gauge theory,
such as~\cite{hurwitz_1891_ueber_riemannische_flaechen_mit_gegebenen_verzweigungpunkten,deconcini_kac_procesi_1992_quantum_coadjoint_action,boalch_2002_g_bundles_isomonodromy_and_quantum_weyl_groups} in genus zero,
which---%
in turn---%
can be regarded as semiclassical limits of the \emph{linear} monodromies of~\cite{knizhnik_zamolodchikov_1984_current_algebra_and_wess_zumino_model_in_two_dimensions,felder_markov_tarasov_varchenko_2000_differential_equations_compatible_with_kz_equations,millson_toledanolaredo_2005_casimir_operators_and_monodromy_representations_of_generalised_braid_groups};
plus the Hitchin connection in higher genus~\cite{hitchin_1990_flat_connections_and_geometric_quantization,axelrod_dellapietra_witten_1991_geometric_quantisation_of_chern_simons_gauge_theory} (cf.~\cite{biswas_mukhopadhyay_wentworth_2023_a_hitchin_connection_on_nonabelian_theta_functions_for_parabolic_g_bundles}),
etc.

The rest of the introduction gathers more background/motivation (in \S~\ref{sec:background_intro}),
before moving on to a statement of the main results (in \S~\ref{sec:results}),
and to a layout of the sections in the body/appendices of the article (in \S~\ref{sec:layout}).

\subsection{More background/motivation}
\label{sec:background_intro}

Let $X$ be a nonsingular projective curve defined over $\mb C$.
(Most of this also works in the nonprojective case.)
Mark a finite---%
unordered---%
set $\bm x \sse X$ of $\mb C$-points,
and consider an algebraic connection on a principal $G$-bundle over the Zariski-open complement $X^o \ceqq X \sm \bm x$.
This canonically determines an \emph{irregular class} $\Theta_x$ at each point $x \in \bm x$~\cite[Def.~7]{boalch_yamakawa_2015_twisted_wild_character_varieties}.
In turn,
the latter encodes the irregular part of the formal normal form of (the formal germ of) the connection there,
in coordinate-independent fashion,
cf.~\cite{malgrange_1982_la_classification_des_connexions_irregulieres_a_une_variable,
	boalch_yamakawa_2020_diagrams_for_nonabelian_hodge_spaces_on_the_affine_line,
	boalch_2021_topology_of_the_stokes_phenomenon} and~\S~\ref{sec:about_twisted_irr_classes}.
(This builds upon work of Fabry,
Cope,
Hukuhara,
Turrittin--Levelt,
Balser--Jurkat--Lutz,
Deligne,
etc.)
The triple
\begin{equation}
	\label{eq:wild_riemann_surface}
	\bm X
	= (X, \bm x, \bm \Theta),
	\qquad \bm \Theta \ceqq \Set{ \Theta_x | x \in \bm x },
\end{equation}
is the corresponding \emph{wild curve},
and the isomonodromic deformations of the starting connection can be phrased over the admissible deformations of~\eqref{eq:wild_riemann_surface}~\cite[Thm.~10.2]{boalch_2014_geometry_and_braiding_of_stokes_data_fission_and_wild_character_varieties} (cf.~\S~\ref{sec:ehresmann};
the local proof extends verbatim to the twisted setting,
cf.~\cite[Cor.~1.2]{boalch_doucot_rembado_2025_twisted_local_wild_mapping_class_groups_configuration_spaces_fission_trees_and_complex_braids}).

Just as in~\cite{doucot_rembado_tamiozzo_2022_local_wild_mapping_class_groups_and_cabled_braids,
	doucot_rembado_2025_topology_of_irregular_isomonodromy_times_on_a_fixed_pointed_curve,
	boalch_doucot_rembado_2025_twisted_local_wild_mapping_class_groups_configuration_spaces_fission_trees_and_complex_braids},
here we consider the case where the underlying pointed curve $(X,\bm x)$ does \emph{not} vary,
focussing on the admissible deformations of an arbitrary irregular class,
i.e.,
the `wild' isomonodromy times.

\subsubsection{}
\label{sec:about_twisted_irr_classes}

Thus,
we essentially look at algebraic connections on principal $G$-bundles over a formal punctured disc,
viewed as the formal punctured neighbourhood of a point $x \in X$.\fn{
	The multi-point case follows by taking direct products over the finite set $\bm x \sse X$.}~Precisely,
if $\Sigma \ceqq X^{\on{an}} = X(\mb C)$ is the corresponding analytified compact Riemann surface,
let $\wh \Sigma = \wh \Sigma_x \to \Sigma$ be the real oriented blowup of the pair $(\Sigma,x)$,
and $\partial = \partial_x \sse \wh \Sigma$ its boundary circle:
the above category of connections is equivalent to that of $G$-local systems on $\partial$,
graded by the `exponential' local system~\cite[Thm.~6]{boalch_yamakawa_2015_twisted_wild_character_varieties} (cf.~\cite{babbitt_varadarajan_1983_formal_reduction_theory_of_meromorphic_differential_equations_a_group_theoretic_view,
	katz_1987_on_the_calculation_of_some_differential_galois_groups,
	martinet_ramis_1991_elementary_acceleration_and_multisummability_i};
in type $A$ this is due to Deligne~\cite[Thm.~2.3]{malgrange_1991_equations_differentielles_a_coefficients_polynomiaux},
rephrasing/strengthening~\cite{balser_jurkat_lutz_1979_a_general_theory_of_invariants_for_meromorphic_differential_equations_i_formal_invariants}).
Such a graded $G$-local system has an equivalence class (under local isomorphism),
and this is precisely its irregular class $\Theta = \Theta_x$.
In particular,
the latter has a well-defined ramification $r = r_x$,
which is a positive integer:
now $\Theta$ is (by definition) \emph{untwisted} if $r = 1$,
else it is \emph{twisted}.

\begin{rema}
	A further extension involves graded $\mc G$-local systems,
	where $\mc G$ is a local system of groups---%
	on $\partial$.
	This yields the `interior' twists of~\cite{boalch_yamakawa_2015_twisted_wild_character_varieties},
	involving torsors for \emph{nonsplit} reductive algebraic groups:
	they will be treated in \S~\ref{sec:interior_twist}.

	Until then,
	as above,
	we will consider a \emph{constant} group $G$,
	which is the relevant setup to treat connections on principal bundles (cf.~\cite{boalch_doucot_rembado_2025_twisted_local_wild_mapping_class_groups_configuration_spaces_fission_trees_and_complex_braids} and~\cite[Prop.~8]{boalch_yamakawa_2015_twisted_wild_character_varieties}).
\end{rema}

\subsubsection{}
\label{sec:irr_classes_intro}

To study the admissible deformations of $\Theta$,
for any ramification $r \geq 1$,
we first work on an $r$-fold cyclic covering of $\partial$.
Then,
as in~\cite{boalch_yamakawa_2015_twisted_wild_character_varieties},
we consider the untwisted pullback $\wh{\Theta}$ of $\Theta$,
and an irregular type $\wh{Q}$ with underlying irregular class $\wh{\Theta}$ (cf.~Rmk.~\ref{rmk:relation_with_twist}).
Concretely,
choosing a maximal torus $T \sse G$,
we view $\wh\Theta$ as an orbit for the Weyl group $W = W(G,T)$,
passing through $\wh Q$.
In turn,
if $z$ is a uniformizer on $X$ at $x$,
and $w \ceqq z^{1 \slash r}$ is an $r$-th root thereof,\fn{
	Up to completions,
	one can view $w^r = z$ as a local \emph{analytic} coordinate on $\Sigma$,
	and $w$ as a coordinate on top of a local cyclic covering thereof.
}~then any $\wh Q \neq 0$ can be explicitly---%
and uniquely---%
written as in~\eqref{eq:explicit_irregular_type}:
it is the principal part of a (formal) Laurent series with values in the Cartan subalgebra $\mf t \ceqq \Lie(T) \sse \mf g$.

The admissible deformations are now as in~\cite{boalch_2014_geometry_and_braiding_of_stokes_data_fission_and_wild_character_varieties},
but with the important caveat that $\wh Q$ and $\wh\Theta$ are $r$-\emph{Galois-closed} (cf.~Def.~\ref{def:galois_closed_irr_types}),
as they come from $r$-ramified irregular types/classes;
this is just an instance of the fact that the roots of an irreducible (separable) polynomial $P$ are transitively permuted by the Galois groups of the splitting field of $P$ (cf.~Rmk.~\ref{rmk:relation_with_twist}).
Then we particularly define and study:
\begin{enumerate}
	\item the space $\bm B_r(\wh{Q})$,
	      of $r$-admissible deformations of $\wh{Q}$;

	\item the space $\bm B_r(\wh\Theta)$,
	      of $r$-admissible deformations of $\wh\Theta$;

	\item the corresponding pure ($r$-ramified) local WMCG,
	      viz.,
	      $\Gamma_r(\wh Q) \ceqq \pi_1 \bigl( \bm B_r (\wh{Q}), \wh Q \bigr)$;
	\item and the (full/nonpure) local WMCG,
	      viz.,
	      $\ol{\Gamma}_r \bigl( \wh\Theta \bigr) \ceqq \pi_1 \bigl( \bm B_r(\wh{\Theta}), \wh\Theta \bigr)$.
\end{enumerate}

The main observation when $r \geq 2$ is that the exponential factors,
featuring in the exponential terms of the (formal) fundamental horizontal sections of the irregular-singular connection at $x \in X$,
are multivalued in $z$---%
and have nontrivial (cyclic) monodromy,
cf.~\eqref{eq:galois_conjugates}.
Therefore,
when considering admissible deformations,
there are not as many `true' deformation parameters as in the untwisted case;
this significantly complicates the local study.

\subsubsection{}

Nonetheless,
by fixing (and then forgetting) suitable markings for the $r$-Galois-closed irregular types,
in the form of Weyl-group elements which govern the monodromy of the exponential factors (cf.~Def.~\ref{def:twisted_g_admissible_deformations}),
we first describe the above topological spaces in abstract fashion:
by (i) providing a topological factorization of $\bm B_r \bigl( \wh Q \bigr)$ into (linear) hyperplane complements in complex vector spaces;
and (ii) describing $\bm B_r \bigl( \wh\Theta \bigr)$ as the base of a Galois covering with total space $\bm B_r \bigl( \wh Q \bigr)$;
cf.~Thm.~\ref{thm:thm_1_intro}.
Compared to the untwisted case,
the Galois group---%
of deck transformations---%
makes \emph{complex} reflection groups appear in meromorphic 2d gauge theory,
cf.~\cite{boalch_doucot_rembado_2025_twisted_local_wild_mapping_class_groups_configuration_spaces_fission_trees_and_complex_braids} and Thm.~\ref{thm:complex_refl_groups_from_gauge}.

\subsubsection{}

Afterwards,
we determine the factors of a finer decomposition of $\bm B_r \bigl( \wh Q \bigr)$,
when the Lie algebra $\mf g \ceqq \Lie(G)$ is simple,
of \emph{classical} type (cf.~Lem.~\ref{lem:reduction_to_simple_case}).
This classifies \emph{all} the hyperplane complements which arise,
but we do \emph{not} delve into the exceptional types (cf.~\S~\ref{sec:exceptional_types}.)
With respect to~\cite{boalch_doucot_rembado_2025_twisted_local_wild_mapping_class_groups_configuration_spaces_fission_trees_and_complex_braids},
the new examples consist of the complex special-orthogonal and symplectic groups,
but are better separated into Weyl groups of type $BC$ and $D$:
the previous abstract setup is enough to treat the former,
while for the latter we first require that the underlying `untwisted' admissible deformations---%
forgetting about $r$-Galois-closedness---%
lead to crystallographic hyperplane arrangements.\fn{
	\label{fn:howlett}
	This is intimately related with nontrivial twists in Howlett's theory of normalizers of parabolic subgroups of finite Coxeter groups~\cite{howlett_1980_normalizers_of_parabolic_subgroups_of_reflection_groups},
	cf.~Cor.-Def.~\ref{cor:howlett_twisted_reflection_cosets},
	Rmk.~\ref{rmk:relative_howlett},
	and Exmp.~\ref{exmp:counterexample_type_D}.}~To remove this assumption,
thereby getting a proof of Thm.~\ref{thm:thm_2_intro},
we introduce $G$-\emph{fission trees} which generalize in one go the trees of~\cite{doucot_rembado_tamiozzo_2022_local_wild_mapping_class_groups_and_cabled_braids,doucot_rembado_2025_topology_of_irregular_isomonodromy_times_on_a_fixed_pointed_curve,boalch_doucot_rembado_2025_twisted_local_wild_mapping_class_groups_configuration_spaces_fission_trees_and_complex_braids};
cf.~also~\cite{boalch_2025_counting_the_fission_trees_and_nonabelian_hodge_graphs_untwisted_case}.

\subsubsection{}

In the untwisted case,
recall that the trees govern the `fission'~\cite{boalch_2009_through_the_analytic_halo_fission_via_irregular_singularities,boalch_2014_geometry_and_braiding_of_stokes_data_fission_and_wild_character_varieties} of the structure group at an irregular singularity:
this means breaking $G$ down to the Levi factor of a parabolic subgroup,
as pointed out in~\cite[Eqq.~(2.2)--(2.3)]{biquard_boalch_2004_wild_nonabelian_hodge_theory_on_curves}---%
when on vector bundles.
Fission trees thus correspond to certain root-valuation strata~\cite{goresky_kottwitz_macpherson_2009_codimensions_of_root_valuation_strata},
in the truncated-current version of the Cartan subalgebra $\mf t \sse \mf g$.
In turn,
the stratification is tantamount to the admissible deformations (cf.~\cite{calaque_felder_rembado_wentworth_2024_wild_orbits_and_generalised_singularity_modules_stratifications_and_quantisation}),
and in \S~\ref{sec:stratifications} we define an analogous stratification in the twisted case.

More concretely,
the monodromy of the above covering $\bm B_r \bigl( \wh Q \bigr) \thra \bm B_r \bigl( \wh\Theta \bigr)$ is \emph{not} just controlled by the admissible deformation class of $\wh Q$.
E.g.,
in the untwisted type-$A$ case,
the nested eigenspaces of (the coefficients of) $\wh Q$ yield iterated wreath products of symmetric groups,
which can be recursively determined along the levels of a `ranked fission tree',
in operadic fashion~\cite{doucot_rembado_2025_topology_of_irregular_isomonodromy_times_on_a_fixed_pointed_curve}.
Then in this text we generalize such trees,
and use them to treat \emph{all} the classical examples:
cf.~Thm.~\ref{thm:thm_3_intro}.

\begin{rema}
	The type-$A$ fission tree of an irregular class may be viewed as a convenient modification/truncation of the Eggers--Wall tree~\cite{wall_2004_singular_points_of_plane_curves} of the corresponding curve singularity,
	involving the irregular-singular Higgs bundle obtained from the connection via wild nonabelian Hodge theory;
	e.g.,
	a type-$A$ level datum (cf.~\eqref{eq:type_a_level_data}) correspond to the Puiseux characteristic of a plane branch.
	More general fission trees might thus lead to (variations of) $G$-versions of Eggers--Wall trees.
\end{rema}

\subsubsection{}

Before stating the main results,
recall that Springer~\cite[\S~5.6]{springer_1974_regular_elements_of_finite_reflection_groups} wrote that:

\begin{center}
	\emph{The results of this section lead to certain complex reflection groups [...] \\
		One can [...]
		derive the main results about polyhedral groups \\
		from the theory of Weyl groups [...]
		We shall not go into this matter [...]}
\end{center}

In this text,
instead,
we shall use the main statements of the theory of Springer,
and the (nonsplit) extension of Lehrer--Springer~\cite{lehrer_springer_1999_intersection_multiplicities_and_reflection_subquotients_of_unitary_reflection_groups,
	lehrer_springer_1999_reflection_subquotients_of_unitary_reflection_groups} (initiated in~\cite[\S~6]{springer_1974_regular_elements_of_finite_reflection_groups}),
to describe the deformation spaces in terms of the eigenspaces of Weyl-group elements;
or rather,
their restrictions onto the centres of Levi factors of parabolic subalgebras of $\mf g$.
In particular,
we relate regular eigenspaces with `quasi-generic' isomonodromic deformations;
recall that an untwisted irregular type is \emph{generic} if its leading coefficient is regular semisimple (in any coordinate):
then an arbitrary irregular type is---%
by definition---%
\emph{quasi-generic} if it becomes generic upon pullback along an $r$-fold cyclic covering,
for some integer $r \geq 1$.
(The terminology is reminiscent of `quasi-unipotent' elements in a ring,
i.e.,
roots of unipotent elements.)

And even in the subregular case,
one can use Springer/Lehrer--Springer theory to prove that the admissible deformations of irregular classes correspond to orbits for the free action of \emph{complex} reflection groups,
provided that a certain obstruction vanishes:
cf.~Fn.~\ref{fn:howlett} and Thm.~\ref{thm:complex_refl_groups_from_gauge}.
This leads in particular to new modular interpretations of the generalized symmetric groups of the infinite Shephard--Todd series~\cite{shepard_todd_1954_finite_unitary_reflection_groups},
and of the corresponding braid groups (cf.~\cite{broue_malle_rouquier_1998_complex_reflection_groups_braid_groups_hacke_algebras}).
But importantly there are new examples:
e.g.,
one now sees that the exceptional (irreducible) complex reflection group $G_{31}$ arises from the isomonodromic deformations of quasi-generic irregular-singular connections on principal $E_8$-bundles (cf.~\S~\ref{sec:bessis_literature}).
Moreover,
outside of the vector-bundle case one finds \emph{nonsplit} reflection cosets,
which are easy examples of `spetses' (à la Broué--Malle--Michel~\cite{broue_malle_michel_2014_split_spetses_for_primitive_reflection_groups}):
cf.~\S\S~\ref{sec:howlett}--\ref{sec:lehrer_springer_theory} and \S~\ref{sec:reflection cosets}.
In type $D$,
notably,
one encounters the $2$-twisted cosets of type $BC$,
cf.~Prop.~\ref{prop:twisted_generic_classification_type_D}.

\subsubsection{}

Finally,
before adding on the `interior' twists of~\cite{boalch_yamakawa_2015_twisted_wild_character_varieties} (cf.~\S~\ref{sec:interior_twist}),
we quickly rephrase Bessis' lift of Springer theory~\cite[Thm.~12.4]{bessis_2015_finite_complex_reflection_arrangements_are_k_pi_1}---%
to braid groups---%
as the study of quasi-generic local WMCGs.
Since the corresponding $K(\pi,1)$ hyperplane complements play the role of moduli spaces $\mc{WM}$ of (germs of) wild curves (cf.~Rmk.~\ref{rmk:stacks}),
their contractible universal coverings can now be viewed as local pieces of `wild' Teichmüller spaces $\mc{WT} \thra \mc{WM}$.

\subsection{Main results}
\label{sec:results}

We now summarize the main results,
up until \S~\ref{sec:tglwmcgs}.

\subsubsection{}

As mentioned above,
we first obtain a general description of the spaces of admissible deformations,
for any (connected) reductive complex algebraic group $G$:

\begin{theo}[cf.~Prop.~\ref{prop:generic_pure_twisted_deformation_space} +~Thmm.--Deff.~\ref{thm:general_pure_twisted_deformation_space} +~\ref{thm:full_general_twisted_deformation_space} +~\ref{thm:generic_full_twmcgs}]
	\label{thm:thm_1_intro}

	Fix an integer $r \geq 1$,
	and let $\Theta$ be an $r$-ramified irregular class for $G$.
	Denote by $\wh{\Theta}$ its $r$-Galois-closed untwisted lift,
	and by $\wh{Q}$ an irregular type with irregular class $\wh{\Theta}$---%
	as in~\eqref{eq:explicit_irregular_type}.
	To avoid trivial cases,
	suppose that the leading coefficient $A_s$ of $\wh Q$ is nonzero,
	for some integer irregularity $s \geq 1$.
	Then:

	\begin{enumerate}
		\item the space of admissible deformations of $\wh Q$ decomposes as a (topological) product
		      \begin{equation}
			      \label{eq:factorization_intro}
			      \bm B_r \bigl( \wh Q \bigr)
			      = \prod_{i = 1}^s \bm B_{r} \bigl( \wh Q,i \bigr) \sse \mf t^s,
		      \end{equation}
		      where each factor is the complement of a (finite,
		      linear) complex hyperplane arrangement,
		      as in~\eqref{eq:general_g_twisted_deformation_factor};

		\item
		      if $A_s$ is a \emph{regular} vector for the Weyl group $W$,
		      then the leading factor $\bm B_r \bigl( \wh Q,s \bigr) \sse \mf t_{\reg}$ (of the product~\eqref{eq:factorization_intro}) is the complement of Springer's complex reflection arrangement inside the eigenspace of a (regular) element $g \in W$,
		      of order $d \ceqq r \wdg s$,
		      as in~\eqref{eq:generic_twisted_deformation_space_1_coeff};

		\item
		      a subquotient $Z_{W,\bm \phi}(r)$ of $W$,
		      determined as in~\eqref{eq:general_galois_covering},
		      acts freely on $\bm B_r \bigl( \wh Q \bigr)$;

		\item
		      the space $\bm B_r \bigl( \wh\Theta \bigr)$ is the corresponding topological quotient of $\bm B_r \bigl( \wh Q \bigr)$;

		\item
		      and if (again) $A_s$ is regular then:
		      \begin{enumerate}
			      \item
			            $Z_{W,\bm \phi}(r)$ is isomorphic to the centralizer $Z_W(g) \sse W$ of a regular element $g \in W$;

			      \item
			            and $\ol \Gamma_r \bigl( \wh\Theta \bigr)$ is isomorphic to Bessis' lift of $Z_W(g)$ inside the full/nonpure $G$-braid group---%
			            i.e.,
			            inside $\pi_1(\mf t_{\reg} \bs W, W.A_s)$.
		      \end{enumerate}
	\end{enumerate}
	(In general,
	it follows that the pure local WMCG also splits as a direct product,
	and that the full/nonpure version is an extension of $\on{Gal} \bigl( \bm B_r (\wh Q) \bs \bm B_r (\wh\Theta) \bigr) \simeq Z_{W,\bm\phi}(r)$ by the pure one:
	cf.~Thm.~\ref{thm:wild_mapping_class_groups}.)
\end{theo}

\subsubsection{}

Moreover,
for the classical Lie algebras we provide another factorization of $\bm B_r \bigl( \wh Q \bigr)$,
which refines that of Thm.~\ref{thm:thm_1_intro}~(1.).

Extending~\cite{doucot_rembado_tamiozzo_2022_local_wild_mapping_class_groups_and_cabled_braids},
one must classify the factors~\eqref{eq:general_g_twisted_deformation_factor},
whenever given an inclusion $\phi_i \sse \phi_{i+1}$ of Levi (root) subsystems\fn{
	Recall that a root subsystem $\phi \sse \Phi$ is (by definition) \emph{Levi} if $\spann_{\mb Q}(\phi) \cap \Phi = \phi$,
	i.e.,
	if it is the annihilator of an element $A \in \mf t$,
	i.e.,
	if the complement~\eqref{eq:pure_untwisted_nongeneric_deformation_space_1_coeff} is nonempty.}~of the root system $\Phi = \Phi(\mf g,\mf t)$;
and,
in addition,
a suitable Weyl group element $g$.
Now,
up to acting by the Weyl group,
one can recursively work with `dominant' Levi subsystems (for a suitable choice of a base of simple roots,
cf.~\cite[Lem.~3.2.5]{calaque_felder_rembado_wentworth_2024_wild_orbits_and_generalised_singularity_modules_stratifications_and_quantisation}),
which are controlled by (full) Dynkin subdiagrams.
Moreover,
for any classical type $\bullet \in \set{A,B,C,D}$,
any subdiagram splits into a disjoint union of several type-$A$ components,
plus at most one component of type $\bullet$.
This observation,
together with a reduction to the quasi-generic case---%
up to lowering the rank---,
makes it possible to get a classification via Lehrer--Springer theory in type $A$,
$B$,
and $C$ (cf.~Propp.~\ref{prop:classification_type_A} +~\ref{prop:classification_type_BC}),
while for the full result we use the most general version of fission trees $\mc T$ (cf.~\S~\ref{sec:fission_trees_intro} just below).

The outcome is the following statement:

\begin{theo}[cf.~\S\S~\ref{sec:type_A}--\ref{sec:type_D} +~\ref{sec:D_trees}]
	\label{thm:thm_2_intro}

	Suppose that $\mf g$ is simple,
	of \emph{any} classical type,
	and choose integers $\rho,k,k' \geq 1$.
	Moreover,
	consider tuples $\bm \lambda = (\lambda_1,\dc,\lambda_k) \in \mb C^k$ and $\bm \mu = (\mu_1,\dc,\mu_{k'}) \in \mb C^{k'}$.
	Finally,
	define the following hyperplane complements:\fn{Cf.~\cite[Lem.~3.3]{broue_malle_rouquier_1998_complex_reflection_groups_braid_groups_hacke_algebras} for~\eqref{eq:standard_complement_type_D}--\eqref{eq:standard_complement},
		and note that~\eqref{eq:exotic_arrangement} need \emph{not} come from a root system.}
	\begin{equation}
		\label{eq:standard_complement_type_D}
		\mc M(\rho,k) \ceqq
		\Set{ \bm \lambda \in \mb C^k | \lambda_i^\rho \neq \lambda_j^\rho, \quad i \neq j \in \set{1,\dc,k} },
	\end{equation}
	and
	\begin{equation}
		\label{eq:standard_complement}
		\mc M^\sharp (\rho,k) \ceqq
		\Set{ \bm \lambda \in \mc M(\rho,k) | \lambda_i \neq 0, \quad i \in \set{1,\dc,k} },
	\end{equation}
	and the \emph{noncrystallographic} complement
	\begin{equation}
		\label{eq:exotic_arrangement}
		\mc M^{\musDoubleSharp}(k,k')
		\ceqq \Set{ (\bm \lambda,\bm \mu) \in \mc M(2,k)^\sharp \ts \mc M(2,k') | \lambda_i^2 \neq \mu_j^2, \quad 1 \leq i \leq k, \quad 1 \leq j \leq k'  }.
	\end{equation}
	Then any $r$-Galois-closed irregular type $\wh Q$ determines three integer-valued functions
	\begin{equation}
		(\rho,k) \lmt \mu(\rho,k) \geq 0,
		\quad (\rho,k) \lmt \mu^\sharp(\rho,k) \geq 0,
		\quad (k,k') \lmt \mu^{\musDoubleSharp}(k,k') \geq 0,
	\end{equation}
	with finite support,
	such that there is a homeomorphism
	\begin{equation}
		\label{eq:finer_decomposition}
		\bm B_r \bigl( \wh Q \bigr)
		\simeq \prod_{\rho,k,k' \geq 1} \Bigl( \mc M(\rho,k)^{\mu(\rho,k)} \ts \mc M^\sharp(\rho,k)^{\mu^\sharp(\rho,k)} \ts \mc M^{\musDoubleSharp}(k,k')^{\mu^{\musDoubleSharp}(k,k')} \Bigr),
	\end{equation}
	endowing the right-hand side with the product topology.
	(One has $\mu^{\musDoubleSharp} \neq 0$ only in certain type-$D$ examples,
	which already arise in the untwisted setting~\cite{doucot_rembado_tamiozzo_2022_local_wild_mapping_class_groups_and_cabled_braids}.)
\end{theo}

\subsubsection{}
\label{sec:fission_trees_intro}

Suppose again that $\mf g$ is simple,
of type $\bullet \in \set{A,B,C,D}$,
and consider an $r$-ramified irregular class $\Theta$.
To get more explicit descriptions of the corresponding full/nonpure local WMCG,
rather than passing to an untwisting cyclic cover as before,
it is convenient to work with \emph{full} irregular types.
The latter are Galois-closed lists of exponential factors,
still lifting $\Theta$ through the Weyl-group action (cf.~\cite{boalch_doucot_rembado_2025_twisted_local_wild_mapping_class_groups_configuration_spaces_fission_trees_and_complex_braids} and \S\S~\ref{sec:notation_for_trees}--\ref{sec:D_trees}).
However,
we will consider (Stokes circles of) exponential factors \emph{up to sign},
to deal with the signed permutations that make up the classical Weyl groups beyond type $A$;
and we will also have to introduce variations of the main definitions,
notably involving the \emph{pseudo-irregular class} $\wt\Theta$ underlying $\Theta$ (cf.~Def.~\ref{def:type_D_pseudo_irreg_class}).\fn{
	This is another instance of nonsplit reflection cosets,
	so that one canonically has $\wt\Theta = \Theta$ besides certain type-$D$ examples.
	Moreover,
	strictly speaking,
	in type $D$ one needs to consider a further \emph{enhancement} of this setup:
	cf.~\S~\ref{sec:enhancements}.}

In any event,
we will focus on the irregular classes of (compatible) \emph{pointed} irregular types $\dot Q$,
which are just particular examples of (full) irregular types:
this is w.l.o.g.,
as far as the topology of the admissible deformation spaces is concerned (cf.~Rmkk.~\ref{rmk:config_spaces_are_the_same} +~\ref{rmk:config_spaces_are_the_same_d}).
And indeed,
one can now spell out the type-dependent notion of admissible deformations of $\Theta$ and $\dot Q$,
thus (re)defining \emph{configuration spaces} $\bm B_{\bullet,r} \bigl( \Theta \bigr)$ and $\bm B_{\bullet,r} (\dot Q)$,
whose fundamental groups are canonically identified with particular examples of the local WMCGs above.
The main tool now is a combinatoric object,
the \emph{fission tree} $\mc T = \mc T(\Theta)$ of the irregular class,
which controls the topology of these spaces (cf. Deff.~\ref{def:fission_tree_type_BC} $+$~\ref{def:fission_tree_type_D} for the new examples).
Notably,
there is a `cabled' version of the Weyl group of type $\bullet$,
i.e.,
the \emph{Weyl group} $W_\bullet(\mc T)$ of the tree,
which governs the passage to the full/nonpure case (cf.~Deff.~\ref{def:weyl_group_tree_type_BC} $+$~\ref{def:weyl_group_tree_type_D}).
More precisely,
in summary:
\begin{theo}[cf.~\S\S~\ref{sec:notation_for_trees}--\ref{sec:D_trees}]
	\label{thm:thm_3_intro}

	Fix an integer $r \geq 1$ and a classical type $\bullet \in \set{A,B,C,D}$.
	Let $\Theta$ be the $r$-ramified irregular class of a pointed irregular type $\dot Q$,
	and denote by $\mc T = \mc T \bigl( \Theta \bigr)$ (resp.,
	by $\dot{\mc T} = \dot{\mc T}(\dot Q)$) the fission tree of $\Theta$ (resp.,
	the \emph{labelled} fission tree of $\dot Q$)---%
	with set of vertices $\mb V$.
	Then:
	\begin{enumerate}
		\item
		      the isomorphism class of $\mc T$ (resp.,
		      of $\dot{\mc T}$) is a complete invariant for the admissible deformations of $\Theta$ (resp.,
		      of $\dot Q$);

		\item
		      there is a homeomorphism $\bm B_{\bullet,r}(\dot Q) \simeq \bm B_\bullet(\dot{\mc T})$,
		      invoking the \emph{configuration space} of~\eqref{eq:config_space_from_bc_tree} $+$~\eqref{eq:configuration_space_of_tree_d};

		\item
		      if $\wh Q$ is the untwisting of (the full irregular type underlying) $\dot Q$,
		      then the factorization~\eqref{eq:finer_decomposition} matches up with the topological product
		      \begin{equation}
			      \bm B_{\bullet,r}(\dot Q)
			      \simeq \prod_{v \in \mb V} \bm B_\bullet (\dot{\mc T},v),
		      \end{equation}
		      invoking the \emph{local configuration spaces} of~\eqref{eq:local_conf_space_bc} $+$~\eqref{eq:local_conf_space_D};

		\item
		      the Weyl group $W_\bullet(\mc T)$ acts freely on $\bm B_{\bullet,r}(\dot Q)$;

		\item
		      the space $\bm B_{\bullet,r} \bigl( \Theta \bigr)$ is the corresponding topological quotient of $\bm B_{\bullet,r}(\dot Q)$;

		\item
		      and there is (canonical) a group isomorphism $W_\bullet(\mc T) \simeq Z_{W,\bm \phi}(r)$,
		      invoking the monodromy group of Thm.~\ref{thm:thm_1_intro}~(3.).
	\end{enumerate}
	(The consequences for these `classical' local WMCGs are spelt out in Thm.~\ref{thm:classical_wmcgs}.)
\end{theo}

\subsubsection{}

In \S~\ref{sec:interior_twist} we also add on the choice of an \emph{outer} automorphism $\dot\varphi$ of $\mf g$ (which preserves the chosen Cartan subalgebra $\mf t \sse \mf g$).
This is required to treat the aforementioned interior twists on the curve $X$,
and the extended setup now involves a definition of $(\dot\varphi,r)$-\emph{Galois-closed} irregular types/classes---%
so that $\dot\varphi = 1$ yields back the previous situation.
Again,
it will be helpful to mark an irregular type by a Weyl group element $g' \in W$,
i.e.,
by an inner automorphism of $\mf g$ preserving $\mf t$.

The upshot is that the statements of Thm.~\ref{thm:thm_1_intro}~(1.)--(4.)~all generalize,
with the following important caveat for (2.):
now even the quasi-generic admissible deformations lead to nonsplit Lehrer--Springer's complex reflection arrangements,
sitting inside an eigenspace for the twisted Weyl-group element $\bm g \ceqq \dot\varphi g'$.
After dealing with the central part of $\mf g$---%
in routine fashion---,
this involves the whole of the group of automorphisms of the root system,
which normalizes the Weyl group by adding on the Dynkin-diagram automorphisms (for any choice of simple roots).

Finally,
we also describe the (pure) classical crystallographic examples,
and observe that the classification statement of Thm.~\ref{thm:thm_2_intro} still holds.
In particular,
in type $D$ the same exact twist can arise both from the interior of the curve,
and from a ramified formal normal form,
by viewing an element of the Weyl group of type $BC$ as a diagram automorphism.
Moreover,
the exceptional outer automorphisms are now seen to play a role in meromorphic $2d$ gauge theory:
the triality of $D_4$,
and the diagram-flipping of $E_6$.
(We do \emph{not} explicitly compute the corresponding hyperplane complements.)

In view of the above,
we give the ultimate definition of local WMCGs (in \S~\ref{sec:doubly_twisted_lwmgc}),
and state their main general properties,
cf.~Def.~\ref{def:doubly_twisted_wmcg} and Thm.~\ref{thm:doubly_twisted_wmcg}.
(Establishing the analogue of Thm.~\ref{thm:thm_1_intro}~(5.)~seems to be a much more difficult problem in general,
involving lifting nonsplit Lehrer--Springer theory to braid groups.)

\subsubsection{}

Finally,
in \S~\ref{sec:stratifications} we rephrase most of the previous material in terms of (topological) stratifications of suitable subspaces of $\mf t^s$,
for any irregularity $s \geq 1$,
which generalize the usual root-valuation stratification.

Notably,
up to homotopy,
the (universal) `irregular' times for \emph{monolevel} (cf.~\eqref{eq:levels}) untwisted isomonodromic deformations correspond to selecting a Levi stratum of $\mf t$,
so that the dense one yields the generic case;
but importantly (by~\cite{boalch_2014_geometry_and_braiding_of_stokes_data_fission_and_wild_character_varieties}) every stratum carries a local system of complex Poisson/symplectic moduli spaces of irregular-singular connections with given irregular type---%
on \emph{compatibly framed} principal $G$-bundles.
Taking quotient then leads to piecewise Galois coverings (in a very particular case of Thm.~\ref{thm:thm_1_intro}),
whence a finite stratification of $\mf t \bs W$:
the latter fits together \emph{all} the admissible deformation families of untwisted irregular classes.
The fundamental groups of the `quotient' strata are then instances of full/nonpure local WMCGs,
acting on moduli spaces of irregular-singular connections with given irregular class---%
on \emph{unframed} principal $G$-bundles.

In the $r$-ramified setting,
instead,
one can:
(i) stratify the (Weyl-invariant) hypersurface intersection $Y_r \sse \mf t$ of~\cite[Prop.~3.2]{springer_1974_regular_elements_of_finite_reflection_groups},
identified with the subspace of $r$-Galois-closed irregular types;
and (ii) take again suitable quotients.
Generalizing to the multilevel case then yields a `twisted' version of the root-valuation stratification,
so that---%
as expected---%
its dense stratum corresponds precisely to quasi-generic isomonodromic deformations,
carrying local systems of \emph{twisted} wild character varieties~\cite{boalch_yamakawa_2015_twisted_wild_character_varieties}.
(Intrinsically,
this involves extending the valuation of the fraction field of a complete DVR to a cyclic Galois extension thereof.)

Note however that the relation with local WMCGs is \emph{not} as straightforward in general,
in view of the aforementioned obstructions,
i.e.,
due to the fact that the `relative' Weyl group of a Levi subsystem need not act as a (real) reflection group on the centre of the corresponding reductive Lie subalgebra of $\mf g$.

\subsection{Layout}
\label{sec:layout}

The layout of this article is as follows.

\subsubsection{}

In \S~\ref{sec:setup} we present the general setup and define the (universal) admissible deformations of $r$-Galois-closed irregular types/classes.

In \S\S~\ref{sec:generic_pure_case}--\ref{sec:pure_nongeneric_case} we deal with the pure case,
building from the quasi-generic setting,
which we relate with Springer theory.
(Experts might want to start from Thm.-Def.~\ref{thm:general_pure_twisted_deformation_space},
and derive the rest as a corollary,
after unpacking the precise notation.)

In \S~\ref{sec:no_marking} we show that the pure deformation spaces do \emph{not} depend on the choice of a marking.

In \S\S~\ref{sec:nonpure_case_generic}--\ref{sec:nonpure_case_general} we deal with the full/nonpure case,
starting again from the quasi-generic setting.
(Again,
Thm.-Def.~\ref{thm:full_general_twisted_deformation_space} is stronger than all previous statements.)

In \S~\ref{sec:howlett} we link the previous constructions to:
(i) relative Weyl groups;
(ii) subtori of $T \sse G$:
(iii) normalizers of parabolic subgroups of (real) reflection groups;
and (iv) (possibly nonsplit) reflection cosets.

In \S~\ref{sec:lehrer_springer_theory} we introduce different reflection cosets (which are spetses),
and prove a few general statements about them,
aiming to describe the factors of the pure admissible deformation spaces in terms of complex reflection arrangements.

In \S\S~\ref{sec:type_A}--\ref{sec:type_D} we do describe them in such terms,
for the classical Lie algebras,
singling out the main obstruction to apply the general theory in type $D$.

In \S~\ref{sec:notation_for_trees} we introduce/recall some additional notions/notations,
in order to define and study fission trees in the subsequent \S\S~\ref{sec:BC_trees}--\ref{sec:D_trees}.

In \S~\ref{sec:tglwmcgs} we formally introduce the local $G$-WMCGs,
we explain that lifted Springer theory describes the quasi-generic examples thereof,
and then we specialize to classical types to state what is proven by using fission trees.

In \S~\ref{sec:interior_twist} we extend all the previous material by allowing for twists in the interior part $X^o \sse X$ of the curve $X$.
Again,
we phrase this in the language of nonsplit reflection cosets.
(The strongest statement is Thm.-Def.~\ref{thm:full_general_doubly_twisted_deformation_space}.)

Finally,
in \S~\ref{sec:stratifications} we stratify finite unions of vector subspaces in (powers of) Cartan subalgebras,
generalizing the usual root-valuation stratifications,
and relating with the admissible deformations spaces of irregular types/classes.

\subsubsection{}

The appendix \S~\ref{sec:background} contains textbook background on complex reflection groups,
reflection cosets,
and classical Weyl groups/root systems.

The appendix \S~\ref{sec:exceptional_types} summarizes the classification problem in the quasi-generic exceptional simple cases.

The appendix \S~\ref{sec:bessis_literature} gives minimal background/references about the $K(\pi,1)$ conjecture/Theorem for finite complex reflection arrangements.

The appendix \S~\ref{sec:missing_proofs} contains proofs which have been postponed---%
aiming to ease the reading flow.

\subsection{Notations}
\label{sec:notations}

\begin{itemize}
	\item
	      If $p \geq 1$ is an integer,
	      and $J = \set{d_1,\dc,d_p} \sse \mb Z$ is a finite set of integers,
	      then we denote the \emph{greatest-common-divisor} (= GCD) of the elements of $J$ by $\bigwedge J = \bigwedge_{i = 1}^p (d_i)$,
	      and their \emph{least-common-multiple} (= LCM) by $\bigvee J = \bigvee_{i = 1}^p (d_i)$.

	\item
	      If $k$ is a positive rational number expressed as a reduced fraction $k = \frac n d$,
	      where $n,d$ are positive integers with $n \wdg d = 1$,
	      then we just say that $d$ (resp.,
	      $n$) is the \emph{denominator of} $k$ (resp.,
	      its \emph{numerator});
	      and we write $ d \ceqq \on{den}(k)$ (resp.,
	      $n \ceqq \on{num}(k)$).

	\item
	      If $k$ is a rational number,
	      then $\lceil k \rceil$ is the smallest integer larger than $k$.

	\item
	      If an integer $r$ divides an integer $d$,
	      we write $r \mid d$;
	      else,
	      we write $r \nmid d$.

	\item
	      The \emph{cardinality} of a finite set $S$ is denoted by $\abs S$.

	\item
	      We fix once and for all a choice $\sqrt{-1}$ of a square root of $-1$ in $\mb C$.
	      (Keeping the letter $i$ as an index.)

	\item
	      Given an integer $r \geq 1$,
	      we denote by $\zeta_r \ceqq e^{2\pi \sqrt{-1} \slash r} \in \mb C^{\ts}$ the standard choice of a primitive $r$-th root of $1$---%
	      in $\mb C$---%
	      determined by $\sqrt{-1}$.

	\item
	      We identify the group $\mu_r \sse \mb C^{\ts}$,
	      of $r$-th roots of $1$,
	      with the finite cyclic group $\mb Z \bs r\mb Z$:
	      via $k \mt \zeta_r^k$,
	      $k \in \mb Z \bs r\mb Z$.
	      (We might also abusively identify the elements $k \in \mb Z \bs r \mb Z$ with their canonical representatives $k \in \set{0,\dc,r-1} \sse \mb Z$.)

	\item
	      The space of nonzero complex numbers is also denoted by $\mb C^{\ast}$---%
	      when thinking of it topologically,
	      rather than as a group.

	\item
	      The end of a remark/example is signalled by a $\diamondsuit$,
	      inspired by~\cite{kirillov_2004_lectures_on_the_orbit_method}.
\end{itemize}

\section{Setup and main definitions}
\label{sec:setup}

\subsection{Galois-closed irregular types/classes}

Here we review the basic terminology of twisted irregular types/classes,
phrased `from above'%
---i.e.,
after an untwisting local cyclic covering,
cf.~\S~\ref{sec:about_twisted_irr_classes}.

\subsubsection{}

Let $G$ be a connected complex reductive algebraic group,
with Lie algebra $\mf g \ceqq \Lie(G)$.
Choose a maximal torus $T \sse G$,
and denote by $\mf t \ceqq \Lie(T) \sse \mf g$ the associated Cartan subalgebra.
Consider then a formal variable $w$,
and the field of $\mb C$-valued formal Laurent series $\mc L \ceqq \mb C(\!( w )\!)$ in that variable.
The standard valuation $\nu \cl \mc L \to \mb Z \cup \set{ \infty }$ determines the DVR (= discrete valuation ring) of formal power series,
i.e.,
\begin{equation}
	\mc L_{\geq 0}
	\ceqq \nu^{-1} \bigl( \mb Z_{\geq 0} \cup \set{\infty} \bigr)
	= \mb C\llb w \rrb.
\end{equation}
Then the corresponding vector space of (standard)\fn{
	They are `standard',
	because we regard $w$ as a uniformizer for the completed local ring of a complex algebraic curve,
	at a nonsingular marked point (or rather,
	on a local cyclic covering thereof,
	cf~\S~\ref{sec:irr_classes_intro} and Rmk.~\ref{rmk:uniformizers}).}
untwisted \emph{irregular types} is
\begin{equation}
	\wh{\mc{IT}}
	= \wh{\mc{IT}}(\mf t,w)
	\ceqq \mf t \ots_{\mb C} ( \mc L \bs \mc L_{\geq 0})
	\simeq \mf t (\!( w )\!) \bs \mf t \llb w \rrb.
\end{equation}

We let the group of $r$-th roots of $1$---%
in $\mb C$---%
act linearly on irregular types via
\begin{equation}
	\label{eq:monodromy}
	\sigma^j \cl \wh Q(w)
	\lmt \wh Q \bigl( \zeta^{-j}_r w \bigl),
	\qquad j \in \mb Z \bs r \mb Z.
\end{equation}

\begin{defi}[cf.~Rmk.~\ref{rmk:relation_with_twist}]
	The finite subset of $\wh{\mc{IT}}$ determined by~\eqref{eq:monodromy} is the $r$-\emph{Galois-orbit of} $\wh Q$.
\end{defi}

\subsubsection{}

Now denote by $W = W(G,T) \ceqq N_G(T) \bs T$ the Weyl group of $(G,T)$.
Let it act on $\mf t^{\dual} $ and $\mf t$ in the standard way,
whence on (the coefficients of) irregular types.
The \emph{irregular class of} $\wh Q \in \wh{\mc{IT}}$ is the $W$-orbit
\begin{equation}
	\wh\Theta
	= \wh\Theta \bigl( \wh Q \bigr)
	\ceqq W.\wh Q \in \wh{\mc{IT}} \bs W.
\end{equation}
The action of $W$ commutes with~\eqref{eq:monodromy},
and so there is a well-induced cyclic action on irregular classes:
\begin{equation}
	\label{eq:reduced_monodromy}
	\wh\Theta \bigl( \wh Q(w) \bigr)
	\lmt \wh\Theta \bigl( \wh Q(\zeta_r^{-j} w) \bigr) \in \wh{\mc{IT}} \bs W.
\end{equation}

\begin{defi}[cf.~\cite{boalch_doucot_rembado_2025_twisted_local_wild_mapping_class_groups_configuration_spaces_fission_trees_and_complex_braids},
		\S~2.1.4]
	\label{def:galois_closed_irr_types}

	Choose an integer $r \geq 1$.
	Then:
	\begin{enumerate}
		\item
		      an irregular class $\wh\Theta$ is $r$-\emph{Galois-closed} if it is invariant under~\eqref{eq:reduced_monodromy};

		\item
		      and an irregular type $\wh Q$ is $r$-\emph{Galois-closed} if this holds for its irregular class.
	\end{enumerate}
	The subset of $r$-Galois closed irregular types is denoted by $\wh{\mc{IT}}_r \sse \wh{\mc{IT}}$.\fn{
		By definition,
		it is $W$-invariant,
		and the set of $r$-Galois-closed irregular classes is the quotient $\wh{\mc{IT}}_r \bs W \sse \wh{\mc{IT}} \bs W$.
		Note that $\wh{\mc{IT}}_1 = \wh{\mc{IT}}$,
		viewing the untwisted setting as a particular case.
	}
\end{defi}

\begin{rema}
	\label{rmk:relation_with_twist}

	Introduce the new formal variable $z \ceqq w^r$.
	Then $w$ is a root of the monic polynomial
	\begin{equation}
		P
		\ceqq (Z^r - z) \in \mc K [Z],
		\qquad \mc K \ceqq \mb C (\!( z )\!),
	\end{equation}
	and the splitting field of $P$---%
	over $\mc K$---%
	can be identified with $\mc L$.
	Moreover,
	the Galois group $\on{Gal} \bigl( \mc L \slash \mc K \bigr) \sse \GL_{\mc K}(\mc L)$ is cyclic,
    and it is generated by the automorphism of $\mc L$ mapping $w \mt \zeta_r w$ (cf.~\cite[Chp.~XIII, \S~2]{serre_1979_local_fields};
	in~\eqref{eq:monodromy} it is convenient to take the inverse action,
	as we work with $w^{-1}$).

	Thus,
	an $r$-Galois-closed irregular type $\wh Q = \wh Q(w)$ yields an $r$-ramified \emph{twisted} irregular type $Q = Q(z)$,
	as in~\cite[Exercise, p.~10]{boalch_yamakawa_2015_twisted_wild_character_varieties}---%
	cf.~App.~A of op.~cit.
	In turn,
	this determines an $r$-ramified irregular class $\Theta = \Theta(Q)$.
	(Sometimes also denoted by $\ol Q$;
	conversely,
	all the twisted irregular types/classes of ramification $r \geq 1$ arise in this way.)
	More precisely,
	in the present setting/convention~\cite[Eq.~(13)]{boalch_yamakawa_2015_twisted_wild_character_varieties} reads
	\begin{equation}
		\label{eq:monodromy_realization_lifted}
		\Ad_{\wt g'} \bigl( \wh Q (w) \bigr)
		= \wh Q(\zeta^{-1}_r w),
		\qquad \wh Q \in \wh{\mc{IT}},
	\end{equation}
	for a suitable element $\wt g' \in G$,
	as right now there is no outer automorphism in play (cf.~also~\cite[Eq.~(2.9)]{balser_jurkat_lutz_1979_a_general_theory_of_invariants_for_meromorphic_differential_equations_i_formal_invariants} in type $A$).
	Then~\eqref{eq:monodromy_realization_lifted} implies that $\wt g'$ normalizes the (connected,
	reductive) centralizer subgroup $L \sse G$ of $\wh Q$ (cf.~\eqref{eq:levi_subgroup} when $\wh Q$ only has one coefficient).
	It follows that an $L$-translated $\wt g \in N_G(L)$ of $\wt g'$,
	which acts in the same way on $\wh Q$,
	also normalizes the maximal torus $T \sse L$---%
	since the maximal tori of $L$ are conjugated by inner automorphisms.

	Finally,
	the class $g \in W$ of $\wt g$ (modulo $T$) now acts on $\wh Q$	as in~\eqref{eq:monodromy_realization},
	and it yields an $r$-Galois-closed irregular type as in Def.~\ref{def:galois_closed_irr_types};
	this motivates Def.~\ref{def:twisted_g_admissible_deformations} below.
	(Cf.~instead Lem.-Def.~\ref{lem:relative_weyl_group} for a relation with subtori of $T$,
	and Lem.~\ref{lem:full_twist_preserves_centralizer} for the general case of~\cite[Eq.~(13)]{boalch_yamakawa_2015_twisted_wild_character_varieties}).
\end{rema}

\subsection{Admissible deformations: pure case}

Here we give two notions of admissible deformations for $r$-Galois-closed irregular types,
and define the corresponding (universal) deformation spaces.

\subsubsection{}

Let $\Phi = \Phi(\mf g,\mf t) \sse \mf t^{\dual}$ be the root system of $(\mf g,\mf t)$.
Given an irregular type $\wh Q \in \wh{\mc{IT}}$,
and a root $\alpha \in \Phi$,
consider the \emph{exponential factor} $\wh q_\alpha \ceqq \alpha \bigl( \wh Q \bigr)$.
Let then
\begin{equation}
	\label{eq:root_valuations}
	d_\alpha \bigl( \wh Q \bigr)
	\ceqq
	\begin{cases}
		- \nu \bigl( \wh q_\alpha \bigr) \in \mb Z_{> 0}, \quad & \wh q_\alpha \neq 0, \\
		0, \quad                                                & \wh q_\alpha = 0,
	\end{cases}
\end{equation}
noting that the (discrete) valuation of $\mc L$ induces a well-defined function on $(\mc L \bs \mc L_{\geq 0}) \sm \set{0}$.
The nonzero pole orders which occur,
viz.,
the elements of 
\begin{equation}
	\label{eq:levels}
	\on{Levels}\bigl( \wh Q \bigr)
    \ceqq \Set{ d_\alpha\bigl( \wh Q \bigr) | \alpha \in \Phi } \sm \set{0} \sse \mb Z_{> 0},
\end{equation}
are the \emph{levels} of $\wh Q$.
Then the `structure' of the corresponding Stokes automorphisms,
i.e.,
e.g.,
the block decomposition of Stokes matrices in type $A$,
is controlled by~\eqref{eq:levels} (in general,
this involves unipotent radicals of sequences of opposite parabolic subalgebras with given Levi factors).
Thus,
it makes sense to discuss isomonodromic deformations when the function
\begin{equation}
	\label{eq:root_valuations_tuple}
	\Bigl( \bm d \bigl( \wh Q \bigr) \cl \alpha
	\lmt d_\alpha \bigl( \wh Q \bigr) \Bigr) \in \mb Z_{\geq 0}^{\Phi},
\end{equation}
defined by~\eqref{eq:root_valuations},
is \emph{constant}:
cf.~\cite[Def.~10.1]{boalch_2014_geometry_and_braiding_of_stokes_data_fission_and_wild_character_varieties} and~\cite[\S~4]{yamakawa_2019_fundamental_two_forms_for_isomonodromic_deformations} in the untwisted/unramified setting.
This defines what it means for two irregular types $\wh Q$ and $\wh Q'$ to be admissible deformations of each other,
which is symbolized by $\wh Q \sim \wh Q'$.

In the $r$-ramified setting,
instead,
one must also preserve the $r$-Galois-closedness throughout a deformation.
To this end,
note that Def.~\ref{def:galois_closed_irr_types} means that there exists a group element $g \in W$ such that
\begin{equation}
	\label{eq:monodromy_realization}
	\sigma \bigl(  \wh Q \bigr)
	= g \bigl( \wh Q \bigr) \in \wh{\mc{IT}},
\end{equation}
in the notation of~\eqref{eq:monodromy}.
(In general $g$ is \emph{not} unique,
cf.~\S~\ref{sec:no_marking_general_case}.)
Since the actions of $\sigma$ and $g$ commute,
it follows that
\begin{equation}
	\sigma^j \bigl( \wh Q \bigr)
	= g^j \bigl( \wh Q \bigr),
	\qquad j \in \mb Z \bs r\mb Z,
\end{equation}
and so we introduce the following terminology:

\begin{defi}
	\label{def:generating_galois_orbit}

	Choose a group element $g \in W$.
	Then:
	\begin{enumerate}
		\item
		      if~\eqref{eq:monodromy_realization} holds,
		      we say that $g$ \emph{generates the} $r$-\emph{Galois-orbit of} $\wh Q$.

		\item
		      and conversely,
		      we denote by $\wh{\mc{IT}}_{g,r} \sse \wh{\mc{IT}}$ the subset of irregular types whose $r$-Galois-orbit is \emph{generated by} $g$.

		      (By definition,
		      they are all $r$-Galois-closed,
		      and so $\wh{\mc{IT}}_{g,r} \sse \wh{\mc{IT}}_r$.)
	\end{enumerate}
\end{defi}

\subsubsection{}

Now there are two natural definitions of admissible deformations in the $r$-ramified setting,
which differ as to whether a generator of the $r$-Galois-orbits is fixed a priori.
The former is more immediately helpful,
and is the first that we state;
but in \S~\ref{sec:no_marking} we will prove that the two notions are equivalent:

\begin{defi}
	\label{def:twisted_g_admissible_deformations}

	Choose a group element $g \in W$.
	Two $r$-Galois-closed irregular types $\wh Q$ and $\wh Q'$ are \emph{mutual} $(g,r)$-\emph{admissible deformations},
	which is symbolized by $\wh Q \sim_{g,r} \wh Q'$,
	if :
	\begin{enumerate}
		\item
		      their $r$-Galois-orbits are generated by $g$;

		\item
		      and their $\Phi$-tuples~\eqref{eq:root_valuations_tuple} coincide.
	\end{enumerate}
\end{defi}

\begin{defi}
	\label{def:twisted_admissible_deformations}

	Two $r$-Galois-closed irregular types $\wh Q$ and $\wh Q'$ are \emph{mutual} $r$-\emph{admissible deformations},
	which is symbolized by $\wh Q \sim_r \wh Q'$,
	if:
	\begin{enumerate}
		\item
		      there exists $g \in W$ such that their $r$-Galois-orbits are---%
		      both---%
		      generated by $g$;

		\item
		      and their $\Phi$-tuples~\eqref{eq:root_valuations_tuple} coincide.
	\end{enumerate}
\end{defi}

\begin{rema}
	\label{rmk:numerical_vs_admissible}

	The fact that the spaces of admissible deformations are \emph{connected} (cf.~\S\S~\ref{sec:generic_pure_case}--\ref{sec:no_marking}) implies that Def.~\ref{def:twisted_admissible_deformations} is equivalent to considering holomorphic families of irregular types as in~\cite[Def.~2.5]{boalch_doucot_rembado_2025_twisted_local_wild_mapping_class_groups_configuration_spaces_fission_trees_and_complex_braids}.

	However,
	contrary to op.~cit.,
	here we must \emph{impose} that there is a fixed element $g \in W$ generating the $r$-Galois-orbits throughout the deformation:
	letting it vary leads to several disconnected components,
	cf.~\S~\ref{sec:disconnected_strata}.
\end{rema}

\begin{rema}
	By definition,
	all the $1$-Galois-orbits are trivial and generated by the identity element of $W$.
	In particular,
	one has $\wh Q \sim_1 \wh Q'$ if and only if $\wh Q \sim \wh Q'$,
	so that indeed the untwisted case can be seen as a particular example.
\end{rema}

\subsubsection{}

Finally,
we bound the `irregularity' (cf.~\S~\ref{sec:notation_for_trees}).
Namely,
choose another integer $s \geq 1$,
and denote by $\wh{\mc{IT}}^{\leq s}$ the---%
$W$-invariant---%
subset of irregular types whose degree in $w^{-1}$ is at most $s$.
(If $\wh Q \in \wh{\mc{IT}}^{\leq s}$ then $\max\limits_\Phi \bm d \bigl( \wh Q \bigr) \leq s$;
the converse holds if $\mf g$ is semisimple.)

Then,
given any irregularity-bounded irregular type $\wh Q \in \wh{\mc{IT}}^{\leq s}$,
in~\cite{doucot_rembado_tamiozzo_2022_local_wild_mapping_class_groups_and_cabled_braids} we have considered the admissible deformation space
\begin{equation}
	\label{eq:abstract_untwisted_deformations_space_pure}
	\bm B^{\leq s} \bigl( \wh Q \bigr)
	\ceqq \Set{ \wh Q' \in \wh{\mc{IT}}^{\leq s} | \wh Q \sim \wh Q' },
\end{equation}
cf.~\cite[Ex.~10.1]{boalch_2014_geometry_and_braiding_of_stokes_data_fission_and_wild_character_varieties}.
Now,
for any pair $(g,r) \in W \ts \mb Z_{> 0}$ as above,
write also
\begin{equation}
	\wh{\mc{IT}}^{\leq s}_{g,r}
	\ceqq \wh{\mc{IT}}^{\leq s} \cap \wh{\mc{IT}}_{g,r} \sse \wh{\mc{IT}}^{\leq s} \cap \wh{\mc{IT}}_r
	\eqqc \wh{\mc{IT}}^{\leq s}_r.
\end{equation}
Finally,
as per Deff.~\ref{def:twisted_g_admissible_deformations} +~\ref{def:twisted_admissible_deformations},
set
\begin{equation}
	\label{eq:abstract_g_twisted_deformations_space_pure}
	\bm B^{\leq s}_{g,r} \bigl( \wh Q \bigr)
	\ceqq \Set{ \wh Q' \in \wh{\mc{IT}}^{\leq s}_{g,r} | \wh Q \sim_{g,r} \wh Q' },
\end{equation}
provided that $g$ generates the $r$-Galois-orbit of $\wh Q$,
and
\begin{equation}
	\label{eq:abstract_twisted_deformations_space_pure}
	\bm B^{\leq s}_r \bigl( \wh Q \bigr)
	\ceqq \Set{ \wh Q' \in \wh{\mc{IT}}^{\leq s}_r | \wh Q \sim_r \wh Q'},
\end{equation}
provided that $\wh Q$ is (only) $r$-Galois-closed.

By definition,
the relation $\wh Q \sim_{g,r} \wh Q'$ implies that $\wh Q \sim_r \wh Q'$,
whence the inclusion $\bm B^{\leq s}_{g,r} \bigl( \wh Q \bigr) \sse \bm B^{\leq s}_r \bigl( \wh Q \bigr)$---%
for all $g \in W$.
Conversely,
if $\wh Q \sim_r \wh Q'$ then there exists an element $g \in W$ such that $\wh Q \sim_{g,r} \wh Q'$.
Overall,
if we denote by $W_r \bigl( \wh Q \bigr) \sse W$ the subset of elements which generate the $r$-Galois-orbit of $\wh Q$,
a priori one has
\begin{equation}
	\label{eq:deformation_spaces_union}
	\bm B^{\leq s}_r \bigl( \wh Q \bigr)
	= \bigcup_{W_r(\wh Q)}  \bm B^{\leq s}_{g,r} \bigl( \wh Q \bigr)  \sse \wh{\mc{IT}}^{\leq s}_r.
\end{equation}

We will start by considering each term of this union (in \S\S~\ref{sec:generic_pure_case}--\ref{sec:pure_nongeneric_case}),
and then prove that the union is actually trivial (in \S~\ref{sec:no_marking}).
Moreover,
when $\mf g$ is a classical (simple) Lie algebra,
we will also provide more explicit descriptions (in \S\S~\ref{sec:type_A}--\ref{sec:D_trees});
cf.~also Lem.~\ref{lem:reduction_to_simple_case}.

\begin{rema}
	The following observation will be quite helpful:
	the definition~\eqref{eq:abstract_g_twisted_deformations_space_pure} yields the equality
	\begin{equation}
		\label{eq:deformations_are_intersections}
		\bm B^{\leq s}_{g,r} \bigl( \wh Q \bigr)
		= \bm B^{\leq s} \bigl( \wh Q \bigr) \cap \wh{\mc{IT}}_{g,r} \sse \wh{\mc{IT}}^{\leq s}_{g,r},
		\qquad \wh Q \in \wh{\mc{IT}}^{\leq s}_{g,r}.
	\end{equation}

	(Beware that the naive analogue is \emph{false} for $r$-admissible deformations,
	cf.~again \S~\ref{sec:disconnected_strata}.)
\end{rema}

\subsection{Admissible deformations: full/nonpure case}

In the untwisted setting,
recall that two irregular classes $\wh\Theta$ and $\wh\Theta'$ are declared to be admissible deformations of each other,
which is symbolized by $\wh\Theta \sim \wh\Theta'$,
provided that this holds for some irregular types lifting them through the Weyl-group action.

In the $r$-ramified setting,
this notion does \emph{not} change:

\begin{defi}[cf.~Def.~\ref{def:twisted_admissible_deformations}]
	\label{def:twisted_admissible_deformation_full}

	Two $r$-Galois-closed irregular classes $\wh\Theta$ and $\wh\Theta'$ are mutual $r$-\emph{admissible deformations},
	which is symbolized by $\wh\Theta \sim_r \wh\Theta'$,
	if there exist two ($r$-Galois-closed) irregular types $\wh Q$ and $\wh Q'$ such that:
	\begin{enumerate}
		\item
		      $\wh\Theta = \wh\Theta \bigl( \wh Q \bigr)$ and $\wh\Theta' = \wh\Theta \bigl( \wh Q' \bigr)$;

		\item
		      and $\wh Q \sim_r \wh Q'$.
	\end{enumerate}
\end{defi}

\subsubsection{}

In~\cite{doucot_rembado_2025_topology_of_irregular_isomonodromy_times_on_a_fixed_pointed_curve} we have considered the admissible deformation spaces of (irregularity-bounded) untwisted irregular classes $\wh\Theta = \wh\Theta \bigl( \wh Q \bigr) \in \wh{\mc{IT}}^{\leq s} \bs W$,
viz.,
\begin{equation}
	\label{eq:abstract_untwisted_deformation_space_full}
	\bm B^{\leq s} \bigl( \wh\Theta \bigr)
	\ceqq \Set{ \wh\Theta' \in \wh{\mc{IT}}^{\leq s} \bs W | \wh\Theta \sim \wh\Theta' },
\end{equation}
cf.~\cite[Rmk.~10.6]{boalch_2014_geometry_and_braiding_of_stokes_data_fission_and_wild_character_varieties}.
Thus,
we now consider the corresponding admissible deformation spaces in the twisted setting,
as per Def.~\ref{def:twisted_admissible_deformation_full}:
\begin{equation}
	\label{eq:abstract_twisted_deformation_space_full}
	\bm B^{\leq s}_r \bigl( \wh\Theta \bigr)
	\ceqq \Set{ \wh\Theta' \in \wh{\mc{IT}}_r^{\leq s} \bs W | \wh\Theta \sim_r \wh\Theta' },
\end{equation}
provided that $\wh\Theta$ is $r$-Galois-closed.
(Again,
the former corresponds to taking $r = 1$ in the latter.)

This space is first studied in \S\S~\ref{sec:nonpure_case_generic}--\ref{sec:nonpure_case_general},
building on the pure case;
and then again more explicitly in \S\S~\ref{sec:BC_trees}--\ref{sec:D_trees} when $\mf g$ is simple,
of type $\bullet \in \set{A,B,C,D}$.

\subsection{Concluding remarks}

Before moving on to the study of the admissible deformation spaces~\eqref{eq:abstract_g_twisted_deformations_space_pure}--\eqref{eq:abstract_twisted_deformations_space_pure} +~\eqref{eq:abstract_twisted_deformation_space_full},
we finish setting up some terminology.

\subsubsection{}

In a precise sense,
one can assume that the ramification is `minimal'.

To state this,
let us write as customary a nonvanishing irregular type as
\begin{equation}
	\label{eq:explicit_irregular_type}
	\wh Q
	= \wh Q(w)
	= \sum_{i = 1}^s A_i w^{-i} \ceqq \sum_{i = 1}^s A_i \ots w^{-i},
	\qquad A_i \in \mf t, \quad A_s \neq 0,
\end{equation}
so that $\wh Q \in \wh{\mc{IT}}^{\leq s}$.
Then the monodromy action~\eqref{eq:monodromy} explicitly reads
\begin{equation}
	\sigma^j \bigl( \wh Q \bigr)
	= \sum_{i = 1}^s (\zeta_r^{ij} A_i) w^{-i}.
\end{equation}

Now,
if~\eqref{eq:explicit_irregular_type} is $r$-Galois-closed for some integer $r \geq 1$,
then for any integer $\wt r \geq 1$ we can set $r' \ceqq \wt r r$ and consider the irregular type
\begin{equation}
	\label{eq:dilated_exponents}
	\wh Q'
	\ceqq \wh Q \bigl (w^{\wt r} \bigr) \in \wh{\mc{IT}}^{\leq s'},
	\qquad s'
	\ceqq \wt r s \in \mb Z_{\geq 1},
\end{equation}
which is $r'$-Galois-closed.
There is a bijection $\bm B^{\leq s}_r \bigl( \wh Q \bigr) \simeq \bm B^{\leq s'}_{r'} \bigl( \wh Q' \bigr)$,
obtained by applying the change of variable~\eqref{eq:dilated_exponents} to the admissible deformations of $\wh Q$;
however,
the $r'$-Galois-orbit of $\wh Q'$ has at most $r \leq r'$ elements.
Conversely,
it is possible that $\wh Q$ has an $r$-Galois-orbits~\eqref{eq:monodromy} with less than $r$ elements;
e.g.,
if $\wh Q$ has a single nonvanishing coefficient and the ramification/irregularity are \emph{not} coprime.
To avoid considering such cases,
we introduce the following:

\begin{enonce}{Lemma-Definition}[]
	\label{lem:primitive_irregular_types}

	Choose an element $\wh Q \in \wh{\mc{IT}}_r$.
	Then there exists an integer $\wt r \geq 1$,
	dividing $r$,
	such that:
	\begin{enumerate}
		\item
		      the irregular type $\wh Q' \ceqq \wh Q \bigl( w^{1/\wt r} \, \bigr)$ is (untwisted and) $r'$-Galois-closed,
		      where $r'$ is the quotient of the division;

		\item
		      and the $r'$-Galois-orbit of $\wh Q'$ contains precisely $r'$ elements.
	\end{enumerate}

	Irregular types satisfying the latter are said to be \emph{primitive}.
\end{enonce}

\begin{proof}[Proof postponed to~\ref{proof:lem_primitive_irregular_types}]
\end{proof}

\subsubsection{}
\label{sec:topology}

By Lem.~\ref{lem:primitive_irregular_types},
w.l.o.g.,
we will only (tacitly) consider \emph{primitive} $r$-Galois-closed irregular types.

We will also (tacitly) identify $\wh{\mc{IT}}^{\leq s}$ with the vector space $\mf t^s$,
via the $\mb C$-linear isomorphism $\wh Q \mt (A_1,\dc,A_s)$,
in the notation of~\eqref{eq:explicit_irregular_type}.
In particular,
there are inclusions
\begin{equation}
	\label{eq:irr_type_coefficients}
	\bm B^{\leq s}_{g,r} \bigl( \wh Q \bigr) \sse \bm B^{\leq s}_r \bigl( \wh Q \bigr) \sse \bm B^{\leq s} \bigl( \wh Q \bigr) \sse \mf t^s,
\end{equation}
in the notation of~\eqref{eq:abstract_twisted_deformations_space_pure}--\eqref{eq:abstract_g_twisted_deformations_space_pure}.
Analogously,
the $W$-action on irregular types is identified with the diagonal $W$-action on the Cartesian power of the Cartan subalgebra,
and so
\begin{equation}
	\label{eq:irr_class_coefficients}
	\bm B^{\leq s}_r \bigl( \wh\Theta \bigr) \sse \bm B^{\leq s} \bigl( \wh\Theta \bigr) \sse \mf t^s \bs W,
\end{equation}
in the notation of~\eqref{eq:abstract_untwisted_deformation_space_full}--\eqref{eq:abstract_twisted_deformation_space_full}.

In view of this,
we shall view the deformation spaces as topological subspaces of $\mf t^s \simeq \mb C^{s \cdot \rk(\mf g)}$,
regarding the latter as a complex manifold;\fn{
	Almost everything works the same in Zariski topology,
	with the notable exception that we consider genuine topological fundamental groups (in \S~\ref{sec:tglwmcgs}),
	rather than profinite/étale ones.}
all the previously considered actions (resp.~bijections) are then continuous (resp.~homeomorphisms).

\begin{rema}
	\label{rmk:uniformizers}

	Beware that the embeddings~\eqref{eq:irr_type_coefficients} make sense because we are given a variable $w$,
	so that we can extract the coefficients of the irregular types.
	This is \emph{not} canonical when working over a projective curve.

	In general,
	one should use a complete DVR $\wh{\ms O}$ defined over $\mb C$,
	with no choice of uniformizer,
	and then take $\wh Q \in \mf t \bigl( \on{Frac} \bigl( \wh{\ms O} \, \bigr) \bigr) \bs \mf t \bigl( \wh{\ms O} \, \bigr)$,
	cf.~\cite[Def.~7.1]{boalch_2014_geometry_and_braiding_of_stokes_data_fission_and_wild_character_varieties}.
	The main point is that the (intrinsic) valuations of $\wh{\ms O} \hra \on{Frac} \bigl( \wh{\ms O} \, \bigr)$ still make it possible to stratify the irregular types:
	this coordinate-free viewpoint is necessarily taken,
	e.g.,
	in~\cite{doucot_rembado_tamiozzo_2024_moduli_spaces_of_untwisted_wild_riemann_surfaces},
	to define stacks of wild curves in the pure untwisted case (for any $G$).
\end{rema}

\subsubsection{}

Finally,
as in~\cite{doucot_rembado_tamiozzo_2022_local_wild_mapping_class_groups_and_cabled_braids,
	doucot_rembado_2025_topology_of_irregular_isomonodromy_times_on_a_fixed_pointed_curve,
	boalch_doucot_rembado_2025_twisted_local_wild_mapping_class_groups_configuration_spaces_fission_trees_and_complex_braids,
	doucot_rembado_tamiozzo_2024_moduli_spaces_of_untwisted_wild_riemann_surfaces},
the coefficients of degree $-i < -s$ of any $(g,r)$- or $r$-admissible deformation of~\eqref{eq:explicit_irregular_type} must be central elements of $\mf g$  (e.g.,
they vanish if $\mf g$ is semisimple).
This means that the homotopy type of~\eqref{eq:abstract_twisted_deformations_space_pure}--\eqref{eq:abstract_g_twisted_deformations_space_pure} is well-determined by $\wh Q$ and $r$ alone,
and hereafter we will remove the corresponding superscript from all notations:
we set $\bm B_{g,r} \bigl( \wh Q \bigr) = \bm B_{g,r}^{\leq s} \bigl( \wh Q \bigr)$,
etc.

\section{Pure setting:
  quasi-generic case}
\label{sec:generic_pure_case}

Let us start from studying $(g,r)$-admissible deformations;
$r \geq 1$ is (still) an integer.

\subsection{Single coefficient}

Suppose first that $\wh Q = A w^{-1} \in \wh{\mc{IT}}_r^{\leq 1}$,
with $A \in \mf t$ a \emph{regular} vector;
i.e.,
\begin{equation}
	\label{eq:regular_cartan}
	A \in \mf t_{\reg}
	= \mf t \, \bigsm \, \bigcup_\Phi H_\alpha,
	\qquad H_\alpha \ceqq \ker(\alpha) \sse \mf t.
\end{equation}

\begin{enonce}{Proposition-Definition}[]
	\label{prop:generic_twisted_deformation_space_1_coeff}

	Chose an element $g \in W$ generating the $r$-Galois-orbit of $\wh Q$,
	and consider the eigenspace
	\begin{equation}
		\label{eq:regular_eigenspace}
		\mf t(g,\zeta_r)
		\ceqq \ker( g - \zeta_r \cdot \Id_{\mf t} ) \sse \mf t,
	\end{equation}
	in the notation of~\eqref{eq:monodromy_realization}%
	---following~\cite[\S~3.1]{springer_1974_regular_elements_of_finite_reflection_groups}.
	Then one has
	\begin{equation}
		\label{eq:generic_twisted_deformation_space_1_coeff}
		\bm B_{g,r} \bigl( \wh Q \bigr)
		= \mf t(g,\zeta_r) \, \bigsm \, \bigcup_\Phi H_\alpha(g,\zeta_r),
		\qquad H_\alpha(g,\zeta_r) \ceqq  H_\alpha \cap \mf t(g,\zeta_r)  \sse \mf t(g,\zeta_r),
	\end{equation}
	which is the complement of a complex reflection arrangement.
\end{enonce}

\begin{proof}
	The condition that $g$ generates the $r$-Galois-orbit of $\wh Q$ means precisely that $A$ lies in~\eqref{eq:regular_eigenspace}.
	Now,
	by ~\cite[Thm.~2.5]{denef_loeser_1995_regular_elements_and_monodromy_of_discriminants_of_finite_reflection_groups},
	the intersection of~\eqref{eq:regular_cartan} and~\eqref{eq:regular_eigenspace} is a (linear) hyperplane complement within the latter vector space,
	which corresponds to a reflection representation of the centralizer of $g$ (cf.~\S~\ref{sec:nonpure_case_generic}).
	The conclusion follows from~\eqref{eq:deformations_are_intersections},
	noting that $\bm B \bigl( \wh Q \bigr) = \mf t_{\reg}$ here.
\end{proof}

\subsection{Several coefficients}
\label{sec:generic_case_several_coeff}

It follows,
e.g.,
that $g$ is a regular element of order $r$,
and that~\eqref{eq:regular_eigenspace} has maximal dimension amongst the $\zeta_r$-eigenspaces of the elements of $W$;
these themes play a major role until \S~\ref{sec:notation_for_trees}.

In any event,
the situation is essentially the same if $\wh Q = \sum_{i = 1}^s A_i w^{-i}$ has regular leading coefficient,
but arbitrary pole order $s \geq 1$:

\begin{prop}
	\label{prop:generic_pure_twisted_deformation_space}

	Suppose that $A_s \in \mf t_{\reg}$.
	Then there is a topological factorization $\bm B_{g,r} \bigl( \wh Q \bigr) = V' \ts U$,
	where:
	\begin{enumerate}
		\item
		      $V' \sse \mf t^{s-1}$ is a vector subspace;

		\item
		      and $U \sse \mf t(g,\zeta_r^s)$ is a hyperplane complement analogous to~\eqref{eq:generic_twisted_deformation_space_1_coeff}.
	\end{enumerate}
\end{prop}

\begin{proof}
	Explicitly,
	the condition is that all coefficients $A_1,\dc,A_s \in \mf t$ are eigenvectors for $g \in W$, with corresponding eigenvalues $\zeta_r,\dc,\zeta_r^s \in \mb C^{\ts}$,
	i.e.,
	$A_i \in \mf t(g,\zeta_r^i) \sse \mf t$.

	Now we use a particular case of the direct-product decomposition of~\eqref{eq:abstract_untwisted_deformations_space_pure},
	cf.~\cite[Prop.~2.1]{doucot_rembado_tamiozzo_2022_local_wild_mapping_class_groups_and_cabled_braids}.
	Namely,
	one has $\bm B \bigl( \wh Q \bigr) = \prod_{i = 1}^s \bm B \bigl( \wh Q,i \bigr)$,
	where in turn $\bm B \bigl( \wh Q,i \bigr) \sse \mf t$ is a hyperplane complement in a vector subspace of $\mf t$ which only depends on the tail $(A_i,\dc,A_s) \in \mf t^{s-i+1}$ of coefficients%
	---we allow for empty hyperplane arrangements.
	If the leading coefficient $A_s$ is regular one has
	\begin{equation}
		\label{eq:generic_factors_pure_deformation_space}
		\bm B \bigl( \wh Q,i \bigr) =
		\begin{cases}
			\mf t_{\reg}, \quad & i = s,                 \\
			\mf t, \quad        & i \in \set{1,\dc,s-1}.
		\end{cases}
	\end{equation}
	Hence,
	using again~\eqref{eq:deformations_are_intersections} and commuting products/intersections:
	\begin{equation}
		\label{eq:generic_g_admissible_deformation_space}
		\bm B_{g,r} \bigl( \wh Q \bigr)
		= \prod_{i = 1}^s  \bm B \bigl( \wh Q,i \bigr)  \cap \prod_{i = 1}^s  \mf t(g,\zeta_r^i)
		= V'  \ts \Bigl( \mf t_{\reg} \cap \mf t(g,\zeta_r^s) \Bigr) \sse \mf t^s,
	\end{equation}
	where in turn
	\begin{equation}
		V' \ceqq \prod_{i = 1}^{s-1}  \mf t(g,\zeta_r^i) \sse \mf t^{s-1}.
	\end{equation}

	Setting now $U \ceqq \mf t_{\reg} \cap \mf t(g,\zeta_r^s)$,
	the conclusion follows again from~\cite{springer_1974_regular_elements_of_finite_reflection_groups,
		denef_loeser_1995_regular_elements_and_monodromy_of_discriminants_of_finite_reflection_groups}:
	$\zeta_r^s \in \mb C^{\ts}$ is a primitive $d$-th root of 1,
	where $d \ceqq r \wdg s \geq 1$.
	(Note that we \emph{cannot} assume that $r$ and $s$ are coprime,
	not even using Lem.-Def.~\ref{lem:primitive_irregular_types}.)
\end{proof}

\begin{rema}
	\label{rmk:cyclotomic_roots}

	It follows that there exists an integer $k \geq 1$,
	coprime with $d$,
	such that $\zeta_r^s = \zeta_d^k \in \mb C^{\ts}$.
	Since $W$ is a Weyl group,
	and since $\zeta_d$ and $\zeta_d^k$ are Galois-conjugate over $\mb Q$,
	one then has $\mf t(g,\zeta_d) \neq (0)$.
\end{rema}

\section{Pure setting:
  general case}
\label{sec:pure_nongeneric_case}

\subsection{Single coefficient}
\label{sec:pure_nongeneric_case_one_coeff}

Choose again $\wh Q = A w^{-1} \in \wh{\mc{IT}}^{\leq 1}_r$,
where now $A \in \mf t$ can lie on any root hyperplane.
As in~\cite{doucot_rembado_tamiozzo_2022_local_wild_mapping_class_groups_and_cabled_braids},
the space $\bm B \bigl( \wh Q \bigr)$ is a hyperplane complement inside the flat
\begin{equation}
	\label{eq:kernel_intersection}
	\mf t_\phi \ceqq \ker(\phi)
	= \bigcap_\phi H_\alpha \sse \mf t,
	\qquad \phi = \phi_A \ceqq \Phi \cap \set{ A }^\perp \sse \Phi,
\end{equation}
involving the Levi (root) subsystem corresponding to the annihilator of $A$---%
or the `Levi annihilator',
for short.
More precisely,
somewhat analogously to~\eqref{eq:generic_twisted_deformation_space_1_coeff},
one has
\begin{equation}
	\label{eq:pure_untwisted_nongeneric_deformation_space_1_coeff}
	\bm B \bigl( \wh Q \bigr)
	= \mf t_\phi \, \bigsm \, \bigcup_{\Phi \sm \phi} \,  H_\alpha(\phi) ,
	\qquad H_\alpha(\phi) \ceqq H_\alpha \cap \mf t_\phi \sse \mf t_\phi,
\end{equation}
and recall that this amounts to a (finite) stratification of $\mf t$ indexed by the lattice of Levi subsystems of $\Phi$ (the `Levi stratification',
cf.~\cite[Lem.~2.4.4]{calaque_felder_rembado_wentworth_2024_wild_orbits_and_generalised_singularity_modules_stratifications_and_quantisation} and \S~\ref{sec:stratifications}).

\begin{enonce}{Proposition-Definition}
	\label{prop:pure_general_twisted_deformation_space_1_coeff}

	Let $g \in W$ be an element generating the $r$-Galois-orbit of $\wh Q$.
	Then:
	\begin{enumerate}
		\item
		      the subspace~\eqref{eq:kernel_intersection} is $g$-stable,
		      i.e.,
		      \begin{equation}
			      \label{eq:stratum_stabilizer}
			      g \in N_W(\mf t_\phi)
			      \ceqq \Set{ g \in W | g (\mf t_\phi) \sse \mf t_\phi };
		      \end{equation}

		\item
		      and one has
		      \begin{equation}
			      \label{eq:pure_general_twisted_deformation_space_1_coeff}
			      \bm B_{g,r} \bigl( \wh Q \bigr)
			      = \mf t_\phi(g_\phi,\zeta_r) \, \bigsm \, \bigcup_{\Phi \sm \phi} \,  H_\alpha(g_\phi,\zeta_r) ,
			      \qquad g_\phi \ceqq \eval[1]g_{\mf t_\phi},
		      \end{equation}
		      which is a nonempty hyperplane complement,
		      where we extend the notation of~\eqref{eq:regular_eigenspace},
		      and set
		      \begin{equation}
			      \label{eq:hyperplane_intersection}
			      H_\alpha(g_\phi,\zeta_r)
			      \ceqq H_\alpha \cap \mf t_\phi(g_\phi,\zeta_r) = H_\alpha(\phi) \cap \mf t(g,\zeta_r) \sse \mf t_\phi(g_\phi,\zeta_r).
		      \end{equation}
	\end{enumerate}
\end{enonce}

\begin{proof}
	By hypothesis $g(A) = \zeta_r A \in \bm B \bigl( \wh Q \bigr)$,
	and the first statement follows from Lem.~\ref{lem:about_setwise_stabilizers}.

	Moreover,
	since $g$ acts in semisimple fashion on $\mf t$ (as it has finite order),
	one has $\mf t(g,\zeta_r) \cap \mf t_\phi = \mf t_\phi (g_\phi,\zeta_r)$,
	generalizing~\eqref{eq:generic_twisted_deformation_space_1_coeff} from the case where $\phi = \vn$.

	Then we conclude by~\eqref{eq:deformations_are_intersections},
	noting that the intersections~\eqref{eq:hyperplane_intersection} are either hyperplanes of $\mf t_\phi(g_\phi,\zeta_r)$,
	or they coincide with it;
	but since~\eqref{eq:pure_general_twisted_deformation_space_1_coeff} is nonempty%
	---it contains $A$---%
	the former holds.
\end{proof}

\subsection{Several coefficients}
\label{sec:general_pure_twisted_adm_def_space}

Now we consider the most general case.

The general direct-product decomposition of $\bm B \bigl( \wh Q \bigr)$,
already invoked in \S~\ref{sec:generic_case_several_coeff},
involves a filtration of $\Phi$ by Levi subsystems (cf.~\cite[\S~4]{doucot_rembado_tamiozzo_2022_local_wild_mapping_class_groups_and_cabled_braids},
~\cite[Def.~3.1.2]{calaque_felder_rembado_wentworth_2024_wild_orbits_and_generalised_singularity_modules_stratifications_and_quantisation},
and \S~\ref{sec:stratifications}).
Namely,
we let $\phi_i \sse \Phi$ be the intersection of $\Phi$ with the annihilators of the coefficients $A_i,\dc,A_s$,
for $i \in \set{1,\dc,s}$.
Thus,
$\phi_s$ is as in \S~\ref{sec:pure_nongeneric_case_one_coeff},
and then there are nested inclusions
\begin{equation}
	\phi_1 \sse \dm \sse \phi_s \sse \phi_{s+1}
	\ceqq \Phi.
\end{equation}
Now one has $\bm B \bigl( \wh Q \bigr) = \prod_{i = 1}^s \bm B \bigl( \wh Q,i \bigr) \sse \mf t^s$,
where
\begin{equation}
	\label{eq:root_stratum_factorization}
	\bm B \bigl( \wh Q,i \bigr)
	= \mf t_{\phi_i} \, \bigsm \, \bigcup_{\phi_{i+1} \sm \phi_i} \,  H_\alpha(\phi_i),
	\qquad i \in \set{1,\dc,s},
\end{equation}
in the notation of~\eqref{eq:pure_untwisted_nongeneric_deformation_space_1_coeff},
and generalizing~\eqref{eq:generic_factors_pure_deformation_space}.

Reasoning as in the proof of Prop.~\ref{prop:generic_pure_twisted_deformation_space} then establishes the following:

\begin{enonce}{Theorem-Definition}
	\label{thm:general_pure_twisted_deformation_space}

	For $i \in \set{1,\dc,s}$ let
	\begin{equation}
		\label{eq:general_g_twisted_deformation_factor}
		\bm B_{g,r} \bigl( \wh Q,i \bigr)
		\ceqq \mf t_{\phi_i} \bigl( g_{\phi_i},\zeta_r^i \bigr) \, \bigsm \, \bigcup_{\phi_{i+1} \sm \phi_i} \,  H_\alpha \bigl( g_{\phi_i},\zeta_r^i \bigr),
	\end{equation}
	in the notation of~\eqref{eq:pure_general_twisted_deformation_space_1_coeff}--\eqref{eq:hyperplane_intersection}.
	Then there is a topological factorization
	\begin{equation}
		\bm B_{g,r} \bigl( \wh Q \bigr)
		= \prod_{i = 1}^s  \bm B_{g,r}( \wh Q,i) \sse \mf t^s.
	\end{equation}
\end{enonce}

\subsection{Reduction to the simple/irreducible case}

Just as in~\cite[\S~5.2]{doucot_rembado_tamiozzo_2022_local_wild_mapping_class_groups_and_cabled_braids} (cf.~\cite[\S~4.1]{doucot_rembado_2025_topology_of_irregular_isomonodromy_times_on_a_fixed_pointed_curve}),
it is possible to reduce the study of the deformation spaces to the case where $\mf g$ is a simple Lie algebra%
---i.e.,
where $\Phi$ is an irreducible root system;
i.e.,
where $W$ acts irreducibly,
cf.~\S\S~\ref{sec:type_A}--\ref{sec:D_trees} in the classical cases.
The precise statement goes as follows:

\begin{lemm}
	\label{lem:reduction_to_simple_case}

	Let $\mf g = \prod_i \mf I_i$ be a factorization into (mutually-commuting) Lie ideals $\mf I_i \sse \mf g$,
	and consider the Cartan subalgebras $\mf t_i \ceqq \mf t \cap \mf I_i$.
	Split the root system/Weyl group accordingly:
	(i) the former as a disjoint union $\Phi = \coprod_i \Phi_i$,
	where $\Phi_i \ceqq \Phi \cap \mf t_i^{\dual}$;\fn{
	Identifying $\mf t_i^{\dual} \sse \mf t^{\dual}$ with the annihilator of $\mf t \ominus \mf t_i \ceqq \bops_{j \neq i} \mf t_j \sse \mf t$.}~and (ii) the latter as a direct product $W = \prod_i W_i$,
	where $W_i \ceqq W(\mf I_i,\mf t_i)$.
	Let also $\wh Q$ be an $r$-Galois-closed irregular type,
	whose $r$-Galois-orbit is generated by $g \in W$,
	and finally decompose (uniquely)
	\begin{equation}
		\wh Q
		= \sum_i \wh Q_i,
		\quad g
		= \prod_i g_i,
		\qquad \wh Q_i \in \mf t_i \ots_{\mb C} ( \mc L \bs \mc L_{\geq 0} ),
		\quad g_i \in W_i.
	\end{equation}
	Then there is a topological factorization $\bm B_{g,r} \bigl( \wh Q \bigr) = \prod_i \bm B_{g_i,r} \bigl( \wh Q_i \bigr)$.
\end{lemm}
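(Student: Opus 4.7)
The plan is to combine the direct-product decomposition of Thm.-Def.~\ref{thm:general_pure_twisted_deformation_space} with the evident splitting of all the relevant linear-algebraic data along the decomposition $\mf g = \prod_i \mf I_i$. Explicitly, I would first write each coefficient $A_k \in \mf t$ of $\wh Q$ uniquely as $A_k = \sum_i A_{k,i}$ with $A_{k,i} \in \mf t_i$, so that $\wh Q_i = \sum_{k = 1}^s A_{k,i} w^{-k}$. The key observation is that the $r$-Galois condition $g.\wh Q = \wh Q(\zeta_r w)$, which amounts to $g(A_k) = \zeta_r^k A_k$ for each $k$, splits across the decomposition: since the subspaces $\mf t_i$ are $W$-stable and $g = \prod_i g_i$ acts as $g_i$ on $\mf t_i$, one has $g_i(A_{k,i}) = \zeta_r^k A_{k,i}$, i.e., $g_i$ generates the $r$-Galois-orbit of each $\wh Q_i$.

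Second, I would check the decomposition of the annihilators $\phi_k \sse \Phi$ appearing in Thm.-Def.~\ref{thm:general_pure_twisted_deformation_space}. Since a root $\alpha \in \Phi_j$ vanishes on $\mf t_i$ for $i \neq j$, one has $\alpha(A_k) = \alpha(A_{k,j})$, and so $\phi_k$ splits as the disjoint union $\coprod_i \phi_{k,i}$, with $\phi_{k,i} \ceqq \phi_k \cap \Phi_i$ the corresponding annihilator of the tuple $(A_{k,i},\dc,A_{s,i}) \in \mf t_i^{s-k+1}$. It follows that the flats decompose, $\mf t_{\phi_k} = \bops_i (\mf t_i)_{\phi_{k,i}}$; then, because $g$ acts diagonally across this decomposition, the restricted eigenspaces split correspondingly, viz.,
\begin{equation}
  \mf t_{\phi_k} \bigl( g_{\phi_k},\zeta_r^k \bigr) = \bops_i (\mf t_i)_{\phi_{k,i}} \bigl( (g_i)_{\phi_{k,i}},\zeta_r^k \bigr).
\end{equation}

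Third, I would address the hyperplane complements: any root $\alpha \in \phi_{k+1} \sm \phi_k$ lies in a unique component $\Phi_j$ (since $\Phi = \coprod_j \Phi_j$), so the intersection $H_\alpha \cap \mf t_{\phi_k}(g_{\phi_k},\zeta_r^k)$ cuts a hyperplane inside the $j$-th direct summand only, and contains all the other summands. Therefore the factor~\eqref{eq:general_g_twisted_deformation_factor} decomposes as $\bm B_{g,r}(\wh Q,k) = \prod_i \bm B_{g_i,r}(\wh Q_i,k)$, by the same argument as in~\eqref{eq:generic_g_admissible_deformation_space} (letting products and intersections commute). Taking the product over $k \in \set{1,\dc,s}$ and using Thm.-Def.~\ref{thm:general_pure_twisted_deformation_space} for both $\wh Q$ and each $\wh Q_i$ (applied within $\mf I_i$) then yields the claim.

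The argument is essentially bookkeeping, and I do not anticipate any substantive obstacle: the only point that requires genuine care is verifying that $g$ truly acts component-wise on each flat $\mf t_{\phi_k}$, which is immediate from $g = \prod_i g_i$ and the $W$-stability of each $\mf t_i \sse \mf t$.
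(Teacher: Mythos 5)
Your argument is correct and is essentially the paper's own proof written out in full: the paper simply cites the untwisted decomposition along the ideals (from the earlier local-WMCG paper), records the key observation that $\mf t(g,\zeta) = \bops_i \mf t_i(g_i,\zeta)$, and concludes via the factorization of Thm.-Def.~\ref{thm:general_pure_twisted_deformation_space}. Your steps (splitting of the coefficients and of the Galois condition, of the Levi annihilators and flats, and of the restricted eigenspaces and hyperplanes) are exactly the unpacked version of that argument, so there is nothing to add.
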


\begin{proof}
	In addition to the proof of~\cite[Prop.~5.1]{doucot_rembado_tamiozzo_2022_local_wild_mapping_class_groups_and_cabled_braids},
	observe that
	\begin{equation}
		\mf t(g,\zeta)
		= \bops_i \mf t_i(g_i,\zeta) \sse \mf t,
		\qquad \zeta \in \mb C^{\ts},
	\end{equation}
	and so the result follows from the factorization of Thm.-Def.~\ref{thm:general_pure_twisted_deformation_space}.
\end{proof}

\section{Forgetting the marking}
\label{sec:no_marking}

While it was useful to select an element $g \in W$ generating the $r$-Galois-orbits of irregular types,
here we prove that this choice is immaterial---%
with a view towards the full/nonpure case.

\subsection{Two useful facts about Weyl groups and Levi strata}

To this end,
we will use the following standard facts,
involving the setwise and pointwise stabilizers of Levi strata---%
respectively.

Choose any vector $A \in \mf t$,
and consider the irregular type $\wh Q \ceqq A w^{-1}$.
Introduce also (as in~\eqref{eq:kernel_intersection}--\eqref{eq:pure_untwisted_nongeneric_deformation_space_1_coeff}):
(i) the Levi subsystem $\phi = \phi_A \sse \Phi$;
(ii) the kernel intersection $\mf t_\phi \sse \mf t$;
and (iii) the hyperplane complement $\bm B \bigl( \wh Q \bigr) \sse \mf t_\phi$.
Then:

\begin{lemm}[cf.~\cite{doucot_rembado_2025_topology_of_irregular_isomonodromy_times_on_a_fixed_pointed_curve}, Lem.~3.1]
	\label{lem:about_setwise_stabilizers}

	Choose $g \in W$.
	Then the following are equivalent:
	\begin{enumerate}
		\item
		      $g \in N_W(\mf t_\phi)$,
		      in the notation of~\eqref{eq:stratum_stabilizer};

		\item
		      $g(\phi) = \phi$,
		      in the Weyl-group action on $\mf t^{\dual}$;

		\item
		      $g(\phi) = \phi$ and $g(\Phi \sm \phi) = \Phi \sm \phi$;

		\item
		      $g \bigl( \bm B\bigl( \wh Q \bigr) \bigr) \sse \bm B \bigl( \wh Q \bigr)$;

		\item
		      and $g(A') \in \bm B \bigl( \wh Q \bigr)$,
		      for any $A' \in \bm B \bigl( \wh Q \bigr)$.
	\end{enumerate}
\end{lemm}

\begin{proof}[Proof postponed to~\ref{proof:lem_about_setwise_stabilizers}]
\end{proof}

\begin{lemm}[cf.~\cite{doucot_rembado_2025_topology_of_irregular_isomonodromy_times_on_a_fixed_pointed_curve}, Lem.~3.2]
	\label{lem:about_pointwise_stabilizers}

	Choose $g \in W$.
	Then the following are equivalent:
	\begin{enumerate}
		\item
		      $g$ acts as the identity on $\mf t_\phi$;

		\item
		      $g$ acts as the identity on $\bm B \bigl( \wh Q \bigr)$;

		\item
		      and $g(A') = A'$,
		      for any $A' \in \bm B \bigl( \wh Q \bigr)$.
	\end{enumerate}
\end{lemm}

\begin{proof}[Proof postponed to~\ref{proof:lem_about_pointwise_stabilizers}]
\end{proof}

\begin{rema}
	\label{rmk:general_position}

	As a particular case of Lem.~\ref{lem:about_pointwise_stabilizers},
	taking $\phi = \vn$ implies that the identity of $W$ is the only element fixing a regular vector,
	cf.~\cite[Prop.~4.1]{springer_1974_regular_elements_of_finite_reflection_groups}.

	In general,
	instead,
	let $V' \sse \mf t$ be any vector subspace.
	Recall that a vector $A \in V'$ is said to be \emph{in general position in} $V'$ precisely when the pointwise stabilizer $W_{V'} \sse V'$ of $V'$ coincides with the parabolic subgroup fixing the line through $A$,
	viz.:
	\begin{equation}
		\label{eq:weyl_parabolic}
		W_{V'}
		= W_A,
		\qquad W_A
		\ceqq \Set{ g \in W | g(A) = A }.
	\end{equation}
	(Cf.~\eqref{eq:abstract_parabolic_subgroup}.)
	Equivalently,
	$A$ is \emph{not} contained in any root hyperplane which does not contain $V'$.
	In turn,
	if $\phi \sse \Phi$ is a Levi subsystem,
	then $\mf t_\phi \nsse H_\alpha$  if and only if $\alpha \in \Phi \sm \phi$.
	(This is \emph{false} for nonlevi subsystems.)
	In conclusion,
	$\bm B \bigl( \wh Q \bigr)$ could be (re)defined as the subspace of elements which are in general position in $\mf t_\phi$---%
	turning Lem.~\ref{lem:about_pointwise_stabilizers} into a tautology.

	Incidentally,
	note that the parabolic subgroup~\eqref{eq:weyl_parabolic} coincides with the Weyl group of $(L,T)$,
	involving the (connected,
	reductive) Adjoint stabilizer:
	\begin{equation}
		\label{eq:levi_subgroup}
		L
		= G^A \ceqq \Set{ g \in G | \Ad_g(A) = A }.
	\end{equation}
	(Cf.~the proof~\ref{proof:lem_about_setwise_stabilizers}.)
\end{rema}

\subsection{Quasi-generic case}

\begin{prop}
	\label{prop:unique_orbit_generater}

	Consider an element $\wh Q = A w^{-1} \in \wh{\mc{IT}}^{\leq 1}_r$,
	with $A \in \mf t_{\reg}$.
	Then:
	\begin{enumerate}
		\item
		      there exists a \emph{unique} group element $g_A \in W$ generating the $r$-Galois-orbit of $\wh Q$;

		\item
		      one has $\bm B_r \bigl( \wh Q \bigr) = \bm B_{g_A,r} \bigl( \wh Q \bigr)$;

		\item
		      and the same actually holds if $\wh Q \in \wh{\mc{IT}}_r^{\leq s}$,
		      for any $s \geq 1$,
		      provided that the leading coefficient is a regular vector.
	\end{enumerate}
\end{prop}

\begin{proof}
	The parabolic subgroup $W_A \sse W$ of~\eqref{eq:weyl_parabolic} is trivial,
	and if $g,g' \in W$ dilate $A$ by the same scalar then $g^{-1} g' \in W_A$;
	the first statement follows.

	For the second statement,
	one now has $\wh Q \sim_r \wh Q'$ if and only if $\wh Q \sim_{g_A,r} \wh Q'$.

	And the extension to arbitrary irregularity $s \geq 1$ goes verbatim,
	considering the trivial parabolic subgroup $W_{A_s} \sse W$ defined by the leading coefficient $A_s \in \mf t_{\reg}$.
\end{proof}

\subsection{General case}
\label{sec:no_marking_general_case}

Choose now instead $\wh Q = A w^{-1} \in \wh{\mc{IT}}^{\leq 1}_r$,
but with no constraint on $A \in \mf t$.

There are a priori several elements of $W$ fixing $A$,
and so also several elements generating the $r$-Galois-orbit of $\wh Q$.
More precisely,
reasoning as in the proof of Prop.~\ref{prop:unique_orbit_generater},
the set $W_r\bigl( \wh Q \bigr) \sse W$ of such elements is a torsor for the parabolic subgroup $W_A$ of~\eqref{eq:weyl_parabolic}.

Nonetheless:

\begin{prop}[]
	\label{prop:no_marking_general}

	Suppose that $g,g' \in W$ generate the $r$-Galois-orbit of $\wh Q$.
	Then:
	\begin{enumerate}
		\item
		      $\bm B_{g,r} \bigl( \wh Q \bigr) = \bm B_{g',r} \bigl( \wh Q \bigr)$;

		\item
		      and the same actually holds in the general case---%
		      when $s \geq 1$ is arbitrary.
	\end{enumerate}
\end{prop}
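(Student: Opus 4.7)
The plan is to reduce the equality $\bm B_{g,r} \bigl( \wh Q \bigr) = \bm B_{g',r} \bigl( \wh Q \bigr)$ to a factor-by-factor comparison via Thm.-Def.~\ref{thm:general_pure_twisted_deformation_space},
and then to show that $g$ and $g'$ restrict to the \emph{same} linear map on each subspace $\mf t_{\phi_i} \sse \mf t$.
Indeed,
the factor $\bm B_{g,r} \bigl( \wh Q,i \bigr)$ defined by~\eqref{eq:general_g_twisted_deformation_factor} depends on $g$ only through $g_{\phi_i} \ceqq \eval[1]g_{\mf t_{\phi_i}}$,
so an equality $g_{\phi_i} = g'_{\phi_i}$ for every $i \in \set{1,\dc,s}$ will immediately yield the desired identity.

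First,
I would locate the correction $g^{-1} g'$ inside $W$.
By hypothesis both $g$ and $g'$ realise~\eqref{eq:monodromy_realisation} for $\wh Q = \sum_i A_i w^{-i}$,
so $g(A_j) = \zeta_r^j A_j = g'(A_j)$ for every $j \in \set{1,\dc,s}$,
and thus $g^{-1} g'$ fixes the set $\set{ A_1,\dc,A_s }$ pointwise.
By Steinberg's theorem on pointwise stabilizers in finite (real) reflection groups---
the same fact underlying the identification of~\eqref{eq:maximal_parabolic} with the Weyl group of the Levi~\eqref{eq:levi_subgroup},
cf.~\cite[Lem.~2.2]{doucot_rembado_2023_topology_of_irregular_isomonodromy_times_on_a_fixed_pointed_curve}---
this stabilizer coincides with the parabolic subgroup $W_{\phi_1}$ generated by the reflections $s_\alpha$ with $\alpha \in \phi_1$,
where $\phi_1 = \Phi \cap \set{ A_1,\dc,A_s }^\perp$ is the innermost term of the Levi filtration of \S~\ref{sec:general_pure_twisted_adm_def_space}.
(When $s = 1$ this is already the single-coefficient assertion $g^{-1} g' \in W_A$.)

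Next,
I would propagate this containment along the filtration.
For every $i$ the nesting $\phi_1 \sse \phi_i$ gives $W_{\phi_1} \sse W_{\phi_i}$,
and each generating reflection $s_\alpha$ of $W_{\phi_i}$ fixes $H_\alpha \supseteq \mf t_{\phi_i}$ pointwise,
so $W_{\phi_i}$ itself acts trivially on $\mf t_{\phi_i}$.
Therefore $g^{-1} g' \in W_{\phi_i}$ restricts to the identity on $\mf t_{\phi_i}$,
proving $g_{\phi_i} = g'_{\phi_i}$;
this equates the eigenspaces $\mf t_{\phi_i}(g_{\phi_i},\zeta_r^i) = \mf t_{\phi_i}(g'_{\phi_i},\zeta_r^i)$ and the hyperplane intersections~\eqref{eq:hyperplane_intersection} factor by factor,
and Thm.-Def.~\ref{thm:general_pure_twisted_deformation_space} then concludes.
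The only delicate point is the invocation of Steinberg's theorem,
which is standard in this setting but should be cited explicitly;
the rest of the argument is straightforward linear algebra inside the Levi filtration.
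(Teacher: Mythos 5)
Your proposal is correct and follows essentially the same route as the paper: both arguments observe that $g^{-1}g'$ fixes the coefficients of $\wh Q$, invoke Steinberg's fixed-point theorem (via~\cite[Lem.~2.2]{doucot_rembado_2023_topology_of_irregular_isomonodromy_times_on_a_fixed_pointed_curve}, which is Lem.~\ref{lem:no_marking} here) to conclude that $g$ and $g'$ restrict to the same automorphism of each term of the kernel flag, and then compare the factors of Thm.-Def.~\ref{thm:general_pure_twisted_deformation_space}. The only cosmetic difference is that you apply the stabilizer argument once to the full set $\set{A_1,\dc,A_s}$, whereas the paper applies it recursively coefficient by coefficient.
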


\begin{proof}
	Lem.~\ref{lem:no_marking} implies that $\mf t_\phi(g_\phi,\zeta_r) = \mf t_\phi(g'_\phi,\zeta_r) \sse \mf t_\phi$,
	in the notation of Prop.-Def.~\ref{prop:pure_general_twisted_deformation_space_1_coeff}.
	Then one also has
	\begin{equation}
		H_\alpha(g_\phi,\zeta_r)
		= H_\alpha(g'_\phi,\zeta_r),
		\qquad \alpha \in \Phi \sm \phi,
	\end{equation}
	and the first statement follows from~\eqref{eq:pure_general_twisted_deformation_space_1_coeff}.

	Thus,
	we are left with elements $\wh Q \in \wh{\mc{IT}}^{\leq s}_r$ with \emph{arbitrary} coefficients.
	But Lem.~\ref{lem:no_marking} can be applied recursively,
	starting from the leading coefficient $A_s$.
	Namely,
	if $g,g' \in W$ both generate the $r$-Galois-orbit of $\wh Q$,
	then their consecutive restrictions on the vector spaces of the kernel flag,
	viz.,
	$\mf t \supseteq \mf t_{\phi_1} \supseteq \dm \supseteq \mf t_{\phi_s} \supseteq \mf t_{\phi_{s+1}} = \mf Z(\mf g)$,
	must all coincide.
	It follows that
	\begin{equation}
		\bm B_{g,r} \bigl( \wh Q,i \bigr)
		= \bm B_{g',r} \bigl( \wh Q,i \bigr),
		\qquad i \in \set{1,\dc,s},
	\end{equation}
	in the notation of~\eqref{eq:general_g_twisted_deformation_factor},
	and in turn $\bm B_{g,r} \bigl( \wh Q \bigr) = \bm B_{g',r} \bigl( \wh Q \bigr)$ by the factorization of Thm.-Def.~\ref{thm:general_pure_twisted_deformation_space}.
\end{proof}

\begin{lemm}[]
	\label{lem:no_marking}

	Choose elements $g,g' \in W$ such that $A \in \mf t(g,\zeta) \cap \mf t(g',\zeta)$,
	for some (root of 1) $\zeta \in \mb C^{\ts}$.
	Then $g_\phi = g'_\phi \in \GL_{\mb C}(\mf t_\phi)$,
	in the notation of Prop.-Def.~\ref{prop:pure_general_twisted_deformation_space_1_coeff}.
\end{lemm}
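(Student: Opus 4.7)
The plan is to show that $(g')^{-1}g$ acts as the identity on $\mf t_\phi$, from which $g_\phi = g'_\phi$ follows at once.

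First I would observe that $A \in \mf t(g,\zeta) \cap \mf t(g',\zeta)$ forces $g(A) = \zeta A = g'(A)$, hence $(g')^{-1}g$ lies in the pointwise stabilizer $W_A \sse W$ of $A$. Next I would invoke Steinberg's theorem---already used in the proof of Prop.~\ref{prop:no_marking_general} via~\cite[Lem.~2.2]{doucot_rembado_2023_topology_of_irregular_isomonodromy_times_on_a_fixed_pointed_curve}---identifying $W_A$ with the parabolic subgroup $W_\phi \sse W$ generated by the reflections $s_\alpha$ for $\alpha \in \phi = \phi_A$. Each such $s_\alpha$ fixes $H_\alpha = \ker(\alpha)$ pointwise, and since $\mf t_\phi = \bigcap_{\alpha \in \phi} H_\alpha$ is contained in each of these hyperplanes, the whole of $W_\phi$ acts trivially on $\mf t_\phi$. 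Applying this to $(g')^{-1}g \in W_\phi$ then gives $g|_{\mf t_\phi} = g'|_{\mf t_\phi}$, as required. (Implicit here is that these restrictions make sense, i.e., that $g$ and $g'$ normalize $\mf t_\phi$; but this is immediate, since each element of $W$ dilating $A$ by a nonzero scalar preserves $\set{A}^\perp$, hence permutes $\phi$, and therefore preserves $\mf t_\phi = \bigcap_{\alpha \in \phi} \ker(\alpha)$.)

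I do not anticipate any substantive obstacle: once Steinberg's description of the isotropy subgroup is granted---which here is standard for finite real reflection groups acting on the Cartan subalgebra---the remaining content is essentially tautological from the definition of $\mf t_\phi$ as the common kernel of the roots in $\phi$.
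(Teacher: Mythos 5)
Your proof is correct and follows essentially the same route as the paper's: both reduce to the observation that $g^{-1}g'$ (or $(g')^{-1}g$) fixes $A$ and hence lies in the parabolic subgroup $W_A$, which---by Steinberg's theorem, cited in the paper via \cite[Lem.~2.2]{doucot_rembado_2023_topology_of_irregular_isomonodromy_times_on_a_fixed_pointed_curve}---acts trivially on $\mf t_\phi$. Your parenthetical check that $g,g'$ normalize $\mf t_\phi$ likewise matches the paper's appeal to Lem.~2.1 of op.~cit.
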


\begin{proof}[Proof postponed to~\ref{proof:lem_no_marking}]
\end{proof}

\begin{rema}
	\label{rmk:independence_restricted_eigenspaces}

	In particular,
	there is a well-defined subspace
	\begin{equation}
		\mf t_{\phi}(r)
		\ceqq \mf t_\phi(g_\phi,\zeta_r) \sse \mf t_\phi,
	\end{equation}
	where $g \in W$ is \emph{any} element such that $\mf t(g,\zeta_r)$ has nonempty intersection with the Levi stratum of $\phi \sse \Phi$.
	(But \emph{not} an arbitrary element of $W$.)

	Analogously,
	there are well-defined hyperplanes
	\begin{equation}
		H_\alpha(\phi,r)
		\ceqq H_\alpha(g_\phi,\zeta_r) \sse \mf t_{\phi}(r),
	\end{equation}
	in the notation of~\eqref{eq:hyperplane_intersection}.
\end{rema}

\subsubsection{}

More importantly,
it follows from Prop.~\ref{prop:no_marking_general} that $\bm B_r \bigl( \wh Q \bigr) = \bm B_{g,r} \bigl( \wh Q \bigr)$,
where $g \in W$ is \emph{any} element realizing the $r$-Galois orbit of $\wh Q$.
In particular,
the union~\eqref{eq:deformation_spaces_union} is trivial and the $r$-admissible deformation spaces are (path-)connected.

\section{Full/nonpure setting:
  quasi-generic case}
\label{sec:nonpure_case_generic}

\subsection{}

We now treat the $r$-admissible deformations of $r$-Galois-closed irregular classes:
let $\wh\Theta = \wh\Theta \bigl( \wh Q \bigr)$ be one such.

\subsubsection{}

We will start from irregular classes of quasi-generic irregular types;
but the general idea goes as follows.
By definition,
an irregular class $\wh\Theta' = \wh\Theta \bigl( \wh Q' \bigr)$ is an $r$-admissible deformation of $\wh\Theta$ if and only if there exists $h \in W$ such that $h \bigl( \wh Q' \bigr) \in \bm B_r \bigl( \wh Q \bigr)$.
In turn,
this happens if and only if there also exists $g \in W$---%
realizing the $r$-Galois-orbit of $\wh Q$---%
such that $h \bigl( \wh Q' \bigr) \in \bm B_{g,r} \bigl( \wh Q \bigr)$.
Thus,
the point is to describe the $W$-orbits of ($r$-Galois-closed) irregular types which intersect $\bm B_{g,r} \bigl( \wh Q \bigr)$.
Turning this around,
we must identify two elements of $\bm B_{g,r} \bigl( \wh Q \bigr)$ which lie in the same $W$-orbit (so that certain paths will become loops in the topological quotient).

But we will actually show that the intersection of any $W$-orbit with $\bm B_{g,r} \bigl( \wh Q \bigr)$ is just an orbit for the setwise stabilizer---%
in $W$---%
of the $(g,r)$-admissible deformation space.
Moreover,
we will determine the pointwise stabilizer,
and prove that the corresponding quotient acts freely and leads to a Galois covering $\bm B_{g,r} \bigl( \wh Q \bigr) \thra \bm B_r \bigl( \wh\Theta \bigr)$.
(The monodromy group involves centralizers of elements in suitable subquotients of $W$.)

\subsection{Single coefficient}

As in the pure case,
we assume first that the leading coefficient $A$ of $\wh Q$ is a regular vector,
and that $s = 1$.

\subsubsection{}

Recall that the (dense) stratum $\bm B \bigl( \wh Q \bigr) = \mf t_{\reg} \sse \mf t$ is $W$-invariant,
and the Weyl group acts freely thereon.
Hence,
in the untwisted case,
one has a Galois covering $\bm B \bigl( \wh Q \bigr) \thra \bm B \bigl( \wh Q \bigr) \bs W \simeq \bm B \bigl( \wh\Theta \bigr)$.
Here instead we are breaking down some symmetries,
even in the quasi-generic case,
simply because the eigenspaces of elements of $W$ need \emph{not} be preserved by $W$:

\begin{enonce}{Proposition-Definition}
	\label{prop:full_generic_twisted_deformation_space_1_coeff}

	Denote by $g = g_A \in W$ the unique element generating the $r$-Galois-orbit of $\wh Q$,
	as in Prop.~\ref{prop:unique_orbit_generater}.
	Then:

	\begin{enumerate}
		\item
		      the centralizer subgroup of $g$,
		      i.e.,
		      \begin{equation}
			      Z_W(g)
			      \ceqq \Set{ g' \in W | gg' = g'g },
		      \end{equation}
		      is isomorphic to the complex reflection group of the arrangement~\eqref{eq:generic_twisted_deformation_space_1_coeff};

		\item
		      and there is a Galois covering
		      \begin{equation}
			      \label{eq:full_generic_twisted_deformation_space_1_coeff}
			      \bm B_r \bigl( \wh Q \bigr)
			      \lthra \bm B_r \bigl( \wh Q \bigr) \bs Z_W(g) \simeq \bm B_r \bigl( \wh\Theta \bigr).
		      \end{equation}
	\end{enumerate}
\end{enonce}

(If $r = 1$ then $g$ is the identity,
and the untwisted setting can be viewed as a particular example.)

\begin{proof}
	Choose an element $g' \in W$ such that $g'(A) \in \bm B_{g,r} \bigl( \wh Q \bigr)$.
	In particular $g'(A) \in \mf t(g,\zeta_r)$,
	and Lem.~\ref{lem:restricted_generic_twisted_orbit} implies that $g' \in Z_W(g) = N_W(g,\zeta_r)$.
	Conversely,
	if $g' \in Z_W(g)$ then this group element preserves $\bm B_{g,r} \bigl( \wh Q \bigr) = \mf t_{\reg} \cap \mf t(g,\zeta_r)$.
	Hence,
	the intersection of the $W$-orbit of $\wh Q$ with $B_{g,r} \bigl( \wh Q \bigr)$ coincides with the $Z_W(g)$-orbit of $\wh Q$;
	and the same holds for all the $g$-admissible deformations thereof.
	Moreover,
	no nontrivial element of $W$ fixes any element of $\mf t_{\reg}$,
	and so the resulting $Z_W(g)$-action is free---%
	and properly discontinuous.
	This yields the Galois covering in the statement.

	Regarding complex reflections,
	recall that~\cite[Prop.~3.5]{springer_1974_regular_elements_of_finite_reflection_groups} proves that the restriction of the linear automorphisms of~\eqref{eq:eigenspace_normalizer} (below) yields a faithful representation $N_W(g,\zeta_r) \hra \GL_{\mb C} \bigl( \mf t(g,\zeta_r) \bigr)$,
	whose image is a complex reflection group generated by reflections about the hyperplanes $H_\alpha(g,\zeta_r) \sse \mf t(g,\zeta_r)$ of~\eqref{eq:generic_twisted_deformation_space_1_coeff}.\fn{
		The equality $Z_W(g) = N_W(g,\zeta_r)$ is implicit in Thm.~4.2 (iii) of op.~cit.}
\end{proof}

\begin{lemm}[cf.~Lem.~\ref{lem:restricted_commutators}]
	\label{lem:restricted_generic_twisted_orbit}

	Choose a regular element $g \in W$,
	and a regular eigenvector $A \in \mf t(g,\zeta) \cap \mf t_{\reg}$,
	for some (root of $1$) $\zeta \in \mb C^{\ts}$.
	Then the following conditions are equivalent,
	for any other element $g' \in W$:
	\begin{enumerate}
		\item
		      $g' \in Z_W(g)$;

		\item
		      $g'$ lies in the setwise stabilizer of the eigenspace~\eqref{eq:regular_eigenspace},
		      i.e.,
		      in the subgroup
		      \begin{equation}
			      \label{eq:eigenspace_normalizer}
			      N_W(g,\zeta)
			      = N_W \bigl( \mf t(g,\zeta) \bigr) \ceqq \Set{ g'' \in W | g'' \bigl( \mf t(g,\zeta) \bigr) \sse \mf t(g,\zeta) };
		      \end{equation}

		\item
		      and $g'(A) \in \mf t(g,\zeta)$.
	\end{enumerate}
\end{lemm}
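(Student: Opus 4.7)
The plan is to establish the cyclic chain $(1) \Rightarrow (2) \Rightarrow (3) \Rightarrow (1)$, with the regularity hypotheses entering only in the last step.

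For $(1) \Rightarrow (2)$, the standard fact that commuting operators preserve each other's eigenspaces applies: if $gg' = g'g$ and $v \in \mf t(g,\zeta)$, then $g \bigl( g'(v) \bigr) = g' \bigl( g(v) \bigr) = \zeta \cdot g'(v)$, so $g'(v) \in \mf t(g,\zeta)$. The implication $(2) \Rightarrow (3)$ is then immediate, as $A$ itself lies in $\mf t(g,\zeta)$ by hypothesis and $g'$ preserves this subspace setwise.

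The main point is therefore $(3) \Rightarrow (1)$. The plan is to conjugate $g$ by $g'$ and evaluate at $A$: using that both $A$ and $g'(A)$ are $\zeta$-eigenvectors of $g$, one computes
\[
\bigl( g'^{-1} g g' \bigr)(A) \;=\; g'^{-1}\bigl( g \bigl( g'(A) \bigr) \bigr) \;=\; g'^{-1}\bigl( \zeta \cdot g'(A) \bigr) \;=\; \zeta A \;=\; g(A),
\]
so that $g^{-1} g'^{-1} g g' \in W$ fixes $A$. Since $A \in \mf t_{\reg}$, its pointwise stabilizer $W_A \sse W$ is trivial---as already recalled in~\eqref{eq:maximal_parabolic}, following~\cite[Prop.~4.1]{springer_1974_regular_elements_of_finite_reflection_groups}---and this forces $g'^{-1} g g' = g$, i.e.\ $g' \in Z_W(g)$.

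The essentially only subtle point to watch is the precise role of each hypothesis in the last implication: the membership $g'(A) \in \mf t(g,\zeta)$ is what produces the key conjugation identity, while the regularity of $A$ is what pins a Weyl group element down from its value on $A$ alone. The regularity of $g \in W$ itself plays merely the background role of ensuring that $\mf t(g,\zeta) \cap \mf t_{\reg}$ is non-empty (by Springer's criterion), so that the hypotheses of the lemma are non-vacuous---no deeper input from~\cite{springer_1974_regular_elements_of_finite_reflection_groups} is needed here.
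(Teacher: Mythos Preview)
Your proof is correct and follows essentially the same route as the paper. The paper defers this lemma to the general case (Lem.~\ref{lem:restricted_commutators} with $\phi = \vn$), whose proof in the appendix also shows only $(3)\Rightarrow(1)$ via the commutator $g^{-1}(g')^{-1}gg'$ fixing $A$ and then invokes the triviality of the stabilizer of a regular vector; your cyclic argument $(1)\Rightarrow(2)\Rightarrow(3)\Rightarrow(1)$ spells out the easy implications and carries out the same commutator computation directly.
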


\begin{proof}
	This is the `absolute' case,
	where one takes $\phi = \vn$ in Lem.~\ref{lem:restricted_commutators}.
\end{proof}

\begin{rema}

	In general the above reflection representation of $Z_W(g) \sse W$ does \emph{not} admit an $\mb R$-form.
	E.g.,
	it can happen that $Z_W(g)$ acts irreducibly,
	and its degrees always correspond to the degrees of $W$ which are divisible by $r$ (still by~\cite[Thm.~4.2 (iii)]{springer_1974_regular_elements_of_finite_reflection_groups}):
	so the integer $2$ need not appear.
	One such example is~\cite[Exmp.~4.9]{boalch_doucot_rembado_2025_twisted_local_wild_mapping_class_groups_configuration_spaces_fission_trees_and_complex_braids},
	which recovers generalized symmetric groups when $G$ is the general/special linear group,
	cf.~\S~\ref{sec:type_A}.
\end{rema}

\subsubsection{}

The deformation space $\bm B_r \bigl( \wh\Theta \bigr)$ also admits a different description,
based on Def.~\ref{def:galois_closed_irr_types}.
Namely,
as already mentioned,
the actions on irregular types of the Weyl group $W$,
and of the group of $r$-th roots of $1$,
commute.
Moreover,
they both preserve the regular part of $\mf t$,
and one can prove that:

\begin{lemm}[cf.~\cite{bessis_2015_finite_complex_reflection_arrangements_are_k_pi_1}, Thm.~1.9]
	\label{lem:deformations_as_invariants}

	If $A \in \mf t(g,\zeta_r) \cap \mf t_{\reg}$,
	and $\wh\Theta = \wh\Theta \bigl( \wh Q \bigr)$ is the irregular class of $\wh Q = Aw^{-1}$,
	then $\bm B_r \bigl( \wh\Theta \bigr) = \bm B \bigl( \wh\Theta \bigr)^{\! \mb Z \slash r\mb Z} \sse \bm B \bigl( \wh\Theta \bigr)$.
\end{lemm}

\begin{proof}
	By definition,
	if $\wh\Theta \sim_r \wh\Theta' = \wh\Theta \bigl( \wh Q' \bigr)$ then the latter is $r$-Galois-closed and $\wh Q' \in \mf t_{\reg}$,
	so that
	\begin{equation}
		\bm B_r \bigl( \wh\Theta \bigr) \sse \bigl( \mf t_{\reg} \bs W \bigr)^{\mb Z \slash r\mb Z} = \bm B \bigl( \wh\Theta \bigr)^{\mb Z \slash r\mb Z}.
	\end{equation}

	Conversely,
	suppose that $\wh\Theta' = \wh\Theta \bigl( \wh Q' \bigr) \in \bm B \bigl( \wh\Theta \bigr)^{\mb Z \slash r\mb Z}$,
	where $\wh Q' = A'w^{-1}$ for some $A' \in \mf t$:
	this means that $A' \in \mf t_{\reg} \cap \mf t(g',\zeta_r)$ for some $g' \in W$.
	It now follows that $g,g' \in W$ are regular elements,
	so that---%
	in particular---%
	their $\zeta_r$-eigenspaces are maximal amongst all the $\zeta_r$-eigenspaces of elements of $W$~\cite[Thm.~4.2~(ii)]{springer_1974_regular_elements_of_finite_reflection_groups}.\fn{
		And $\dim_{\mb C} \bigl( \mf t(g,\zeta_r) \bigr) = \dim_{\mb C} \bigl( \mf t(g',\zeta_r) \bigr)$ is equal to the number of degrees of $(\mf t,W)$ which are divisible by $r$.}
	In turn,
	by Thm.~3.4~(iii) of op.~cit.,
	there exists $h \in W$ such that $h(A') \in \mf t(g,\zeta_r)$,
	so that $\wh Q \sim_r h\bigl( \wh Q \bigr)$,
	and finally $\wh\Theta \sim_r \wh\Theta'$.
\end{proof}

\subsubsection{}

Therefore,
Prop.-Def.~\ref{prop:full_generic_twisted_deformation_space_1_coeff} presents the cyclotomic-invariant subspace in terms of complex reflection groups,
via the $Z_W(g)$-invariant continuous composition
\begin{equation}
	\bm B_{g,r}\bigl( \wh Q \bigr)
	= \mf t(g,\zeta_r) \cap \mf t_{\reg} \lhra \mf t_{\reg} \lthra \mf t_{\reg} \bs W \simeq \bm B \bigl( \wh\Theta \bigr),
\end{equation}
whose image lies in the $\mb Z \bs r\mb Z$-invariant part.

\subsection{Several coefficients}

As in the pure case,
the situation is essentially the same when $\wh Q = \sum_{i = 1}^s A_iw^{-i}$ has higher irregularity $s \geq 1$:

\begin{prop}[]
	\label{Prop:full_generic_twisted_deformation_space}

	Choose $A_s \in \mf t_{\reg}$,
	and denote by $g = g_{A_s} \in W$ the (regular) element determined by $g(A_s) = \zeta_r^s A_s$---%
	of order $d \ceqq r \wdg s \geq 1$.
	Then:
	\begin{enumerate}
		\item
		      one has $\bm B_r \bigl( \wh\Theta \bigr) \simeq \bm B_{g,r} \bigl( \wh Q \bigr) \bs Z_W(g)$,
		      which is the base of a Galois covering analogous to~\eqref{eq:full_generic_twisted_deformation_space_1_coeff};

		\item
		      and $\bm B_r \bigl( \wh\Theta \bigr)$ has the homotopy type of the topological quotient $U \bs Z_W(g)$,
		      in the notation of Prop.~\ref{prop:generic_pure_twisted_deformation_space}.
	\end{enumerate}
\end{prop}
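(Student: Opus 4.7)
The plan is to reduce to the single-coefficient case via the direct-product decomposition of Prop.~\ref{prop:generic_pure_twisted_deformation_space}, namely $\bm B_r \bigl( \wh Q \bigr) = \bm B_{g,r} \bigl( \wh Q \bigr) = V' \ts U$, where $V' = \prod_{i=1}^{s-1} \mf t(g,\zeta_r^i)$ is a complex vector space and $U = \mf t_{\reg} \cap \mf t(g,\zeta_r^s)$ is the hyperplane complement from Prop.-Def.~\ref{prop:full_generic_twisted_deformation_space_1_coeff}. The identification $\bm B_r \bigl( \wh Q \bigr) = \bm B_{g,r} \bigl( \wh Q \bigr)$ follows from the remark immediately after Prop.~\ref{prop:unique_orbit_generater}, as $A_s$ is regular.

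For (1), the key step is to show that the $W$-orbit of any $\wh Q' \in \bm B_{g,r} \bigl( \wh Q \bigr)$ meets $\bm B_{g,r} \bigl( \wh Q \bigr)$ precisely in the $Z_W(g)$-orbit of $\wh Q'$. Every element of $Z_W(g)$ commutes with $g$ and hence preserves each eigenspace $\mf t(g,\zeta_r^i)$, so it acts on the product $V' \ts U$ while preserving $\mf t_{\reg}$ on the leading factor. Conversely, if $g' \in W$ satisfies $g'.\wh Q' \in \bm B_{g,r} \bigl( \wh Q \bigr)$, then in particular $g'(A'_s) \in \mf t(g,\zeta_r^s) \cap \mf t_{\reg}$; applying Lem.~\ref{lem:restricted_generic_twisted_orbit} to the regular element $g$ and the eigenvalue $\zeta_r^s$ then forces $g' \in N_W \bigl( \mf t(g,\zeta_r^s) \bigr) = Z_W(g)$, via Springer's identification. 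Freeness of the resulting $Z_W(g)$-action is immediate, since a nontrivial stabilizer would lie in the parabolic $W_{A'_s}$, which is trivial by regularity of $A'_s$. The standard quotient argument (cf.~\cite[Prop.~2.1]{doucot_rembado_2023_topology_of_irregular_isomonodromy_times_on_a_fixed_pointed_curve}) then yields the Galois covering $\bm B_r \bigl( \wh Q \bigr) \lthra \bm B_r \bigl( \wh Q \bigr) \bs Z_W(g) \simeq \bm B_r \bigl( \wh \Theta \bigr)$.

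For (2), I would exploit that $V'$ is a complex vector space on which $Z_W(g)$ acts linearly. The straight-line contraction of $V'$ to the origin is therefore $Z_W(g)$-equivariant and, extended trivially on $U$, gives a $Z_W(g)$-equivariant deformation retraction of $V' \ts U$ onto $\set{0} \ts U$. Passing to the free quotient yields a deformation retraction of $\bm B_r \bigl( \wh \Theta \bigr) \simeq (V' \ts U) \bs Z_W(g)$ onto $U \bs Z_W(g)$, as required.

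The main subtlety lies in invoking the correct form of Springer's identification: although $\zeta_r^s$ is only a primitive $d$-th root of unity (with $d = r \wdg s$), the hypothesis $A_s \in \mf t_{\reg}$ forces $g$ to be a regular element of order $d$, placing us squarely in the scope of~\cite[Thm.~4.2]{springer_1974_regular_elements_of_finite_reflection_groups}. Everything else is a routine product-compatible extension of the single-coefficient analysis already carried out in Prop.-Def.~\ref{prop:full_generic_twisted_deformation_space_1_coeff}.
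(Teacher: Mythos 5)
Your proposal is correct and follows essentially the same route as the paper: part (1) rests on the identification $\bm B_r \bigl( \wh Q \bigr) = \bm B_{g,r} \bigl( \wh Q \bigr)$ together with Lem.~\ref{lem:restricted_generic_twisted_orbit} applied to the regular leading coefficient, and part (2) uses the $Z_W(g)$-equivariant projection onto the factor $U$ with the zero section as homotopy inverse (your straight-line contraction of $V'$ is just the explicit homotopy the paper leaves implicit). Your closing remark about $\zeta_r^s$ being only a primitive $d$-th root of unity, with $g$ regular of order $d = r \wdg s$, is exactly the caveat the paper records in the proof of Prop.~\ref{prop:generic_pure_twisted_deformation_space}.
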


\begin{proof}
	The untwisted deformation space $\bm B \bigl( \wh Q \bigr) = \mf t^{s-1} \ts \mf t_{\reg}$ is $W$-stable,
	and by~\eqref{eq:generic_g_admissible_deformation_space} its subspace $\bm B_r \bigl( \wh Q \bigr) = \bm B_{g,r} \bigl( \wh Q \bigr)$ is stabilized by the centralizer $Z_W(g) \sse W$.
	Conversely,
	by Lem.~\ref{lem:restricted_generic_twisted_orbit},
	the $W$-orbit of any admissible deformation of $\wh Q$ intersects $\bm B_r \bigl( \wh Q \bigr)$ in a $Z_W(g)$-orbit,
	and the free action of the centralizer yields a Galois covering $\bm B_r \bigl( \wh Q \bigr) \thra \bm B_r \bigl( \wh\Theta \bigr)$ with Galois group $Z_W(g)$.

	For the second statement,
	the canonical projection $\bm B_r \bigl( \wh Q \bigr) \thra U = \bm B_{g,r} \bigl( \wh Q,s \bigr)$%
	---along the `lower' factor $V' \sse \mf t^{s-1}$---%
	is $Z_W(g)$-equivariant,
	and it induces a continuous map
	\begin{equation}
		\label{eq:deformation_retraction}
		\bm B_r \bigl( \wh\Theta \bigr) \lthra U \bs Z_W(g).
	\end{equation}
	Then the `zero' section,
	viz.,
	\begin{equation}
		U \lhra \bm B_r \bigl( \wh Q \bigr),
		\qquad A'_s
		\lmt ( \! \underbrace{0,\dc,0}_{s-1 \text{ times }},A'_s),
	\end{equation}
	is also $Z_W(g)$-equivariant,
	and it induces a homotopy-inverse of~\eqref{eq:deformation_retraction}.
\end{proof}

\section{Full/nonpure setting:
  general case}
\label{sec:nonpure_case_general}

\subsection{One coefficient}

Choose again $\wh Q = A w^{-1}$,
with arbitrary $A \in \mf t$.
Then introduce once more the Levi subsystem $\phi = \phi_A \sse \Phi$ determined by (the annihilator of) $A$,
as well as the kernel intersection $\mf t_\phi \sse \mf t$ of~\eqref{eq:kernel_intersection}.
The main point is that Lem.~\ref{lem:restricted_generic_twisted_orbit} has a `subregular' extension,
so that one can prove the following:

\begin{enonce}{Proposition-Definition}[]
	\label{prop:full_general_twisted_deformation_space_1_coeff}

	Consider the faithful quotient of the setwise stabilizer~\eqref{eq:stratum_stabilizer},
	acting on $\mf t_\phi$;
	i.e.,
	\begin{equation}
		\label{eq:effective_weyl_group}
		W(\phi)
		= N_W(\mf t_\phi) \bs W_{\mf t_\phi},
		\qquad W_{\mf t_\phi}
		\ceqq \Set{ g \in N_W(\mf t_\phi) | g_\phi = \Id_{\mf t_\phi} },
	\end{equation}
	in the notation of Prop.-Def.~\ref{prop:pure_general_twisted_deformation_space_1_coeff}.
	Moreover,
	choose an element $g \in W$ generating the $r$-Galois-orbit of $\wh Q$,\fn{
		By \S~\ref{sec:no_marking},
		this choice is w.l.o.g.
		Hereafter,
		we tacitly identify the restriction $g_\phi \in \GL_{\mb C}(\mf t_\phi)$ with the class of $g$ in $W(\phi)$.}~and introduce the centralizer subgroup
	\begin{equation}
		\label{eq:centralizer_in_quotient}
		Z_{W,\phi}(g)
		= Z_{W,\phi}(g,r)
		\ceqq Z_{W(\phi)} (g_\phi) \sse W(\phi).
	\end{equation}
	Then there is a Galois covering
	\begin{equation}
		\bm B_{g,r} \bigl( \wh Q \bigr) \lthra \bm B_{g,r} \bigl( \wh Q \bigr) \bs Z_{W,\phi}(g)
		\simeq \bm B_r \bigl( \wh\Theta \bigr).
	\end{equation}
\end{enonce}

\begin{proof}
	Choose an element $g' \in W$ such that $g' \bigl( \wh Q \bigr) \in \bm B_{g,r} \bigl( \wh Q \bigr)$.
	In particular $g' \bigl( \wh Q \bigr) \in \bm B \bigl( \wh Q \bigr)$,
	and so $g' \in N_W(\mf t_\phi)$ by Lem.~\ref{lem:about_setwise_stabilizers}.
	Moreover,
	the same condition also implies that $g'_\phi(A) \in \mf t_\phi(g_\phi,\zeta_r)$,
	and so $g'_\phi$ commutes with $g_\phi$ by Lem.~\ref{lem:restricted_commutators}.

	Conversely,
	if we denote the canonical projection by $p_\phi \cl N_W(\mf t_\phi) \thra W(\phi)$,
	then any element of the subgroup $p_\phi^{-1} \bigl( Z_{W,\phi}(g) \bigr) \sse N_W(\mf t_\phi)$ preserves the $g$-admissible deformation space;
	and the quotient $Z_{W,\phi}(g) \simeq p_\phi^{-1} \bigl( Z_{W,\phi}(g) \bigr) \bs W_{\mf t_\phi}$ acts naturally on $\bm B_{g,r} \bigl( \wh Q \bigr)$.
	(Note that $W_{\mf t_\phi} = p^{-1}_\phi(1) \sse p_\phi^{-1} \bigl( Z_{W,\phi}(g) \bigr)$.)

	Finally,
	the action is free.
	Indeed,
	if $g' \in W$ fixes any point of $\bm B \bigl( \wh Q \bigr) \sse \mf t_\phi$ then $g' \in W_{\mf t_\phi}$ by Lem.~\ref{lem:about_pointwise_stabilizers};
	a fortiori,
	an element $g' \in p^{-1}_\phi \bigl( Z_{W,\phi}(g) \bigr) \sse W$ fixing a point of $\bm B_{g,r} \bigl( \wh Q \bigr) \sse \bm B \bigl( \wh Q \bigr)$ will act as the identity on $\mf t_\phi$.
	Now one can conclude as in the quasi-generic case.
\end{proof}

\begin{lemm}[cf.~Lem.~\ref{lem:restricted_generic_twisted_orbit}]
	\label{lem:restricted_commutators}

	Choose an element $g \in W$,
	and an eigenvector $A \in \mf t(g,\zeta)$,
	for some (root of $1$) $\zeta \in \mb C^{\ts}$.
	Then,
	in the notation of~\eqref{eq:effective_weyl_group}--\eqref{eq:centralizer_in_quotient} (and extending the notation of~\eqref{eq:eigenspace_normalizer}),
	the following conditions are equivalent for any other element $g' \in W$ preserving the Levi stratum of $A$ (i.e.,
	for any $g' \in N_W(\mf t_\phi)$):
	\begin{enumerate}
		\item
		      $g'_\phi \in Z_{W,\phi}(g)$;

		\item
		      $g'_\phi \in N_{W(\phi)}(g_\phi,\zeta) = N_{W(\phi)} \bigl( \mf t_\phi (g_\phi,\zeta) \bigr)$;

		\item
		      and $g'_\phi(A) \in \mf t_\phi(g_\phi,\zeta)$.
	\end{enumerate}
\end{lemm}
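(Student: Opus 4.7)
My plan is to prove the cyclic chain of implications $(1) \Rightarrow (2) \Rightarrow (3) \Rightarrow (1)$, after first observing that $A$ itself belongs to $\mf t_\phi(g_\phi,\zeta)$. Indeed, $A \in \mf t_\phi$ by the very definition of $\phi = \phi_A$, while $A \in \mf t(g,\zeta)$ by hypothesis; since $g$ has finite order and preserves $\mf t_\phi$, the restriction $g_\phi$ acts semisimply on $\mf t_\phi$ and inherits $A$ as a $\zeta$-eigenvector.

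With this in place, the first two implications are formal. For $(1) \Rightarrow (2)$, the semisimplicity of $g_\phi$ implies that any linear endomorphism of $\mf t_\phi$ commuting with it preserves each of its eigenspaces, in particular the $\zeta$-eigenspace. The implication $(2) \Rightarrow (3)$ is immediate, since $A$ already lies in the $g'_\phi$-invariant subspace $\mf t_\phi(g_\phi,\zeta)$.

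The real content is the reverse implication $(3) \Rightarrow (1)$. The key auxiliary fact I would establish is that $A$ has trivial stabilizer in the effective Weyl group $W(\phi)$, i.e., it is a \emph{regular} vector for the $W(\phi)$-action on $\mf t_\phi$. By Steinberg's theorem on pointwise fixators in real reflection groups, the pointwise stabilizer $W_{\mf t_\phi}$ of $\mf t_\phi$ in $W$ coincides with the parabolic subgroup $W_\phi$ generated by the reflections $s_\alpha$ for $\alpha \in \phi$, which is exactly the setwise stabilizer $W_A$ of $A$. Hence any element of $N_W(\mf t_\phi)$ fixing $A$ already lies in $W_{\mf t_\phi}$, so its class in $W(\phi) = N_W(\mf t_\phi) \slash W_{\mf t_\phi}$ is trivial; by conjugation the vector $g'_\phi(A) = g'(A) \in \mf t_\phi$ therefore also has trivial $W(\phi)$-stabilizer.

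Assuming (3), I would then consider the conjugate element $g'' \ceqq g'_\phi g_\phi (g'_\phi)^{-1} \in W(\phi)$, and compute
\begin{equation}
	g'' \cdot g'_\phi(A) = g'_\phi \bigl( g_\phi(A) \bigr) = \zeta g'_\phi(A),
\end{equation}
while (3) provides $g_\phi \cdot g'_\phi(A) = \zeta g'_\phi(A)$. Hence $g_\phi^{-1} g''$ fixes $g'_\phi(A)$, and the regularity of $g'_\phi(A)$ forces $g'' = g_\phi$ in $W(\phi)$, which is precisely the commutation relation $g'_\phi g_\phi = g_\phi g'_\phi$ defining (1). I expect the main obstacle to be the identification $W_{\mf t_\phi} = W_\phi$ needed for regularity; every other step is formal manipulation with finite-order (hence semisimple) operators, and no appeal to Springer's deeper theorems (as in the generic case of Lem.~\ref{lem:restricted_generic_twisted_orbit}) is required.
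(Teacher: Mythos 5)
Your proof is correct and follows essentially the same route as the paper's: the only nontrivial implication is $(3) \Rightarrow (1)$, and both arguments reduce it to a commutator/conjugation computation showing that some element of $W(\phi)$ fixes a point of the Levi stratum, then conclude that this element is trivial. The paper cites this last fact from earlier work (\cite[Lem.~2.2]{doucot_rembado_2023_topology_of_irregular_isomonodromy_times_on_a_fixed_pointed_curve}), whereas you re-derive it from Steinberg's fixed-point theorem via the identification $W_A = W_{\mf t_\phi}$ — a harmless and correct elaboration.
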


\begin{proof}[Proof postponed to~\ref{proof:lem_restricted_commutators}]
\end{proof}

\begin{rema}
	\label{rmk:about_parabolics}
	The proof of Prop.-Def.~\ref{prop:full_general_twisted_deformation_space_1_coeff} in particular yields the equality $W_{\mf t_\phi} = W_{\mf t_\phi(g_\phi,\zeta_r)} \sse W$ (of parabolic subgroups of $W$),
	due to the fact that the eigenspace $\mf t_\phi(g_\phi,\zeta_r)$ has nonempty intersection with the `subregular' part $\bm B \bigl( \wh Q \bigr) \sse \mf t_\phi$---%
	as it contains $A$.
\end{rema}

\begin{rema}
	In the untwisted setting,
	the intersection of the $W$-orbit of $A$ with the Levi-stratum of $A$ is determined by the setwise stabilizer~\eqref{eq:stratum_stabilizer},
	and the subquotient of $W$ acting faithfully on $\mf t_\phi \sse \mf t$ is precisely~\eqref{eq:effective_weyl_group}.
	More precisely,
	the normal subgroup $W_{\mf t_\phi} \sse N_W(\mf t_\phi)$ coincides with the parabolic subgroup~\eqref{eq:weyl_parabolic} (cf.~Lemm.~\ref{lem:about_pointwise_stabilizers} +~\ref{lem:relative_weyl_group}).

	Overall,
	in the untwisted/unramified case there is a Galois covering
	\begin{equation}
		\bm B \bigl( \wh Q \bigr) \lthra \bm B \bigl( \wh Q \bigr) \bs W(\phi) \simeq \bm B \bigl( \wh\Theta \bigr),
	\end{equation}
	which again can be regarded as the particular case where $r = 1$ in Prop.-Def.~\ref{prop:full_general_twisted_deformation_space_1_coeff}%
	---taking $g$ to be the identity.
\end{rema}

\begin{rema}
	\label{rmk:restricted_centralizer_independence}

	By Lem.~\ref{lem:no_marking},
	the group~\eqref{eq:centralizer_in_quotient} does \emph{not} depend on the choice of the element generating the $r$-Galois-orbit of $\wh Q$.
	One can thus write $Z_{W,\phi}(r) \ceqq Z_{W,\phi}(g)$,
	where $g \in W$ is any such element.

	In Thm.~\ref{thm:complex_refl_groups_from_gauge} we will provide a sufficient condition so that this is still a complex reflection group,
	via the work of Howlett~\cite{howlett_1980_normalizers_of_parabolic_subgroups_of_reflection_groups} and Lehrer--Springer~\cite{lehrer_springer_1999_reflection_subquotients_of_unitary_reflection_groups},
	generalizing the quasi-generic case.
	(The main obstruction are certain quotients of normalizers of parabolic subgroups of Weyl groups.)
\end{rema}

\begin{rema}
	On the same note,
	beware that---%
	the second part of---%
	the argument in the proof of Lem.~\ref{lem:deformations_as_invariants} does \emph{not} extend verbatim.

	Namely,
	it is true that the $\mb Z \bs r\mb Z$-action on irregular types preserves their (untwisted) admissible deformations,
	and that there are inclusions
	\begin{equation}
		\label{eq:deformations_as_invariants_2}
		\bm B_r \bigl( \wh\Theta \bigr) \sse \bm B \bigl( \wh\Theta \bigr)^{\mb Z \slash r\mb Z} = \bigl( \bm B( \wh Q) \bs W(\phi) \bigr)^{\mb Z \slash r\mb Z} \sse \bm B \bigl( \wh\Theta \bigr).
	\end{equation}
	However,
	in general,
	the quotient $W(\phi)$ does \emph{not} act as a real reflection group on $\mf t_\phi$,
	and so one cannot immediately appeal to~\cite[Thm.~3.4~(iii)]{springer_1974_regular_elements_of_finite_reflection_groups} to prove that the first inclusion of~\eqref{eq:deformations_as_invariants_2} is an equality.
	(Cf.~again Thm.~\ref{thm:complex_refl_groups_from_gauge};
	the subtlety is that the $(g,r)$-admissible deformation spaces lie within a \emph{single} eigenspace,
	cf.~Thm.~\ref{thm:deformations_as_quotient_strata}.)
\end{rema}

\subsection{Several coefficients}

Finally,
we can recursively extend Prop.-Def.~\ref{prop:full_general_twisted_deformation_space_1_coeff} to treat the general case.

To this end,
given an irregular type $\wh Q \in \wh{\mc{IT}}^{\leq s}_r$ with arbitrary coefficients $A_1,\dc,A_s \in \mf t$,
denote by $\bm\phi \ceqq (\phi_1 \sse \dm \sse \phi_s \sse \phi_{s+1} = \Phi)$ the corresponding Levi filtration of $\Phi$ (cf.~again~\cite[Def.~3.1.2]{calaque_felder_rembado_wentworth_2024_wild_orbits_and_generalised_singularity_modules_stratifications_and_quantisation} and \S~\ref{sec:stratifications}).
As in the proof of Prop.~\ref{prop:no_marking_general},
there is an associated kernel-flag:
\begin{equation}
	\label{eq:kernel_flag}
	\mf t_{\bm \phi} \ceqq \bigl( \mf Z(\mf g)
	= \mf t_{\phi_{s+1}} \sse \mf t_{\phi_s} \sse \dm \sse \mf t_{\phi_1} \sse \mf t \bigr).
\end{equation}
Then,
choosing---%
w.l.o.g.---%
an element $g \in W$ which generates the $r$-Galois-orbit of $\wh Q$,
we can state the:

\begin{enonce}{Theorem-Definition}
	\label{thm:full_general_twisted_deformation_space}

	Consider the parabolic subgroup of $\GL_{\mb C}(\mf t)$ preserving~\eqref{eq:kernel_flag},
	i.e.,
	\begin{equation}
		P_{\bm \phi}
		\ceqq \Set{ g' \in \GL_{\mb C}(\mf t) | g' (\mf t_{\phi_i}) \sse \mf t_{\phi_i} \text{ for } i \in \set{1,\dc,s} },
	\end{equation}
	and let $N_W(\mf t_{\bm \phi}) \ceqq W \cap P_{\bm\phi} \sse W$.
	Introduce also the pointwise stabilizers $W_{\mf t_{\phi_i}} \sse N_W(\mf t_{\phi_i})$ of~\eqref{eq:effective_weyl_group},
	and denote by $p_{\phi_i} \cl N_W(\mf t_{\phi_i}) \thra W(\phi_i)$ the canonical projections,
	for $i \in \set{1,\dc,s}$.
	Finally,
	let
	\begin{equation}
		\label{eq:restricted_centralizer_intersection}
		p_{\bm \phi}^{-1}(g)
		\ceqq \bigcap_{i = 1}^s p_{\phi_i}^{-1} \bigl( Z_{W,\phi_i}(g) \bigr) \sse N_W(\mf t_{\bm \phi}),
	\end{equation}
	in the notation of~\eqref{eq:centralizer_in_quotient}.
	Then there is a Galois covering
	\begin{equation}
		\label{eq:general_galois_covering}
		\bm B_r \bigl( \wh Q \bigr) \lthra \bm B_r \bigl( \wh Q \bigr) \bs Z_{W,\bm \phi} (g)
		\simeq \bm B_r \bigl( \wh\Theta \bigr),
		\qquad Z_{W,\bm \phi} (g) \ceqq p_{\bm \phi}^{-1} (g) \bs W_{\mf t_{\phi_1}}.
	\end{equation}
\end{enonce}

\begin{proof}
	First,
	a recursive usage of Lem.~\ref{lem:about_setwise_stabilizers} shows that the subgroup $N_W(\mf t_{\bm \phi}) \sse W$ determines the intersection of the $W$-orbit of $\wh Q$ with its root-valuation stratum $\bm B \bigl( \wh Q \bigr) \sse \mf t^s$ (cf.~\cite[Lem.~3.3]{doucot_rembado_2025_topology_of_irregular_isomonodromy_times_on_a_fixed_pointed_curve}),
	and one has $N_W(\mf t_{\bm \phi}) = \bigcap_{i = 1}^s N_W (\mf t_{\phi_i})$.
	Moreover,
	any element generating the $r$-Galois-orbit of $\wh Q$ necessarily lies in $N_W(\mf t_{\bm \phi})$---%
	so that the inclusion of~\eqref{eq:restricted_centralizer_intersection} make sense.

	Now Lem.~\ref{lem:about_pointwise_stabilizers} implies that the (normal) subgroup $W_{\mf t_{\phi_1}} \sse N_W(\mf t_{\bm \phi})$ consists of the group elements acting trivially on the stratum,
	or (equivalently) the elements fixing any point therein (cf.~\cite[Lemm.~3.4--3.5]{doucot_rembado_2025_topology_of_irregular_isomonodromy_times_on_a_fixed_pointed_curve});
	so in the untwisted case there is a Galois covering
	\begin{equation}
		\label{eq:final_galois_covering}
		\bm B \bigl( \wh Q \bigr) \lthra \bm B \bigl( \wh Q \bigr) \bs W(\bm \phi),
		\qquad W(\bm \phi) \ceqq N_W(\mf t_{\bm\phi}) \bs W_{\mf t_{\phi_1}}.
	\end{equation}

	In the twisted setting,
	analogously,
	if $g' \in W$ is such that $g' \bigl( \wh Q \bigr) \sse \bm B_{g,r} \bigl( \wh Q \bigr)$,
	then $g' \in N_W(\mf t_{\bm \phi})$;
	but moreover,
	since the Weyl group acts diagonally on the product $\mf t^s$,
	the restrictions $g_{\phi_i}, g'_{\phi_i} \in W(\phi_i)$ must commute for $i \in \set{1,\dc,s}$,
	using Lem.~\ref{lem:restricted_commutators} at each step.
	Conversely,
	the subgroup~\eqref{eq:restricted_centralizer_intersection} preserves the subspace $\bm B_{g,r} \bigl( \wh Q \bigr) \sse \mf t^s$,
	and so it controls the intersection of the $g$-admissible deformation space with the $W$-orbit of any point $\wh Q' \in \bm B_{g,r} \bigl( \wh Q \bigr)$.

	Finally,
	once more,
	$W_{\mf t_{\phi_1}} \sse p^{-1}_{\bm \phi}(g)$ consists precisely of the group elements which fix any point of $\bm B_{g,r} \bigl( \wh Q \bigr)$,
	or equivalently the group elements which fix $\mf t_{\phi_1} = \bigcup_{i = 1}^s \mf t_{\phi_i}$ pointwise,
	because
	\begin{equation}
		A_i \in \bm B_{g,r} \bigl( \wh Q,i \bigr)
		= \mf t_{\phi_i}(g_{\phi_i},\zeta_r^i) \cap \bm B \bigl( \wh Q,i \bigr) \neq \vn,
		\qquad i \in \set{1,\dc,s}. \qedhere
	\end{equation}
\end{proof}

\begin{rema}
	One actually has
	\begin{equation}
		W_{\mf t_{\phi_i}}
		= W_{A_i} \cap \dm \cap W_{A_s} \sse N_W(\mf t_{\phi_i}),
		\qquad i \in \set{1,\dc,s},
	\end{equation}
	in the notation of~\eqref{eq:weyl_parabolic}.
	In particular,
	$W_{\mf t_{\phi_1}} \sse N_W(\mf t_{\bm \phi})$ is the subgroup stabilizing the irregular type,
	i.e.,
	the Weyl group of $\bigl( G^{\wh Q},T \bigr)$,
	involving the (connected,
	reductive)
	subgroup $G^{\wh Q} \ceqq G^{A_1} \cap \dm \cap G^{A_s} \sse G$ obtained at the bottom of the `$\wh Q$-fission' of $G$,
	cf.~\cite{boalch_2009_through_the_analytic_halo_fission_via_irregular_singularities,
		boalch_2014_geometry_and_braiding_of_stokes_data_fission_and_wild_character_varieties}.
\end{rema}

\subsubsection{}
\label{sec:independence_monodromy_group_general}

As mentioned in the introduction,
and contrary to the pure case of Lem.~\ref{lem:reduction_to_simple_case},
the $r$-deformation spaces of $r$-Galois-closed irregular classes do \emph{not} split into products along decompositions of $\mf g$ in Lie ideals.
The main point is that the subgroup $Z_{W,\bm \phi}(r) \ceqq Z_{W,\bm \phi}(g) \sse N_W(\mf t_{\bm \phi}) \bs W_{\mf t_{\phi_1}}$ depends on the coefficients of the irregular type in a more convoluted way,
due to the very definition of $N_W(\mf t_{\bm \phi})$.
(But again,
it does \emph{not} depend on the choice of the element $g \in W$ generating the $r$-Galois-orbit.)

When $\mf g$ is a simple classical Lie algebra,
this is better understood via---%
Weyl groups of---%
fission trees,
cf.~\S\S~\ref{sec:weyl_group_tree_bc} +~\ref{sec:weyl_group_tree_d}.

\section{First interlude (some more Lie/Weyl theory)}
\label{sec:howlett}

After obtaining general descriptions of the admissible deformation spaces,
our next aim is to prove Thmm.~\ref{thm:thm_2_intro} $+$~\ref{thm:thm_3_intro}.
(The proof of Thm.~\ref{thm:thm_1_intro} will be concluded in \S~\ref{sec:tglwmcgs}.)
More precisely,
this section and the subsequent \S~\ref{sec:lehrer_springer_theory} are preliminary;
then the crystallographic cases are treated in \S\S~\ref{sec:type_A}--\ref{sec:type_D},
and we conclude by defining/using fission trees in \S\S~\ref{sec:notation_for_trees}--\ref{sec:D_trees}.

(Recall that \S~\ref{sec:interior_twist} considers a further extension of the starting setup of \S~\ref{sec:setup},
and that \S~\ref{sec:stratifications} relates the previous constructions with root-valuation stratifications.)

\subsection{Relative Weyl groups and subtori}

The short exact group sequence $1 \to W_{\mf t_\phi} \to N_W(\mf t_\phi) \to W(\phi) \to 1$ defined by~\eqref{eq:effective_weyl_group} has proven quite helpful to describe the deformation spaces of irregular classes%
---be them $r$-Galois-closed,
or not.
We now relate it with standard Lie-theoretic objects.

\subsubsection{}

Choose a Levi subsystem $\phi \sse \Phi = \Phi(\mf g,\mf t)$.
For any root $\alpha \in \Phi$ denote by $\mf g_\alpha \sse \mf g$ the corresponding root line,
i.e.,
\begin{equation}
	\mf g_\alpha
	\ceqq \Set{ Y \in \mf g | \ad_A(Y) = \Braket{ \alpha | A } Y \text{ for } A \in \mf t}.
\end{equation}
Consider then the (reductive) Lie subalgebra associated with $\phi$:
\begin{equation}
	\label{eq:levi_subalgebra}
	\mf l
	= \mf l_\phi \ceqq \mf t \ops \bops_\phi \mf g_\alpha \sse g.
\end{equation}
If $\phi = \phi_A$ as in \S~\ref{sec:pure_nongeneric_case_one_coeff},
then $\mf l$ is the Lie algebra of the (algebraic) subgroup $L = L_\phi = G^A$ in~\eqref{eq:levi_subgroup},
viz.,
the adjoint stabilizer
\begin{equation}
	\mf g^A \ceqq \ker(\ad_A)
	= \Set{ Y \in \mf g | \ad_Y(A) = 0 }.
\end{equation}
Then the subspace $\mf t_\phi \sse \mf t$ of~\eqref{eq:kernel_intersection} is precisely the centre of $\mf l$,
and it integrates to a subtorus $T_\phi \sse T$ such that $L = Z_G(T_\phi)$---%
so that $T_\phi$ is the identity component of the centre of $L$.

\begin{enonce}{Lemma-Definition}[]
	\label{lem:relative_weyl_group}

	There are group isomorphisms
	\begin{equation}
		W_L
		\ceqq N_G(L) \bs L \simeq W(\phi) \simeq N_G(T_\phi) \bs Z_G(T_\phi).
	\end{equation}
	We refer to these isomorphic quotients as the \emph{relative Weyl group of} $(G,L)$.\fn{
		Or,
		equivalently,
		\emph{of} $(G,T,\phi)$;
		cf.~\cite{bonnafe_2004_actions_of_relative_weyl_groups_i,
			bonnafe_2005_actions_of_relative_weyl_groups_ii} in a more general context.
		(It is well-known that this yields finite Coxeter groups,
		cf.~\cite[Thm.~9.2]{lusztig_1984_intersection_cohomology_complexes_on_a_reductive_group}.)}
\end{enonce}

\begin{proof}[Proof postponed to~\ref{proof:lem_relative_weyl_group}]
\end{proof}

\begin{rema}

	Beware that $T_\phi \sse T$ is \emph{not} the same as Ramis' torus,
	which is also mentioned in~\cite{boalch_yamakawa_2015_twisted_wild_character_varieties},
	and which is not stable under admissible deformations.
	The difference is already visible in (untwisted) generic examples where $G = \GL_2(\mb C)$---%
	and it does not necessarily vanish in the semisimple case.
\end{rema}

\subsubsection{}

With an irregular type $\wh Q = \sum_{i = 1}^s A_i w^{-i}$ we thus associate:
(i) a sequence
\begin{equation}
	W_{A_i}
	= N_{L_i}(T) \bs T
	= \bigl( L_i \cap N_G(T) \bigr) \bs T \sse W,
	\qquad L_i \ceqq L_{\phi_i} \sse G,
\end{equation}
of parabolic subgroups of $W$;
and (ii) a sequence $W_{L_i}$ of relative Weyl groups.

\subsection{Normalizers of parabolic subgroups and reflection cosets}

Furthermore,
in the twisted case,
we have also considered an element $g \in W$ (generating the $r$-Galois-orbit of $\wh Q$) such that $g \in N_W(\mf t_{\phi_i})$ for $i \in \set{1,\dc,s}$.

It will be useful to view such Weyl-group elements as `outer' automorphisms of parabolic subgroups of $W$,
corresponding to (possibly nonsplit) reflection cosets,
as follows:

\begin{lemm}
	\label{lem:setwise_stabilizer_equal_normalizer}

	Let $\phi \sse \Phi$ be a Levi subsystem.
	The setwise stabilizer~\eqref{eq:stratum_stabilizer} coincides with the normalizer (in $W$) of the parabolic subgroup $W_{\mf t_\phi} \sse W$.
\end{lemm}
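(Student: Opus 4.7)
The plan is to identify $\mf t_\phi$ intrinsically as the fixed-point set on $\mf t$ of the subgroup $W_{\mf t_\phi}$, so that the two stabilizer conditions become two sides of a Galois-type correspondence.

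First I would establish the identification $W_{\mf t_\phi} = W_\phi \ceqq \langle s_\alpha \mid \alpha \in \phi \rangle$. By definition $W_{\mf t_\phi}$ is the pointwise stabilizer of a subspace of $\mf t$ under the action of a finite real reflection group, so by Steinberg's fixed-point theorem it is generated by the reflections it contains. Now a reflection $s_\alpha \in W$ fixes $\mf t_\phi$ pointwise if and only if $H_\alpha \supseteq \mf t_\phi$, and since $\phi$ is a Levi subsystem one has $\phi = \{\alpha \in \Phi \mid \alpha \vert_{\mf t_\phi} = 0\}$, i.e.\ the condition $H_\alpha \supseteq \mf t_\phi$ is equivalent to $\alpha \in \phi$. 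Hence $W_{\mf t_\phi} = W_\phi$, and by construction $\mf t^{W_{\mf t_\phi}} = \mf t_\phi$: the inclusion $\mf t^{W_{\mf t_\phi}} \subseteq \mf t_\phi$ is tautological, and conversely $W_\phi$ fixes $\mf t_\phi$ pointwise.

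With these two identifications in hand, the lemma follows from a short conjugation argument in both directions. For $\subseteq$, suppose $g(\mf t_\phi) \subseteq \mf t_\phi$; since $g$ has finite order this inclusion is an equality, and for any $h \in W_{\mf t_\phi}$ and any $v \in \mf t_\phi$ one may write $v = g(v')$ with $v' \in \mf t_\phi$, whence $(ghg^{-1})(v) = g(h(v')) = g(v') = v$. Thus $ghg^{-1} \in W_{\mf t_\phi}$, which gives $g \in N_W(W_{\mf t_\phi})$. For $\supseteq$, suppose $g$ normalizes $W_{\mf t_\phi}$; then for $v \in \mf t_\phi$ and any $h \in W_{\mf t_\phi}$ one has $h \cdot g(v) = g \cdot (g^{-1}hg)(v) = g(v)$, since $g^{-1}hg \in W_{\mf t_\phi}$ fixes $v$. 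Hence $g(v) \in \mf t^{W_{\mf t_\phi}} = \mf t_\phi$, so $g \in N_W(\mf t_\phi)$.

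The only nontrivial input is Steinberg's theorem, which is what makes the identification $W_{\mf t_\phi} = W_\phi$ go through; once that is granted, the rest is a formal Galois-type argument between a parabolic subgroup and its fixed flat. I do not expect any real obstacle here, but if one wanted a fully self-contained proof the one step worth spelling out carefully is the characterization of Levi subsystems as $\phi = \{\alpha \in \Phi \mid H_\alpha \supseteq \mf t_\phi\}$, which is precisely what makes the pointwise stabilizer coincide with the `root-theoretic' parabolic subgroup $W_\phi$.
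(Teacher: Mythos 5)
Your proof is correct, and in the reverse direction it takes a genuinely different (if closely related) route from the paper. Both arguments rest on the same two inputs: Steinberg's fixed-point theorem, which identifies $W_{\mf t_\phi}$ with the reflection subgroup $W_\phi = \langle \sigma_\alpha \mid \alpha \in \phi\rangle$, and the Levi property of $\phi$, i.e. that a root vanishes on $\mf t_\phi$ if and only if it lies in $\phi$. From there the paper argues on the \emph{root} side: if $g$ normalizes $W_{\mf t_\phi}$ then $g\sigma_\alpha g^{-1} = \sigma_{g(\alpha)} \in W_{\mf t_\phi}$ forces $g(\alpha) \in \phi$ for every $\alpha \in \phi$, so $g$ permutes $\phi$ and hence preserves $\mf t_\phi = \bigcap_\phi H_\alpha$. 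You argue on the \emph{flat} side: you first record that $\mf t_\phi$ is exactly the fixed-point set $\mf t^{W_{\mf t_\phi}}$ and then run the formal conjugation computation, so that $g(v)$ is fixed by all of $W_{\mf t_\phi}$ and hence lies back in $\mf t_\phi$. Your version is slightly cleaner as a self-contained group-theoretic statement (it is the standard duality between a parabolic subgroup and its fixed flat), whereas the paper's version has the side benefit of showing $g(\phi) \sse \phi$ and $g(\Phi \sm \phi) \sse \Phi \sm \phi$, facts that are reused elsewhere (e.g. in the proofs concerning the relative reflection group). One small expository slip: of the two inclusions in $\mf t^{W_{\mf t_\phi}} = \mf t_\phi$, it is $\mf t_\phi \sse \mf t^{W_{\mf t_\phi}}$ that is tautological from the definition of the pointwise stabilizer, while $\mf t^{W_{\mf t_\phi}} \sse \mf t_\phi$ is the one requiring $W_{\mf t_\phi} = W_\phi$ and the identification of the fixed hyperplane of each generating reflection; you have the two labels swapped, but both ingredients are present in your argument, so nothing is missing.
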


(This retrospectively justifies the notation $N_W(\mf t_\phi) = N_W(W_{\mf t_\phi}) \sse W$.)

\begin{proof}[Proof postponed to~\ref{proof:lem_setwise_stabilizer_equal_normalizer}]
\end{proof}

\subsubsection{}

Now choose integers $r,s \geq 1$,
and let $\wh Q \in \wh{\mc{IT}}^{\leq s}_r$ be an irregularity-bounded $r$-Galois-closed irregular type.
By Lem.~\ref{lem:setwise_stabilizer_equal_normalizer},
the Levi filtration $\bm \phi = (\phi_1,\dc,\phi_s)$ determined by $\wh Q$ corresponds to a nested sequence of untwisted Levi subcosets $\mb L(\phi_i) \ceqq \bigl( \mf t,W_{\mf t_{\phi_i}} \bigr)$ of the rational reflection coset $\mb G = (\mf t,W)$:
cf.~\S~\ref{sec:reflection cosets}.

In the twisted setting,
instead:

\begin{enonce}{Corollary-Definition}[]
	Let $g \in W$ be an element generating the $r$-Galois-orbit of $\wh Q$,
	and suppose that $r \geq 2$.
	Then:
	\begin{enumerate}
		\item
		      $\mb L_r(\phi_i) \ceqq \bigl( \mf t,g W_{\mf t_{\phi_i}} \bigr)$ is a Levi subcoset of $\mb G$;

		\item
		      and the latter is necessarily nonsplit for some $i \in \set{1,\dc,s}$.
	\end{enumerate}
\end{enonce}

\begin{proof}
	The former statement is just the definition.
	For the latter,
	having only split reflection cosets would lead to a trivial $r$-Galois-orbit,
	contradicting the assumption that $\wh Q$ is primitive,
	cf.~Lem.-Def.~\ref{lem:primitive_irregular_types}.
\end{proof}

\begin{rema}
	In conclusion,
	twisted/ramified isomonodromic deformations can be phrased in terms of sequences of rational reflection cosets,
	which are easy examples of \emph{spetses}%
	~\cite[Prop.~3.10]{malle_1998_spetses},
	cf.~\cite{broue_malle_1992_theoremes_de_sylow_generiques_pour_les_groupes_reductifs_sur_les_corps_finis,
		broue_malle_1993_zyklotomische_heckealgebren,
		broue_malle_michel_1993_generic_blocks_of_finite_reductive_groups,
		broue_malle_michel_1999_towards_spetses_i,
		malle_2000_on_the_generic_degrees_of_cyclotomic_algebras,
		broue_malle_michel_2014_split_spetses_for_primitive_reflection_groups}.

	Here (and in \S~\ref{sec:lehrer_springer_theory}) we just borrow a few basic notions to streamline the rest of the exposition;
	but recall that spetses were (also) introduce to construct Lie-theoretic objects underlying complex reflection groups---%
	rather than just the Weyl groups.
	Naively then,
	to relate with (local) isomonodromic deformations,
	it would seem natural to work,
	e.g.,
	over the algebraic closure of a finite field,
	and to consider the irregular moduli of twisted/ramified irregular-singular connections,
	defined on principle bundles with split reductive structure groups,
	over the spectra of complete DVRs.\fn{
		Incidentally,
		the setup of \S~\ref{sec:setup} generalizes essentially verbatim if one replaces the Weyl group $(\mf t,W)$ with any complex reflection group $(V,W')$:
		only the tuple~\eqref{eq:root_valuations_tuple} of root valuations is missing;
		but one can instead choose a hyperplane $H$ of the reflection arrangement,
		and consider the pole order of the first coefficient of the irregular type which does \emph{not} lie on $H$%
		---starting from the leading one.}
\end{rema}

\subsection{Howlett's reflection cosets}
\label{sec:howlett_twist}

We now recall the
construction of a different---%
closely related---%
family of real reflection subcosets of $(\mf t,W)$.
(In \S~\ref{sec:lehrer_springer_theory} we will consider a third one,
and later see examples in type $A$ and $BC$ when they all match up.)
The main aim is to give a sufficient condition so that the monodromy group $Z_{W,\bm\phi}(r)$ of the Galois covering of Prop.-Def.~\ref{prop:full_general_twisted_deformation_space_1_coeff} is a complex reflection group.

\subsubsection{}

To this end,
as far as this text is concerned,
we can summarize the theory of normalizers of parabolic subgroups of finite Coxeter groups~\cite{howlett_1980_normalizers_of_parabolic_subgroups_of_reflection_groups} (cf.~\cite{lusztig_1977_coxeter_orbits_and_eigenspaces_of_frobenius,
	deriziotis_1977_the_brauer_complex_and_its_applications_to_the_chevalley_groups})\fn{
	This was later extended to arbitrary Coxeter groups~\cite{borcherds_1998_coxeter_grous_lorentzian_lattices_and_k_3_surfaces,
		brink_howlett_1999_normalizers_of_parabolic_subgroups_in_coxeter_groups},
	cf.~\cite{deodhar_1982_on_the_root_system_of_a_coxeter_group,
		nuida_2011_on_centralizers_of_parabolic_subgroups_in_coxeter_groups,
		allcock_2012_normalizers_of_parabolic_subgroups_of_coxeter_groups};
	and more recently to finite complex reflection groups~\cite{muraleedaran_taylor_2018_normalisers_of_parabolic_subgroups_in_finite_unitary_reflection_groups}.}~in the statement of Prop.-Def.~\ref{prop:howlett}.
To set this up,
fix a Levi subsystem $\phi \sse \Phi$ and pose the following:

\begin{enonce}{Lemma-Definition}
	\label{lem:reduced_reflections}

	Choose a $W$-invariant positive-definite Hermitian product $( \cdot \mid \cdot ) \cl \mf t \ts \mf t \to \mb C$,
	and denote by the same symbol its restriction to $\mf t_\phi \sse \mf t$.\fn{
		Since $W$ acts trivially on $\mf Z(\mf g) \sse \mf t$,
		one can put any inner product on the centre.}~Let also $\alpha_\phi^{\dual}$ be the orthogonal projection of the coroot $\alpha^{\dual} \in \mf t$,
	for $\alpha \in \Phi \sm \phi$,
	onto $\mf t_\phi$.
	Then the hyperplane arrangement~\eqref{eq:pure_untwisted_nongeneric_deformation_space_1_coeff} coincides with the orthocomplements (in $\mf t_\phi$) of the vectors in the finite set
	\begin{equation}
		\Set{ \alpha^{\dual}_\phi | \alpha \in \Phi \sm \phi} \sse \mf t_\phi.
	\end{equation}
\end{enonce}

\begin{proof}[Proof postponed to~\ref{proof:lem_reduced_reflections}]
\end{proof}

\subsubsection{}

By Lem.-Def.~\ref{lem:reduced_reflections},
there is a well-defined order-$2$ reflection
\begin{equation}
	\label{eq:reduced_reflection}
	\sigma_\alpha(\phi) \in \GL_{\mb C}(\mf t_\phi),
	\qquad \sigma_\alpha(\phi) \cl \alpha^{\dual}_\phi \lmt - \alpha^{\dual}_\phi,
\end{equation}
about the hyperplane $H_\alpha(\phi) \sse \mf t_\phi$,
for all $\alpha \in \Phi \sm \phi$.
Furthermore,
if the reflection $\sigma_\alpha \in W$ associated with $\alpha$ preserves $\mf t_\phi$,
then~\eqref{eq:reduced_reflection} is just the restriction of $\sigma_\alpha$ thereon.

\begin{enonce}{Proposition-Definition}[]
	\label{prop:howlett}

	Let $\Delta \sse \Phi$ be a base (of simple roots) such that $\Delta_\phi \ceqq \Delta \cap \phi \sse \phi$ is a base of $\phi$.
	Then:
	\begin{enumerate}
		\item
		      there is a set of involutions $\sigma_\alpha(\Delta_\phi) \in W(\phi)$ (the `R-elements'),
		      which correspond bijectively to a subset $\phi' \sse \Phi \sm \phi$;

		\item
		      the subgroup $W'(\phi) \sse W(\phi)$ generated by such involutions is normal,
		      and it acts on $\mf t_\phi$ as the real reflection group generated by the set $\set{ \sigma_\alpha(\phi) | \alpha \in \phi'}$,
		      in the notation of~\eqref{eq:reduced_reflection};

		\item
		      and the corresponding short exact group sequence splits:
		      \begin{equation}
			      \label{eq:relative_reflection_group_and_twist}
			      1 \lra W'(\phi) \lra W(\phi) \lra \wt W(\phi) \lra 1,
			      \qquad \wt W(\phi) \ceqq W(\phi) \bs W'(\phi).
		      \end{equation}
	\end{enumerate}
	We will say that $W'(\phi)$ is \emph{Howlett's relative reflection group},
	and that $\wt W(\phi)$ is \emph{Howlett's twist}.
\end{enonce}

\begin{proof}
	First,
	the group extension $1 \to W_{\mf t_\phi} \to N_W(\mf t_\phi) \to W(\phi) \to 1$ splits.
	More precisely,
	by~\cite[Cor.~3]{howlett_1980_normalizers_of_parabolic_subgroups_of_reflection_groups},
	one has
	\begin{equation}
		\label{eq:splitting_parabolic_normalizers}
		N_W(\mf t_\phi)
		\simeq W(\Delta_{\phi}) \lts W_{\mf t_\phi},
		\qquad W(\Delta_\phi)
		\ceqq \Set{ g \in W | g(\Delta_\phi) \sse \Delta_\phi }.
	\end{equation}
	Under the group isomorphism $W(\phi) \simeq W(\Delta_\phi)$,
	the statements now are just rephrasing Thm.~6 and Cor.~7 of op.~cit.
	(The definition of the involutions $\sigma_\alpha(\Delta_\phi)$ runs between Cor.~3 and Thm.~6 of op.~cit.)
\end{proof}

\begin{rema}
	\label{rmk:dominant_levis}

	The condition that $\Delta_\phi$ is a base of $\phi$ is equivalent to asking that there exists an element $A \in \mf t_\phi$ which is $\Delta$-\emph{dominant},
	in the complexified sense:
	cf.~\cite[\S~5.2]{crooks_2019_complex_adjoint_orbits_in_lie_theory_and_geometry} and~\cite[\S~2.2]{collingwood_mcgovern_1993_nilpotent_orbits_in_semisimple_lie_algebras}.
	Then $\phi$ is encoded by the (full) subdiagram of the Dynkin diagram of $(\mf g,\mf t,\Delta)$ on the set of nodes $\Delta_\phi \sse \Delta$;
	one might say that $\phi$ itself is $\Delta$-\emph{dominant}.
	(Then any Levi subsystem is dominant with respect to some base,
	but \emph{not} conversely,
	cf.~Fn.~\ref{fn:missing_levis}.)
\end{rema}

\subsubsection{}

Overall,
a Levi subsystem $\phi \sse \Phi$ yields the following `crossing' of split group extensions:
\begin{equation}
	\label{eq:levi_diagram}
	\begin{tikzcd}
		& & & 1 \ar{d} &  \\
		& & & W'(\phi) \ar{d} &  \\
		1 \ar{r} & W_{\mf t_\phi} \ar{r} & N_W(\mf t_\phi) \ar{r} & W(\phi) \ar{d} \ar{r} \ar[bend right=20,dashed]{l} & 1. \\
		& & & \wt W(\phi) \ar{d} \ar[bend right=20,dashed]{u} &  \\
		& & & 1 &
	\end{tikzcd}
\end{equation}
Moreover,
if Howlett's twist is trivial,
then the relative Weyl group $W(\phi)$ acts on $t_\phi$ as a real reflection group.

\begin{rema}
	\label{rmk:trivial_howlett_reflections}

	The reflection arrangement of $W'(\phi)$ need \emph{not} exhaust the set of hyperplanes $\set{ H_\alpha(\phi) | \alpha \in \Phi \sm \phi}$,
	even when $W'(\phi) = W(\phi)$.
	(This is the main reason behind the definitions/statements of \S~\ref{sec:lehrer_springer_theory}.)

	E.g.,
	the root system of type $A_2$ contains exactly three proper root subsystems of type $A_1$,
	all automatically Levi.
	In these examples,
	the set of hyperplanes $H_\alpha(\phi) \sse \mf t_\phi$,
	for $\alpha \in \Phi \sm \phi$,
	is a singleton;
	but $W(\phi)$ is trivial---%
	and so a fortiori $W'(\phi)$.
\end{rema}

\begin{enonce}{Corollary-Definition}[]
	\label{cor:howlett_twisted_reflection_cosets}

	Choose an integer $r \geq 1$ and a vector $A \in \mf t$.
	Let also $\phi \sse \Phi$ be the Levi subsystem determined by $A$ (so that the groups of the diagram~\eqref{eq:levi_diagram} are defined.)
	Then:
	\begin{enumerate}
		\item
		      there is a real reflection subcoset of $\mb G = (\mf t,W)$,
		      defined by $\mb G'_r(\phi) \ceqq \bigl( \mf t_\phi, g_\phi W'_\phi \bigr)$,
		      where $g \in W$ is any element generating the $r$-Galois-orbit of the irregular type $\wh Q \ceqq A w^{-1}$;

		\item
		      if $W$ is irreducible,
		      of type $A$,
		      $BC$,
		      or $F$,
		      then $\mb G'_r$ is split;

		\item
		      if $W$ is irreducible,
		      of type $D$,
		      and if $\phi$ contains a (unique) irreducible component of type $D$,
		      then $\mb G'_r$ is split;

		\item
		      and if $W$ is irreducible,
		      of type $E$,
		      then $\delta_{\mb G'_r} \leq 2$.
	\end{enumerate}
\end{enonce}

\begin{proof}
	The previous discussion implies that $g_\phi W'_\phi \sse N_W(\mf t_\phi)$ is indeed a reflection coset of Howlett's relative reflection group~\eqref{eq:relative_reflection_group_and_twist},
	and a subcoset of $(\mf t,W)$,
	which splits if and only if the class of $g_\phi$ vanishes in Howlett's twist.
	The other statements follow from the case-by-case discussion of~\cite{howlett_1980_normalizers_of_parabolic_subgroups_of_reflection_groups}%
	---which is consistent with~\cite{doucot_rembado_tamiozzo_2022_local_wild_mapping_class_groups_and_cabled_braids}.
\end{proof}

\begin{rema}
	In type $D$,
	the crux of the matter is that the complement~\eqref{eq:pure_untwisted_nongeneric_deformation_space_1_coeff} need \emph{not} come from a root-hyperplane arrangement,
	cf.~\cite{rembado_2024_a_colourful_classification_of_quasi_root_systems_and_hyperplane_arrangements} and \S~\ref{sec:type_D}.
	Rather,
	it involves `generalized' root systems~\cite{dimitrov_fioresi_2023_generalized_root_systems},
	which all arise upon restrictions/projections of root systems,
	relative to a Levi subsystem~\cite{cuntz_muehlherr_2024_a_classification_of_generalized_root_systems}.
	(Cf.~Rmk.~\ref{rmk:relative_howlett}.)

	The simplest example corresponds to a (type-$A$) rank-$1$ Levi subsystem $\phi \sse \Phi_D(4)$,
	leading to $7$ hyperplanes in $\mb C^3$:
	their complement is a copy of $\mc M^{\musDoubleSharp}(1,2)$,
	in the notation of~\eqref{eq:exotic_arrangement}.
\end{rema}

\subsection{Complex reflection groups from admissible deformations}

Using (the split version of) Lehrer--Springer theory~\cite{lehrer_springer_1999_reflection_subquotients_of_unitary_reflection_groups} (cf.~\cite{denef_loeser_1995_regular_elements_and_monodromy_of_discriminants_of_finite_reflection_groups}),
the material of \S~\ref{sec:howlett_twist} clarifies the main obstruction to finding complex reflection groups in twisted meromorphic 2d gauge theory:

\begin{theo}
	\label{thm:complex_refl_groups_from_gauge}

	Choose an element $g \in W$,
	and an eigenvector $A \in \mf t(g,\zeta_r)$;
	let $\phi \ceqq \phi_A \sse \Phi$ be the Levi annihilator of $A$.
	If Howlett's twist $W(\phi) \thra \wt W(\phi)$ is \emph{trivial},
	as per Prop.~\ref{prop:howlett},
	then the centralizer $Z_{W,\phi}(g) \sse W(\phi)$ of~\eqref{eq:centralizer_in_quotient} acts (faithfully) on $\mf t_\phi(g_\phi,\zeta_r)$ as a complex reflection group.
\end{theo}

\begin{proof}
	By hypothesis,
	$W(\phi)$ acts (faithfully) on $\mf t_\phi \sse \mf t$ as a real reflection group.

	Now one can show that the eigenspace $\mf t_\phi(g_\phi,\zeta_r) \sse \mf t_\phi$ has maximal dimension amongst the $\zeta_r$-eigenspaces of elements of $W(\phi)$.\fn{
		This does \emph{not} use that $\mf t(g,\zeta_r) \sse \mf t$ has maximal dimension,
		but cf.~Rmk.~\ref{rmk:relative_howlett}.}~To this end,
	in view of~\cite[Thm.~3.4~(ii)]{springer_1974_regular_elements_of_finite_reflection_groups},
	it is enough to establish that it is \emph{maximal}---%
	with respect to inclusion.
	Suppose,
	therefore,
	that $\mf t_\phi(g_\phi,\zeta_r) \sse \mf t_\phi(g'_\phi,\zeta_r)$ for some $g'_\phi \in W(\phi)$.
	This means that $g'_\phi$ is the restriction/class of an element $g' \in N_W(\mf t_\phi)$ such that $g'(A) = \zeta_r(A)$,
	as now $A \in \mf t_\phi \cap \mf t(g',\zeta_r)$;
	then Lem.~\ref{lem:no_marking} implies that $\mf t(g_\phi,\zeta_r) = \mf t_\phi(g'_\phi,\zeta_r)$.

	Let now $N \sse W(\phi)$ be the setwise stabilizer of $\mf t_\phi(g_\phi,\zeta_r)$ inside $W(\phi)$,
	and denote by $Z \sse N$ be the pointwise stabilizer therein.
	It follows from~\cite[Thm.~1.1]{lehrer_springer_1999_reflection_subquotients_of_unitary_reflection_groups} that the quotient $\ol N \ceqq N \bs Z$ acts on $\mf t_\phi(g_\phi,\zeta_r)$ as a complex reflection group.
	But finally Lem.~\ref{lem:restricted_commutators} states precisely that $N = Z_{W,\phi}(g)$,
	and in our context $Z$ is trivial because $\mf t_\phi(g_\phi,\zeta_r)$ contains the vector $A$,
	which is in generic position in $\mf t_\phi$ (cf.~Rmk.~\ref{rmk:about_parabolics}).
\end{proof}

\subsubsection{}

In the notation of Thm.~\ref{thm:complex_refl_groups_from_gauge},
let $\wh\Theta = \wh\Theta \bigl( \wh Q \bigr)$ be the irregular class underlying the $r$-Galois-closed irregular type $\wh Q \ceqq Aw^{-1}$.
Then the $r$-admissible deformations of $\wh\Theta$ correspond to the orbits for the free action of a complex reflection group,
on a (stable) hyperplane complement---%
contained within the regular part of the latter.

The general case---%
for any irregularity $s \geq 1$---%
involves nested semidirect factorizations of normalizers of parabolic subgroups of $W$.
When $\mf g$ is a simple Lie algebra of classical type,
this can be explicitly understood by associating a decorated tree $\mc T$ to (a variation of) an irregular type/class,
as $\mc T$ comes equipped with its own Weyl group which can be viewed as a `cabled' version of an ordinary Weyl group (cf.~Rmk.~\ref{rmk:cabled_weyl}).

\subsubsection{}

Before delving into that,
however,
we state a few more general results (in \S~\ref{sec:lehrer_springer_theory}) which readily yield a classification of all possible complex reflection arrangements that arise in the classical simple examples,
apart from the aforementioned `noncrystallographic' exceptions in type $D$ (cf.~\S\S~\ref{sec:type_A}--\ref{sec:type_D}).
This is consistent with Thm.~\ref{thm:complex_refl_groups_from_gauge},
which indicates that the main issue lies within the Levi stratum of the coefficient $A$,
\emph{regardless} of the integer $r \geq 1$:
one thus expects that having real reflection groups in the untwisted setting should lead to complex reflections---%
upon turning on a ramification.

More precisely,
if $\mf g$ is a classical simple Lie algebra of type $A$,
$B$,
or $C$,
then the material of \S\S~\ref{sec:type_A}--\ref{sec:D_trees} implies in particular that the deformation spaces of \emph{arbitrary} $r$-Galois-closed irregular types can be factored into copies of the complex reflection complements $\mc M(1,k), \mc M^\sharp(\rho,k) \sse \mb C^k$ of generalized symmetric groups $G(\rho,1,k)$ (for suitable integers $k,\rho \geq 1$,
cf.~\eqref{eq:standard_complement_type_D}--\eqref{eq:standard_complement}).
Moreover,
in type $D$,
the unique new examples are:
(i) the `unramified' complements $\mc M(2,k) \sse \mb C^k$,
i.e.,
the regular part of the Weyl group $W_D(k)$;
and (ii) the infinite family of noncrystallographic complements $\mc M^{\musDoubleSharp}(k,k') \sse \mb C^{k+k'}$,
which interpolate between the reflection arrangements of $W_D(k+k')$ and $W_{BC}(k+k')$ (for another integer $k' \geq 1$,
cf.~\eqref{eq:exotic_arrangement}).
The latter already arises in the untwisted setting of~\cite{doucot_rembado_tamiozzo_2022_local_wild_mapping_class_groups_and_cabled_braids},
and there are \emph{no} new examples in the twisted setting.

\begin{rema}
	\label{rmk:relative_howlett}

	As pointed out by J.~Michel,
	one can rephrase the main obstruction in terms of normalizers of parabolic subgroups in \emph{complex} reflection groups,
	in a `relative' version of~\cite{howlett_1980_normalizers_of_parabolic_subgroups_of_reflection_groups}:
	it suffices to consider the eigenspace of an element of $W$ \emph{before} working in a given Levi stratum.

	Precisely,
	choose a group element $g \in W$ and an eigenvector $A \in \mf t(g,\zeta_r)$.
	Moreover,
	up to changing $g$,
	assume---%
	w.l.o.g.~\cite[Thm.~3.4~(ii)]{springer_1974_regular_elements_of_finite_reflection_groups}---%
	that the eigenspace $\mf t(g,\zeta_r) \sse \mf t$ is maximal amongst the $\zeta_r$-eigenspaces of elements of $W$.
	(The crux of the matter is when $A$ is \emph{not} in general position in $\mf t(g,\zeta_r)$,
	cf.~Rmk.~\ref{rmk:general_position}.)
	Denote now by $\pi_{g,r} \cl N_W(g,\zeta_r) \thra W_{g,r}$ the setwise-modulo-pointwise stabilizer of $\mf t(g,\zeta_r)$ (cf.~\eqref{eq:eigenspace_normalizer}),
	which is a subquotient of $W$ acting thereon as a complex reflection group à la Springer/Lehrer--Springer.
	It follows that the intersection $\mf t_\phi \cap \mf t(g,\zeta_r) = \mf t_\phi(g_\phi,\zeta_r)$ is the smallest flat of the reflection arrangement of $W_{g,r}$ containing $A$,
	and one can consider the parabolic subgroup
	\begin{equation}
		\label{eq:complex_normalizer}
		W_{A,g,r} \ceqq \pi_{g,r} \bigl( W_A \cap N_W(g,\zeta_r) \bigr) \sse W_{g,r}.
	\end{equation}
	Again,
	the faithful quotient $N_{W_{g,r}}(W_{A,g,r}) \bs W_{A,g,r}$ need \emph{not} act as a complex reflection group.

	In this viewpoint,
	the rest of this text in particular explores the examples of~\eqref{eq:complex_normalizer} coming from Weyl groups of classical type;
	or rather,
	a nested extension thereof.
\end{rema}

\section{Relative reflection groups}
\label{sec:lehrer_springer_theory}

\subsection{Some more Lehrer--Springer theory}

Lemm.~\ref{lem:restricted_generic_twisted_orbit} +~\ref{lem:restricted_commutators} and Thm.~\ref{thm:complex_refl_groups_from_gauge} relate twisted (local) isomonodromic deformations of irregular-singular connections with Lehrer--Springer theory.
To start classifying the $r$-admissible deformation spaces,
we also make a few statements about the \emph{nonsplit} case~\cite[\S~6]{springer_1974_regular_elements_of_finite_reflection_groups},
which can still be phrased in terms of reflection cosets (cf.~\S~\ref{sec:reflection cosets} and~\cite{lehrer_1995_poincare_polynomials_for_unitary_reflection_groups,
	broue_michel_1997_sur_certains_elements_reguliers_des_groupes_de_weyl_et_les_varietes_de_deligne_luztig_associees,lehrer_springer_1999_intersection_multiplicities_and_reflection_subquotients_of_unitary_reflection_groups,
	lehrer_springer_1999_reflection_subquotients_of_unitary_reflection_groups}.)

\subsubsection{}
\label{sec:generalized_root_systems}

Ideally,
in order to leverage \S~\ref{sec:howlett},
the reflection arrangement of Howlett's relative reflection group $W'(\phi) \sse W(\phi)$
(from Cor.-Def.~\ref{cor:howlett_twisted_reflection_cosets}) would coincide with the hyperplane arrangement of~\eqref{eq:pure_untwisted_nongeneric_deformation_space_1_coeff}.
In that case,
the group elements generating the $r$-Galois-orbit could be regarded as Howlett's twists.

However,
there are simple examples where $W'(\phi)$ is trivial,
while the reflections about the hyperplanes $H_\alpha(\phi) \sse \mf t_\phi$ generate a nontrivial group (cf.~Rmk.~\ref{rmk:trivial_howlett_reflections}).
Thus,
we also introduce the following more naive object:

\begin{defi}
	Let $\phi \sse \Phi$ be a Levi subsystem.
	The \emph{relative reflection group of} $(\mf t,W,\phi)$ is
	\begin{equation}
		\label{eq:relative_reflection_group}
		G(\phi)
		\ceqq \Braket{ \sigma_\alpha(\phi) \mid \alpha \in \Phi \sm \phi } \sse \GL_{\mb C}(\mf t_\phi),
	\end{equation}
	in the notation of~\eqref{eq:reduced_reflection}.\fn{
		Hereafter we tacitly work under the assumption that $G(\phi)$ is finite (cf.~\S~\ref{sec:background}).
		Beware that in general $W(\phi) \nsse G(\phi)$ and---%
		as mentioned above---%
		$G(\phi) \nsse W(\phi)$.}
\end{defi}

(This group is `spetsial',
as it is generated by elements of order $2$.)

\subsubsection{}

By construction,
the reflection arrangement of~\eqref{eq:relative_reflection_group} contains the hyperplanes $H_\alpha(\phi) \sse \mf t_\phi$,
possibly properly.
The main point is that whenever they coincide then we can use Lehrer--Springer theory to characterize the pure factors~\eqref{eq:pure_general_twisted_deformation_space_1_coeff} as complements of complex reflection arrangements:
most directly via Cor.~\ref{cor:pure_factors_as_reflection_complements},
which works in type $A$ and $BC$.
In type $D$ instead,
as already mentioned,
one finds nontrivial twists,
and so we first provide a more general statement:

\begin{lemm}
	\label{lem:restricted_reflection_group}

	Choose an element $g \in N_W(\mf t_\phi) \sse W$,
	and suppose that
	\begin{equation}
		\dim_{\mb C} \bigl( \mf t_\phi( g_\phi,\zeta_r ) \bigr) \geq \dim_{\mb C} \bigl( \mf t_\phi( g_\phi g',\zeta_r ) \bigr),
		\qquad g' \in G(\phi).
	\end{equation}
	Write  also
	\begin{equation}
		\label{eq:outer_normalizer}
		N
		\ceqq \Set{ g' \in G(\phi) | g' \bigl( \mf t_\phi( g_\phi,\zeta_r) \bigr) \sse \mf t_\phi (g_\phi,\zeta_r) },
	\end{equation}
	and
	\begin{equation}
		\label{eq:outer_centralizer_1}
		Z
		\ceqq \Set{ g' \in N | g'(A) = A \text{ for all } A \in \mf t_\phi (g_\phi,\zeta_r) }.
	\end{equation}

	Then:
	\begin{enumerate}
		\item
		      the quotient $\ol N \ceqq N \bs Z$ acts as a complex reflection group on $\mf t_\phi (g_\phi,\zeta_r)$,
		      whose reflecting hyperplanes are the intersections of those of $G(\phi)$ with $\mf t_\phi (g_\phi,\zeta_r)$;

		\item
		      and if $\bigl( \mf t_\phi, G(\phi) \bigr)$ is irreducible,
		      so is $\bigl( \mf t_\phi(g_\phi,\zeta), \ol N \bigr)$.
	\end{enumerate}
\end{lemm}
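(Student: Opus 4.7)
The plan is to deduce both statements from Lehrer--Springer's theory of reflection subquotients~\cite{lehrer_springer_1999_reflection_subquotients_of_unitary_reflection_groups,
  lehrer_springer_1999_intersection_multiplicities_and_reflection_subquotients_of_unitary_reflection_groups},
in its coset version (cf.~\cite{broue_michel_1997_sur_certains_elements_reguliers_des_groupes_de_weyl_et_les_varietes_de_deligne_luztig_associees}),
applied to the reflection coset $g_\phi G(\phi) \sse \GL_{\mb C}(\mf t_\phi)$.

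The first step is to check that this really is a reflection coset,
i.e.,
that $g_\phi$ normalizes $G(\phi)$.
By Lem.~\ref{lem:setwise_stabilizer_equal_normalizer},
the element $g \in N_W(\mf t_\phi) = N_W(W_{\mf t_\phi})$ normalizes the parabolic subgroup $W_{\mf t_\phi}$,
whence preserves the Levi subsystem $\phi \sse \Phi$---
as by Steinberg's theorem the latter is precisely the set of roots whose reflections lie in $W_{\mf t_\phi}$---,
and therefore permutes the complement $\Phi \sm \phi$.
It follows that conjugation by $g_\phi$ permutes the generators $\sigma_\alpha(\phi)$ of~\eqref{eq:relative_reflection_group},
so that indeed $g_\phi$ normalizes $G(\phi)$.
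Next,
the maximality hypothesis on $\dim_{\mb C} \mf t_\phi(g_\phi,\zeta)$ amounts to declaring that $\zeta$ is a \emph{regular value} for the reflection coset $g_\phi G(\phi)$,
in the sense of op.~cit.;
equivalently,
it is the hypothesis required by Lehrer--Springer for their reflection-subquotient construction.

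At this point,
(1) becomes a direct application of the coset version of their theorem:
the quotient $\ol N = N \bs Z$ acts faithfully on $\mf t_\phi(g_\phi,\zeta)$ as a complex reflection group,
whose reflecting hyperplanes are precisely the proper intersections of $\mf t_\phi(g_\phi,\zeta)$ with the reflecting hyperplanes $H_\alpha(\phi)$ of $G(\phi)$.
The identification of $Z$ with the `parabolic' subgroup of $G(\phi)$ generated by reflections whose hyperplane contains $\mf t_\phi(g_\phi,\zeta)$ follows from Steinberg's fixed-point theorem (which holds for arbitrary complex reflection groups).
For (2),
irreducibility of the subquotient $\bigl( \mf t_\phi(g_\phi,\zeta),\ol N \bigr)$ can then be read off from its degree data:
its degrees form the subset of the degrees of $G(\phi)$ divisible by the order $d$ of $\zeta$,
and when $\bigl( \mf t_\phi,G(\phi) \bigr)$ is irreducible the corresponding subquotient is as well---
again by Lehrer--Springer.

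The main anticipated obstacle is chiefly bibliographic:
one must verify that the cited Lehrer--Springer machinery applies cleanly to the present \emph{twisted} coset setup,
where $G(\phi)$ need not be crystallographic (being only spetsial) and where $g_\phi$ is external to $G(\phi)$.
This is handled by the coset version in the framework of spetses,
but it is worth spelling out the compatibility carefully---
especially if one wants the stated form of the reflecting hyperplanes and of the subgroup $Z$ to be derived uniformly,
rather than case-by-case as one goes to \S\S~\ref{sec:type_A}--\ref{sec:type_D}.
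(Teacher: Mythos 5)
Your proposal follows the paper's own route: establish that $g_\phi$ normalizes the relative reflection group $G(\phi)$ by showing that conjugation permutes the generating reflections $\sigma_\alpha(\phi)$, $\alpha \in \Phi \sm \phi$, and then invoke Lehrer--Springer's reflection-subquotient theorem (the paper cites Thm.~1.1 and Thm.~A of the 1999 paper) for both conclusions. The only step you elide is the justification that $g$ permuting $\Phi \sm \phi$ actually yields $g_\phi \sigma_\alpha(\phi) g_\phi^{-1} = \sigma_{g(\alpha)}(\phi)$: an order-$2$ reflection is not determined by its reflecting hyperplane alone, so one must also check that $g_\phi$ carries the line spanned by $\alpha^{\dual}_\phi$ to that spanned by $g(\alpha)^{\dual}_\phi$, which the paper does by noting that the orthogonal projection onto $\mf t_\phi$ (taken with respect to a $W$-invariant inner product) commutes with $g$, so that $g_\phi\bigl(\alpha^{\dual}_\phi\bigr) = \bigl( g(\alpha^{\dual}) \bigr)_{\phi}$.
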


\begin{proof}[Proof postponed to~\ref{proof:lem_restricted_reflection_group}]
\end{proof}

\subsubsection{}

The proof~\ref{proof:lem_restricted_reflection_group} shows in particular that $\bigl( \mf t_\phi, g_\phi G(\phi) \bigr)$ is a (possibly twisted) spets,
so in particular a reflection coset,
which is a subcoset of $(\mf t,W)$ if $G(\phi) \sse W(\phi)$.

\begin{coro}
	\label{cor:restricted_reflection_group}

	Choose an element $g \in N_W(\mf t_\phi)$,
	and suppose that its class/restriction $g_\phi \in W(\phi)$ admits a $G(\phi)$-regular\fn{
		This does \emph{not} depend on whether $g$ itself is regular,
		cf.~Exmp.~\ref{exmp:counterexample_type_D} in type $D$.}~$\zeta_r$-eigenvector.
	Then:
	\begin{enumerate}
		\item
		      the conclusions of Lem.~\ref{lem:restricted_reflection_group} hold for the eigenspace $\mf t_\phi(g_\phi,\zeta_r) \sse \mf t_\phi$;

		\item
		      the pointwise stabilizer~\eqref{eq:outer_centralizer_1} is trivial;

		\item
		      and the setwise stabilizer~\eqref{eq:outer_normalizer} coincides with the `centralizer' of $g_\phi$ in $G(\phi)$,
		      i.e.,
		      with the subgroup
		      \begin{equation}
			      \label{eq:outer_centralizer_2}
			      Z_{G(\phi)}(g_\phi)
			      \ceqq \Set{ g' \in G(\phi) | g_\phi g' = g' g_\phi \in \GL_{\mb C}(\mf t_\phi) }.
		      \end{equation}
	\end{enumerate}
\end{coro}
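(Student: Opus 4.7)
The plan is to invoke Lem.~\ref{lem:restricted_reflection_group} as a black box, and then to exploit the regularity hypothesis to pin down the stabilizers $Z$ and $N$ of~\eqref{eq:outer_centralizer_1}--\eqref{eq:outer_normalizer}. To apply that lemma, the dimensional maximality of $\mf t_\phi(g_\phi,\zeta)$ within the coset $g_\phi G(\phi)$ must first be verified. Let $v \in \mf t_\phi(g_\phi,\zeta)$ be a $G(\phi)$-regular eigenvector; since $g \in N_W(\mf t_\phi)$ permutes the root hyperplanes $H_\alpha \cap \mf t_\phi$---which are precisely the reflecting hyperplanes of $G(\phi)$---one sees that $g_\phi$ normalizes $G(\phi)$, so $g_\phi G(\phi)$ is a genuine reflection coset. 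The existence of $v$ then means that $g_\phi$ is a $\zeta$-regular element of this coset in the sense of Springer/Lehrer--Springer~\cite{springer_1974_regular_elements_of_finite_reflection_groups,lehrer_springer_1999_reflection_subquotients_of_unitary_reflection_groups}, and so its $\zeta$-eigenspace attains the maximal dimension within the coset. This gives the hypothesis of Lem.~\ref{lem:restricted_reflection_group}, whence the conclusions summarised in~(1).

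For~(2), the pointwise stabiliser $Z \sse G(\phi)$ of $\mf t_\phi(g_\phi,\zeta)$ is contained in the stabiliser in $G(\phi)$ of the single vector $v$; by Steinberg's theorem the stabiliser of a regular vector in a complex reflection group is trivial, so $Z = \set{1}$. For~(3), the inclusion $Z_{G(\phi)}(g_\phi) \sse N$ is formal, as commuting elements preserve each other's eigenspaces. Conversely, given $g' \in N$, set $h \ceqq g_\phi g' g_\phi^{-1} g'^{-1} \in G(\phi)$ (using once more that $g_\phi$ normalises $G(\phi)$). Using $g_\phi(v) = \zeta v$, $g_\phi^{-1}(v) = \zeta^{-1} v$, and $g'(v) \in \mf t_\phi(g_\phi,\zeta)$, a short direct computation gives
\begin{equation*}
	h \bigl( g'(v) \bigr) = g_\phi g' g_\phi^{-1} (v) = \zeta^{-1} g_\phi g'(v) = \zeta^{-1} \cdot \zeta g'(v) = g'(v).
\end{equation*}
Since $g' \in G(\phi)$ permutes the reflecting hyperplanes, $g'(v)$ is again $G(\phi)$-regular, and Steinberg forces $h = 1$, i.e., $g' \in Z_{G(\phi)}(g_\phi)$.

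The only genuine subtlety is the initial dimensional maximality: this is a standard consequence of Lehrer--Springer's theory applied to reflection \emph{cosets} rather than merely to reflection groups, but one does need to have that coset-version at hand (a fact already built into the proof of Lem.~\ref{lem:restricted_reflection_group}). Everything downstream is a routine regularity/centraliser manipulation, in close parallel to what was done for Prop.-Def.~\ref{prop:full_generic_twisted_deformation_space_1_coeff} (cf. also Lem.~\ref{lem:restricted_generic_twisted_orbit}).
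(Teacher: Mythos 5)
Your argument is correct, and it ultimately rests on the same source as the paper: the paper's entire proof is the citation ``this follows from \cite[Prop.~6.3 + Thm.~6.4]{springer_1974_regular_elements_of_finite_reflection_groups}'', i.e., it outsources both the eigenspace-maximality and the two stabilizer statements to Springer's \S~6. You invoke that coset/Lehrer--Springer theory only where it is genuinely indispensable --- to get that a regular eigenvector forces $\mf t_\phi(g_\phi,\zeta)$ to have maximal dimension in the coset $g_\phi G(\phi)$, which is the hypothesis of Lem.~\ref{lem:restricted_reflection_group} --- and then re-derive the two ``moreover'' items by hand: triviality of $Z$ from Steinberg's fixed-point theorem applied to the regular vector $v$, and $N = Z_{G(\phi)}(g_\phi)$ from the commutator computation $h(g'(v)) = g'(v)$ together with the $G(\phi)$-regularity of $g'(v)$. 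Both steps are sound (the preliminary observation that $g_\phi$ normalizes $G(\phi)$ is exactly what the proof of Lem.~\ref{lem:restricted_reflection_group} establishes, so there is no circularity), and what your version buys is a self-contained, checkable argument for the stabilizer statements rather than a black-box citation; the cost is that the maximality step still cannot be made elementary and must be quoted. One cosmetic remark: your items (2) and (3) correspond to the corollary's items (1) and (2).
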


\begin{proof}
	This follows from~\cite[Prop.~6.3 + Thm.~6.4]{springer_1974_regular_elements_of_finite_reflection_groups}.
\end{proof}

\begin{coro}
	\label{cor:pure_factors_as_reflection_complements}

	Choose an element $g \in W$ and an eigenvector $A \in \mf t(g,\zeta_r)$.
	Let also $\phi \sse \Phi$ be the Levi subsystem determined by (the annihilator of) $A$,
	and suppose that the set of hyperplanes
	\begin{equation}
		\set{ H_\alpha(\phi) | \alpha \in \Phi \sm \phi } \sse \mb P \bigl( \mf t_\phi^{\dual} \bigr),
	\end{equation}
	of~\eqref{eq:pure_untwisted_nongeneric_deformation_space_1_coeff},
	exhausts the reflection arrangement of $G(\phi)$.
	Then:
	\begin{enumerate}
		\item
		      the conclusions of Cor.~\ref{cor:restricted_reflection_group} hold for the eigenspace $\mf t_{\phi}(r) \sse \mf t_\phi$ (cf.~Rmk.~\ref{rmk:independence_restricted_eigenspaces});

		\item
		      and the hyperplane complement~\eqref{eq:pure_general_twisted_deformation_space_1_coeff} coincides with the regular part of $\mf t_{\phi}(r)$ for the action of the complex reflection group~\eqref{eq:outer_centralizer_2}.
	\end{enumerate}
\end{coro}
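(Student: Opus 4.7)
The plan is to derive this as a direct corollary of Cor.~\ref{cor:restricted_reflection_group} (together with Lem.~\ref{lem:restricted_reflection_group}), the main task being to check that the vector $A$ is $G(\phi)$-regular inside $\mf t_\phi$.

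First, I would observe that $A \in \mf t_\phi$ by the definition~\eqref{eq:kernel_intersection} of the Levi flat, since $\phi = \phi_A$ is exactly the set of roots annihilating $A$. Next, by Prop.-Def.~\ref{prop:pure_general_twisted_deformation_space_1_coeff}~(1), any element generating the $r$-Galois-orbit of the (single-coefficient) irregular type with coefficient $A$ preserves $\mf t_\phi$; more elementarily, the condition $g(A) = \zeta_r A \in \mf t_\phi$ already forces $g \in N_W(\mf t_\phi)$ via the stability argument of~\cite[Lem.~2.1]{doucot_rembado_2023_topology_of_irregular_isomonodromy_times_on_a_fixed_pointed_curve}. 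Hence $g_\phi \in \GL_{\mb C}(\mf t_\phi)$ is defined and satisfies $g_\phi(A) = \zeta_r A$, so that $A \in \mf t_\phi(g_\phi,\zeta_r)$.

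The key step is the regularity claim: by hypothesis the reflecting hyperplanes of $G(\phi) \sse \GL_{\mb C}(\mf t_\phi)$ are precisely the $H_\alpha(\phi)$ for $\alpha \in \Phi \sm \phi$, and by construction of $\phi = \phi_A$ one has $\alpha(A) \neq 0$ for each such $\alpha$; hence $A \notin H_\alpha \cap \mf t_\phi = H_\alpha(\phi)$, showing that $A$ is a $G(\phi)$-regular $\zeta_r$-eigenvector of $g_\phi$. I can then invoke Cor.~\ref{cor:restricted_reflection_group} verbatim: the pointwise stabilizer~\eqref{eq:outer_centralizer_1} is trivial, the setwise stabilizer~\eqref{eq:outer_normalizer} coincides with~\eqref{eq:outer_centralizer_2}, and the quotient $\ol N = Z_{G(\phi)}(g_\phi)$ acts on $\mf t_\phi(g_\phi,\zeta_r)$ as a complex reflection group whose reflecting hyperplanes are the intersections of those of $G(\phi)$ with the eigenspace. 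By Rmk.~\ref{rmk:independence_restricted_eigenspaces}, this eigenspace is exactly $\mf t_\phi(r)$, independently of the chosen $g$.

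Finally, to match the complement~\eqref{eq:pure_general_twisted_deformation_space_1_coeff} with the regular part of $\mf t_\phi(r)$ for the $Z_{G(\phi)}(g_\phi)$-action, I would unpack the notation~\eqref{eq:hyperplane_intersection}: the reflecting hyperplanes of $\ol N$ in $\mf t_\phi(r)$ are precisely $H_\alpha(\phi) \cap \mf t_\phi(g_\phi,\zeta_r) = H_\alpha(g_\phi,\zeta_r)$ for $\alpha \in \Phi \sm \phi$, and these are exactly the hyperplanes removed in~\eqref{eq:pure_general_twisted_deformation_space_1_coeff}. I do not expect any genuine obstacle here: the only subtle point is to confirm that the hypothesis on the exhaustion of the reflection arrangement is used exactly in identifying the reflecting hyperplanes of $\ol N$ with the intersections appearing in~\eqref{eq:pure_general_twisted_deformation_space_1_coeff}; away from this hypothesis (as can happen in type $D$, cf. Exmp.~\ref{exmp:counterexample_type_D}) the two sets can differ and the statement would fail.
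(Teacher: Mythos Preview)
Your proposal is correct and follows essentially the same approach as the paper: verify that $g \in N_W(\mf t_\phi)$ and that $A$ is a $G(\phi)$-regular $\zeta_r$-eigenvector of $g_\phi$ (using the exhaustion hypothesis), then invoke Cor.~\ref{cor:restricted_reflection_group} and match the hyperplanes~\eqref{eq:hyperplane_intersection} with the intersections of the $G(\phi)$-arrangement. The paper's proof is simply a terser version of the same argument.
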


\begin{proof}
	Under the given hypotheses we have $g \in N_W(\mf t_\phi)$,
	as well as
	\begin{equation}
		g_\phi(A)
		= g(A)
		= \zeta_r A \in \mf t_\phi.
	\end{equation}
	Furthermore,
	now~\eqref{eq:pure_untwisted_nongeneric_deformation_space_1_coeff} coincides with the $G(\phi)$-regular part of $\mf t_\phi$,
	so that in particular $A$ is $G(\phi)$-regular,
	and the intersections of the reflecting hyperplanes of $G(\phi)$ with $\mf t_{\phi}(r)$ are precisely the hyperplanes~\eqref{eq:hyperplane_intersection}.
\end{proof}

\section{Pure type \texorpdfstring{$A$}{A} (a survey)}
\label{sec:type_A}

\subsection{Reduction to the split quasi-generic case}

The twisted examples where $\mf g \in \Set{ \gl_m(\mb C),\mf{sl}_m(\mb C) }$,
for an integer $m \geq 1$,
were treated in~\cite{boalch_doucot_rembado_2025_twisted_local_wild_mapping_class_groups_configuration_spaces_fission_trees_and_complex_braids}.
Here we will review the pure setting,
in the viewpoint of \S~\ref{sec:lehrer_springer_theory},
focussing on the general linear case (removing the centre does not change the homotopy type of the admissible deformations spaces):
cf.~\S~\ref{sec:background_type_A} for background/notation.

\subsubsection{}

As explained in \S\S~\ref{sec:pure_nongeneric_case}--\ref{sec:no_marking},
one must describe the hyperplane complements~\eqref{eq:pure_general_twisted_deformation_space_1_coeff},
for any choice of:
(i) integers $r,m \geq 1$;
(ii) an $m$-by-$m$ diagonal matrix $A \in \mf t \simeq V^+_m$,
corresponding to a root subsystem $\phi \sse \Phi_A(m)$ (they are all Levi);
and (iii) a group element $g \in W_A(m) \simeq \mf S_m$ such that $g(A) = \zeta_r A$ for the standard reflection representation.

\begin{lemm}
	\label{lem:reduction_to_generic_type_A}

	The conclusions of Cor.~\ref{cor:pure_factors_as_reflection_complements} hold for the eigenspace $\mf t_\phi(r) \sse \mf t_\phi$,
	and moreover:
	\begin{enumerate}
		\item
		      the relative reflection group $G(\phi)$ of~\eqref{eq:relative_reflection_group} is isomorphic to the type-$A$ Weyl group $W_A(m_\phi)$,
		      where $m_\phi \ceqq \dim_{\mb C}(\mf t_\phi)$;\fn{
			      This is (a shift of) the rank of $\phi$ within the lattice of Levi subsystems of $\Phi$,
			      cf.~\cite[\S~2.3]{calaque_felder_rembado_wentworth_2024_wild_orbits_and_generalised_singularity_modules_stratifications_and_quantisation}.}

		\item
		      and $g_\phi \in G(\phi)$.
	\end{enumerate}
\end{lemm}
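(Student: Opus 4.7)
The plan is to use the partition encoding of $\phi$ in type $A$ to describe $\mf t_\phi$ and its hyperplane arrangement explicitly, and then verify both claims by direct identification.

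First I would set up notation: let $\phi \sse \Phi_m$ correspond to a set-partition $(B_1,\dc,B_{m_\phi})$ of $\set{1,\dc,m}$ (every Levi subsystem in type $A$ arising in this way), so that $\mf t_\phi = \bops_k \mb C \cdot \mathbf{1}_{B_k}$ carries block coordinates $(x_1,\dc,x_{m_\phi})$ and has dimension $m_\phi$. With the standard $W$-invariant pairing on $\mf t$, the projected coroot for $\alpha = e_i - e_j$ with $i \in B_p$, $j \in B_q$ (and $p \neq q$) is $\alpha^{\dual}_\phi = \mathbf{1}_{B_p}/\abs{B_p} - \mathbf{1}_{B_q}/\abs{B_q}$, and its orthogonal complement in $\mf t_\phi$ is $H_\alpha(\phi) = \set{x_p = x_q}$. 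The resulting family of hyperplanes is precisely the type-$A_{m_\phi - 1}$ braid arrangement on $\mf t_\phi$.

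For step~(1), $G(\phi)$ is a finite real reflection group on $\mf t_\phi$ (for the inherited inner product) whose reflecting arrangement is this braid arrangement. Under the tacit finiteness assumption from \S~\ref{sec:lehrer_springer_theory}, the classification of finite real reflection groups via their reflecting arrangement identifies $G(\phi)$ with the essentially unique such group realizing the $A_{m_\phi-1}$ arrangement, namely $\mf S_{m_\phi} = W_{m_\phi}(A)$. Since the $H_\alpha(\phi)$ for $\alpha \in \Phi \sm \phi$ exhaust this arrangement, the hypothesis of Cor.~\ref{cor:pure_factors_as_reflection_complements} is met and its conclusions apply to $\mf t_\phi(r)$.

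For step~(2), the element $g \in N_W(\mf t_\phi)$ must, as an element of $\mf S_m$, permute the partition blocks (since it preserves the joint kernel $\mf t_\phi$); write $\bar g \in \mf S_{m_\phi}$ for the induced size-preserving permutation. Under the identification of~(1), $g_\phi$ acts on $\mf t_\phi$ via block-coordinate permutation by $\bar g$ and is therefore in $G(\phi)$. Concretely, decomposing $\bar g$ as a product of transpositions $(p,q)$ of blocks of equal size, and noting that $\abs{B_p} = \abs{B_q}$ makes $\alpha_\phi^{\dual}$ proportional to $\mathbf{1}_{B_p} - \mathbf{1}_{B_q}$, each factor coincides with $\sigma_{pq}(\phi) \in G(\phi)$.

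The main obstacle I foresee is the identification in~(1): when block sizes differ, $\mathbf{1}_{B_p}/\abs{B_p} - \mathbf{1}_{B_q}/\abs{B_q}$ is not parallel to $\mathbf{1}_{B_p} - \mathbf{1}_{B_q}$ under the inherited inner product, so the orthogonal reflection $\sigma_{pq}(\phi)$ is \emph{not} the coordinate swap; the na\"ive type-$A$ braid relations between $\sigma_{pq}(\phi)$ and $\sigma_{qr}(\phi)$ can fail and without the finiteness hypothesis $G(\phi)$ need not even be a Weyl group. Resolving this rigorously requires either invoking the finiteness assumption together with the classification of finite real reflection groups via their arrangement, or equivalently passing to the restricted-root viewpoint on $\mf t_\phi^{\dual}$—where $\eval[1]{\alpha}_{\mf t_\phi} = e_p^{\dual} - e_q^{\dual}$ forms a genuine type-$A_{m_\phi - 1}$ root system carrying the standard permutation action of $\mf S_{m_\phi}$—to make the symmetric-group structure of $G(\phi)$ manifest.
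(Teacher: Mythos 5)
Your proposal is correct and in substance follows the same route as the paper's proof (\S~\ref{proof:lem_reduction_to_generic_type_A}): both rest on the fact that the restrictions of the roots to $\mf t_\phi$ form a type-$A_{m_\phi-1}$ root system in the block coordinates, so that the hyperplanes $H_\alpha(\phi)$ are exactly the braid arrangement and $G(\phi)\simeq \mf S_{m_\phi}$, and both then realize $g_\phi$ as a block permutation. The execution differs: the paper obtains the block structure of $\phi$ from a Dynkin subdiagram in an adapted base and cites earlier work for the restricted root system, and it deduces part~(2) from the inclusion $W(\phi)\subseteq G(\phi)$ extracted from the wreath-product description of $N_W(\mf t_\phi)$; your set-partition bookkeeping and the decomposition of $\bar g$ into transpositions of equal-size blocks make both points self-contained, and the latter has the merit of exhibiting $g_\phi$ as a product of actual generators $\sigma_\alpha(\phi)$ (for equal-size blocks the projected coroot is proportional to $\mathbf 1_{B_p}-\mathbf 1_{B_q}$, so that generator genuinely is the coordinate swap). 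The subtlety you flag is real and is passed over silently in the paper: for unequal block sizes the order-two map negating $\alpha^{\dual}_\phi$ and fixing $H_\alpha(\phi)$ is not the block-coordinate swap, and the product of two such reflections can have infinite order (e.g.\ blocks of sizes $1,1,2$ give an angle with cosine $1/3$ between the projected coroots), so the tacit finiteness assumption cannot be verified for the literal generators. The reading that makes the lemma work---and the one consistent with the paper's appeal to the restricted roots---is the second one you propose, namely taking the reflections of the type-$A$ root system of restricted roots (the coordinate swaps), whose reflection arrangement is visibly the set of $H_\alpha(\phi)$; with that reading your argument is complete.
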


\begin{proof}[Proof postponed to~\ref{proof:lem_reduction_to_generic_type_A}]
\end{proof}

\subsection{Quasi-generic classification}

The upshot is that every type-$A$ example can be viewed as a \emph{quasi-generic} type-$A$ example of lower rank.
Therefore,
the classification of all the possible hyperplane arrangements that arise follows from the:

\begin{prop}
	\label{prop:classification_type_A}

	Choose integers $m \geq 1$ and $r \geq 2$ (to focus on the twisted case).
	Let also $g$ be a regular element of $\bigl( V^+_m,W_A(m) \bigr)$ of order $r$,
	and set
	\begin{equation}
		\bm B_{r,\reg}(A_m) = \bm B_{g,r,\reg}(A_m) \ceqq V^+_m(g,\zeta_r) \cap V^+_{m,\reg}.
	\end{equation}
	Then:
	\begin{enumerate}
		\item
		      either $r \mid m$ or $r \mid (m-1)$,
		      and if $k \geq 1$ is the quotient of the division then there is a homeomorphism $\bm B_{r,\reg}(A_m) \simeq \mc M^\sharp(r,k)$,
		      in the notation of~\eqref{eq:standard_complement};

		\item
		      and the corresponding complex reflection group is the generalized symmetric group $G(r,1,k) \simeq \mf S_k \wr (\mb Z \bs r\mb Z)$ of the infinite Shephard--Todd family~\cite{shepard_todd_1954_finite_unitary_reflection_groups}.
	\end{enumerate}
\end{prop}

\begin{proof}
	By~\cite[\S~5.1]{springer_1974_regular_elements_of_finite_reflection_groups},
	the regular elements split into disjoint cycles of one and the same length,
	and fix at most one element of $\ul m^+$.
	(They are all powers of Coxeter elements,
	i.e.,
	of $m$-cycles.)
	In particular,
	in any such factorization of $g$,
	the length of all cycles equals $r$---%
	so that $g^d = 1$ if and only if $r \mid d$,
	for any integer $d > 0$.

	Suppose first that $g$ fixes no element:
	then $r \mid m$.
	Consider the integer $k \geq 1$ defined by $m = kr$,
	so that $g$ is the product of $k$ disjoint $r$-cycles.
	Up to conjugation,
	one can assume that
	\begin{equation}
		\label{eq:cycle_decomposition_type_A}
		g
		= c^+_1 \dm c^+_k,
		\qquad c^+_j \ceqq \bigl( (j-1)r + 1 \mid (j-1)r + 2 \mid \dm \mid jr - 1 \mid jr \bigr),
	\end{equation}
	in the notation of~\eqref{eq:type_A_cycle}.
	Then $g$ acts on $V^+_m$ with spectrum $\set{ 1,\zeta_r,\dc,\zeta_r^{r-1} } \sse \mb C^{\ts}$,
	and
	\begin{equation}
		\label{eq:eigenspace_base_type_A}
		V^+_m(g,\zeta_r)
		= \spann_{\mb C} \set{A_1,\dc,A_k},
		\quad A_i \ceqq \prescript{t}{} ( \underbrace{0, \dc, 0,}_{(i-1)r \text{ times }} 1,\zeta_r^{r-1},\dc,\zeta_r, \underbrace{0, \dc, 0}_{(k-i)r \text{ times }} ).
	\end{equation}
	Hence,
	the regular eigenvectors are of the form
	\begin{equation}
		\label{eq:regular_eigenvector_type_A}
		A
		= \sum_{i = 1}^k \lambda_i A_i,
		\qquad 0 \neq \zeta_r^l\lambda_i \neq \zeta_r^{\wt l} \lambda_j,
		\qquad i \neq j \in \ul k^+,
		\quad l,\wt l \in \mb Z \bs r \mb Z,
	\end{equation}
	and indeed they correspond to the elements of~\eqref{eq:standard_complement}.

	If instead $g$ fixes one element,
	then $r \mid (m-1)$.
	Moreover,
	by hypothesis $\zeta_r \neq 1$,
	and so a fixed coordinate in any eigenvector must vanish.
	Hence,
	the above construction applies verbatim inside $V^+_{m-1} \simeq \mb C^{m-1} \hra \mb C^m \simeq V^+_m$.

	For the second statement,
	it follows,
	e.g.,
	from~\cite[Lem.~1]{kezlan_rhee_1999_a_characterization_of_the_centralizer_of_a_permutation},
	that
	\begin{equation}
		\label{eq:generalized_symmetric_group}
		Z_{W_A(m)}(g)
		= \Set{ (c^+_1)^{l_1} \dm (c^+_k)^{l_k} \cdot g' | l_1,\dc,l_k \in \mb Z \bs r \mb Z,
			\quad g' \in W_k(A) },
	\end{equation}
	with tacit use of the block-permutation embedding $\mf S_k \hra \mf S_m$ (cf.~\cite[\S~4]{doucot_rembado_2025_topology_of_irregular_isomonodromy_times_on_a_fixed_pointed_curve}).
\end{proof}

\begin{rema}
	Some properties of $\mc M^\sharp(r,k)$ and $Z_{W_A(m)}(g)$ can be immediately deduced from the integer parameters $r,k,m \geq 1$,
	in nonconstructive fashion.

	Notably,
	the degrees of $W_A(m)$ divisible by $r$ are $r,2r,\dc,(k-1)r,kr = m$,
	and there are $k$ of them.
	Then the degrees of the reflection group~\eqref{eq:generalized_symmetric_group} are precisely these $k$ integers,
	and one must have $\dim_{\mb C} \bigl( V^+_m(g,\zeta_r) \bigr) = k$ (in accordance with~\eqref{eq:eigenspace_base_type_A});
	moreover,
	in particular:
	\begin{equation}
		\prod_{i = 1}^k (ir)
		= (k!) \cdot r^k
		= \abs{ \mf S_k \wr (\mb Z \bs r \mb Z) }.
	\end{equation}
\end{rema}

\begin{rema}
	\label{rmk:normalizers_parabolic_subgroups}

	In the context of Prop.~\ref{prop:classification_type_A},
	where the regular elements correspond to partitions having parts of one and the same cardinality,
	one actually finds $W'(\phi) = W(\phi) = G(\phi)$:
	in particular the centralizers~\eqref{eq:centralizer_in_quotient} and~\eqref{eq:outer_centralizer_2} coincide,
	and all descriptions of full/nonpure admissible deformation spaces match up.
\end{rema}

\section{Pure type \texorpdfstring{$BC$}{BC}}
\label{sec:type_BC}

\subsection{Reduction to the split quasi-generic case}

We will now extend the classification beyond the case of algebraic connections on vector bundles.

More precisely,
suppose that $\mf g \in \Set{ \so_{2m+1}(\mb C),\mf{sp}_{2m}(\mb C) }$,
for an integer $m \geq 2$ (to avoid repetitions,
up to isomorphism),
and refer to \S~\ref{sec:background_type_BCD} for background/notation:
in particular,
there is a standard Cartan subalgebra $\mf t \sse \mf g$ whose underlying vector space is identified with~\eqref{eq:classical_cartan},
and the Weyl group with~\eqref{eq:weyl_type_BC}.

Now fix again a ramification $r \geq 1$,
a vector $A \in \mf t \simeq \wt V_m$ which determines a Levi subsystem $\phi \sse \Phi_{B/C}(m)$,
and an element $g \in W_{BC}(m)$ such that $g(A) = \zeta_r A$.

\begin{lemm}
	\label{lem:reduction_to_generic_type_BC}

	The conclusions of Cor.~\ref{cor:pure_factors_as_reflection_complements} hold for the eigenspace $\mf t_\phi(r) \sse \mf t_\phi$,
	and moreover:
	\begin{enumerate}
		\item
		      the relative reflection group $G(\phi)$ of~\eqref{eq:relative_reflection_group} is isomorphic to the type-$BC$ Weyl group $W_{BC}(m_\phi)$,
		      where $m_\phi \ceqq \dim_{\mb C}(\mf t_\phi)$;

		\item
		      and $g_\phi \in G(\phi)$.
	\end{enumerate}
\end{lemm}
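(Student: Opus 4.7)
The plan is to follow the type-$A$ template of Lem.~\ref{lem:reduction_to_generic_type_A}, adapting it to account for the short and long roots appearing in $\Phi_m(B/C)$. Since the statement is invariant under simultaneous Weyl conjugation of the triple $(\phi, A, g)$, without loss of generality I take $\phi$ to be dominant, i.e., associated with a partition $\set{1, \dc, m} = B_1 \sqcup \dm \sqcup B_k \sqcup S$: then $\phi \simeq A_{n_1 - 1} \ts \dm \ts A_{n_k - 1} \ts BC_{\abs S}$ (with $n_l \ceqq \abs{B_l}$), and $\mf t_\phi \sse \mf t$ is spanned by the vectors $u_l \ceqq \sum_{i \in B_l} e_i$, so that $m_\phi = k$.

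For the first statement, I will enumerate the orthogonal projections onto $\mf t_\phi$ of the roots in $\Phi \sm \phi$, relative to the standard $W$-invariant inner product on $\mf t$. The crucial point is that $e_i$ projects to $\frac{1}{n_l} u_l$ if $i \in B_l$, and to $0$ if $i \in S$. Hence, inter-block roots $\pm e_i \pm e_j$ (with $i \in B_l$, $j \in B_{l'}$, $l \neq l'$) project to nonzero multiples of $\frac{1}{n_l} u_l \pm \frac{1}{n_{l'}} u_{l'}$, yielding the hyperplanes $\mu_l = \pm \mu_{l'}$ in the dual coordinates $\mu_l$ to $u_l$; while the `cross' roots $\pm e_i \pm e_j$ with $i \in B_l$ and $j \in S$, together with the short roots $\pm e_i$ (type $B$) or the long roots $\pm 2 e_i$ (type $C$) with $i \in B_l$, all project to nonzero multiples of $u_l$, yielding the hyperplane $\mu_l = 0$. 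The resulting hyperplane arrangement is exactly the reflection arrangement of $W_k(BC)$, which proves $G(\phi) \simeq W_{m_\phi}(BC)$.

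For the second statement, since $g \in N_W(\mf t_\phi)$ preserves this $BC_k$-arrangement, and since each line $\mb C u_l \sse \mf t_\phi$ is the unique intersection within $\mf t_\phi$ of the hyperplanes $\mu_{l'} = 0$ for $l' \neq l$, there is a permutation $\sigma \in \mf S_k$ such that $g_\phi(u_l) \in \mb C u_{\sigma(l)}$. But $g$ acts on $\mf t$ as a signed permutation of $\set{e_1, \dc, e_m}$, so the coefficients of $g_\phi(u_l)$ in that basis are all $\pm 1$; hence the scalar is $\pm 1$, and $g_\phi$ is a signed permutation of the $u_l$'s, i.e., an element of $W_k(BC) = G(\phi)$. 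With both hypotheses verified, Cor.~\ref{cor:pure_factors_as_reflection_complements} applies to yield its conclusions for the eigenspace $\mf t_\phi(r) \sse \mf t_\phi$.

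The main obstacle, compared to the type-$A$ situation, is the short/long root bookkeeping in the projection step: one must check that the hyperplanes $\mu_l = 0$ arise \emph{regardless} of whether $S$ is empty, a block has size $1$, or one is working in type $B$ versus type $C$, since it is precisely these extra hyperplanes which upgrade $G(\phi)$ from a type-$A_{k-1}$ Weyl group to the type-$BC_k$ one. A closely related subtlety is the identification of $g_\phi$ as a \emph{signed} permutation of the $u_l$'s (rather than a mere permutation), which requires carefully tracking the signed-permutation structure of $g$ on the original basis $\set{e_i}$ through the projection.
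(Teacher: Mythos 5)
Your treatment of the first statement is correct and self-contained: the enumeration of the orthogonal projections of the elements of $\Phi_m(B/C) \sm \phi$ onto $\mf t_\phi$ (including the check that the coordinate hyperplanes $\mu_l = 0$ survive even when $S = \vn$) recomputes what the paper instead cites from \cite[\S~7]{doucot_rembado_tamiozzo_2022_local_wild_mapping_class_groups_and_cabled_braids}, namely that the restricted arrangement is a complete type-$BC_{m_\phi}$ arrangement. You omit the same-block roots $e_i + e_j$ with $i \neq j \in B_l$, but these project onto multiples of $u_l$ and add no new hyperplane, so the conclusion $G(\phi) \simeq W_{m_\phi}(BC)$ and the hypothesis of Cor.~\ref{cor:pure_factors_as_reflection_complements} are both secured.

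The gap is in the second statement, at the step ``since $g_\phi$ preserves the $BC_k$-arrangement, and each line $\mb C u_l$ is the intersection of the hyperplanes $\mu_{l'}=0$ for $l' \neq l$, there is a permutation $\sigma$ with $g_\phi(u_l) \in \mb C u_{\sigma(l)}$.'' A linear map permuting the hyperplanes of the $BC_k$-arrangement need \emph{not} permute the coordinate lines: the intersection lattice contains non-coordinate lines which are also intersections of hyperplanes of the arrangement (e.g. $\mb C(u_2+u_3) = \set{\mu_2 = \mu_3} \cap \bigcap_{l \neq 2,3}\set{\mu_l = 0}$), and already for $k=2$ the rotation by $\pi/4$ preserves the four lines of the $BC_2$-arrangement while swapping the coordinate axes with the diagonals. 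The signed-permutation structure of $g$, which you invoke only afterwards to pin down the scalar, does not by itself exclude, say, $g(u_l) = u_{l'} + u_{l''}$ with $n_l = n_{l'} + n_{l''}$ (a $\set{0,\pm 1}$-vector in $\mf t_\phi$ of support size $n_l$). What is needed---and what the paper's proof supplies---is root-level information: $g$ preserves $\phi$, hence permutes its irreducible components, and since $e_i - e_j \in \phi$ for $i,j \in B_l$ must be mapped into $\phi$, each block $B_l$ is sent onto a single block $B_{\sigma(l)}$ with a single sign, giving $g_\phi(u_l) = \pm u_{\sigma(l)}$ directly. Alternatively, your own projection formula repairs the step with no combinatorics: $g$ preserves $\mf t_\phi$ and the $W$-invariant inner product, hence commutes with the orthogonal projection $P$ onto $\mf t_\phi$, so for every $i \in B_l$ one gets $g_\phi\bigl(\tfrac{1}{n_l}u_l\bigr) = P\bigl(g(e_i)\bigr) = \pm\tfrac{1}{n_{l'}}u_{l'}$ with $\abs{g(i)} \in B_{l'}$; comparing norms forces $n_{l'} = n_l$ and the scalar to be $\pm 1$. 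As written, however, the derivation of $\sigma$ does not hold up.
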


\begin{proof}[Proof postponed to~\ref{proof:lem_reduction_to_generic_type_BC}]
\end{proof}

\subsection{Quasi-generic classification}

Again,
it is now enough to classify the \emph{quasi-generic} hyperplane complements:

\begin{prop}
	\label{prop:classification_type_BC}

	Choose integers $r,m \geq 2$,
	and consider a regular element $g$ of $\bigl( \wt V_m,W_{BC}(m) \bigr)$ of order $r$.
	Set also
	\begin{equation}
		\bm B_{r,\reg}(BC_m)
		= \bm B_{g,r,\reg}(BC_m)
		\ceqq \wt V_m(g,\zeta_r) \cap \wt V_{m,\reg}.
	\end{equation}
	Then one of the two following (mutually-exclusive) situations happen:
	\begin{enumerate}
		\item
		      (i) the integer $r$ is odd $r \mid m$;
		      and (ii) if $k \geq 1$ is the quotient of the division then there is a homeomorphism $\bm B_{r,\reg}(BC_m) \simeq \mc M^\sharp(2r,k)$,
		      in the notation of~\eqref{eq:standard_complement};

		\item
		      or (i) $r$ is even and $r \mid (2m)$;
		      and (ii) if again $k \geq 1$ is the quotient then $\bm B_{r,\reg}(BC_m) \simeq \mc M^\sharp(r,k)$.
	\end{enumerate}
	(So the corresponding complex reflection group is still a generalized symmetric group.)
\end{prop}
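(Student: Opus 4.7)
The plan is to parallel the type-$A$ argument of Prop.~\ref{prop:classifiction_type_A}, replacing permutations with signed permutations and paying attention to the length of negative cycles.

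First I would pin down the signed cycle type of a regular element $g$ of order $r$ in $W_m(BC) \simeq \mf S_m \lts (\mb Z \bs 2\mb Z)^m$. A positive $\ell$-cycle has order $\ell$, with eigenvalues the $\ell$-th roots of $1$; a negative $\ell$-cycle has order $2\ell$, with eigenvalues the $\ell$-th roots of $-1$, i.e., $\set{\zeta_{2\ell}^{2k+1}}_k$. A short root-of-unity check shows that $\zeta_r$ occurs as an eigenvalue only on positive cycles of length divisible by $r$ when $r$ is odd, or additionally on negative cycles whose length is an odd multiple of $r \slash 2$ when $r$ is even. Regularity of $g$ forces the $\zeta_r$-eigenspace to have maximal possible dimension, which by~\cite[Thm.~4.2]{springer_1974_regular_elements_of_finite_reflection_groups} equals the number of degrees of $W_m(BC)$ divisible by $r$; using the degrees $2, 4, \dc, 2m$, this count is $\lfloor m \slash r \rfloor$ when $r$ is odd and $\lfloor 2m \slash r \rfloor$ when $r$ is even. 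Since any fixed coordinate of $g$ would force a zero coordinate in a $\zeta_r$-eigenvector (contradicting $\wt V_m$-regularity via the short root $\pm e_i$), the divisibility conditions $r \mid m$ (resp. $r \mid 2m$) then follow.

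Since regular elements of a fixed order are conjugate, I may then replace $g$ by a canonical representative: a product of $q$ positive $r$-cycles on consecutive blocks of size $r$ in case (1), so that $m = qr$; or a product of $q$ negative $(r \slash 2)$-cycles on consecutive blocks of size $r \slash 2$ in case (2), so that $2m = qr$. Solving the eigenvalue equation block-by-block yields an explicit basis $(A_1, \dc, A_q)$ of $\wt V_m(g, \zeta_r)$, parallel to~\eqref{eq:eigenspace_base_type_A}, with the sign flip from the negative cycles in case (2) accommodated by the identity $\zeta_r^{r \slash 2} = -1$. A general $\zeta_r$-eigenvector $A = \sum_j \lambda_j A_j$ thus has block coordinates of the form $\lambda_j$ times an appropriate root of unity.

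The last step is to translate the regularity condition $A \in \wt V_{m, \reg}$---no coordinate zero, no two coordinates equal or opposite---into a condition on $(\lambda_1, \dc, \lambda_q)$. The intra-block conditions amount to excluding the appearance of $\zeta_r^{r \slash 2} = -1$ between two in-block exponents, and are automatic in both cases (in case (1) because $-1 \notin \Braket{\zeta_r}$ for $r$ odd; in case (2) because the range of in-block exponent differences is too narrow). The inter-block conditions require $\lambda_j \slash \lambda_{j'} \notin \set{ \pm \zeta_r^k | k \in \mb Z }$ for $j \neq j'$, and this is the key dichotomy I expect to be the main subtlety: when $r$ is odd the set $\set{ \pm \zeta_r^k }$ is the full group of $2r$-th roots of unity (since $-1 \notin \Braket{\zeta_r}$), while when $r$ is even it collapses to the group of $r$-th roots of unity (since $-1 = \zeta_r^{r \slash 2}$). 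The condition thus reads $\lambda_j^{2r} \neq \lambda_{j'}^{2r}$ in case (1) and $\lambda_j^r \neq \lambda_{j'}^r$ in case (2), combined with $\lambda_j \neq 0$, yielding the stated homeomorphisms onto $\mc M^\sharp(2r, q)$ and $\mc M^\sharp(r, q)$ respectively. The parenthetical identification of $Z_{W_m(BC)}(g)$ with a generalized symmetric group then follows either by direct inspection of centralizers of signed cycles, or by matching the degrees of $Z_{W_m(BC)}(g)$---those of $W_m(BC)$ divisible by $r$, via Cor.~\ref{cor:pure_factors_as_reflection_complements}---with the degrees of $G(2r, 1, q)$ or $G(r, 1, q)$.
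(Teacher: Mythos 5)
Your proof is correct and follows essentially the same route as the paper's: normalize the regular element to a product of $q$ positive $r$-cycles ($r$ odd) or $q$ negative $(r\slash 2)$-cycles ($r$ even), write down the explicit eigenbasis block-by-block, and translate $\wt V_m$-regularity into the conditions $\lambda_i^{2r} \neq \lambda_j^{2r}$ resp. $\lambda_i^r \neq \lambda_j^r$ via the dichotomy of Lem.~\ref{lem:even_odd_roots_of_1}. The only (harmless) difference is that you re-derive the signed-cycle-type classification of regular elements from the eigenvalue analysis and the degree count of~\cite[Thm.~4.2]{springer_1974_regular_elements_of_finite_reflection_groups}, where the paper simply cites \S~5.2 of op.~cit.
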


\begin{proof}
	By~\cite[\S~5.2]{springer_1974_regular_elements_of_finite_reflection_groups},
	a regular element of order $r$ falls into two (mutually-exclusive) classes:
	\begin{enumerate}
		\item
		      (i) $r$ is odd and $r \mid m$%
		      ---say $m = rk$ as in the statement;
		      and (ii) $g$ is a product of $k$ positive disjoint $r$-cycles;

		\item
		      or (i) $r$ is even and $r \mid (2m)$%
		      ---say $r = 2r'$ and $m = r'k$;
		      and (ii) $g$ is a product of $k$ negative disjoint $r'$-cycles.
	\end{enumerate}
	(Both yield powers of Coxeter elements,
	i.e.,
	of \emph{negative} $m$-cycles.)

	Up to conjugation,
	in the first case it is enough to consider the element
	\begin{equation}
		g
		= c_1^+ c_1^- \dm c_k^+ c_k^-,
		\qquad c_j^- \ceqq \bigl( (1-j)r - 1 \mid (1-j)r - 2 \mid \dm \mid 1 - jr \mid - jr \bigr),
	\end{equation}
	with $c^+_j$ as in~\eqref{eq:cycle_decomposition_type_A} (cf.~\eqref{eq:positive_cycle}).
	The degrees of $W_{BC}(m)$ are the even integers $2,4,\dc,2(m-1),2m$,
	and since $r$ is odd we expect%
	---by~\cite[Thm.~4.2]{springer_1974_regular_elements_of_finite_reflection_groups}---%
	that $\dim_{\mb C} \bigl( \wt V_m(g,\zeta_r) \bigr) = k$.
	Indeed,
	one has $\wt V_m(g,\zeta_r) = \spann_{\mb C} \set{ A_1,\dc,A_k }$,
	where
	\begin{equation}
		A_i
		\ceqq \prescript{t}{} ( \underbrace{0,\dc,0,}_{(i-1)r \text{ times }} 1,\zeta_r^{r-1},\dc,\zeta_r, \underbrace{0,\dc,0,0,\dc,0,}_{r(k-1) \text{ times }} -1,-\zeta_r^{r-1},\dc,-\zeta_r, \underbrace{0,\dc,0}_{(k-i)r \text{ times }} ),
	\end{equation}
	cf.~\eqref{eq:eigenspace_base_type_A}.
	Hence,
	the regular eigenvectors are of the form
	\begin{equation}
		A
		= \sum_{i = 1}^k \lambda_i A_i,
		\qquad 0 \neq \zeta_r^l \lambda_i \neq \pm \zeta_r^{\wt l} \lambda_j,
		\quad i \neq j \in \ul k^+,
		\quad l,\wt l \in \mb Z \bs r \mb Z,
	\end{equation}
	cf.~\eqref{eq:regular_eigenvector_type_A}.
	We conclude by Lem.~\ref{lem:even_odd_roots_of_1}~(1.).

	Analogously,
	up to conjugation,
	in the second case it is enough to consider the element $g = \wt c_1 \dm \wt c_k$,
	where
	\begin{equation}
		\wt c_j
		\ceqq \bigl( (j-1)r' + 1 \mid \dm \mid jr' - 1 \mid jr' \mid  (1-j)r' - 1 \mid \dm \mid 1 - jr' \mid -jr' \bigr),
	\end{equation}
	cf.~\eqref{eq:negative_cycle}.
	Again we find a $k$-dimensional eigenspace,
	with basis
	\begin{equation}
		\wt A_i
		= \prescript{t}{} ( \underbrace{0,\dc,0,}_{ (i-1)r \text{ times }} 1,\zeta_r^{r-1},\dc,\zeta_r^{r'+1}, \underbrace{0,\dc,0,0,\dc,0,}_{ r(k-1) \text{ times }} \zeta_r^{r'},\zeta_r^{r'-1},\dc,\zeta_r, \underbrace{0,\dc,0}_{ (k-i)r \text{ times}} ),
	\end{equation}
	for $i \in \set{1,\dc,k}$.
	This expression makes sense,
	in view of Lem.~\ref{lem:even_odd_roots_of_1}~(2.),
	and the regular eigenvalues are of the form
	\begin{equation}
		\wt A
		= \sum_{i = 1}^k \lambda_i \wt A_i,
		\qquad \lambda_i^r \neq \lambda_j^r \in \mb C^{\ts}.
	\end{equation}

	(Note that $r = 2$ just yields the sign-swapping permutation $g \cl i \mt -i$,
	in which case $\wt V_m(g,\zeta_r) = \wt V_m(g,-1) = \wt V_m$,
	and we just find the $W_{BC}(m)$-regular part.)
\end{proof}

\begin{lemm}[]
	\label{lem:even_odd_roots_of_1}

	Let $r \geq 1$ be an integer.
	Then:
	\begin{enumerate}
		\item
		      If $r$ is odd,
		      the set $\set{ \pm \zeta_r, \pm \zeta_r^2,\dc, \pm \zeta_r^{r-1},\pm 1} \sse \mb C^{\ts}$ consists of all the $(2r)$-th roots of $1$;

		\item
		      if $r = 2r'$ is even,
		      then $\zeta_r^l + \zeta_r^{l-r'} = 0$ for $l \in \set{r',\dc,r}$;
		      (Whence $\set{ \pm \zeta_r,\dc,\pm 1} = \set{\zeta_r,\dc,1} \sse \mb C^{\ts}$.)

		\item
		      and the order of $-\zeta_r$ (as a root of $1$) equals:
		      \begin{enumerate}
			      \item
			            $2r$,
			            if $r$ is odd;

			      \item
			            $r$,
			            if $r \equiv 0 \pmod{4}$;

			      \item $r \slash 2$,
			            if $r \equiv 2 \pmod{4}$.
		      \end{enumerate}
	\end{enumerate}
\end{lemm}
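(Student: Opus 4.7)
The plan is to prove the three parts separately, since they amount to independent (and elementary) computations with roots of unity; no deep input is needed.

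For part (1), I will show that the $2r$ complex numbers $\pm \zeta_r^k$, for $k \in \{0,1,\dots,r-1\}$, are \emph{pairwise distinct} $(2r)$-th roots of $1$. The second property is immediate since $(\pm \zeta_r^k)^{2r} = \zeta_r^{2rk} = 1$, and there are exactly $2r$ such roots. For distinctness, the only nontrivial case is ruling out $\zeta_r^k = -\zeta_r^j$ for some $k,j$: such an equality would force $\zeta_r^{k-j} = -1$, so that $-1$ is an $r$-th root of $1$, contradicting the hypothesis that $r$ is odd. A counting argument then finishes part (1).

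For part (2), the key observation is that $r=2r'$ implies $\zeta_r^{r'} = e^{\pi \sqrt{-1}} = -1$, so $\zeta_r^{k-r'} = -\zeta_r^k$ and the claimed sum vanishes trivially. The footnote assertion $\{\pm \zeta_r,\dots,\pm 1\} = \{\zeta_r,\dots,1\}$ then follows because negation merely permutes the even-indexed $r$-th roots of $1$.

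For part (3), I will compute the order $n$ of $-\zeta_r$ from $(-\zeta_r)^n = (-1)^n \zeta_r^n = 1$, splitting cases by the residue of $r$ modulo $4$. If $r$ is odd, then $n$ odd would force $\zeta_r^n = -1$, impossible since $-1$ is not an $r$-th root of $1$; so $n$ is even and $r \mid n$, giving $n = 2r$. If $r = 2r'$ is even, part (2) lets me rewrite $-\zeta_r = \zeta_r^{r'+1}$, so the order is $r / \gcd(r, r'+1)$; a direct check shows $\gcd(r,r'+1) \in \{1,2\}$, with value $1$ when $r' $ is even (i.e. $r \equiv 0 \pmod 4$) and value $2$ when $r'$ is odd (i.e. $r \equiv 2 \pmod 4$), yielding orders $r$ and $r/2$ respectively.

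I do not anticipate any real obstacle: the only mild subtlety is keeping the case distinctions in part (3) clearly organized, so that the parity of $r'+1$ (which controls the gcd) is handled in the right sub-cases. Everything else reduces to the identity $\zeta_r^{r'} = -1$ for $r=2r'$ and the elementary fact that $-1$ is an $r$-th root of $1$ if and only if $r$ is even.
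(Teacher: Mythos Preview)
Your proposal is correct and follows essentially the same elementary route as the paper, with two small differences in bookkeeping. For part~(1) the paper shows the reverse inclusion directly (if $\zeta^{2r}=1$ and $\zeta^r=-1$ then $(-\zeta)^r=1$ for $r$ odd), whereas you argue by distinctness and counting; both are fine. For part~(3) the paper treats all parities uniformly by writing $-\zeta_r=e^{2\pi\sqrt{-1}(r+2)/(2r)}$ and computing $\gcd(2r,r+2)$, whereas you cleverly recycle part~(2) in the even case to write $-\zeta_r=\zeta_r^{r'+1}$ and compute $\gcd(r,r'+1)$ instead. Your version is arguably tidier for the even sub-cases; the paper's is slightly more uniform. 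Either way the content is the same.
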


\begin{proof}[Proof postponed to~\ref{proof:lem_even_odd_roots_of_1}]
\end{proof}

\begin{rema}
	\label{rmk:normalizers_parabolic_subgroups_2}

	Let simply $W = W_{BC}(m)$,
	and suppose (w.l.o.g.~here) that $\phi \sse \Phi_{B/C}(m)$ has no component of type $B/C$.
	The proof~\ref{proof:lem_reduction_to_generic_type_BC} also shows that $N_W(\mf t_\phi) \sse W$ splits into a direct product of wreath products,
	where each direct factor is of the form $\mf S_k \wr ( \mf S_l \ts \mb Z^{\ts} )$,
	for suitable integers $k,l \geq 1$.
	(It corresponds to the case where $\phi$ consists of $k$ irreducible type-$A$ components of rank $l$.)
	In turn,
	the parabolic subgroup $W_{\mf t_\phi}$ decomposes into the direct product of the factors $(\mf S_l)^k$,
	and the relative Weyl group into the direct product of the factors $\mf S_k \wr \mb Z^{\ts}$.
	This showcases the splitting~\eqref{eq:splitting_parabolic_normalizers},
	but more importantly in the quasi-generic context of Prop.~\ref{prop:classification_type_BC} we recognize $W(\phi)$ as a Weyl group of type $BC_{m_\phi}$:
	it follows that $W'(\phi) = W(\phi) = G(\phi)$.

	Thus,
	we are effectively replacing the $2$-element group $\mb Z^{\ts}$ with the image of the group morphism
	\begin{equation}
		\mb Z^{\ts} \ts \mb Z \bs r \mb Z \lra \mb C^{\ts},
		\qquad \bigl( \pm 1,\zeta_r^k \bigr) \lmt \pm \zeta_r^k,
		\quad k \in \set{1,\dc,r},
	\end{equation}
	which is a (cyclic) group of roots of $1$ of order $r$ or $2r$,
	according to the ramification parity.
	(This is the basic example of the difference between `special' and `nonspecial' Stokes-circles-up-to-sign,
	cf.~Lemm.~\ref{lem:example_special_nonspecial_circles} +~\ref{lem:special_exponential_factors}.)
\end{rema}

\section{Pure type \texorpdfstring{$D$}{D}}
\label{sec:type_D}

\subsection{Reduction to the (possibly nonsplit) quasi-generic case}
\label{sec:setup_type_D}

Consider finally the simple Lie algebra $\mf g \ceqq \so_{2m}(\mb C)$,
for an integer $m \geq 4$,
and cf.~again \S~\ref{sec:background_type_BCD} for background/notation.
Once more,
there is a standard Cartan subalgebra $\mf t \sse \mf g$ identified with~\eqref{eq:classical_cartan},
and the Weyl group with~\eqref{eq:weyl_type_D}.
As usual,
we let $r \geq 1$ be an integer,
$A \in \mf t \simeq \wt V_m$ any vector with Levi annihilator $\phi \sse \Phi_D(m)$,
and $g \in W_D(m)$ a group element with $g(A) = \zeta_r A$.

We will spread the positive results across Lemm.~\ref{lem:reduction_to_generic_type_D}--\ref{lem:twisted_centralizer_type_D},
and showcase the main obstacle to apply the general statements---%
of \S~\ref{sec:lehrer_springer_theory}---%
in Exmp.~\ref{exmp:counterexample_type_D}.

\begin{lemm}
	\label{lem:reduction_to_generic_type_D}

	Suppose that $\phi$ has (exactly) one irreducible component of type $D$.\fn{
		Whence Howlett's twist is trivial,
		cf.~Cor.-Def.~\ref{cor:howlett_twisted_reflection_cosets}.}~Then the conclusions of Cor.~\ref{cor:pure_factors_as_reflection_complements} hold for the eigenspace $\mf t_\phi(r) \sse \mf t_\phi$,
	and moreover:
	\begin{enumerate}
		\item
		      the relative reflection group $G(\phi)$ of~\eqref{eq:relative_reflection_group} is isomorphic to a Weyl group of type $W_{BC}(m_\phi)$,
		      where $m_\phi \ceqq \dim_{\mb C}(\mf t_\phi)$;

		\item
		      and $g_\phi \in G(\phi)$.
	\end{enumerate}
\end{lemm}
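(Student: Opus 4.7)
The plan is to follow the template of Lemma~\ref{lem:reduction_to_generic_type_BC}: describe $\mf t_\phi$ and the induced hyperplane arrangement explicitly via the block decomposition dictated by $\phi$, identify it with the type-$BC_{m_\phi}$ arrangement, and then check that $g_\phi$ automatically lands in the resulting reflection group $W_{m_\phi}(BC)$. The presence of a type-$D$ component---as opposed to its absence, which lands one in the exotic situation of~\eqref{eq:exotic_arrangement} illustrated by Example~\ref{exmp:counterexample_type_D}---is precisely what makes the argument go through.

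Concretely, I would first parameterize $\phi$: group the indices $\set{1,\dc,m}$ by the value of $\abs{A_i}$, obtaining blocks $B_{v_1},\dc,B_{v_N}$ for the distinct positive values $v_1,\dc,v_N$ and a zero block $B_0 \ceqq \set{i | A_i = 0}$. Each $B_{v_a}$ splits into a positive and a negative subgroup (of sizes $p_a,n_a \geq 0$ with $p_a + n_a \geq 1$), and the roots $\pm(e_i - e_j)$ within a sign together with $\pm(e_i + e_j)$ across signs assemble into a type-$A_{p_a + n_a - 1}$ component of $\phi$; by hypothesis, the block $B_0$ has size $l \geq 2$ and gives the unique type-$D$ component $D_l$. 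The space $\mf t_\phi$ has dimension $N = m_\phi$, with coordinates $x_a$ normalized so that $A'_i = \mathrm{sgn}(A_i)\, x_a$ for $i \in B_{v_a}$ and $A'_i = 0$ for $i \in B_0$.

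Next, I would enumerate the hyperplanes $H_\alpha(\phi)$ for $\alpha \in \Phi \sm \phi$. Roots connecting two distinct blocks $B_{v_a}, B_{v_b}$ produce both hyperplanes $x_a = \pm x_b$, as we can choose either sign of the coordinate on each side. Roots $e_i \pm e_j$ with $i \in B_{v_a}$ and $j \in B_0$ produce the axis hyperplane $x_a = 0$ for every $a \in \set{1,\dc,N}$: this is the crux, and it uses crucially that $B_0 \neq \vn$. Thus $\set{H_\alpha(\phi) | \alpha \in \Phi \sm \phi}$ is exactly the reflection arrangement of $W_{m_\phi}(BC)$, and the relative reflection group $G(\phi)$ of~\eqref{eq:relative_reflection_group}---generated by reflections about these hyperplanes---is $W_{m_\phi}(BC)$, acting on $\mf t_\phi$ by signed permutations.

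Finally, the constraint $g(A) = \zeta_r A$ forces $g$ to permute the blocks $B_{v_a}$ amongst themselves (preserving absolute values) and to stabilize $B_0$ set-wise, so $g$ normalizes $\mf t_\phi$ and its restriction $g_\phi$ acts as a signed permutation of the $x_a$'s---hence $g_\phi \in W_{m_\phi}(BC) = G(\phi)$, the even-sign-flip constraint in $W_m(D)$ being absorbed by the free parity available in the nonempty $B_0$ and not surviving the restriction. With both the exhaustion of the $G(\phi)$-arrangement by the $H_\alpha(\phi)$ and the membership $g_\phi \in G(\phi)$ in hand, Corollary~\ref{cor:pure_factors_as_reflection_complements} applies and yields the announced description of $\mf t_\phi(r)$. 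The only real subtlety is the requirement $B_0 \neq \vn$, which is exactly the hypothesis on the type-$D$ component; bypassing it is what forces the analysis of the noncrystallographic case of~\S~\ref{sec:type_D}.
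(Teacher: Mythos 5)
Your proposal is correct and follows essentially the same route as the paper: the paper's proof simply cites \cite[\S~8]{doucot_rembado_tamiozzo_2022_local_wild_mapping_class_groups_and_cabled_braids} for the fact that the presence of a type-$D$ component (equivalently, a nonempty zero block) makes the restricted arrangement a complete type-$BC_{m_\phi}$ arrangement, and then declares that the type-$BC$ argument of Lemma~\ref{lem:reduction_to_generic_type_BC} applies verbatim, including the observation that the parity constraint on $W_m(D)$ does not survive restriction since $G(\phi)$ is all of $W_{m_\phi}(BC)$. Your explicit block decomposition and hyperplane enumeration just rederive that cited input by hand; the content is the same.
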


\begin{proof}[Proof postponed to~\ref{proof:lem_reduction_to_generic_type_D}]
\end{proof}

\begin{lemm}
	For any Levi subsystem $\phi \sse \Phi_D(m)$:
	\begin{enumerate}
		\item
		      the relative reflection group $G(\phi)$ is isomorphic to a Weyl group of type $D_{m_\phi}$ or $BC_{m_\phi}$;

		\item
		      and the former happens if and only if the Levi stratum of $\phi$ is the complement of a crystallographic hyperplane arrangement (of type $D$).
	\end{enumerate}
\end{lemm}

\begin{proof}
	In~\cite[\S~8]{doucot_rembado_tamiozzo_2022_local_wild_mapping_class_groups_and_cabled_braids} it is also shown that the restricted hyperplane arrangement $ \set{ H_\alpha(\phi) | \alpha \in \Phi \sm \phi } \sse \mb P \bigl( \mf t_\phi^{\dual} \bigr)$ `interpolates' between types $D$ and $BC$,
	for any Levi subsystem $\phi$.
	More precisely,
	there exist integers $k \geq 0$ and $k' \geq 1$ such that:
	(i) $m_\phi = k + k'$;
	and (ii) if one writes $(\bm \lambda,\bm \mu) \ceqq (\lambda_1,\dc,\lambda_k,\mu_1,\dc,\mu_{k'}) \in \mb C^{m_\phi}$,
	then the hyperplane complement~\eqref{eq:pure_untwisted_nongeneric_deformation_space_1_coeff} is homeomorphic to $\mc M^{\musDoubleSharp}(k,k')$,
	extending the notation of~\eqref{eq:exotic_arrangement} to allow for $k = 0$.

	The conclusion follows.
	Indeed,
	if $k = 0$ then $\mc M^{\musDoubleSharp}(k,k') = \mc M(2,k')$,
	and $G(\phi)$ is a Weyl group of type $D$.
	Otherwise,
	starting from $W_D(m_\phi)$ and adding any reflection about a coordinate hyperplane of $\mf t_\phi \simeq \mb C^{m_\phi}$ makes it possible to generate the whole of $W_{BC}(m_\phi)$;
	and conversely $G(\phi)$ is contained within the group of signed permutations of the coordinates.
\end{proof}

\begin{lemm}
	\label{lem:twisted_centralizer_type_D}

	Keep all notation from \S~\ref{sec:setup_type_D},
	and let again $m_\phi \ceqq \dim_{\mb C}(\mf t_\phi)$.
	Suppose that $\phi$ has no component of type $D$,
	but that its stratum is a crystallographic complement%
	---of type $D_{m_\phi}$,
	necessarily.
	Then:
	\begin{enumerate}
		\item
		      the conclusions of Cor.~\ref{cor:pure_factors_as_reflection_complements} hold for the eigenspace $\mf t_\phi(r) \sse \mf t_\phi$;

		\item
		      and the complex reflection group~\eqref{eq:outer_centralizer_2} is isomorphic to the `centralizer' (inside $W_D(m_\phi)$) of an \emph{arbitrary} element of $W_{BC}(m_\phi)$.
	\end{enumerate}
\end{lemm}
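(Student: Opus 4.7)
The plan is to verify the hypothesis of Cor.~\ref{cor:pure_factors_as_reflection_complements} to obtain the first claim, and then to analyze Howlett's twist of \S~\ref{sec:howlett} (specialized to type $D$) to determine the form of the resulting centralizer.

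First, under the assumption $p = 0$ in~\eqref{eq:exotic_arrangement}, the Levi stratum~\eqref{eq:pure_untwisted_nongeneric_deformation_space_1_coeff} is a copy of $\mc M(2,m_\phi) \sse \mb C^{m_\phi}$, which coincides with the $W_{m_\phi}(D)$-regular part of $\mf t_\phi \simeq \mb C^{m_\phi}$. Hence the relative reflection group $G(\phi)$ of~\eqref{eq:relative_reflection_group} is isomorphic to $W_{m_\phi}(D)$, and its reflection arrangement is exhausted by $\Set{ H_\alpha(\phi) | \alpha \in \Phi \sm \phi }$. Since $A \in \mf t(g,\zeta_r)$ lies in the Levi stratum, it is in particular a $G(\phi)$-regular eigenvector of $g_\phi$ of eigenvalue $\zeta_r$; hence Cor.~\ref{cor:pure_factors_as_reflection_complements} applies and yields the first conclusion.

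Next, since $\phi$ has no type-$D$ component, Howlett's twist $\wt W(\phi)$ from Prop.-Def.~\ref{prop:howlett} is the nontrivial $\mb Z \bs 2 \mb Z$ recorded in Cor.-Def.~\ref{cor:howlett_twisted_reflection_cosets} (3.), and the split sequence~\eqref{eq:relative_reflection_group_and_twist} yields $W(\phi) \simeq W_{m_\phi}(D) \rtimes \mb Z \bs 2 \mb Z \simeq W_{m_\phi}(BC)$, acting on $\mf t_\phi$ as the full hyperoctahedral group (with $W'(\phi) \simeq W_{m_\phi}(D)$ the index-$2$ subgroup of even sign changes). The restricted element $g_\phi \in W(\phi)$ is then identified with an element of $W_{m_\phi}(BC)$, which need not lie in $G(\phi) = W_{m_\phi}(D)$ precisely because of the nontrivial twist; consequently~\eqref{eq:outer_centralizer_2} becomes the centralizer $Z_{G(\phi)}(g_\phi) = Z_{W_{m_\phi}(D)}(g_\phi)$ of such an element inside $W_{m_\phi}(D)$.

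It remains to show that every $\ol g \in W_{m_\phi}(BC)$ arises as some $g_\phi$ in this setup, so that $Z_{W_{m_\phi}(D)}(\ol g)$ genuinely ranges over centralizers of \emph{arbitrary} elements. Given $\ol g$, one lifts it through the surjection $N_{W_m(D)}(\mf t_\phi) \thra W(\phi)$ to some $g \in W_m(D)$ with $g_\phi = \ol g$, sets $r$ equal to the order of $\ol g$ (so that $\zeta_r$ is an eigenvalue of $\ol g$), and picks $A$ in the intersection of $\mf t_\phi(\ol g,\zeta_r)$ with the $W_{m_\phi}(D)$-regular locus; then $(\wh Q = A w^{-1}, g, \phi)$ is a valid input recovering $Z_{W_{m_\phi}(D)}(\ol g)$. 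The hard part is to check that this last intersection is nonempty for every $\ol g \in W_{m_\phi}(BC)$, which I expect to handle via the signed-permutation cycle decomposition (in the spirit of the proofs of Propp.~\ref{prop:classifiction_type_A}--\ref{prop:classification_type_BC}): one writes $\ol g$ as a product of positive/negative cycles, computes the $\zeta_r$-eigenspace explicitly, and verifies that a generic vector therein satisfies the type-$D$ regularity condition $\mu_i^2 \neq \mu_j^2$ (for $i \neq j$).
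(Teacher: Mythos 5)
Your first paragraph is correct and in fact supplies the half of the argument that the paper's own proof leaves implicit: when $p=0$ the stratum is the full $W_{m_\phi}(D)$-regular part of $\mf t_\phi$, so the restricted hyperplanes exhaust the reflection arrangement of $G(\phi)\simeq W_{m_\phi}(D)$, and $A$ is then a $G(\phi)$-regular eigenvector of $g_\phi$, so Cor.~\ref{cor:pure_factors_as_reflection_complements} applies directly. The trouble starts in your second paragraph. Cor.-Def.~\ref{cor:howlett_twisted_reflection_cosets}~(3.) records the \emph{untwisted} case (when $\phi$ \emph{does} contain a type-$D$ component); it does not assert that the twist is nontrivial when no such component is present, so you cannot cite it for $\wt W(\phi)\simeq \mb Z\bs 2\mb Z$. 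More seriously, you silently identify Howlett's relative reflection group $W'(\phi)$ with $G(\phi)$ and conclude $W(\phi)\simeq W_{m_\phi}(BC)$ for \emph{every} $\phi$ satisfying the hypotheses; the paper only claims $W(\phi)\sse W_{m_\phi}(BC)$ in general (by adapting the argument of Lem.~\ref{lem:reduction_to_generic_type_BC}, which shows $g_\phi$ restricts to a signed permutation, possibly with an odd number of negative cycles and hence possibly outside $G(\phi)$), and then exhibits \emph{one explicit family} where equality $W(\phi)=W_{m_\phi}(BC)$ holds: $m=2p'$, $A=(1,1,2,2,\dc,p',p',\dc)$, $\phi\simeq\Phi_2(A)^{\ops p'}$, with the sign-swaps realized by the elements $(2i-1\mid 1-2i)(2i\mid -2i)\in W_m(D)$. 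That is all the lemma needs: the point of ``arbitrary'' is that $g_\phi$ is \emph{not} constrained to lie in $W_{m_\phi}(D)$, not that every element of $W_{m_\phi}(BC)$ occurs.

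Consequently the ``hard part'' you defer at the end is not just a gap but is false as stated: not every $\ol g\in W_{m_\phi}(BC)$ admits a $W_{m_\phi}(D)$-regular eigenvector for the relevant eigenvalue. For instance, a positive transposition in $W_2(BC)$ has eigenvectors only along $(\mu,\pm\mu)$, none of which satisfy $\mu_1^2\neq\mu_2^2$; so the lift-and-choose-$A$ procedure in your third paragraph cannot produce that element as a $g_\phi$. Your reading of the lemma as a surjectivity statement onto all of $W_{m_\phi}(BC)$ therefore leads you to try to prove something stronger (and false), whereas the paper's route---containment in $W_{m_\phi}(BC)$ in general, plus one family realizing the full hyperoctahedral group as $W(\phi)$ with $G(\phi)\simeq W_{m_\phi}(D)$ a proper normal subgroup---is both weaker and sufficient.
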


\begin{proof}[Proof postponed to \S~\ref{proof:twisted_centralizer_type_D}]
\end{proof}

\begin{exem}
	\label{exmp:counterexample_type_D}

	The crux of the matter is that the hyperplane complement~\eqref{eq:exotic_arrangement} is \emph{not} the $G(\phi)$-regular part of $\mf t_\phi$ (when $k > 0$),
	precisely because we are removing a proper subset of its reflection arrangement.
	Moreover,
	it is possible to construct an element $g \in N_W(\mf t_\phi)$ such that:
	(i) $g(A) = \zeta_r A$;
	and (ii) $g_\phi \in W(\phi)$ does \emph{not} have a $G(\phi)$-regular eigenvector.
	Thus,
	one cannot use Cor.~\ref{cor:restricted_reflection_group} either.

	The simplest case seems to be as follows.
	Let $W \ceqq W_5(D)$,
	and consider the (nonregular) vector
	\begin{equation}
		A
		\ceqq (2,2,2,1,0,-2,-2,-2,-1,0) \in \wt V_5.
	\end{equation}
	Then $\phi \sse \Phi_D(5)$ consists of a single copy of $\Phi_A(3)$,
	and if $\wh Q \ceqq A w^{-1}$ then the corresponding untwisted admissible deformations lie in
	\begin{equation}
		\label{eq:counterexample_type_D}
		\bm B \bigl( \wh Q \bigr)
		= \Set{ A' = (a,a,a,b,c,-a-a-a,-b,-c) \in \wt V_5 | 0 \neq a^2 \neq b^2 \neq c^2 \neq a^2 },
	\end{equation}
	which is a copy of $\mc M^{\musDoubleSharp}(1,2)$.
	Finally,
	consider the (regular) element
	\begin{equation}
		g
		\ceqq (1 \mid -1) (2 \mid -2) (3 \mid - 3) (4 \mid - 4) \in W.
	\end{equation}
	It satisfies $g(A) = - A$,
	and now $A' \in V(g,-1)$ implies $c = 0$ in~\eqref{eq:counterexample_type_D}.
\end{exem}

\subsection{Crystallographic classification}

Let us conclude by classifying all the examples where the issue of Exmp.~\ref{exmp:counterexample_type_D} does \emph{not} arise.
We start from the split ones:

\begin{prop}
	\label{prop:generic_classification_type_D}

	Choose integers $r \geq 2$ and $m \geq 4$,
	and consider a regular element $g$ of $\bigl( \wt V_m,W_D(m) \bigr)$ of order $r$.
	Set also
	\begin{equation}
		\bm B_{r,\reg}(D_m)
		= \bm B_{g,r,\reg}(D_m)
		\ceqq \wt V_m(g,\zeta_r) \cap \wt V_{m,\reg}.
	\end{equation}
	Then one of the following (mutually-exclusive) situations happens:
	\begin{enumerate}
		\item
		      (i) the integer $r$ is odd,
		      and either $r \mid m$ or $r \mid (m-1)$;
		      and (ii) if $k \geq 1$ is the quotient of the division then there is a homeomorphism $\bm B_{r,\reg}(D_m) \simeq \mc M^\sharp(2r,k)$;

		\item
		      or (i) $r \geq 4$ is even and $r \mid m$;
		      and (ii) if $k \geq 1$ is the quotient then there is a homeomorphism $\bm B_{r,\reg}(D_m) \simeq \mc M^\sharp(r,2k)$;

		\item
		      or (i) $r \geq 4$ is even and $r \mid (2m -2)$;
		      and (ii) if $k \geq 1$ is the quotient then $\bm B_{r,\reg}(D_m) \simeq \mc M^\sharp(r,k)$;

		\item
		      or (i) $r = 2$;
		      and (ii) $\bm B_{r,\reg}(D_m) \simeq \mc M(r,m)$,
		      in the notation of~\eqref{eq:standard_complement_type_D}.
	\end{enumerate}
\end{prop}

\begin{proof}
	By~\cite[\S~5.3]{springer_1974_regular_elements_of_finite_reflection_groups},
	one the following (non-mutually-exclusive) situations happen:
	\begin{enumerate}
		\item
		      (i) the integer $r$ is odd and $r \mid m$%
		      ---say $m = rk$ for an integer $k \geq 1$;
		      and (ii) $g$ is a product of $k$ positive disjoint $r$-cycles;

		\item
		      or (i) $r$ is odd and $r \mid (m-1)$%
		      ---say $m-1 = rk$ for an integer $k \geq 1$;
		      (ii) $g$ fixes each element of an opposite pair;
		      and (iii) $g$ is a product of $k$ positive disjoint $r$-cycles;

		\item
		      or (i) $r$ is even and $r \mid m$%
		      ---say $r = 2r'$ and $m = 2kr'$,
		      for integers $r',k \geq 1$;
		      and (ii) $g$ is a product of $2k$ negative disjoint $r'$-cycles;

		\item
		      or (i) $r$ is even and $r \mid (2m-2)$%
		      ---say $r = 2r'$ and $m-1 = r'k$ for integers $r',k \geq 1$;
		      (ii) $g$ stabilizes an opposite pair;
		      (iii) $g$ is a product of $k$ negative disjoint $r'$-cycles;
		      and (iv) $g$ fixes each element of that opposite pair if and only if $k$ is even.
	\end{enumerate}
	(These are \emph{not} all powers of Coxeter elements;
	Coxeter elements correspond to taking $r = 2(m-1)$ in the last class.)

	The first case is equivalent to Prop.~\ref{prop:classification_type_BC}~(1.),
	and since $\zeta_r \neq 1$ the second case reduces to it%
	---inside $\wt V_{m-1} \simeq \mb C^{m-1} \hra \mb C^m \simeq \wt V_m$---%
	when looking for eigenvectors.
	Analogously,
	the third case is equivalent to Prop.~\ref{prop:classification_type_BC}~(2.),
	but we get twice as many negative cycles,
	and the fourth case reduces to it if $k$ is even.
	Moreover,
	the fourth case reduces to the third one if $k$ is odd and $r \geq 4$,
	since then $-\zeta_r \neq 1$.
	Finally,
	if $k$ is odd and $r = 2$ then the fourth case is conjugated to the sign-swapping $g \cl A \mt -A$,
	whence $\wt V_m(g,\zeta_r) = \wt V_m(g,-1) = \wt V_m$,
	and we find the whole of the $W_D(m)$-regular part.
\end{proof}

\subsubsection{}

Finally,
we treat the \emph{nonsplit} reflection cosets of Lem.~\ref{lem:twisted_centralizer_type_D}.
They are of the form $\mb G = \bigl( \wt V_m, g W_D(m) \bigr)$,
where $g \in W_{BC}(m)$ has nontrivial class in $W_{BC}(m) \bs W_D(m) \simeq \mb Z^{\ts}$ (so that $\delta_{\mb G} = 2$).

\begin{prop}
	\label{prop:twisted_generic_classification_type_D}

	Choose integers $r \geq 2$ and $m \geq 4$,
	and denote by $\wt V_{m,\reg}(D) \sse \wt V_m$ the $W_D(m)$-regular part.\fn{
		This is a copy of $\mc M(2,m) \sse \mb C^m$.
		If we define $\wt V_{m,\reg}(BC)$ as the $W_{BC}(m)$-regular part,
		then $\mc M^{\sharp}(2,m) \simeq \wt V_{m,\reg}(BC) \sse \wt V_{m,\reg}(D)$.}~Let also $g \in W_{BC}(m)$ be an element such that
	\begin{equation}
		\wt{\bm B}_{r,\reg}(D_m)
		= \bm B_{g,r,\reg}(D_m)
		\ceqq \wt V_m(g,\zeta_r) \cap \wt V_{m,\reg}(D) \neq \vn.
	\end{equation}
	Then one of the following (mutually-exclusive) situations happen:
	\begin{enumerate}
		\item
		      (i) the integer $r$ is odd and $r \mid (m-1)$;
		      and (ii) if $k \geq 1$ is the quotient of the division then there is a homeomorphism $\wt{\bm B}_{r,\reg}(D_m) \simeq \mc M^\sharp(2r,k)$;

		\item
		      or (i) $r \geq 4$ is even and $r \mid (2m)$;
		      (ii) the quotient $k \geq 1$ is odd;
		      and (iii) there is a homeomorphism $\wt{\bm B}_{r,\reg}(D_m) \simeq \mc M^\sharp(r,k)$;

		\item
		      or (i) $r \geq 4$ is even and $r \mid (2m-2)$;
		      and (ii) if $k$ is the quotient then there is a homeomorphism $\wt{\bm B}_{r,\reg}(D_m) \simeq \mc M^\sharp(r,2k)$;

		\item
		      or (i) $r = 2$;
		      and (ii) $\wt{\bm B}_{r,\reg}(D_m) \simeq \mc M(r,m)$.
	\end{enumerate}
\end{prop}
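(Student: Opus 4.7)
The plan is to run the signed-cycle classification underlying Prop.~\ref{prop:classification_type_BC} and Prop.~\ref{prop:generic_classification_type_D}, now applied to elements $g \in W_m(BC)$ whose $\zeta_r$-eigenspace meets the strictly larger regular locus $\wt V_{m,\reg}(D) \supsetneq \wt V_{m,\reg}(BC)$, guided by the Lehrer--Springer theory of reflection cosets~\cite{lehrer_springer_1999_intersection_multiplicities_and_reflection_subquotients_of_unitary_reflection_groups,
	lehrer_springer_1999_reflection_subquotients_of_unitary_reflection_groups} applied to the coset $\bigl( \wt V_m, g W_m(D) \bigr)$. The novelty compared to Prop.~\ref{prop:classification_type_BC} is that $\wt V_{m,\reg}(D)$ tolerates (at most) one vanishing coordinate per eigenvector, which on the cycle side amounts to allowing one ``excess'' index beyond the rigid negative-cycle template of the $BC$-classification.

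First I would split by the size/parity of $r$. For $r$ odd, every negative cycle has even order, so only the positive-cycle part of $g$ can produce a nontrivial $\zeta_r$-eigenspace: the $r \mid (m-1)$ subcase of Prop.~\ref{prop:classification_type_BC}(1) becomes accessible by filling the last index with a $1$-cycle (positive or negative), whose coordinate must then vanish and hence uses up the single allowed zero of $\wt V_{m,\reg}(D)$; this yields case (1). For $r = 2$, the element $g = -I$ has $\wt V_m(g,-1) = \wt V_m$, and intersecting with $\wt V_{m,\reg}(D)$ gives exactly $\mc M(2,m)$, matching case (4). For $r = 2r' \geq 4$ even, I would replicate the argument of Prop.~\ref{prop:classification_type_BC}(2): each negative $r'$-cycle contributes one dimension to $\wt V_m(g,\zeta_r)$ with the explicit $\zeta_r$-power pattern computed there, while cycles of any other type (positive of any length, or negative of length $\neq r'$) have spectrum avoiding $\zeta_r$ (using $\zeta_r \notin \set{\pm 1}$ for $r \geq 4$) and thus force the corresponding eigenvector coordinates to vanish. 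The single-zero tolerance then pins $g$, up to conjugation, to one of two cycle shapes: (i) $m = qr'$ with $g$ a product of $q$ negative $r'$-cycles (case (2), nonsplit iff $q$ is odd); or (ii) $m-1 = qr'$ with $g$ a product of $q$ negative $r'$-cycles plus one leftover $1$-cycle on the final index (case (3), accommodating both parities of the total negative-cycle count via either leftover sign). In each shape, adapting the eigenbasis of the proof of Prop.~\ref{prop:classification_type_BC}(2) and invoking Lem.~\ref{lem:even_odd_roots_of_1} to transform the ``distinct squares'' condition into power-$r$ conditions on the scalar parameters identifies $\wt{\bm B}_{r,\reg}(D_m)$ with the claimed $\mc M^\sharp$-factor.

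The main obstacle will be exhaustiveness---certifying that no further signed-cycle type of $g \in W_m(BC)$ produces a $\zeta_r$-eigenspace of the required maximal dimension meeting $\wt V_{m,\reg}(D)$, and that the number of free parameters matches the stated exponent in the $\mc M^\sharp$ factor. I would argue this via the coset-degree formula for $(W_m(D), g W_m(D))$ coming from Lehrer--Springer: the regular numbers of the coset are the orders of roots of unity dividing one of its degrees, and these degrees interpolate between those of $W_m(D)$ and of $W_m(BC)$, so that a case-by-case comparison exhausts exactly the divisibility conditions of (1)--(4) and pins down the maximal eigenspace dimension as the number of coset-degrees divisible by $r$, confirming each homeomorphism.
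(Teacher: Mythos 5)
Your strategy---hand-classifying the signed cycle types of $g \in W_m(BC)$ compatible with a $W_m(D)$-regular $\zeta_r$-eigenvector, using the observation that such an eigenvector tolerates at most one vanishing coordinate---is a more self-contained route than the paper's, which simply quotes Springer's classification (\S~6.11 of op.~cit.) of the regular elements of the twisted coset and then reduces each entry to Prop.~\ref{prop:classification_type_BC}. Your odd-$r$ case, your $r=2$ case, and your ``exact fit'' even case line up with cases (1), (4) and (2) of the statement, and your allowance of both leftover signs in the remaining even case is the right idea.

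There is, however, a genuine flaw in your exclusion step for $r = 2r' \geq 4$. It is false that ``cycles of any other type (positive of any length, or negative of length $\neq r'$) have spectrum avoiding $\zeta_r$'': a positive $d$-cycle has $\zeta_r$ in its spectrum whenever $r$ divides $d$, and a negative $d$-cycle does whenever $d \equiv r' \pmod{r}$ (e.g.\ $d = 3r'$). What actually excludes these cycles is $D$-regularity, not the spectrum: the coordinates of a $\zeta_r$-eigenvector along such a cycle form a geometric progression with ratio $\zeta_r^{-1}$, whose squares have period $r' = r/2 < d$, so two coordinates have equal squares; hence the coordinates there must all vanish, which exceeds the one-zero budget. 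The same mechanism (rather than a spectral one) is what pins the positive cycles to length exactly $r$ in the odd case. Two further points need tightening. First, the parity constraint $g \notin W_m(D)$ (an odd total number of negative cycles) must be tracked throughout: in your case (1) the leftover $1$-cycle must be \emph{negative} (a positive fixed point returns you to the split situation of Prop.~\ref{prop:generic_classification_type_D}), and in your case (3) the sign of the leftover is forced by the parity of the number of negative $r'$-cycles---this is exactly what underlies the ``$2q$ negative $r'$-cycles plus one negative transposition'' shape in the paper's treatment, so your parameter count for that case needs to be reconciled with the stated exponent. Second, your Lehrer--Springer fallback misstates the coset criterion: for a nonsplit coset, $\zeta$-regularity is detected by the pairs (degree, eigenvalue of the twist on the corresponding invariant), not by divisibility of the degrees alone, so as phrased the exhaustiveness check would not close the argument.
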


\begin{proof}
	The Weyl group $W_{BC}(m)$ preserves the root subsystem $\Phi_D(m) = \Phi_B(m) \cap \Phi_m(C)$,
	and so we are in the context of~\cite[Lem.~6.8]{springer_1974_regular_elements_of_finite_reflection_groups}.
	Then we can appeal to the classification of \S~6.11 of op.~cit.~(cf.~\cite{steinberg_1968_endomorphisms_of_linear_algebraic_groups}),
	i.e.,
	one of the following holds:
	\begin{enumerate}
		\item
		      (i) the integer $r$ is odd and $r \mid (m-1)$%
		      ---say $m-1 = rk$ for an integer $k \geq 1$;
		      and (ii) $g$ is a product of $k$ positive disjoint $r$-cycles and one negative transposition;

		\item
		      or (i) $r \geq 4$ is even and $r \mid (2m)$;
		      (ii) the quotient of the division is odd%
		      ---say $r = 2r'$ and $m = kr'$ for an integer $r' \geq 1$ and an odd integer $k \geq 1$;
		      and (iii) $g$ is a product of $k$ negative disjoint $r'$-cycles;

		\item
		      or (i) $r$ is even and $r \mid (2m-2)$%
		      ---say $r = 2r'$ and $m-1 = kr'$ for integers $k,r' \geq 1$;
		      and (ii) $g$ is a product of $2k$ negative disjoint $r'$-cycles and one negative transposition.\fn{
			      This seems to correct a misprint in~\cite[\S~6.11~(b)]{springer_1974_regular_elements_of_finite_reflection_groups}.}
	\end{enumerate}

	The first case reduces to Prop.~\ref{prop:classification_type_BC}~(1.),
	in codimension $1$,
	because $-\zeta_r \neq 1$.
	Moreover,
	the second case literally matches up with (2.) of that proposition.
	Analogously,
	if $r \geq 4$ then the third case still reduces to the one mentioned just above,
	because (again) $-\zeta_r \neq 1$,
	but we get twice as many negative cycles.
	Finally,
	if $r = 2$ in the third case then $g$ is conjugated to the overall sign-swap $A \mt -A$.
\end{proof}

\section{Second interlude (in preparation for fission trees)}
\label{sec:notation_for_trees}

\subsection{Exponential factors,
	Stokes circles,
	etc.}

Here we introduce additional notions/notations in order to define and study twisted generalizations of the trees of~\cite{doucot_rembado_tamiozzo_2022_local_wild_mapping_class_groups_and_cabled_braids,doucot_rembado_2025_topology_of_irregular_isomonodromy_times_on_a_fixed_pointed_curve},
i.e.,
an extension of the trees of~\cite{boalch_doucot_rembado_2025_twisted_local_wild_mapping_class_groups_configuration_spaces_fission_trees_and_complex_braids} beyond type $A$.
Recall that the twofold goal is to:
i) conclude the proof of Thm.~\ref{thm:thm_2_intro} by including the noncrystallographic type $D$ examples,
involving the hyperplane complements~\eqref{eq:exotic_arrangement};
and ii) establish Thm.~\ref{thm:thm_3_intro}.
This occupies \S\S~\ref{sec:BC_trees}--\ref{sec:D_trees}.

\subsubsection{}

Let again $x$ be a marked point on a nonsingular complex projective curve $X$,
analytified by the compact Riemann surface $\Sigma = X(\mb C)$.
Choose a local coordinate $z$ on $\Sigma$,
vanishing at $x$,
and fix:
(i) a point $d \in \partial \sse \wh\Sigma \to \Sigma$ in the circle of real oriented directions at $x$;
and (ii) a branch of $\log(z)$ in---%
a germ of---%
an open sector about $d$.
Then an \emph{exponential factor along} $d$ is a `Fabry series' in $z$,
viz.,
the principal part $q$ of a $\mb C$-valued Puiseux series.
Recall that Puiseux series have fractional exponents with bounded denominators,
and so when $q \neq 0$ one can (uniquely) write
\begin{equation}
	\label{eq:exponential_factor}
	q
	= \sum_{i = 1}^s a_i z^{-i \slash r},
\end{equation}
for suitable integers $r,s \geq 1$ and coefficients $a_i \in \mb C$,
with $a_s \neq 0$.
We set $\on{ram}(q) \ceqq r$ and $\irr(q) \ceqq s$,
which are (respectively) the \emph{ramification} and \emph{irregularity of} $q$.
We then say that $q$ is $r$-\emph{ramified},
and the \emph{slope of} $q$ is the number
\begin{equation}
	\on{slope}(q) \ceqq s \slash r = \on{irr}(q) \slash \on{ram}(q) \in \mb Q_{> 0},
\end{equation}
i.e.,
the opposite of the largest exponent with nonzero coefficient.
Finally,
we extend this terminology via $\on{ram}(0) \ceqq 1$ and $\on{irr}(0) \ceqq 0$,
so that $q = 0$ is the unique exponential factor with vanishing slope.

\begin{rema}
	\label{rmk:exp_factors_as_diag_terms}

	The untwisted/unramified pullback of~\eqref{eq:exponential_factor} along the $r$-fold covering $w \mt w^r = z$ is then the (Laurent) principal part $\wh q = \sum_{i = 1}^s a_i w^{-i}$.

	Comparing~\eqref{eq:exponential_factor} with~\eqref{eq:explicit_irregular_type},
	one might view the numbers $a_i$ as the `diagonal' terms of the $\mf t$-valued function-germ $\wh Q$,
	in a given faithful representation of $\mf g$,
	albeit this is not completely satisfactory from the viewpoint of isomonodromic deformations~\cite[Lem.~A.2]{boalch_2002_g_bundles_isomonodromy_and_quantum_weyl_groups}.
	In our setting,
	instead,
	we are \emph{given} a Lie algebra of matrices,
	with a (standard) Cartan subalgebra $\mf t \sse \mf g$ which consists of the diagonal ones therein.
	Moreover,
	there is a uniform model for the Cartan subalgebra.

	Namely,
	suppose that $\mf g$ is simple,
	of classical type $B$,
	$C$,
	or $D$,
	and of rank $m \geq 1$.
	As in \S\S~\ref{sec:type_BC}--\ref{sec:type_D},
	identify $\mf t$ (canonically) with the vector space $\wt V_m \simeq \mb C^m$,
	and write
	\begin{equation}
		A_j
		= \bigl( a_{1,j},\dc,a_{m,j},-a_{1,j},\dc,-a_{m,j} \bigr) \in \mf t,
		\qquad  j \in \set{1,\dc,s},
	\end{equation}
	and in turn
	\begin{equation}
		\wh Q
		= \bigl( \wh q_1,\dc,\wh q_m,-\wh q_1,\dc,-\wh q_m \bigr),
		\qquad \wh q_i = \sum_j a_{i,j} w^{-j}.
	\end{equation}
	Now indeed each diagonal term $\wh q_i$ is as in (the untwisted version of)~\eqref{eq:exponential_factor}.
\end{rema}

\subsubsection{}

Hereafter,
however,
we will work with rational exponents of $z$
(taking the inverse convention of~\cite{boalch_doucot_rembado_2025_twisted_local_wild_mapping_class_groups_configuration_spaces_fission_trees_and_complex_braids}.)

Denote by $\sigma$ the monodromy of the exponential local system,
i.e.,
the action~\eqref{eq:monodromy}---%
now viewed on each diagonal term.
Thus,
for any $j \in \mb Z \bs r \mb Z$,
the $j$-th \emph{Galois conjugate} of $q = q^{(0)}$ is
\begin{equation}
	\label{eq:galois_conjugates}
	q^{(j)}
	\ceqq
	\sigma^j(q)
	= \sum_{i = 1}^s \bigl( a_i \zeta_r^{ij} \bigr) z^{-i/r}.\fn{
		In this setup the ramification $r$ is \emph{not} fixed:
		the action depends on the exponential factor $q$.}
\end{equation}
Then consider the cyclically-ordered finite set
\begin{equation}
	\braket q
	\ceqq \set{q^{(0)},\dc,q^{(r-1)}},
\end{equation}
of conjugates of $q$:
by definition,
this Galois orbit is the \emph{Stokes circle of} $q$.
The set of all Stokes circles $I = \braket{q}$,
as the parameters $\bigl(r,s,q = q(a_1,\dc,a_s) \bigr)$ vary,
is denoted by $\mc S$.
Set also $\on{ram} \bigl( \braket q \bigr) \ceqq \on{ram}(q)$,
which is well-defined.

We will also use the \emph{height}-$k$ \emph{truncations} of exponential factors/Stokes circles,
for any $k \in \mb Q_{\geq 0}$,
viz.,
\begin{equation}
	\label{eq:truncation}
	\tau_k(I)
	= \Braket{ \tau_k(q) },
	\qquad \tau_k(q)
	\ceqq \sum_{i = \lceil kr \rceil}^s a_i z^{-i/r}.
\end{equation}
(The former does \emph{not} depend on the choice of a Galois representative $q$ for $I$).
In particular $\tau_0(q) = q$,
and conversely $\tau_k(q) = 0$ if $k > \on{slope}(q)$.

Moreover,
if $q$ is a nonvanishing exponential factor as in~\eqref{eq:exponential_factor},
denote by $E(q) \sse \mb Q_{>0}$ the set of opposite `active' exponents of $q$,
i.e.,
the numbers $i \slash r$ such that $a_i \neq 0$.
(In particular,
$\max E(q) = \on{slope}(q)$.)
Analogously,
if $I = \braket q$ is a Stokes circle,
let $E(I) \ceqq E(q) \sse \mb Q_{>0}$ be the set of opposite exponents appearing in---%
any representative of---%
$I$.
We shall order the elements of $E(q) = E \bigl( \braket q \bigr)$ in strictly-decreasing fashion,
and set $E(0) = E \bigl( \braket 0 \bigr) \ceqq \vn$.
(Recall that $I = \braket{0}$ is a.k.a.~the \emph{tame circle};
it plays an important role in type $D$).

Finally,
we recall~\cite[Def.~3.4]{boalch_doucot_rembado_2025_twisted_local_wild_mapping_class_groups_configuration_spaces_fission_trees_and_complex_braids} the definition of level data and admissible/inconsequential exponents in type $A$ (with a view towards its generalization):

\begin{defi}
	Let $q$ be an $r$-ramified exponential factor,
	and $I = \braket q$ its Stokes circle.
	Then:
	\begin{enumerate}
		\item
		      the $A$-\emph{level datum of} $q$,
		      or (equivalently) \emph{of} $I$,
		      is the set
		      \begin{equation}
			      \label{eq:type_a_level_data}
			      L_A(q)
			      = L_A(I)
			      \ceqq \Set{ \on{slope} \bigl( q^{(i)} - q^{(j)} \bigr) | i,j \in \mb Z \slash r \mb Z } \sm \set{0},
		      \end{equation}
		      in the notation of~\eqref{eq:galois_conjugates};

		\item
		      and conversely,
		      a finite subset $\bm L \sse \mb Q_{> 0}$ is an $A$-\emph{level datum} if there exists a Stokes circle $I$ such that $L_A(I) = \bm L$.
	\end{enumerate}
\end{defi}

\begin{defi}
	Let $\bm L$ be a type-$A$ level datum.
	Then:
	\begin{enumerate}
		\item
		      The set of $A$-\emph{admissible exponents of} $\bm L$ is
		      \begin{equation}
			      \on{Adm}_A(\bm L)
			      \ceqq \Set{ k\in \mb Q_{>0} | \text{ there exists } I \in \mc S \text{ such that } L_A(I) = \bm L \text{ and } k \in E(I) };\fn{
				      I.e.,
				      if we denote by $\mc S_A(\bm L) \sse \mc S$ the subset of Stokes circles with $A$-level datum $\bm L$,
				      then
				      \begin{equation}
					      \on{Adm}_A(\bm L) = \bigcup_{I \in \mc S_A(\bm L)} E(I) \sse \mb Q_{> 0}.
				      \end{equation}}
		      \end{equation}

		\item
		      and the set of $A$-\emph{inconsequential exponents of} $\bm L$ is the complement
		      \begin{equation}
			      \label{eq:a_inconsequential_exponents}
			      \on{Inc}_A(\bm L)
			      \ceqq \on{Adm}_A(\bm L) \sm \bm L.
		      \end{equation}
	\end{enumerate}
\end{defi}

\begin{rema}
	\label{rmk:levels_are_admissible}

	The largest exponent of $q^{(i)} - q^{(j)}$ always appears amongst the exponents of $q$,
	for any $r$-ramified exponential factor $q$---%
	and indices $i,j \in \mb Z \bs r \mb Z$.
	This yields the inclusion $\bm L \ceqq L_A(q) \sse E(q)$,
	and in turn $\bm L \sse \on{Adm}_A(\bm L)$.
	(This is implicit in~\eqref{eq:a_inconsequential_exponents}.)

	Moreover,
	observe that $L_A(q) = L_A(-q)$:
	an analogous symmetry under sign-swap will be quite relevant to treat the Weyl groups of type $BC$ and $D$.
\end{rema}

\section{Twisted fission trees of type \texorpdfstring{$BC$}{BC}}
\label{sec:BC_trees}

\subsection{Special exponential factors and Stokes-circles-up-to-sign}

Indeed,
the Weyl group of type $BC$ involves \emph{signed} permutations,
and so it is necessary to study Stokes circles up to signs of exponential factors.

\subsubsection{}

Given an integer $r \geq 1$,
let $q$ be an $r$-ramified exponential factor,
so that $r = \on{ram}(q) = \on{ram}(-q)$.
Consider then the set
\begin{equation}
	\label{eq:stokes_circle_up_to_sign}
	\braket{ \pm q }
	\ceqq
	\braket{q} \cup \braket{-q}
	= \set{ q^{(0)},\dc,q^{(r-1)},-q^{(0)},\dc,-q^{(r-1)} },
\end{equation}
consisting of the Galois conjugates $q^{(j)}$ of $q$,
and their opposites.
(The notation is unambiguous,
as $q^{(j)} + (-q)^{(j)} = 0$ for $j \in \mb Z \bs r \mb Z$.)
If $I = \braket q$ is a Stokes circle,
write
\begin{equation}
	-I
	\ceqq \braket{-q},
	\qquad \pm I
	\ceqq
	I \cup (-I)
	= \braket{\pm q},
\end{equation}
and set also
\begin{equation}
	\label{eq:ramification_stokes_circles_up_to_sign}
	\on{ram}(\pm I)
	\ceqq \on{ram}(q)
	= \on{ram}(-q),
	\qquad
	E(\pm I)
	\ceqq E(q)
	= E(-q).
\end{equation}
Unlike for usual Galois orbits,
the cardinality of $\braket{\pm I}$ is \emph{not} just $\on{ram}(\pm I)$,
and it rather depends on all the active exponents $k \in E(\pm I)$.
E.g.:

\begin{lemm}
	\label{lem:example_special_nonspecial_circles}

	Consider an exponential factor of the form $q = z^{-s/r}$,
	with $r \wdg s = 1$.
	Then there are two (mutually-exclusive) situations:
	\begin{enumerate}
		\item
		      $r$ is even,
		      and $\braket{\pm q}$ has $r$ elements,
		      and
		      \begin{equation}
			      \braket{\pm q}
			      = \braket q
			      = \set{q^{(0)},\dc, q^{(r-1)}};
		      \end{equation}

		\item
		      or $r$ is odd,
		      and $\braket{\pm q}$ has $2r$ elements,
		      and
		      \begin{equation}
			      \braket{\pm q}
			      = \Set{ \zeta_{2r}^{2js} z^{-s/r} | j \in \mb Z \bs (2r) \mb Z }.
		      \end{equation}
	\end{enumerate}
	(In brief,
	if $r$ is odd then the coefficients of the elements of $\braket{ \pm z^{-s/r} }$ differ from each other by $(2r)$-th roots of $1$,
	rather than by $r$-th roots of $1$.)
\end{lemm}

\begin{proof}
	This follows from Lem.~\ref{lem:even_odd_roots_of_1}~(1.)--(2.).
\end{proof}

\subsubsection{}

In the case of multiple exponents,
one can first still prove the following:

\begin{lemm}
	\label{lem:special_exponential_factors}

	Let $q$ be a nonzero $r$-ramified exponential factor with Stokes circle $I \ceqq \braket q$.
	Then $\abs{\pm I} \in \set{r,2r}$.
\end{lemm}

\begin{proof}
	The \emph{list} of Galois conjugates of $q$,
	and their opposites,
	is
	\begin{equation}
		\label{eq:galois_list}
		(q^{(0)}, \dc, q^{(r-1)}, -q^{(0)}, \dc, -q^{(r-1)}),
	\end{equation}
	and there are two (mutually-exclusive) situations:
	\begin{enumerate}
		\item
		      either there exists $i \in \mb Z \bs r \mb Z$,
		      $i \neq 0$,
		      such that $q^{(i)} + q^{(0)} = 0$,
		      whence
		      \begin{equation}
			      q^{(i + j)}
			      = -q^{(j)},
			      \qquad j \in \mb Z \slash r \mb Z,
		      \end{equation}
		      and~\eqref{eq:galois_list} has (only) $r$ distinct elements;
		\item
		      or there is no such integer,
		      whence the elements of~\eqref{eq:galois_list} are pairwise distinct. \qedhere
	\end{enumerate}
\end{proof}

\begin{defi}
	\label{def:special_expo_factor}

	Let $q$ be a \emph{nonzero}\fn{
            For the sake of uniform terminology,
        we exclude the tame circle $I = -I = \braket{0}$,
    which plays a distinguished role in \S~\ref{sec:D_trees}.}~$r$-ramified exponential factor.
	Then:
	\begin{enumerate}
		\item
		      we say that $q$,
		      or (equivalently) $I = \braket{q}$,
		      is \emph{special},
		      if $\abs{\pm I} = r$;

		\item
		      else,
		      $q$ and $I$ are \emph{nonspecial};

		\item
		      and the subset of special (resp.,
		      nonspecial) Stokes circles is denoted by $\mc S^{\on s} \sse \mc S \sm \set{\braket 0}$ (resp.,
		      by $\mc S^{\on{ns}}$).
	\end{enumerate}
\end{defi}

\subsubsection{}

It follows that a special Stokes circle $I$ satisfies $I = -I$,
while conversely $I \cap (-I) = \vn$ in the nonspecial case.

Denote now by $\mc S \bs \mb Z^{\ts}$ the quotient set of \emph{Stokes-circles-up-to-sign},
where one identifies $I \in \mc S$ with the `opposite' circle $-I$:
we will view the finite set~\eqref{eq:stokes_circle_up_to_sign} as an element of this quotient.
(So $I = \braket{0} = \braket{\pm 0}$ is the \emph{tame} Stokes-circle-up-to-sign.)
In particular,
the subsets of special/nonspecial Stokes circles are invariant under the sign-swapping $\mb Z^{\ts}$-action:
the restricted canonical projection $\mc S^{\on s} \to \mc S^{\on s} \bs \mb Z^{\ts}$ is a bijection,
while $\mc S^{\on{ns}} \to \mc S^{\on{ns}} \bs \mb Z^{\ts}$ is $2$-to-$1$.
The elements of the subset $\mc S^{\on s} \bs \mb Z^{\ts} \sse \mc S \bs \mb Z^{\ts}$ are the \emph{special} Stokes-circles-up-to-sign,
while those of $\mc S^{\on{ns}} \bs \mb Z^{\ts}$ are the \emph{nonspecial} ones.

With the aim of characterizing `specialness',
we introduce the following:

\begin{defi}
	\label{def:special_number_sequence}

	Let $\bm d = (d_i)_i$ (resp.,
	$\bm E = (k_i)_i$) be a finite nonempty sequence of positive integers (resp.,
	of positive rational numbers).
	Then:
	\begin{enumerate}
		\item
		      the sequence $\bm d$ is \emph{special} if $d_i$ is even for all $i$,
		      and if the integer $\frac{2m}{d_i}$ is odd for all $i$,
		      where $m \ceqq \bigvee_i (d_i \slash 2)$;

		\item
		      the sequence $\bm E$ is \emph{special} if this holds for its sequence of denominators;

		\item
		      else,
		      $\bm d$ and $\bm E$ are \emph{nonspecial}.
	\end{enumerate}
	(Note that these conditions only depend on the \emph{unordered} sets underlying $\bm d$ and $\bm E$.)
\end{defi}

\begin{prop}
	Let $q$ be a nonzero $r$-ramified exponential factor,
	with Stokes-circle-up-to-sign $\pm I = \braket{\pm q}$.
	Then $q$,
	or (equivalently) $\pm I$, is special (as per Def.~\ref{def:special_expo_factor}) if and only if the set $E(\pm I) = E(q)$ is special (as per Def.~\ref{def:special_number_sequence}).
\end{prop}

\begin{proof}
	Set $\bm E \ceqq E(q) = ( k_1,\dc,k_p)$,
	so that $k_1 > \dm > k_p \in \mb Q_{> 0}$ for a suitable integer $p \geq 1$,
	and $q = \sum_{i = 1}^p a_i z^{-k_i}$.
	Then define
	\begin{equation}
		d_i
		\ceqq \on{den}(k_i),
		\quad b_i
		\ceqq \on{num}(k_i),
		\qquad i \in \set{1,\dc,p}.
	\end{equation}

	Let us first assume that $q = q^{(0)}$ is special.
	Then there exists $j \in \mb Z \bs r \mb Z$,
	$j \neq 0$,
	such that $q^{(0)} + q^{(j)} = 0$;
	i.e.:
	\begin{equation}
		\sum_{i = 1}^p a_i \bigl( 1 + \wt\zeta_i^{\, j} \bigr) z^{-b_i/d_i}
		= 0,
		\qquad \wt\zeta_i
		\ceqq \zeta_{d_i}^{b_i}
		= \exp \Bigl(2\pi \sqrt{-1} \cdot \frac{b_i}{d_i} \Bigr) \in \mb C^{\ts}.
	\end{equation}
	(Cf.~\S~\ref{sec:notations}.)
	Since $\wt\zeta_i$ is a primitive $(d_i)$-th root of $1$,
	the identity $1+ \wt \zeta_i^{\, j} = 0$ implies that $d_i$ is even,
	and that $j \equiv \frac{d_i}2 \pmod{d_i}$,
	so that $j$ is an odd multiple of $\frac{d_i}2$.
	Thus,
	$j$ is a multiple of $r' \ceqq \bigvee_{i = 1}^p (d_i \slash 2)$,
	and $r'$ is necessarily an odd multiple of $\frac{d_i}2$ for $i \in \set{1,\dc,p}$.
	It follows that the sequence $(d_1, \dc, d_p)$ is special,
	and that $j$ is an odd multiple of $r'$;
	in other words $j \equiv \frac r 2 \pmod r$,
	because $r = \bigvee_{i = 1}^p (d_i) = 2 r'$.

	Conversely,
	let us assume that $\bm E$ is special,
	i.e.,
	that the sequence $\bm d = (d_1, \dc, d_p)$ is special.
	Then $r = \bigvee \bm d$ is even,
	and the integer $j \ceqq \frac r 2$ is an odd multiple of $\frac{d_i}2$ for $i \in \set{1, \dc, p}$:
	the previous computations now yield $q^{(0)} + q^{(j)} = 0$.
\end{proof}

\subsection{Full \texorpdfstring{$BC$}{BC}-irregular types}

Recall that~\cite{boalch_doucot_rembado_2025_twisted_local_wild_mapping_class_groups_configuration_spaces_fission_trees_and_complex_braids} gave a definition of `full' irregular types in type $A$,
involving Galois-closed lists of Fabry series.
We now extend it in type $BC$,
rewriting (a particular case of) Def.~\ref{def:galois_closed_irr_types},
as follows:

\begin{defi}
	\label{def:full_irr_type_bc}

	Let $m \geq 1$ be an integer.
	A \emph{rank}-$m$ \emph{full irregular type of type} $BC$ is a $W_{BC}(m)$-Galois-closed list of exponential factors of the form
	\begin{equation}
		\label{eq:full_irr_type_BC}
		Q
		= (q_1, \dc, q_m, -q_1, \dc, -q_m),
	\end{equation}
	where $q_1,\dc,q_m$ are as in~\eqref{eq:exponential_factor}---%
	with repetitions allowed.
	I.e.,
	if $\sigma$ (still) denotes the monodromy of the exponential local system,
	then there exists a group element $g \in W_{BC}(m)$ such that
	\begin{equation}
		\label{eq:galois_closed_list_BC}
		g(Q)
		= \sigma(Q)
		\ceqq \bigl( \sigma(q_1),\dc,\sigma(q_m),\sigma(-q_1),\dc,\sigma(-q_m) \bigr).
	\end{equation}
	(For short,
	just say that~\eqref{eq:full_irr_type_BC} is a \emph{full rank}-$m$ $BC$-\emph{irregular type};
	or even just a \emph{full} $BC_m$-\emph{irregular type}.)
\end{defi}

\subsubsection{}

Explicitly,
in the identification~\eqref{eq:weyl_type_BC},
the identity~\eqref{eq:galois_closed_list_BC} means that there exists a permutation $\rho \in \mf S_m$,
and a tuple of signs $\bm\varepsilon = (\varepsilon_1, \dc, \varepsilon_m) \in \bigl( \mb Z^{\ts} \bigr)^{\!m}$,
such that
\begin{equation}
	\label{eq:monodromy_realization_BC}
	\sigma(q_i) =
	\varepsilon_i q_{\rho(i)},
	\qquad i \in \set{1,\dc,m}.
\end{equation}
In the terminology of \S~\ref{sec:setup},
the element $g \ceqq \rho \lts \bm\varepsilon \in W_{BC}(m)$ generates the $r$-Galois-orbit of the untwisted pullback $\wh Q$ of $Q$,
for a suitable integer $r \geq 1$;
thus,
if~\eqref{eq:galois_closed_list_BC} holds,
we will say that $g$ \emph{generates the Galois-orbit of} $Q$.
(The difference is that in this setup we do \emph{not} bound a priori the `total' ramification,
cf.~\eqref{eq:total_ramification}.)

Now an irregular class can be (re)defined as an equivalence class of full irregular types,
in a particular case of \S~\ref{sec:setup}.
Strictly speaking,
since loc.~cit.~worked within a single structure group $G$,
here we must also fix the rank:

\begin{defi}
	\label{def:bc_equivalence}

	Let $m,m' \geq 1$ be integers,
	and $Q$ (resp.,
	$Q'$) a full rank-$m$ $BC$-irregular type (resp.,
	rank-$m'$).
	Then $Q$ and $Q'$ are $BC$-\emph{equivalent} if:
	\begin{enumerate}
		\item
		      $m = m '$;

		\item
		      and there exists $g \in W_{BC}(m)$ such that $Q' = g(Q)$.
	\end{enumerate}
\end{defi}

\subsubsection{}

We (still) write $\Theta = \Theta(Q)$ for the $BC$-\emph{irregular class of} $Q$,
i.e.,
its equivalence class for the above relation.

Now recall also from~\cite[Prop.~8]{boalch_yamakawa_2015_twisted_wild_character_varieties} (cf.~\cite{boalch_doucot_rembado_2025_twisted_local_wild_mapping_class_groups_configuration_spaces_fission_trees_and_complex_braids}) that a type-$A$ irregular class is (equivalent to) the data of a multiset of Stokes circles,
which can be encoded in a linear combination
\begin{equation}
	\Theta
	= \sum_{i = 1}^p n_i \cdot I_i,
\end{equation}
for some integer $p \geq 1$:
with integer multiplicities $n_i > 0$ (so that $m = \sum_i n_i \cdot \on{ram}(I_i)$ is the rank),
and where $I_i \neq I_j$ for $i \neq j \in \set{1,\dc,p}$.
After dealing with special Stokes circles,
we obtain an analogous explicit description of irregular classes in type $BC$:

\begin{prop}
	\label{prop:form_type_BC_irregular_class}

	A $BC_m$-irregular class is (equivalent to) the data of a linear combination
	\begin{equation}
		\label{eq:type_bc_irr_class}
		\Theta
		= \sum_{i = 1}^p n_i \cdot (\pm I_i),
	\end{equation}
	for some integer $p \geq 1$,
	where:
	(i) $n_1,\dc,n_p > 0$ are rational numbers;
	and (ii) $\pm I_i$ are Stokes-circles-up-to-sign;
	such that:
	\begin{enumerate}
		\item
		      $\pm I_i \neq \pm I_j$,
		      for $i \neq j \in \set{1,\dc,p}$;

		\item
		      $n_i \in \mb Z_{> 0}$,
		      if $\pm I_i$ is nonspecial or tame;

		\item
		      $n_i \in \frac 1 2 \mb Z_{> 0}$,
		      if $\pm I_i$ is special;

		\item
		      and $m = \sum_{i = 1}^p n_i \cdot \on{ram}(\pm I_i)$.
	\end{enumerate}
\end{prop}

\begin{proof}
	First,
	let $Q$ be a full $BC_m$-irregular type,
	and $I = \braket q \in \mc S$ the Stokes circle of an exponential factor $q$:
	denote by $m_Q(I) \in \mb Z_{\geq 0}$ be the number of occurrences of $q$ in $Q$,
	so that $I$ is \emph{active} (in $Q$) if $m_Q(I) > 0$.
	(By Galois-closedness,
	this does \emph{not} depend on the choice of representative $q$ for $I$).
	Then $m_Q(I) = m_Q(-I)$,
	and one can define a function
	\begin{equation}
		\theta_Q \cl \mc S \bs \mb Z^{\ts} \lra \mb Z_{\geq 0},
		\qquad (\pm I)
		\lmt m_Q(\pm I),
	\end{equation}
	by collecting the multiplicities of all Stokes-circles-up-to-sign in $Q$.
	By Galois-closedness,
	the map $\theta_Q$ only depends on the irregular class $\Theta = \Theta(Q)$,
	and conversely $\theta_Q$ is determined by $\Theta$.

	Second,
	to relate the rank $m$ (of $Q$) with the multiplicity function $m_Q$,
	let $\pm I = \braket{\pm q}$ be an \emph{active} Stokes-circle-up-to-sign.
	If $I = \braket 0$ is the tame circle,
	then $m_Q(I) = m_Q \bigl( \braket 0 \bigr)$ is even (by looking at~\eqref{eq:full_irr_type_BC}).
	If instead $\pm I \neq \braket 0$,
	then there are two (mutually-exclusive) situations:
	\begin{enumerate}
		\item
		      if $I$ is nonspecial,
		      by Galois-closedness,
		      the number of elements in $Q$ that belong to $\pm I$,
		      i.e.,
		      that are equal to $\pm q^{(j)}$ for some $j \in \mb Z \bs r \mb Z$,
		      equals $2\on{ram}(I) \cdot m_Q(I)$;

		\item
		      conversely,
		      if $I$ is special,
		      by Galois-closedness,
		      the number of elements in $Q$ that belong to $\pm I = I$,
		      is $\on{ram}(I) \cdot m_Q(I)$.
	\end{enumerate}

	It follows that
	\begin{equation}
		2m
		= m_Q \bigl( \braket 0 \bigr) + \sum_{\mc S^{\on{ns}} \slash \mb Z^{\ts}} 2m_Q(\pm I) \cdot \on{ram}(\pm I) + \sum_{\mc S^{\on s}} m_Q(\pm I) \cdot \on{ram}(\pm I).
	\end{equation}
	One can rewrite this identity in symmetric fashion,
	by defining
	\begin{equation}
		n_Q(I)
		\ceqq
		\begin{cases}
			m_Q \bigl( \braket 0 \bigr) \slash 2, & \quad I = \braket 0,         \\
			m_Q(I),                               & \quad I \in \mc S^{\on{ns}}, \\
			m_Q(I) \slash 2,                      & \quad I \in \mc S^{\on s}.
		\end{cases}
	\end{equation}
	Now $n_Q(I) \in \mb Z_{\geq 0}$ if $I \in \mc S^{\on{ns}} \cup \set{ \braket{0} }$,
	and $n_Q(I) \in \frac 1 2\mb Z_{\geq 0}$ if $I \in \mc S^{\on s}$,
	and finally
	\begin{equation}
		m
		= \sum_{\mc S \slash \mb Z^{\ts}} n_Q(\pm I) \cdot \on{ram}(\pm I). \qedhere
	\end{equation}
\end{proof}

\begin{rema}
	\label{rmk:convention_integer_multiplicities_BC}

	It will at times be helpful to work with integer multiplicities for special Stokes circles.
	Concretely,
	if $\Theta$ is as in~\eqref{eq:type_bc_irr_class},
	then one might instead consider the $\mb Z$-linear combination
	\begin{equation}
		\sum_i m_i \cdot (\pm I_i),
	\end{equation}
	where:
	(i) $m_i \ceqq 2n_i$, if $\pm I_i$ is special;
	and (ii) $m_i \ceqq n_i$,
	otherwise.
	Such a combination will also be denoted by $\Theta$,
	and then the $m_i$ will \emph{consistently} denote integer multiplicities,
	while $n_i \in \frac 1 2 \mb Z_{>0}$.
	The same convention applies to other objects in this section,
	and then again in \S~\ref{sec:D_trees}---%
	in type $D$.

	Beware however that using the integer-multiplicity convention the last condition of Prop.~\ref{prop:form_type_BC_irregular_class} need \emph{not} hold,
	viz.,
	the rank $m$ need not be equal to the integer $\sum_i m_i \cdot \on{ram}(\pm I_i)$.
	(Conversely,
	the half-integer convention is designed to extend this equality from type $A$.)
\end{rema}

\subsection{Pointed irregular types and configuration spaces}
\label{sec:pointed_types_BC}

As in the twisted type-$A$ case,
up to the Weyl-group action it is convenient to consider a smaller space of deformation parameters for the irregular types,
relying on Galois-closedness,
and on the natural cyclic order of Stokes circles:

\begin{defi}[cf.~\cite{boalch_doucot_rembado_2025_twisted_local_wild_mapping_class_groups_configuration_spaces_fission_trees_and_complex_braids}, Def.~2.4]
	\label{def:pointed_irreg_type_BC}

	A \emph{pointed} $BC_m$-\emph{irregular type} is an ordered list
	\begin{equation}
		\label{eq:pointed_irr_type_BC}
		\dot Q
		= \bigl( (n_1, q_1), \dc, (n_p, q_p) \bigr),
	\end{equation}
	for some integer $p \geq 1$,
	where:
	(i) $n_1,\dc,n_p > 0$ are rational numbers;
	and (ii) $q_1,\dc,q_p$ are exponential factors;
	such that:
	\begin{enumerate}
		\item
		      $\braket{\pm q_i} \neq \braket{\pm q_i}$,
		      for $i \neq j \in \set{1,\dc,p}$;

		\item
		      $n_i \in \mb Z$,
		      if $q_i$ is nonspecial,
		      or if $q_i = 0$;

		\item
		      $n_i \in \frac 1 2 \mb Z$,
		      if $q_i$ is special;

		\item
		      and $m = \sum_{i = 1}^p n_i \cdot \on{ram}(q_i)$.
	\end{enumerate}
\end{defi}

(Hereafter,
when no reference to the rank is needed,
we will just speak of `pointed irregular types' and `irregular classes',
all tacitly of type $BC$ until \S~\ref{sec:D_trees}.)

\begin{rema}
	\label{rmk:pointed_to_full}

	A pointed irregular type determines a full one,
	of a specific form.
	Namely,
	if $\dot Q$ is as in~\eqref{eq:pointed_irr_type_BC},
	set $r_i \ceqq \on{ram}(q_i)$,
	and let again $q_i^{(j)}$ be the $j$-th Galois conjugate of $q_i = q_i^{(0)}$---%
	for $j \in \mb Z \bs r_i \mb Z$.
	Then the corresponding full irregular type is the concatenation of lists
	\begin{equation}
		\label{eq:pointed_type_as_list}
		Q
		= \bigl( l^\pm_1, \dc, l^\pm_p, -l^\pm_1, \dc, -l^\pm_p \bigr),
	\end{equation}
	where in turn (for $i \in \set{1,\dc,p}$):
	\begin{enumerate}
		\item
		      we set
		      \begin{equation}
			      \label{eq:nonspecial_list}
			      l^+_i
			      \ceqq \bigl( \underbrace{q_i^{(0)}, \dc, q_i^{(0)}}_{n_i \text{ times }}, \dc,\dc, \underbrace{q_i^{(r_i-1)}, \dc, q_i^{(r_i-1)}}_{n_i \text { times}} \bigr),
		      \end{equation}
		      if $q_i$ is nonspecial,
		      or if $q_i=0$;

		\item
		      and we let
		      \begin{equation}
			      \label{eq:special_list}
			      l^-_i
			      \ceqq \bigl( \underbrace{q_i^{(0)}, \dc, q_i^{(0)}}_{m_i \text{ times }}, \dc,\dc, \underbrace{q_i^{(r_i \slash 2 - 1)}, \dc, q_i^{(r_i \slash 2 - 1)}}_{m_i \text { times}} \bigr),
		      \end{equation}
		      if $q_i$ is special,
		      where again $m_i \ceqq 2n_i$ (cf.~Rmk.~\ref{rmk:convention_integer_multiplicities_BC}). \qedhere
	\end{enumerate}
\end{rema}

\begin{rema}
	\label{rmk:decomposition_pit}

	In the notation of~\eqref{eq:nonspecial_list},
	consider the (sub)list of exponential factors $Q^+_i \ceqq (l^+_i,-l^+_i)$.
	Note that
	\begin{equation}
		\sigma (Q^+_i)
		= g^+_i(Q_i),
	\end{equation}
	where $g^+_i \in W_{BC}(n_i \cdot r_i)$ is the product of the following (pairwise-commuting) \emph{positive} disjoint $r_i$-cycles (cf.~\S~\ref{sec:background_type_BCD}):
	\begin{equation}
		\label{eq:nonspecial_positive_cycles}
		\wt c_{ij}
		= c^+_{ij} c^-_{ij},
		\qquad c^+_{ij}
		\ceqq \bigl( j \mid n_i + j \mid \dm \mid (r_i - 1)n_i + j \bigr),
		\quad j \in \set{1,\dc,n_i}.
	\end{equation}
	(Recall that $c^-_{ij}$ is uniquely determined by $c^+_{ij}$.)
	In particular,
	$Q^+_i$ is a full irregular type of rank $n_i \cdot r_i \geq 1$,
	and---%
	just as in type $A$---%
	it is helpful to view it as a `sub-irregular type' of~\eqref{eq:pointed_type_as_list}.

	Analogously,
	in the notation of~\eqref{eq:special_list},
	if we set $r_i' \ceqq r_i \slash 2$ and $Q^-_i \ceqq (l_i^-,-l_i^-)$ then
	\begin{equation}
		\sigma(Q^-_i)
		= g^-_i (Q^-_i),
	\end{equation}
	where $g^-_i \in W_{BC}(m_i \cdot r_i')$ is the product of the following \emph{negative} disjoint $r_i'$-cycles:
	\begin{equation}
		\label{eq:special_negative_cycles}
		\wt c_{ij}
		= \bigl( j \mid m_i + j \mid \dm \mid (r_i' - 1)m_i + j \mid -j \mid -m_i - j \mid \dm \mid (1 - r_i')m_i - j \bigr),
		\quad j \in \set{1,\dc,m_i}.
	\end{equation}
	Again,
	we find a full irregular type of rank $m_i \cdot r'_i = n_i \cdot r_i \geq 1$.
\end{rema}

\subsubsection{}

Henceforth,
the symbol $\dot Q$ will (abusively) denote both a pointed irregular type and its underlying full irregular type,
as per Rmk.~\ref{rmk:pointed_to_full}.

Now the important observation is that:

\begin{lemm}
	\label{lem:full_gives_pointed_BC}

	Any irregular class $\Theta$ admits a pointed representative $\dot Q$.
\end{lemm}

\begin{proof}
	This follows from Prop.~\ref{prop:form_type_BC_irregular_class}:
	cf.~\ref{proof:lem_full_gives_pointed_BC} for an alternative direct proof involving the conjugacy classes of the Weyl group of type $BC$.
\end{proof}

\begin{rema}
	\label{rmk:pointed_dominance}

	Pointed irregular types are intimately related with filtrations of `dominant' Levi subsystems $\phi \sse \Phi_{B/C}$,
	which are controlled by the Dynkin diagram of type $B/C$ (for the standard base $\Delta$ of simple roots,
	cf.~Rmk.~\ref{rmk:dominant_levis}).

	This is already relevant in the unramified case of type $A$.
	E.g,
	the list
	\begin{equation}
		Q = (q,0,q),
		\qquad q \ceqq z^{-1},
	\end{equation}
	is a full irregular type for the group $G = \GL_3(\mb C)$,
	which is \emph{not} pointed.
	And indeed,
	regarding it as an untwisted irregular type $\wh Q = A w^{-1}$,
	where $A \ceqq \diag(1,0,1)$ lies in the standard Cartan subalgebra (noting that $w = z$ here),
	the Levi annihilator $\phi = \phi_A$ is \emph{not} dominant with respect to the standard base of simple roots:
	cf.~Fn.~\ref{fn:missing_levis}.
	Nonetheless,
	the $\mf S_3$-orbit of $\wh Q$ also contains the irregular types $\diag(1,1,0) w^{-1}$ and $\diag (0,1,1)w^{-1}$,
	which are dominant,
	and the corresponding full irregular types are pointed---%
	and have the same irregular class as $\wh Q$.
\end{rema}

\subsubsection{}

We will then also write $\Theta = \Theta(\dot Q)$ for the irregular class of (the full irregular type underlying) $\dot Q$.
Finally,
w.l.o.g.,
it will be useful to further restrict to a suitable subspace of pointed irregular types:

\begin{defi}
	\label{def:compatible_pit}

	A pointed irregular type~\eqref{eq:pointed_irr_type_BC} is \emph{compatible} if the identity $\braket{\pm \tau_k(q_i)} = \braket{\pm \tau_k(q_j)}$ implies that $\tau_k(q_i) = \tau_k(q_j)$,
	for all exponents $k \in \mb Q_{>0}$,
	and for all $i \neq j \in \set{1,\dc,p}$---%
	in the notation of~\eqref{eq:truncation}.

	(I.e.,
	if the height-$k$ truncations are in the same Stokes-circle-up-to-sign,
	then they are actually equal.)
\end{defi}

\subsubsection{}

Whether $\dot Q$ is compatible or not depends on which representatives are chosen for the Galois orbits,
i.e.,
the exponential factors $q_i^{(0)}$ in the lists~\eqref{eq:nonspecial_list}--\eqref{eq:special_list}.
Moreover,
analogously to Lem.~\ref{lem:full_gives_pointed_BC}:

\begin{lemm}
	\label{lem:pointed_gives_compatible}

	Any pointed irregular type $\dot Q$ is $BC$-equivalent to a \emph{compatible} pointed irregular type.
\end{lemm}

\begin{proof}[Proof omitted;
		cf.~Fn.~\ref{fn:compatible_pit}]
\end{proof}

\subsubsection{}

Finally,
in view of the explicit descriptions of the root systems of type $B/C$ (cf.~\S~\ref{sec:root_systems_background}),
the notions of $r$-admissible deformations of Deff.~\ref{def:twisted_admissible_deformations} +~\ref{def:twisted_admissible_deformation_full} are equivalent to the following more explicit numerical criterion (cf.~Rmk.~\ref{rmk:numerical_vs_admissible}):

\begin{defi}
	\label{def:numerical_equivalence_bc}

	Consider two full irregular types
	\begin{equation}
		Q
		= (q_1,\dc, q_m, -q_1, \dc, -q_m),
		\qquad  Q'
		= (q'_1,\dc, q'_{m'}, -q'_1, \dc, -q'_{m'}),
	\end{equation}
	of ranks $m,m' \geq 1$---%
	respectively.
	Then:
	\begin{enumerate}
		\item
		      $Q$ and $Q'$ are \emph{mutual} $BC$-\emph{admissible deformations},
		      which is symbolized by $Q \sim_{BC} Q'$,
		      if:
		      \begin{enumerate}
			      \item
			            $m = m'$;

			      \item
			            there exists $g \in W_{BC}(m)$ such that their Galois-orbits are---%
			            both---%
			            generated by $g$;

			      \item
			            and one has the equalities
			            \begin{equation}
				            \label{eq:numerical_equivalence_BC}
				            \on{slope}(q_i\pm q_j)
				            = \on{slope}(q'_i\pm q'_j),
				            \quad \on{slope}(q_i)
				            = \on{slope}(q'_i),
				            \qquad i \neq j \in \set{1,\dc,m};
			            \end{equation}
		      \end{enumerate}

		\item
		      and two irregular classes $\Theta$ and $\Theta'$ are \emph{mutual} $BC$-\emph{admissible deformations},
		      which is symbolized by $\Theta \sim_{BC} \Theta'$,
		      if there exist full irregular types $Q$ and $Q'$ such that
		      \begin{enumerate}
			      \item
			            $\Theta = \Theta(Q)$ and $\Theta' = \Theta(Q')$;

			      \item
			            and $Q \sim_{BC} Q'$.
		      \end{enumerate}
	\end{enumerate}
\end{defi}

\begin{rema}
	\label{rmk:pointed_is_enough}

	One can prove that if $Q \sim_{BC} Q'$,
	and $Q$ is pointed,
	then so is $Q'$ (cf.~Rmk.~\ref{rmk:pointed_dominance};
	this depends on the underlying `untwisted' deformations of $Q$,
	which are controlled by the slopes~\eqref{eq:numerical_equivalence_BC}).
	Thus,
	two pointed irregular types $\dot Q$ and $\dot Q'$ are said to be \emph{mutual} $BC$-\emph{admissible deformations},
	which is also symbolized by $\dot Q \sim_{BC} \dot Q'$,
	if---%
	and only if---%
	this holds for their underlying full irregular types.

	Analogously,
	if $\dot Q \sim_{BC} \dot Q'$,
	and $\dot Q$ is compatible,
	then so is $\dot Q'$.
\end{rema}

\subsubsection{}
\label{sec:type_bc_configuration_space}

On the one hand,
restricting to compatible pointed irregular types selects a \emph{proper} subset of the pure admissible deformation spaces.
On the other hand,
by Lemm.~\ref{lem:full_gives_pointed_BC} +~\ref{lem:pointed_gives_compatible} and Rmk.~\ref{rmk:pointed_is_enough},
doing so yields a complete set of representatives for their homeomorphism classes.
Therefore,
hereafter,
unless otherwise specified,
we will tacitly work with \emph{compatible} pointed irregular types.

Now,
analogously to \S~\ref{sec:setup},
we attach a deformation spaces---%
a.k.a.~`configuration spaces'~\cite{boalch_doucot_rembado_2025_twisted_local_wild_mapping_class_groups_configuration_spaces_fission_trees_and_complex_braids}---%
to a pointed irregular type $\dot Q = \bigl( (n_1,q_1),\dc,(n_p,q_p) \bigr)$,
and to its irregular class $\Theta \ceqq \Theta(\dot Q)$.
Namely,
let
\begin{equation}
	\label{eq:total_ramification}
	r
	= \on{ram}(\dot Q)
	\ceqq \bigvee_{i = 1}^p \on{ram}(q_i) \in \mb Z_{>0},
\end{equation}
which is the \emph{total ramification of} $\dot Q$,
and define the \emph{Poincaré--Katz rank of} $\dot Q$  as the largest slope
\begin{equation}
	\label{eq:katz_rank}
	K
	=
	\on{Katz}(\dot Q)
	\ceqq \max \Set{ \on{slope}(q_1),\dc,\on{slope}(q_p) } \in \mb Q_{>0}.
\end{equation}
Thus,
the exponential factors appearing in $\dot Q$ can be expressed as principal parts of (formal) Laurent series in the variable $w \ceqq z^{1/r}$,
of pole order bounded by $s \ceqq rK \geq 1$.
Moreover,
upon untwisting,
any pointed irregular type with the same number $p$ of exponential factors,
the same multiplicities/ramifications,
and of Poincaré--Katz rank at most $K$,
will be of the form:
\begin{equation}
	\label{eq:naivecoeffs_bc}
	\dot Q_{\bm a}
	= \bigl( (n_1,\wh q_{1,\bm a}), \dc, (n_p,\wh q_{p,\bm a}) \bigr),
	\qquad \wh q_{i,\bm a}
	= \sum_{j = 1}^s a_{ij} w^{-j},
\end{equation}
for a unique collection of numbers $\bm a = (a_{ij})_{ij} \in \mb C^{ps}$.
In particular,
the admissible deformations of $\dot Q$ are also of this form,
and so (by Def.~\ref{def:numerical_equivalence_bc}) we pose the:

\begin{defi}[cf.~\cite{boalch_doucot_rembado_2025_twisted_local_wild_mapping_class_groups_configuration_spaces_fission_trees_and_complex_braids},
		Def. 2.10]
	\label{def:config_space_type_BC}

	In the notation of~\eqref{eq:naivecoeffs_bc}:
	\begin{enumerate}
		\item
		      the $BC$-\emph{deformation space of} $\dot Q$ is the following topological subspace of $\mb C^{ps}$:
		      \begin{equation}
			      \label{eq:configuration_space_bc}
			      \bm B_{BC,r}(\dot Q)
			      = \bm B_{BC,r}^{\leq s,p}(\dot Q)
			      \ceqq  \Set{ \bm a \in \mb C^{ps} | \dot Q \sim_{BC} \dot Q_{\bm a} };
		      \end{equation}

		\item
		      and the $BC$-\emph{deformation space} of the irregular class $\Theta = \Theta(\dot Q)$ is:
		      \begin{equation}
			      \label{eq:configuration_space_bc_quotient}
			      \bm B_{BC,r}(\Theta)
			      = \bm B_{BC,r}^{\leq s,p}(\Theta)
			      \ceqq \Set{ \Theta_{\bm a} = \Theta(\dot Q_{\bm a}) | \Theta \sim_{BC} \Theta_{\bm a} }.
		      \end{equation}
	\end{enumerate}
\end{defi}

(We still work in strong/classical topology,
and we view~\eqref{eq:configuration_space_bc_quotient} as a topological quotient of~\eqref{eq:configuration_space_bc},
by identifying irregular types in the same $BC$-equivalence class.)

\begin{rema}
	\label{rmk:config_spaces_are_the_same}

	Again,
	this is a particular case of \S~\ref{sec:setup}:
	if $\wh Q$ is the $r$-Galois-closed untwisting of (the full irregular type underlying) $\dot Q$,
	then there is a homeomorphism $\bm B_{BC,r}(\dot Q) \simeq \bm B_r \bigl( \wh Q \bigr)$ (cf.~Rmkk.~\ref{rmk:exp_factors_as_diag_terms} +~\ref{rmk:pointed_is_enough});
	analogously,
	if $\wh\Theta = \wh\Theta \bigl( \wh Q \bigr)$,
	then $\bm B_{BC,r}(\Theta) \simeq \bm B_r\bigl( \wh\Theta \bigr)$.

	As above,
	we omit the bound on the `total' irregularity $s$ of $\dot Q$.
	In addition,
	we also omit the number $p$ of active exponential factors in $\dot Q$,
	as this is anyway uniquely determined by the latter.
	(Strictly speaking,
	in this setup one could also omit the ramification $r$,
	but we rather keep it to indicate the equivalence of the two viewpoints.)
\end{rema}

\subsubsection{}
\label{sec:about_slopes}

In the remainder of this section we will explicitly describe the topological spaces of Def.~\ref{def:config_space_type_BC},
complementing the previous results in the full/nonpure case,
and preparing the discussion of the type $D$ examples---%
in \S~\ref{sec:D_trees}.
To this end,
the slopes that appear in~\eqref{eq:numerical_equivalence_BC} can be separated into:
\begin{enumerate}
	\item \emph{interior} slopes,
	      i.e.:
	      \begin{enumerate}
		      \item
		            the slope of an individual exponential factor;

		      \item
		            the slopes of sums/differences of exponential factors belonging to \emph{the same} Stokes-circle-up-to-sign;
	      \end{enumerate}
	\item
	      and \emph{exterior} slopes,
	      i.e.,
	      the slopes of sums/differences of exponential factors belonging to \emph{different} Stokes-circles-up-to-sign.
\end{enumerate}

To study them separately,
in the next sections we will consider pointed irregular types:
i) having only one Stokes circle (cf.~\S\S~\ref{sec:bc_one_circle}--\ref{sec:bc_one_circle_end}),
leading to $BC$-level data;
ii) having exactly two Stokes circles (cf.~\S~\ref{sec:bc_two_circles}),
leading to $BC$-fission exponents;
and finally iii) the general case (cf.~\S\S~\ref{sec:bc_general}--\ref{sec:weyl_group_tree_bc}),
leading to $BC$-fission trees.

\subsection{One Stokes circle:
	level data}
\label{sec:bc_one_circle}

\subsubsection{}

We shall say that a pointed irregular type is \emph{elementary} if it has exactly one active Stokes circle.

\begin{defi}
	\label{def:bc_level_data}

	Let $\dot Q = (n,q)$ be an elementary pointed irregular type.
	The $BC$-\emph{level datum of} $\dot Q$ is the set of nonzero slopes governing the $BC$-admissible deformations of $\dot Q$,
	as per Def.~\ref{def:numerical_equivalence_bc},
	i.e.:
	\begin{equation}
		L_{BC}(\dot Q)
		= L_{BC}(n,q)
		\ceqq \Set{ \on{slope}(\wt q_i),
			\, \on{slope}(\wt q_i \pm \wt q_j) | i \neq j \in \set{1,\dc,m} } \sm \set{0} \sse \mb Q_{>0},
	\end{equation}
	where $Q = (\wt q_1,\dc,\wt q_m,-\wt q_1,\dc,-\wt q_m)$ is the full irregular type associated to $\dot Q$.\fn{
		We use the half-integer convention,
		so that $m = n \cdot \on{ram}(q)$ is the rank,
		and $n \in \frac 1 2 \mb Z_{>0}$ is an integer if $\braket q$ is nonspecial or tame.}~
\end{defi}

\begin{lemm}
	Let again $q = q^{(0)}, \dc, q^{(r-1)}$ denote the Galois conjugates of the $r$-ramified exponential factor $q$,
	and consider any allowed multiplicity $n \in \frac 1 2\mb Z_{> 0}$ (for $q$) in the pointed irregular type $\dot Q \ceqq (n,q)$.
	Then
	\begin{equation}
		\label{eq:bc_level_data_one_circle}
		L_{BC} (\dot Q)
		= \Bigl( \set{\on{slope}(q)} \cup \Set{ \on{slope} \bigl( q^{(i)} \pm q^{\pm (j)}  \bigr) | i \neq j \in \mb Z \bs r\mb Z } \Bigr) \sm \set{0}.
	\end{equation}
\end{lemm}

\begin{proof}[Proof omitted]
\end{proof}

\subsubsection{}

In particular,
for a given exponential factor $q$,
the set~\eqref{eq:bc_level_data_one_circle} does \emph{not} depend on the multiplicity $n$.
(This is another instance of the fact that any type-$BC$ example can be viewed as a quasi-generic type-$BC$ example of lower rank,
cf.~\S~\ref{sec:type_BC}.)

This observation motivates the following:

\begin{defi}
	Let $q$ be an $r$-ramified exponential factor,
	with Stokes-circle-up-to-sign $\pm I = \braket{\pm q }$.
	Then:
	\begin{enumerate}
		\item
		      the $BC$-\emph{level datum of} $q$,
		      or (equivalently) \emph{of} $\pm I$,
		      is the set
		      \begin{equation}
			      L_{BC}(\pm I)
			      = L_{BC}(q)
			      \ceqq \Bigl( \set{ \on{slope}(q) } \cup \Set{ \on{slope} \bigl( q^{(i)} \pm q^{(j)} \bigr) | i \neq j \in \mb Z \bs r\mb Z } \Bigr) \sm \set{0};
		      \end{equation}

		\item
		      conversely,
		      a finite set $\bm L \sse \mb Q_{>0}$ is a $BC$-\emph{level datum} if there exists a Stokes-circle-up-to-sign $\pm I \in \mc S \bs \mb Z^{\ts}$ such that $\bm L = L_{BC}(\pm I)$;

		\item
		      and the \emph{ramification of} $\bm L$ is then (well-)defined by $\on{ram}(\bm L) \ceqq \on{ram}(\pm I)$.
	\end{enumerate}
\end{defi}

\subsubsection{}

It follows that $L_{BC}(q) = \vn$ if and only if $q = 0$:
the next aim is to find an explicit description of $BC$-level data for nontame Stokes-circles-up-to-sign.
To this end,
it will be convenient to rely on the description of level data in type $A$ (cf.~\S~\ref{sec:notation_for_trees}),
and focus on the differences that arise in type $BC$.
Indeed,
if $\pm I \neq \braket{0}$ is $r$-ramified,
then by definition
\begin{equation}
	L_{BC}(\pm I)
	= \set{ \on{slope}(\pm I) } \cup L_A(I) \cup \Set{ \on{slope} \bigl( q^{(i)} + q^{(j)} \bigr) | i \neq j \in \mb Z \slash r \mb Z },
\end{equation}
in the notation of~\eqref{eq:type_a_level_data}.
Therefore,
any exponent of $I$ that is an $A$-level of $I$ is also a $BC$-level of $\pm I$.

To study the new levels,
it is first easy to understand when the slope of $I$ is \emph{not} already an $A$-level:

\begin{lemm}
	\label{lem:type_bc_levels}

	Let $I \neq \braket 0$ be a Stokes circle,
	and $k \ceqq \on{slope}(I)$.
	Then $k \in L_A(I)$ if and only if $k \not\in \mb Z$.
\end{lemm}

\begin{proof}[Proof postponed to~\ref{proof:lem_type_bc_levels}]
\end{proof}

\subsubsection{}

Now,
to understand the rest of the new $BC$-levels,
the crux of the matter is to describe the subset
\begin{equation}
	L_{BC}^+(\pm I)
	\ceqq \Set{ \on{slope} \bigl( q^{(i)} + q^{(j)} \bigr) | i \neq j \in \mb Z \slash r\mb Z } \sm L_A(I) \sse L_{BC}(\pm I),
\end{equation}
where $I \neq \braket 0$ is an $r$-ramified Stokes circle.

\begin{prop}[cf.~\cite{boalch_doucot_rembado_2025_twisted_local_wild_mapping_class_groups_configuration_spaces_fission_trees_and_complex_braids}, Prop.~3.1]
	\label{prop:new_level_data_type_BC}

	Let $I \neq \braket 0$ be a Stokes circle.
	Write $E(I) = ( k_1, \dc, k_p )$,
	with $k_1 >\dc > k_p$ for an integer $p \geq 1$,
	and let (again) $d_i \ceqq \on{den}(k_i)$ and $b_i \ceqq \on{num}(k_i)$---%
	for $i \in \set{1,\dc,p}$.
	Then:
	\begin{enumerate}
		\item if the sequence $(d_1, \dc, d_p)$ starts with a nonspecial subsequence
		      \begin{equation}
			      \bm d' = (d_1,\dc, d_{n-1}, d_n),
		      \end{equation}
		      such that $(d_1, \dc, d_{n-1})$ is special and $d_n$ divides $\bigvee_{i = 1}^{n-1} (d_i)$,
		      then $\bm d'$ is uniquely determined and $L_{BC}^+(\pm I) = \set{ k_n }$;

		\item
		      else $L_{BC}^+(\pm I) = \vn$.
	\end{enumerate}
\end{prop}

\begin{proof}
	Let $k\in E(I)$ be an exponent of $I$ such that $k \notin L_A(I)$ and $k \neq \on{slope}(I)$.
	Suppose that $k\in L^+_{BC}(\pm I)$.
	This means that there exist $i \neq j \in \mb Z \bs r\mb Z$ such that $\on{slope} \bigl( q^{(i)} + q^{(j)} \bigr) = k$.
	By Galois-closedness,
	we can assume that $i = 0$.
	Let then $n > 0$ be the integer such that $k = k_n$.
	Truncating at height $k_{n-1}$ yields
	\begin{equation}
		\tau_{k_{n-1}} \bigl( q^{(0)} + q^{(j)} \bigr)
		= 0.
	\end{equation}
	But this precisely means that the truncation $\tau_{k_{n-1}} \bigl( q^{(0)} \bigr)$ is a special exponential factor,
	and so in particular the list $(d_1, \dc, d_{n-1})$ is special:
	$j$ is an odd multiple of $m' \ceqq \bigvee_{i = 0}^{n-1} ( d_i \slash 2 )$,
	or equivalently
	\begin{equation}
		j \equiv \frac m 2 \pmod m,
		\qquad m \ceqq \bigvee_{i = 0}^{n-1} (d_i)
		= 2 m'.
	\end{equation}

	Now consider the coefficients of the exponent $k$.
	First,
	since by hypothesis $k_n$ is \emph{not} an $A$-level of $I$,
	one has $d_n \mid m$.
	In turn,
	the truncation $\tau_{k_{n}} \bigl( q^{(j)} \bigr)$ is uniquely determined:
	since $\on{ram}\bigl( \tau_{k_n}(q) \bigr) = m$,
	and $j \equiv \frac m 2 \pmod m$,
	one has
	\begin{equation}
		\tau_{k_n} \bigl( q^{(j)} \bigr)
		= \sigma^{m/2} \bigl( \tau_{k_{n}}(q) \bigr).
	\end{equation}
	Furthermore,
	the coefficient of exponent $k$ in $q^{(0)} + q^{(j)}$ is nonzero,
	i.e.,
	\begin{equation}
		1 + \wt \zeta_n^{\, j}
		\neq 0,
		\qquad \wt \zeta_n
		\ceqq \zeta_{d_n}^{b_n}
		= \exp \Bigl( 2\pi\sqrt{-1} \cdot \frac{b_n}{d_n} \Bigr) \in \mb C^{\ts}.
	\end{equation}
	This implies that either $d_n$ is odd,
	or $d_n$ is even and $j \not\equiv \frac{d_n}2 \pmod{d_n}$.
	In particular,
	the sequence $(d_1, \dc, d_{n-1}, d_n)$ is \emph{not} special,
	and it follows that there is no other exponent $k' \in E(I)$,
	with $k' \neq k$,
	such that $k' \in L_{BC}^+(\pm I)$.

	In summary,
	the above implies that if $L_{BC}^+(\pm I)$ is nonempty then there exists a \emph{unique} subsequence $(d_1, \dc, d_n)$ of $(d_1,\dc, d_p)$ such that:
	i) $(d_1, \dc, d_{n-1})$ is special;
	ii) $(d_1, \dc, d_n)$ is \emph{not} special;
	iii) $d_n$ divides $m = \bigvee_{i = 0}^{n-1} (d_i)$;
	and iv) $L_{BC}^+(\pm I) = \set{k_n}$.

	But the previous computations also yield the converse.
	Namely,
	if there exists a subsequence $(d_1, \dc, d_n)$ with these properties,
	then,
	taking $j \ceqq \bigvee_{i = 0}^{n-1} (d_i \slash 2)$,
	it follows that $j$ is an odd multiple of $\frac{d_i}2$ for $i \in \set{1,\dc,n-1}$ (since $(d_1,\dc,d_{n-1})$ is special),
	and $\zeta_n^j \neq -1$,
	whence $\on{slope} \bigl( q^{(0)} + q^{(j)} \bigr) = k_n$:
	thus,
	$k_n \in L_{BC}^+(\pm I) \neq \vn$.
\end{proof}

\begin{rema}
	In particular,
	if $I = -I$ is special then
	\begin{equation}
		L_{BC}^+(\pm I)
		= \vn,
		\qquad L_{BC}(\pm I)
		=L_{A}(I). \qedhere
	\end{equation}
\end{rema}

\subsubsection{}

As in type $A$,
one can prove that the level data control admissible deformations:

\begin{prop}
	\label{prop:numerical_equiv_from_level_data}

	Let $q$ and $q'$ be two exponential factors,
	and $\dot Q = (n,q)$ and $\dot Q' = (n',q')$ two elementary pointed irregular types featuring (only) them.
	Then $\dot Q \sim_{BC} \dot Q'$ if and only if:
	\begin{enumerate}
		\item
		      $n = n' \in \frac 1 2 \mb Z_{> 0}$;

		\item
		      and  $L_{BC}(q) = L_{BC}(q')$.
	\end{enumerate}
\end{prop}

\begin{proof}
	For the nontrivial implication,
	let $q$,
	$n$,
	and $\dot Q$ be as in the statement,
	with ramification $r \ceqq \on{ram}(q)$ and rank $m \ceqq nr \geq 1$.
	The point is that the `ramified' root-valuation map
	\begin{equation}
		\Phi_m(B/C) \lra \mb Q_{\geq 0},
		\qquad \alpha
		\lmt \on{slope} \bigl( \alpha(\dot Q) \bigr),
	\end{equation}
	where $\Phi_m(B/C)$ is the root system of type $B_m/C_m$ (cf.~\eqref{eq:root_valuations} and~\eqref{eq:type_b_c_roots}),
	is determined by the data of the multiplicity $n$ and the set $L_{BC}(q)$.
	For the differences corresponding to the type-$A$ roots,
	this follows from~\cite{boalch_doucot_rembado_2025_twisted_local_wild_mapping_class_groups_configuration_spaces_fission_trees_and_complex_braids}.
	For the other roots,
	instead,
	this follows from the previous discussion---%
	of $BC$-levels that are \emph{not} $A$-levels.
\end{proof}

\subsubsection{}

This description of the $BC$-level datum of a Stokes-circle-up-to-sign leads to an explicit characterization of \emph{all} possible $BC$-level data,
involving a decomposition which is easy to determine in practice (cf.~Exmp.~\ref{exmp:type_bc_levels}).
To state this precisely,
we introduce the following:

\begin{defi}
	Let $\bm E = ( k_1,\dc,k_p )$ be a finite strictly-decreasing sequence of positive rational numbers,
	for some integer $p \geq 1$.
	Set $d_i \ceqq \on{den}(k_i)$ and $k_{p+1} \ceqq 0$;
	moreover,
	choose a number $k \in \mb Q_{>0}$,
	with denominator $d \ceqq \on{den}(k)$.
	Then:
	\begin{enumerate}
		\item the sequence $\bm E$ has \emph{special beginning} if there exists an integer $n \leq p$ such that the sequence of denominators $(d_1, \dc, d_n)$ is special;

		\item and $k$ is a \emph{good breaking of specialness of} $\bm E$ if there exists an integer $n \leq p$ such that:
		      \begin{enumerate}
			      \item
			            $k_n>k>k_{n+1}$;

			      \item
			            the sequence $(d_1,\dc, d_n)$ is special;

			      \item
			            the sequence $(d_1, \dc, d_n,d)$ is nonspecial;

			      \item
			            and $d \mid \bigvee_{i = 1}^n (d_i)$.
		      \end{enumerate}
	\end{enumerate}
	We denote by $\beta(\bm E) \sse \mb Q_{>0}$ the (finite) set of good breakings of specialness of $\bm E$.
	(In particular,
	if $\bm E$ has \emph{no} special beginning,
	then $\beta(\bm E) = \vn$.)
\end{defi}

\begin{coro}
	Let $\bm L \neq \vn$ be a finite set of positive rational numbers.
	Then $\bm L$ is a $BC$-level datum if and only if it can be written as a \emph{disjoint} union
	\begin{equation}
		\label{eq:form_BC_level_data}
		\bm L
		= \bm S \cup \bm L_A \cup \bm L^+,
	\end{equation}
	where:
	\begin{enumerate}
		\item
		      the set $\bm L_A$ is a (possibly empty) type-$A$ level datum;

		\item
		      the set $\bm S$ is either empty,
		      or it is a singleton $\bm S = \set{k}$,
		      where $k$ is an integer such that $k > \max(\bm L_A)$;

		\item
		      and the set $\bm L^+$ satisfies the following:
		      \begin{enumerate}
			      \item
			            if $\bm S \neq \vn$,
			            then $\bm L^+ = \vn$;

			      \item
			            if $\bm S = \vn$,
			            and if $\bm L_A$ has a special beginning,
			            then either $\bm L^+ = \vn$,
			            or $\bm L^+ = \set{k}$,
			            where $k \in \beta(\bm L_A)$;

			      \item
			            else $\bm L^+ = \vn$.
		      \end{enumerate}
	\end{enumerate}
\end{coro}

\begin{proof}
	The previous discussion implies that all $BC$-level data have this form (cf.~particularly Prop.~\ref{prop:new_level_data_type_BC}).
	Conversely,
	if $\bm L$ satisfies the conditions of the statement,
	then one can prove that $\bm L = L_{BC}(\pm I)$ for any Stokes-circle-up-to-sign $\pm I$ such that $E(\pm I) = \bm L$.
\end{proof}

\begin{exem}
	\label{exmp:type_bc_levels}

	Let us illustrate the previous discussion on a few examples.
	\begin{enumerate}
		\item
		      If $\pm I$ is an unramified Stokes-circle-up-to-sign of (integer) slope $k$,
		      then $L_{BC}(\pm I) = \set{k}$.
		      In this case $L_{BC}(\pm I) = \bm S$ and $\bm L_A = \bm L^+ = \vn$.

		\item
		      If $\pm I$ is a ramified Stokes-circle-up-to-sign with a single exponent,
		      i.e.,
		      if $E(I)=\set{k}$ with $k \in \mb Q_{>0} \sm \mb Z_{>0}$,
		      then $L_{BC}(\pm I) = \set{k}$.
		      In this case $L_{BC}(\pm I) = \bm L_A$ and $\bm S = \bm L^+ = \vn$.

		\item
		      Consider the Stokes-circle-up-to-sign $\pm I = \braket{\pm q}$,
		      with $q \ceqq z^{-3/2} + z^{-1}$.
		      Then $L_{BC}(\pm I) = \set{ 3 \slash 2,1 }$,
		      with $\bm S = \vn$,
		      $\bm L_A = \set{ 3 \slash 2 }$,
		      and $\bm L^+ = \set{1}$.
		      Indeed,
		      the sequence of denominators of $\bm E \ceqq \set{ 3 \slash 2, 1 }$ is $(2, 1)$,
		      the subsequence $(2)$ is special,
		      and $1 \in \beta(\bm E)$.
		      More explicitly,
		      the Galois conjugates of $q$ are
		      \begin{equation}
			      q^{(0)}
			      = q
			      = z^{-3/2} + z^{-1},
			      \qquad q^{(1)}
			      = -z^{-3/2} + z^{-1}.
		      \end{equation}
		      Then the identity $q^{(0)} - q^{(1)} = 2z^{3/2}$ yields $\frac 3 2 \in L_A(I)$,
		      and $q^{(0)} + q^{(1)} = 2z^{-1}$ yields $1 \in L_{BC}^+(\pm I)$.

		\item
		      Consider the Stokes-circle-up-to-sign $\pm I = \braket{ \pm q}$,
		      with $q \ceqq z^{-3/2} + z^{1/2}$.
		      Then $L_{BC}(\pm I) = \set{ 3 \slash 2 }$,
		      with $\bm S = \vn$,
		      $\bm L_A = L_{BC}(\pm I)$,
		      and $\bm L^+ = \vn$.
		      Indeed,
		      the sequence of denominators of $\bm E \ceqq \set{ 3 \slash 2, 1 \slash 2 }$ is $(2,2)$,
		      so it is special;
		      but there is \emph{no} breaking of specialness.

		      (Note that the Stokes circles of the two previous examples have the same type-$A$ level data.)

		\item
		      Consider the Stokes-circle-up-to-sign $I = \braket {\pm q}$,
		      with $q \ceqq z^{-3/2} + z^{-5/6} + z^{-1/3}$.
		      Then
		      \begin{equation}
			      L_{BC}(\pm I)
			      = \set{ 3 \slash 2,5 \slash 6,1 \slash 3 },
			      \quad \bm S
			      = \vn,
			      \quad \bm L_A
			      = \set{ 3 \slash 2,5 \slash 6 },
			      \quad \bm L^+
			      = \set{ 1 \slash 3 }.
		      \end{equation}
		      Indeed,
		      here $E(I) = \set{ 3 \slash 2, 5 \slash 6, 1 \slash 3 }$,
		      the sequence of denominators is $(2,6,3)$,
		      the subsequence $(2,6)$ is special,
		      the subsequence $(2,6,3)$ is \emph{not} special,
		      and $3$ divides $6 = 2 \vee 6$;
		      so $\frac 1 3 \in \beta \bigl( \set{ 3 \slash 2,5 \slash 6 } \bigr)$.

		\item
		      Consider finally $\pm I = \braket{\pm q}$,
		      with $q \ceqq z^{-3/2} + z^{-5/6} + z^{-2/3} + z^{-1/3}$.
		      Then
		      \begin{equation}
			      L_{BC}(\pm I)
			      = \set{ 3 \slash 2, 5 \slash 6, 2 \slash 3 },
			      \quad \bm S
			      = \vn,
			      \quad \bm L_A
			      = \set{ 3 \slash 2, 5 \slash  6 },
			      \quad \bm L^+
			      = \set{ 2 \slash 3 }.
		      \end{equation}

		      One should compare with the previous example:
		      here $\frac 1 3 \notin L_{BC}(\pm I)$,
		      although $\frac 1 3 \in \beta(\bm L_A)$.
		      This is because the specialness of $\bm L_A$ is first broken by $\frac 2 3$,
		      and it is only the \emph{highest} good breaking which belongs to the $BC$-level datum.

		      (Again,
		      the Stokes circles of the two previous examples have the same type-$A$ level data.) \qedhere
	\end{enumerate}
\end{exem}

\subsection{One Stokes circle:
	admissible/inconsequential exponents}

We now associate admissible/inconsequential exponents to a $BC$-level datum,
in order to determine the `elementary' configuration spaces,
and define fission trees later on.

\begin{defi}[cf.~again~\cite{boalch_doucot_rembado_2025_twisted_local_wild_mapping_class_groups_configuration_spaces_fission_trees_and_complex_braids}, Def.~3.4]
	\label{def:admissible_inconsequential_bc}

	Let $\bm L$ be a $BC$-level datum.
	Then:
	\begin{enumerate}
		\item
		      the set of $BC$-\emph{admissible exponents of} $\bm L$ is
		      \begin{equation}
			      \on{Adm}_{BC}(\bm L)
			      \ceqq \Set{ k \in \mb Q_{>0} | \text{ there is } \pm I \in \mc S \bs \mb Z^{\ts} \text{ with } L_{BC}(\pm I) = \bm L \text{ and } k \in E(I) };
		      \end{equation}
		\item
		      the set of $BC$-\emph{inconsequential exponents of} $\bm L$ is the complement
		      \begin{equation}
			      \on{Inc}_{BC}(\bm L)
			      \ceqq \on{Adm}_{BC}(\bm L) \sm \bm L;\fn{
				      If $\bm L = \vn$ then $\on{Adm}_{BC}(\bm L) = \on{Inc}_{BC}(\bm L) = \vn$,
				      cf.~Rmk.~\ref{rmk:levels_are_admissible}.}
		      \end{equation}

		\item
		      and if $\bm L = L_{BC}(\pm I)$,
		      with $\pm I = \braket{\pm q}$,
		      then we set
		      \begin{equation}
			      \on{Adm}_{BC}(\pm I)
			      =\on{Adm}_{BC}(q)
			      \ceqq     \on{Adm}_{BC}(\bm L),
			      \qquad
			      \on{Inc}_{BC}(\pm I)
			      = \on{Inc}_{BC}(q)
			      \ceqq \on{Inc}_{BC}(\bm L).
		      \end{equation}
	\end{enumerate}
\end{defi}

\begin{prop}
	\label{prop:admissible_inconsequential_bc}

	Let $\bm L$ be a \emph{nonempty} $BC$-level datum,
	and decompose it as in~\eqref{eq:form_BC_level_data}.
	Then:
	\begin{enumerate}
		\item
		      if $\bm S \cup \bm L_A$ has no special beginnings (e.g.,
		      if $\bm S \neq \vn$),
		      then
		      \begin{equation}
			      \on{Adm}_{BC}(\bm L)
			      = \on{Adm}_A(\bm L_A) \cap \Set{ k\in \mb Q_{>0} | k \leq \max(\bm S \cup \bm L_A) };
		      \end{equation}
		\item
		      if $\bm S \cup \bm L_A$ has special beginnings (whence $\bm S = \vn$),
		      and $\bm L^+ = \vn$,
		      then
		      \begin{equation}
			      \on{Adm}_{BC}(\bm L)
			      = \Set{ k\in \mb Q_{>0} | k \leq \max(\bm L_A), \, k \in \on{Adm}_A(\bm L_A), \text{ and } k \notin \beta(\bm L_A) };
		      \end{equation}
		\item
		      and if $\bm S \cup \bm L_A$ has special beginnings,
		      and $\bm L^+ = \set{k_b}$,
		      then
		      \begin{equation}
			      \on{Adm}_{BC}(\bm L)
			      = \Set{ k\in \mb Q_{>0} | k \leq \max(\bm L_A), \, k \in \on{Adm}_A(\bm L_A); \text{ if } k > k_b \text{ then } k \notin \beta(\bm L_A) }.
		      \end{equation}
	\end{enumerate}
\end{prop}

\begin{proof}[Proof postponed to~\ref{proof:prop_admissible_inconsequential_bc}]
\end{proof}

\begin{exem}
	Let us showcase the admissible/inconsequential exponents of Exmp.~\ref{exmp:type_bc_levels} (keeping the notation therein).
	\begin{enumerate}
		\item
		      In this case $\on{Adm}_{BC}(\pm I) = \set{1, \dc, k}$ and $\on{Inc}_{BC}(\pm I) = \set{1, \dc, k-1}$.

		\item
		      Let $d = \on{den}(k)$.
		      Then $\on{Adm}_{BC}(\pm I) = \set{ l \in \frac 1 d \mb Z_{>0} | l\leq k }$ and $\on{Inc}_{BC}(\pm I) = \set{ l \in \frac 1 d \mb Z_{>0} | l < k }$.

		\item
		      In this case $\on{Adm}_{BC}(\pm I) = \set{ 3 \slash 2,1,1 \slash 2 }$ and $\on{Inc}_{BC}(\pm I) = \set{ 1 \slash 2 }$.
		      Note that $\on{Adm}_{BC}(\pm I) = \on{Adm}_A(I)$,
		      while $\on{Inc}_A(I) = \set{ 1, 1 \slash 2 } \neq \on{Inc}_{BC}(\pm I)$.

		\item
		      In this case $\on{Adm}_{BC}(\pm I) = \set{ 3 \slash 2, 1 \slash 2 }$ and $\on{Inc}_{BC}(\pm I) = \vn$.
		      Incidentally,
		      note that $\on{Adm}_{BC}(\pm I) \ssne \on{Adm}_A(I) = \set{ 3 \slash 2,1,1 \slash 2}$,
		      because $1 \in \beta \bigl( \set{3 \slash 2} \bigr)$.
		      (In type $A$ this exponent would be admissible,
		      but here introducing a nonzero coefficient of exponent $1$ would create a new level in the $\bm L^+$-part.)

		\item
		      In this case $\on{Adm}_{BC}(\pm I) = \set{ 3 \slash 2, 5 \slash 6, 1 \slash 2, 1 \slash 3, 1 \slash 6 }$ and $\on{Inc}_{BC}(\pm I) = \set{ 1 \slash 2, 1 \slash 6 }$.
		      Indeed,
		      one has $\on{Adm}_A(I) = \on{Adm}_{BC}(\pm I) \cup \set{1,2 \slash 3}$,
		      but $ 1 \notin \on{Adm}_{BC}(\pm I)$ because $1 \in \beta \bigl( \set{3 \slash 2} \bigr)$ is greater than the good breaking $\frac 1 3$;
		      analogously,
		      $\frac 2 3 \notin \on{Adm}_{BC}(\pm I)$ because $\frac 2 3 \in \beta \bigl( \set{3 \slash 2,5 \slash 6} \bigr)$ is greater than $\frac 1 3$.
		      The $A$-admissible exponent $\frac 1 2$ is also greater than $\frac 1 3$,
		      but $\frac 1 2 \not\in \beta \bigl( \set{ 3 \slash 2,5 \slash 6 } \bigr)$ (the sequence of denominators $(2,6,2)$ is special),
		      so it is also $BC$-admissible.
		      Finally,
		      the exponent $\frac 1 6$ is smaller than $\frac 1 3$,
		      so it is $BC$-admissible.

		\item
		      Analogously,
		      in this case $\on{Adm}_{BC}(\pm I) = \set{ 3 \slash 2,5 \slash 6, 2 \slash 3, 1 \slash 2, 1 \slash 3, 1 \slash 6 }$ and $\on{Inc}_{BC}(\pm I) = \set{1 \slash 2,1 \slash 6 }$. \qedhere
	\end{enumerate}
\end{exem}

\subsection{One Stokes circle:
	configuration spaces}

In particular,
one can now explicitly describe the `elementary' $BC$-deformation spaces:

\begin{coro}[cf.~\cite{boalch_doucot_rembado_2025_twisted_local_wild_mapping_class_groups_configuration_spaces_fission_trees_and_complex_braids}, Prop.~3.8]
	\label{cor:elementary_BC_space}

	Let $\dot Q = (n,q)$ be a pointed irregular type with exponential factor $q$ and multiplicity $n \in \frac 1 2 \mb Z_{> 0}$,
	and set $\bm L \ceqq L_{BC}(q)$.
	Then there is a homeomorphism
	\begin{equation}
		\bm B_{BC,r}(\dot Q)
		\simeq (\mb C^{\ast})^{\, \abs{\bm L}} \ts \mb C^{\, \abs{ \on{Inc}_{BC}(\bm L) }},
	\end{equation}
	equipping the right-hand-side with the product topology.
\end{coro}

\begin{proof}
	This follows from Prop.~\ref{prop:numerical_equiv_from_level_data} and Def.~\ref{def:admissible_inconsequential_bc}.

	Indeed,
	identify $\mb C^{\on{Adm}_{BC}(\bm L)} \simeq \mb C^{\, \abs{\on{Adm}_{BC}(\bm L)}}$ via the total order of $\on{Adm}_{BC}(\bm L) \sse \mb Q_{>0}$.
	Then assume that $(n,q) \sim_{BC} (n',q')$,
	so that necessarily $n = n'$,
	and let
	\begin{equation}
		r \ceqq \on{ram}(q)
		= \on{ram}(q'),
		\qquad     s \ceqq \on{irr}(q)
		\geq \on{irr}(q').
	\end{equation}
	Finally,
	write $q'$ as in~\eqref{eq:exponential_factor},
	for suitable coefficients $\bm a' = (a'_i)_i \in \mb C^s$.
	By definition of admissible exponents,
	if the number $i \slash r$ does \emph{not} lie in $\on{Adm}_{BC}(\bm L) = \on{Adm}_{BC}(q')$,
	then $a'_i = 0$.
	This yields a (sub)collection $(a'_k)_k \in \mb C^{\, \abs{\on{Adm}_{BC}(\bm L)}}$ such that $q' = \sum_k a'_k z ^{-k}$.
	Moreover,
	if $k \in \bm L$,
	then $a'_k\neq 0$ (otherwise $k$ would \emph{not} be a level),
	so that $\bm a'$ lies in the space of the statement.
	(In particular $a'_s \neq 0$,
	and so $\on{irr}(q') = s$ as well.)

	Conversely,
	choose a tuple $\bm a' = (a'_k)_k \in (\mb C^\ast)^{\abs{\bm L}} \ts \mb C^{\, \abs{\on{Inc}_{BC}(\bm L)}} \sse \mb C^{\, \abs{\on{Adm}_{BC}(\bm L)}}$.
	Then one can define the exponential factor
	\begin{equation}
		q_{\bm a}
		\ceqq \sum_{k \in \on{Adm}_{BC}(\bm L)} a_k z^{-k},
	\end{equation}
	whose irregularity is bounded by $s$,
	and the above computations of level data imply that $L_{BC}(q_{\bm a}) = \bm L$.
\end{proof}

\begin{rema}
	As in type $A$,
	the inconsequential exponents do \emph{not} contribute to the homotopy class of the deformation space.
	Contrary to type $A$,
	there is no need to truncate (cf.~Rmk.~\ref{rmk:truncations_trees_BC}).
\end{rema}

\subsection{One Stokes circle:
	full branches}
\label{sec:bc_one_circle_end}

As a first step towards defining twisted fission trees in type $BC$,
it will be convenient to picture the above data in graphical way.
To this end,
we pose the following:

\begin{defi}
	\label{def:full_branch_of_level_datum_type_BC}

	Let $\bm L$ be a $BC$-level datum.
	The \emph{full} $BC$-\emph{branch of} $\bm L$ is the triple $\mc B_{\bm L} = (\mc B, \mb A, \mb L)$,
	where:
	\begin{enumerate}
		\item
		      $\mc B$ is a totally-ordered set,
		      equipped with an order-preserving bijective \emph{height function} $h \cl \mc B \lxra{\simeq} \ol{\mb R}_{\geq 0} \ceqq \mb R_{\geq 0} \cup \set{\infty}$;\fn{
			      Where in turn $\ol{\mb R}_{\geq 0}$ is totally ordered by extending the natural order of the nonnegative real line,
			      declaring that $k < \infty$ for $k \in \mb R_{\geq 0}$.}

		\item
		      and $\mb A, \mb L \sse \mc B$ are two finite subsets,
		      defined as follows:
		      \begin{enumerate}
			      \item
			            the set of \emph{admissible vertices} is $\mb A \ceqq h^{-1}\bigl( \on{Adm}_{BC}(\bm L) \bigr)$;

			      \item
			            and the set of \emph{mandatory vertices} is $\mb L \ceqq h^{-1}(\bm L) \sse \mb A$.
		      \end{enumerate}
	\end{enumerate}
\end{defi}

\subsubsection{}

In the notation of Def.~\ref{def:full_branch_of_level_datum_type_BC},
the set of \emph{inconsequential vertices} is the complement $\mb I \ceqq \mb A \sm \mb L \sse \mb A$.
The element $v_\infty \ceqq h^{-1}(\infty) \in \mc B$ is the \emph{root} of the full branch,
and $v_0 \ceqq h^{-1}(0)$ is the \emph{leaf}.
(Note that the root and leaf are \emph{not} admissible,
as $h(\mb A) = \on{Adm}_{BC}(\bm L) \sse \mb Q_{> 0}$.)

For later use,
we wish to regard a full branch as a tree with no branching,
and so we also equip it with a set of \emph{edges}:

\begin{defi}
	\label{def:edges_bc_full_branch}

	Let $\mc B_{\bm L}$ be the full $BC$-branch of a $BC$-level datum $\bm L$.
	Choose a pair $\set{k,l} \sse h(\mb A)$ of consecutive heights of admissible vertices of $\mc B_{\bm L}$,
	with $k < l$.
	Then:
	\begin{enumerate}
		\item
		      the corresponding \emph{finite edge of} $\mc B_{\bm L}$ is the subset
		      \begin{equation}
			      e_{k,l}
			      \ceqq  h^{-1} \bigl( (k,l) \bigr) \sse \mc B;
		      \end{equation}

		\item
		      $l \eqqc h^+(e_{k,l}) \in \mb Q_{> 0}$ is the \emph{parent-height of} $e_{k,l}$;

		\item
		      and the (unique) \emph{infinite edge} is
		      \begin{equation}
			      e_\infty
			      = e_{\ul k,\infty}
			      = h^{-1} \bigl( (\ul k,\infty) \bigr) \sse \mc B,
			      \qquad \ul k
			      \ceqq \max \bigl( h(\mb A) \bigr) < \infty.
		      \end{equation}
	\end{enumerate}
\end{defi}

\subsubsection{}

We denote by $\mb E$ the resulting set of edges of $\mc B_{\bm L}$.
Finally,
we decorate the edges according to the specialness---%
or lack thereof---%
of the top part of the full branch,
just above them.
Precisely,
consider the set $\set{E,S,NS}$,
whose elements stand for `\emph{empty}',
`\emph{special}',
and `\emph{nonspecial}',
respectively.
Then:

\begin{defi}[cf.~Def.~\ref{def:edges_bc_full_branch}]
	\label{def:type_of_edges_type_BC}

	The \emph{type} $t(e) \in \set{E,S,NS}$ of an edge $e \in \mb E$ is:
	\begin{equation}
		t(e)
		\ceqq
		\begin{cases}
			E,  & \quad e = e_\infty,                                                                  \\
			S,  & \quad \bm L \cap \bigl[ h^+(e), \infty \bigr) \sse \mb Q_{> 0} \text{ is special},   \\
			NS, & \quad \bm L \cap \bigl[h^+(e), \infty \bigr) \sse \mb Q_{> 0} \text{ is nonspecial},
		\end{cases}
	\end{equation}
	as per Def.~\ref{def:special_number_sequence}.
\end{defi}

(The latter distinction is a new feature,
which was \emph{not} required in type $A$.)

\subsubsection{}

We will draw a full branch with the same conventions of~\cite{boalch_doucot_rembado_2025_twisted_local_wild_mapping_class_groups_configuration_spaces_fission_trees_and_complex_braids},
adding decorations for the edge-type.
Thus,
the mandatory vertices (resp.,
inconsequential ones) are painted black (resp.,
white),
and the root is a square node.
In addition,
empty edges (resp.,
special,
nonspecial) are depicted by a dotted line (resp.,
a dashed line,
a full line).

Given a vertex $v$ in a full $BC$-branch $\mc B_{\bm L}$,
all combinations for the type of $v$,
and for the edges incident at $v$,
are shown in Fig.~\ref{fig:bc_edge_type}.

\begin{figure}
	\begin{center}
		\begin{tikzpicture}
			\tikzstyle{authorized}=[circle,fill=white,minimum size=5pt,draw, inner sep=0pt]
			\tikzstyle{mandatory}=[circle,fill=black,minimum size=5pt,draw,inner sep=0pt]
			\begin{scope}
				\node (A) at (0,2){};
				\node[mandatory] (B) at (0,1){};
				\node (C) at (0,0){};
				\draw[dotted, thick] (A)--(B);
				\draw[dashed] (B)--(C);
			\end{scope}
			\begin{scope}[xshift=1.2cm]
				\node (A) at (0,2){};
				\node[mandatory] (B) at (0,1){};
				\node (C) at (0,0){};
				\draw[dotted, thick] (A)--(B);
				\draw (B)--(C);
			\end{scope}
			\begin{scope}[xshift=2.4cm]
				\node (A) at (0,2){};
				\node[authorized] (B) at (0,1){};
				\node (C) at (0,0){};
				\draw [dashed] (A)--(B);
				\draw[dashed] (B)--(C);
			\end{scope}
			\begin{scope}[xshift=3.6cm]
				\node (A) at (0,2){};
				\node[mandatory] (B) at (0,1){};
				\node (C) at (0,0){};
				\draw [dashed] (A)--(B);
				\draw[dashed] (B)--(C);
			\end{scope}
			\begin{scope}[xshift=4.8cm]
				\node (A) at (0,2){};
				\node[mandatory] (B) at (0,1){};
				\node (C) at (0,0){};
				\draw [dashed] (A)--(B);
				\draw (B)--(C);
			\end{scope}
			\begin{scope}[xshift=6.0cm]
				\node (A) at (0,2){};
				\node[authorized] (B) at (0,1){};
				\node (C) at (0,0){};
				\draw (A)--(B);
				\draw (B)--(C);
			\end{scope}
			\begin{scope}[xshift=7.2cm]
				\node (A) at (0,2){};
				\node[mandatory] (B) at (0,1){};
				\node (C) at (0,0){};
				\draw (A)--(B);
				\draw (B)--(C);
			\end{scope}
		\end{tikzpicture}
	\end{center}
	\caption{Combinations of vertex- and edge-type in a full $BC$-branch.}
	\label{fig:bc_edge_type}
\end{figure}

\begin{exem}
	Consider the Stokes-circle-up-to-sign $\pm I = \braket{ \pm q}$,
	with $q = z^{-3/2} + z^{-1}$,
	as in Exmp.~\ref{exmp:type_bc_levels}~(3.).
	Recall that $L_{BC}(\pm I) = \set{ 3 \slash 2, 1 }$ and $\on{Inc}_{BC}(\pm I) = \set{ 1 \slash 2 }$,
	while $L_A(I) = \set{ 3 \slash 2 }$ and $\on{Inc}_A(I) = \set{1 \slash 2, 1, 2, 3,\dc}$.
	The corresponding type-$A$ and -$BC$ full branches are depicted in Fig.~\ref{fig:comparison_branches}:
	as expected,
	the sets of admissible/mandatory/inconsequential vertices are different.
	(But mandatory vertices in type $A$ are mandatory in type $BC$.)
\end{exem}

\begin{figure}
	\begin{center}
		\begin{tikzpicture}
			\tikzstyle{authorized}=[circle,fill=white,minimum size=5pt,draw, inner sep=0pt]
			\tikzstyle{mandatory}=[circle,fill=black,minimum size=5pt,draw,inner sep=0pt]
			\tikzstyle{empty}=[circle,fill=black,minimum size=0pt,inner sep=0pt]
			\tikzstyle{root}=[fill=black,minimum size=5pt,draw,inner sep=0pt]
			\tikzstyle{indeterminate}=[circle,densely dotted,fill=white,minimum size=5pt,draw, inner sep=0pt]
			\begin{scope}
				\node[root] (R) at (0,6.5){};
				\node (X) at (0,5.5){$\vdots$};
				\node (Y) at (0,5){};
				\node[mandatory] (A) at (0,4){};
				\node[mandatory] (B) at (0,3){};
				\node[authorized] (C) at (0,2){};
				\node (D) at (0,1){};
				\draw[thick, dotted] (R)--(X);
				\draw[thick, dotted] (Y)--(A);
				\draw[dashed] (A)--(B);
				\draw (B)--(C);
				\draw (C)--(D);
				\draw (-2, 6.5) node {$\infty$};
				\draw (-2, 5) node {$2$};
				\draw (-2, 4) node {$\frac 3 2$};
				\draw (-2, 3) node {$1$};
				\draw (-2, 2) node {$\frac 1 2$};
				\draw (0, 0.5) node {full $BC$-branch of $\pm I$};
			\end{scope}
			\begin{scope}[xshift=6cm]
				\node[root] (R) at (0,6.5){};
				\node (X) at (0,5.5){$\vdots$};
				\node[authorized] (Y) at (0,5){};
				\node[mandatory] (A) at (0,4){};
				\node[authorized] (B) at (0,3){};
				\node[authorized] (C) at (0,2){};
				\node (D) at (0,1){};
				\draw (R)--(X);
				\draw (Y)--(A);
				\draw (A)--(B);
				\draw (B)--(C);
				\draw (C)--(D);
				\draw (0, 0.5) node {full $A$-branch of $I$};
			\end{scope}
		\end{tikzpicture}
	\end{center}
	\caption{Comparison between full branches in type $A$ and $BC$}
	\label{fig:comparison_branches}
\end{figure}

\subsection{Two Stokes circles:
	fission exponents}
\label{sec:bc_two_circles}

Throughout this section,
consider a pointed irregular type of the form
\begin{equation}
	\dot Q
	= \bigl( (n,q), (\wt n, \wt q) \bigr),
\end{equation}
where $q$ and $\wt q$ are exponential factors with distinct Stokes-circles-up-to-sign $\pm I \ceqq \braket{ \pm q}$ and $\pm \wt I \ceqq \braket{ \pm \wt q}$,
and with multiplicities $n,\wt n \in \frac 1 2 \mb Z_{> 0}$.
Let also $r \ceqq \on{ram}(q)$ and $\wt r \ceqq \on{ram}(\wt q)$.

By~\eqref{eq:truncation},
for $k \in \mb Q_{\geq 0}$ large enough one has $\braket{\pm\tau_k(q)} = \braket{\pm \tau_k(\wt q)} \in \mc S \bs \mb Z^{\ts}$---%
e.g.,
for $k > \max \set{\on{slope}(q),\on{slope}(\wt q) }$.
Conversely,
denoting by $\ul k = \ul k(q,\wt q)$ the \emph{minimal} such number (so that,
e.g.,
$\ul k = 0$ if and only if $q = \wt q$),
we pose the following:

\begin{defi}
	\label{def:common_different_part}

	\leavevmode

	\begin{enumerate}
		\item
		      The \emph{common part of} $q$ \emph{and} $\wt q$ is the exponential factor
		      \begin{equation}
			      q_c
			      = \wt q_c
			      \ceqq \tau_{\ul k}(q)
			      = \tau_{\ul k}(\wt q).
		      \end{equation}
		      (Recall that $\dot Q$ is assumed to be compatible,
		      cf.~Def.~\ref{def:compatible_pit}.)

		\item
		      The \emph{different parts of} $q$ \emph{and} $\wt q$ are the exponential factors
		      \begin{equation}
			      q_d
			      \ceqq q - q_c,
			      \qquad \wt q_d
			      \ceqq \wt q - \wt q_c.
		      \end{equation}
		      (Thus,
		      there are decompositions $q = q_c + q_d$ and $\wt q = \wt q_c + \wt q_d$.)

		\item
		      And the $BC$-\emph{fission exponent of} $q$ \emph{and} $\wt q$,
		      or (equivalently) \emph{of} $\pm I$ \emph{and} $\pm \wt I$,
		      is the number
		      \begin{equation}
			      f_{\pm I, \pm \wt I}= f_{q,\wt q}
			      \ceqq \max \Set{ \on{slope}(q_d), \on{slope}(\wt q_d) } \in \mb Q_{> 0}.
		      \end{equation}
	\end{enumerate}
\end{defi}

\begin{lemm}[cf.~\cite{boalch_doucot_rembado_2025_twisted_local_wild_mapping_class_groups_configuration_spaces_fission_trees_and_complex_braids}, Lem.~3.11]
	\label{lem:exterior_slopes_two_circles_bc}

	\leavevmode

	\begin{enumerate}
		\item
		      The set of exterior slopes,
		      governing the $BC$-admissible deformations of $\dot Q$ (as per Def.~\ref{def:numerical_equivalence_bc}),
		      is given by
		      \begin{equation}
			      \Set{ \on{slope} \bigl( q^{(i)} \pm \wt q^{(j)} \bigr) | (i,j) \in \mb Z \bs r\mb Z \ts \mb Z \bs \tilde r \mb Z }
			      = L_{BC}(q_c) \cup \{f_{q,\wt q}\} \sse \mb Q_{>0}.
		      \end{equation}
		      (I.e.,
		      it consists of the levels of the common part of $q$ and $\wt q$,
		      together with their fission exponent.)

		\item
		      Furthermore,
		      the maps $(i,j) \mt \on{slope} \bigl( q^{(i)} \pm \wt q^{(j)} \bigr)$ are determined by $L_{BC}(q_c)$ and $f_{q,\wt q}$.
		      (This uses the fact that $\dot Q$ is compatible.)
	\end{enumerate}
\end{lemm}

\begin{proof}[Proof postponed to~\ref{proof:lem_exterior_slopes_two_circles_bc}]
\end{proof}

\begin{prop}
	\label{prop:characterisation_fission_exp_BC}

	Let $k \ceqq f_{q,\wt q}$ be the fission exponent of $q$ and $\wt q$.
	Then a \emph{compatible} pointed irregular type $\dot Q'$ satisfies $\dot Q \sim_{BC} \dot Q'$ if and only if it is of the form $\dot Q' = \bigl ((n ,q'),(\wt n, \wt q') \bigr)$,
	for two exponential factors $q'$  and $\wt q'$ with common part $q'_c = \wt q'_c$,
	such that:
	\begin{enumerate}
		\item
		      $L_{BC}(q) = L_{BC}(q')$ and $L_{BC}(\wt q) = L_{BC}(\wt q')$;

		\item
		      $L_{BC}(q_c) = L_{BC}(q'_c)$;

		\item and $f_{q',\wt q'} = k$.
	\end{enumerate}
\end{prop}

\begin{proof}
	The proof is analogous to that of~\cite[Prop.~3.12]{boalch_doucot_rembado_2025_twisted_local_wild_mapping_class_groups_configuration_spaces_fission_trees_and_complex_braids}.
\end{proof}

\subsubsection{}

Relying on the case of a single circle (in \S~\ref{sec:bc_one_circle}),
and the description of $BC$-levels,
one can spell out the first two conditions of Prop.~\ref{prop:characterisation_fission_exp_BC}.
It remains to investigate the third condition in more detail:

\begin{prop}[cf.~\cite{boalch_doucot_rembado_2025_twisted_local_wild_mapping_class_groups_configuration_spaces_fission_trees_and_complex_braids}, Prop.~3.13]
	\label{prop:types_of_fission_BC}

	Choose a number $k \in \mb Q_{>0}$,
	and an exponential factor $q_c$ whose exponents are all strictly greater than $k$;
	let $n \ceqq \on{num}(k)$ and $d \ceqq \on{den}(k)$.
	Moreover,
	for any pair of numbers $a,\wt a\in \mb C$  consider the two exponential factors
	\begin{equation}
		\label{eq:types_of_fission_BC}
		q
		\ceqq q_c + a z^{-k} + b,
		\qquad \wt q
		\ceqq q_c + \wt a z^{-k} + \wt b,
	\end{equation}
	where in turn $b,\wt b$ are exponential factors of slope strictly smaller than $k$.
	Finally,
	let $r_c \ceqq \on{ram}(q_c)$,
	and---%
	in turn---%
	$N \ceqq \frac{r_c \mathsmaller{\vee} d}{r_c}$.
	Then:
	\begin{enumerate}
		\item
		      one has $f_{q,\wt q} = k$ if and only if one of the following (mutually-exclusive) situations happen:
		      \begin{enumerate}
			      \item[(I)]
			            $q_c\neq 0$ is not special,
			            and:
			            \begin{enumerate}
				            \item[(1)]
				                  $k \in \on{Inc}_{BC}(q_c)$ and $a \neq \wt a$;

				            \item[(2a)]
				                  $k \notin \on{Inc}_{BC}(q_c)$ and exactly one of the two numbers $a$,
				                  $\wt a$ is nonzero;

				            \item[(2b)]
				                  $k \notin \on{Inc}_{BC}(q_c)$ and $a\neq 0$,
				                  $\wt a \neq 0$,
				                  $a^N \neq \wt a^N$;
			            \end{enumerate}
			      \item[(II)]
			            or $q_c \neq 0$ is special,
			            and:
			            \begin{enumerate}
				            \item[(1)]
				                  $k \in \on{Inc}_{BC}(q_c)$ and $a\neq \wt a$;

				            \item[(2a)]
				                  $k \notin \on{Inc}_{BC}(q_c)$ and exactly one of the two numbers $a$,
				                  $\wt a$ is nonzero;

				            \item[(2b)]
				                  $k\notin \on{Inc}_{BC}(q_c)$ and $a\neq 0$,
				                  $\wt a\neq 0$,
				                  and:
				                  \begin{enumerate}
					                  \item
					                        either $N = 1$,
					                        $k$ is a breaking of specialness of $E(q_c)$,
					                        and $a\neq \pm \wt a$;

					                  \item
					                        or $N > 1$,
					                        and:
					                        \begin{equation}
						                        \begin{cases}
							                        a^N \neq \wt a^{\, N},     & \quad N \text{ odd},  \\
							                        a^{2N} \neq \wt a^{\, 2N}, & \quad N \text{ even};
						                        \end{cases}
					                        \end{equation}
				                  \end{enumerate}
			            \end{enumerate}
			      \item[(III)]
			            or $q_c = 0$,\fn{
				            In this case $E(q_c) = \vn$,
				            so any (nonexistent) exponent of $q_c$ is vacuously strictly larger than $k$.}~and:
			            \begin{enumerate}
				            \item[(a)]
				                  exactly one of the two numbers $a$,
				                  $\wt a$ is nonzero;

				            \item[(b)]
				                  $a\neq 0$,
				                  $\wt a\neq 0$,
				                  and:
				                  \begin{equation}
					                  \begin{cases}
						                  a^N \neq \wt a^{\, N},     & \quad N \text{ even}, \\
						                  a^{2N} \neq \wt a^{\, 2N}, & \quad N \text{ odd};
					                  \end{cases}
				                  \end{equation}
			            \end{enumerate}
		      \end{enumerate}

		\item
		      furthermore,
		      the exponent $k$ satisfies the following:
		      \begin{enumerate}
			      \item
			            in cases (I.1) and (II.1),
			            one has $k \in \on{Inc}_{BC}(q) = \on{Inc}_{BC}(\wt q)$;

			      \item
			            in cases (I.2a),
			            (II.2a),
			            and (III.a),
			            one has $k \in L_{BC}(\wt q)$ and $k \notin E(q)$;

			      \item
			            and in cases (I.2b),
			            (II.2b),
			            and (III.b),
			            one has $k\in L_{BC}(q) = L_{BC}(\wt q)$.
		      \end{enumerate}
	\end{enumerate}
\end{prop}

\begin{proof}[Proof postponed to~\ref{proof:prop_types_of_fission_BC}]
\end{proof}

\begin{defi}
	\label{def:partial_ramification}

	In the notation of~\eqref{eq:types_of_fission_BC},
	the \emph{partial ramification order of} $q$ \emph{and} $\wt q$ is the integer
	\begin{equation}
		N
		\ceqq \frac{ \on{ram}(q_c) \vee \on{den}(k) }{\on{ram}(q_c) } \geq 1.
	\end{equation}

	(It only depends on the number $k \in \mb Q_{> 0}$,
	and on the Stokes-circle-up-sign $\pm I_c \ceqq \braket{\pm q_c}$.)
\end{defi}

\subsubsection{}

With a view towards the definition of $BC$-fission trees,
Fig.~\ref{fig:different_types_fission_two_circles_type_BC} sketches the various cases of Prop.~\ref{prop:types_of_fission_BC} in terms of the `fission' of a vertex $q_c$,
into two vertices,
corresponding to the coefficients $a$ and $\wt a$.
The types of the vertices correspond to the status of $k$ as an exponent of $q$ and $\wt q$,
respectively,
and the types of edges are as in Def.~\ref{def:type_of_edges_type_BC}.

\begin{figure}
	\begin{center}
		\begin{tikzpicture}
			\tikzstyle{mandatory}=[circle,fill=black,minimum size=6pt,draw, inner sep=0pt]
			\tikzstyle{authorised}=[circle,fill=white,minimum size=6pt,draw,inner sep=0pt]
			\tikzstyle{empty}=[circle,fill=white,minimum size=0pt,inner sep=0pt]
			\begin{scope}
				\node[authorised] (A1) at (1-4,0){};
				\node[authorised] (A2) at (2-4,0){};
				\node[empty] (A0) at (1.5-4,1){};
				\node[empty] (B1) at (1,0){};
				\node[mandatory] (B2) at (2,0){};
				\node[empty] (B0) at (1.5,1){};
				\node[mandatory] (C1) at (1+4,0){};
				\node[mandatory] (C2) at (2+4,0){};
				\node[empty] (C0) at (1.5+4,1){};
				\foreach \from/\to in {A1/A0,A2/A0,B1/B0,B2/B0,C1/C0,C2/C0}
				\draw (\from) -- (\to);
				\draw (-5,1) node {(I)};
				\draw (1.5-4,-0.7) node {(1)};
				\draw (1.5,-0.7) node {(2a)};
				\draw (1.5+4,-0.7) node {(2b)};
			\end{scope}
			\begin{scope}[yshift=-2.5cm]
				\node[authorised] (A1) at (1-4,0){};
				\node[authorised] (A2) at (2-4,0){};
				\node[empty] (A0) at (1.5-4,1){};
				\node[empty] (B1) at (1,0){};
				\node[mandatory] (B2) at (2,0){};
				\node[empty] (B0) at (1.5,1){};
				\node[mandatory] (C1) at (1+4,0){};
				\node[mandatory] (C2) at (2+4,0){};
				\node[empty] (C0) at (1.5+4,1){};
				\foreach \from/\to in {A1/A0,A2/A0,B1/B0,B2/B0,C1/C0,C2/C0}
				\draw[dashed] (\from) -- (\to);
				\draw (-5,1) node {(II)};
				\draw (1.5-4,-0.7) node {(1)};
				\draw (1.5,-0.7) node {(2a)};
				\draw (1.5+4,-0.7) node {(2b)};
			\end{scope}
			\begin{scope}[yshift=-5cm]
				\node[empty] (B1) at (1,0){};
				\node[mandatory] (B2) at (2,0){};
				\node[empty] (B0) at (1.5,1){};
				\node[mandatory] (C1) at (1+4,0){};
				\node[mandatory] (C2) at (2+4,0){};
				\node[empty] (C0) at (1.5+4,1){};
				\foreach \from/\to in {B1/B0,B2/B0,C1/C0,C2/C0}
				\draw[dotted, thick] (\from) -- (\to);
				\draw (-5,1) node {(III)};
				\draw (1.5,-0.7) node {(a)};
				\draw (1.5+4,-0.7) node {(b)};
			\end{scope}
		\end{tikzpicture}
	\end{center}
	\caption{The (eight) different types of fission for two $BC$-Stokes circles.}
	\label{fig:different_types_fission_two_circles_type_BC}
\end{figure}

\begin{exem}
	Let us illustrate the different cases of Prop.~\ref{prop:types_of_fission_BC} on a few examples:
	choose (again) numbers $a,\wt a \in \mb C$.
	\begin{enumerate}
		\item Take $q_c=0$,
		      $k = 1/3$,
		      and
		      \begin{equation}
			      q
			      = a z^{-1/3},
			      \qquad \wt q
			      = \wt a z^{-1/3}.
		      \end{equation}
		      This is case (III) with $N = 3$,
		      which is odd,
		      so $\braket{\pm q} \neq \braket{\pm \wt q}$ if and only if $a^6 \neq \wt a^6$.

		\item Take $q_c = z^{-3/2}$ and $k = 1$,
		      i.e.,
		      \begin{equation}
			      q
			      = z^{-3/2} + az^{-1},
			      \qquad \wt q
			      = z^{-3/2}+ \wt az^{-1}.
		      \end{equation}
		      Here $q_c$ is special,
		      $N=1$,
		      and $k$ is \emph{not} inconsequential for $L_{BC}(q_c) = \set{3 \slash 2}$ (as it is a breaking of specialness),
		      so we are in case (II.2b.A) and $\braket{\pm q} \neq \braket{\pm \wt q}$ if and only if $a \neq \pm \wt a$.
		\item
		      Take $q_c = z^{-1/2}$ and $k = 1/4$,
		      i.e.,
		      \begin{equation}
			      q
			      = z^{-1/2} + az^{-1/4},
			      \qquad \wt q
			      = z^{-1/2} + \wt az^{-1/4}.
		      \end{equation}
		      Here $q_c$ is special,
		      $N = 2 > 1$,
		      and $k$ is \emph{not} inconsequential for $L_{BC}(q_c) = \set{ 1 \slash 2 }$,
		      so we are in case (II.2b.B).
		      If $a \neq 0$ then $L_{BC}(q) = \set{ 1 \slash 2,1 \slash 4 }$,
		      and similarly for $\wt q$.
		      The partial ramification order is $N = 2$,
		      which is even.
		      This implies that $\braket{\pm q} \neq \braket{\pm \wt q}$ if and only if $a^4 \neq \wt a^4$.

		\item
		      Take $q_c = z^{-3/2}$ and $k = \frac 1 2$,
		      i.e.,
		      \begin{equation}
			      q
			      = z^{-1/2} + az^{-1/2},
			      \qquad \wt q
			      = z^{-1/2} + \wt az^{-1/2}.
		      \end{equation}
		      Here $q_c$ is special,
		      $N = 1$,
		      and $k$ is inconsequential for $L_{BC}(q_c) = \set{ 1 \slash 2 }$ (as it is \emph{not} a breaking of specialness),
		      so we are in case (II.1) and $\braket{\pm q} \neq \braket{\pm \wt q}$ if and only if $a \neq \wt a$. \qedhere
	\end{enumerate}
\end{exem}

\subsection{General case:
	fission data}
\label{sec:bc_general}

Everything has been set up to treat the general case.
(There are few modifications from type $A$,
so we will be brief.)

\begin{defi}[cf.~\cite{boalch_doucot_rembado_2025_twisted_local_wild_mapping_class_groups_configuration_spaces_fission_trees_and_complex_braids}, Def.~3.16]
	\label{def:fission_datum_BC}

	Let $p \geq 1$ be an integer.
	A $BC$-\emph{fission datum} is a pair $\mc F = (\mc L,f)$,
	consisting of:
	\begin{enumerate}
		\item
		      a multiset
		      \begin{equation}
			      \mc L = \sum_{i = 1}^p (n_i,\bm L_i),
		      \end{equation}
		      of (possibly nondistinct) pairs $(n_i,\bm L_i)$,
		      where in turn---for $i \in \set{1,\dc,p}$:
		      \begin{enumerate}
			      \item
			            $\bm L_i$ is a $BC$-level datum;

			      \item
			            and $n_i$ is an allowed multiplicity for $\bm L_i$,
			            i.e.,
			            $n_i\in \frac 1 2 \mb Z_{>0}$ is such that $n_i$ is an integer if $\bm L_i$ is nonspecial or empty;
		      \end{enumerate}
		\item
		      and a \emph{list of fission exponents} $f$,
		      i.e.,
		      the choice of a rational number $f_{ij} = f_{ji} \geq 0$,
		      for $i,j \in \set{1, \dc ,p}$.
	\end{enumerate}
\end{defi}

\subsubsection{}
\label{sec:fission_datum_from_irr_class_bc}

An irregular class $\Theta$ determines a $BC$-fission datum $\mc F(\Theta) = \bigl( \mc L(\Theta),f(\Theta) \bigr)$,
as follows.
As in Prop.~\ref{prop:form_type_BC_irregular_class},
write $\Theta = \sum_{i = 1}^p n_i \cdot (\pm I_i)$,
where $\pm I_1,\dc,\pm I_p$ are pairwise-distinct Stokes-circles-up-to-sign,
and taking suitable half-integer multiplicities $n_i > 0$.
Then let
\begin{equation}
	\label{eq:fission_datum_from_irr_class_bc}
	\mc L(\Theta) \ceqq
	\sum_{i = 1}^p \bigl( n_i,L_{BC}(\pm I_i) \bigr),
	\qquad f_{ij}(\Theta) \ceqq f_{\pm I_i, \pm I_j} \in \mb Q_{\geq 0},
\end{equation}
with the convention $f_{\pm I,\pm I} \ceqq 0$.

When considering pointed irregular types,
the main difference is one is also given an ordering of the fission data/multiplicities;
thus:
\begin{defi}
	\label{def:labelled_fission_datum_bc}

	Let $p \geq 1$ be an integer.
	A \emph{labelled} $BC$-\emph{fission datum} is a pair $\dot{\mc F} = (\dot{\mc L},f)$,
	consisting of:
	\begin{enumerate}
		\item
		      an ordered list
		      \begin{equation}
			      \dot{\mc L}
			      = \bigl( (n_1,\bm L_1),\dc,(n_p,\bm L_p) \bigr),
		      \end{equation}
		      of (possibly nondistinct) pairs $(n_i,\bm L_i)$ as in Def.~\ref{def:fission_datum_BC};

		\item
		      and a \emph{list of fission exponents} $f$,
		      containing rational numbers $f_{ij} = f_{ji} \geq 0$ for $i,j \in \set{1,\dc,p}$ such that $f_{ij} = 0$ if and only if $i = j$.
	\end{enumerate}
\end{defi}

\subsubsection{}

Analogously to~\eqref{eq:fission_datum_from_irr_class_bc},
a pointed irregular type $\dot Q$ determines a labelled $BC$-fission datum $\dot{\mc F}(\dot Q) = \bigl( \dot{\mc L}(\dot Q),f(\dot Q) \bigr)$.
Moreover,
a labelled fission datum determines a(n unlabelled) fission datum by forgetting the ordering of the list,
and this is compatible with passing from a pointed irregular type to its underlying irregular class.
The material of \S\S~\ref{sec:bc_one_circle}--\ref{sec:bc_two_circles} then yields the following important:

\begin{theo}[cf.~\cite{boalch_doucot_rembado_2025_twisted_local_wild_mapping_class_groups_configuration_spaces_fission_trees_and_complex_braids}, Thm.~3.17]
	\label{thm:admissible_deformation_iff_same_fission_data_type_BC}

	Two irregular classes (resp.,
	two compatible pointed irregular types)
	are mutual $BC$-admissible deformations if and only if they determine the same fission datum (resp.,
	the same labelled fission datum).
\end{theo}

\begin{proof}
	By construction,
	the fission datum of an irregular class is equivalent to the levels of its active circles,
	together with the common part and the fission exponent of any pair of distinct active circles.
	Etc.
\end{proof}

\subsection{General case:
	fission trees}
\label{sec:type_BC_fission_trees}

We now encode fission data in a graphical way,
using (twisted) fission trees.
Consider thus a tuple $(\mc T,\mb V,\mb A,\mb L, h, m)$,
where:
\begin{enumerate}
	\item
	      $\mc T$ is a \emph{topological} tree,
	      with set of vertices $\mb V \sse \mc T$;\fn{
		      I.e.,
		      $\mc T$ is a contractible 1-dimensional CW-complex,
		      with 0-skeleton $\mb V$,
		      cf.~\cite[Chp.~0]{hatcher_2002_algebraic_topology}.}

	\item
	      $\mb A \sse \mb V$ is a subset of \emph{admissible} vertices;

	\item
	      $\mb L \sse \mb A$ is a finite,
	      possibly empty,
	      subset of \emph{mandatory} vertices (a.k.a.~\emph{interior levels});

	\item
	      $h \cl \mc T \to \ol{\mb R}_{\geq 0}$ is a \emph{height function},
	      mapping each edge bijectively onto an open interval,
	      such that $\mb V_0 \ceqq h^{-1}(0) \sse \mc T$ is the set of \emph{leaves};

	\item
	      there is a unique vertex $v_\infty \in \mb V$ at height $h(v_\infty) = \infty$,
	      which is the \emph{root} of $\mc T$;

	\item
	      and $m \cl \mb V_0 \to \mb Z_{>0}$ is a map assigning an integer \emph{multiplicity} to each leaf.
\end{enumerate}

(Recall that the set of \emph{edges of} $\mc T$,
i.e.,
the 1-cells,
is recovered as $\mb E \ceqq \pi_0(\mc T \sm \mb V)$.)

Any vertex that is not a leaf,
nor the root,
is adjacent to at least two edges:
one of them is the \emph{parent-edge} (going towards the root),
and the others are the \emph{child-edges} (going towards the leaves).
Each vertex different from the root has exactly one \emph{parent-vertex} (the other end of its parent-edge),
and each vertex which is not a leaf has \emph{child-vertices} (the other ends of the child-edges):
the set of child-vertices of $v \in \mb V$ is denoted by $\on{Ch}(v) \sse \mb V$.

The \emph{branch-vertices} are those with more than one child-edge;
their set is denoted by $\mb Y \sse \mb V$.
The vertices in $\mb I \ceqq \mb A \sm \mb L \sse \mb A$ are called \emph{inconsequential},
and those in $\mb V \sm \mb A \sse \mb V$  are said to be \emph{empty}.
We also suppose that $h(\mb A) \sse \mb R_{> 0}$,
so that in particular the root/leaves are empty.

The \emph{full branch} $\mc B_i$ of a leaf $i \in \mb V_0$ is the (minimal) subspace of $\mc T$ spanning all the way from $i$ to the root $v_\infty$.
Finally,
let $\mb L_i \ceqq \mb L \cap \mc B_i$ and $\mb A_i \ceqq \mb A \cap \mc B_i$ be the sets of mandatory/admissible vertices on the $i$-th full branch---%
respectively.

\begin{defi}[cf.~\cite{boalch_doucot_rembado_2025_twisted_local_wild_mapping_class_groups_configuration_spaces_fission_trees_and_complex_braids}, Def.~3.18]
	\label{def:bc_quasi_fission_tree}

	A tuple $(\mc T,\mb V,\mb A,\mb L, h, m)$ as above is a \emph{pre}-$BC$-\emph{fission tree} if it satisfies the following conditions:
	\begin{enumerate}
		\item
		      the vertices of $\mc T$ are exactly the leaves,
		      the points mapping to $h(\mb A)$,
		      and the root $v_\infty$,
		      i.e.,
		      $\mb V = h^{-1} \bigl( \set{0} \cup h(\mb A) \cup \set{\infty} \bigr)$;

		\item
		      $h$ maps each full branch bijectively onto $\ol{\mb R}_{\geq 0}$;

		\item
		      the interior levels of any full branch map to a $BC$-level datum,
		      i.e.,
		      the set $\bm L_i \ceqq h(\mb L_i) \sse \mb Q_{>0}$ is a $BC$-level datum,
		      for any leaf $i \in \mb V_0$;

		\item
		      and $\on{Adm}_i \ceqq h(\mb A_i) \sse \mb Q_{>0}$ coincides with the set $\on{Adm}_{BC}(\bm L_i)$ of $BC$-admissible exponents of $\bm L_i$,
		      for any leaf $i \in \mb V_0$;
		      i.e.,
		      forgetting the empty vertices on $\mc B_i$,
		      the triple $(\mc B_i, \mb A_i,\mb L_i)$ is a copy of the full $BC$-branch $\mc B_{\bm L_i}$ of the level datum $\bm L_i$,
		      as per Def.~\ref{def:full_branch_of_level_datum_type_BC}.
	\end{enumerate}
\end{defi}

(At times,
the tuple $(\mc T,\mb V,\mb A,\mb L,h,m)$ will be abusively shortened to $\mc T$ alone.)

\begin{defi}
	\label{def:bc_edge_type}

	Let $\mc T$ be a pre-$BC$-fission tree,
	and $e \in \mb E$ an edge contained in the full branch $\mc B_i \sse \mc T$ of a leaf $i \in \mb V_0$.
	Under the identification $\mc B_i \simeq \mc B_{\bm L_i}$,
	$e$ is then contained within a unique edge $e_i$ of $\mc B_{\bm L_i}$:
	the \emph{type of} $e$ is $t(e) \ceqq t(e_i) \in \set{E,S,NS}$,
	as per Def.~\ref{def:type_of_edges_type_BC}.\fn{
		This is well-posed:
		if $e \in \mb E$ is contained in two full branches $\mc B_i$ and $\mc B_j$ then necessarily $t(e_i) = t(e_j)$.}
\end{defi}

\subsubsection{}

With a view towards twisted wild mapping class groups,
we will only consider trees which satisfy the following additional branching axiom (cf.~Fig.~\ref{fig:possible_branching_type_BC_fission_tree}):

\begin{defi}[cf.~Def.~\ref{def:bc_quasi_fission_tree}]
	\label{def:fission_tree_type_BC}

	A pre-$BC$-fission tree $\mc T$ is a $BC$-\emph{fission tree} if moreover:
	\begin{enumerate}
		\setcounter{enumi}{4}
		\item
		      for any branch-vertex $v \in \mb Y$,
		      exactly one of the following happens:
		      \begin{enumerate}
			      \item[(I)] the child-edges of $v$ are nonspecial (of type $NS$),
			            and:
			            \begin{enumerate}
				            \item[(1)] the child-vertices in $\on{Ch}(v)$ are inconsequential;

				            \item[(2a)] one vertex in $\on{Ch}(v)$ is empty,
				                  and the others are mandatory;

				            \item[(2b)] the vertices in $\on{Ch}(v)$ are mandatory;
			            \end{enumerate}
			      \item[(II)] or the child-edges of $v$ are special (of type $S$),
			            and:
			            \begin{enumerate}
				            \item[(1)] the child-vertices in $\on{Ch}(v)$ are inconsequential;

				            \item[(2a)] one vertex in $\on{Ch}(v)$ is empty,
				                  and the others are mandatory;

				            \item[(2b)] the vertices in $\on{Ch}(v)$ are mandatory;
			            \end{enumerate}
			      \item[(III)] or the child-edges of $v$ are empty (of type $E$),
			            and:
			            \begin{enumerate}
				            \item[(a)] one child-vertex in $\on{Ch}(v)$ is empty,
				                  and the others are mandatory;

				            \item[(b)] the vertices in $\on{Ch}(v)$ are mandatory.
			            \end{enumerate}
		      \end{enumerate}
	\end{enumerate}
\end{defi}

\begin{rema}
	\label{rmk:about_def_trees_BC}

	As in~\cite[Rmk.~3.20]{boalch_doucot_rembado_2025_twisted_local_wild_mapping_class_groups_configuration_spaces_fission_trees_and_complex_braids},
	the last two axioms of Def.~\ref{def:bc_quasi_fission_tree} imply that the branching condition of Def.~\ref{def:fission_tree_type_BC} could be replaced by the requirement that $\on{Ch}(v)$ contains \emph{at most} one empty vertex,
	for any branch-vertex $v \in \mb Y$.
\end{rema}

\begin{rema}
	Recall that~\cite{boalch_doucot_rembado_2025_twisted_local_wild_mapping_class_groups_configuration_spaces_fission_trees_and_complex_braids} rather considered \emph{metrized trees}~\cite{zhang_1993_admissible_pairing_on_a_curve} (cf.~\cite[Def.~2]{baker_faber_2004_metrized_graphs_electrical_networks_and_fourier_analysis}).
	The caveat is that in type $A$ there exists a smallest (finite) admissible height which is strictly larger than the height of the largest fission exponent,
	while here (and in type $D$) there is in general \emph{no} such height.
	Thus,
	wanting to glue the full branches strictly above the largest fission exponent,
	thereby getting a tree rather than a forest,
	one must allow for a root at `infinite' height.
	(Cf. \S~\ref{sec:fission_tree_from_bc_irr_class} and~Rmk.~\ref{rmk:truncations_trees_BC}.)

	Importantly,
	while a metric/topology is convenient to have,
	it is actually \emph{not} needed to define configuration spaces.
\end{rema}

\begin{figure}
	\begin{center}
		\begin{tikzpicture}
			\tikzstyle{mandatory}=[circle,fill=black,minimum size=6pt,draw, inner sep=0pt]
			\tikzstyle{authorised}=[circle,fill=white,minimum size=6pt,draw,inner sep=0pt]
			\tikzstyle{empty}=[circle,fill=white,minimum size=0pt,inner sep=0pt]
			\tikzstyle{indeterminate}=[circle,dotted,thick,minimum size=6pt,draw, inner sep=0pt]
			\begin{scope}
				\node[authorised] (A1) at (1-4,0){};
				\node[authorised] (A2) at (1.8-4,0){};
				\node[authorised] (A3) at (3-4,0){};
				\draw (2.4-4,0) node {$\dc$};
				\node[indeterminate] (A0) at (2-4,1){};
				\node[empty] (B1) at (1,0){};
				\node[mandatory] (B2) at (1.8,0){};
				\node[mandatory] (B3) at (3,0){};
				\draw (2.4,0) node {$\dc$};
				\node[indeterminate] (B0) at (2,1){};
				\node[mandatory] (C1) at (1+4,0){};
				\node[mandatory] (C2) at (1.8+4,0){};
				\node[mandatory] (C3) at (3+4,0){};
				\draw (2.4+4,0) node {$\dc$};
				\node[indeterminate] (C0) at (2+4,1){};
				\foreach \from/\to in {A1/A0,A2/A0,A3/A0,B1/B0,B2/B0,B3/B0,C1/C0,C2/C0, C3/C0}
				\draw (\from) -- (\to);
				\draw (-5,1) node {(I)};
				\draw (2-4,-0.7) node {(1)};
				\draw (2,-0.7) node {(2a)};
				\draw (2+4,-0.7) node {(2b)};
			\end{scope}
			\begin{scope}[yshift=-2.5cm]
				\node[authorised] (A1) at (1-4,0){};
				\node[authorised] (A2) at (1.8-4,0){};
				\node[authorised] (A3) at (3-4,0){};
				\draw (2.4-4,0) node {$\dc$};
				\node[indeterminate] (A0) at (2-4,1){};
				\node[empty] (B1) at (1,0){};
				\node[mandatory] (B2) at (1.8,0){};
				\node[mandatory] (B3) at (3,0){};
				\draw (2.4,0) node {$\dc$};
				\node[indeterminate] (B0) at (2,1){};
				\node[mandatory] (C1) at (1+4,0){};
				\node[mandatory] (C2) at (1.8+4,0){};
				\node[mandatory] (C3) at (3+4,0){};
				\draw (2.4+4,0) node {$\dc$};
				\node[indeterminate] (C0) at (2+4,1){};
				\foreach \from/\to in {A1/A0,A2/A0,A3/A0,B1/B0,B2/B0,B3/B0,C1/C0,C2/C0, C3/C0}
				\draw[dashed] (\from) -- (\to);
				\draw (-5,1) node {(II)};
				\draw (2-4,-0.7) node {(1)};
				\draw (2,-0.7) node {(2a)};
				\draw (2+4,-0.7) node {(2b)};
			\end{scope}
			\begin{scope}[yshift=-5cm]
				\node[empty] (B1) at (1,0){};
				\node[mandatory] (B2) at (1.8,0){};
				\node[mandatory] (B3) at (3,0){};
				\draw (2.4,0) node {$\dc$};
				\node[indeterminate] (B0) at (2,1){};
				\node[mandatory] (C1) at (1+4,0){};
				\node[mandatory] (C2) at (1.8+4,0){};
				\node[mandatory] (C3) at (3+4,0){};
				\draw (2.4+4,0) node {$\dc$};
				\node[indeterminate] (C0) at (2+4,1){};
				\foreach \from/\to in {B1/B0,B2/B0,B3/B0,C1/C0,C2/C0, C3/C0}
				\draw[dotted, thick] (\from) -- (\to);
				\draw (-5,1) node {(III)};
				\draw (2,-0.7) node {(a)};
				\draw (2+4,-0.7) node {(b)};
			\end{scope}
		\end{tikzpicture}
	\end{center}
	\caption{Type-$BC$ branchings.
		The branching vertex is dotted,
		indicating that its type is \emph{not} prescribed by the branching type.}
	\label{fig:possible_branching_type_BC_fission_tree}
\end{figure}

\begin{defi}
	\label{def:tree_labels}

	\leavevmode

	\begin{enumerate}
		\item
		      A \emph{labelling of a} $BC$-\emph{fission tree} $\mc T$ is a total ordering of its of leaves,
		      i.e.,
		      a bijection $\psi \cl \set{1,\dc,p} \lxra{\simeq} \mb V_0$,
		      where $p \ceqq \abs{\mb V_0}$.

		\item
		      The corresponding \emph{labelled} $BC$-\emph{fission tree} is the pair $\dot{\mc T} \ceqq (\mc T,\psi)$.
	\end{enumerate}
\end{defi}

\begin{defi}
	\label{def:tree_isos}

	\leavevmode
	\begin{enumerate}
		\item
		      An \emph{isomorphism} $\mc T \lxra{\simeq} \mc T'$ \emph{of fission trees} is an isomorphism of the underlying (topological) trees which matches up the distinguished subsets of vertices and the height/multiplicity functions.

		\item
		      An \emph{isomorphism} $\dot{\mc T} \lxra{\simeq} \dot{\mc T}'$ \emph{of labelled fission trees} is an isomorphism of the underlying (unlabelled) fission trees,
		      which moreover preserves the labelling.
	\end{enumerate}
\end{defi}

(Until \S~\ref{sec:D_trees},
all (pre-)fission trees are tacitly of type $BC$.)

\begin{rema}
	\label{rmk:ordering_vertices}

	If one draws the labelled leaves $v_i \ceqq \psi(i) \in \mb V_0$ of a labelled fission tree $\dot{\mc T}$ in the plane,
	ordering them left-to-right in a row,
	and requires that $\dot{\mc T}$ be \emph{planar},
	then the set of vertices at each height is also totally ordered---%
	from left to right.
	In turn,
	filtering by height yields a total order on the set of vertices such that the root is the greatest element.
	(This is the convention of Fig.~\ref{fig:example_tree_type_BC},
	as well as Fig.~\ref{fig:example_tree_type_D} in type $D$.)

	In particular,
	a labelling yields a bijection $\set{1,\dc,\abs{\mb A}} \lxra{\simeq} \mb A$.
\end{rema}

\subsubsection{}

Now to any fission tree $\mc T$ one can associate a $BC$-fission datum (using Rmk.~\ref{rmk:convention_integer_multiplicities_BC}),
via
\begin{equation}
	\mc F(\mc T)
	\ceqq \sum_{i \in \mb V_0} \bigl( m_i,h(\mb L_i) \bigr),
	\qquad m_i \ceqq m(i).
\end{equation}
Analogously,
to any labelled fission tree $\dot{\mc T}$ one can associate a labelled $BC$-fission datum,
via
\begin{equation}
	\dot{\mc F} \bigl( \dot{\mc T} \bigr)
	\ceqq \bigl( \bigl( m_1,h(\mb L_1) \bigr),\dc, \bigl(m_p,h(\mb L_p)\bigr) \bigr),
	\qquad p \ceqq \abs{V_0},
\end{equation}
under a labelling bijection $\set{1,\dc, p } \simeq \mb V_0$.
Then:

\begin{lemm}
	\label{lem:equivalence_tree_fission_datum_BC}

	Two (labelled) fission trees are isomorphic if and only if they determine the same (labelled) $BC$-fission datum.
\end{lemm}

\begin{proof}
	Analogous to the proof of~\cite[Lem.~3.21]{boalch_doucot_rembado_2025_twisted_local_wild_mapping_class_groups_configuration_spaces_fission_trees_and_complex_braids}.
\end{proof}

\subsubsection{}

Up to using Stokes-circles-up-to-sign,
an irregular class $\Theta$ defines a fission tree exactly as in type $A$.

Namely,
if $\Theta$ is as is~\eqref{eq:type_bc_irr_class},
but with the integer-multiplicities convention,
then each distinct Stokes-circle-up-to-sign $\pm I_i = \braket{\pm q_i}$ defines a full $BC$-branch $\mc B_i$ as above:
thus,
$\mc B_i$ is a totally-ordered set equipped with subsets $\mb L_i \sse \mb A_i\sse \mc B_i$,
and with an order-preserving bijection $h \cl \mc B_i \lxra{\simeq} \ol{\mb R}_{\geq 0}$.
Then let $\on{Adm}(\Theta) \ceqq \bigcup_i \on{Adm}_i \sse \mb Q_{>0}$ be the union of all the admissible exponents.
The latter is a discrete subset,
and so for any $k \in \mb Q_{>0}$ the \emph{successor}
\begin{equation}
	\on{succ}(k) \in \ol{\on{Adm}}(\Theta) \ceqq \on{Adm}(\Theta) \cup \set{\infty},
\end{equation}
is well-defined---%
as the smallest element of $\on{Adm}(\Theta)$ which is strictly greater than $k$,
if it exists;
else $\on{succ}(k) \ceqq \infty$.

\begin{defi}
	If $f_{ij} = f_{\pm I_i,\pm I_j}$ is the $BC$-fission exponent of ${\pm I_i,\pm I_j}$,
	then their $BC$-\emph{gluing exponent} is the next admissible exponent,
	i.e.,
	\begin{equation}
		\label{eq:gluing_exponent}
		g_{ij}
		\ceqq \on{succ}(f_{ij}) \in \ol{\on{Adm}}(\Theta).
	\end{equation}
\end{defi}

\subsubsection{}
\label{sec:fission_tree_from_bc_irr_class}

Now,
for $i,j$ with $i \neq j$,
glue the full branches $\mc B_i$ and $\mc B_j$ as follows:
i) over the interval $[g_{ij}, \infty)$, if $g_{ij}<\infty$;
or ii) at the root,
if $g_{ij} = \infty$.
This defines the tree $\mc T = \mc T(\Theta)$,
already equipped with a height function $h \cl \mc T \to \ol{\mb R}_{\geq 0}$.
The subsets $\mb L_i \sse \mb A_i \sse \mc B_i$ then fit together to yield $\mb L \sse \mb A \sse \mc T$,
and the overall set of vertices is
\begin{equation}
	\mb V
	\ceqq h^{-1} \bigl( \set{0} \cup \ol{\on{Adm}}(\Theta) \bigr) \sse \mc T.
\end{equation}
The latter is declared to be the 0-skeleton of $\mc T$,
and the standard topology of the open intervals of $\ol{\mb R}_{\geq 0}$ is transferred in bijective fashion to the edges---%
via $h$---%
to define the 1-cells;
the gluing is already determined.
Finally,
the function $m \cl \mb V_0 \to \mb Z_{> 0}$ takes the multiplicity of each Stokes-circle-up-to-sign in $\Theta$.
(The fact that the branching is correct,
i.e.,
that one does \emph{not} just get a pre-fission tree,
is established in Lem.~\ref{lem:characterisation_right_branchings_type_BC}.)

In addition,
a pointed irregular type $\dot Q$ defines a labelled fission tree $\dot{\mc T} = \dot{\mc T}(\dot Q)$ by labelling the leaves as per the order of the exponential factors in $\dot Q$,
and in conclusion:

\begin{coro}
	\label{cor:adm_def_by_trees}
	Two irregular classes (resp.,
	two compatible pointed irregular types) are mutual $BC$-admission deformations if and only if they determine isomorphic fission trees (resp.,
	isomorphic labelled fission trees).
\end{coro}

\begin{proof}
	Combine Thm.~\ref{thm:admissible_deformation_iff_same_fission_data_type_BC} +~Lem.~\ref{lem:equivalence_tree_fission_datum_BC}.
\end{proof}

\subsubsection{}

Furthermore,
the nonroot vertices of $\mc T(\Theta)$ (and $\dot{\mc T}(\dot Q)$) may be interpreted in terms of truncated Stokes-circles-up-to-sign,
as follows:

\begin{enonce}{Lemma-Definition}
	\label{lem:link_vertices_truncated_circles_BC}

	Let $\mb V_k \ceqq h^{-1}(k) \sse \mc T(\Theta)$ be the subset of \emph{height}-$k$ vertices.
	If $k \in \on{Adm}(\Theta) \cup \set{0}$,
	then there is a canonical bijection
	\begin{equation}
		\mb V_k \lxra{\simeq} \Set{ \Braket{\pm\tau_k(q_i)} | i \in \mb V_0 }.
	\end{equation}
\end{enonce}

\begin{proof}
	Analogous to the proof of~\cite[Lem.~3.22]{boalch_doucot_rembado_2025_twisted_local_wild_mapping_class_groups_configuration_spaces_fission_trees_and_complex_braids}.
\end{proof}

\begin{rema}
	In particular,
	if $k > l$ are two admissible exponents (or zero),
	then Lem.-Def.~\ref{lem:link_vertices_truncated_circles_BC} yields a surjection
	\begin{equation}
		\phi_{kl} \cl \mb V_l \lthra \mb V_k,
		\qquad \phi_{kl} \bigl( \braket{\pm\tau_l(q_i)} \bigr)
		\ceqq \braket{\pm\tau_k(q_i)}.
	\end{equation}
	This controls the structure of the tree,
	as it determines the unique parent-vertex of each node if---%
	in particular---%
	$k,l \in \on{Adm}(\Theta)$ are consecutive.
	(Incidentally,
	it also proves that the gluings of the full branches can be done consistently.)

	Thus,
	two vertices $\Braket{\pm\tau_l({q_i})}, \braket{\pm\tau_l({q_j})} \in \mb V_l$ are descendants of one and the same vertex in $\mb V_k$,
	where $l < k$,
	provided that they have the same truncation to exponent $k$;
	i.e.,
	if $\braket{\pm\tau_k({q_i})} = \braket{\pm\tau_k({q_j})}$.
	Furthermore,
	if $\braket{\pm q_i}$ and $\braket{\pm q_j}$ are two active Stokes-circles-up-to-sign,
	corresponding to two leaves of $\mc T$,
	then their closest common ancestor-vertex in the tree corresponds to (the Stokes-circle-up-to-sign of) their common part.
	(If their common part vanishes,
	the closest common ancestor is the root $v_\infty$.)
\end{rema}

\begin{rema}
	\label{rmk:comparison_bichromatic_trees_BC_DR}

	In~\cite{doucot_rembado_tamiozzo_2022_local_wild_mapping_class_groups_and_cabled_braids} we introduced \emph{bichromatic fission trees} to encode admissible deformations of \emph{untwisted} $BC$-irregular types.
	The bichromatic fission trees are equivalent to a particular case of Def.~\ref{def:fission_tree_type_BC},
	as follows.

	Let $\dot Q$ be an untwisted compatible pointed irregular type.
	We associate to it a labelled fission tree $\dot{\mc T}$ as above,
	as well as bichromatic fission tree $\wt{\mc T}$ as in op.~cit.---%
	regarding $\dot Q$ as a full irregular type.
	The bichromatic tree  can be viewed as a tuple $(\wt{\mc T}, \wt{\mb V}, \wt h, \wt m)$,
	with set of vertices $\wt{\mb V}$,
	height function $\wt h$,
	and the function $\wt m$ assigning multiplicities to the---%
	ordered---%
	leaves.
	(Strictly speaking,
	this is an ordinary combinatoric tree;
	it can be naturally made into a topological one,
	denoted the same.)

	Then:
	\begin{enumerate}
		\item
		      there is a height-preserving function $\dot{\mc T} \to \wt{\mc T}$,
		      inducing a bijection $\mb V \lxra{\simeq} \wt{\mb V}$;

		\item
		      and the coloured vertices of $\wt{\mc T}$ are related to vertices/edges of $\dot{\mc T}$ as follows:
		      \begin{enumerate}
			      \item
			            the blue vertices of $\wt{\mc T}$ map to the vertices of $\dot{\mc T}$ whose parent-edge is empty;

			      \item
			            and the green vertices of $\wt{\mc T}$ map to the vertices of $\dot{\mc T}$ whose parent-edge is \emph{not} empty (whence necessarily it is nonspecial,
			            since $\dot Q$ is untwisted).
		      \end{enumerate}
	\end{enumerate}
	The axioms for bichromatic fission trees are consistent:
	green vertices in $\wt{\mc T}$ only have green child-vertices,
	which maps to the fact that if $v \in \mb V$ has nonempty parent-edge then the its child-edges are nonempty;
	and blue vertices in $\wt{\mc T}$ have at most one blue child-vertex,
	which maps to the fact that if $v \in \mb V$ has empty parent-edge then at most one of its child-edges is empty.
\end{rema}

\begin{rema}
	\label{rmk:truncations_trees_BC}

	The truncation of fission trees is different from~\cite{boalch_doucot_rembado_2025_twisted_local_wild_mapping_class_groups_configuration_spaces_fission_trees_and_complex_braids}.

	Namely,
	recall that in op.~cit.~the set of admissible exponents is unbounded above,
	hence infinite.
	Then it was convenient to introduce a truncation (to the smallest admissible exponent to all branches),
	reducing to a tree with a finite number of admissible vertices,
	getting a finite-dimensional space of admissible deformations.
	In the present situation,
	the set of admissible exponents is automatically \emph{finite}.
\end{rema}

\begin{exem}
	\label{ex:example_fission_tree}

	Consider the pointed irregular type
	\begin{equation}
		\dot Q
		= \bigl( (1,q_1), (1 \slash 2, q_2), (1, q_3), (1, q_4) \bigr),
	\end{equation}
	where $q_1 = z^{-1} + z^{-1/2} + z^{-1/3}$,
	$q_2 = z^{-1/2} + z^{-1/6}$,
	$q_3 = z^{-1/2} + z^{-1/4}$,
	and $q_4 = z^{-1/2} +2 z^{-1/4}$.
	(So $q_2$ is special,
	while $q_1$,
	$q_3$,
	and $q_4$, are not.)
	The labelled fission tree $\dot{\mc T}(\dot Q)$ is drawn in Fig.~\ref{fig:example_tree_type_BC}.
	(There are 8 nonempty nonroot vertices,
	of which 7 are mandatory and 1 is inconsequential.)
\end{exem}

\begin{figure}
	\begin{center}
		\begin{tikzpicture}
			\tikzstyle{root}=[fill=black,minimum size=6pt,draw,inner sep=0pt]
			\tikzstyle{mandatory}=[circle,fill=black,minimum size=6pt,draw, inner sep=0pt]
			\tikzstyle{authorised}=[circle,fill=white,minimum size=6pt,draw,inner sep=0pt]
			\tikzstyle{empty}=[circle,fill=white,minimum size=0pt,inner sep=0pt]
			\node[root] (R) at (1,6){};
			\node[mandatory] (A1) at (0,5){};
			\node[empty] (A2) at (2,5){};
			\node[mandatory] (B1) at (0,4){};
			\node[mandatory] (B2) at (2,4){};
			\node[mandatory] (C1) at (0,3){};
			\node[empty] (C2) at (2,3){};
			\node[empty] (D1) at (0,2){};
			\node[empty] (D2) at (1,2){};
			\node[mandatory] (D3) at (2,2){};
			\node[mandatory] (D4) at (3,2){};
			\node[authorised] (E1) at (0,1){};
			\node[mandatory] (E2) at (1,1){};
			\node[empty] (E3) at (2,1){};
			\node[empty] (E4) at (3,1){};
			\node[empty] (F1) at (0,0){};
			\node[empty] (F2) at (1,0){};
			\node[empty] (F3) at (2,0){};
			\node[empty] (F4) at (3,0){};
			\draw[dotted, thick] (R)--(A1); \draw[dotted, thick] (R)--(A2);
			\draw (A1)--(B1); \draw[dotted, thick] (A2)--(B2);
			\draw (B1)--(C1); \draw[dashed] (B2)--(C2);
			\draw (C1)--(D1); \draw[dashed] (C2)--(D2); \draw[dashed] (C2)--(D3); \draw[dashed] (C2)--(D4);
			\draw (D1)--(E1); \draw[dashed] (D2)--(E2); \draw (D3)--(E3); \draw (D4)--(E4);
			\draw (E1)--(F1); \draw[dashed] (E2)--(F2); \draw (E3)--(F3); \draw (E4)--(F4);
			\draw (-1,6) node {$\infty$};
			\draw (-1,5) node {$1$};
			\draw (-1,4) node {$1/2$};
			\draw (-1,3) node {$1/3$};
			\draw (-1,2) node {$1/4$};
			\draw (-1,1) node {$1/6$};
			\draw (-1,0) node {$0$};
			\draw (0,-0.5) node {$q_1$};
			\draw (1,-0.5) node {$q_2$};
			\draw (2,-0.5) node {$q_3$};
			\draw (3,-0.5) node {$q_4$};
		\end{tikzpicture}
	\end{center}
	\caption{Labelled fission tree of Exmp.~\ref{ex:example_fission_tree}.}
	\label{fig:example_tree_type_BC}
\end{figure}

\subsection{General case:
	tree realizations}
\label{sec:realization}

With a view towards configuration spaces,
consider a fission tree $\mc T$ and a function $c \cl \mb A \to \mb C$.
For each leaf $i \in \mb V_0$,
define an exponential factor via
\begin{equation}
	\label{eq:exp_factor_from_map_c}
	q_{c,i}
	\ceqq \sum_{v \in \mb A_i} c(v) z^{-h(v)}.\fn{
		The sum does \emph{not} have an essential singularity,
		since each full branch has a \emph{finite} number of admissible vertices,
		cf.~Rmk.~\ref{rmk:truncations_trees_BC}.}
\end{equation}
This yields a linear combination
\begin{equation}
	\label{eq:irr_class_from_map_c}
	\Theta_c
	= \sum_{i \in \mb V_0} m_i \cdot (\pm I_{c,i}),
	\qquad
	m_i \ceqq m(i),
	\quad \pm I_{c,i} \ceqq \braket{ \pm q_{c,i} },
\end{equation}
Moreover,
a labelling bijection $\psi \cl \set{ 1,\dc,p } \lxra{\simeq} \mb V_0$ yields a list
\begin{equation}
	\label{eq:pit_from_map_c}
	\dot Q_c
	\ceqq \bigl( (m_1,q_{c,1}),\dc,(m_p,q_{c,p}) \bigr).
\end{equation}

\begin{defi}
	\label{def:realization_type_BC}

	The `coefficient' map $c$ is a \emph{realization of} $\mc T$ (resp.,
	\emph{of} $\dot{\mc T} = (\mc T,\psi)$) if:
	\begin{enumerate}
		\item
		      the formal sum~\eqref{eq:irr_class_from_map_c} is an irregular class (resp.,
		      the list~\eqref{eq:pit_from_map_c} is a pointed irregular type);

		\item
		      and $\mc T(\Theta_c) \simeq \mc T$ as fission trees (resp.,
		      $\dot{\mc T}(\dot Q_c) \simeq \dot{\mc T}$ as labelled fission trees).
	\end{enumerate}
\end{defi}

\subsubsection{}
\label{sec:realization_check}

By definition,
$c$ realizes $\mc T$ if and only if it realizes $\dot{\mc T}$:
the next aim is to give explicit conditions so that a map $c \cl \mb A \to \mb C$ is a realization of either.
(This leads to more explicit configuration spaces,
still describing $BC$-admissible deformations.)

In brief,
one must check whether the tree determined by~\eqref{eq:irr_class_from_map_c} has the desired branching/mandatory nodes,
which can be checked independently at each branchpoint.
Consider,
therefore,
two consecutive heights $l > k$ of the tree,
and let $v \in \mb V_l$ be a vertex with child-vertices $\on{Ch}(v) = \set{v_1,\dc, v_n} \sse \mb V_k$,
for some integer $n \geq 1$.
Moreover,
denote by $q = q_{c,v}$ the exponential factor determined by $c$ at the node $v$,
i.e.,
$q = \tau_l(q_j)$ for any leaf $j$ which is a descendant-vertex of $v$.
Finally,
let
\begin{equation}
	\label{eq:galois_orbits_children}
	q_i
	= q + c_i z^{-k},
	\qquad c_i \ceqq c(v_i),
	\quad i \in \set{1,\dc,n},
\end{equation}
be the corresponding exponential factors of the child-vertices.

\begin{rema}
	\label{rmk:realization_check}
	Each pair $(q_i,q_j)$ in~\eqref{eq:galois_orbits_children},
	for $i \neq j \in \set{1,\dc,n}$,
	has $q$ as its common part;
	and by Def.~\ref{def:partial_ramification},
	the partial ramification order $N_{ij}$ of the former is a function of $\on{ram}(q)$ and $\on{den}(k)$ only.
	Thus,
	given a function $c \cl \mb A \to \mb C$,
	there is a \emph{common} partial ramification order to the child-vertices of $v$:
	denote it by $N \geq 1$.
\end{rema}

\subsubsection{}

Now,
for $c$ to be a realization,
the numbers $c_1,\dc,c_n$ have to be such that the Stokes-circles-up-to-sign $\braket{\pm q_1},\dc,\braket{\pm q_n}$ are pairwise-distinct,
and to this extent:

\begin{lemm}[cf.~\cite{boalch_doucot_rembado_2025_twisted_local_wild_mapping_class_groups_configuration_spaces_fission_trees_and_complex_braids}, Prop.~3.26]
	\label{lem:characterisation_right_branchings_type_BC}

	The Stokes-circles-up-to-sign of the exponential factors~\eqref{eq:galois_orbits_children} are pairwise-distinct if and only if one of the following (mutually-exclusive) conditions holds:
	\begin{enumerate}
		\item[(I)] $q \neq 0$ is not special,
		      and:
		      \begin{enumerate}
			      \item[(1)] $k \in \on{Inc}_{BC}(q)$,
			            and $c_i\neq c_j$ for $i \neq j \in \set{1,\dc,n}$;

			      \item[(2a)] $k \notin \on{Inc}_{BC}(q)$,
			            and there is a unique index $i_0 \in \set{1, \dc, n}$ such that $c_{i_0}=0$;
			            moreover,
			            if $i \neq i_0$ then $c_i \neq 0$,
			            and if $i \neq j \in \set{1, \dc, n} \sm \set{i_0}$ then $c_i^N \neq c_j^N$---%
			            in the notation of Rmk.~\ref{rmk:realization_check};

			      \item[(2b)] $k \notin \on{Inc}_{BC}(q)$,
			            and $c_i \neq 0$ for $i \in \set{1,\dc,n}$,
			            and $c_i^N \neq c_j^N$ for $i \neq j \in \set{1,\dc,n}$;
		      \end{enumerate}
		\item[(II)] $q\neq 0$ is special,
		      and:
		      \begin{enumerate}
			      \item[(1)] $k \in \on{Inc}_{BC}(q)$,
			            and $c_i\neq c_j$ for $i \neq j \in \set{1,\dc,n}$;

			      \item[(2a)] $k \notin \on{Inc}_{BC}(q)$,
			            and there is a unique index $i_0 \in \set{1, \dc, n}$ such that $c_{i_0} = 0$;
			            moreover,
			            if $i \neq i_0$ then $c_i \neq 0$,
			            and if $i,j \in \set{1, \dc, n} \sm \set{i_0}$ then:
			            \begin{equation}
				            \begin{cases}
					            c_i^N \neq c_j^N,       & \quad N \text{ odd},  \\
					            c_i^{2N} \neq c_j^{2N}, & \quad N \text{ even};
				            \end{cases}
			            \end{equation}
			      \item[(2b)] $k \notin \on{Inc}_{BC}(q)$,
			            and:
			            \begin{enumerate}
				            \item
				                  either $N=1$,
				                  and $c_i \neq 0$ for $i \in \set{1,\dc,n}$,
				                  and $c_i \neq \pm c_j$ for $i \neq j \in \set{1,\dc,n}$;

				            \item  or $N>1$,
				                  and $c_i \neq 0$ for $i \in \set{1,\dc,n}$,
				                  and if $i \neq j \in \set{1,\dc,n}$ then:
				                  \begin{equation}
					                  \begin{cases}
						                  c_i^N \neq c_j^N,      & \quad N \text{ odd},  \\
						                  c_i^{2N} \neq c_j^{2N} & \quad N \text{ even};
					                  \end{cases}
				                  \end{equation}
			            \end{enumerate}
		      \end{enumerate}
		\item[(III)] $q_c = 0$,
		      and:
		      \begin{enumerate}
			      \item[(a)] there is a unique index $i_0 \in \set{1, \dc, n}$ such that $c_{i_0} = 0$;
			            moreover,
			            for $i \neq i_0$ one has $c_i\neq 0$,
			            and if $i,j \in \set{1, \dc, n} \sm \set{i_0}$ then:
			            \begin{equation}
				            \begin{cases}
					            c_i^N \neq  c_j^N,       & \quad N \text{ even}, \\
					            c_i^{2N} \neq  c_j^{2N}, & \quad N \text{ odd};
				            \end{cases}
			            \end{equation}
			      \item[(b)] $c_i\neq 0$ for $i \in \set{1,\dc,n}$,
			            and if $i \neq j \in \set{1, \dc, n}$ then:
			            \begin{equation}
				            \begin{cases}
					            c_i^N \neq  c_j^N,       & N \text{ even},      \\
					            c_i^{2N} \neq  c_j^{2N}, & \quad N \text{ odd}.
				            \end{cases}
			            \end{equation}
		      \end{enumerate}
	\end{enumerate}
	(This justifies the branching axiom of Def.~\ref{def:fission_tree_type_BC}.)
\end{lemm}

\begin{proof}
	Apply Prop.~\ref{prop:types_of_fission_BC} to pairs of distinct indices.
\end{proof}

\begin{coro}[cf.~\cite{boalch_doucot_rembado_2025_twisted_local_wild_mapping_class_groups_configuration_spaces_fission_trees_and_complex_braids}, Thm.~3.27]
	\label{cor:type_BC_characterisation_of_realizations}

	The map $c \cl \mb A\to \mb C$ is a realization of $\mc T$ and/or $\dot{\mc T}$ if and only if:
	\begin{enumerate}
		\item
		      $c(\mb L) \sse \mb C^\ast$,
		      i.e.,
		      $c(v) \neq 0$ for any mandatory node $v$;

		\item
		      and for any pair of nonempty sibling-vertices $u,v$,
		      denoting by $N \geq 1$ their partial ramification order (cf.~again Rmk.~\ref{rmk:realization_check}):
		      \begin{enumerate}
			      \item[(I)]
			            if $u,v$ have nonspecial parent-edges,
			            then $c(u)^N \neq c(v)^N$;

			      \item[(II)]
			            if $u,v$ have special parent-edges,
			            then:
			            \begin{enumerate}
				            \item[(i)]
				                  If $N=1$ and $u,v$ are mandatory,
				                  then $c(u) \neq \pm c(v)$;

				            \item[(ii)]
				                  otherwise:
				                  \begin{equation}
					                  \begin{cases}
						                  c(u)^N \neq c(v)^N,       & \quad N \text{ odd},  \\
						                  c(u)^{2N} \neq c(v)^{2N}, & \quad N \text{ even};
					                  \end{cases}
				                  \end{equation}
			            \end{enumerate}

			      \item[(III)]
			            and if $u,v$ have empty parent-edges,
			            then:
			            \begin{equation}
				            \begin{cases}
					            c(u)^N \neq  c(v)^N,       & \quad N \text{ even}, \\
					            c(u)^{2N} \neq  c(v)^{2N}, & \quad N \text{ odd}.
				            \end{cases}
			            \end{equation}
		      \end{enumerate}
	\end{enumerate}
\end{coro}

\begin{proof}
	The first condition (and the definition of $\mb A$) means that each full branch has the correct interior levels,
	and the second condition means that $\mc T(\Theta_c)$ has the correct branchings---%
	in view of Lem.~\ref{lem:characterisation_right_branchings_type_BC}.
\end{proof}

\begin{coro}
	\label{cor:type_BC_realisability}

	Any (labelled) fission tree admits a realization.
\end{coro}

\begin{proof}
	Analogous to the proof of~\cite[Cor.~3.28]{boalch_doucot_rembado_2025_twisted_local_wild_mapping_class_groups_configuration_spaces_fission_trees_and_complex_braids}.
\end{proof}

\subsubsection{}

Hereafter,
and until \S~\ref{sec:weyl_group_tree_bc},
let $\dot Q$ be a (compatible) pointed irregular type with labelled fission tree $\dot{\mc T}$.

\begin{lemm}[cf.~\cite{boalch_doucot_rembado_2025_twisted_local_wild_mapping_class_groups_configuration_spaces_fission_trees_and_complex_braids}, Lem.~3.29]
	\label{lem:any_Q_comes_from_realization_type_BC}

	Set $r \ceqq \on{ram}(\dot Q)$,
	$w \ceqq z^{1/r}$,
	$K \ceqq \on{Katz}(\dot Q)$,
	and $s \ceqq rK$ (cf.~\S~\ref{sec:type_bc_configuration_space}),
	so that
	\begin{equation}
		\label{eq: naivecoeffs_bc_2}
		\dot Q
		= \bigl( (m_1,\wh q_1), \dc, (m_p,\wh q_p) \bigr),
		\qquad \wh q_i
		= \sum_{j = 1}^s a_{ij} w^{-j},
	\end{equation}
	for a suitable collection of coefficients $\bm a = (a_{ij})_{i,j}\in \mb C^{ps}$.
	Then there is a \emph{unique} realization $c = c_{\dot Q} \cl \mb A \to \mb C$ of $\dot{\mc T}$ such that $c(v) = a_{ik}$ for all $i,k$,
	where in turn $v = \braket{\pm \tau_{k/r}(\wh q_i)}\in \mb A$ is the vertex of $\dot{\mc T}$ determined by the truncation of the exponential factor $\wh q_i$---%
	as in Lem.-Def.~\ref{lem:link_vertices_truncated_circles_BC}.
\end{lemm}

\begin{proof}
	This follows from the facts that:
	i) $a_{ik} = 0$ if $\braket{\pm \tau_{k/r}(\wh q_i)} \in \mb V \sm \mb A$,
	i.e.,
	if the vertex of $\dot{\mc T}$ corresponding to the truncated Stokes-circle-up-to-sign $\braket{\pm \tau_{k/r}(\wh q_i)}$ is \emph{not} admissible;
	and ii) $a_{ik} = a_{jk}$ if $\braket{\pm \tau_{k/r}(\wh q_i)} = \braket{\pm \tau_{k/r}(\wh q_j)}$,
	i.e.,
	if the truncations determine the same vertex of $\dot{\mc T}$.
\end{proof}

\begin{enonce}{Theorem-Definition}[cf.~\cite{boalch_doucot_rembado_2025_twisted_local_wild_mapping_class_groups_configuration_spaces_fission_trees_and_complex_braids}, Thm.~3.30]
	\label{thm:type_BC_configuration_space_from_realizations}

	There is a homeomorphism
	\begin{equation}
		\bm B_{BC,r}(\dot Q) \simeq \bm B_{BC}(\dot{\mc T}),
	\end{equation}
	in the notation of Def.~\ref{def:config_space_type_BC},
	where the $BC$-\emph{configuration space of} $\dot{\mc T}$ is defined by
	\begin{equation}
		\label{eq:config_space_from_bc_tree}
		\bm B_{BC} (\dot{\mc T})
		\ceqq  \Set{ c \cl \mb A \to \mb C |  c \text{ is a realization of } \dot{\mc T} }.
	\end{equation}
	(We regard the latter as a topological subspace of $\mb C^{\mb A} \simeq \mb C^{\,\abs{\mb A}}$,
	in view of Rmk.~\ref{rmk:ordering_vertices}.)
\end{enonce}

\begin{proof}
	This follows from Thm.~\ref{thm:admissible_deformation_iff_same_fission_data_type_BC} +~Lemm.~\ref{lem:equivalence_tree_fission_datum_BC}--\ref{lem:any_Q_comes_from_realization_type_BC}:
	any $BC$-admissible deformation $\dot Q'$ of $\dot Q$ yields a realization $c' = c_{\dot Q'} \in\mb C^{\,\abs{\mb A}}$ of $\dot{\mc T}$,
	in bijective fashion.
	(We omit the proof that this operation,
	and its inverse,
	are continuous.)
\end{proof}

\subsubsection{}

The next aim is to obtain a (topological) factorization of the space~\eqref{eq:config_space_from_bc_tree},
along the vertices of the fission trees,
refining that of Thm.-Def.~\ref{thm:general_pure_twisted_deformation_space}.

\begin{defi}
	\label{def:local_conf_space_bc}

	Let $v \in \mb V$ be a vertex of $\dot{\mc T}$,
	and denote by $\on{Ch}_{\mb A}(v) \ceqq \mb A \cap \on{Ch}(v)$ the set of admissible child-vertices of $v$.
	The \emph{local} $BC$-\emph{configuration space of} $(\dot{\mc T},v)$ is the topological space
	\begin{equation}
		\label{eq:local_conf_space_bc}
		\bm B_{BC}(\dot{\mc T},v)
		\ceqq \Set{ c_v \cl \on{Ch}_{\mb A}(v) \to \mb C | c_v \text{ satisfies the conditions of Cor.~\ref{cor:type_BC_characterisation_of_realizations}} }.
	\end{equation}
	(Again,
	view this as a topological subspace of $\mb C^n$,
	where $n = n(v) \ceqq \abs{\on{Ch}_{\mb A}(v)} \geq 0$.)
\end{defi}

\begin{rema}
	\label{rmk:local_conf_space_classification}

	The space~\eqref{eq:local_conf_space_bc} is a point if $v$ has no nonempty child-vertex.

	Otherwise,
	suppose that $v$ has $n \geq 1$ nonempty child-vertices,
	with common partial ramification order $N \geq 1$ (cf.~Rmk.~\ref{rmk:realization_check}).
	Then the space $\bm B_{BC}(\dot{\mc T},v)$ is homeomorphic to one of the hyperplane complements $\mc M(1,n)$,
	$\mc M^\sharp(N,n)$,
	or $\mc M^\sharp(2N,n)$,
	in the notation of~\eqref{eq:standard_complement_type_D}--\eqref{eq:standard_complement}.
\end{rema}

\begin{coro}[cf.~\cite{boalch_doucot_rembado_2025_twisted_local_wild_mapping_class_groups_configuration_spaces_fission_trees_and_complex_braids}, Cor.~3.31]
	\label{cor:config_product_decomposition_type_BC}

	There is a homeomorphism
	\begin{equation}
		\bm B_{BC,r}(\dot Q)
		\simeq \prod_{v \in \mb V} \bm B_{BC}(\dot{\mc T},v),\fn{
			The product is finite,
			since there is a \emph{finite} number of vertices for which $\bm B_{BC}(\mc T,v)$ is \emph{not} a point---%
			the nonempty vertices.
			In particular,
			viewing $\bm B_{BC}(\dot Q)$ as a complex manifold,
			its dimension equals the number of admissible vertices of $\mc T$.
		}
	\end{equation}
	endowing the target with the product topology.
\end{coro}

\begin{proof}
	This follows Thm.-Def.~\ref{thm:type_BC_configuration_space_from_realizations}:
	the branching conditions of Cor.~\ref{cor:type_BC_characterisation_of_realizations} are \emph{independent} at each vertex.
\end{proof}

\begin{exem}
	Consider the pointed irregular type $\dot Q$ of Exmp.~\ref{ex:example_fission_tree}.
	From the fission tree we can directly read the following topological factorization
	\begin{equation}
		\bm B_{BC,r}(\dot Q)
		\simeq \mb C \ts (\mb C^\ast)^5 \ts \mc M^\sharp(4,2). \qedhere
	\end{equation}
\end{exem}

\begin{rema}
	\label{rmk:forest_bc}

	As in~\cite[\S~3.7]{boalch_doucot_rembado_2025_twisted_local_wild_mapping_class_groups_configuration_spaces_fission_trees_and_complex_braids},
	with a view towards the global case,
	one can define a $BC$-\emph{fission forest} $\bm F$ as a multiset of---%
	isomorphism classes of---%
	$BC$-fission trees of one and the same rank $m \geq 1$.
	(The \emph{rank} of a fission tree $\mc T$ is well-defined as the rank of the irregular class $\Theta_c$,
	for any choice of a realization $c$ of $\mc T \simeq \mc T(\Theta_c)$.)
	Then a meromorphic connection on a principal $G$-bundle over a projective curve $X$,
	where
	\begin{equation}
		G \in \Set{ \SO_{2m+1}(\mb C),\Sp_m(\mb C) },
	\end{equation}
	determines a $BC$-fission forest by taking the $BC$-fission trees of its multiset $\bm{\Theta} = \set{ \Theta_x }_{\bm x}$ of irregular classes---%
	at each pole/marked point $x \in X$.
	(Again,
	more precisely,
	we consider algebraic connections over the complement of the marked points $\bm x \sse X$.)
	This only depends on the wild curve $\bm X = (X,\bm x,\bm{\Theta})$ underlying the connection,
	cf.~\eqref{eq:wild_riemann_surface}.

	In turn,
	one defines the $BC$-\emph{topological skeleton of} $\bm X$ as the pair $(g, \bm F)$,
	where (in addition) $g \geq 0$ is the genus of $X$.
	It follows that two wild curves are mutual admissible deformations if and only if they have the same topological skeleton,
	invoking the connectedness of the stack $\mc M_{g,n}$ (where $n \ceqq \abs{\bm x}$,
	cf.~\cite[Cor.~3.36]{boalch_doucot_rembado_2025_twisted_local_wild_mapping_class_groups_configuration_spaces_fission_trees_and_complex_braids} in type $A$).
\end{rema}

\subsection{General case:
	Weyl groups}
\label{sec:weyl_group_tree_bc}

It remains to consider the full/nonpure case.
Hereafter,
and until \S~\ref{sec:D_trees},
let $\Theta = \Theta(\dot Q)$ be the irregular class of a pointed $r$-ramified irregular type $\dot Q$,
and $\mc T \ceqq \mc T(\Theta)$ the fission tree of the former.

\subsubsection{}

Up to untwisting (cf.~Rmk.~\ref{rmk:config_spaces_are_the_same}),
the admissible deformations of $\Theta$ amount to the base of a Galois covering:
its total space is that of admissible deformations of $\dot Q$,
and the deck transformations correspond to the free action of a subquotient of the Weyl group of type $BC$ (cf.~\S\S~\ref{sec:nonpure_case_generic}--\ref{sec:nonpure_case_general}).
Here we provide a different,
more explicit description of the full/nonpure deformation space,
and of the Galois group of this covering.

Namely,
we will present the deformation space~\eqref{eq:configuration_space_bc_quotient} as a different topological quotient,
in terms of the action of automorphisms of (unlabelled) fission trees,
complemented by `interior' groups associated to the full branches:

\begin{defi}[cf.~Rmk.~\ref{rmk:normalizers_parabolic_subgroups_2}]
	\label{def:interior_weyl_group_leaf_type_BC}

	Choose a leaf $i \in \mb V_0$,
	and denote by $r_i \ceqq \on{ram}(\bm L_i) \geq 1$ the ramification order of the $BC$-level datum $\bm L_i = h(\mb L_i) \sse \mb Q_{>0}$.
	Then the \emph{type}-$BC$ \emph{interior Weyl group of} $i$ is the abelian group defined by
	\begin{equation}
		\label{eq:interior_weyl_group}
		W_{BC}(\mc T,i)
		\ceqq
		\begin{cases}
			\bigl( \mb Z \bs r_i \mb Z \bigr) \ts \mb Z^{\ts}, & \quad \bm L_i \text { is nonempty and nonspecial}, \\
			\mb Z \bs r_i \mb Z,                               & \quad \bm L_i \text { is nonempty and special},    \\
			(0),                                               & \quad \bm L_i = \vn.
		\end{cases}
	\end{equation}
\end{defi}

\subsubsection{}
\label{sec:interior_weyl_elements}

We shall write---%
in any case---%
an element of~\eqref{eq:interior_weyl_group} as a pair
\begin{equation}
	w_i = (d_i, \varepsilon_i),
	\qquad d_i \in \mb Z \bs r_i \mb Z,
\end{equation}
where:
i) $\varepsilon_i \in \mb Z^{\ts}$ if $\bm L_i$ is nonempty and nonspecial;
ii) $\varepsilon_i \ceqq 1$ if $\bm L_i$ is nonempty and special;
and iii) $(d_i, \varepsilon_i) \ceqq (0,1)$ if $\bm L_i = \vn$.

\subsubsection{}
\label{sec:weyl_group_tree_bc_continued}

Observe that the group $\Aut(\mc T)$ (of fission-tree isomorphisms $\mc T \lxra{\simeq} \mc T$)
embeds in the symmetric group $\mf S_{\mb V_0}$ (of permutations of the leaves),
because any automorphism of $\mc T$ is determined by its action on the leaves.
Moreover,
if two full branches are isomorphic,
then the corresponding interior Weyl groups~\eqref{eq:interior_weyl_group} are the same,
and they can be swapped by an automorphism.
Thus,
the group $\Aut(\mc T)$ acts on the (exterior) direct product $\prod_{\mb V_0} W_{BC}(\mc T,i)$ of interior Weyl groups,
by permuting the identical factors.

Consider first the corresponding semidirect product
\begin{equation}
	\label{eq:group_acting_on_type_BC_extended_tree}
	\Aut(\mc T) \lts \prod_{\mb V_0} W_{BC}(\mc T,i).
\end{equation}
While this group acts freely on pointed irregular types (cf.~the proof of Thm.~\ref{thm:full_wmcg_from_tree_type_BC}),
it does \emph{not} preserve compatible ones,
and so it does not stabilize the pure admissible deformation spaces.
Therefore,
we also identify the subgroup that achieves that.

Namely,
let $i \neq j \in \mb V_0$ be two distinct leaves.
If $v_{ij} \in \mb V$ is the nearest common ancestor-vertex of $i$ and $j$---%
where the corresponding full branches meet---,
set
\begin{equation}
	\label{eq:common_ramification}
	r_{ij}
	\ceqq \on{ram} \bigl( \bm L_i\cap \bigl[ h(v_{ij}), \infty \bigr) \bigr)
	= \on{ram} \bigl( \bm L_j \cap \bigl[ h(v_{ij}), \infty \bigr) \bigr).
\end{equation}
(By construction,
the integer $r_{ij} = r_{ji} \geq 1$ divides both $\on{ram}(\bm L_i)$ and $\on{ram}(\bm L_j)$.)

Then:
\begin{defi}[cf.~\cite{boalch_doucot_rembado_2025_twisted_local_wild_mapping_class_groups_configuration_spaces_fission_trees_and_complex_braids}, Def.~4.5]
	\label{def:weyl_group_tree_type_BC}

	The $BC$-\emph{Weyl group of} $\mc T$ is the subgroup of
	\eqref{eq:group_acting_on_type_BC_extended_tree} defined by
	\begin{equation}
		\label{eq:weyl_group_tree_bc}
		W_{BC}(\mc T)
		= \Aut(\mc T) \lts W'_{BC}(\mc T),
	\end{equation}
	where in turn
	\begin{equation}
		W'_{BC}(\mc T)
		\ceqq \Set{ (w_i)_i \in \prod_{\mb V_0} W_{BC}(\mc T,i) | \varepsilon_i\varepsilon_j = \zeta_{r_{ij}}^{d_i - d_j} \in \mb C^{\ts},
			\text{ for } i \neq j \in \mb V_0 }.\fn{
		Note that the subgroup $W'_{BC}(\mc T) \sse \prod_{\mb V_0} W_i$ is preserved by the $\Aut(\mc T)$-action.}
	\end{equation}
	(Cf.~\S~\ref{sec:interior_weyl_elements} and~\eqref{eq:common_ramification};
	we write as usual $\zeta_{r_{ij}} = e^{ 2\pi\sqrt{-1} \slash r_{ij}}$.)
\end{defi}

\begin{theo}[cf.~\cite{boalch_doucot_rembado_2025_twisted_local_wild_mapping_class_groups_configuration_spaces_fission_trees_and_complex_braids}, Thm.~4.6]
	\label{thm:full_wmcg_from_tree_type_BC}

	\leavevmode
	\begin{enumerate}
		\item
		      The Weyl group~\eqref{eq:weyl_group_tree_bc} acts \emph{freely} on the configuration space $\bm B_{BC}(\dot Q)$,
		      preserving irregular classes.

		\item
		      And there is a homeomorphism
		      \begin{equation}
			      \bm B_{BC,r}(\Theta) \simeq \bm B_{BC,r}(\dot Q) \bs W_{BC}(\mc T),
		      \end{equation}
		      in the notation of~\eqref{eq:configuration_space_bc_quotient}.

	\end{enumerate}
\end{theo}

\begin{proof}

	For the first statement,
	let us regard pointed irregular types as full irregular types,
	as per Rmk.~\ref{rmk:pointed_to_full}:
	this involves lists $l^\pm_1,\dc,l^\pm_p$ of exponential factors,
	where $p \ceqq \abs{\mb V_0}$.
	(We identify as usual $\set{1,\dc,p} \simeq \mb V_0$ under the given labelling.)

	Now,
	in the tame/nonspecial case,
	consider an element $w_i = (d_i,\varepsilon_i) \in W_{BC}(\mc T,i)$;
	let it act on a list~\eqref{eq:nonspecial_list} by
	\begin{equation}
		\label{eq:nonspecial_list_action}
		w_i \cdot l^+_i
		\ceqq \varepsilon_i \bigl( q_i^{(d_i)}, \dc, q_i^{(d_i)}, \dc,\dc, q_i^{(r_i - 1 + d_i)}, \dc, q_i^{(r_i - 1 + d_i)} \bigr),
	\end{equation}
	taking as usual indices in $\mb Z \bs r_i \mb Z$.
	In the special case,
	instead,
	an element $w_i = (d_i,1)$ should act on~\eqref{eq:special_list} by
	\begin{equation}
		\label{eq:special_list_action}
		w_i \cdot l^-_i
		\ceqq \bigl( q_i^{(d_i)}, \dc, q_i^{(d_i)}, \dc,\dc, q_i^{(r_i \slash 2 - 1 + d_i)}, \dc, q_i^{(r_i \slash 2 - 1 + d_i)} \bigr).
	\end{equation}
	Both actions are free,
	and one gets a free action of the group~\eqref{eq:group_acting_on_type_BC_extended_tree} on the set of (pointed) irregular types of the form $\dot Q_0 = (l^\pm_1,\dc,l^\pm_p,-l^\pm_1,\dc,-l^\pm_p)$.
	Namely,
	if
	\begin{equation}
		g
		= \rho \lts \bm w,
		\qquad
		\rho \in \Aut(\mc T),
		\quad \bm w
		= (w_i)_i \in \prod_{i = 1}^p W_{BC}(\mc T,i),
	\end{equation}
	then one maps $\dot Q_0 \mt g \cdot \dot Q_0$,
	where
	\begin{equation}
		\label{eq:transformed_pointed_irr_type}
		g \cdot \dot Q_0
		\ceqq \bigl( w_{1_\rho} \cdot l^\pm_{1_\rho},\dc, w_{p_\rho} \cdot l^\pm_{p_\rho},-w_{1_\rho} \cdot l^\pm_{1_\rho}, \dc, -w_{p_\rho} \cdot l^\pm_{p_\rho} \bigr),
	\end{equation}
	writing $i_\rho \ceqq \rho^{-1}(i) \in \set{1,\dc,p}$ for the inverse-image of the $i$-th root determined by the tree automorphism.
	While this action clearly preserves irregular classes,
	in general~\eqref{eq:transformed_pointed_irr_type} is \emph{not} an admissible deformation of $\dot Q_0$;
	namely,
	this happens if and only if the pointed irregular type~\eqref{eq:transformed_pointed_irr_type} is (also) compatible.
	In turn,
	one can prove that the latter holds if and only if $\bm w \in W'_{BC}(\mc T)$ (thereby motivating Def.~\ref{def:weyl_group_tree_type_BC},
	cf.~\cite[Lem.~4.7]{boalch_doucot_rembado_2025_twisted_local_wild_mapping_class_groups_configuration_spaces_fission_trees_and_complex_braids}).

	For the second statement,
	somewhat conversely,
	suppose that $\dot Q_0$ and $\dot Q_0'$ are both pointed/compatible and such that:
	(i) $\dot Q_0 \sim_{BC} \dot Q_0'$;
	and (ii) $\Theta (\dot Q_0) = \Theta (\dot Q_0')$.
	If $\dot Q_0 = \bigl( (n_1,q_1),\dc,(n_p,q_p) \bigr)$,
	for integers $p,n_i \geq 1$ and exponential factors $q_i$,
	then by hypothesis $\dot Q_0' = \bigl( (n_1,q'_1),\dc,(n_p,q'_p) \bigr)$,
	for another list of exponential factors $q'_i$.
	Now,
	by Cor.~\ref{cor:adm_def_by_trees},
	up to identifying the labelled fission trees of $\dot Q_0$ and $\dot Q'_0$,
	there exists an automorphism $\rho$ of the fission tree $\mc T$ of the irregular class $\Theta_0 \ceqq \Theta(\dot Q_0)$ such that
	\begin{equation}
		\braket{ \pm q_i } = \rho \bigl( \braket{\pm q'_i} \bigr) \in \mb V_0,
		\qquad i \in \set{1,\dc,p},
	\end{equation}
	viewing the leaves as the Stokes-circles-up-to-sign of $\dot Q_0$.
	Thus,
	there are also (cyclic) indices $j = j_i$ such that $q'_i = \pm \rho \bigl( q_i^{(j)} \bigr)$,
	and this means precisely that $\dot Q_0' = g \cdot \dot Q_0$ for a suitable element of the semidirect product~\eqref{eq:semidirect_product}.
	Finally,
	the fact that $\dot Q_0'$ is compatible implies that $g \in W_{BC}(\mc T)$.
\end{proof}

\begin{coro}
	\label{cor:weyl_groups_are_the_same}

	Let $\wh Q$ be the $r$-Galois-closed untwisting of $\dot Q$,
	and denote by $\bm\phi$ the filtration of Levi subsystems determined by the annihilators of the coefficients of $\wh Q$.
	Then there is a (canonical) group isomorphism
	\begin{equation}
		Z_{W,\bm\phi} (r) \simeq W_{BC}(\mc T),
	\end{equation}
	in the notation of \S~\ref{sec:independence_monodromy_group_general},
	setting $W \ceqq W_{BC}(m)$---%
	where $m \geq 1$ is the rank of $\dot Q$.
\end{coro}

\begin{proof}[Proof postponed to~\ref{proof:cor_weyl_groups_are_the_same}]
\end{proof}

\begin{rema}
	\label{rmk:cabled_weyl}

	There is an explicit recursive expression for the Weyl group of the tree:
	the precise statement,
	which we omit,
	is an analogue of~\cite[Thm~4.10]{boalch_doucot_rembado_2025_twisted_local_wild_mapping_class_groups_configuration_spaces_fission_trees_and_complex_braids}.
	(This also works in type $D$,
	cf.~\S~\ref{sec:weyl_group_tree_d}.)

	In particular,
	in view of Cor.~\ref{cor:weyl_groups_are_the_same},
	fission trees provide a description of the Galois groups of the coverings of \S~\ref{sec:nonpure_case_general} in terms of nested semidirect factorizations,
	which we view as a vast generalization of~\eqref{eq:splitting_parabolic_normalizers}.
\end{rema}

\subsubsection{}

The results of this sections yield a proof of Thm.~\ref{thm:thm_3_intro} in type $B$ and $C$ (the corresponding `classical' local WMCGs will be formally defined in \S~\ref{sec:tglwmcgs}).
In the next \S~\ref{sec:D_trees},
instead,
we shall deal with the type-$D$ examples,
thereby concluding in one go the proofs of the two main Thmm.~\ref{thm:thm_2_intro} +~\ref{thm:thm_3_intro}.

\section{Twisted fission trees of type D}
\label{sec:D_trees}

\subsection{(Full) irregular types and irregular classes}
\label{sec:pseudo_irr_classes}

To treat the most complicated classical simple case,
we must first introduce a weaker version of irregular classes.

\subsubsection{}

Choose an integer $m \geq 1$.
Analogously to Def.~\ref{def:full_irr_type_bc},
a \emph{full} $D$-\emph{irregular type},
of \emph{rank} $m$ (or just a `full $D_m$-irregular type'),
is a $W_D(m)$-Galois-closed list of exponential factors;
viz.,
a tuple
\begin{equation}
	Q
	= (q_1, \dc, q_m, -q_1, \dc, -q_m),
\end{equation}
such that there exists $g \in W_D(m)$ satisfying $\sigma(Q) = g(Q)$,
where $\sigma$ is the monodromy of the exponential local system.
In view of~\eqref{eq:weyl_type_BC}--\eqref{eq:weyl_type_D},
this can be spelled out as in~\eqref{eq:monodromy_realization_BC},
where moreover $\varepsilon_1 \dm \varepsilon_ m = 1$.
(Then $g$ \emph{generates the Galois-orbit of} $Q$,
etc.)

The crux of the matter is the latter parity condition,
which was \emph{not} present in type $BC$.
Nonetheless,
we---%
still---%
regard a type-$D$ irregular class $\Theta = \Theta(Q)$ as the $D$-\emph{equivalence class} of a full irregular type $Q$ (analogously to~Def.~\ref{def:bc_equivalence}).
Then their admissible deformations read as follows,
in view of~\eqref{eq:type_d_roots}:

\begin{defi}[cf.~Def.~\ref{def:numerical_equivalence_bc}]
	\label{def:numerical_equivalence_d}

	Consider two full $D$-irregular types
	\begin{equation}
		Q = (q_1,\dc, q_m, -q_1, \dc, -q_m),
		\qquad Q' = (q'_1,\dc, q'_{m'}, -q'_1, \dc, -q'_{m'}),
	\end{equation}
	of ranks $m,m' \geq 1$---%
	respectively.
	Then:
	\begin{enumerate}
		\item
		      $Q$ and $Q'$ are \emph{mutual} $D$-\emph{admissible deformations},
		      which is symbolized by $Q \sim_D Q'$,
		      if:
		      \begin{enumerate}
			      \item
			            $m = m'$;

			      \item
			            there exists $g \in W_D(m)$ such that the Galois-orbits of $Q$ and $Q'$ are---%
			            both---%
			            generated by $g$;

			      \item
			            and
		      \end{enumerate}
		      \begin{equation}
			      \on{slope}(q_i \pm q_j)
			      = \on{slope}(q'_i\pm q'_j),
			      \qquad i \neq j \in \set{1,\dc,m};
		      \end{equation}

		\item
		      and two $D$-irregular classes $\Theta$ and $\Theta'$ are \emph{mutual} $D$-\emph{admissible deformations},
		      which is symbolized by $\Theta \sim_D \Theta'$,
		      if there exist full $D$-irregular types $Q$ and $Q'$ such that:
		      \begin{enumerate}
			      \item
			            $\Theta = \Theta(Q)$ and $\Theta' = \Theta(Q')$;

			      \item
			            and                      $Q \sim_D Q'$.
		      \end{enumerate}
	\end{enumerate}
\end{defi}

\subsection{Pseudo-irregular classes}

As in type $BC$,
a $D$-irregular class $\Theta$ determines (linear combinations of) Stokes-circles-up-to-sign.
The caveat is that,
conversely,
one might possibly need the additional choice of a `global' sign in order to completely recover the irregular class from such data.

To state this precisely,
consider the following:

\begin{defi}
	\label{def:type_D_pseudo_irreg_class}

	A \emph{pseudo-}$D_m$-\emph{irregular class} is a linear combination of pairwise-distinct Stokes-circles-up-to-sign
	\begin{equation}
		\wt\Theta
		= \sum_{i = 1}^p n_i \cdot (\pm I_i),
	\end{equation}
	for some integer $p \geq 1$,
	such that:
	\begin{enumerate}
		\item
		      $n_i \in \mb Z_{>0}$,
		      if $\pm I_i$ is nonspecial or tame;

		\item
		      $n_i \in \frac 1 2\mb Z_{>0}$,
		      if $\pm I_i$ is special;

		\item
		      $m = \sum_i n_i \cdot \on{ram}(\pm I_i)$;

		\item
		      and moreover $\wt\Theta$ is $D$-\emph{compatible},
		      i.e.:
		      \begin{enumerate}
			      \item
			            either $\wt\Theta$ contains the tame circle (viz.,
			            $\pm I_i = \braket{0}$ for some $i \in \set{1,\dc,p}$);

			      \item
			            or $\sum_{i = 1}^p n_i \in \mb Z$.
		      \end{enumerate}
	\end{enumerate}
\end{defi}

\begin{prop}
	\label{prop:form_type_D_irreg_classes}

	Let $\Theta$ (resp.,
	$\wt\Theta$) be a $D_m$-irregular class (resp.,
	a pseudo-$D_m$-irregular class).
	Then:
	\begin{enumerate}
		\item
		      $\Theta$ canonically determines a pseudo-$D_m$-irregular class;

		\item
		      and conversely,
		      the set of irregular classes with pseudo-irregular class $\wt\Theta$ is:
		      \begin{enumerate}
			      \item
			            a singleton,
			            if $\wt\Theta$ contains the tame circle;

			      \item
			            and a $\mb Z^{\ts}$-torsor,
			            otherwise.
		      \end{enumerate}
	\end{enumerate}
\end{prop}

\begin{proof}{Postponed to \ref{proof:prop_form_type_D_irreg_classes}}
\end{proof}

\begin{rema}
	The map~\eqref{eq:sign_function} (in~\ref{proof:prop_form_type_D_irreg_classes}) depends on the choice of representatives for the Stokes circles,
	but the $\mb Z^{\ts}$-action does \emph{not}.
	Similarly,
	the negation involves the choice of a group element $g \in W_{BC}(m)$ which lies in the nontrivial $W_D(m)$-coset,
	whose class in $\mb Z^{\ts} \simeq W_{BC}(m) \bs W_D(m)$ is well-defined:
	this is the main relation with nonsplit reflection cosets,
	cf.~\S\S~\ref{sec:howlett}--\ref{sec:lehrer_springer_theory},
	etc.

	(One might then view Deff.~\ref{def:enhanced_irr_types} +~\ref{def:enhanced_irr_classes} as a way to `artificially' split the short exact group sequence $1 \to W_D(m) \to W_{BC}(m) \to \mb Z^{\ts} \to 1$.)
\end{rema}

\subsection{Pointed (quasi-)irregular types and quasi-irregular classes}
\label{sec:pointed_types_D}

Again we will consider a smaller number of deformation parameters,
in the notion of pointed irregular types;
but we will also need to introduce a weaker variant thereof.

\subsubsection{}

Namely,
as in types $A$ and $BC$,
we will proceed by:
(i) breaking irregular types/classes into elementary pieces;
(ii) analyzing their internal structure;
and (iii) understanding how to glue them together.
The caveat however is that the `global' condition of $D$-compatibility of Def.~\ref{def:type_D_pseudo_irreg_class}~(4.) is \emph{not} preserved upon looking at such elementary pieces.
Therefore,
we introduce the following definition (and later suitably `enhance' it,
cf.~\S~\ref{sec:enhancements}):

\begin{defi}
	\label{def:pointed_quasi_irr_types}

	Let $p \geq 1$ be an integer.
	Then:
	\begin{enumerate}
		\item
		      a \emph{pointed quasi}-$D_m$-\emph{irregular type} is an ordered list
		      \begin{equation}
			      \label{eq:pointed_irr_type_D}
			      \dot Q
			      = \bigl( (n_1, q_1), \dc, (n_p, q_p) \bigr),
		      \end{equation}
		      satisfying the properties of~Def.~\ref{def:pointed_irreg_type_BC};

		\item
		      the \emph{quasi}-$D_m$-\emph{irregular class of} $\dot Q$ is the linear combination
		      \begin{equation}
			      \label{eq:quasi_irr_class}
			      \wt\Theta
			      = \wt\Theta(\dot Q)
			      \ceqq \sum_{i = 1}^p n_i \cdot (\pm I_i),
			      \qquad \pm I_i
			      \ceqq \braket{\pm q_i};
		      \end{equation}

		\item
		      a pointed quasi-$D_m$-irregular type~\eqref{eq:pointed_irr_type_D} is a \emph{pointed}-$D_m$-\emph{irregular type} if~\eqref{eq:quasi_irr_class} is a pseudo-$D_m$-irregular class,
		      i.e.,
		      if it is $D$-compatible---%
		      in the sense of Def.~\ref{def:type_D_pseudo_irreg_class}~(4.);

		\item
		      in the latter case,
		      $\wt\Theta$ is the \emph{pseudo}-$D_m$-\emph{irregular class of} $\dot Q$.
	\end{enumerate}
\end{defi}

\subsubsection{}

By definition,
a pointed quasi-$D_m$-irregular type is just a pointed $BC_m$-irregular type,
and this change of viewpoint/terminology will prove quite useful when discussing level data in type $D$.
In any event,
analogously to Rmk.~\ref{rmk:pointed_to_full},
a pointed $D_m$-irregular type can be viewed as a full $D_m$-irregular type of a specific form---%
denoted by the same symbol.
Analogously,
a pointed quasi-$D_m$-irregular types can be viewed as a \emph{full quasi}-$D_m$-\emph{irregular type},
i.e.~(by definition),
a list of exponential factors satisfying the properties of a full $BC_m$-irregular type.

Finally,
a pointed $D_m$-irregular type is \emph{compatible} if it satisfies the truncation conditions of Def.~\ref{def:compatible_pit},
and we extend this notion to pointed quasi-$D_m$-irregular types in the natural way.
(Compatibility works as in type $A$ and $BC$,
because it involves acting with \emph{positive} cycles in the Weyl group,
which have no parity restriction.)

\begin{rema}
	\label{rmk:no_pointed_lift}

	Analogously to Lemm.~\ref{lem:full_gives_pointed_BC} +~\ref{lem:pointed_gives_compatible},
	any quasi/pseudo-$D_m$-irregular class admits a compatible pointed lift,
	but beware that this does \emph{not} hold in general for a $D_m$-irregular class $\Theta$,
	which may simply fail to admit a pointed representative---%
	compatible or not.\fn{
		The failure of proving the analogue of Lem.~\ref{lem:full_gives_pointed_BC} is an instance of~\cite[Prop.~25]{carter_1972_conjugacy_classes_in_the_weyl_group},
		cf.~\S~\ref{sec:background_type_BCD}.}~Namely,
	if $\wt\Theta$ is the pseudo-$D_m$-irregular class of $\Theta$ (as per Prop.~\ref{prop:form_type_D_irreg_classes}),
	then the failure happens when the pointed $D_m$-irregular type $\dot Q$ with pseudo-irregular class $\wt\Theta = \wt\Theta(\dot Q)$ has the wrong sign (as per~\eqref{eq:sign_function}).
\end{rema}

(Hereafter,
when no type/rank is specified,
we tacitly work in type $D$ and rank $m$.)

\begin{rema}
	As mentioned in Rmk.~\ref{rmk:convention_integer_multiplicities_BC},
	it will be often convenient to use \emph{integer} multiplicities for Stokes-circles-up-to-sign.
	In that case,
	write
	\begin{equation}
		\wt\Theta
		= \sum_{i = 1}^p m_i \cdot (\pm I_i), \qquad
		\qquad \dot Q
		= \bigl( (m_1, q_1), \dc, (m_p, q_p) \bigr),
	\end{equation}
	with:
	(i) $m_i \ceqq 2n_i \in \mb Z_{> 0}$,
	if $q_i$ is special;
	and (ii) $m_i \ceqq n_i$,
	otherwise.
\end{rema}

\subsubsection{}
\label{sec:full_to_pointed_D}

Now one can prove that $D$-admissible deformations preserve the subspace of pointed irregular types,
viewed as full ones,
in an analogue of Rmk.~\ref{rmk:pointed_is_enough}.
In particular,
the corresponding relation for pointed irregular types is also symbolized by $\dot Q \sim_D \dot Q'$;
and the subspaces of compatible ones are also stable under $D$-admissible deformations.

Moreover,
we will say that two pseudo-irregular classes $\wt \Theta$ and $\wt\Theta'$ are \emph{mutual} $D$-\emph{admissible deformations} if this holds for two pointed irregular types representing them:
this is (also) symbolized by $\wt\Theta \sim_D \wt\Theta'$.

\subsection{Enhancements}
\label{sec:enhancements}

Because of the constraint on the global sign,
the naive extension of Def.~\ref{def:numerical_equivalence_d} to full/pointed quasi-irregular types is \emph{not} useful to study the topology of admissible deformation spaces.

\begin{exem}
	\label{exmp:need_for_enhancement}

	The basic issue goes as follows.
	Choose numbers $a,\wt a \in \mb C$,
	and consider the following (elementary) pointed quasi-irregular types,
	with the integer-multiplicities convention:
	\begin{equation}
		\dot Q_1
		= (1, a z^{-1}),
		\qquad \dot Q_2
		= (1,0),
		\qquad \dot Q_3
		= (1, \wt a z^{-1/2}).
	\end{equation}
	The corresponding rank-1 full quasi-irregular types are
	\begin{equation}
		Q_1
		= (a z^{-1}, -a z^{-1}),
		\qquad Q_2
		= (0, 0),
		\qquad Q_3
		= (\wt a z^{-1/2}, -\wt a z^{-1/2}),
	\end{equation}
	and all trivially have the same type-$D$ slopes.
	However,
	while $Q_1$ and $Q_2$ are proper $D_1$-irregular types,
	$Q_3$ is \emph{not}:
	there is precisely one sign-change between $Q_3$ and $\sigma(Q_3)$,
	and so $Q_3$ is (only) an irregular type of type $BC_1$.
\end{exem}

\subsubsection{}

Motivated by the previous Exmp.~\ref{exmp:need_for_enhancement},
i.e.,
wanting to make the `global' sign constraint into a notion which behaves under breaking into `local' pieces,
we introduce the following objects:

\begin{defi}
	\label{def:enhanced_irr_types}

	Given an integer $p \geq 1$,
	consider a list
	\begin{equation}
		\label{eq:enhanced_quasi_irr_type}
		\ul{\dot Q}
		= \bigl( (m_1, q_1,\varepsilon_1), \dc, (m_p, q_p, \varepsilon_p) \bigr),
	\end{equation}
	where:
	(i) $q_1,\dc,q_p$ are exponential factors with pairwise-distinct Stokes-circles-up-to-sign;
	(ii) $m_1,\dc,m_p \in \mb Z_{> 0}$ (viewed as \emph{multiplicities});
	and (iii) $\varepsilon_1,\dc,\varepsilon_p \in \mb Z^{\ts}$ are signs.
	Then:
	\begin{enumerate}
		\item
		      the tuple~\eqref{eq:enhanced_quasi_irr_type} is an \emph{enhanced pointed quasi-irregular type} if
		      \begin{equation}
			      \varepsilon_i
			      =
			      \begin{cases}
				      (-1)^{m_i}, & \quad q_i \text{ is special},    \\
				      1,          & \quad q_i \text{ is nonspecial};
			      \end{cases}
		      \end{equation}
		      (Recall that special/nonspecial exponential factors are nonzero,
		      cf.~Def.~\ref{def:special_expo_factor};
		      by design,
		      there is \emph{no} constraint on $\varepsilon_i$ if $q_i = 0$.)

		\item
		      the \emph{global sign of} $\ul{\dot Q}$ is
		      \begin{equation}
			      \varepsilon(\ul{\dot Q})
			      \ceqq \prod_{i = 1}^p \varepsilon_i \in \mb Z^{\ts};
		      \end{equation}

		\item
		      an enhanced pointed quasi-irregular type $\ul{\dot Q}$ is an \emph{enhanced pointed irregular type} if $\varepsilon(\ul{\dot Q}) = 1$;

		\item
		      if $\ul{\dot Q}$ is an enhanced pointed (quasi-)irregular type,
		      then it is an \emph{enhancement} of the underlying pointed (quasi-)irregular type $\dot Q$---%
		      forgetting the signs;

		\item
		      and $\ul{\dot Q}$ is a \emph{compatible} enhanced pointed (quasi-)irregular type if $\dot Q$ is a compatible pointed (quasi-)irregular type.
	\end{enumerate}
\end{defi}

\subsubsection{}

In brief,
Def.~\ref{def:enhanced_irr_types} encodes the parity of the number of sign-changes between the lists $l_i$ and $\sigma(l_i)$,
in the notation of \ref{proof:prop_form_type_D_irreg_classes},
for all exponential factors $q_i$.
Of course,
for nonzero ones this parity is already determined by the pair $(m_i,q_i)$,
while for the tame exponential factor an enhancement now provides a (required) choice.

With this terminology,
one can now prove the following:

\begin{lemm}
	\label{lem:enhancements}

	Let $\dot Q =  \bigl( (m_1, q_1), \dc, (m_p, q_p) \bigr)$ be a pointed quasi-irregular type.
	Then:
	\begin{enumerate}
		\item
		      if $\dot Q$ contains the tame circle (i.e.,
		      if $q_i = 0$ for some $i$),
		      then for all $\varepsilon \in \mb Z^{\ts}$ there exists a \emph{unique} enhanced pointed quasi-irregular type $\ul{\dot Q}$ such that:
		      \begin{enumerate}
			      \item
			            $\ul{\dot Q}$ enhances $\dot Q$;

			      \item
			            and $\varepsilon(\ul{\dot Q}) = \varepsilon$;
		      \end{enumerate}

		\item otherwise,
		      there exists a \emph{unique} enhanced pointed quasi-irregular type $\ul{\dot Q}$ enhancing $\dot Q$.
	\end{enumerate}
\end{lemm}

\begin{proof}
	Analogous to the proof of Prop.~\ref{prop:form_type_D_irreg_classes}.
\end{proof}

\begin{coro}
	\label{cor:enhancements}

	Any pointed irregular type $\dot Q$ admits a \emph{unique} enhancement $\ul{\dot Q}$ which is an enhanced pointed irregular type.
\end{coro}

\begin{proof}[Proof postponed to~\ref{proof:cor_enhancements}]
\end{proof}

\subsubsection{}

With a view towards the full/nonpure case,
we also provide the analogous notions modulo the Weyl-group action:

\begin{defi}[cf.~Def.~\ref{def:enhanced_irr_types}]
	\label{def:enhanced_irr_classes}

	Given an integer $p \geq 1$,
	consider a linear combination
	\begin{equation}
		\label{eq:enhanced_quasi_irr_class}
		\ul{\wt\Theta}
		= \sum_{i = 1}^p m_i \cdot (\pm I_i, \varepsilon_i),
	\end{equation}
	where (as usual) $\pm I_1, \dc, \pm I_p$ are pairwise-distinct Stokes-circles-up-to-sign,
	$m_1,\dc,m_p > 0$ are integers,
	and $\varepsilon_1,\dc,\varepsilon_p \in \mb Z^{\ts}$ are signs.
	Then:
	\begin{enumerate}
		\item
		      the element~\eqref{eq:enhanced_quasi_irr_class} is an \emph{enhanced quasi-irregular class} if
		      \begin{equation}
			      \varepsilon_i
			      =
			      \begin{cases}
				      (-1)^{m_i}, & \quad \pm I_i \text{ is special},    \\
				      1,          & \quad \pm I_i \text{ is nonspecial};
			      \end{cases}
		      \end{equation}
		      (Again,
		      with \emph{no} constraint on $\varepsilon_i$ if $\pm I_i = \braket{0}$.)

		\item
		      the \emph{global sign of} $\ul{\wt\Theta}$ is
		      \begin{equation}
			      \varepsilon \bigl( \ul{\wt\Theta} \bigr)
			      \ceqq \prod_{i = 1}^p \varepsilon_i \in \mb Z^{\ts};
		      \end{equation}

		\item
		      an enhanced quasi-irregular class $\ul{\wt\Theta}$ is an \emph{enhanced pseudo-irregular class} if $\varepsilon \bigl( \ul{\wt\Theta} \bigr) = 1$;

		\item
		      and if $\ul{\wt\Theta}$ is an enhanced quasi/pseudo-irregular class,
		      then it is an \emph{enhancement} of the underlying quasi/pseudo-irregular class $\wt\Theta$---%
		      forgetting the signs.
	\end{enumerate}
\end{defi}

\subsubsection{}

Now an enhanced pointed quasi-irregular type $\ul{\dot Q}$ determines an enhanced quasi-irregular class $\ul{\wt\Theta} \ceqq \ul{\wt\Theta}(\ul{\dot Q})$,
in the natural way,
and the global signs match up.
In particular,
an enhanced pointed irregular type has a well-defined enhanced pseudo-irregular class.

\begin{rema}
	\label{rmk:enhancements_for_classes}

	As for pointed irregular types,
	a pseudo-irregular class $\wt\Theta$ admits a \emph{unique} enhancement $\ul{\wt\Theta}$ which is an enhanced pseudo-irregular class,
	cf.~Cor.~\ref{cor:enhancements}.

	Moreover,
	if $\dot Q$ is a pointed irregular type enhanced by $\ul{\dot Q}$,
	then the enhanced pseudo-irregular class of the latter is the unique enhancement of the pseudo-irregular class of the former.
	(And so the notation is consistent.)
\end{rema}

\subsubsection{}

Finally,
there is now a natural notion of deformations which behaves well under splitting into pieces:

\begin{defi}
	\label{def:enhanced_numerical_equivalence}

	Consider two enhanced pointed quasi-irregular types
	\begin{equation}
		\ul{\dot Q}
		= \bigl( (m_1, q_1,\varepsilon_1), \dc, (m_p, q_p, \varepsilon_p) \bigr),
		\qquad \ul{\dot Q}'
		= \bigl( (m'_1, q'_1,\varepsilon'_1), \dc, (m'_{p'}, q'_{p'}, \varepsilon'_{p'}) \bigr),
	\end{equation}
	containing $p,p' \geq 1$ exponential factors---%
	respectively.
	Then:
	\begin{enumerate}
		\item
		      $\ul{\dot Q}$ and $\ul{\dot Q}'$ are \emph{mutual}-$D$-\emph{admissible deformations},
		      which is (still) symbolized by $\ul{\dot Q} \sim_D \ul{\dot Q}'$,
		      if:
		      \begin{enumerate}
			      \item
			            $p = p'$;

			      \item
			            they have the same signs,
			            i.e.,
			            $\varepsilon_i = \varepsilon'_i$ for $i \in \set{1,\dc,p}$;

			      \item
			            and the underlying pointed irregular types $\dot Q$ and $\dot Q'$ are mutual $D$-admissible deformations,
			            in the sense of \S~\ref{sec:full_to_pointed_D};
		      \end{enumerate}

		\item
		      and two enhanced quasi-irregular classes $\ul{\wt\Theta}$ and $\ul{\wt\Theta}'$ are \emph{mutual} $D$-\emph{admissible deformations},
		      which is symbolized by $\ul{\wt\Theta} \sim_D \ul{\wt\Theta}'$,
		      if there exist pointed quasi-irregular types $\ul{\dot Q}$ and $\ul{\dot Q}'$ such that:
		      \begin{enumerate}
			      \item
			            $\ul{\wt\Theta} = \ul{\wt\Theta}(\ul{\dot Q})$ and $\ul{\wt\Theta}' = \ul{\wt\Theta}(\ul{\dot Q}')$;

			      \item
			            and $\ul{\dot Q} \sim_D \ul{\dot Q}'$;
		      \end{enumerate}
	\end{enumerate}
\end{defi}

\subsubsection{}

Now one can prove that enhanced $D$-admissible deformations preserve the subspace of (compatible) enhanced pointed irregular types,
viewed as particular (compatible) enhanced pointed quasi-irregular types,
generalizing the situations already encountered in types $A$ and $BC$.
Analogously,
enhanced pseudo-irregular classes are stable under enhanced $D$-admissible deformations.

Moreover,
albeit perhaps a priori seemingly too strong,
by using $D$-fission trees one can show that Def.~\ref{def:enhanced_numerical_equivalence} implies that:

\begin{lemm}
	\label{lem:equivalence_enhanced_non_enhanced}

	Two pointed irregular types (resp.,
	two pseudo-irregular classes) are mutual $D$-admissible deformations if and only if this holds for the corresponding \emph{enhanced} pointed irregular types (resp.,
	\emph{enhanced} pseudo-irregular classes),
	as per Cor.~\ref{cor:enhancements} (resp.,
	Rmk.~\ref{rmk:enhancements_for_classes}).
\end{lemm}

\begin{proof}[Proof postponed to~\ref{proof:lem_equivalence_enhanced_non_enhanced}]
\end{proof}

(Again beware that Exmp.~\ref{exmp:need_for_enhancement} implies that this is \emph{false} for pointed quasi-irregular types and quasi-irregular classes.)

\subsection{Configuration spaces}

As in \S~\ref{sec:type_bc_configuration_space},
we now (re)define spaces of admissible deformations/configurations.

\subsubsection{}

Let $\ul{\dot Q} = \bigl( (m_1,q_1,\varepsilon_1),\dc,(m_p,q_p,\varepsilon_p) \bigr)$ be a compatible enhanced pointed irregular type,
as in Def.~\ref{def:enhanced_irr_types},
with enhanced pseudo-irregular class $\ul{\wt\Theta} \ceqq \ul{\wt\Theta} \bigl( \ul{\dot Q} \bigr)$,
as in Def.~\ref{def:enhanced_irr_classes}.
Moreover,
let $\dot Q$ (resp.,
$\wt\Theta$) be the compatible pointed irregular type underlying $\ul{\dot Q}$ (resp.,
the pseudo-irregular class underlying $\ul{\wt\Theta}$).

Now keep the notation of~\eqref{eq:total_ramification}--\eqref{eq:katz_rank} for the total ramification $r = \on{ram}(\dot Q)$ and the Poincaré--Katz rank $K = \on{Katz}(\dot Q)$ of $\dot Q$---%
but using integer multiplicities.
Upon untwisting,
any enhanced $D$-admissible deformation of $\ul{\dot Q}$ has the form
\begin{equation}
	\label{eq:naivecoeffs_d}
	\ul{\dot Q}_{\bm a}
	= \bigl( (m_1,\wh q_{1,\bm a},\varepsilon_1), \dc,
	(m_p , \wh q_{p,\bm a}, \varepsilon_p) \bigr),
	\qquad \wh q_{i,\bm a}
	= \sum_{j = 1}^s a_{ij} w^{-j},
\end{equation}
for a \emph{unique} collection of numbers $\bm a = (a_{ij})_{ij} \in \mb C^{ps}$,
where as usual $s \ceqq rK$ and $w \ceqq z^{1 \slash r}$.
By Deff.~\ref{def:numerical_equivalence_d} +~\ref{def:enhanced_numerical_equivalence},
we pose the:

\begin{defi}[cf.~Def.~\ref{def:config_space_type_BC}]
	\label{def:config_space_type_D}

	In the notation of~\eqref{eq:naivecoeffs_d}:
	\begin{enumerate}
		\item
		      the $D$-\emph{deformation space of} $\ul{\dot Q}$ (resp.,
		      \emph{of} $\dot Q$) is the following topological space of $\mb C^{ps}$:
		      \begin{equation}
			      \label{eq:config_space_type_D_enhanced}
			      \bm B_D \bigl( \ul{\dot Q} \bigr)
			      = \bm B_{D,r}^{\leq s,p} \bigl( \ul{\dot Q} \bigr)
			      \ceqq  \Set{ \bm a \in \mb C^{ps} | \ul{\dot Q} \sim_D \ul{\dot Q}_{\bm a} }
		      \end{equation}
		      (resp.,
		      $\bm B_D(\dot Q) = \bm B_{D,r}^{\leq s,p} (\dot Q) \ceqq \set{ \bm a | \dot Q \sim_D \dot Q_{\bm a} }$,
		      where $\dot Q_{\bm a}$ is the pointed irregular type underlying~\eqref{eq:naivecoeffs_d});

		\item
		      and the $D$-\emph{deformation space of} $\ul{\wt\Theta}$ (resp.,
		      \emph{of} $\wt\Theta$) is the following topological quotient thereof:
		      \begin{equation}
			      \label{eq:config_space_type_D_enhanced_quotient}
			      \bm B_D \bigl( \ul{\wt\Theta} \bigr)
			      = \bm B_{D,r}^{\leq s,p} \bigl( \ul{\wt\Theta} \bigr)
			      \ceqq \Set{ \ul{\wt\Theta}_{\bm a} = \ul{\wt\Theta} \bigl( \ul{\dot Q}_{\bm a} \bigr) | \ul{\wt\Theta} \sim_D \ul{\wt\Theta}_{\bm a } }
		      \end{equation}
		      (resp.,
		      $\bm B_D \bigl( \wt\Theta \bigr) = \bm B_{D,r}^{\leq s,p} \bigl( \wt\Theta \bigr) \ceqq \set{ \wt\Theta_{\bm a} = \wt\Theta (\dot Q_{\bm a}) | \wt\Theta \sim_D \wt\Theta_{\bm a} }$).
	\end{enumerate}
\end{defi}

\begin{rema}
	\label{rmk:config_spaces_are_the_same_d}

	First,
	in view of Lem.~\ref{lem:equivalence_enhanced_non_enhanced},
	there are homeomorphisms
	\begin{equation}
		\label{eq:config_space_type_D}
		\bm B_{D,r}(\dot Q)
		\simeq \bm B_{D,r} \bigl( \ul{\dot Q} \bigr),
		\qquad \bm B_{D,r} \bigl( \wt\Theta \bigr) \simeq \bm B_{D,r} \bigl( \ul{\wt\Theta} \bigr).
	\end{equation}
	Thus,
	as far as the topology of admissible deformation spaces is concerned,
	one can work with either version.

	Second,
	in the pure case,
	the topological space $\bm B_{D,r}(\dot Q)$ also describes the admissible deformation of the $r$-Galois-closed untwisting $\wh Q$ of $\dot Q$ (cf.~Rmk.~\ref{rmk:config_spaces_are_the_same}).
	Compared to type $A$ and $BC$,
	the caveat is that in general \emph{not} all full irregular types are $D$-equivalent to pointed ones.
	Nonetheless,
	the latter holds up to the former involution in~\eqref{eq:sign_involution},
	and in turn the $D$-admissible deformation spaces of $Q$ and $-Q$ are mapped homeomorphically onto each other by the same involution:
	this just expresses the fact that $W_{BC}(m)$ acts on $\mf t \simeq \wt V_m$ by automorphisms of the root (sub)system $\Phi_D(m) = \Phi_B(m) \cap \Phi_C(m)$.
	Therefore,
	one last time,
	restricting to compatible pointed irregular types is w.l.o.g.

	Third,
	in the full/nonpure case,
	recall that in principle we are interested in the $D$-\emph{deformation space} $\bm B_{D,r}(\Theta)$ of a (non-pseudo) irregular class $\Theta$,
	which can also be formally introduced---%
	as per \S~\ref{sec:full_to_pointed_D}.
	But again,
	up to the latter involution of~\eqref{eq:sign_involution},
	which does not modify the underlying pseudo-irregular class $\wt\Theta$,
	nor the topology of the space of admissible deformations,
	there will be a (compatible) pointed irregular type representing $\Theta$;
	whence a homeomorphism $\bm B_{D,r}(\Theta) \simeq \bm B_{D,r} \bigl( \wt\Theta \bigr)$.
	(Cf.~again Rmk.~\ref{rmk:no_pointed_lift},
	and the end of~\ref{proof:prop_form_type_D_irreg_classes}.)

	Overall,
	we are thus still describing particular instances of the $r$-admissible deformation spaces of \S~\ref{sec:setup},
	but now covering the topology of \emph{all} the type-$D$ examples.
\end{rema}

\subsubsection{}

In view of Rmk.~\ref{rmk:config_spaces_are_the_same_d},
hereafter,
unless otherwise specified,
all pointed (quasi-)irregular types are \emph{compatible},
enhanced or not.
As in \S~\ref{sec:about_slopes},
the slopes that appear in Def.~\ref{def:numerical_equivalence_d} can be classified into interior/exterior ones.
We will (again) study them separately,
considering:
i) enhanced pointed irregular types having only one Stokes circle (cf.~\S~\ref{sec:one_circle_type_d});
ii) then two Stokes circles (cf.~\S~\ref{sec:two_circles_type_d});
and iii) the general case (cf.~\S\S~\ref{sec:general_case_type_d}--\ref{sec:weyl_group_tree_d}).

\subsection{One Stokes circle:
	(enhanced) level data and full branches}
\label{sec:one_circle_type_d}

As in type $BC$,
a pointed quasi-irregular type is \emph{elementary} if $p = 1$ in Def.~\ref{def:pointed_quasi_irr_types}.
With the integer-valued convention,
it can be written $\dot Q = (\ul m,q)$,
where $\ul m \in \mb Z_{> 0}$,
and $q$ is an exponential factor.
Its ramification is denoted by $r \ceqq \on{ram}(q)$,
and its rank (still) by $m$.\fn{
	Again,
	beware that with this convention $m \neq \ul m \cdot r$ for special Stokes circles,
	cf.~Rmk.~\ref{rmk:convention_integer_multiplicities_BC}.}
Analogously,
an \emph{elementary} enhanced pointed quasi-irregular type is of the form $\ul{\dot Q} = (\ul m,q,\varepsilon)$,
where in addition $\varepsilon \in \mb Z^{\ts}$ is a sign.

\begin{rema}[cf.~Cor.~\ref{cor:enhancements}]
	If $q \neq 0$,
	then $\dot Q$ has a unique elementary enhancement $\ul{\dot Q} = (\ul m,q,\varepsilon)$,
	with:
	i) $\varepsilon = 1$,
	if $q$ is nonspecial;
	and ii) $\varepsilon = (-1)^{\ul m}$,
	if $q$ is special.
	Conversely,
	$(\ul m, 0, \pm 1)$ are the elementary enhancements of $\dot Q = (\ul m,0)$.
\end{rema}

\begin{defi}[cf.~Def.~\ref{def:bc_level_data}]
	\label{def:type_d_level_data}

	Let $\dot Q = (\ul m,q)$ be an elementary pointed quasi-irregular type.
	Then:
	\begin{enumerate}
		\item
		      the \emph{naive} $D$-\emph{level datum of} $\dot Q$ is the set
		      \begin{equation}
			      \wt L_D(\dot Q)
			      = \wt L_D(\ul m,q)
			      \ceqq \Set{ \on{slope}(\wt q_i \pm \wt q_j) | i \neq j \in \set{1,\dc,m} } \sm \set{0},
		      \end{equation}
		      where $Q = (\wt q_1, \dc, \wt q_m, -\wt q_1, \dc, -\wt q_m)$ is the full quasi-irregular type corresponding to $\dot Q$;
		\item
		      in turn,
		      if $\ul{\dot Q} = (\ul m,q,\varepsilon)$ is an enhancement of $\dot Q$,
		      then its \emph{enhanced naive} $D$-\emph{level datum} is the pair
		      \begin{equation}
			      \wt L_D \bigl( \ul{\dot Q} \bigr)
			      = \wt L_D(\ul m,q,\varepsilon)
			      \ceqq \bigl( \, \wt L_D(\dot Q), \varepsilon \bigr).
		      \end{equation}
	\end{enumerate}
\end{defi}

\begin{lemm}
	\label{lem:type_d_level_data}

	Let $\dot Q = (\ul m,q)$ be an elementary pointed quasi-irregular type with nonzero exponential factor.
	Then:
	\begin{enumerate}
		\item
		      if $r = 1$ and $\ul m = 1$,
		      one has $\wt L_D(\dot Q) = \vn$;

		\item
		      if $r = 2$,
		      $q$ is special,
		      and $\ul m = 1$,
		      one has $\wt L_D(\dot Q) = \vn$;

		\item
		      otherwise:
		      \begin{enumerate}
			      \item
			            $\on{slope}(q) \in \wt L_D(\dot Q)$;

			      \item
			            and $\wt L_D(\dot Q) = L_{BC}(\dot Q) = L_{BC}(q) \neq \vn$ (regarding $\dot Q$ as a pointed $BC_m$-irregular type).
		      \end{enumerate}
	\end{enumerate}
\end{lemm}

\begin{proof}[Proof postponed to~\ref{proof:lem_type_d_level_data}]
\end{proof}

\begin{coro}
	\label{cor:when_level_datum_empty_type_D}

	Let $\dot Q = (\ul m,q)$ be an elementary pointed quasi-irregular type.
	Then $\wt L_D(Q) = \vn$ if and only if we are in one of the following (mutually-exclusive) situations:
	\begin{enumerate}
		\item[(1)] $\ul m = 1$,
		      and:
		      \begin{enumerate}
			      \item $q \neq 0$ is untwisted;

			      \item $q \neq 0$ is special,
			            and $r = 2$;

			      \item or $q = 0$;
		      \end{enumerate}
		\item[(2)] or $\ul m \geq 2$ and $q = 0$.
	\end{enumerate}
\end{coro}

\begin{proof}
	This follows from Lem.~\ref{lem:type_d_level_data},
	and from the fact that for any exponential factor $q$ we have $L_{BC}(q) = \vn$ if and only if $q = 0$.
\end{proof}

\subsubsection{}

In brief,
apart from two exceptions,
the naive $D$-level datum coincides with that of type $BC$.
However,
the enhanced deformations are quite different in the cases $(1)$ and $(2)$ of Cor.~\ref{cor:when_level_datum_empty_type_D}.
Namely:
\begin{enumerate}
	\item two enhanced pointed irregular types $\ul{\dot Q} = (1, q, \varepsilon)$ and $\ul{\dot Q}' = (1, q', \varepsilon')$,
	      whose underlying (non-enhanced) pointed irregular types $\dot Q$ and $\dot Q'$ fall in case $(1)$,
	      are mutual $D$-admissible deformations if and only if $\varepsilon = \varepsilon'$;
	      in particular:
	      \begin{enumerate}
		      \item
		            if $\dot Q$ falls in subcase $(1.a)$,
		            then its enhancement $\ul{\dot Q}$ has positive sign,
		            so that $\ul{\dot Q} \sim_D (1, 0, 1)$;

		      \item
		            and if $\dot Q$ falls in case $(1.b)$,
		            then its enhancement $\ul{\dot Q}$ has negative sign,
		            so that $\ul{\dot Q} \sim_D (1, 0, -1)$.
	      \end{enumerate}
	\item conversely,
	      if $\ul m \geq 2$ and $q \neq 0$ then $(\ul m,q, \varepsilon')$ is \emph{not} a $D$-admissible deformations of $(\ul m,0, \varepsilon)$.
	      (I.e.,
	      $\ul{\dot Q} \ceqq (\ul m,0, \varepsilon)$ has no nontrivial deformations.)
\end{enumerate}

Compared to \S~\ref{sec:bc_one_circle},
the multiplicities now play a role in determining the levels,
which is reminiscent of the fact that one \emph{cannot} reduce any type-$D$ example to a quasi-generic one (cf.~\S~\ref{sec:type_D}).
In view of this,
we overwrite particular cases of Def.~\ref{def:type_d_level_data},
so that enhanced level data will characterize enhanced $D$-admissible deformations.

Let us therefore introduce two symbols $\vn_D$ and $\vn_{BC}$,
which can be regarded as two different `flavours' of empty sets,
and pose the following:

\begin{defi}
	\leavevmode

	\begin{enumerate}
		\item
		      Let $\dot Q = (\ul m,q)$ be an elementary pointed quasi-irregular type.
		      Its $D$-\emph{level datum} is defined as follows:
		      \begin{enumerate}
			      \item
			            if $\dot Q$ falls in case $(1)$ of Cor.~\ref{cor:when_level_datum_empty_type_D},
			            set $L_D(\dot Q) = L_D(\ul m,q) \ceqq \vn_D$;

			      \item
			            if $\dot Q$ falls in case $(2)$ of Cor.~\ref{cor:when_level_datum_empty_type_D},
			            set $L_D(\dot Q) = L_D(\ul m,q) \ceqq \vn_{BC}$;

			      \item
			            otherwise,
			            let $L_D(\dot Q) \ceqq \wt L(\dot Q)$.
		      \end{enumerate}
		\item
		      Let $\ul{\dot Q} = (\ul m, q, \varepsilon)$ be an enhanced pointed quasi-irregular type.
		      Its \emph{enhanced} $D$-\emph{level datum} $L_D \bigl( \ul{\dot Q} \bigr)$ is defined as follows:
		      \begin{enumerate}
			      \item
			            if $L_D(\ul m,q) = \vn_D$,
			            set $L_D \bigl( \ul{\dot Q} \bigr) = L_D(\ul m,q,\varepsilon) \ceqq \bigl( \vn_D,\varepsilon \bigr)$;

			      \item
			            and if $L_D(\ul m,q) = \vn_{BC}$,
			            set $L_D \bigl( \ul{\dot Q} \bigr) = L_D(\ul m,q,\varepsilon) \ceqq \bigl( \vn_{BC} ,\varepsilon \bigr)$;

			      \item
			            otherwise,
			            let $L_D \bigl( \ul{\dot Q} \bigr) \ceqq \wt L \bigl( \ul{\dot Q} \bigr)$.
		      \end{enumerate}
	\end{enumerate}
\end{defi}

(Note that $L_D \bigl( \ul{\dot Q} \bigr)$ only depends on the triple $(\ul m, \pm I, \varepsilon)$, where $\pm I=\braket{ \pm q}$;
hereafter,
we shall \emph{never} use the naive versions.)

\begin{prop}[cf.~Prop.~\ref{prop:numerical_equiv_from_level_data}]
	\label{prop:numerical_equiv_from_level_data_D}

	Let $\ul{\dot Q} = (\ul m,q,\varepsilon)$ and $\ul{\dot Q}' = (\ul m',q', \varepsilon')$ be two elementary enhanced pointed quasi-irregular types.
	Then $\ul{\dot Q} \sim_D \ul{\dot Q}'$ if and only if:
	\begin{enumerate}
		\item
		      $\ul m = \ul m'$;

		\item
		      and $L_D \bigl( \ul{\dot Q} \bigr) = L_D \bigl( \ul{\dot Q}' \bigr)$.
	\end{enumerate}
\end{prop}

\begin{proof}[Proof omitted]
\end{proof}

\subsubsection{}
\label{sec:enhanced_level_data}

Now,
by definition,
a $D$-\emph{level datum} $\bm L$ is either equal to $\vn_D$,
or to $\vn_{BC}$,
or it is a nonempty subset of $\mb Q_{>0}$ such that there exists an elementary pointed quasi-irregular type $\dot Q = (\ul m,q)$ satisfying $L_D(\dot Q) = \bm L$.
If $\bm L \neq \vn_D$,
then the \emph{ramification of} $\bm L$ is (well-)defined by $\on{ram}(\bm L) \ceqq \on{ram}(q)$
(e.g.,
$\on{ram}(\vn_{BC}) = 1$);
else,
we let $\on{ram}(\bm L) \ceqq 1$.
In turn,
an \emph{enhanced} $D$-\emph{level datum} is a pair $\ul{\bm L} = (\bm L,\varepsilon)$,
consisting of a $D$-level datum $\bm L$ and a sign $\varepsilon \in \mb Z^{\ts}$;
i.e.,
equivalently,
$\ul{\bm L}$ is the enhanced $D$-level datum of an enhanced elementary pointed quasi-irregular type $\ul{\dot Q} = (\ul m,q,\varepsilon)$.

\begin{defi}
	Let $\ul{\bm L} \ceqq (\bm L,\varepsilon)$ be an enhanced $D$-level datum.
	Then:
	\begin{enumerate}
		\item
		      the set $\on{Adm}_D(\ul{\bm L})$ of $D$-\emph{admissible exponents of} $\ul{\bm L}$ is the set of all positive rational numbers $k > 0$ such that there exists an elementary enhanced pointed quasi-irregular type $\ul{\dot Q} = (\ul m, q, \varepsilon)$ satisfying:
		      \begin{enumerate}
			      \item
			            $L_D \bigl( \ul{\dot Q} \bigr) = \ul{\bm L}$;

			      \item
			            and $k\in E(q)$;
		      \end{enumerate}

		\item and the set of $D$-\emph{inconsequential exponents of} $\ul{\bm L}$ is defined by:
		      \begin{enumerate}
			      \item
			            the complement
			            \begin{equation}
				            \on{Inc}_D(\ul{\bm L})
				            \ceqq \on{Adm}_D(\ul{\bm L}) \sm \bm L \sse \mb Q_{> 0},
			            \end{equation}
			            if $\bm L \neq \vn_D$ and $\bm L \neq \vn _{BC}$;

			      \item
			            and $\on{Inc}_D(\ul{\bm L}) \ceqq \on{Adm}_D(\ul{\bm L})$,
			            otherwise.
		      \end{enumerate}
	\end{enumerate}
\end{defi}

\begin{lemm}
	\label{lem:admissible_incons_exponents_type_D}

	Let $\ul{\bm L} = (\bm L, \varepsilon)$ be an enhanced $D$-level datum.
	Then:
	\begin{enumerate}
		\item
		      if $\bm L = \vn_D$,
		      one has
		      \begin{equation}
			      \on{Adm}_D(\ul{\bm L})
			      = \on{Inc}_D(\ul{\bm L})
			      =
			      \begin{cases}
				      \mb Z_{> 0},                & \quad \varepsilon = 1;  \\
				      \frac 1 2 + \mb Z_{\geq 0}, & \quad \varepsilon = -1;
			      \end{cases}
		      \end{equation}

		\item
		      if $\bm L = \vn_{BC}$,
		      one has $\on{Adm}_D(\ul{\bm L}) = \on{Inc}_D(\ul{\bm L}) = \vn$;

		\item
		      otherwise,
		      one has $\on{Adm}_D(\ul{\bm L}) = \on{Adm}_{BC}(\bm L)$ and $\on{Inc}_D(\ul{\bm L}) = \on{Inc}_{BC}(\bm L)$.
	\end{enumerate}
\end{lemm}

\begin{proof}
	For $\bm L = \vn_D$,
	the cases $\varepsilon = 1$ and $\varepsilon = -1$ correspond (respectively) to subcases $(1.a)$ and $(1.b)$ of Cor.~\ref{cor:when_level_datum_empty_type_D},
	and the other statements follow from the previous discussion.
\end{proof}

\subsubsection{}

Note that Cor.~\ref{cor:elementary_BC_space} does \emph{not} extend verbatim when involving the `half-empty' $D$-level datum $\bm L = \vn_D$,
as one needs to truncate the admissible exponents in order to bound the Poincaré--Katz rank of admissible deformations (analogously to type $A$).
Rather then stating the exact analogue,
we move on towards the definition of $D$-fission trees,
which will treat the general case:

\begin{defi}[cf.~Def.~\ref{def:full_branch_of_level_datum_type_BC}]
	\label{def:type_d_branch}

	Let $\ul{\bm L} = (\bm L, \varepsilon)$ be a enhanced $D$-level datum.
	The \emph{full} $D$-\emph{branch of} $\ul{\bm L}$ is the quadruple $\mc B_{\ul{\bm L}} = (\mc B,\mb V, \mb A,\mb L)$,
	where $\mc B$ is a (totally-ordered) copy of $\ol{\mb R}_{\geq 0}$,
	and $\mc B \supseteq \mb V \supseteq \mb A \supseteq \mb L$ are subsets,
	whose elements are called \emph{vertices},
	\emph{admissible vertices},
	and \emph{mandatory vertices},
	respectively,
	defined as follows:
	\begin{enumerate}
		\item if $\bm L \neq \vn_D$,
		      then $(\mc B,\mb A, \mb L)$ is the full $BC$-branch of $\bm L$,
		      and $\mb V \ceqq \mb A$;

		\item otherwise,
		      using the given \emph{height function} $h \cl \mc B \lxra{\simeq} \ol{\mb R}_{\geq 0}$,
		      let:
		      \begin{enumerate}
			      \item
			            $\mb L \ceqq \vn$;

			      \item $\mb A \ceqq h^{-1} \bigl( \on{Inc}_D(\ul{\bm L}) \bigr)$;

			      \item and $\mb V \ceqq \mb A$.
		      \end{enumerate}
	\end{enumerate}
	The set $\mb I$ of \emph{inconsequential vertices},
	the \emph{root} $v_\infty$,
	and the \emph{leaf} $v_0$,
	are as in type $BC$.
\end{defi}

\begin{rema}
	Later on,
	when gluing,
	we will define slightly modified full branches associated to the level datum $\vn_D$ (cf.~Def.~\ref{def:hybrid_full_branches_type_D}):
	the set $\mb V$ will be \emph{different} from $\mb A$,
	and in order to ensure uniform notation it is thus convenient to include $\mb V$ in the data of Def.~\ref{def:type_d_branch} (although it plays no role therein.)
\end{rema}

\subsubsection{}

We still associate a \emph{type} to each edge of a full $D$-branch.
If $\ul{\bm L} \neq (\vn_D, \varepsilon)$,
these are as in type $BC$,
while the edges of $\mc B_{\vn_D}$ are the subsets
\begin{equation}
	e_k
	\ceqq h^{-1} \bigl( (k, k+1) \bigr) \sse \mc B,
	\qquad k\in \on{Inc}_D(\ul{\bm L}).
\end{equation}
In the latter situation,
each edge is defined to be of type $t(e_k) \ceqq HE$ (as in `\emph{half-empty}').

\begin{exem}
	The full $D$-branches which \emph{do} not appear in type $BC$ are drawn in Fig.~\ref{fig:new_type_D_full_branches}.
	(The half-empty edges are depicted by loosely dotted lines.)
\end{exem}

\begin{figure}
	\begin{center}
		\begin{tikzpicture}
			\tikzstyle{authorized}=[circle,fill=white,minimum size=5pt,draw, inner sep=0pt]
			\tikzstyle{mandatory}=[circle,fill=black,minimum size=5pt,draw,inner sep=0pt]
			\tikzstyle{empty}=[circle,fill=black,minimum size=0pt,inner sep=0pt]
			\tikzstyle{root}=[fill=black,minimum size=5pt,draw,inner sep=0pt]
			\tikzstyle{indeterminate}=[circle,densely dotted,fill=white,minimum size=5pt,draw, inner sep=0pt]
			\begin{scope}
				\node[root] (R) at (0,6.5){};
				\node (X) at (0,5.5){$\vdots$};
				\node (Y) at (0,5){};
				\node[authorized] (A) at (0,4){};
				\node[authorized] (B) at (0,3){};
				\node[authorized] (C) at (0,2){};
				\node (D) at (0,1){};
				\draw[thick, loosely dotted] (R)--(X);
				\draw[thick, loosely dotted] (Y)--(A);
				\draw[thick, loosely dotted] (A)--(B);
				\draw[thick, loosely dotted] (B)--(C);
				\draw[thick, loosely dotted] (C)--(D);
				\draw (-2, 6.5) node {$\infty$};
				\draw (-2, 5) node {$4$};
				\draw (-2, 4) node {$3$};
				\draw (-2, 3) node {$2$};
				\draw (-2, 2) node {$1$};
				\draw (0, 0.5) node {$\mc B_{(\vn_D, 1)}$};
			\end{scope}
			\begin{scope}[xshift=6cm]
				\node[root] (R) at (0,6.5){};
				\node (X) at (0,5){$\vdots$};
				\node (Y) at (0,4.5){};
				\node[authorized] (A) at (0,3.5){};
				\node[authorized] (B) at (0,2.5){};
				\node[authorized] (C) at (0,1.5){};
				\node (D) at (0,1){};
				\draw[thick, loosely dotted] (R)--(X);
				\draw[thick, loosely dotted] (Y)--(A);
				\draw[thick, loosely dotted] (A)--(B);
				\draw[thick, loosely dotted] (B)--(C);
				\draw[thick, loosely dotted] (C)--(D);
				\draw (-2, 6.5) node {$\infty$};
				\draw (-2, 4.5) node {$9/2$};
				\draw (-2, 3.5) node {$7/2$};
				\draw (-2, 2.5) node {$3/2$};
				\draw (-2, 1.5) node {$1/2$};
				\draw (0, 0.5) node {$\mc B_{(\vn_D, -1)}$};
			\end{scope}
		\end{tikzpicture}
		\caption{Full $D$-branches with level datum $\vn_D$.}
		\label{fig:new_type_D_full_branches}
	\end{center}
\end{figure}

\subsection{Two Stokes circles:
	fission exponents}
\label{sec:two_circles_type_d}

Throughout this section,
consider an enhanced pointed quasi-irregular type with two active exponential factors,
i.e.,
of the form
\begin{equation}
	\ul{\dot Q}
	= \bigl( (\ul m,q, \varepsilon), (\wt{\ul m}, \wt q, \wt\varepsilon \,) \bigr),
\end{equation}
with underlying pointed quasi-irregular type $\dot Q = \bigl( (\ul m,q),(\wt{\ul m},\wt q) \bigr)$.
Set $r \ceqq \on{ram}(q)$ and $\wt r \ceqq \on{ram}(\wt q)$.
Then consider the common/different parts of $q$ and $\wt q$,
denoted by $q_c = \wt q_c$,
$q_d$,
and $\wt q_d$,
as in Def.~\ref{def:common_different_part}.
The $D$-\emph{fission exponent of} $q$ \emph{and} $\wt q$ is (also) defined as the number
\begin{equation}
	\label{eq:type_d_fission_exponent}
	f_{q,\wt q}
	\ceqq \max \Set{ \on{slope}(q_d), \on{slope}(\wt q_d) } \in \mb Q_{> 0}.
\end{equation}
As in type $BC$,
this only depends on the underlying Stokes-circles-up-to-sign $\pm I \ceqq \braket{\pm q}$ and $\pm \wt I \ceqq \braket{ \pm\wt q}$.
(Beware however that the common/different parts in type $D$ are \emph{not} the same as in type $BC$,
and so this is not quite the same definition.)

\begin{lemm}
	\label{lem:slopes_differences_two_circles_type_D}

	\leavevmode

	\begin{enumerate}
		\item
		      The set of exterior slopes,
		      governing the enhanced admissible deformations of $\ul{\dot Q}$ (as per Def.~\ref{def:enhanced_numerical_equivalence}),
		      is given by
		      \begin{equation}
			      \Set{ \on{slope} \bigl( q^{(i)} \pm \wt q^{(j)} \bigr) | (i,j) \in \mb Z \bs r\mb Z \ts \mb Z \bs \tilde r \mb Z }
			      = L_{BC}(q_c) \cup \set{f_{q,\wt q}} \sse \mb Q_{>0}.
		      \end{equation}

		\item
		      Furthermore,
		      the maps $(i,j) \mt \on{slope} \bigl( q^{(i)} \pm \wt q^{(j)} \bigr)$ are determined by $L_{BC}(q_c)$ and $f_{q,\wt q}$.
	\end{enumerate}
\end{lemm}

\begin{proof}
	This follows from Lemm.~\ref{lem:exterior_slopes_two_circles_bc} +~\ref{lem:type_d_level_data} (and it uses the fact that $\dot Q$ is compatible).
	Indeed,
	the truncated pointed quasi-irregular type $\dot Q_c \ceqq \tau_k(\dot Q)$,
	defined in the natural way,
	is elementary and of multiplicity $m_c \geq 2$,
	so that $L_D(\dot Q_c) = L_{BC}(q_c)$.
\end{proof}

\begin{prop}[cf.~Prop.~\ref{prop:characterisation_fission_exp_BC}]
	\label{prop:characterisation_admissible_def_two_circles_type_D}

	Let $k \ceqq f_{q,\wt q}$ be the fission exponent of $q$ and $\wt q$.
	Then a \emph{compatible} enhanced pointed quasi-irregular type $\ul{\dot Q}'$ satisfies $\ul{\dot Q}' \sim_D \ul{\dot Q}$ if and only if it is of the form $\ul{\dot Q}' = \bigl( (\ul m,q', \varepsilon'),(\wt{\ul m}, \wt q', \wt\varepsilon') \bigr)$,
	for two exponential factors $q'$ and $\wt q'$ with common part $q'_c = \wt q'_c$,
	such that:
	\begin{enumerate}
		\item $L_D(\ul m,q, \varepsilon) = L_D (\ul m,q', \varepsilon')$ and $L_D(\wt{\ul m},\wt q, \wt\varepsilon) = L_D(\wt{\ul m},\wt q', \wt\varepsilon')$;

		\item $L_D(\ul m + \wt{\ul m},q_c) = L_D (\ul m + \wt{\ul m},q'_c)$;

		\item and $f_{q',\wt q'} = k$.
	\end{enumerate}
	(The second condition is equivalent to having $L_{BC}(q_c) = L_{BC}(q'_c)$.)
\end{prop}

\begin{proof}[Proof omitted]
\end{proof}

\begin{prop}
	\label{prop:types_of_fission_D}

	In the notation of~\eqref{eq:types_of_fission_BC}:
	\begin{enumerate}
		\item
		      if $q_c = 0$ and $k \in \frac 1 2 \mb Z_{> 0}$,
		      then $f_{q,\wt q} = k$ if and only if $a \neq \pm\wt a$;

		\item else,
		      the condition for having $f_{q,\wt q} = k$ is the same as in type $BC$.
	\end{enumerate}
\end{prop}

\begin{proof}
	The former follows from the definition~\eqref{eq:type_d_fission_exponent},
	and the latter from Prop.~\ref{prop:characterisation_admissible_def_two_circles_type_D}---%
	which reduces to type $BC$.
\end{proof}

\begin{lemm}[cf.~Prop.~\ref{prop:types_of_fission_BC}]
	In the setting of Prop.~\ref{prop:types_of_fission_D}~(2.),
	setting $\ul{\bm L} = (\bm L, \varepsilon) \ceqq L_D(m, q, \varepsilon)$,
	the fission exponent $k \ceqq f_{q,\wt q}$ satisfies the following:
	\begin{enumerate}
		\item if $\bm L \neq \vn_D$ and $a\neq 0$,
		      then $k$ is mandatory for $\ul{\bm L}$;

		\item
		      if $\bm L\neq \vn_D$ and $a = 0$,
		      then $k$ is \emph{not} admissible for $\ul{\bm L}$;

		\item
		      and if $\bm L = \vn_D$,
		      then $k$ is inconsequential for $\ul{\bm L}$.
	\end{enumerate}
	(And analogously for the enhanced level datum $\bigl( \, \wt{\bm L}, \wt\varepsilon \, \bigr) = L_D(\wt m, \wt q, \wt\varepsilon)$.)
\end{lemm}

\begin{proof}
	This follows from Lem.~\ref{lem:admissible_incons_exponents_type_D}.
\end{proof}

\subsubsection{}

Again we can pictorially represent the various types of fission for two Stokes circles.
With a view towards $D$-fission trees,
and with the above notations/conventions,
we focus on the examples where $q_c = 0$ and $k \in \frac 1 2 \mb Z_{> 0}$,
cf.~Fig.~\ref{fig:new_types_of_fission_2_circles_type_D}.
Some cases are already present in type $BC$,
when the level datum $\vn_D$ is not involved;
and then there are three new possibilities.
In the first of these (in the middle of the figure) one has $a = 0$,
so the condition $f_{q, \wt q} = k$ forces $\wt a \neq 0$,
although $k$ is \emph{not} a mandatory exponent for $\wt q$:
we thus say that $k$ is \emph{indirectly mandatory for} $\wt q$.
(Note also that the fourth case of Fig.~\ref{fig:new_types_of_fission_2_circles_type_D},
from the left,
involves two nonempty sibling-vertices of different types,
a unique feature in type $D$.)

\begin{figure}
	\begin{center}
		\begin{tikzpicture}[scale=0.95]
			\tikzstyle{mandatory}=[circle,fill=black,minimum size=6pt,draw, inner sep=0pt]
			\tikzstyle{authorised}=[circle,fill=white,minimum size=6pt,draw,inner sep=0pt]
			\tikzstyle{empty}=[circle,fill=white,minimum size=0pt,inner sep=0pt]
			\begin{scope}
				\node[empty] (B1) at (1,0){};
				\node[mandatory] (B2) at (2,0){};
				\node[empty] (A) at (1.5,1){};
				\draw[thick, dotted] (A)--(B1);
				\draw[thick, dotted]  (A)--(B2);
				\draw (0.9,-0.5) node {\scriptsize $\bm L\neq\vn_D$};
				\draw (2.1,-0.46) node {\scriptsize $\wt{\bm L}\neq\vn_D$};
			\end{scope}
			\begin{scope}[xshift=2.7cm]
				\node[mandatory] (B1) at (1,0){};
				\node[mandatory] (B2) at (2,0){};
				\node[empty] (A) at (1.5,1){};
				\draw[thick, dotted] (A)--(B1);
				\draw[thick, dotted]  (A)--(B2);
				\draw (0.9,-0.5) node {\scriptsize $\bm L\neq\vn_D$};
				\draw (2.1,-0.46) node {\scriptsize $\wt{\bm L}\neq\vn_D$};
			\end{scope}
			\begin{scope}[xshift=5.4cm]
				\node[empty] (B1) at (1,0){};
				\node[authorised] (B2) at (2,0){};
				\node[empty] (A) at (1.5,1){};
				\draw[thick, dotted] (A)--(B1);
				\draw[thick, loosely dotted]  (A)--(B2);
				\draw (0.9,-0.5) node {\scriptsize $\bm L\neq\vn_D$};
				\draw (2.1,-0.46) node {\scriptsize $\wt{\bm L}=\vn_D$};
			\end{scope}
			\begin{scope}[xshift=8.1cm]
				\node[mandatory] (B1) at (1,0){};
				\node[authorised] (B2) at (2,0){};
				\node[empty] (A) at (1.5,1){};
				\draw[thick, dotted] (A)--(B1);
				\draw[thick, loosely dotted]  (A)--(B2);
				\draw (0.9,-0.5) node {\scriptsize $\bm L\neq\vn_D$};
				\draw (2.1,-0.46) node {\scriptsize $\wt{\bm L}=\vn_D$};
			\end{scope}
			\begin{scope}[xshift=10.8cm]
				\node[authorised] (B1) at (1,0){};
				\node[authorised] (B2) at (2,0){};
				\node[empty] (A) at (1.5,1){};
				\draw[thick, loosely dotted] (A)--(B1);
				\draw[thick, loosely dotted]  (A)--(B2);
				\draw (0.9,-0.5) node {\scriptsize $\bm L=\vn_D$};
				\draw (2.1,-0.46) node {\scriptsize $\wt{\bm L}=\vn_D$};
			\end{scope}
		\end{tikzpicture}
		\caption{$D$-fission of two Stokes circles with vanishing common part and integer/half-integer fission exponent.
			(The three rightmost cases are specific to type $D$.)}
		\label{fig:new_types_of_fission_2_circles_type_D}
	\end{center}
\end{figure}

\subsection{General case:
	enhanced fission data}
\label{sec:general_case_type_d}

It is now convenient to (only) introduce versions of fission data involving an explicit choice of signs.
Namely:

\begin{defi}[cf.~Def.~\ref{def:fission_datum_BC}]
	Let $p \geq 1$ be an integer.
	An \emph{enhanced} $D$-\emph{fission datum} is a pair $\mc F =(\mc L,f)$,
	consisting of:
	\begin{enumerate}
		\item
		      a multiset
		      \begin{equation}
			      \mc L
			      = \sum_{i = 1}^p (m_i,\ul{\bm L}_i),
		      \end{equation}
		      of (possibly nondistinct) pairs $(m_i,\ul{\bm L}_i)$,
		      where in turn---%
		      for $i \in \set{1,\dc,p}$:
		      \begin{enumerate}
			      \item
			            $\ul{\bm L}_i$ is an enhanced $D$-level datum;

			      \item
			            and $m_i > 0$ is an integer;
		      \end{enumerate}
		\item
		      and a \emph{list of fission exponents} $f$,
		      i.e.,
		      (again) the choice of a rational number $f_{ij} = f_{ji} \geq 0$,
		      for $i,j \in \set{1,\dc,p}$.
	\end{enumerate}
\end{defi}

\subsubsection{}
\label{sec:fission_datum_from_pseudo_irr_class}

Similarly to \S~\ref{sec:fission_datum_from_irr_class_bc},
to any enhanced pseudo-irregular class one can associate an enhanced $D$-fission datum,
but in a subtler way (as Lem.~\ref{lem:slopes_differences_two_circles_type_D} involves the $BC$-level datum of the common part,
rather than the type-$D$ one).

Namely,
with the usual notation,
consider an enhanced pseudo-irregular class
\begin{equation}
	\ul{\wt\Theta} = \sum_{i = 1}^p m_i \cdot (\pm I_i, \varepsilon_i).
\end{equation}
For $i \in \set{1, \dc, p}$ define an enhanced level datum $\ul{\bm L}_i = \ul{\bm L}_i \bigl( \ul{\wt\Theta} \bigr)$,
as follows.
Write $\pm I_i = \braket{\pm q_i}$ for suitable exponential factors $q_1,\dc,q_p$,
and then:
\begin{enumerate}
	\item if $L_D(m_i,q_i) \neq \vn_D$,
	      set $\ul{\bm L}_i \ceqq L_D(m_i,q_i, \varepsilon_i) = \bigl( L_{BC}(q_i), \varepsilon_i \bigr)$;

	\item otherwise:
	      \begin{enumerate}
		      \item
		            if the Stokes-circle-up-to-sign $\pm I_i$ and $\pm I_j$ have vanishing common part,
		            for $i \neq j \in \set{1, \dc, p}$,
		            set $\ul{\bm L}_i \ceqq (\vn_D, \varepsilon_i)$;

		      \item
		            else,
		            if there exists $j \neq i \in \set{1,\dc,p}$ such that $\pm I_i$ and $\pm I_j$ have nonzero common part,
		            set
		            \begin{equation}
			            \ul{\bm L}_i
			            = (\bm L_i, \varepsilon_i),
			            \qquad \bm L_i \ceqq L_{BC}(q_i)
			            = \set{ \on{slope}(q_i)}.
		            \end{equation}
	      \end{enumerate}
\end{enumerate}
Then let
\begin{equation}
	\mc L \bigl( \ul{\wt\Theta} \bigr) \ceqq (m_1,\ul{\bm L}_1) + \dm + (m_p,\ul{\bm L}_p),
\end{equation}
and
\begin{equation}
	f_{ij}
	= f \bigl( \ul{\wt\Theta} \bigr)_{ij} \ceqq f_{\pm I_i,\pm I_j} \in \mb Q_{\geq 0},
	\qquad i,j \in \set{1,\dc,p},
\end{equation}
with the convention $f_{\pm I_i,\pm I_i} \ceqq 0$.

In turn,
if $\wt\Theta$ is a pseudo-irregular class,
we let $\mc F \bigl( \wt\Theta \bigr) \ceqq \mc F \bigl( \ul{\wt\Theta} \bigr)$ be its associated \emph{enhanced} $D$-fission datum,
considering the corresponding enhanced pseudo-irregular class $\ul{\wt\Theta}$ (cf.~Rmk.~\ref{rmk:enhancements_for_classes}.)
Finally,
if $\Theta$ is a (non-pseudo) irregular class,
its fission datum is---%
by definition---%
that of the underlying pseudo-irregular class $\wt\Theta$ (cf. Prop.~\ref{prop:form_type_D_irreg_classes}).

\subsubsection{}

Analogously to Def.~\ref{def:labelled_fission_datum_bc},
a \emph{labelled enhanced} $D$-\emph{fission datum} is a pair $\dot{\mc F} = \bigl( \dot{\mc L},f \bigr)$,
consisting of an ordered list (with allowed repetitions)
\begin{equation}
	\dot{\mc L}
	= \bigl( (m_1,\ul{\bm L}_1),\dc,(m_p,\ul{\bm L}_p) \bigr),
\end{equation}
for some integer $p \geq 1$,
where:
(i) $\ul{\bm L}_1, \dc \ul{\bm L}_p$ are enhanced $D$-level data;
(ii) $m_i > 0$ are integer multiplicities;
and (iii) $f$ is a list of fission exponents $f_{ij} = f_{ji} \in \mb Q_{\geq 0}$,
for $i,j \in \set{1, \dc, p}$,
such that $f_{ij}=0$ if and only if $i = j$.
Again,
a labelled enhanced $D$-fission datum determines an (unlabelled) enhanced $D$-fission datum by forgetting the ordering of the pairs $(m_i, \ul{\bm L}_i)$.

Furthermore,
an enhanced pointed irregular type $\ul{\dot Q}$ naturally determines a labelled enhanced $D$-fission datum $\dot{\mc F}\bigl( \ul{\dot Q} \bigr) = \bigl( \dot{\mc L}\bigl( \ul{\dot Q} \bigr),f \bigl( \ul{\dot Q} \bigr) \bigr)$.
In turn,
as in the full/nonpure case,
if $\dot Q$ is a pointed irregular type then it determines a labelled \emph{enhanced} $D$-fission datum via $\dot{\mc F} (\dot Q) \ceqq \dot{\mc F} \bigl( \ul{\dot Q} \bigr)$,
considering its unique pointed enhancement $\ul{\dot Q}$.

Finally,
if $\wt\Theta = \wt\Theta(\dot Q)$ is the pseudo-irregular class of a pointed irregular type $\dot Q$,
then $\mc F \bigl( \wt\Theta \bigr) = \mc F \bigl( \ul{\wt\Theta} \bigr)$ is obtained from $\dot{\mc F}(\dot Q) = \dot{\mc F} \bigl( \ul{\dot Q} \bigr)$ by forgetting the labels,
and one can prove the following important:

\begin{theo}[cf.~Thm.~\ref{thm:admissible_deformation_iff_same_fission_data_type_BC}]
	\label{thm:admissible_deformation_iff_same_fission_data_type_D}

	Two enhanced pseudo-irregular classes (resp.,
	two enhanced pointed irregular types) are mutual $D$-admissible deformations if and only if they determine the same enhanced fission datum (resp.
	the same labelled enhanced fission datum).
\end{theo}

\begin{proof}
	Use Prop.~\ref{prop:characterisation_admissible_def_two_circles_type_D}.
\end{proof}

(Lem.~\ref{lem:equivalence_enhanced_non_enhanced} then yields the same statement for pseudo-irregular classes and pointed irregular types,
prior to enhancements.)

\subsection{General case:
	hybrid branches}

To define $D$-fission trees,
one needs to associate a different kind of branch to the enhanced $D$-fission data of the form $(\vn_D, \pm 1)$.
Moreover,
when such branches are involved,
the gluing is \emph{not} as straightforward---%
as for the other types.
The upshot is the following:

\begin{defi}[cf.~Def.~\ref{def:type_d_branch}]
	\label{def:hybrid_full_branches_type_D}

	Let $k > 0$ be a rational number and $\varepsilon \in \mb Z^{\ts}$ a sign.
	Then:
	\begin{enumerate}
		\item
		      the \emph{hybrid full} $D$-\emph{branch of} $(k,\varepsilon)$ is the quadruple $\mc B_{(\vn_D, \varepsilon)}(k) = (\mc B,\mb V,\mb A, \mb L)$,
		      where---%
		      again---%
		      $\mc B$ is a copy of $\ol{\mb R}_{\geq 0}$,
		      and  $\mc B \supseteq \mb V \supseteq \mb A \supseteq \mb L$ are subsets whose elements are called \emph{vertices},
		      \emph{admissible vertices},
		      and \emph{mandatory vertices},
		      respectively,
		      defined as follows (using the given identification $h \cl \mc B \lxra{\simeq} \ol{\mb R}_{\geq 0}$):
		      \begin{enumerate}
			      \item
			            $\mb L \ceqq \vn$;

			      \item
			            $\mb A \ceqq h^{-1} \bigl( \on{Inc}_D(\vn_D, \varepsilon) \cap (0, k) \bigr)$;

			      \item
			            and $\mb V \ceqq \mb A \cup \set{v_k}$,
			            where $v_k \ceqq h^{-1}(k)$ is the \emph{hybridation vertex};
		      \end{enumerate}
		\item
		      the set of \emph{inconsequential vertices},
		      the \emph{root} $v_\infty$,
		      and the \emph{leaf} $v_0$,
		      are defined as in the non-hybrid case;

		\item
		      and each edge $e$ of $\mc B_{(\vn_D, \varepsilon)}(k)$ has a \emph{type} $t(e) \in \set{E, HE}$ (either `\emph{empty}' or `\emph{half-empty}',
		      respectively),
		      defined as follows:
		      \begin{enumerate}
			      \item
			            if $e = h^{-1}\bigl( (m, m+1) \bigr)$,
			            with $m, m+1 \in h(\mb A)$,
			            set $t(e) \ceqq HE$;

			      \item
			            if $e = h^{-1} \bigl( (\ul k,k) \bigr)$,
			            with $\ul k \ceqq \max \bigl( h(\mb A) \bigr)$,
			            set $t(e) \ceqq HE$;

			      \item
			            if $e$ is the parent-edge of $v_0$,
			            set $t(e) \ceqq HE$;

			      \item
			            and if $e=h^{-1} \bigl( (k, \infty) \bigr)$,
			            set $t(e) \ceqq E$.
		      \end{enumerate}
	\end{enumerate}
\end{defi}

\begin{rema}
	In brief,
	to get the hybrid full $D$-branch $\mc B_{(\vn_D,\varepsilon)}(k)$,
	we change the edge-types of $\mc B_{(\vn_D,\varepsilon)}$ from half-empty to empty above the height $k$:
	cf.~Fig.~\ref{fig:hybrid_full_branches_type_D}.
\end{rema}

\begin{figure}
	\begin{center}
		\begin{tikzpicture}
			\tikzstyle{authorized}=[circle,fill=white,minimum size=5pt,draw, inner sep=0pt]
			\tikzstyle{mandatory}=[circle,fill=black,minimum size=5pt,draw,inner sep=0pt]
			\tikzstyle{hybridation}=[circle,fill=black,minimum size=3pt,inner sep=0pt]
			\tikzstyle{root}=[fill=black,minimum size=5pt,draw,inner sep=0pt]
			\tikzstyle{indeterminate}=[circle,densely dotted,fill=white,minimum size=5pt,draw, inner sep=0pt]
			\begin{scope}
				\node[root] (R) at (0,7.5){};
				\node (X) at (0,6.5){$\vdots$};
				\node (Y) at (0,6){};
				\node[hybridation] (Z) at (0,5){};
				\node[authorized] (A) at (0,4){};
				\node[authorized] (B) at (0,3){};
				\node[authorized] (C) at (0,2){};
				\node (D) at (0,1){};
				\draw[thick, dotted] (R)--(X);
				\draw[thick, dotted] (Y)--(Z);
				\draw[thick, loosely dotted] (Z)--(A);
				\draw[thick, loosely dotted] (A)--(B);
				\draw[thick, loosely dotted] (B)--(C);
				\draw[thick, loosely dotted] (C)--(D);
				\draw (-2, 7.5) node {$\infty$};
				\draw (-2, 5) node {$k$};
				\draw (-2, 4) node {$l$};
				\draw (-2, 2) node {$1$};
				\draw (0, 0) node {$\mc B_{(\vn_D, 1)}(k)$};
			\end{scope}
			\begin{scope}[xshift=5cm]
				\node[root] (R) at (0,7.5){};
				\node (X) at (0,6.5){$\vdots$};
				\node (Y) at (0,6){};
				\node[hybridation] (Z) at (0,5){};
				\node[authorized] (A) at (0,4.5){};
				\node[authorized] (B) at (0,3.5){};
				\node[authorized] (C) at (0,2.5){};
				\node[authorized] (D) at (0,1.5){};
				\node (E) at (0,1){};
				\draw[thick, dotted] (R)--(X);
				\draw[thick, dotted] (Y)--(Z);
				\draw[thick, loosely dotted] (Z)--(A);
				\draw[thick, loosely dotted] (A)--(B);
				\draw[thick, loosely dotted] (B)--(C);
				\draw[thick, loosely dotted] (C)--(D);
				\draw[thick, loosely dotted] (D)--(E);
				\draw (-2, 7.5) node {$\infty$};
				\draw (-2, 5) node {$k$};
				\draw (-2, 4.5) node {$l+1/2$};
				\draw (-2, 1.5) node {$1/2$};
				\draw (0, 0) node {$\mc B_{(\vn_D, -1)}(k)$};
			\end{scope}
		\end{tikzpicture}
		\caption{Hybrid full $D$-branches:
			on the left (resp.,
			on the right) with integer-height vertices (resp.,
			with half-integer-height vertices).}
		\label{fig:hybrid_full_branches_type_D}
	\end{center}
\end{figure}

\subsection{General case:
	fission trees}

Keep all notation from \S~\ref{sec:type_BC_fission_trees}.
The natural notion of (pre-)fission trees in type $D$ already comes with enhancements:

\begin{defi}[cf.~Def.~\ref{def:bc_quasi_fission_tree}]
	\label{def:d_quasi_fission_tree}

	A tuple $(\mc T,\mb V,\mb A,\mb L, h, m)$ is a \emph{pre}-$D$-\emph{fission tree} if it satisfies the following conditions:
	\begin{enumerate}
		\item
		      $\mb V = h^{-1} \bigl( \set{0}\cup h(\mb A) \cup \set{\infty} \bigr)$;

		\item  $h$ maps each full branch bijectively onto $\ol{\mb R}_{\geq 0}$;

		\item  for any leaf $i$,
		      the set $\bm L_i \ceqq h(\mb L_i) \sse \mb Q_{>0}$ is a $D$-level datum;

		\item  for any leaf $i$,
		      denoting by $\mb V_i \ceqq \mc B_i \cap \mb V$ the set of vertices belonging to $\mc B_i$,
		      then---%
		      up to additional empty vertices---%
		      the quadruple $(\mc B_i,\mb V_i, \mb A_i, \mb L_i)$ is a copy of the full $D$-branch $\wt{\mc B}_i$ of an enhanced $D$-level datum $\ul{\bm L}_i = (\bm L_i, \varepsilon_i)$,
		      with underlying (non-enhanced) level datum $\bm L_i$;
		      namely:
		      \begin{enumerate}
			      \item
			            if $\bm L_i\neq \vn_D$,
			            then---%
			            up to adding empty vertices---%
			            $\wt{\mc B}_i$ is equal to the full $D$-branch $\mc B_{\ul{\bm L}_i}$ associated to $\ul{\bm L}_i$;

			      \item
			            if $\bm L_i = \vn_D$,
			            then---%
			            up to adding empty vertices---%
			            $\wt{\mc B}_i$ is equal to the hybrid full $D$-branch $\mc B_{(\vn_D, \varepsilon_i)}(k)$,
			            where $k$ is the smallest height of a branching vertex in $\mc B_i$;

			      \item
			            if $\wt{\mc B}_i = \mc B_{(\vn_D, \varepsilon_i)}(k)$,
			            then $m_i = 1$;

			      \item
			            and if $\wt{\mc B}_i = \mc B_{(\vn_{BC}, \varepsilon_i)}(k)$,
			            then $m_i \geq 2$;
		      \end{enumerate}
		\item
		      and $\prod_{\mb V_0} \varepsilon_i = 1$.

		      (Note that the sign $\varepsilon_i$ is determined by the tree if and only if $\bm L_i \neq \vn_{BC}$.)
	\end{enumerate}
\end{defi}

\subsubsection{}

As in Def.~\ref{def:bc_edge_type},
we associate a \emph{type} $t(e) \in \set{HE,E,S,NS}$ to any edge $e$ of $\mc T$:
under the identification $\mc B_i \simeq \wt{\mc B}_i$,
$e$ is contained in a (unique) edge $\wt e_i$ of $\wt{\mc B}_i$,
for a leaf $i \in \mb V_0$;
etc.

Finally,
we impose a last branching axiom to relate with admissible deformations:

\begin{defi}[cf.~Deff.~\ref{def:fission_tree_type_BC} +~\ref{def:d_quasi_fission_tree}]
	\label{def:fission_tree_type_D}

	A pre-$D$-fission tree is a $D$-\emph{fission tree} if moreover:
	\begin{enumerate}
		\setcounter{enumi}{5}
		\item

		      for any branch-vertex $v\in \mb Y$,
		      exactly one of the following happens:
		      \begin{enumerate}
			      \item[(I--III)]
			            $v$ has no half-empty descendant-edge,
			            and the branching is identical to a branching in a $BC$-fission tree;

			      \item[(IV)]
			            $v$ has at least one half-empty descendant-edge (this is only possible if $v$ is empty),
			            and:
			            \begin{enumerate}
				            \item[(IV.a)]
				                  exactly one child-vertex of $v$ is empty,
				                  and the other child-vertices are either inconsequential with a half-empty parent-edge,
				                  or mandatory with an empty parent-edge;

				            \item[(IV.b)]
				                  the child-vertices of $v$ are nonempty,
				                  and are either inconsequential with a half-empty parent-edge,
				                  or mandatory with an empty parent-edge.
			            \end{enumerate}
		      \end{enumerate}
	\end{enumerate}
\end{defi}

\begin{rema}
	As in type $BC$,
	the branching axiom could be replaced by the statement that $\on{Ch}(v)$ contains at most one empty vertex for any $v \in \mb Y$,
	cf.~Rmk.~\ref{rmk:about_def_trees_BC}.
	Moreover,
	the same exact notions of isomorphisms/labellings carry over,
	cf.~Deff.~\ref{def:tree_labels}--\ref{def:tree_isos}:
	labelled trees are (still) denoted by $\dot{\mc T} = (\mc T,\psi)$,
	where $\psi \cl \set{ 1,\dc,\abs{\mb V_0} } \lxra{\simeq} \mb V_0$ is the labelling bijection.

	Conversely,
	there are now new types of branchings involving the hybrid full branches,
	viz.,
	subcases (IV.a)--(IV.b) (cf.~Fig.~\ref{fig:possible_branching_type_D_fission_tree}).
\end{rema}

\begin{figure}
	\begin{center}
		\begin{tikzpicture}
			\tikzstyle{mandatory}=[circle,fill=black,minimum size=6pt,draw, inner sep=0pt]
			\tikzstyle{authorised}=[circle,fill=white,minimum size=6pt,draw,inner sep=0pt]
			\tikzstyle{empty}=[circle,fill=white,minimum size=0pt,inner sep=0pt]
			\tikzstyle{indeterminate}=[circle,dotted,thick,minimum size=6pt,draw, inner sep=0pt]
			\begin{scope}
				\node[empty] (B0) at (3,2){};
				\node[empty] (B1) at (1,0){};
				\node[authorised] (B2) at (2,0){};
				\node[authorised] (B3) at (3,0){};
				\draw (2.5,0) node {$\dc$};
				\node[mandatory] (B4) at (4,0){};
				\draw (4.5,0) node {$\dc$};
				\node[mandatory] (B5) at (5,0){};
				\draw[thick, dotted] (B0)--(B1);
				\draw[thick, loosely dotted] (B0)--(B2);
				\draw[thick, loosely dotted] (B0)--(B3);
				\draw[thick, dotted] (B0)--(B4);
				\draw[thick, dotted] (B0)--(B5);
				\draw [thick, decoration={brace, mirror},decorate] (2,-0.4)--(3,-0.4);
				\draw (2.5, -0.8) node {$n_1$ times};
				\draw [thick, decoration={brace, mirror},decorate] (4,-0.4)--(5,-0.4);
				\draw (4.5, -0.8) node {$n_2$ times};
				\draw (3,-1.4) node {(IV.a)};
			\end{scope}
			\begin{scope} [xshift=6cm]
				\node[empty] (B0) at (3.5,2){};
				\node[authorised] (B2) at (2,0){};
				\node[authorised] (B3) at (3,0){};
				\draw (2.5,0) node {$\dc$};
				\node[mandatory] (B4) at (4,0){};
				\draw (4.5,0) node {$\dc$};
				\node[mandatory] (B5) at (5,0){};
				\draw[thick, loosely dotted] (B0)--(B2);
				\draw[thick, loosely dotted] (B0)--(B3);
				\draw[thick, dotted] (B0)--(B4);
				\draw[thick, dotted] (B0)--(B5);
				\draw [thick, decoration={brace, mirror},decorate] (2,-0.4)--(3,-0.4);
				\draw (2.5, -0.8) node {$n_1$ times};
				\draw [thick, decoration={brace, mirror},decorate] (4,-0.4)--(5,-0.4);
				\draw (4.5, -0.8) node {$n_2$ times};
				\draw (3.5,-1.4) node {(IV.b)};
			\end{scope}
		\end{tikzpicture}
	\end{center}
	\caption{Hybrid branchings (involving half-empty edges;
		the branching vertex is empty in this case).
		In subcase (IV.a),
		the inconsequential vertices are indirectly mandatory since they have an empty sibling-vertex.}
	\label{fig:possible_branching_type_D_fission_tree}
\end{figure}

(Hereafter,
and until \S~\ref{sec:tglwmcgs},
all (pre-)fission trees are tacitly of type $D$.)

\subsubsection{}

One can now associate an enhanced $D$-fission datum $\mc F = \mc F(\mc T)$ to any fission tree $\mc T$,
and prove that fission data/trees are equivalent.
Namely:
\begin{enumerate}
	\item
	      every full branch $\mc B_i$ determines a $D$-level datum $\bm L_i \ceqq h(\mb L_i)$;
	      now:
	      \begin{enumerate}
		      \item
		            if $\bm L_i\neq \vn_{BC}$,
		            then the full branch  $\mc B_i$ actually determines an enhanced level datum $\ul{\bm L}_i=(\bm L_i,\varepsilon_i)$,
		            such that $\mc B_i = \mc B_{\ul{\bm L}_i}$---%
		            up to empty vertices;

		      \item
		            if $\bm L_{i_0} = \vn_{BC}$ for some (unique) $i_0 \in \mb V_0$,
		            then $\mc B_{i_0}$ does \emph{not} directly determines an enhancement $\ul{\bm L}_{i_0}=(\bm L_{i_0},\varepsilon_{i_0})$ of $\bm L_{i_0}$,
		            but the sign $\varepsilon_{i_0} \in \mb Z^{\ts}$ is recovered from the identity $\prod_{\mb V_0} \varepsilon_i = 1$;
	      \end{enumerate}
	\item
	      and the fission exponent $f_{ij}$ is obtained as the first admissible exponent below the branching heights of the branches $\mc B_i$ and $\mc B_j$.
\end{enumerate}

Analogously,
a labelled fission tree $\dot{\mc T}$ determines a labelled enhanced $D$-fission datum $\dot{\mc F} = \dot{\mc F}(\dot{\mc T})$,
and then:

\begin{lemm}[cf.~Lem.~\ref{lem:equivalence_tree_fission_datum_BC}]
	Two (labelled) fission trees are isomorphic if and only if they determine the same (labelled) enhanced fission datum.
\end{lemm}

\begin{proof}[Proof omitted]
\end{proof}

\subsubsection{}
\label{sec:trees_from_d_irr_types_classes}

Moreover,
an enhanced pseudo-irregular class $\ul{\wt\Theta} = \sum_{i=1}^p m_i \cdot (\pm I_i,\varepsilon_i)$ determines a fission tree,
as follows.
Write $\pm I_i = \braket{\pm q_i}$,
and let $\mc F = \mc F \bigl( \ul{\wt\Theta} \bigr) = (\mc L,f)$ be the fission datum of $\ul{\wt\Theta}$ (cf.~\S~\ref{sec:fission_datum_from_pseudo_irr_class}),
where
\begin{equation}
	\mc L
	= (m_1,\ul{\bm L}_1) + \dm + (m_p,\ul{\bm L}_p),
	\qquad \ul{\bm L}_i
	= (\bm L_i, \varepsilon_i).
\end{equation}

We start from the gluing exponents for the full branches.
(They determine the hybridation points of hybrid branches.)
Let $\on{Adm} \bigl( \ul{\wt\Theta} \bigr) \ceqq \bigcup_i \on{Adm}_D (\ul{\bm L}_i) \sse \mb Q_{>0}$ be the union of all the $D$-admissible exponents of the enhanced level data $\ul{\bm L}_i$.
It is a discrete subset,
and (again) any number $k \in \mb Q_{> 0}$ has a well-defined \emph{successor}
\begin{equation}
	\on{succ}(k) \in \ol{\on{Adm}} \bigl( \ul{\wt\Theta} \bigr) \ceqq \on{Adm} \bigl( \ul{\wt\Theta} \bigr) \cup \set{\infty}.
\end{equation}
If $f_{ij} \ceqq f_{\pm I_i,\pm I_j}$ is the fission exponent between ${\pm I_i,\pm I_j}$,
define the \emph{gluing exponent} as in~\eqref{eq:gluing_exponent}.
Now for $i \in \set{1,\dc,p}$ define a full $D$-branch $\wt{\mc B}_i = (\wt{\mc B}_i,\mb A_i, \mb L_i)$,
as follows:
\begin{enumerate}
	\item
	      if $\bm L_i\neq\vn_D$,
	      set $\wt{\mc B}_i \ceqq \mc B_{\bm L_i}$;

	\item
	      otherwise,
	      let $\wt{\mc B}_i \ceqq \mc B_{\vn_D}(k_i)$ be the hybrid full branch with hybridation vertex at height $k_i$,
	      where $k_i \ceqq \min_{j\neq i}(g_{ij})$.
\end{enumerate}
The rest of the construction is analogous to \S~\ref{sec:fission_tree_from_bc_irr_class},
and yields the fission tree $\mc T = \mc T \bigl( \ul{\wt\Theta} \bigr)$.
(The branching is correct because one can establish the type-$D$ analogue of Lem.~\ref{lem:characterisation_right_branchings_type_BC},
which we will \emph{not} state/prove,
since everything works in the same way.)
Moreover,
the nonroot vertices of $\mc T$ may still be interpreted in terms of truncated Stokes-circles-up-to-sign,
and Lem.-Def.~\ref{lem:link_vertices_truncated_circles_BC} translates verbatim.

In turn,
a pseudo-irregular class $\wt\Theta$ also determines a fission tree $\mc T = \mc T(\wt\Theta)$ by considering its associated enhanced pseudo-irregular class $\ul{\wt\Theta}$;
and for an irregular class $\Theta$ we let $\mc T(\Theta) \ceqq \mc T(\wt\Theta)$,
where $\wt\Theta$ is the underlying pseudo-irregular class.

Finally,
an enhanced pointed irregular type $\ul{\dot Q}$ yields a labelled fission tree $\dot{\mc T} = \dot{\mc T} \bigl( \ul{\dot Q} \bigr)$,
by labelling the leaves according to the order of the exponential factors,
and this is by definition also the labelled fission tree of the underlying pointed irregular type $\dot Q$.
Then the exact analogue of Cor.~\ref{cor:adm_def_by_trees} holds true.

\begin{rema}[cf.~Rmk.~\ref{rmk:truncations_trees_BC}]
	Now the set $\on{Adm} \bigl( \ul{\wt\Theta} \bigr)$ can be infinite,
	since the level datum $\vn_D$ has infinitely many admissible exponents.
	Nonetheless,
	as in type $BC$,
	the set of heights of admissible vertices of fission trees is \emph{finite}.
	(So there is a natural truncation height for drawing them.)
\end{rema}

\begin{rema}[cf.~Rmk.~\ref{rmk:comparison_bichromatic_trees_BC_DR}]
	\label{rmk:comparison_generalised_trees_D_DR}

	In \cite{doucot_rembado_tamiozzo_2022_local_wild_mapping_class_groups_and_cabled_braids} we also introduced \emph{generalized fission trees},
	to encode admissible deformations of \emph{untwisted} $D$-irregular types.
	These trees are equivalent to a particular case of Def.~\ref{def:fission_tree_type_D},
	as follows.

	Let $\dot Q$ be an untwisted compatible pointed $D$-irregular type.
	We associate to it a labelled fission tree $\dot{\mc T}$ as above,
	as well as a generalized fission tree $\wt{\mc T}$ as in op.~cit.
	The generalized tree can be viewed as a tuple $(\wt{\mc T}, \wt{\mb V}, \wt h, \wt m)$,
	and then:
	\begin{enumerate}
		\item there is a height-preserving function $\dot{\mc T} \to \wt{\mc T}$,
		      inducing a bijection $\mb V \lxra{\simeq} \wt{\mb V}$;

		\item and the colours/sizes of the vertices of $\wt{\mc T}$ are related to the vertices/edges of $\dot{\mc T}$ as follows:
		      \begin{enumerate}
			      \item the blue vertices of $\wt{\mc T}$ map to empty vertices of $\dot{\mc T}$;

			      \item the large green vertices of $\wt{\mc T}$ map to nonempty vertices of $\dot{\mc T}$,
			            such that the sum of the multiplicities of their descendant leaves is greater than $1$,
			            i.e.,
			            the nonempty vertices belonging to several full branches,
			            or to a single full branch with multiplicity greater than $1$;

			      \item the small green vertices of $\wt{\mc T}$ map to nonempty vertices of $\dot{\mc T}$,
			            such that the sum of the multiplicities of their descendant leaves is $1$,
			            i.e.,
			            the nonempty vertices belonging to a single full branch with multiplicity $1$.
		      \end{enumerate}
	\end{enumerate}
	The axioms for the generalized fission trees are consistent:
	green vertices in $\wt{\mc T}$ can only have green child-vertices,
	which maps to the fact that if $v \in \mb V$ is a nonempty vertex then its child-vertices are also nonempty;
	large green vertices in $\wt{\mc T}$ are allowed to have both large/small green child-vertices,
	while small green vertices cannot be branching points,
	which links with the sum of the multiplicities of their descendant leaves;
	and a blue vertex has at most one blue child-vertex,
	which maps to the fact that if $v \in \mc T$ is empty then at most one of its child-vertices is empty.
\end{rema}

\begin{exem}
	\label{exmp:example_fission_tree_D}

	Consider the following pointed irregular type,
	with the integer-multiplicities convention:
	\begin{equation}
		\dot Q
		= \bigl( (2, q_1), (1, q_2), (1, q_3), (1, q_4), (1, q_5) \bigr),
	\end{equation}
	where
	\begin{equation}
		q_1
		\ceqq 0,
		\quad q_2
		\ceqq z^{-2},
		\quad q_3
		\ceqq-z^{-2},
		\quad q_4
		\ceqq z^{-3},
		\quad q_5
		\ceqq z^{-3} + z^{-3/2}.
	\end{equation}
	Its enhancement is
	\begin{equation}
		\ul{\dot  Q}
		= \bigl( (2, q_1, 1), (1, q_2, 1), (1, q_3, 1), (1, q_4, 1), (1, q_5, 1) \bigr),
	\end{equation}
	and the corresponding labelled fission tree is drawn in Fig.~\ref{fig:example_tree_type_D}.

	Note that $q_1,\dc,q_4$ are untwisted,
	but do \emph{not} have the same full branches.
	Indeed, $q_1 = 0$ appears with multiplicity $2$,
	so that $\ul{\bm L}_1 = (L_{BC}(q_1), 1) = (\vn_{BC}, 1)$.
	On the other hand,
	$q_2$ and $q_3$ have multiplicity 1,
	and both have vanishing common parts with other exponential factors,
	so that $\ul{\bm L}_2 = \ul{\bm L}_3 = (\vn_D, 1)$.
	Finally,
	$q_4$ also has multiplicity 1,
	but it has nonzero common part with $q_5$ (equal to $z^{-3}$),
	so that $\ul{\bm L}_4 = (L_{BC}(q_4), 1)$ and $\bm L_4 = \set{ \on{slope}(q_4) } = \set{3}$.
	Moreover,
	the inconsequential vertices of height $2$ on the full branches $\mc B_2$ and $\mc B_3$ are indirectly mandatory,
	since they have an empty sibling-vertex---%
	on the full branch $\mc B_1$.
\end{exem}

\begin{figure}
	\begin{center}
		\begin{tikzpicture}
			\tikzstyle{root}=[fill=black,minimum size=6pt,draw,inner sep=0pt]
			\tikzstyle{mandatory}=[circle,fill=black,minimum size=6pt,draw, inner sep=0pt]
			\tikzstyle{authorised}=[circle,fill=white,minimum size=6pt,draw,inner sep=0pt]
			\tikzstyle{empty}=[circle,fill=white,minimum size=0pt,inner sep=0pt]

			\node[root] (R) at (2,4){};

			\node[empty] (B1) at (1,3){};
			\node[mandatory] (B2) at (3.5,3){};

			\node[empty] (C1) at (0,2){};
			\node[authorised] (C2) at (1,2){};
			\node[authorised] (C3) at (2,2){};
			\node[authorised] (C4) at (3,2){};
			\node[authorised] (C5) at (4,2){};

			\node[mandatory] (C_D5) at (4,1.5){};

			\node[empty] (D1) at (0,1){};
			\node[authorised] (D2) at (1,1){};
			\node[authorised] (D3) at (2,1){};
			\node[authorised] (D4) at (3,1){};
			\node[authorised] (D5) at (4,1){};

			\node[empty] (D1) at (0,1){};
			\node[authorised] (D2) at (1,1){};
			\node[authorised] (D3) at (2,1){};
			\node[authorised] (D4) at (3,1){};
			\node[authorised] (D5) at (4,1){};

			\node[authorised] (DE5) at (4,0.5){};

			\node[empty] (E1) at (0,0){};
			\node[empty] (E2) at (1,0){};
			\node[empty] (E3) at (2,0){};
			\node[empty] (E4) at (3,0){};
			\node[empty] (E5) at (4,0){};

			\draw[thick, dotted] (R)--(B1);
			\draw[thick, dotted] (R)--(B2);

			\draw[thick, dotted] (B1)--(C1);
			\draw[thick, loosely dotted] (B1)--(C2);
			\draw[thick, loosely dotted] (B1)--(C3);
			\draw (B2)--(C4);
			\draw (B2)--(C5);

			\draw[thick, dotted] (C1)--(D1);
			\draw[thick, loosely dotted] (C2)--(D2);
			\draw[thick, loosely dotted] (C3)--(D3);
			\draw (C4)--(D4);
			\draw (C5)--(C_D5)--(D5);

			\draw[thick, dotted] (D1)--(E1);
			\draw[thick, loosely dotted] (D2)--(E2);
			\draw[thick, loosely dotted] (D3)--(E3);
			\draw (D4)--(E4);
			\draw (D5)--(DE5)--(E5);

			\draw (0,-0.5) node {$q_1$};
			\draw (1,-0.5) node {$q_2$};
			\draw (2,-0.5) node {$q_3$};
			\draw (3,-0.5) node {$q_4$};
			\draw (4,-0.5) node {$q_5$};

			\draw (-1.5, 4) node {$\infty$};
			\draw (-1.5, 3) node {$3$};
			\draw (-1.5, 2) node {$2$};
			\draw (-1.5, 1.5) node {$3/2$};
			\draw (-1.5, 1) node {$1$};

		\end{tikzpicture}
	\end{center}
	\caption{Labelled fission tree of Exmp.~\ref{exmp:example_fission_tree_D}.}
	\label{fig:example_tree_type_D}
\end{figure}

\subsection{General case:
	tree realizations}

Consider a fission tree $\mc T$,
and a function $c \cl \mb A\to \mb C$.
Analogously to \S~\ref{sec:realization},
we associate to the pair $(\mc T,c)$:
i) an exponential factor $q_{c,i}$ for each leaf $i \in \mb V_0$;
ii) a $\mb Z_{> 0}$-linear combination $\wt\Theta_c$ of pairwise-distinct Stokes-circles-up-to-sign;
and iii) a list $\dot Q_c$ of exponential factors with multiplicities,
after choosing a labelling $\set{1,\dc,p \ceqq \abs{\mb V_0} } \lxra{\simeq} \mb V_0$ of $\mc T$.
As in Def.~\ref{def:realization_type_BC},
we then say that $c$ is a \emph{realization of} $\mc T$ (or,
equivalently,
\emph{of} $\dot{\mc T}$) if $\wt\Theta_c$ is a pseudo-irregular class such that $\mc T \bigl( \wt\Theta_c \bigr) \simeq \mc T$ (or,
equivalently,
if $\dot Q_c$ is a pointed irregular type such that $\dot{\mc T} \bigl( \dot Q_c \bigr) \simeq \dot{\mc T}$).

Now one can proceed as in \S~\ref{sec:realization_check},
and Lem.~\ref{lem:characterisation_right_branchings_type_BC} translates verbatim (since it just involves Stokes-circles-up-to-sign),
justifying the axioms for $D$-branchings.
This leads to the following:

\begin{prop}[cf.~Cor.~\ref{cor:type_BC_characterisation_of_realizations}]
	\label{prop:type_D_characterisation_of_realizationsg}

	A function $c \cl \mb A \to \mb C$ is a realization of $\mc T$ and/or $\dot{\mc T}$ if and only if the following conditions hold:
	\begin{enumerate}
		\item
		      $c(\mb L)\sse \mb C^\ast$;

		\item
		      for any node $u$ with half-empty parent-edge,
		      if $u$ has an empty sibling-vertex then $c(u) \neq 0$;

		\item
		      for any pair of distinct nonempty sibling-vertices $u,v$,
		      denoting by $N \geq 1$ their partial ramification order (cf.~Rmk.~\ref{rmk:realization_check}):
		      \begin{enumerate}
			      \item[(I)]
			            if $u,v$ have nonspecial parent-edges,
			            then $c(u)^N\neq c(v)^N$;

			      \item[(II)]
			            if $u,v$ have special parent-edges:
			            \begin{enumerate}
				            \item[(i)]
				                  if $N=1$ and $u,v$ are mandatory,
				                  then $c(u)\neq \pm c(v);$

				            \item[(ii)]
				                  otherwise:
				                  \begin{equation}
					                  \begin{cases}
						                  c(u)^N \neq c(v)^N,       & \quad N \text{ odd},  \\
						                  c(u)^{2N} \neq c(v)^{2N}, & \quad N \text{ even};
					                  \end{cases}
				                  \end{equation}
			            \end{enumerate}
			      \item[(III)]
			            and if $u,v$ have empty or half-empty parent-edges,
			            then:
			            \begin{equation}
				            \begin{cases}
					            c(u)^N \neq  c(v)^N,      & \quad N \text{ even}, \\
					            c(u)^{2N} \neq c(v)^{2N}, & \quad N \text{ odd}.
				            \end{cases}
			            \end{equation}
		      \end{enumerate}
	\end{enumerate}
\end{prop}

\begin{proof}
	The first condition (and the definition of $\mb A$) means that each full branch has the correct interior levels.
	The second condition ensures that,
	for a branching of type (IV.a) in Def.~\ref{def:fission_tree_type_D},
	the child-vertices with half-empty parent-edge are indirectly mandatory.
	Finally,
	the third condition corresponds to the fact that Lem.~\ref{lem:characterisation_right_branchings_type_BC} remains true in type $D$.
\end{proof}

\subsubsection{}

In particular,
the type-$D$ version of~Cor.~\ref{cor:type_BC_realisability} holds true.
Hereafter,
and until \S~\ref{sec:weyl_group_tree_d},
let $\dot Q$ be a compatible pointed $D$-irregular type with labelled fission tree $\dot{\mc T} \ceqq \dot{\mc T}(\dot Q)$.

First,
one can state/prove the analogue of Lem.~\ref{lem:any_Q_comes_from_realization_type_BC},
which we omit.
Then to the tree $\dot{\mc T}$ we associate a (global) $D$-\emph{configuration space}
\begin{equation}
	\label{eq:configuration_space_of_tree_d}
	\bm B_D(\dot{\mc T})
	\ceqq  \Set{ c \cl \mb A \to \mb C | c \text{ is a realization of } \dot{\mc T} },
\end{equation}
which we still regard as a topological subspace of $\mb C^{\mb A} \simeq \mb C^{\,\abs{\mb A}}$ (cf.~Rmk.~\ref{rmk:ordering_vertices}).

\begin{theo}
	\label{thm:type_D_configuration_space_from_realizations}

	There is a homeomorphism
	\begin{equation}
		\bm B_{D,r}(\dot Q) \simeq \bm B_D(\dot{\mc T}),
	\end{equation}
	in the notation of~\eqref{eq:config_space_type_D}.
\end{theo}

\begin{proof}
	Analogous to the proof of Thm.-Def.~\ref{thm:type_BC_configuration_space_from_realizations}.
\end{proof}

\subsubsection{}

Again,
using that the conditions of Prop.~\ref{prop:type_D_characterisation_of_realizationsg} are independent at each vertex $v \in \mb V$ of $\dot{\mc T}$ yields a direct-product decomposition of the configuration space.
To state this,
introduce the \emph{local} $D$-\emph{configuration space of} $(\dot{\mc T},v)$ as
\begin{equation}
	\label{eq:local_conf_space_D}
	\bm B_D(\dot{\mc T},v)
	\ceqq \Set{ c_v \cl \on{Ch}_{\mb A}(v) \to \mb C | c_v \text{ satisfies the conditions of Prop.~\ref{prop:type_D_characterisation_of_realizationsg}} },
\end{equation}
where in turn $\on{Ch}_{\mb A}(v) \ceqq \mb A \cap \on{Ch}(v) \sse \mb A$ (cf.~Def.~\ref{def:local_conf_space_bc}).
Again,
this space is a point if $v$ has no nonempty child-vertex.
Otherwise,
if $v$ has $n \geq 1$ of them,
and  $N \geq 1$ is their common partial ramification order (cf.~Rmk.~\ref{rmk:local_conf_space_classification}),
then~\eqref{eq:local_conf_space_D} is homeomorphic to one of the hyperplane complements $\mc M(1,n)$,
$\mc M(2,n)$,
$\mc M^\sharp(N,n)$,
$\mc M^\sharp(2N,n)$,
or $\mc M^{\musDoubleSharp}(n,n')$,
for some integer $n' \geq 1$,
in the notation of~\eqref{eq:standard_complement_type_D}--\eqref{eq:exotic_arrangement}.

As already mentioned,
the latter complement is specific to type $D$,
and it appears when the branching at $v$ is of type (IV.b),
in the sense of Def.~\ref{def:fission_tree_type_D}.
Coherently with Thm.~\ref{thm:complex_refl_groups_from_gauge},
this factor already arises in the untwisted/unramified setting of~\cite{doucot_rembado_tamiozzo_2022_local_wild_mapping_class_groups_and_cabled_braids}:
the weight of the present classification is that \emph{no} now factor arises in the twisted classical examples.\fn{
	Incidentally,
	in type (IV.a) the child-vertices with half-empty parent-edges are indirectly mandatory,
	which yields a copy of $\mc M^\sharp(2,n')$,
	for some integer $n' \geq 1$.}

\begin{coro}
	\label{cor:config_product_decomposition_type_D}

	There is a homeomorphism
	\begin{equation}
		\bm B_{D,r}(\dot Q)
		\simeq \prod_{v \in \mb V} \bm B_D(\dot{\mc T},v),
	\end{equation}
	endowing the target with the product topology.
\end{coro}

\begin{proof}
	Analogous to Cor.~\ref{cor:config_product_decomposition_type_BC},
	using Prop.~\ref{prop:type_D_characterisation_of_realizationsg} and Thm.~\ref{thm:type_D_configuration_space_from_realizations}.
\end{proof}

\begin{exem}
	\label{exmp:type_D_tree}

	For the pointed irregular type of Exmp.~\ref{exmp:example_fission_tree_D},
	the fission tree encodes the topological factorization
	\begin{equation}
		\bm B_{D,r}(\dot Q)
		\simeq (\mb C^\ast)^2 \ts \mb C^4 \ts \mc M(1,2) \ts \mc M^\sharp(2,2).
	\end{equation}
	Indeed,
	the branching at the common ancestor-vertex of the leaves $i \in \set{1,2,3}$ is of type (IV.a),
	with two inconsequential vertices that are indirectly mandatory,
	which gives the rightmost factor;
	and the branching at the common ancestor of the leaves $i \in \set{4,5}$ is of type (I.1),
	which gives the second-to-right one.
\end{exem}

\begin{rema}[cf.~Rmk.~\ref{rmk:forest_bc}]
	\label{rmk:forest_d}

	One can define a $D$-\emph{fission forest} $\bm F$ by gathering (isomorphism classes of) fission trees of constant rank.
	They provide complete invariants for the admissible deformations of wild curves $\bm X = (X,\bm x,\bm\Theta)$ with structure group $G = \SO_{2m}(\mb C)$,
	for any integer $m \geq 1$.\fn{
		Note that (by definition) the $D$-\emph{topological skeleta} $(g,\bm F)$ are constructed by first passing from a wild curve $\bm X$ to the underlying `pseudo' wild curve $\wt{\bm X} \ceqq \bigl( X,\bm x,\wt{\bm\Theta} \bigr)$,
		i.e.,
		replacing the multiset $\bm\Theta = \set{\Theta_x}_{\bm x}$ of irregular classes with the multiset $\wt{\bm\Theta} = \set{\wt\Theta_x}_{\bm x}$ of their pseudo-irregular classes.}
\end{rema}

\subsection{General case:
	Weyl groups}
\label{sec:weyl_group_tree_d}

Finally,
we will describe the configuration spaces of irregular classes via topological quotients,
modulo the action of Weyl groups of fission trees (cf.~Def.~\ref{def:config_space_type_D}).
Again,
this yields in particular a recursive description of the Galois group of the general Galois covering of \S~\ref{sec:nonpure_case_general}.

Hereafter,
and until \S~\ref{sec:tglwmcgs},
let $\Theta = \Theta(\dot Q)$ be an irregular class representable by an $r$-ramified pointed irregular type $\dot Q$,
and consider the fission tree $\mc T \ceqq \mc T(\Theta)$ of the former.
(By Rmk.~\ref{rmk:config_spaces_are_the_same_d},
it would be equivalent to start more generally from any pseudo-irregular class $\wt\Theta$.)

\subsubsection{}

Let $\ul{\bm L}_i = (\bm L_i, \varepsilon_i)$ be the enhanced fission datum corresponding to (the full branch of) a leaf $i \in \mb V_0$,
of ramification $r_i \geq 1$ (cf.~\S~\ref{sec:enhanced_level_data}).
Let us say that $\bm L_i$ is \emph{nonempty} if it is different both from $\vn_{BC}$ and $\vn_D$;
then again we attach to it a finite abelian group:

\begin{defi}[cf.~Def.~\ref{def:interior_weyl_group_leaf_type_BC}]
	The \emph{type}-$D$ \emph{interior Weyl group of} $i$ is defined by
	\begin{equation}
		W_D(\mc T,i)
		\ceqq
		\begin{cases}
			\bigl( \mb Z \bs r_i \mb Z \bigr) \ts \mb Z^{\ts}, & \quad \bm L_i \text { is nonempty and nonspecial},    \\
			\mb Z \bs r_i \mb Z,                               & \quad \bm L_i \text { is nonempty and special},       \\
			\mb Z^{\ts},                                       & \quad \bm L_i = \vn_{BC} \text{ or } \bm L_i = \vn_D.
		\end{cases}
	\end{equation}
\end{defi}

\subsubsection{}

We write the elements $w_i = (d_i,\varepsilon_i) \in W_D(\mc T,i)$ as in \S~\ref{sec:interior_weyl_elements},
provided that $\bm L_i$ is nonempty.
Else,
we set $(d_i,\varepsilon_i) \ceqq (0,\varepsilon_i)$,
with $\varepsilon_i \in \mb Z^{\ts}$.

Now the semidirect product~\eqref{eq:group_acting_on_type_BC_extended_tree} is defined in the same way,
but we consider a smaller subgroup thereof to take into account the parity condition of the number of sign-changes.
Namely,
for $i \in \mb V_0$ let
\begin{equation}
	\label{eq:new_signs}
	\wt{\varepsilon}_i
	\ceqq
	\begin{cases}
		\varepsilon_i^{r_i m_i}, & \quad \bm L_i \text{ is nonempty and nonspecial},     \\
		(-1)^{d_i m_i},          & \quad \bm L_i \text{ is nonempty and special},        \\
		\varepsilon_i,           & \quad \bm L_i = \vn_{BC} \text{ or } \bm L_i = \vn_D.
	\end{cases}
\end{equation}
Finally,
overwrite~\eqref{eq:common_ramification} as follows:
\begin{equation}
	\wt r_{ij}
	\ceqq
	\begin{cases}
		r_{ij}, & \quad \bm L_i \neq \vn_D \neq \bm L_j,
		\\
		1,      & \quad \text{else},
	\end{cases}
\end{equation}
for any $i \neq j \in \mb V_0$.

Then we pose the following:

\begin{defi}[cf.~Def.~\ref{def:weyl_group_tree_type_BC}]
	\label{def:weyl_group_tree_type_D}

	The $D$-\emph{Weyl group of} $\mc T$ is the subgroup of~\eqref{eq:group_acting_on_type_BC_extended_tree} defined by
	\begin{equation}
		\label{eq:weyl_group_tree_D}
		W_D(\mc T)
		= \Aut(\mc T) \lts W'_D(\mc T),
	\end{equation}
	where:
	\begin{enumerate}
		\item
		      we let
		      \begin{equation}
			      W'_D(\mc T)
			      \ceqq \Set{ (w_i)_i \in \prod_{\mb V_0} W_D(\mc T,i) | \wt{\bm\varepsilon} = 1 \text{ and } \varepsilon_i \varepsilon_j = \zeta_{\wt r_{ij}}^{d_i - d_j},
				      \text{ for } i \neq j \in \mb V_0 };
		      \end{equation}

		\item
		      and---%
		      in turn---%
		      we set $\wt{\bm\varepsilon} \ceqq \prod_{\mb V_0} \wt\varepsilon_i \in \mb Z^{\ts}$.
	\end{enumerate}
\end{defi}

\begin{theo}[cf.~Thm.~\ref{thm:full_wmcg_from_tree_type_BC}]
	\label{thm:full_wmcg_from_tree_type_D}

	\leavevmode
	\begin{enumerate}
		\item
		      The Weyl group~\eqref{eq:weyl_group_tree_D} acts freely on the configuration space $\bm B_{D,r}(\dot Q)$,
		      preserving irregular classes.

		\item
		      And there is a homeomorphism
		      \begin{equation}
			      \bm B_{D,r} (\Theta) \simeq \bm B_{D,r}(\dot Q) \bs W_D(\mc T).
		      \end{equation}
	\end{enumerate}
\end{theo}

\begin{proof}
	The only difference from the proof of Thm.~\ref{thm:full_wmcg_from_tree_type_BC} is the type-$D$ constraint on the signs,
	which we now discuss---%
	keeping the notation of loc.~cit.
	Interestingly,
	in one case this condition ensures that the action of $W_D(\mc T)$ preserves irregular classes (and it is automatically free),
	and in another case it ensures that the $W_D(\mc T)$-action is free (and it always preserve irregular classes).

	In any event,
	choose a group element $w_i \in W_D(\mc T,i)$,
	for $i \in \set{1,\dc,p} \simeq \mb V_0$.
	Then the parity of the number of sign-changes between the lists of exponential factors $l^\pm_i$ and $w_i \cdot l^\pm_i$ is given by~\eqref{eq:new_signs},
	provided that $q_i \neq 0$.
	Thus,
	if $\dot Q_0 = (l^\pm_1,\dc,l^\pm_p,-l^\pm_1,\dc,-l^\pm_p)$ is a pointed irregular type,
	then there are the two following usual (mutually-exclusive) situations.

	\begin{enumerate}
		\item
		      The pointed irregular type $\dot Q_0$ does \emph{not} contain the tame circle.
		      Then the product $\wt{\bm \varepsilon} \in \mb Z^{\ts}$ corresponds to the parity of the number of sign-changes between $\dot Q_0$ and $\dot Q_0' \ceqq g \cdot \dot Q_0$,
		      in the notation of~\eqref{eq:transformed_pointed_irr_type}.
		      In view of Prop.~\ref{prop:form_type_D_irreg_classes},
		      this implies that $\dot Q_0$ and $\dot Q_0'$ have the same irregular class if and only if $\wt{\bm\varepsilon} = 1$.
		      (We omit the proof that the resulting action is free.)

		\item
		      Otherwise,
		      $\dot Q_0$ contains a vanishing exponential factor $q_{i_0} = 0$,
		      for some unique $i_0 \in \set{1,\dc,p}$.
		      Then all irregular types with given pseudo-irregular class will (also) have the same irregular class (cf.~the proof~\ref{proof:prop_form_type_D_irreg_classes}).
		      Thus,
		      the semidirect product~\eqref{eq:group_acting_on_type_BC_extended_tree} acts on the set of pointed $D$-irregular types with given irregular class.
		      The caveat is that this action is \emph{not} free,
		      and that it restricts to a free action of the subgroup $W_D(\mc T)$.
		      Indeed,
		      if $\dot Q_0'$ is a pointed irregular type with the same (pseudo-)irregular class of $\dot Q_0$,
		      then the equality
		      \begin{equation}
			      \dot Q_0'
			      = \bigl( \rho \lts \bm w \bigr) \cdot \dot Q_0,
			      \qquad \bm w = (w_i)_i \in \prod_{i = 1}^p W_D(\mc T,i),
		      \end{equation}
		      uniquely determines $\rho \in \Aut(\mc T)$ and $w_i \in W_i$,
		      for $i \neq i_0 \in \set{1,\dc,p}$;
		      finally,
		      the identity $\wt{\bm\varepsilon} = 1$ also forces the choice of $w_{i_0} \in W_{i_0}$. \qedhere
	\end{enumerate}
\end{proof}

\subsubsection{}

The proof of Cor.~\ref{cor:weyl_groups_are_the_same} extends verbatim to establish that moreover $W_D(\mc T) \simeq Z_{W,\bm\phi}(r)$,
keeping the notation of loc.~cit.
Altogether,
we have thus concluded the proofs of the two main Thmm.~\ref{thm:thm_2_intro} +~\ref{thm:thm_3_intro}.

\section{Twisted local \texorpdfstring{$G$}{G}-wild mapping class groups}
\label{sec:tglwmcgs}

\subsection{Main statement}
\label{sec:general_tglwmcgs}

Here we will generalize the definitions of local wild mapping class groups (= WMCGs) from~\cite{doucot_rembado_tamiozzo_2022_local_wild_mapping_class_groups_and_cabled_braids,
	doucot_rembado_2025_topology_of_irregular_isomonodromy_times_on_a_fixed_pointed_curve,
	boalch_doucot_rembado_2025_twisted_local_wild_mapping_class_groups_configuration_spaces_fission_trees_and_complex_braids}.
We also gather previous results to state theorems about their general structure (for any complex reductive group $G$),
and then their finer descriptions in the classical simple examples.

\begin{defi}
	\label{def:tglwmcgs}

	Let $r,s \geq 1$ be integers.
	Choose an irregularity-bounded $r$-Galois-closed irregular type $\wh Q \in \wh{\mc{IT}}^{\leq s}_r$,
	and let $\wh\Theta \ceqq \wh\Theta \bigl( \wh Q \bigr) \in \wh{\mc{IT}}^{\leq s}_r \bs W$ be the associated ($r$-Galois-closed) irregular class (cf.~\S~\ref{sec:setup}).
	Then:
	\begin{enumerate}
		\item
		      the \emph{pure} $r$-\emph{ramified} \emph{local WMCG of} $\wh Q$ is the fundamental group
		      \begin{equation}
			      \label{eq:pure_wmcg}
			      \Gamma_r \bigl( \wh Q \bigr)
			      \ceqq \pi_1 \bigl( \bm B_r( \wh Q),\wh Q \bigr);
		      \end{equation}

		\item
		      and the (\emph{full/nonpure}) $r$-\emph{ramified} \emph{local WMCG of} $\wh\Theta$ is the fundamental group
		      \begin{equation}
			      \label{eq:full_wmcg}
			      \ol \Gamma_r \bigl( \Theta \bigr)
			      \ceqq \pi_1 \bigl( \bm B_r( \wh\Theta),\wh\Theta \bigr).
		      \end{equation}
	\end{enumerate}
\end{defi}

\begin{rema}
	\label{rmk:about_wmcgs}

	By \S~\ref{sec:no_marking},
	in the pure case it would be the same to also fix an element $g \in W$ generating the $r$-Galois-orbit of $\wh Q$,
	and set $\Gamma_{g,r} \bigl( \wh Q \bigr) \ceqq \pi_1 \bigl( \bm B_{g,r}(\wh Q),\wh Q \bigr)$.

	Moreover,
	a priori~\eqref{eq:pure_wmcg}--\eqref{eq:full_wmcg} depend also on the integer $s$,
	but their isomorphism class is well-determined by $\wh Q$ and $r$ alone (cf.~\S~\ref{sec:topology}):
	this is the rationale behind the abusive notation.

	Finally,
	by \S\S~\ref{sec:generic_pure_case}--\ref{sec:nonpure_case_general},
	the topological spaces $\bm B_{g,r} \bigl( \wh Q \bigr) = \bm B_r \bigl( \wh Q \bigr)$ and $\bm B_r \bigl( \wh\Theta \bigr)$ are path-connected,
	and so changing the base irregular type/class does \emph{not} affect the isomorphism class of the pure/nonpure WMCGs.
	Just as for the moduli stacks $\mc M_{g,n}$ of $n$-pointed genus-$g$ projective curves~\cite{deligne_mumford_1969_the_irreducibility_of_the_space_of_curves_of_given_genus},
	there is actually a discrete set of parameters governing the topology of the admissible deformation spaces,
	analogous to the integers $g,n \geq 0$:
	in addition to the ramification $r \geq 1$,
	this is precisely the root-valuation tuple $\bm d$ in~\eqref{eq:root_valuations_tuple}.

	One might turn this around,
	by:
	(i) choosing data $(r,s,\bm d) \in \mb Z_{> 0} \ts \mb Z_{> 0} \ts \mb Z_{\geq 0}^\Phi$;
	and (ii) proving that the (fine) moduli space $\bm B_r^{\leq s}(\bm d)$ of $\bm d$-admissible $r$-Galois irregular types of irregularity bounded above by $s$ can be identified with $\bm B_r \bigl( \wh Q \bigr)$,
	for any irregular type $\wh Q$ such that $\bm d = \bm d(\wh Q)$,
	cf.~\cite[Rmk.~2.3]{doucot_rembado_tamiozzo_2022_local_wild_mapping_class_groups_and_cabled_braids} and~\cite[Cor.~3.34]{boalch_doucot_rembado_2025_twisted_local_wild_mapping_class_groups_configuration_spaces_fission_trees_and_complex_braids}.
	This is the viewpoint of~\cite{doucot_rembado_tamiozzo_2024_moduli_spaces_of_untwisted_wild_riemann_surfaces},
	but note that only finitely many functions $\bm d$ lead to nonempty root-valuation strata:
	cf.~\S~\ref{sec:stratifications},
	as well as the fission forests of Rmkk.~\ref{rmk:forest_bc} +~\ref{rmk:forest_d}.
	(But beware that the labelled fission trees do \emph{not} parameterize all the pure strata,
	even in the untwisted case,
	cf.~Rmk.~\ref{rmk:pointed_dominance}.)
\end{rema}

\begin{theo}
	\label{thm:wild_mapping_class_groups}

	Write $\wh Q = \sum_{i = 1}^s A_iw^{-i}$,
	and consider the (increasing,
	exhaustive) filtration of nested Levi annihilators of the coefficients $A_i \in \mf t$:
	\begin{equation}
		\bm \phi
		= \bm \phi (\wh Q)
		= \bigl( \phi_1 \sse \dm \sse \phi_s \sse \phi_{s+1} \ceqq \Phi \bigr),
		\qquad \phi_i \ceqq \Phi \cap \mf g^{A_i} \cap \dm \cap \mf g^{A_s}.
	\end{equation}
	Define also the groups $W_{\mf t_{\phi_1}} \sse N_W(\mf t_{\bm \phi}) \sse W$ and $Z_{W,\bm \phi}(r) \sse N_W(\mf t_{\bm \phi}) \bs W_{\mf t_{\phi_1}}$ as in Thm.-Def.~\ref{thm:full_general_twisted_deformation_space}.
	Then:
	\begin{enumerate}
		\item
		      there is a direct-product decomposition
		      \begin{equation}
			      \Gamma_r \bigl( \wh Q \bigr)
			      = \prod_{i = 1}^s \Gamma_r \bigl( \wh Q,i \bigr),
			      \qquad \Gamma_r \bigl( \wh Q_,i \bigr)
			      = \pi_1 \bigl( \bm B_r(\wh Q,i),A_i \bigr),
		      \end{equation}
		      where in turn
		      \begin{equation}
			      \bm B_r \bigl( \wh Q,i \bigr)
			      \ceqq \mf t_{\phi_i}(r) \, \bigsm \, \bigcup_{\phi_{i+1} \sm \phi_i} H_\alpha(\phi_i,r) \sse \mf t_{\phi_i},
		      \end{equation}
		      in the notation of Rmk.~\ref{rmk:independence_restricted_eigenspaces};

		\item
		      and there is a (typically nonsplit) short exact group sequence
		      \begin{equation}
			      \label{eq:wmcg_sequence}
			      1 \lra \Gamma_r \bigl( \wh Q \bigr) \lra \ol \Gamma_r \bigl( \wh\Theta \bigr) \lra Z_{W,\bm \phi}(r) \lra 1,
		      \end{equation}
		      in the notation of \S~\ref{sec:independence_monodromy_group_general}.
	\end{enumerate}
\end{theo}

\begin{proof}
	The former statement follows from the direct-product decomposition of Thm.-Def.~\ref{thm:general_pure_twisted_deformation_space},
	and the latter from the Galois covering of Thm.-Def.~\ref{thm:full_general_twisted_deformation_space}.
	(The `augmentation' surjection in the sequence~\eqref{eq:wmcg_sequence} corresponds to the monodromy action of $\bm B_r \bigl( \wh Q \bigr) \thra \bm B_r \bigl( \wh\Theta \bigr)$.)
\end{proof}

\begin{rema}
	Suppose that $s = 1$,
	and write as usual $\wh Q = Aw^{-1}$.
	In this case $\Gamma_1 \bigl( \wh Q \bigr)$ is the fundamental group of the hyperplane complement~\eqref{eq:pure_untwisted_nongeneric_deformation_space_1_coeff},
	and if $A$ is regular then it is the pure $G$-braid group $\PBr_{\mf g} \ceqq \pi_1 (\mf t_{\reg},A)$.
	In turn,
	the sequence~\eqref{eq:wmcg_sequence} becomes
	\begin{equation}
		\label{eq:G_braid_sequence}
		1 \lra \PBr_{\mf g} \lra \Br_{\mf g} \lxra{\pi} W \lra 1,
		\qquad \Br_{\mf g} \ceqq \pi_1 \bigl( \mf t_{\reg} \bs W,W. A \bigr),
	\end{equation}
	and it is centred around the (full/nonpure) $G$-braid group~\cite{brieskorn_1971_die_fundamentalgruppe_des_raumes_der_regulaeren_orbits_einer_endlichen_komplexen_spiegelungsgruppe,
		deligne_1972_les_immeubles_des_groupes_de_tresses_generalises}%
	---which one can view as the local WMCG $\ol \Gamma_1 \bigl( \wh\Theta \bigr)$,
	where $\wh\Theta \ceqq \wh\Theta \bigl( \wh Q \bigr)$ is the underlying irregular class.
\end{rema}

\subsection{Braid Springer theory}
\label{sec:braid_springer_theory}

The quasi-generic examples of $r$-ramified WMCGs are (incidentally) studied in Bessis' seminal work~\cite{bessis_2015_finite_complex_reflection_arrangements_are_k_pi_1},
cf.~\cite{shvartsman_1996_torsion_in_the_quotient_group_of_the_artin_brieskorn_braid_group_with_respect_to_the_centre_and_regular_springer_numbers,
	broue_michel_1997_sur_certains_elements_reguliers_des_groupes_de_weyl_et_les_varietes_de_deligne_luztig_associees,
	bessis_digne_michel_2002_springer_theory_in_braid_groups_and_the_birman_ko_lee_monoid} and \S~\ref{sec:bessis_literature}.
Namely:

\begin{enonce}{Theorem-Definition}
	\label{thm:generic_full_twmcgs}

	Choose an integer $r \geq 1$,
	a regular vector $A \in \mf t_{\reg}$,
	and a group element $g \in W$.
	Suppose that $g$ generates the $r$-Galois-orbit of the irregular type $\wh Q \ceqq Aw^{-1}$,
	and let also $\beta \in \Br_{\mf g}$ be a $G$-braid such $\pi(\beta) = g$,
	in the notation of~\eqref{eq:G_braid_sequence}.
	Then:
	\begin{enumerate}
		\item
		      the `full twist' $\tau \ceqq \beta^r \in \PBr_{\mf g}$ generates the (cyclic) centre of the pure $G$-braid group,
		      and it corresponds to the homotopy class of the loop
		      \begin{equation}
			      t
			      \lmt e^{2\pi\sqrt{-1}t} \cdot A \cl [0,1] \lra \mf t_{\reg};
		      \end{equation}

		\item
		      all the $r$-th roots of $\tau$ are conjugate in $\Br_{\mf g}$;\fn{
			      Thus,
			      $\beta$ is conjugate to the standard $r$-th root of $\tau$,
			      which corresponds to $t \mt e^{2\pi\sqrt{-1} \cdot t \slash r } A$.
		      }

		\item
		      and there is a group isomorphism
		      \begin{equation}
			      \label{eq:full_wmcg_as_braid_centralizer}
			      \ol \Gamma_r \bigl( \wh\Theta \bigr)
			      \simeq Z_{\Br_{\mf g}}(\beta) \sse \Br_{\mf g},
			      \qquad \wh\Theta \ceqq \wh\Theta \bigl( \wh Q \bigr) \in \wh{\mc{IT}}_r^{\leq 1} \bs W.
		      \end{equation}
	\end{enumerate}
\end{enonce}

\begin{proof}
	This follows from~\cite[Thm.~12.4]{bessis_2015_finite_complex_reflection_arrangements_are_k_pi_1}.
\end{proof}

\begin{rema}
	By the second statement of Thm.-Def.~\ref{thm:generic_full_twmcgs},
	as usual,
	the isomorphism class of the local WMCG does \emph{not} depend on the choice of $\beta$%
	---but only on the integer $r$.
\end{rema}

\subsubsection{}

By Prop.~\ref{Prop:full_generic_twisted_deformation_space},
an analogous group isomorphism as in~\eqref{eq:full_wmcg_as_braid_centralizer} holds for all the $r$-Galois-closed irregular types with regular semisimple leading coefficient%
---up to replacing $r$ with the GCD of the ramification/irregularity.

Finally,
in view of the previous sections,
studying more general $r$-ramified local WMCGs relates with the problem of `lifting' nonsplit Lehrer--Springer theory.
Nonetheless,
cf.~\cite[Prop.~2.29]{broue_malle_rouquier_1998_complex_reflection_groups_braid_groups_hacke_algebras} (about parabolic braid subgroups) and~\cite{amend_deligne_roehrle_2020_on_the_k_pi_1_problem_for_restrictions_of_complex_reflection_arrangements} (about restrictions of $K(\pi,1)$ arrangements).

\subsection{Classical examples}

Suppose now that $\mf g$ is a simple Lie algebra of (classical) type $\bullet \in \set{A,B,C,D}$.
Rephrasing a particular case of Def.~\ref{def:tglwmcgs},
using the material of \S\S~\ref{sec:notation_for_trees}--\ref{sec:D_trees},
we pose the following:

\begin{defi}
	Choose an irregular class $\Theta$,
	of type $\bullet$.
	Suppose (w.l.o.g.,
	cf.~Lemm.~\ref{lem:full_gives_pointed_BC} +~\ref{lem:pointed_gives_compatible} and Rmk.~\ref{rmk:config_spaces_are_the_same_d}) that it is represented by an $r$-ramified compatible pointed irregular type $\dot Q$,
	of Poincaré--Katz rank $K \ceqq \on{Katz}(\dot Q)$.
	Then:
	\begin{enumerate}
		\item
		      the \emph{pure local WMCG of} $\dot Q$ is the fundamental group
		      \begin{equation}
			      \Gamma_{\bullet,r} (\dot Q)
			      \ceqq \pi_1 \bigl( \bm B_{\bullet,r}(\dot Q),\dot Q \bigr),
		      \end{equation}
		      where in turn
		      \begin{equation}
			      \bm B_{\bullet,r}(\dot Q)
			      = \bm B_{\bullet,r}^{\leq K}(\dot Q)
			      \ceqq \Set{ \dot Q' \text{ pointed irr.~type } | \on{Katz}(\dot Q') \leq K \text{ and } \dot Q \sim_\bullet \dot Q' },
		      \end{equation}
		      which is the space of (compatible,
		      bounded) $\bullet$-admissible deformations of $\dot Q$;

		\item
		      and the (\emph{full/nonpure}) \emph{local WMCG of} $\Theta$ is the fundamental group
		      \begin{equation}
			      \ol\Gamma_{\bullet,r}  (\Theta)
			      \ceqq \pi_1 \bigl( \bm B_{\bullet,r} (\Theta),\Theta \bigr),
		      \end{equation}
		      where in turn
		      \begin{equation}
			      \bm B_{\bullet,r} (\Theta)
			      = \bm B_{\bullet,r}^{\leq K}  (\Theta)
			      \ceqq \Set{ \Theta' = \Theta(\dot Q') \text{ irr.~class} | \on{Katz}(\dot Q') \leq K \text{ and } \Theta \sim_{\bullet} \Theta' },
		      \end{equation}
		      which is the space of (bounded) $\bullet$-admissible deformations of $\Theta$.
	\end{enumerate}
\end{defi}

\subsubsection{}

Then the main statement is the following:

\begin{theo}
	\label{thm:classical_wmcgs}

	Let $\mc T \ceqq \mc T (\Theta)$ (resp.,
	$\dot{\mc T} \ceqq \dot{\mc T}(\dot Q)$) be the $\bullet$-fission tree of $\Theta$ (resp.,
	the labelled $\bullet$-fission tree of $\dot Q$),
	with vertices $\mb V$ and admissible vertices $\mb A \sse \mb V$.
	Then:
	\begin{enumerate}
		\item
		      there is a group isomorphism
		      \begin{equation}
			      \Gamma_{\bullet,r}(\dot Q) \simeq \Gamma_\bullet(\dot{\mc T}),
			      \qquad \Gamma_\bullet(\dot{\mc T})
			      \ceqq \pi_1 \bigl( \bm B_\bullet(\dot{\mc T}),c \bigr),
		      \end{equation}
		      where $\bm B_\bullet(\dot{\mc T}) \sse \mb C^{\mb A} \simeq \mb C^{\,\abs{\mb A}}$ is the (configuration) space of realizations of $\dot{\mc T}$,
		      and $c = c_{\dot Q} \cl \mb A \to \mb C$ is the realization canonically associated to $\dot Q$;

		\item
		      there is also a group isomorphism
		      \begin{equation}
			      \Gamma_{\bullet,r}(\dot Q)
			      \simeq \prod_{v \in \mb V} \Gamma_\bullet (\dot{\mc T},v),
			      \qquad \Gamma_\bullet(\dot{\mc T},v)
			      \ceqq \pi_1 \bigl( \bm B_\bullet(\dot{\mc T},v),c_v \bigr),
		      \end{equation}
		      where:
		      (i) $\bm B_\bullet(\dot{\mc T},v) \sse \mb C^{n(v)}$ is the local configuration space of $(\dot{\mc T},v)$;
		      (ii) $c_v$ is the restriction of $c$ to the admissible child-vertices of $v \in \mb V$;
		      and (iii) $n(v) \geq 0$ is the number of nonempty child-vertices of $v$;

		\item
		      there is a (typically nonsplit) short exact group sequence
		      \begin{equation}
			      1 \lra \Gamma_{\bullet,r} (\dot Q) \lra \ol\Gamma_{\bullet,r} ( \Theta ) \lra W_\bullet(\mc T) \lra 1,
		      \end{equation}
		      where $W_\bullet(\mc T)$ is the Weyl group of the tree;

		\item
		      and the latter sequence is isomorphic to~\eqref{eq:wmcg_sequence},
		      taking the usual $r$-Galois-closed untwisted pullbacks $\wh Q$ and $\wh\Theta$---%
		      of $\dot Q$ and $\Theta$,
		      respectively.
	\end{enumerate}
\end{theo}

\begin{proof}
	This is just a summary of---%
	part of---%
	\cite{boalch_doucot_rembado_2025_twisted_local_wild_mapping_class_groups_configuration_spaces_fission_trees_and_complex_braids} (when $\bullet = A$),
	\S~\ref{sec:BC_trees} (when $\bullet \in \set{B,C}$),
	and \S~\ref{sec:D_trees} (when $\bullet = D$).
\end{proof}

\section{Twists in the interior of the curve}
\label{sec:interior_twist}

\subsection{Setup for twisted \texorpdfstring{$G$}{G}-local systems}

As mentioned in \S~\ref{sec:intro},
recall that~\cite{boalch_yamakawa_2015_twisted_wild_character_varieties} also considers a different type of `twists',
in addition to the twisted/ramified exponential factors of irregular-singular connections on principal $G$-bundles;
cf.~particularly just below Thm.~6 of op.~cit.

In brief,
in the notation of \S~\ref{sec:about_twisted_irr_classes},
one can allow for \emph{nonconstant} local systems of groups $\mc G$ on the boundary circle $\partial \sse \wh X$ of the real-oriented blowup $\wh X \to X$ at a marked point $x \in X$,
whose monodromy is governed by an automorphism of $G$.
To treat this last extension,
we will now allow for the action of nontrivial \emph{outermorphisms} (= outer automorphisms) of $\mf g$ on the irregular types,
as follows.

\subsubsection{}

Denote by $\Aut(\mf g) \sse \GL_{\mb C}(\mf g)$ the group of Lie-algebra automorphisms of $\mf g$,
and then write
\begin{equation}
	\label{eq:inner_automorphisms}
	\Inn(\mf g)
	\ceqq \Set{ \Ad_{\wt g} | \wt g \in G } \sse \Aut(\mf g).
\end{equation}
Since $G$ is connected,
the latter group of \emph{inner automorphisms} (of $\mf g$) is generated by the linear transformations $\Ad_{(e^Y)} = e^{\ad_Y}$,
for $Y \in \mf g$;\fn{
	The kernel of the Adjoint representation $G \to \GL_{\mb C}(\mf g)$ is the centre $Z(G) \sse G$,
	and the former induces a group isomorphism $P(G) \ceqq G \bs Z(G) \lxra{\simeq} \Inn(\mf g) $.
}~and it is a normal subgroup of $\Aut(\mf g)$.
Then the group of \emph{outermorphisms} is defined by the corresponding short exact group sequence
\begin{equation}
	\label{eq:outer_automorphisms_sequence}
	1 \lra \Inn(\mf g) \lra \Aut(\mf g) \lra \Out(\mf g) \lra 1.
\end{equation}

To reduce the discussion to the semisimple case,
and to recall what is needed there,
we state the following:

\begin{enonce}{Lemma-Definition}
	\label{lem:outer_automorphisms}

	Let $\mf g' \ceqq [\mf g,\mf g] \sse \mf g$ be the (semisimple) derived Lie subalgebra of $\mf g$,
	corresponding to the derived subgroup $G' \ceqq [G,G] \sse G$ (which is still connected~\cite{wofsey_2022_commutator_subgroup_of_connected_group}).
	Denote also by $\mf t' \ceqq \mf t \cap \mf g' \sse \mf g'$ the Cartan subalgebra corresponding to the maximal torus $T' \ceqq T \cap G' \sse G'$.
	Then:
	\begin{enumerate}
		\item
		      there is a direct-product decomposition $\Aut(\mf g) \simeq \GL_{\mb C} \bigl( \mf Z(\mf g) \bigr) \ts \Aut(\mf g')$;

		\item
		      the sequence corresponding to~\eqref{eq:outer_automorphisms_sequence},
		      for the Lie algebra $\mf g'$,
		      splits (i.e.,
		      there is a semidirect-product decomposition $\Aut(\mf g') \simeq \Out(\mf g') \lts \Inn(\mf g')$);\fn{
			      In this case one can also define $\Inn(\mf g')$ as the identity component of $\Aut(\mf g')$.}

		\item
		      moreover,
		      the section $\Out(\mf g') \hra \Aut(\mf g')$ can be chosen so that its image lies in the subgroup of automorphisms preserving $\mf t'$ (so that an element of $\Aut(\mf g')$ preserves $\mf t'$
		      if and only if this holds for its `inner' part);

	\end{enumerate}
\end{enonce}

\begin{proof}[Proof postponed to~\ref{proof:lem_outer_automorphisms}]
\end{proof}

\begin{coro}
	\label{coro:outer_automorphisms}

	In the notation of Lem.-Def.~\ref{lem:outer_automorphisms},
	there are also:
	(i) a group isomorphism $\Inn(\mf g) \simeq \Inn(\mf g')$;
	and (ii) a direct-product decomposition $\Out(\mf g) \simeq \GL_{\mb C} \bigl( \mf Z(\mf g) \bigr) \ts \Out(\mf g')$.
\end{coro}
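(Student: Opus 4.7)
The plan is to derive both (i) and (ii) directly from Lemma-Def.~\ref{lem:outer_automorphisms}~(1.), by carefully tracking where the various subgroups sit inside the product $\GL_{\mb C}(\mf Z(\mf g)) \ts \Aut(\mf g')$. The key observation, throughout, is that $G$-conjugation acts trivially on the (connected component of the) centre $Z(G)^0 \sse G$---hence on its Lie algebra $\mf Z(\mf g) \sse \mf g$---so any element of $\Inn(\mf g)$ is the identity on $\mf Z(\mf g)$ and is therefore determined by its restriction to the derived subalgebra $\mf g' \sse \mf g$ (which is $\Inn$-stable, as a characteristic ideal).

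For (i), I would define the restriction morphism $\rho \colon \Inn(\mf g) \to \Inn(\mf g')$ by $\Ad_{\wt g} \lmt \eval[1]{\Ad_{\wt g}}_{\mf g'}$, for $\wt g \in G$, and show it is bijective. Injectivity follows from the preceding observation together with the decomposition $\mf g = \mf Z(\mf g) \ops \mf g'$: an inner automorphism trivial on both factors is trivial. Surjectivity uses the inclusion $G' \hra G$ of the (connected) derived subgroup, combined with the standard fact that $\Inn(\mf g') = \Ad(G')$ in the semisimple case: any $\Ad_{\wt h} \in \Inn(\mf g')$ with $\wt h \in G'$ lifts canonically to $\Ad_{\wt h} \in \Inn(\mf g)$, whose restriction to $\mf g'$ is the original automorphism.

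For (ii), I would invoke Lemma-Def.~\ref{lem:outer_automorphisms}~(1.) to identify $\Aut(\mf g)$ with $\GL_{\mb C}(\mf Z(\mf g)) \ts \Aut(\mf g')$, and then locate $\Inn(\mf g)$ inside this product. By (i) and the fact that inner automorphisms restrict trivially to $\mf Z(\mf g)$, this subgroup corresponds precisely to $\set{ \Id_{\mf Z(\mf g)} } \ts \Inn(\mf g')$. Quotienting both factors separately then yields
\begin{equation}
    \Out(\mf g) = \Aut(\mf g) \bs \Inn(\mf g) \simeq \GL_{\mb C}(\mf Z(\mf g)) \ts \bigl( \Aut(\mf g') \bs \Inn(\mf g') \bigr) = \GL_{\mb C}(\mf Z(\mf g)) \ts \Out(\mf g').
\end{equation}

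The only genuinely delicate point is verifying surjectivity in (i)---i.e., that inner automorphisms of $\mf g'$ are indeed the restrictions of inner automorphisms of $\mf g$, rather than of some larger group of `semisimple-like' automorphisms. This reduces to the fact that $G'$ is connected with Lie algebra $\mf g'$, so the Adjoint image $\Ad(G') \sse \GL_{\mb C}(\mf g')$ exhausts $\Inn(\mf g')$; since $G' \sse G$, those same elements also define inner automorphisms of $\mf g$, and the diagram commutes. Everything else is formal bookkeeping inside the direct-product decomposition already given by Lemma-Def.~\ref{lem:outer_automorphisms}~(1.).
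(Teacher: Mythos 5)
Your proposal is correct and follows essentially the same route as the paper: the paper's proof of (i) rests on the identity $\ad_{Z+X}=\ad_X$ (which preserves $\mf g'$ and acts trivially on $\mf Z(\mf g)$), which is exactly the observation you use to get injectivity and surjectivity of the restriction map, and (ii) is in both cases the same factor-wise quotient of the decomposition from Lemma-Definition~\ref{lem:outer_automorphisms}~(1.). The only cosmetic difference is that you phrase the generation of $\Inn(\mf g')$ via $\Ad(G')$ at the group level, whereas the paper works directly with the generators $e^{\ad_X}$.
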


\begin{proof}
	The former follows from the fact that $\ad_{(Z+Y)} = \ad_Y \in \mf{gl}_{\mb C}(\mf g)$ preserves $\mf g' \sse \mf g$ and acts trivially on the centre,
	for any $Z \in \mf Z(\mf g)$ and $Y \in \mf g'$.
	Then use the direct-product decomposition of Lem.-Def.~\ref{lem:outer_automorphisms}~(1.) for the latter.
\end{proof}

\subsubsection{}

Now the general version of~\cite[Eq.~(13)]{boalch_yamakawa_2015_twisted_wild_character_varieties} requires:
(i) looking at an (untwisted) irregular type $\wh Q \in \mf t (\!( w )\!) \bs \mf t \llb w \rrb$;
(ii) choosing an integer $r \geq 1$;
and (iii) imposing that there exists a group automorphism $\bm\varphi \in \Aut(G)$ such that
\begin{equation}
	\label{eq:doubly_twisted_irr_types}
	\sigma \bigl( \wh Q \bigr)
	= \wh Q (\zeta^{-1}_r w)
	= \dot{\bm \varphi} \bigl( \wh Q \bigr),
	\qquad \dot{\bm \varphi} \ceqq T_1(\bm\varphi) \in \Aut(\mf g).
\end{equation}
(We take the tangent map of $\bm \varphi$ at the identity of $G$,
and let it act diagonally on the coefficients of $\wh Q$.)

Then,
in a generalization of Rmk.~\ref{rmk:relation_with_twist},
the following holds:

\begin{lemm}
	\label{lem:full_twist_preserves_centralizer}

	In the notation of~\eqref{eq:doubly_twisted_irr_types},
	one has $\bm \varphi(L) \sse L$,
	where $L \ceqq G^{\wh Q} \sse G$ is the centralizer of (all the coefficients of) $\wh Q$.
\end{lemm}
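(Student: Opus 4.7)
The plan is to translate the twisting identity~\eqref{eq:doubly_twisted_irr_types} into a statement about each coefficient of $\wh Q$ separately, and then feed the result into the standard intertwining relation between $\bm\varphi$ and the Adjoint action.

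First I would expand both sides of~\eqref{eq:doubly_twisted_irr_types}. Writing $\wh Q = \sum_{i=1}^s A_i w^{-i}$ as in~\eqref{eq:explicit_irregular_type}, one has $\wh Q(\zeta_r w) = \sum_i \zeta_r^{-i} A_i w^{-i}$, while $\dot{\bm\varphi}$ acts $\mb C$-linearly on the coefficients, so $\dot{\bm\varphi}(\wh Q) = \sum_i \dot{\bm\varphi}(A_i) w^{-i}$. Since the family $\{w^{-i}\}$ is linearly independent over $\mathfrak t$, matching term by term yields
\begin{equation}
  \dot{\bm\varphi}(A_i) = \zeta_r^{-i} A_i, \qquad i \in \set{1,\dc,s}.
\end{equation}
In particular each $A_i$ is an eigenvector of $\dot{\bm\varphi}$, and so $\dot{\bm\varphi}$ stabilises each line $\mb C A_i \sse \mf g$.

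Next I would invoke the elementary identity $\Ad_{\bm\varphi(g)} = \dot{\bm\varphi} \circ \Ad_g \circ \dot{\bm\varphi}^{-1}$, which holds for every Lie group automorphism $\bm\varphi$ of $G$ (it follows from differentiating at $1 \in G$ the equality $\bm\varphi \circ C_g = C_{\bm\varphi(g)} \circ \bm\varphi$ of conjugation maps). Given $g \in L = \bigcap_{i=1}^s G^{A_i}$, I would then compute, for each $i$,
\begin{equation}
  \Ad_{\bm\varphi(g)}(A_i) = \dot{\bm\varphi}\bigl( \Ad_g (\dot{\bm\varphi}^{-1} A_i) \bigr) = \dot{\bm\varphi} \bigl( \Ad_g( \zeta_r^i A_i) \bigr) = \zeta_r^i \dot{\bm\varphi}(A_i) = A_i,
\end{equation}
using in turn the intertwining identity, the eigenvector relation above (inverted), the $\mb C$-linearity of $\Ad_g$ and the assumption $g \in G^{A_i}$, and finally the eigenvector relation itself. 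Hence $\bm\varphi(g) \in G^{A_i}$ for all $i$, i.e., $\bm\varphi(g) \in L$, which is the desired inclusion.

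There is no genuine obstacle here: the only points meriting care are that $\dot{\bm\varphi}$ acts diagonally on the coefficients (so the twisting equation really decouples into one eigenvector equation per coefficient, despite mixing different scalars $\zeta_r^{-i}$), and that the eigenvalues $\zeta_r^{\pm i}$ can be freely pulled through $\dot{\bm\varphi}$ and $\Ad_g$ by $\mb C$-linearity. One may also record as a byproduct that $\bm\varphi^{-1}(L) \sse L$ by the same argument applied to $\bm\varphi^{-1}$, so in fact $\bm\varphi$ \emph{normalises} $L$; this will be useful later when one wants to produce, as in Rmk.~\ref{rmk:relation_with_twist}, a representative of $\bm\varphi$ normalising also the maximal torus $T \sse L$.
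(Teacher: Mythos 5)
Your argument is correct, and it reaches the conclusion by a genuinely (if mildly) different route than the paper. The paper works infinitesimally: for $X \in \mf g^{A_i}$ it computes $\bigl[ \dot{\bm\varphi}(X),A_i \bigr] = \dot{\bm\varphi}\bigl( [X,\dot{\bm\varphi}^{-1}(A_i)] \bigr) = 0$, concludes that $\dot{\bm\varphi}(\mf g^{A_i}) \sse \mf g^{A_i}$, and then passes to the group via the Lie correspondence, using that $L$ is \emph{connected} (so that it is generated by $e^{\mf l}$). You instead stay at the group level, using the global form $\Ad_{\bm\varphi(g)} = \dot{\bm\varphi} \circ \Ad_{g} \circ \dot{\bm\varphi}^{-1}$ of the same equivariance to show directly that $\bm\varphi(g)$ fixes each coefficient $A_i$; this makes the proof independent of the connectedness of the fission subgroup $G^{\wh Q}$, which the paper's version does invoke, at essentially no extra cost. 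Your term-by-term decoupling into eigenvector equations is exactly how the paper also unpacks the hypothesis (cf.~\eqref{eq:phi_monodromy_realization}); the only cosmetic discrepancy is the sign of the exponent --- your literal reading of~\eqref{eq:doubly_twisted_irr_types} gives $\dot{\bm\varphi}(A_i) = \zeta_r^{-i}A_i$, whereas the paper's stated conventions use $\zeta_r^{i}$ --- which is immaterial, since the nonzero scalar cancels in either computation. Your closing observation that $\bm\varphi$ in fact \emph{normalizes} $L$ (apply the same argument to $\bm\varphi^{-1}$, which satisfies the analogous identity with $\zeta_r^{-1}$ in place of $\zeta_r$) is also correct and is indeed what is tacitly needed in \S~\ref{sec:explicit_phi_monodromy_realization} to adjust $\bm\varphi$ by an element of $L$ so as to preserve $T$.
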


\begin{proof}[Proof postponed to~\ref{proof:lem_full_twist_preserves_centralizer}]
\end{proof}

\subsubsection{}
\label{sec:explicit_phi_monodromy_realization}

By Lem.~\ref{lem:full_twist_preserves_centralizer},
in the notation of~\eqref{eq:doubly_twisted_irr_types},
there exists an element $\wt g \in L$ such that $\wt{\bm \varphi} \ceqq \bm{\varphi} \circ \Ad_{\wt g} \in \Aut(G)$ also preserves the given maximal torus $T \sse G$---%
and it acts in the same way on $\wh Q$.
Then apply Lem.-Def.~\ref{lem:outer_automorphisms} to $T_1 \wt{\bm \varphi} \in \Aut(\mf g)$,
splitting it into three pieces:
(i) a linear automorphism $f \in \GL_{\mb C} \bigl( \mf Z(\mf g) \bigr)$;
(ii) an outermorphism $\dot\varphi' \in \Out(\mf g')$,
which we regard as an automorphism of $\mf g'$ preserving $\mf t' \sse \mf g'$;
and (iii) an inner automorphism $\Ad_{\wt g'} \in \Inn(\mf g')$,
for an element $\wt g' \in G'$.
By construction,
the semidirect product $\dot\varphi' \lts \Ad_{\wt g'} \in \Aut(\mf g')$ preserves $\mf t'$,
and so the same holds for $\Ad_{\wt g'}$:
in particular there is a well-defined class $g' \in W$---%
of $\wt g'$---%
modulo $T'$.
(Recall that we identify the Weyl groups of $(G,T)$ and $(G',T')$,
cf.~\S~\ref{sec:background}.)

Hereafter we will denote semidirect products of elements by juxtaposition:
$\dot\varphi' \Ad_{\wt g'} \ceqq \dot\varphi \lts \Ad_{\wt g'}$,
etc.
Then,
if we now (uniquely) decompose the coefficients $A_1,\dc,A_s \in \mf t$ of $\wh Q$ as
\begin{equation}
	A_i
	= A^{\mf Z}_i + A'_i,
	\qquad A^{\mf Z}_i \in \mf Z(\mf g),
	\quad A'_i \in \mf t',
\end{equation}
and tacitly restrict $\dot\varphi'$ to $\mf t'$,
the condition~\eqref{eq:doubly_twisted_irr_types} is equivalent to the spectral constraints
\begin{equation}
	\label{eq:phi_monodromy_realization}
	\begin{cases}
		f \bigl( A^{\mf Z}_i \bigr)
		= \zeta_r^i A^{\mf Z}_i,                  &                            \\
		\dot\varphi' g'(A'_i) = \zeta_r^i (A'_i), & \quad i \in \set{1,\dc,s}.
	\end{cases}
\end{equation}

This leads to the following series of definitions,
where $r \geq 1$ is a fixed integer:

\begin{defi}[cf.~Def.~\ref{def:galois_closed_irr_types}]
	\label{def:phi_r_galois_closed_irr_types}

	Choose a group element $\dot\varphi \in \Out(\mf g)$.
	(Hereafter regard $\dot\varphi$ as an automorphism of $\mf g$ preserving $\mf t$,
	or as a $\mb C$-linear automorphism of $\mf t$,
	as needed.)
	Then:
	\begin{enumerate}
		\item
		      an irregular class $\wh\Theta = \wh\Theta \bigl( \wh Q \bigr) \in \wh{\mc{IT}} \bs W$ is $(\dot\varphi,r)$-\emph{Galois-closed} if
		      \begin{equation}
			      \label{eq:phi_r_galois_closed_irr_class}
			      \wh\Theta \bigl( \wh Q(\zeta^{-1}_r w) \bigr)
			      = \dot\varphi \bigl( \wh\Theta \bigr) \ceqq \wh\Theta \bigl( \dot\varphi(\wh Q) \bigr) \in \wh{\mc{IT}} \bs W;
		      \end{equation}

		\item
		      and an irregular type $\wh Q \in \wh{\mc{IT}}$ is $(\dot\varphi,r)$-\emph{Galois-closed} if this holds for its irregular class.
	\end{enumerate}
	The subset of $(\dot\varphi,r)$-Galois-closed irregular types is denoted by $\wh{\mc{IT}}_{\dot\varphi,r} \sse \wh{\mc{IT}}$.
\end{defi}

\begin{rema}
	The condition~\eqref{eq:phi_r_galois_closed_irr_class} makes sense,
	because the irregular classes of $\dot\varphi \bigl( \wh Q \bigr)$ and $\dot\varphi \bigl( g(\wh Q) \bigr)$ coincide,
	for any $g \in W$.
	More precisely,
	in the notation of Cor.~\ref{coro:outer_automorphisms},
	if we decompose $\dot\varphi = (f,\dot\varphi') \in \GL_{\mb C} \bigl( \mf Z(\mf g) \bigr) \ts \Out(\mf g')$,
	then
	\begin{equation}
		\dot\varphi \bigl( g(\wh Q) \bigr)
		= (\dot\varphi'.g) \bigl( \dot\varphi (\wh Q) \bigr),
		\qquad \dot\varphi'.g \ceqq \dot\varphi' g (\dot\varphi')^{-1} \in W.
	\end{equation}
	(The element $f$,
	instead,
	just commutes past $g$.)
\end{rema}

\begin{rema}
	We will also identify $\Out(\mf g')$ with the group of outermorphisms of the root system $\Phi' = \Phi(\mf g',\mf t')$,
	i.e.,
	the quotient $\Out(\Phi') \simeq \Aut(\Phi') \bs W$.
	Moreover,
	choosing a base $\Delta' \sse \Phi'$ of simple roots provides a semidirect factorization $\Aut(\Phi') \simeq \Out(\Phi') \lts W$,
	so that the outer part corresponds to the automorphisms of $\Phi'$ which preserve $\Delta'$---%
	while the Weyl group permutes the bases in simply-transitive fashion.
	Finally,
	one can identify $\Out(\Phi')$ with the group of automorphisms of the Dynkin diagram of $(\mf g',\mf t',\Delta')$,
	recalling that its set of nodes is precisely $\Delta'$,
	and that the Cartan integers (i.e.,
	the number of edges amongst the nodes) are preserved by automorphisms of $\Phi'$,
	cf.~\cite{armstrong_2010_the_automorphism_group_of_a_root_system}.
\end{rema}

\begin{defi}[cf.~Def.~\ref{def:generating_galois_orbit}]
	Choose group elements $\dot\varphi \in \Out(\mf g)$ and $g' \in W$.
	Then:
	\begin{enumerate}
		\item
		      if~\eqref{eq:phi_monodromy_realization} holds,
		      we say that $g'$ \emph{generates the} $(\dot\varphi,r)$-\emph{Galois-orbit of} $\wh Q$;\fn{
			      If one defines the $\dot\varphi$-\emph{twisted monodromy} $\wt\sigma$ of the exponential local system by means of $\dot\varphi \wt\sigma = \sigma$,
			      then this condition means that $\set{ \wt\sigma^j \bigl( \wh Q \bigr) }_{j \geq 1} = \set{ (g')^j\bigl( \wh Q \bigr) }_{j \geq 1}.$  }

		\item
		      and conversely we denote by $\wh{\mc{IT}}_{\dot\varphi g',r} \sse \wh{\mc{IT}}_{\dot\varphi,r}$ the subset of irregular types whose $(\dot\varphi,r)$-Galois-orbit \emph{is generated by} $g'$.
	\end{enumerate}
\end{defi}

\begin{defi}[cf.~Deff.~\ref{def:twisted_g_admissible_deformations} +~\ref{def:twisted_admissible_deformations} +~\ref{def:twisted_admissible_deformation_full}]
	Choose a pair $(\dot\varphi,g') \in \Out(\mf g) \ts W$.
	Then:
	\begin{enumerate}
		\item
		      two $(\dot\varphi,r)$-Galois-closed irregular types $\wh Q$ and $\wh Q'$ are \emph{mutual} $(g',\dot\varphi,r)$-\emph{admissible deformations},
		      which is symbolized by $\wh Q \sim_{g',\dot\varphi,r} \wh Q'$,
		      if:
		      \begin{enumerate}
			      \item
			            their $(\dot\varphi,r)$-Galois-orbits are generated by $g'$;

			      \item
			            and their $\Phi$-tuples~\eqref{eq:root_valuations_tuple} coincide;
		      \end{enumerate}

		\item
		      two $(\dot\varphi,r)$-Galois-closed irregular types $\wh Q$ and $\wh Q'$ are \emph{mutual} $(\dot\varphi,r)$-\emph{admissible deformations},
		      which is symbolized by $\wh Q \sim_{\dot\varphi,r} \wh Q'$,
		      if there exists $g' \in W$ such that they are mutual $(g',\dot\varphi,r)$-admissible deformations;

		\item
		      and two $(\dot\varphi,r)$-Galois-closed irregular classes $\wh\Theta$ and $\wh\Theta'$ are \emph{mutual}-$(\dot\varphi,r)$~\emph{admissible deformations},
		      which is symbolized by $\wh\Theta \sim_{\dot\varphi,r} \wh\Theta'$,
		      if there exist two ($(\dot\varphi,r$)-Galois-closed) irregular types $\wh Q$ and $\wh Q'$ such that:
		      \begin{enumerate}
			      \item
			            $\wh\Theta = \wh\Theta \bigl( \wh Q \bigr)$,
			            and $\wh\Theta' = \wh\Theta \bigl( \wh Q' \bigr)$;

			      \item
			            and their $\Phi$-tuples~\eqref{eq:root_valuations_tuple} coincide.
		      \end{enumerate}
	\end{enumerate}
\end{defi}

\subsubsection{}

After bounding the irregularity by an integer $s \geq 1$,
the spaces of admissible deformations are denoted by $\bm B^{\leq s}_{\dot\varphi,r} \bigl( \wh Q \bigr) = \bm B_{\dot\varphi,r} \bigl( \wh Q \bigr)$ (resp.~$\bm B^{\leq s}_{\dot\varphi g',r} \bigl( \wh Q \bigr) = \bm B_{\dot\varphi g',r} \bigl( \wh Q \bigr)$,
resp.~$\bm B^{\leq s}_{\dot\varphi,r} \bigl( \wh\Theta \bigr) = \bm B_{\dot\varphi,r} \bigl( \wh\Theta \bigr)$).
We (still) view them as topological subspaces of $\mf t^s$,
cf.~\S~\ref{sec:topology}.

\subsubsection{}

Throughout the rest of this section,
we tacitly fix an outermorphism
\begin{equation}
	\dot\varphi
	= (f,\dot\varphi') \in \Out(\mf g) \simeq \GL_{\mb C} \bigl( \mf Z(\mf g) \bigr) \ts \Out(\Phi').
\end{equation}
Moreover,
the symbol $\bm g$ will always denote a (semidirect) product
\begin{equation}
	\label{eq:semidirect_product}
	\bm g \ceqq \dot\varphi g'
	= (f,\dot\varphi' g') \in \GL_{\mb C} \bigl( \mf Z(\mf g) \bigr) \ts \Aut(\Phi'),
	\qquad g' \in W,
\end{equation}
which we view as an element of $\GL_{\mb C}(\mf t)$;
and we set $\bm g' \ceqq \dot\varphi' g' \in \Aut(\Phi')$.

Overall,
we consider the inclusions
\begin{equation}
	\GL_{\mb C} \bigl( \mf Z(\mf g) \bigr) \sse Z_{\GL_{\mb C}(\mf t)}(W) \sse N_{\GL_{\mb C}(\mf t)} (W) \supseteq \Aut(\Phi) \supseteq \Aut(\Phi'),
\end{equation}
as well as the (left) reflection coset
\begin{equation}
	\label{eq:twisted_weyl_group}
	\dot\varphi W
	\ceqq \Set{ \bm g = (f,\bm g') | g' \in W } \sse \GL_{\mb C} (\mf t),
\end{equation}
in the notation of~\eqref{eq:semidirect_product}.

Note that the `$\dot\varphi$-twisted' Weyl group~\eqref{eq:twisted_weyl_group} lies in $\Aut(\Phi')$ if and only if $f = \Id_{\mf Z(\mf g)}$.
More generally,
we will henceforth assume that $f$ has \emph{finite} order in $\GL_{\mb C} \bigl( \mf Z(\mf g) \bigr)$ (cf.~\S~\ref{sec:reflection cosets}):
it follows that $\dot\varphi$ and $\bm g$ have finite order in $\Out(\mf g)$ and $\GL_{\mb C}(\mf t)$,
respectively.\fn{
	This is consistent with the assumption of~\cite{boalch_yamakawa_2023_polystability_of_stokes_representations_and_differential_galois_groups},
	that the monodromy group of the local system of groups $\mc G$ (over $\partial$) has finite image in $\Out(G) = \Aut(G) \bs \Inn(G)$,
	where in turn (as usual)
	\begin{equation}
		\qquad \Inn(G)
		\ceqq \Set{ C_{\wt g} \cl \wt g' \mt \wt g \wt g' \wt g^{-1} | \wt g \in G } \sse \Aut(G),
	\end{equation}
	invoking the group of (inner/outer) group automorphisms of $G$.}

\begin{rema}
	\label{rmk:twisted_loop_algebra}

	This is a good spot to relate with \emph{twisted loop algebras},
	cf.~the survey~\cite{senesi_2010_finite_dimensional_representation_theory_of_loop_algebras_a_survey}:
	let us assume (for simplicity) that $\mf g$ is simple,
	so that $\mf g = \mf g'$,
	$\mf t' = \mf t$,
	etc.
	(But contrary to op.~cit.~we phrase this in the formal setting,
	i.e. taking formal Laurent series in $w$ rather than Laurent polynomials.)

	For an integer $r \geq 1$,
	let $\dot{\bm\varphi} \in \Aut(\mf g)$ be such that $\dot{\bm\varphi}^r = 1$.
	Then consider the usual (untwisted) loop algebra of $\mf g$,
	viz.,
	$\mc L\mf g = \mf g (\!( w )\!) \ceqq \mf g \ots_{\mb C} \mb C (\!( w )\!)$.
	Now define the \emph{twisted loop algebra of} $(\mf g,\dot{\bm\varphi},r)$ as the Lie subalgebra
	\begin{equation}
		\label{eq:twisted_loop_algebra}
		\mc L_r(\mf g,\dot{\bm\varphi})
		\ceqq \mf g(\!( w )\!)^{\mc L\dot{\bm\varphi}} = \Set{ \bm Y \in \mf g(\!( w )\!) | \mc L \dot{\bm\varphi} (\bm Y) = \bm Y },
	\end{equation}
	where in turn $\mc L \dot{\bm\varphi} = \mc L_r \dot{\bm\varphi} \in \Aut \bigl( \mf g (\!( w )\!) \bigr)$ is the ($w$-graded) automorphism obtained from the (completed) $\mb C$-linear extension of
	\begin{equation}
		Y \ots w^i
		\lmt \zeta_r^i \dot{\bm\varphi}(Y) \ots w^i,
		\qquad i \in \mb Z, \quad Y \in \mf g.
	\end{equation}
	It follows that the elements of~\eqref{eq:twisted_loop_algebra} consist precisely of $\mf g$-valued formal Laurent series $\bm Y = \bm Y(w)$ such that $\bm Y(\zeta^{-1}_r w) = \mc L\dot{\bm \varphi} \bigl( \bm Y(w) \bigr)$.
	Finally,
	to relate with our setting,
	choose a Cartan subalgebra $\mf t \sse \mf g$ such that $\dot{\bm \varphi}(\mf t) \sse \mf t$ (cf.~\cite[\S~8.1--8.3]{kac_1990_infinite_dimensional_lie_algebras}).
	Then,
	writing $\dot\varphi g' = \bm g \ceqq \eval[1]{\dot{\bm \varphi}}_{\mf t} \in \GL_{\mb C}(\mf t) = \Aut(\mf t)$ as above,
	there is a $\mb C$-linear isomorphism:
	\begin{equation}
		\wh{\mc{IT}}_{\bm g,r}
		\simeq \mc L_r(\mf t,\bm g) \bs \mf t \llb w \rrb \sse \mf t (\!( w )\!) \bs \mf t \llb w \rrb.
	\end{equation}
	(Noting that $\mc L\dot{\bm \varphi} \bigl( \mf t\llb w \rrb \bigr) \sse \mf t \llb w \rrb$.)

	In this viewpoint,
	the fact that one can rid of the choice of the element $g' \in W$ to describe the admissible deformation spaces---%
	as we do below in \S~\ref{sec:no_doubly_twisted_marking}---%
	is reminiscent of~\cite[Prop.~8.5]{kac_1990_infinite_dimensional_lie_algebras} (cf.~\cite[Thm.~1]{senesi_2010_finite_dimensional_representation_theory_of_loop_algebras_a_survey}).
\end{rema}

\subsection{Pure quasi-generic case:
	one coefficient}

Suppose first that $\wh Q = Aw^{-1}$,
with $A \in \mf t_{\reg}$.

\begin{enonce}{Proposition-Definition (cf.~Prop.-Def.~\ref{prop:generic_twisted_deformation_space_1_coeff})}
	\label{prop:generic_doubly_twisted_deformation_space_1_coeff}

	Choose an element $g'$ generating the $(\dot\varphi,r)$-Galois-orbit of $\wh Q$,
	and extend the notation of~\eqref{eq:regular_eigenspace} via
	\begin{equation}
		\label{eq:twisted_regular_eigenspace}
		\mf t(\bm g,\zeta_r)
		\ceqq \ker (\bm g - \zeta_r \Id_{\mf t}) \sse \mf t,
	\end{equation}
	as in~\cite[\S~6]{springer_1974_regular_elements_of_finite_reflection_groups}.
	Then one has
	\begin{equation}
		\label{eq:generic_doubly_twisted_deformation_space_1_coeff}
		\bm B_{\bm g,r} \bigl( \wh Q \bigr)
		= \mf t(\bm g,\zeta_r) \bigsm \bigcup_{\Phi} H_\alpha(\bm g,\zeta_r),
		\qquad H_\alpha(\bm g,\zeta_r) \ceqq H_\alpha \cap \mf t(\bm g,\zeta_r) \sse \mf t(\bm g,\zeta_r),
	\end{equation}
	which is the complement of a complex reflection arrangement---%
	in the notation of~\eqref{eq:regular_cartan}.
\end{enonce}

\begin{proof}
	We have seen that $A$ is a (regular) eigenvector of the twisted Weyl-group element $\bm g \in \GL_{\mb C}(\mf t)$,
	and so we can use the basic statements of Lehrer--Springer theory:
	cf.~\S~\ref{sec:lehrer_springer_theory}---%
	and references therein.\fn{
		The role of $N_W(\mf t_\phi)$ is now played by the whole of $W$,
		as we are twisting the Weyl group by an element of $\Out(\Phi)$ \emph{before} starting to break $\Phi$ into Levi subsystems.}
\end{proof}

(Hereafter,
we will say that an element $g' \in W$ is $\dot\varphi$-\emph{regular} if $\bm g \in \GL_{\mb C}(\mf t)$ admits a regular eigenvector $A \in \mf t_{\reg}$.)

\subsection{Pure quasi-generic case:
	several coefficients}

Suppose now instead that $\wh Q = \sum_{i = 1}^s A_iw^{-i}$,
with $A_s \in \mf t_{\reg}$.
Then,
extending Prop.\ref{prop:generic_doubly_twisted_deformation_space_1_coeff}:

\begin{prop}
	\label{prop:pure_generic_doubly_twisted_def_space}

	There is a factorization $\bm B_{\bm g,r} \bigl( \wh Q \bigr) = V' \ts U$,
	where $V' \sse \mf t^{s-1}$ is a vector subspace,
	and $U \sse \mf t(\bm g,\zeta_r^s)$ is a hyperplane complement analogous to~\eqref{eq:generic_doubly_twisted_deformation_space_1_coeff}.
\end{prop}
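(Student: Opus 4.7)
The strategy is to mimic the proof of Prop.~\ref{prop:generic_pure_twisted_deformation_space} with the twisted Weyl-group element $\bm g = \dot\varphi g' \in \GL_{\mb C}(\mf t)$ replacing $g$. First I would translate the condition that $g'$ generate the $(\dot\varphi,r)$-Galois-orbit of $\wh Q$ into the spectral constraints~\eqref{eq:phi_monodromy_realization}, which (after folding the central and semisimple pieces into the single operator $\bm g$) read
\begin{equation}
	\bm g (A_i) = \zeta_r^i A_i \in \mf t, \qquad i \in \set{1,\dc,s}.
\end{equation}
Hence $A_i \in \mf t(\bm g,\zeta_r^i)$ for each $i$, in the notation of~\eqref{eq:twisted_regular_eigenspace}.

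Next I would invoke the untwisted factorization $\bm B \bigl( \wh Q \bigr) = \prod_{i=1}^s \bm B \bigl( \wh Q,i \bigr)$ from~\cite[Prop.~2.1]{doucot_rembado_tamiozzo_2022_local_wild_mapping_class_groups_and_cabled_braids}, recalled at~\eqref{eq:generic_factors_pure_deformation_space}. Under the assumption that $A_s$ is regular the factor $\bm B \bigl( \wh Q,s \bigr) = \mf t_{\reg}$, while all the lower factors equal $\mf t$. Letting the products commute with the intersection yields
\begin{equation}
	\bm B_{\bm g,r} \bigl( \wh Q \bigr) = \prod_{i = 1}^s \Bigl( \bm B \bigl( \wh Q,i \bigr) \cap \mf t(\bm g,\zeta_r^i) \Bigr) = V' \ts \bigl( \mf t_{\reg} \cap \mf t(\bm g,\zeta_r^s) \bigr),
\end{equation}
with $V' \ceqq \prod_{i = 1}^{s-1} \mf t(\bm g,\zeta_r^i) \sse \mf t^{s-1}$ a vector subspace (possibly $0$-dimensional). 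Set $U \ceqq \mf t_{\reg} \cap \mf t(\bm g,\zeta_r^s)$.

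Finally I would argue that $U \sse \mf t(\bm g,\zeta_r^s)$ is a hyperplane complement analogous to~\eqref{eq:generic_doubly_twisted_deformation_space_1_coeff}. The key input is that $A_s$ is a regular eigenvector of $\bm g$ with eigenvalue $\zeta_r^s$, so $\bm g$ is a $\dot\varphi$-regular element in the sense that $\mf t(\bm g,\zeta_r^s)$ contains a vector avoiding every root hyperplane. Then Lehrer--Springer's theory applied to the reflection coset $\dot\varphi W \sse \GL_{\mb C}(\mf t)$ (cf.~\cite[\S~6]{springer_1974_regular_elements_of_finite_reflection_groups} and \S~\ref{sec:lehrer_springer_theory}) guarantees that the intersections $H_\alpha \cap \mf t(\bm g,\zeta_r^s)$ are either the whole of $\mf t(\bm g,\zeta_r^s)$ or proper hyperplanes of it; the existence of the regular eigenvector $A_s$ rules out the former, whence $U$ is the complement of a (finite, linear) complex reflection arrangement inside $\mf t(\bm g,\zeta_r^s)$---as in Prop.-Def.~\ref{prop:generic_doubly_twisted_deformation_space_1_coeff}.

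The only subtlety I anticipate is bookkeeping around the fact that $\zeta_r^s$ need not be a primitive $r$-th root of $1$ (cf. Rmk.~\ref{rmk:cyclotomic_roots}): it is a primitive $d$-th root of $1$ for $d \ceqq r \wdg s$, and one must check that $\bm g$ remains a $\zeta_r^s$-regular element of the twisted Weyl group when regarded at this possibly smaller order. This is however automatic from the definition of $\dot\varphi$-regularity applied to the eigenvalue $\zeta_r^s$, and requires no further input beyond the existence of the regular eigenvector $A_s$ itself.
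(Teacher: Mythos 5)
Your proposal is correct and follows exactly the route the paper takes: the paper's proof is literally "the proof of Prop.~\ref{prop:generic_pure_twisted_deformation_space} extends verbatim," and your argument is precisely that extension, substituting $\bm g$ for $g$, using the spectral constraints~\eqref{eq:phi_monodromy_realization}, the untwisted factorization, and the (Lehrer--)Springer input already packaged in Prop.-Def.~\ref{prop:generic_doubly_twisted_deformation_space_1_coeff}. Your closing remark about $\zeta_r^s$ being a primitive $d$-th root of $1$ for $d = r \wdg s$ matches the caveat the paper makes in the untwisted case (cf.~Rmk.~\ref{rmk:cyclotomic_roots}).
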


\begin{proof}
	The proof of Prop.~\ref{prop:generic_pure_twisted_deformation_space} extends verbatim.
\end{proof}

\subsection{Pure general case:
	one coefficient}

Choose now $\wh Q = A w^{-1}$,
with $A \in \mf t$ arbitrary.

\begin{enonce}{Proposition-Definition}
	\label{prop:pure_doubly_twisted_def_space_general_1_coeff}

	Let $\phi \sse \Phi$ be the Levi annihilator of $A$,
	as in~\eqref{eq:kernel_intersection},
	and choose an element $g' \in W$ generating the $(\dot\varphi,r)$-Galois-orbit of $\wh Q$.
	Then:
	\begin{enumerate}
		\item
		      the subspace~\eqref{eq:kernel_intersection} is $\bm g$-stable,
		      i.e.,
		      $\bm g \in N_{\GL_{\mb C}(\mf t)}(\mf t_\phi)$;

		\item
		      and one has
		      \begin{equation}
			      \label{eq:doubly_twisted_general_pure_def_space_1_coeff}
			      \bm B_{\bm g,r} \bigl( \wh Q \bigr)
			      = \mf t_\phi(\bm g_\phi,\zeta_r) \, \bigsm \, \bigcup_{\Phi \sm \phi} H_\alpha(\bm g_\phi,\zeta_r),
			      \qquad \bm g_\phi \ceqq \eval[1]{\bm g}_{\mf t_\phi},
		      \end{equation}
		      which is a nonempty hyperplane complement,
		      where we extend the notation of~\eqref{eq:twisted_regular_eigenspace},
		      and set
		      \begin{equation}
			      \label{eq:doubly_twisted_hyperplane_restriction}
			      H_\alpha(\bm g_\phi,\zeta_r)
			      \ceqq H_\alpha \cap \mf t_\phi(\bm g_\phi,\zeta_r)
			      = H_\alpha(\phi) \cap \mf t(\bm g,\zeta_r) \sse \mf t_\phi(\bm g_\phi,\zeta_r).
		      \end{equation}
	\end{enumerate}
\end{enonce}

\begin{proof}
	The proof of Prop.~\ref{prop:pure_general_twisted_deformation_space_1_coeff} extends verbatim,
	after establishing Lem.~\ref{lem:doubly_twisted_setwise_stabilizer}---%
	just below---,
	and noting that $\mf t(\bm g,\zeta_r) \cap \mf t_\phi = \bm g_\phi$.
	In turn,
	the latter follows from: (i) the splitting
	\begin{equation}
		\mf t(\bm g,\zeta_r)
		= \mf Z(\mf g)(f,\zeta_r) \ops \mf t'(\bm g',\zeta_r) \sse \mf Z(\mf g) \ops \mf t';
	\end{equation}
	(ii) the fact that $\bm g' \in \Aut(\Phi')$ acts in semisimple fashion on $\mf t'$ (since it has finite order);
	and (iii) the inclusion $\mf Z(\mf g) \sse \mf t_\phi$,
	whence $\bm g_\phi = (f,\bm g'_\phi) \in \GL_{\mb C}(\mf t_\phi)$.
\end{proof}

\begin{lemm}
	\label{lem:doubly_twisted_setwise_stabilizer}

	Let $\phi \sse \Phi$ be a Levi subsystem,
	and choose an element $g' \in W$.
	Then,
	in the notation of~\eqref{eq:pure_untwisted_nongeneric_deformation_space_1_coeff},
	the following conditions are equivalent:
	\begin{enumerate}
		\item
		      $\bm g \in \GL_{\mb C}(\mf t)$ stabilizes the kernel $\mf t_\phi \sse \mf t$;

		\item
		      $\bm g (A) \in \bm B \bigl( \wh Q \bigr)$;

		\item
		      and 	$\bm g'(A) \in \bm B \bigl( \wh Q \bigr)$.
	\end{enumerate}
\end{lemm}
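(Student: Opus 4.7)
The plan is to reduce each of the three conditions to the single statement that the dual action of $\bm g$ on $\mf t^{\dual}$ permutes the Levi subsystem $\phi \sse \Phi$.

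First, I would rewrite the stratum in Levi form as $\bm B \bigl( \wh Q \bigr) = \set{ B \in \mf t | \phi_B = \phi }$, in the notation of \S~\ref{sec:pure_nongeneric_case_one_coeff}, and note that since every root vanishes on $\mf Z(\mf g)$ one has $\mf Z(\mf g) \sse \mf t_\phi$ as well as $\mf Z(\mf g) \sse H_\alpha$ for each $\alpha \in \Phi$. In particular, $\bm B \bigl( \wh Q \bigr)$ is invariant under translations by the centre, and the Levi annihilator $\phi_B$ depends only on the semisimple component $B - B^{\mf Z} \in \mf t'$ of $B$.

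For the equivalence $(2) \Leftrightarrow (3)$, I decompose $A = A^{\mf Z} + A'$ as in \S~\ref{sec:explicit_phi_monodromy_realization}; then writing $\bm g = (f,\bm g')$ one has $\bm g(A) - \bm g'(A) = (f - \Id_{\mf Z(\mf g)}) \bigl( A^{\mf Z} \bigr) \in \mf Z(\mf g)$, since the $\mf t'$-components of both $\bm g(A)$ and $\bm g'(A)$ equal $\bm g'(A')$. The translation-invariance just established then yields the equivalence.

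For $(1) \Leftrightarrow (3)$, the key identity is $\phi_{\bm g'(A)} = \bigl( (\bm g')^{\dual} \bigr)^{-1}(\phi)$, obtained by unwinding $\alpha \bigl( \bm g'(A) \bigr) = \bigl( (\bm g')^{\dual} \alpha \bigr)(A)$: thus $(3)$ is equivalent to $(\bm g')^{\dual}$ permuting $\phi$. On the other side, since $\phi$ is a Levi subsystem one has $\phi = \Phi \cap \spann_{\mb C}(\phi)$, and $\spann_{\mb C}(\phi) \sse \mf t^{\dual}$ is precisely the annihilator of $\mf t_\phi$; as $\bm g \in \GL_{\mb C}(\mf t)$ has finite order, it stabilizes $\mf t_\phi$ iff its transpose $\bm g^{\dual}$ stabilizes $\spann_{\mb C}(\phi)$ iff $\bm g^{\dual}$ permutes $\phi \sse \Phi$. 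The central factor $f$ contributes nothing to the action on $\spann_{\mb C}(\phi) \sse (\mf t')^{\dual}$, so $\bm g^{\dual}$ and $(\bm g')^{\dual}$ agree there, closing the loop.

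The main obstacle, such as it is, amounts to careful bookkeeping between $\mf t = \mf Z(\mf g) \ops \mf t'$ and its dual, together with the identifications $\Phi \sse (\mf t')^{\dual} \sse \mf t^{\dual}$; but once the translation-invariance under $\mf Z(\mf g)$ isolates the central contribution, all three conditions collapse to the single assertion that $(\bm g')^{\dual}$ permutes $\phi$.
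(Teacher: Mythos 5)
Your proof is correct and follows essentially the same route as the paper's: isolate the central contribution via $\mf Z(\mf g) \sse \mf t_\phi \cap \bigcap_\Phi H_\alpha$ (which reduces to the semisimple part and gives $(2)\Leftrightarrow(3)$), then use that $\bm g' \in \Aut(\Phi')$ permutes the roots. The only cosmetic difference is that you spell out, via the double-annihilator identity $\on{Ann}(\mf t_\phi) = \spann_{\mb C}(\phi)$ and the Levi property $\phi = \Phi \cap \spann_{\mb C}(\phi)$, the step that the paper delegates to~\cite[Lem.~2.1]{doucot_rembado_2023_topology_of_irregular_isomonodromy_times_on_a_fixed_pointed_curve} after noting $\bm g(H_\alpha) = H_{\bm g(\alpha)}$.
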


\begin{proof}[Proof postponed to~\ref{proof:lem_doubly_twisted_setwise_stablizer}]
\end{proof}

\begin{rema}
	One can view Lem.~\ref{lem:doubly_twisted_setwise_stabilizer} as a generalization of Lem.~\ref{lem:about_setwise_stabilizers},
	which is about the trivial coset of $W$.
	Beware,
	however,
	that Lem.~\ref{lem:about_pointwise_stabilizers},
	which was used several times above,
	does \emph{not} extend verbatim.
	More precisely,
	there are elements of $\Aut(\Phi')$ which do not act as the identity on $\mf t_\phi$,
	even though they fix an element of the Levi stratum of $\phi$.
	For example,
	consider $\dot\varphi' = -\Id_{\mf t} \in \Out(\Phi')$,
	for the standard Cartan subalgebra of $\mf g \ceqq \mf{sl}_m(\mb C)$,
	with $m \geq 3$---%
	cf.~\S~\ref{sec:doubly_twisted_type_A}.
	Then $\wt g' (A) = A$ if and only if $g'(A) = -A$,
	and if we take $A \in \mf t_{\reg}$ then $g' \in W_A(m)$ has order 2 by Springer theory.
	(This shows in particular that the $(\dot\varphi,1)$-Galois-closed irregular types/classes are \emph{not} necessarily untwisted.)

	The previous results still extend because we work within a \emph{single} reflection coset of $(\mf t,W)$,
	rather than changing the twist at each step of the fission.
\end{rema}

\subsection{Pure general case:
	several coefficients}

Let finally $\wh Q = \sum_{i = 1}^s A_i w^{-i}$ be arbitrary.
Iterating the previous arguments along the Levi filtration $\bm \phi = (\phi_1 \sse \dm \sse \phi_s \sse \phi_{s+1} = \Phi)$---%
of $\Phi$---%
determined by $\wh Q$ yields a proof of the following:

\begin{enonce}{Theorem-Definition (cf.~Thm.-Def.~\ref{thm:general_pure_twisted_deformation_space})}
	\label{thm:general_pure_doubly_twisted_deformation_space}

	For any integer $i \in \set{1,\dc,s}$ define
	\begin{equation}
		\bm B_{\bm g,r} \bigl( \wh Q,i \bigr)
		\ceqq \mf t_{\phi_i} (\bm g_{\phi_i},\zeta^i_r) \, \bigsm \, \bigcup_{\phi_{i+1} \sm \phi_i} H_\alpha(\bm g_{\phi_i},\zeta_r^i),
	\end{equation}
	in the notation of~\eqref{eq:doubly_twisted_general_pure_def_space_1_coeff}--\eqref{eq:doubly_twisted_hyperplane_restriction}.
	Then there is a direct-product decomposition
	\begin{equation}
		\bm B_{\bm g,r} \bigl( \wh Q \bigr)
		= \prod_{i = 1}^s \bm B_{\bm g,r} \bigl( \wh Q,i \bigr) \sse \mf t^s.
	\end{equation}
\end{enonce}

\subsection{Reduction to the simple/irreducible case}

Even in this extended setting,
it is possible to reduce the study of the pure admissible deformation spaces to the case where $\mf g = \mf g'$ is simple---%
and $W$ and $\Phi = \Phi'$ are irreducible.

Namely,
keeping all the notation from Lem.~\ref{lem:reduction_to_simple_case}:

\begin{lemm}
	Factor also $\Out(\Phi) = \prod_i \Out(\Phi_i)$.
	(This corresponds to acting on each irreducible component of the Dynkin diagram,
	in any choice of base $\Delta \sse \Phi$.)
	Moreover,
	decompose uniquely $\dot\varphi' = \prod_i \dot\varphi'_i$,
	with $\dot\varphi'_i \in \Out(\Phi_i)$,
	and set
	\begin{equation}
		\dot\varphi_i
		\ceqq
		\begin{cases}
			f,              & \quad \mf I_i = \mf Z(\mf g),    \\
			\dot\varphi'_i, & \quad \mf I_i \neq \mf Z(\mf g).
		\end{cases}
	\end{equation}
	Then there is a direct-product decomposition
	\begin{equation}
		\bm B_{\bm g,r} \bigl( \wh Q \bigr)
		= \prod_i \bm B_{\bm g_i,r} \bigl( \wh Q_i \bigr),
		\qquad \bm g_i \ceqq \dot\varphi_i g'_i \in \GL_{\mb C}(\mf t_i).\fn{
			Here $g' = \prod_i g'_i \in \prod_i W_i$.
			Note also that $\mf t_i = \mf t \cap \mf I_i = \mf Z(\mf g)$ if $\mf I_i = \mf Z(\mf g)$,
			in which case $g'_i = 1$ and $\bm g_i = f$.
			(The corresponding component is homotopically trivial.)}
	\end{equation}
\end{lemm}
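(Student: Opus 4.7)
The plan is to adapt the proof of Lem.~\ref{lem:reduction_to_simple_case} to the doubly-twisted setting, by combining the untwisted direct-product decomposition with the componentwise behaviour of the $(\dot\varphi,r)$-Galois-orbit condition.

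First I would verify that the linear automorphism $\bm g = \dot\varphi g' \in \GL_{\mb C}(\mf t)$ acts block-diagonally with respect to the Cartan decomposition $\mf t = \bops_i \mf t_i$, so that $\eval[1]{\bm g}_{\mf t_i} = \bm g_i$. This uses: the splitting $W = \prod_i W_i$, which makes each $g'_i$ preserve $\mf t_i$; the hypothesis $\Out(\Phi) = \prod_i \Out(\Phi_i)$, which ensures that each $\dot\varphi'_i$ restricts to an outermorphism of the irreducible component $\Phi_i \sse \Phi$ (hence preserves $\mf t_i \cap \mf g'$); and the observation that the central factor $f$ acts on the unique $\mf t_i$ coming from $\mf I_i = \mf Z(\mf g)$.

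Next I would show that the defining conditions of $g'$-admissible deformations split componentwise. Since each root $\alpha \in \Phi_i$ vanishes on $\mf t_j$ for $j \neq i$, the root-valuation tuple~\eqref{eq:root_valuations_tuple} decomposes as $d_\alpha \bigl( \wh Q \bigr) = d_\alpha \bigl( \wh Q_i \bigr)$ for $\alpha \in \Phi_i$; this is precisely the input to Lem.~\ref{lem:reduction_to_simple_case}, giving $\bm B \bigl( \wh Q \bigr) = \prod_i \bm B \bigl( \wh Q_i \bigr)$. Independently, by the block-diagonal action from the first step, the spectral constraint $\wh Q'(\zeta_r w) = \bm g \bigl( \wh Q'(w) \bigr)$ defining $\wh{\mc{IT}}_{\bm g,r}$ is equivalent to the family $\wh Q'_i(\zeta_r w) = \bm g_i \bigl( \wh Q'_i(w) \bigr)$ of componentwise constraints.

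Intersecting the two decompositions, under the identification $\wh{\mc{IT}}^{\leq s} \simeq \mf t^s$, then yields the claimed product factorization. The only subtle point I expect is the bookkeeping around the central factor, where one must separately track the $f$-part (a $\mb C$-linear automorphism of $\mf Z(\mf g)$) and the Weyl-group part (which is trivial there); but this is already handled by the direct-product decomposition $\Aut(\mf g) \simeq \GL_{\mb C} \bigl( \mf Z(\mf g) \bigr) \ts \Aut(\mf g')$ of Lem.-Def.~\ref{lem:outer_automorphisms}, so no genuinely new ideas should be required beyond those already appearing in the pure case.
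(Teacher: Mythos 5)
Your proposal is correct and follows essentially the same route as the paper: the paper's proof simply notes that the argument for Lem.~\ref{lem:reduction_to_simple_case} applies verbatim, the key point being the eigenspace splitting $\mf t(\bm g,\zeta) = \bops_i \mf t_i(\bm g_i,\zeta)$ (your block-diagonality plus componentwise spectral constraint), combined with the direct-product factorization of Thm.-Def.~\ref{thm:general_pure_doubly_twisted_deformation_space}. Your more explicit bookkeeping of the central factor $f$ versus the semisimple part is consistent with what the paper leaves implicit.
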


\begin{proof}
	The proof of Lem.~\ref{lem:reduction_to_simple_case} applies verbatim,
	using the factorization of Thm.-Def.~\ref{thm:general_pure_doubly_twisted_deformation_space}.
\end{proof}

\subsection{Forgetting the marking}
\label{sec:no_doubly_twisted_marking}

As in \S~\ref{sec:no_marking},
we now get rid of the choice of the `inner' part of the automorphism:

\begin{prop}
	Suppose that $g',g'' \in W$ generate the $(\dot\varphi,r)$-Galois-orbit of an \emph{arbitrary} irregular type $\wh Q$.
	Then $\bm B_{\dot\varphi g',r} \bigl( \wh Q \bigr) = \bm B_{\dot\varphi g'',r} \bigl( \wh Q \bigr)$.
\end{prop}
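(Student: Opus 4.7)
The plan is to mimic the proof of Prop.~\ref{prop:no_marking_general}, reducing everything (via the direct-product decomposition of Thm.-Def.~\ref{thm:general_pure_doubly_twisted_deformation_space}) to proving that the restrictions $(\dot\varphi g')_{\phi_i}$ and $(\dot\varphi g'')_{\phi_i}$ coincide inside $\GL_{\mb C}(\mf t_{\phi_i})$ for every $i \in \set{1,\dc,s}$, where $\bm \phi = (\phi_1 \sse \dc \sse \phi_s \sse \phi_{s+1} = \Phi)$ is the Levi filtration determined by $\wh Q$.  Once this is established, the eigenspaces $\mf t_{\phi_i}((\dot\varphi g')_{\phi_i},\zeta_r^i)$ and the hyperplanes $H_\alpha((\dot\varphi g')_{\phi_i},\zeta_r^i)$ of~\eqref{eq:doubly_twisted_general_pure_def_space_1_coeff}--\eqref{eq:doubly_twisted_hyperplane_restriction} depend only on this common restriction, so each pure factor $\bm B_{\dot\varphi g',r}(\wh Q,i)$ coincides with $\bm B_{\dot\varphi g'',r}(\wh Q,i)$, and the direct-product decomposition yields the desired equality.

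Concretely, I would set $\bm g_1 \ceqq \dot\varphi g'$ and $\bm g_2 \ceqq \dot\varphi g''$ in $\GL_{\mb C}(\mf t)$, and consider the quotient
\begin{equation}
	h \ceqq \bm g_1 \bm g_2^{-1} = \dot\varphi \cdot g'(g'')^{-1} \cdot \dot\varphi^{-1}.
\end{equation}
Using the section $\Out(\mf g') \hra \Aut(\mf g')$ of Lem.-Def.~\ref{lem:outer_automorphisms}, which lands in $N_{\GL_{\mb C}(\mf t)}(W)$, together with the fact that the central part $f$ cancels in the quotient, one checks that $h$ actually lies in $W$ (rather than in some nontrivial $\dot\varphi$-twisted coset).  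By hypothesis $\bm g_1(A_i) = \zeta_r^i A_i = \bm g_2(A_i)$ for every coefficient $A_i$ of $\wh Q$, so $h$ fixes each $A_i$, i.e., $h \in W_{A_1} \cap \dc \cap W_{A_s}$.  The remark following Thm.-Def.~\ref{thm:full_general_twisted_deformation_space} identifies this intersection with the pointwise stabilizer $W_{\mf t_{\phi_1}} \sse W$, so $h$ acts trivially on $\mf t_{\phi_1} \supseteq \mf t_{\phi_i}$ for every $i$.  Combining this with the fact that $\bm g_2$ preserves each $\mf t_{\phi_i}$ (from Prop.-Def.~\ref{prop:pure_doubly_twisted_def_space_general_1_coeff}), for any $X \in \mf t_{\phi_i}$ one has $\bm g_1(X) = h \bm g_2(X) = \bm g_2(X)$, which is exactly the sought-after equality $(\bm g_1)_{\phi_i} = (\bm g_2)_{\phi_i}$.

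The main (minor) technical point I expect is the verification that $h \in W$: this is precisely where the hypothesis that both elements share the same outer part $\dot\varphi$ becomes essential --- the outer contributions cancel rather than composing into a nontrivial outermorphism --- and where the specific choice of $\mf t'$-preserving section $\Out(\mf g') \hra \Aut(\mf g')$ from Lem.-Def.~\ref{lem:outer_automorphisms} is used (so that $g'(g'')^{-1} \in W$ is conjugated by $\dot\varphi$ into another element of $W$).  Modulo this observation, the argument is essentially the untwisted one of Prop.~\ref{prop:no_marking_general}, together with the stability statement Prop.-Def.~\ref{prop:pure_doubly_twisted_def_space_general_1_coeff}~(1.).
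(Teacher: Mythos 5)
Your proposal is correct and follows essentially the same route as the paper: the paper also reduces, via the direct-product decomposition, to showing that the restrictions of $\dot\varphi g'$ and $\dot\varphi g''$ to the kernel flag coincide, the key point being that the two outer parts cancel and the resulting Weyl-group element fixes the coefficients, hence lies in the pointwise stabilizer $W_{\mf t_{\phi_1}}$. The only (cosmetic) difference is that you treat all coefficients at once by forming the conjugated quotient $h = \dot\varphi g'(g'')^{-1}\dot\varphi^{-1} \in W$, whereas the paper cancels $\dot\varphi$ directly and invokes the single-coefficient lemma recursively along the flag.
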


\begin{proof}
	The proof of Prop.~\ref{prop:no_marking_general} extends verbatim,
	after establishing Lem.~\ref{lem:no_doubly_twisted_marking}.
\end{proof}

\begin{lemm}
	\label{lem:no_doubly_twisted_marking}

	Choose elements $g',g'' \in W$ such that $A \in \mf t(\dot\varphi g',\zeta) \cap \mf t(\dot\varphi g'',\zeta)$,
	for some (root of 1) $\zeta \in \mb C^{\ts}$.
	Then $(\dot\varphi g')_\phi = (\dot\varphi g'')_\phi \in \GL_{\mb C}(\mf t_\phi)$,
	in the notation of Prop.-Def.~\ref{prop:pure_doubly_twisted_def_space_general_1_coeff},
	where $\phi = \phi_A \sse \Phi$ is the Levi annihilator of $A$.
\end{lemm}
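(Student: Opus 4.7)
The plan is to mirror the argument of Lem.~\ref{lem:no_marking}, taking advantage of the following key observation: once $\dot\varphi \in \Out(\mf g)$ has been lifted, via Lem.-Def.~\ref{lem:outer_automorphisms} (3.), to a specific element of $\Aut(\mf g)$ preserving $\mf t$ --- and identified with an element of $\GL_{\mb C}(\mf t)$ --- the outer parts of $\bm g \ceqq \dot\varphi g'$ and $\bm g'' \ceqq \dot\varphi g''$ cancel exactly when one forms $(\bm g'')^{-1} \bm g$, leaving a genuine Weyl-group element.

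I would proceed in three steps. First, check that both $\bm g$ and $\bm g''$ preserve $\mf t_\phi \sse \mf t$: by Lem.~\ref{lem:doubly_twisted_setwise_stabilizer} it suffices that $\bm g(A), \bm g''(A)$ lie in the Levi stratum $\bm B \bigl( \wh Q \bigr)$, and this is clear since $\bm g(A) = \bm g''(A) = \zeta A$ and $\bm B \bigl( \wh Q \bigr)$ is the complement of a \emph{central} hyperplane arrangement in $\mf t_\phi$, hence closed under multiplication by nonzero scalars. Consequently $h \ceqq (\bm g'')^{-1} \bm g \in \GL_{\mb C}(\mf t)$ also preserves $\mf t_\phi$, and its restriction $h_\phi \in \GL_{\mb C}(\mf t_\phi)$ is well defined.

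Second, unravel the semidirect-product structure of~\eqref{eq:semidirect_product}:
\begin{equation}
	h = (g'')^{-1} \dot\varphi^{-1} \dot\varphi g' = (g'')^{-1} g' \in W,
\end{equation}
and note that $h(A) = (\bm g'')^{-1}(\zeta A) = A$. So $h$ sits in the parabolic subgroup $W_A$ of~\eqref{eq:maximal_parabolic}. Third, conclude via the classical identification $W_A = W_{\mf t_\phi}$, already used in the proof of Prop.~\ref{prop:no_marking_general} (cf. Lem.~2.2 of~\cite{doucot_rembado_2023_topology_of_irregular_isomonodromy_times_on_a_fixed_pointed_curve}): an element of $W$ fixing $A$ is generated by the reflections $\sigma_\alpha$ with $\alpha \in \phi_A$, each of which fixes $H_\alpha \supseteq \mf t_\phi$ pointwise, so it acts trivially on the whole Levi flat through $A$. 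Therefore $h_\phi = \Id_{\mf t_\phi}$, i.e.\ $\bm g_\phi = \bm g''_\phi$.

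The only point requiring real care is that the cancellation $\dot\varphi^{-1}\dot\varphi = \Id_{\mf t}$ must take place inside $\GL_{\mb C}(\mf t)$ and not merely in $\Out(\mf g)$; this is exactly what is guaranteed by the section-theoretic lift of Lem.-Def.~\ref{lem:outer_automorphisms}. Beyond this bookkeeping, the whole argument reduces to the $\dot\varphi = 1$ case of Lem.~\ref{lem:no_marking}, with the outermorphism playing a purely passive role --- the hard Lie-theoretic input is the identification $W_A = W_{\mf t_\phi}$, which is already in place.
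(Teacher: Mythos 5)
Your proof is correct and follows essentially the same route as the paper's: one first notes that $\dot\varphi g'$ and $\dot\varphi g''$ both stabilize $\mf t_\phi$, then cancels the common outer twist (as linear maps on $\mf t$, which is legitimate since $\dot\varphi$ has been fixed as an element of $\GL_{\mb C}(\mf t)$) to reduce to the Weyl-group element $(g'')^{-1}g'$ fixing $A$, and concludes via the identification of the isotropy group $W_A$ with the pointwise stabilizer $W_{\mf t_\phi}$ from~\cite[Lem.~2.2]{doucot_rembado_2023_topology_of_irregular_isomonodromy_times_on_a_fixed_pointed_curve}. The only cosmetic difference is that you justify the stabilization of $\mf t_\phi$ via the scalar-invariance of the stratum and Lem.~\ref{lem:doubly_twisted_setwise_stabilizer}, where the paper cites Prop.-Def.~\ref{prop:pure_doubly_twisted_def_space_general_1_coeff}~(1.) directly.
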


\begin{proof}[Proof postponed to~\ref{proof:lem_no_doubly_twisted_marking}]
\end{proof}

\begin{rema}
	\label{rmk:doubly_twisted_independence_restricted_eigenspaces}

	As usual,
	if $A$ is regular,
	then Lem.~\ref{lem:no_doubly_twisted_marking} states that $\dot\varphi g'= \dot\varphi g'' \in \GL_{\mb C}(\mf t)$.
	(I.e.,
	there is precisely one $\dot\varphi$-regular element generating the $(\dot\varphi,r)$-Galois-orbit of $\wh Q = Aw^{-1}$.)

	Moreover,
	analogously to Rmk.~\ref{rmk:independence_restricted_eigenspaces},
	there is a well-defined vector subspace $\mf t_\phi(\dot\varphi,r) \ceqq t_\phi \bigl( \bm g_\phi,\zeta_r \bigr) \sse \mf t_\phi$,
	independent of the choice of (a suitable) $g' \in W$,
	as well as a hyperplane $H_\alpha(\phi,\dot\varphi,r) \ceqq H_\alpha (\bm g_\phi,\zeta_r)$ therein.

	Finally,
	we have established the equality $\bm B_{\dot\varphi,r} \bigl( \wh Q \bigr) = \bm B_{\bm g,r} \bigl( \wh Q \bigr)$,
	for any element $g' \in W$ generating the $(\dot\varphi,r)$-Galois-orbit of $\wh Q$.
\end{rema}

\subsection{Full/nonpure quasi-generic case:
	one coefficient}

We now describe the admissible deformation space of the $(\dot\varphi,r)$-Galois-closed irregular class $\wh\Theta = \wh\Theta \bigl( \wh Q \bigr)$,
when $\wh Q = Aw^{-1}$,
with $A \in \mf t_{\reg}$.

\begin{enonce}{Proposition-Definition}
	\label{prop:full_generic_doubly_twisted_deformation_space_1_coeff}

	Denote by $g' \in W$ the unique element generating the $(\dot\varphi,r)$-Galois-orbit of $\wh Q$ (cf.~Rmk.~\ref{rmk:doubly_twisted_independence_restricted_eigenspaces}).
	Then:

	\begin{enumerate}
		\item
		      the `centralizer' subgroup of $\bm g$ in $W$,
		      i.e.,
		      \begin{equation}
			      \label{eq:doubly_twisted_centralizer}
			      Z_W(\bm g)
			      \ceqq \Set{ g'' \in W | g'' \bm g = \bm g g'' \in \GL_{\mb C}(\mf t) },\fn{
				      This is the same as the subgroup of elements $g''$ which `$\dot\varphi$-commute' with $g' \in W$,
				      i.e.,
				      such that
				      \begin{equation}
					      g'' g'
					      = g' (\dot\varphi.g'') \in W,
					      \qquad \dot\varphi.g'' \ceqq \dot\varphi g'' \dot\varphi^{-1}.
				      \end{equation}
				      Moreover,
				      it is the same as the subgroup of elements which commute with $\bm g' \in \Aut(\Phi')$.
			      }
		      \end{equation}
		      is naturally identified with the complex reflection group of the hyperplane arrangement of~\eqref{eq:generic_doubly_twisted_deformation_space_1_coeff} (cf.~\eqref{eq:outer_centralizer_2});

		\item
		      and there is a Galois covering
		      \begin{equation}
			      \label{eq:full_generic_doubly_twisted_deformation_space_1_coeff}
			      \bm B_{\dot\varphi,r} \bigl( \wh Q \bigr) \lthra \bm B_{\dot\varphi,r} \bigl( \wh Q \bigr) \bs Z_W(\bm g) \simeq \bm B_{\dot\varphi,r} \bigl( \wh\Theta \bigr).
		      \end{equation}
	\end{enumerate}
\end{enonce}

\begin{proof}
	The proof of Prop.-Def.~\ref{prop:full_generic_twisted_deformation_space_1_coeff} extends verbatim,
	using Lehrer--Springer theory (from \S~\ref{sec:lehrer_springer_theory}),
	after proving Lem.~\ref{lem:restricted_generic_doubly_twisted_orbit}.
\end{proof}

\begin{lemm}[cf.~Lemm.~\ref{lem:restricted_generic_twisted_orbit} +~\ref{lem:doubly_twisted_restricted_commutators}]
	\label{lem:restricted_generic_doubly_twisted_orbit}

	Choose a regular element $g' \in W$,
	and a regular eigenvector $A \in \mf t(\bm g,\zeta) \cap \mf t_{\reg}$,
	for some (root of $1$) $\zeta \in \mb C^{\ts}$.
	Then the following conditions are equivalent for any other element $g'' \in W$:
	\begin{enumerate}
		\item
		      $g'' \in Z_W(\bm g)$;

		\item
		      $g''$ lies in the setwise stabilizer of the eigenspace~\eqref{eq:twisted_regular_eigenspace},
		      i.e.,
		      in the subgroup
		      \begin{equation}
			      \label{eq:doubly_twisted_eigenspace_normalizer}
			      N_W(\bm g,\zeta)
			      = N_W \bigl( \mf t(\bm g,\zeta) \bigr)
			      \ceqq \Set{ g \in W | g \bigl( \mf t(\bm g,\zeta) \bigr) \sse \mf t(\bm g,\zeta) };
		      \end{equation}

		\item
		      and $g''(A) \in \mf t(\bm g,\zeta)$.
	\end{enumerate}
\end{lemm}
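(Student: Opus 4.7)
The plan is to establish the cyclic chain $(1) \Rightarrow (2) \Rightarrow (3) \Rightarrow (1)$, in direct analogy with the absolute/regular case treated in Lem.~\ref{lem:restricted_generic_twisted_orbit}. The first two implications are essentially formal: any element $g'' \in Z_W(\bm g)$ commutes with the (semisimple, finite-order) operator $\bm g \in \GL_{\mb C}(\mf t)$ and so preserves each of its eigenspaces, in particular $\mf t(\bm g,\zeta)$; while the assumed inclusion $A \in \mf t(\bm g,\zeta)$ immediately yields $g''(A) \in \mf t(\bm g,\zeta)$ whenever $g''$ stabilizes that subspace.

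The substantive implication is $(3) \Rightarrow (1)$, which I would carry out via the standard commutator trick. First, unpacking $g''(A) \in \mf t(\bm g,\zeta)$ gives $\bm g \bigl( g''(A) \bigr) = \zeta g''(A) = g''(\zeta A) = g'' \bm g(A)$, so the operator $c \ceqq \bm g^{-1} g''^{-1} \bm g g'' \in \GL_{\mb C}(\mf t)$ fixes $A$. Next, I would verify that $c$ actually lies in $W$: using $\bm g = \dot\varphi g'$ and that $\dot\varphi \in \Aut(\Phi)$ normalizes $W$ by conjugation, one may rewrite
\begin{equation}
	c = g'^{-1} \bigl( \dot\varphi^{-1}.g''^{-1} \bigr) g' g'',
	\qquad \dot\varphi^{-1}.g''^{-1} \ceqq \dot\varphi^{-1} g''^{-1} \dot\varphi \in W,
\end{equation}
whence $c \in W$. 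Finally, the regularity of $A$ entails that its $W$-stabilizer is trivial---by~\cite[Prop.~4.1]{springer_1974_regular_elements_of_finite_reflection_groups}---forcing $c = 1$ and therefore $g'' \in Z_W(\bm g)$.

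The main obstacle is the bookkeeping with the outer part: one must keep careful track of the semidirect-product structure of the coset $\dot\varphi W \sse \GL_{\mb C}(\mf t)$ to ensure that the commutator $c$ really lands in the Weyl group (rather than in a nontrivial coset of $W$ inside $\Aut(\Phi)$), so that the regularity of $A$ can actually be invoked. Once this is secured, everything formally reduces to the untwisted argument of Lem.~\ref{lem:restricted_generic_twisted_orbit}.
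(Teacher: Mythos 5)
Your proposal is correct and follows essentially the same route as the paper: the paper simply deduces this lemma as the absolute case $\phi = \vn$ of the subregular Lem.~\ref{lem:doubly_twisted_restricted_commutators}, whose proof is exactly your commutator argument---including the key observation that $\bm g^{-1} (g'')^{-1} \bm g g''$ lies in $W$ because $\dot\varphi$ normalizes $W$, after which regularity of $A$ forces the commutator to be trivial.
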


\begin{proof}
	This is the `absolute' case of Lem.~\ref{lem:doubly_twisted_restricted_commutators},
	where one takes $\phi = \vn$.
\end{proof}

\subsection{Full/nonpure quasi-generic case:
	several coefficients}

Once more,
the situation is essentially the same when the irregularity of $\wh Q = \sum_{i = 1}^s A_iw^{-i}$ is higher:

\begin{prop}
	\label{Prop:full_generic_doubly_twisted_deformation_space}

	Let $s \geq 1$ be arbitrary,
	suppose that $A_s \in \mf t_{\reg}$,
	and denote by $g' \in W$ the element determined by $\bm g(A_s) = \zeta_r^s A_s \in \mf t$.
	Then:
	\begin{enumerate}
		\item
		      one has $\bm B_{\dot\varphi,r} \bigl( \wh\Theta \bigr) \simeq \bm B_{\bm g,r} \bigl( \wh Q \bigr) \bs Z_W(\bm g)$,
		      which is the base of a Galois covering analogous to~\eqref{eq:full_generic_doubly_twisted_deformation_space_1_coeff};

		\item
		      and $\bm B_{\dot\varphi,r} \bigl( \wh\Theta \bigr)$ has the homotopy type of the topological quotient $U \bs Z_W(\bm g)$,
		      in the notation of Prop.~\ref{prop:pure_generic_doubly_twisted_def_space}.
	\end{enumerate}
\end{prop}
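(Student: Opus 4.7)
The plan is to mirror the proof of Prop.~\ref{Prop:full_generic_twisted_deformation_space}, replacing every use of Lem.~\ref{lem:restricted_generic_twisted_orbit} with its $\dot\varphi$-twisted counterpart Lem.~\ref{lem:restricted_generic_doubly_twisted_orbit}, and using Thm.-Def.~\ref{thm:general_pure_doubly_twisted_deformation_space} in place of Thm.-Def.~\ref{thm:general_pure_twisted_deformation_space}. First, I would observe that the untwisted deformation space $\bm B \bigl( \wh Q \bigr) = \mf t^{s-1} \ts \mf t_{\reg}$ is $W$-stable, and that by Rmk.~\ref{rmk:doubly_twisted_independence_restricted_eigenspaces} we have $\bm B_{\dot\varphi,r} \bigl( \wh Q \bigr) = \bm B_{\bm g,r} \bigl( \wh Q \bigr)$. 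By the factorization of Thm.-Def.~\ref{thm:general_pure_doubly_twisted_deformation_space} and the regularity of $A_s$, the subspace $\bm B_{\bm g,r} \bigl( \wh Q \bigr)$ is stabilized by any $g'' \in W$ whose conjugation commutes with $\bm g$, i.e.~by $Z_W(\bm g)$, acting diagonally on the coefficients.

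Next, I would show that the intersection of the $W$-orbit of any admissible deformation $\wh Q'$ with $\bm B_{\bm g,r} \bigl( \wh Q \bigr)$ is precisely a $Z_W(\bm g)$-orbit. For this, suppose $g'' \in W$ is such that $g'' \bigl( \wh Q' \bigr) \in \bm B_{\bm g,r} \bigl( \wh Q \bigr)$. Looking at the leading coefficient, $g''(A'_s)$ must lie in $\mf t(\bm g,\zeta_r^s) \cap \mf t_{\reg}$; since $A'_s$ itself does too, Lem.~\ref{lem:restricted_generic_doubly_twisted_orbit} (applied with $\zeta = \zeta_r^s$, after noting that $\bm g$ is $\dot\varphi$-regular of order $d = r \wdg s$) forces $g'' \in Z_W(\bm g)$. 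The freeness of the $Z_W(\bm g)$-action follows from the regularity of $A_s$: no nontrivial element of $W$ fixes a regular vector, so in particular no nontrivial element of $Z_W(\bm g)$ does. Arguing as in the proof of~\cite[Prop.~2.1]{doucot_rembado_2023_topology_of_irregular_isomonodromy_times_on_a_fixed_pointed_curve} then yields the Galois covering
\begin{equation}
    \bm B_{\bm g,r} \bigl( \wh Q \bigr) \lthra \bm B_{\bm g,r} \bigl( \wh Q \bigr) \bs Z_W(\bm g) \simeq \bm B_{\dot\varphi,r} \bigl( \wh \Theta \bigr).
\end{equation}

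For the second statement, the factorization $\bm B_{\bm g,r} \bigl( \wh Q \bigr) = V' \ts U$ of Prop.~\ref{prop:pure_generic_doubly_twisted_def_space} is $Z_W(\bm g)$-equivariant, since $Z_W(\bm g)$ acts diagonally through its action on $\mf t$ and preserves each eigenspace $\mf t(\bm g,\zeta_r^i)$. The canonical projection onto the leading factor $U$, parallel to $V' \sse \prod_{i=1}^{s-1} \mf t(\bm g,\zeta_r^i)$, is thus $Z_W(\bm g)$-equivariant and induces a continuous map $\bm B_{\dot\varphi,r} \bigl( \wh \Theta \bigr) \thra U \bs Z_W(\bm g)$; the `zero' section $U \hra \bm B_{\bm g,r} \bigl( \wh Q \bigr)$, $A'_s \mt (0,\dc,0,A'_s)$, is also $Z_W(\bm g)$-equivariant, and gives the required homotopy-inverse.

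The main obstacle, compared to the untwisted case of Prop.~\ref{Prop:full_generic_twisted_deformation_space}, is checking that Lem.~\ref{lem:restricted_generic_doubly_twisted_orbit} really does give the equivalence between `$g''(A_s) \in \mf t(\bm g,\zeta_r^s)$' and `$g'' \in Z_W(\bm g)$', since now we are working inside the twisted coset $\dot\varphi W \sse \GL_{\mb C}(\mf t)$ rather than inside $W$ itself; but this is precisely the content of the already-stated Lem.~\ref{lem:restricted_generic_doubly_twisted_orbit}, which reduces to the absolute case $\phi = \vn$ of Lem.~\ref{lem:doubly_twisted_restricted_commutators}, together with the fact that $\bm g$ restricted to the leading eigenspace is a $\dot\varphi$-regular element in the sense of Lehrer--Springer.
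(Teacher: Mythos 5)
Your proposal is correct and follows exactly the route the paper takes: the paper's own proof simply states that the argument of Prop.~\ref{Prop:full_generic_twisted_deformation_space} extends verbatim, and your write-up spells out precisely what that extension involves (substituting Lem.~\ref{lem:restricted_generic_doubly_twisted_orbit} for Lem.~\ref{lem:restricted_generic_twisted_orbit}, invoking Rmk.~\ref{rmk:doubly_twisted_independence_restricted_eigenspaces} and the factorization of Prop.~\ref{prop:pure_generic_doubly_twisted_def_space}).
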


\begin{proof}
	The proof of Prop.~\ref{Prop:full_generic_twisted_deformation_space} extends verbatim.
\end{proof}

\subsection{Full/nonpure general case:
	one coefficient}

Suppose again that $\wh Q = Aw^{-1}$,
but $A \in \mf t$ has no constraints.

Again the point is to extend Lem.~\ref{lem:restricted_generic_doubly_twisted_orbit} to subregular vectors,
leading to a proof of the following:

\begin{enonce}{Proposition-Definition}
	\label{prop:full_general_doubly_twisted_deformation_space_1_coeff}

	Consider the setwise/pointwise stabilizers $W_{\mf t_\phi} \sse N_W(\mf t_\phi)$ of $\mf t_\phi \sse \mf t$,
	and the relative Weyl group $W(\phi) = N_W(\mf t_\phi) \bs W_{\mf t_\phi}$,
	as in~\eqref{eq:effective_weyl_group} (cf.~Lem.~\ref{lem:relative_weyl_group}).
	Choose then an element $g' \in W$ generating the $(\dot\varphi,r)$-Galois-orbit of $\wh Q$.
	Finally,
	introduce the `centralizer' subgroup
	\begin{equation}
		\label{eq:doubly_twisted_centralizer_in_quotient}
		Z_{W,\phi}(\bm g)
		\ceqq Z_{W(\phi)} (\bm g_\phi)
		= \Set{ g''_\phi \in W(\phi) | g''_\phi \bm g_\phi = \bm g_\phi g''_\phi },
	\end{equation}
	generalizing~\eqref{eq:centralizer_in_quotient} and~\eqref{eq:doubly_twisted_centralizer}.
	Then there is a Galois covering
	\begin{equation}
		\bm B_{\bm g,r} \bigl( \wh Q \bigr) \lthra \bm B_{\bm g,r} \bigl( \wh Q \bigr) \bs Z_{W,\phi}(\bm g)
		\simeq \bm B_{\dot\varphi,r} \bigl( \wh\Theta \bigr).
	\end{equation}
\end{enonce}

\begin{proof}
	The proof of Prop.-Def.~\ref{prop:full_general_twisted_deformation_space_1_coeff} extends verbatim,
	after establishing Lem.~\ref{lem:doubly_twisted_restricted_commutators}.
\end{proof}

\begin{lemm}[cf.~Lemm.~\ref{lem:restricted_commutators} +~\ref{lem:restricted_generic_doubly_twisted_orbit}]
	\label{lem:doubly_twisted_restricted_commutators}

	Choose an element $g' \in W$,
	and an eigenvector $A \in \mf t(\bm g,\zeta)$,
	for some (root of $1$) $\zeta \in \mb C^{\ts}$.
	Then,
	in the notation of~\eqref{eq:effective_weyl_group} and~\eqref{eq:doubly_twisted_centralizer_in_quotient} (and extending the notation of~\eqref{eq:doubly_twisted_eigenspace_normalizer}),
	the following conditions are equivalent	for any other element $g'' \in W$ preserving the Levi stratum of $A$ (i.e.,
	$g'' \in N_W(\mf t_\phi)$):

	\begin{enumerate}
		\item
		      $g''_\phi \in Z_{W,\phi}(\bm g)$;

		\item
		      $g''_\phi \in N_{W(\phi)}(\bm g_\phi,\zeta) = N_{W(\phi)} \bigl( \mf t_\phi (\bm g_\phi,\zeta) \bigr)$;

		\item
		      and $g''_\phi(A) \in \mf t_\phi(\bm g_\phi,\zeta)$.

	\end{enumerate}
\end{lemm}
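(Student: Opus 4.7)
The plan is to mimic verbatim the proof of Lem.~\ref{lem:restricted_commutators}, substituting Springer's theory for the trivial coset of $W$ with Lehrer--Springer's theory for the twisted reflection coset $\dot\varphi W \sse \GL_{\mb C}(\mf t)$, and then restricting along the Levi stratum $\mf t_\phi \sse \mf t$. Crucially, since $f \in \GL_{\mb C} \bigl( \mf Z(\mf g) \bigr)$ has finite order by the standing assumption and $\bm g' = \dot\varphi' g' \in \Aut(\Phi')$ has finite order (being an element of a finite group), the element $\bm g \in \GL_{\mb C}(\mf t)$ is of finite order, and therefore acts in semisimple fashion on $\mf t$. This yields the identification $\mf t(\bm g,\zeta) \cap \mf t_\phi = \mf t_\phi(\bm g_\phi,\zeta)$---already invoked in Prop.-Def.~\ref{prop:pure_doubly_twisted_def_space_general_1_coeff}---which makes sense of conditions (2.) and (3.), and reduces everything to a statement inside $\mf t_\phi$.

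Next I would establish the chain (1.) $\Rightarrow$ (2.) $\Rightarrow$ (3.) directly: any element $g''_\phi \in W(\phi)$ commuting with $\bm g_\phi$ preserves all eigenspaces of $\bm g_\phi$, and in particular the $\zeta$-eigenspace $\mf t_\phi(\bm g_\phi,\zeta)$; and since by hypothesis $A$ lies in that eigenspace, so does $g''_\phi(A)$. Both steps are formal once one checks that $g''_\phi \in W(\phi)$ is well-defined as a linear automorphism of $\mf t_\phi$, which follows from the fact that the subgroup $W_{\mf t_\phi} \sse N_W(\mf t_\phi)$ acts trivially on $\mf t_\phi$ and hence commutes with every element of $\GL_{\mb C}(\mf t_\phi)$.

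The main obstacle is the converse (3.) $\Rightarrow$ (1.). Here the plan is to leverage the subregular Lehrer--Springer theory of \S~\ref{sec:lehrer_springer_theory}: the vector $A$ has Levi annihilator exactly $\phi$ by construction, hence lies in the open stratum $\bm B \bigl( \wh Q \bigr) = \mf t_\phi \sm \bigcup_{\Phi \sm \phi} H_\alpha(\phi)$, and is therefore a regular vector for the relative reflection group $G(\phi) \sse \GL_{\mb C}(\mf t_\phi)$ of~\eqref{eq:relative_reflection_group}. An element $g''_\phi \in W(\phi)$ satisfying (3.) then sends this regular vector into $\mf t_\phi(\bm g_\phi,\zeta)$, and the regular-eigenspace characterization of centralizers (Cor.~\ref{cor:restricted_reflection_group}, itself an instance of~\cite[Thm.~4.2 (iii)]{springer_1974_regular_elements_of_finite_reflection_groups}) forces $g''_\phi$ to centralize $\bm g_\phi$. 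The hard part will be to ensure that this argument goes through when the restricted hyperplanes $H_\alpha(\phi)$, for $\alpha \in \Phi \sm \phi$, do \emph{not} exhaust the reflection arrangement of $G(\phi)$---which, as highlighted in Exmp.~\ref{exmp:counterexample_type_D}, can fail in type $D$: in that case one should either work within Howlett's normal subgroup $W'(\phi) \sse W(\phi)$ via the splitting~\eqref{eq:relative_reflection_group_and_twist}, or directly invoke Lem.~\ref{lem:restricted_reflection_group} for the twisted coset $\bm g_\phi W(\phi)$ and check that the resulting reflection subquotient still captures the required commutation relation.
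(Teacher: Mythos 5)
Your chain (1.)\,$\Rightarrow$\,(2.)\,$\Rightarrow$\,(3.) is fine and matches what is needed. The problem is the converse (3.)\,$\Rightarrow$\,(1.), where your argument has a genuine gap that you flag but do not close: it rests on $A$ being a \emph{regular} vector for the relative reflection group $G(\phi)$, and this is exactly what can fail. The Levi stratum $\bm B\bigl(\wh Q\bigr)$ is the complement of the hyperplanes $H_\alpha(\phi)$ for $\alpha \in \Phi \sm \phi$ only, and (as Exmp.~\ref{exmp:counterexample_type_D} shows) these need not exhaust the reflection arrangement of $G(\phi)$, so $A$ need not be $G(\phi)$-regular and the regular-eigenvector characterization of centralizers does not apply. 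Retreating to Howlett's subgroup $W'(\phi)$ or to Lem.~\ref{lem:restricted_reflection_group} does not obviously repair this, and there is a second mismatch: the element you want to control is $g''_\phi \in W(\phi)$, whereas Springer/Lehrer--Springer theory for $G(\phi)$ speaks about elements of $G(\phi)$, and in general neither of $W(\phi)$, $G(\phi)$ contains the other. So the hard implication is not actually proved.

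The paper's proof is much more elementary and avoids Lehrer--Springer theory altogether. One forms the commutator $h \ceqq \bm g^{-1} (g'')^{-1} \bm g\, g''$; the single new observation in the doubly-twisted setting is that $h$ still lies in $W$, because $\dot\varphi$ (hence $\bm g$) normalizes $W$. Then $h \in N_W(\mf t_\phi)$, its restriction $h_\phi$ is the commutator of $\bm g_\phi$ and $g''_\phi$, and condition (3.) gives
\begin{equation}
	h_\phi(A) = \bm g_\phi^{-1} (g''_\phi)^{-1} \bigl( \zeta\, g''_\phi(A) \bigr) = \zeta\, \bm g_\phi^{-1}(A) = A .
\end{equation}
Since $h$ is an element of the Weyl group fixing a point of the open Levi stratum, \cite[Lem.~2.2]{doucot_rembado_2023_topology_of_irregular_isomonodromy_times_on_a_fixed_pointed_curve} forces $h_\phi = \Id_{\mf t_\phi}$, i.e., $g''_\phi \in Z_{W,\phi}(\bm g)$. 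No regularity of $A$ beyond membership in the stratum is needed, which is precisely why the lemma holds in the problematic type-$D$ configurations as well. You should replace your subregular-Springer route by this commutator argument.
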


\begin{proof}
	The proof~\ref{proof:lem_restricted_commutators} extends to the present setting,
	because of the following observation:
	for any pair of elements $g',g'' \in W$,
	the commutator $\dot\varphi g' g'' (\dot\varphi g')^{-1} (g'')^{-1} \in \GL_{\mb C}(\mf t)$ still lies in $W$.
\end{proof}

\begin{rema}
	\label{rmk:restricted_doubly_twisted_centralizer_independence}

	Analogously to Rmk.~\ref{rmk:restricted_centralizer_independence},
	the group~\eqref{eq:doubly_twisted_centralizer_in_quotient} does \emph{not} depend on the choice of the element generating the $(\dot\varphi,r)$-Galois-orbit of $\wh Q$.
	We can thus write $Z_{W,\phi}(\dot\varphi,r) \ceqq Z_{W,\phi}(\bm g)$.
	(One last time,
	Lehrer--Springer theory is helpful to interpret $Z_{W,\phi}(\dot\varphi,r)$ as a complex reflection group.)
\end{rema}

\subsection{Full/nonpure general case:
	several coefficients}

Finally,
we consider the most general topology of admissible deformations of (formal germs of) irregular-singular connections,
now also allowing for nonsplit reductive structure groups,
defined over transcendental extensions of $\mb C$.

To this end,
denote again by $\bm\phi = (\phi_1 \sse \dm \sse \phi_s \sse \phi_{s+1} = \Phi)$ the Levi filtration of $\Phi$ determined by $\wh Q = \sum_{i = 1}^s A_i w^{-i}$,
and consider the kernel flag of~\eqref{eq:kernel_flag}.
Then we can state the most general:

\begin{enonce}{Theorem-Definition}[]
	\label{thm:full_general_doubly_twisted_deformation_space}

	Keeping all the notation of Thm.-Def.~\ref{thm:full_general_twisted_deformation_space},
	let
	\begin{equation}
		\label{eq:restricted_doubly_twisted_centralizer_intersection}
		p_{\bm \phi}^{-1}(\dot\varphi,\bm g)
		\ceqq \bigcap_{i = 1}^s p_{\phi_i}^{-1} \bigl( Z_{W,\phi_i}(\dot\varphi,\bm g) \bigr) \sse N_W(\mf t_{\bm \phi}).
	\end{equation}
	Then there is a Galois covering
	\begin{equation}
		\bm B_{\dot\varphi,r} \bigl( \wh Q \bigr) \lthra \bm B_{\dot\varphi,r} \bigl( \wh Q \bigr) \bs Z_{W,\bm \phi} (\dot\varphi,\bm g) \simeq \bm B_{\dot\varphi,g} \bigl( \wh\Theta \bigr),
		\qquad Z_{W,\bm \phi} (\dot\varphi,\bm g) \ceqq p_{\bm \phi}^{-1} (\dot\varphi,\bm g) \bs W_{\mf t_{\phi_1}}.
	\end{equation}
\end{enonce}

\begin{proof}
	The proof of Thm.-Def.~\ref{thm:full_general_twisted_deformation_space} extends verbatim.
\end{proof}

\subsection{Some more Lehrer--Springer theory}

To extend the statements of \S~\ref{sec:lehrer_springer_theory} to the present setting,
the main point is that Lem.~\ref{lem:restricted_reflection_group} can be strengthened as follows:

\begin{lemm}
	\label{lem:doubly_twisted_restricted_reflection_group}

	If $\bm g \in N_{\GL_{\mb C}(\mf t)}(\mf t_\phi)$,
	then $\bm g$ normalizes the relative reflection group $G(\phi)$ of~\eqref{eq:relative_reflection_group}.
\end{lemm}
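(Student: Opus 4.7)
The plan is to verify that conjugation by $\bm g_\phi = \eval{\bm g}_{\mf t_\phi} \in \GL_{\mb C}(\mf t_\phi)$ permutes the generating reflections $\sigma_\alpha(\phi)$ of $G(\phi)$, for $\alpha \in \Phi \sm \phi$.

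First, I would reduce to the semisimple part. Since every root $\alpha$ vanishes on $\mf Z(\mf g) \sse \mf t_\phi$, each generator $\sigma_\alpha(\phi)$ acts trivially on the centre, so conjugation by $\bm g_\phi = (f,\bm g'_\phi)$ within $G(\phi)$ is governed entirely by the semisimple part $\bm g'_\phi$. Next I would establish that $\bm g$ induces a permutation of $\Phi$ sending $\phi$ to $\phi$ and $\Phi \sm \phi$ to $\Phi \sm \phi$. The first claim is immediate from $\bm g' \in \Aut(\Phi')$, via the identification $\Phi \simeq \Phi'$. For the second, taking annihilators of the inclusion $\bm g(\mf t_\phi) \sse \mf t_\phi$ yields that the (contragredient) action of $\bm g$ on $\mf t^{\dual}$ preserves $\spann_{\mb C}(\phi)$, and since $\phi$ is a Levi subsystem we have $\phi = \Phi \cap \spann_{\mb C}(\phi)$, so $\bm g(\phi) = \phi$ and therefore $\bm g(\Phi \sm \phi) = \Phi \sm \phi$.

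Second, I would fix a $\bm g$-invariant inner product on $\mf t$: on $\mf t'$ use (a multiple of) the Killing form, which is $\Aut(\mf g')$-invariant and hence preserved by $\bm g' = \dot\varphi' g'$; on $\mf Z(\mf g)$, average an arbitrary inner product over the finite cyclic subgroup $\Braket{ f } \sse \GL_{\mb C} \bigl( \mf Z(\mf g) \bigr)$, which exists precisely because $f$ was assumed to have finite order. With this choice $\bm g$ is orthogonal, so it preserves the decomposition $\mf t = \mf t_\phi \ops \mf t_\phi^\perp$ and commutes with the orthogonal projection $\pi_\phi \cl \mf t \thra \mf t_\phi$. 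Combined with the standard identity $\bm g(\alpha^{\dual}) = (\bm g \alpha)^{\dual}$ for automorphisms of root systems, this gives
\begin{equation}
    \bm g_\phi \bigl( \alpha^{\dual}_\phi \bigr) = \bm g \bigl( \pi_\phi (\alpha^{\dual}) \bigr) = \pi_\phi \bigl( \bm g (\alpha^{\dual}) \bigr) = (\bm g \alpha)^{\dual}_\phi,
    \qquad \alpha \in \Phi \sm \phi.
\end{equation}

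Finally, I would compute the conjugate explicitly. By Lem.-Def.~\ref{lem:reduced_reflections}, $\sigma_\alpha(\phi)$ is the unique order-$2$ linear map of $\mf t_\phi$ that fixes $H_\alpha(\phi)$ pointwise and sends $\alpha^{\dual}_\phi \mt -\alpha^{\dual}_\phi$. By the general conjugation formula, $\bm g_\phi \sigma_\alpha(\phi) \bm g_\phi^{-1}$ is then the order-$2$ reflection fixing $\bm g_\phi \bigl( H_\alpha(\phi) \bigr) = H_{\bm g\alpha}(\phi)$ pointwise and sending $\bm g_\phi ( \alpha^{\dual}_\phi ) = (\bm g\alpha)^{\dual}_\phi$ to its negative—that is, $\sigma_{\bm g\alpha}(\phi) \in G(\phi)$, since $\bm g\alpha \in \Phi \sm \phi$. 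Thus $\bm g_\phi$ permutes a generating set of $G(\phi)$, proving the lemma. The only delicate step is the construction of the $\bm g$-invariant inner product securing equivariance of $\pi_\phi$; this is where the standing finite-order hypothesis on $f$ is essential.
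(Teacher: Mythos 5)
Your proof is correct and follows essentially the same route as the paper's: the paper observes that $f$ centralizes $G(\phi)$ (since $G(\phi)$ is trivial on $\mf Z(\mf g)$ and $f$ is trivial on $\mf t'_\phi$) and then runs the untwisted argument of Lem.~\ref{lem:restricted_reflection_group} verbatim after replacing the $W$-invariant inner product by an $\Aut(\Phi')$-invariant one, which is exactly your scheme of showing $\bm g(\phi) = \phi$, making $\pi_\phi$ equivariant via a $\bm g$-invariant form, and concluding that $\bm g_\phi \sigma_\alpha(\phi) \bm g_\phi^{-1} = \sigma_{\bm g\alpha}(\phi)$. You merely spell out the details that the paper leaves to the earlier proof (the Killing form in place of an averaged $\Aut(\Phi')$-invariant form is an equivalent choice).
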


\begin{proof}[Proof postponed to~\ref{proof:lem_doubly_twisted_restricted_reflection_group}]
\end{proof}

\subsubsection{}

Thus,
the exact analogue of Lem.~\ref{lem:restricted_reflection_group} holds,
replacing $g \in N_W(\mf t_\phi)$ with $\bm g$ throughout;
and analogously for Corr.~\ref{cor:restricted_reflection_group}--\ref{cor:pure_factors_as_reflection_complements}.

In particular,
the reduction to quasi-generic cases which was discussed for classical simple Lie algebras still holds,
and we will now extend it.
(In this case one has $\dot\varphi = \dot\varphi' \in \Out(\Phi)$ and $\bm g = \bm g' \in \Aut(\Phi)$,
with $\Phi = \Phi'$,
etc.)

\subsection{Pure type \texorpdfstring{$A$}{A}}
\label{sec:doubly_twisted_type_A}

Suppose first that $\mf g = \mf{sl}_m(\mb C)$,
for an integer $m \geq 2$,
and keep all notations from \S~\ref{sec:type_A}.

If $m = 2$ there are no nontrivial outermorphisms.
Else,
there is precisely one such,
corresponding to flipping (the standard presentation of) the Dynkin diagram left-to-right:
it is $\dot\varphi(A) = -A$,
for all $A \in V^+_m$,
induced by the negative-transposition of square matrices.

\begin{prop}[cf.~\cite{springer_1974_regular_elements_of_finite_reflection_groups}, \S~6.9]
	Choose integers $r \geq 1$ and $m \geq 3$,
	and a $\dot\varphi$-regular element $g' \in W_A(m)$.
	Set also
	\begin{equation}
		\bm B_{\dot\varphi,r,\reg}(A_m) \ceqq V^+_m(\bm g,\zeta_r) \cap V^+_{m,\reg}.
	\end{equation}
	Then one of the three following (mutually-exclusive) situations happens:
	\begin{enumerate}
		\item
		      (i) the integer $r$ is odd,
		      and $2r$ divides either $m$ or $m-1$;
		      and (ii) if $k \geq 1$ is the quotient of the division then there is a homeomorphism $\bm B_{\dot\varphi,r,\reg}(A_m) \simeq \mc M^\sharp(2r,k)$;

		\item or (i) the integer $r$ is (even and) congruent to $0$ modulo $4$,
		      and it divides either $m$ or $m-1$;
		      and (ii) if again $k \geq 1$ is the quotient then there is a homeomorphism $\bm B_{\dot\varphi,r,\reg}(A_m) \simeq \mc M^\sharp(r,k)$;

		\item
		      or (i) the integer $r$ is (even and) congruent to $2$ modulo $4$,
		      and $r \slash 2$ divides either $m$ or $m-1$;
		      and (ii) if $k \geq 1$ is the quotient then $\bm B_{\dot\varphi,r,\reg}(A_m) \simeq \mc M^\sharp(r \slash 2,k)$.
	\end{enumerate}
\end{prop}

\begin{proof}
	We are looking at group elements $g' \in W_A(m)$,
	and regular vectors $A \in V^+_{m,\reg}$,
	such that $-g'(A) = \zeta_r(A)$.
	This means that $A \in V^+_m(g',-\zeta_r) \cap V^+_{m,\reg}$,
	and so $g'$ is regular:
	the conclusion follows from Prop.~\ref{prop:classification_type_A} +~Lem.~\ref{lem:even_odd_roots_of_1}~(3.).

	Note that we are also using the fact that $g'$ admits a regular eigenvector of eigenvalue $\zeta_{r'} \in \mb C^{\ts}$,
	for an integer $r' \geq 1$,
	if and only if it admits a regular eigenvector whose eigenvalue is any primitive $r'$-th root of $1$---%
	as $W_A(m)$ admits a $\mb Q$-form,
	cf.~Rmk.~\ref{rmk:cyclotomic_roots}.
\end{proof}

\subsection{Pure type \texorpdfstring{$B/C$}{B/C}}

If instead $\mf g \in \set{ \mf{so}_{2m+1}(\mb C), \mf{sp}_{2m}(\mb C) }$ (for an integer $m \geq 2$),
then there are no nontrivial outermorphisms:
the classification is in \S~\ref{sec:type_BC}.

\subsection{Pure type \texorpdfstring{$D$}{D}}

Suppose that $\mf g = \mf{so}_{2m}(\mb C)$,
for an integer $m \geq 4$.

If in particular $m \geq 5$,
then there is one nontrivial outermorphism,
corresponding to flipping (the standard presentation of) the Dynkin diagram upside-down.
This however corresponds to acting via a negative transposition,
i.e.,
by an element of the Weyl group $W_{BC}(m)$ with nontrivial class modulo $W_D(m)$,
and the corresponding nonsplit reflection coset has already been dealt with in Prop.~\ref{prop:twisted_generic_classification_type_D}.

\begin{rema}
	If $m = 4$ instead,
	as it is well-known,
	one has a group isomorphism $\Out(\Phi) \simeq \mf S_3$.
	This `triality' corresponds to permuting the $3$ legs of the Dynkin diagram,
	and (more geometrically) to outermorphisms of the universal cover $\on{Spin}_8(\mb C) \thra \SO_8(\mb C)$.
\end{rema}

\subsection{Doubly-twisted local \texorpdfstring{$G$}{G}-wild mapping class groups}
\label{sec:doubly_twisted_lwmgc}

We now extend the material of \S~\ref{sec:general_tglwmcgs}.

\begin{defi}[cf.~Def.~\ref{def:tglwmcgs}]
	\label{def:doubly_twisted_wmcg}

	Let $r,s \geq 1$ be integers.
	Choose an irregularity-bounded $(\dot\varphi,r)$-Galois-closed irregular type $\wh Q \in \wh{\mc{IT}}^{\leq s}_{\dot\varphi,r}$,
	and let $\wh\Theta \ceqq \wh\Theta \bigl( \wh Q \bigr) \in \wh{\mc{IT}}^{\leq s}_{\dot\varphi,r} \bs W$ be the associated ($(\dot\varphi,r)$-Galois-closed) irregular class.
	Then:

	\begin{enumerate}
		\item
		      the \emph{pure} $\dot\varphi$-\emph{twisted} $r$-\emph{ramified} \emph{local WMCG of} $\wh Q$ is the fundamental group
		      \begin{equation}
			      \Gamma_{\dot\varphi,r} \bigl( \wh Q \bigr)
			      \ceqq \pi_1 \bigl( \bm B_{\dot\varphi,r}( \wh Q),\wh Q \bigr);
		      \end{equation}

		\item
		      and the (\emph{full/nonpure}) $\dot\varphi$-\emph{twisted} $r$-\emph{ramified} \emph{local WMCG of} $\wh\Theta$ is the fundamental group
		      \begin{equation}
			      \ol \Gamma_{\dot\varphi,r} \bigl( \wh\Theta \bigr)
			      \ceqq \pi_1 \bigl( \bm B_{\dot\varphi,r}( \wh\Theta),\wh\Theta \bigr).
		      \end{equation}
	\end{enumerate}
\end{defi}

\begin{theo}
	\label{thm:doubly_twisted_wmcg}

	In the notation of Thmm.~\ref{thm:wild_mapping_class_groups} +~\ref{thm:full_general_doubly_twisted_deformation_space}:
	\begin{enumerate}
		\item
		      there is a direct-product decomposition
		      \begin{equation}
			      \Gamma_{\dot\varphi,r} \bigl( \wh Q \bigr)
			      = \prod_{i = 1}^s \Gamma_{\dot\varphi,r} \bigl( \wh Q,i \bigr),
			      \qquad \Gamma_{\dot\varphi,r} \bigl( \wh Q_,i \bigr) = \pi_1 \bigl( \bm B_{\dot\varphi,r}(\wh Q,i),A_i \bigr),
		      \end{equation}
		      where in turn
		      \begin{equation}
			      \bm B_{\dot\varphi,r} \bigl( \wh Q,i \bigr)
			      \ceqq \mf t_{\phi_i}(\dot\varphi,r) \, \bigsm \, \bigcup_{\phi_{i+1} \sm \phi_i} \, \bigl( H_\alpha(\phi_i,\dot\varphi,r) \bigr) \sse \mf t_{\phi_i},
		      \end{equation}
		      in the notation of Rmk.~\ref{rmk:doubly_twisted_independence_restricted_eigenspaces};

		\item
		      and there is a (typically nonsplit) short exact group sequence
		      \begin{equation}
			      1 \lra \Gamma_{\dot\varphi,r} \bigl( \wh Q \bigr) \lra \ol \Gamma_{\dot\varphi,r} \bigl( \wh\Theta \bigr) \lra Z_{W,\bm \phi}(\dot\varphi,\bm g) \lra 1.
		      \end{equation}
	\end{enumerate}
\end{theo}

\section{Twisted root-valuation strata}
\label{sec:stratifications}

In this section we define a few topological stratifications,
and we relate certain of their strata with the admissible deformation spaces of $r$-Galois-closed irregular types/classes.
(As usual,
this would work the same in Zariski topology,
except for the definition of the fundamental groups.)

Please refer to~\cite[App.~A]{calaque_felder_rembado_wentworth_2024_wild_orbits_and_generalised_singularity_modules_stratifications_and_quantisation} for the basic terminology around stratifications,
substratifications,
and quotient stratifications.

\subsection{Recap of the pure untwisted case}
\label{sec:untwisted_stratifications}

We start from a review of the case where $r = 1$.

\subsubsection{}

Let again $G$ be a connected complex reductive group with Lie algebra $\mf g$;
fix as usual a maximal torus $T \sse G$,
with associated Cartan subalgebra $\mf t = \Lie(T) \sse \mf g$.
Any element $A \in \mf t$ determines a \emph{stratum of} $\mf t$,
viz.,
the topological subspace
\begin{equation}
	\bm B \bigl( \wh Q \bigr) \sse \mf t,
	\qquad \wh Q
	= \wh Q(A)
	\ceqq A w^{-1},
\end{equation}
in the notation of~\eqref{eq:pure_untwisted_nongeneric_deformation_space_1_coeff}.
Letting $A$ vary,
this yields a partition of $\mf t$ by finitely many locally-closed subspaces,
indexed by the Levi (root) subsystems of the root system $\Phi = \Phi(\mf g,\mf t)$:
indeed,
as it was already mentioned (and used),
the stratum of $A$ is uniquely determined by the subset of roots annihilating it.
Hence,
conversely,
for each Levi subsystem $\phi \sse \Phi$ one might just write $\bm B(\phi) \ceqq \bm B \bigl( \wh Q \bigr)$,
where $\wh Q = Aw^{-1}$ is any irregular type such that $\phi = \phi_A$---%
in the notation of~\eqref{eq:kernel_intersection}.

Moreover,
the set $\mc L(\Phi) \sse 2^\Phi$ of Levi subsystems,
partially ordered by \emph{anti-inclusion},
i.e.,
by setting $\phi \leq \phi'$ if $\phi' \sse \phi$,
constitutes a \emph{ranked lattice} (cf.~\cite{calaque_felder_rembado_wentworth_2024_wild_orbits_and_generalised_singularity_modules_stratifications_and_quantisation}).\fn{The \emph{rank of} $\phi \in \Phi$ is by definition $\rho(\phi) \ceqq \dim_{\mb C}(\mf t_\phi)$,
	i.e.,
	the dimension of $\bm B(\phi) \sse \mf t_\phi$ as a complex manifold/variety,
	i.e.,
	the dimension of the centre of the reductive Lie subalgebra $\mf l_\phi \sse \mf g$ (cf.~\eqref{eq:levi_subalgebra}).
	As per the lattice structure,
	let:
	\begin{equation}
		\sup \set{\phi,\phi'} \ceqq \phi \cap \phi',
		\quad \inf \set{ \phi,\phi' } \ceqq \spann_{\mb Q}(\phi \cup \phi') \cap \Phi,
		\qquad \phi,\phi' \in \mc L_\Phi.
	\end{equation}
}
Then,
in addition to $\mf t = \bigcup_{\mc L(\Phi)} \bm B(\phi)$,
the closures of the strata satisfy
\begin{equation}
	\ol{\bm B(\phi)}
	= \bigcup_{\phi' \leq \phi} \bigl( \bm B(\phi') \bigr).
\end{equation}
(Both unions are disjoint.)
In particular:
(i) the \emph{highest stratum} is dense,
and it is the regular part $\bm B(\vn) = \mf t_{\reg}$;
while (ii) the \emph{lowest stratum} is closed,
and it is the centre $\bm B(\Phi) = \mf Z(\mf g)$.
(Or just the origin,
if $\mf g$ is semisimple.)

\subsubsection{}

Take now instead any (integer) irregularity $s \geq 1$.
Then we can define a stratification of $\mf t^s$,
as follows (cf.~\cite{goresky_kottwitz_macpherson_2009_codimensions_of_root_valuation_strata,calaque_felder_rembado_wentworth_2024_wild_orbits_and_generalised_singularity_modules_stratifications_and_quantisation}).

First,
let $\mc L(\Phi)^{\leq s}$ be the finite set of \emph{depth}-$s$ \emph{Levi filtrations of} $\Phi$,
i.e.,
\begin{equation}
	\label{eq:levi_filtrations}
	\mc L(\Phi)^{\leq s} \ceqq
	\Set{ \bm\phi = (\phi_1,\dc,\phi_s) \in \mc L(\Phi)^s | \phi_1 \sse \dm \sse \phi_s }.
\end{equation}
We extend the notation by setting $\phi_0 \ceqq \vn$ and $\phi_{s+1} \ceqq \Phi$,
and we make~\eqref{eq:levi_filtrations} into a poset by restricting the product order of $\mc L(\Phi)^s$,
i.e.,
by declaring that
\begin{equation}
	\bm\phi \leq \bm\phi'
	= (\phi_1',\dc,\phi_s')
	\quad \text{if}
	\quad \phi'_i \sse \phi_i,
	\qquad i \in \set{1,\dc,s}.
\end{equation}

Second,
given a root $\alpha \in \Phi$ and a Levi filtration $\bm\phi \in \mc L(\Phi)^{\leq s}$,
let
\begin{equation}
	\label{eq:root_valuations_tuple_2}
	d_\alpha
	= d_{\alpha,\bm\phi}
	\ceqq \min \Set{i \in \set{1,\dc,s+1} | \alpha \in \phi_i },
\end{equation}
which is the integer determined by the condition that $\alpha \in \phi_{d_\alpha} \sm \phi_{d_\alpha - 1}$.
Third,
given again $\bm\phi \in \mc L(\Phi)^{\leq s}$,
consider the product of kernels
\begin{equation}
	\label{eq:kernel_product}
	\mf t(\bm\phi)
	\ceqq \prod_{i = 1}^s \mf t_{\phi_i} \sse \mf t^s,
\end{equation}
in the notation of~\eqref{eq:kernel_intersection}.
(Beware that $\mf t_{\bm\phi}$ denotes instead the \emph{flag} of kernels,
cf.~\eqref{eq:kernel_flag}.)

Then:

\begin{defi}
	\label{def:root_stratification}

	The \emph{pure root-valuation stratum of} $\bm\phi \in \mc L(\Phi)^{\leq s}$ is the topological subspace
	\begin{equation}
		\label{eq:root_stratum}
		\bm B^{\leq s}(\bm\phi)
		= \bm B(\bm\phi)
		= \mf t(\bm\phi) \, \bigsm \, \bigcup_{d_\alpha > 1} \, H_\alpha^{(d_\alpha)}  \sse \mf t^s,
	\end{equation}
	where in turn
	\begin{equation}
		H_\alpha^{(d_\alpha)}
		\ceqq \mf t^{d_\alpha - 2} \ts \ker(\alpha) \ts \mf t^{s + 1 - d_\alpha}.\fn{
			If $d_\alpha = 2$ then $\mf t^{d_\alpha - 2} = \mf t^0 = (0)$,
			so that $H_\alpha^{(d_\alpha)} = \ker(\alpha) \ts \mf t^{s-1}$;
			on the other extreme,
			if $d_\alpha = s+1$ then $H_\alpha^{(d_\alpha)} = \mf t^{s-1} \ts \ker(\alpha)$.}
	\end{equation}
\end{defi}

\subsubsection{}

The subspace~\eqref{eq:root_stratum} is a hyperplane complement,
and it can be factored precisely as in~\eqref{eq:root_stratum_factorization}.
Conversely,
given any irregular type $\wh Q = \sum_{i = 1}^s A_i w^{-i}$,
with $(A_1,\dc,A_s) \in \mf t^s$,
the underlying Levi filtration $\bm\phi$---%
of $\wh Q$---%
uniquely determines the space $\bm B \bigl( \wh Q \bigr) = \bm B(\bm\phi)$ of admissible deformations;
essentially,
because the data of the root-valuation $\Phi$-tuple~\eqref{eq:root_valuations_tuple} is equivalent to~\eqref{eq:root_valuations_tuple_2}.

Letting the Levi filtration vary,
one can show that~\eqref{eq:root_stratum} yields a stratification of $\mf t^s$ indexed by $\mc L^{\leq s}(\Phi)$,
viz.,
the \emph{pure root-valuation stratification}.
The highest stratum is $\bm B (\vn,\dc,\vn) = \mf t^{s-1} \ts \mf t_{\reg}$ (i.e.,
the generic case),
and the lowest stratum is $\bm B (\Phi,\dc,\Phi) = \mf Z(\mf g)^s$.

The local case of~\cite[Thm.~10.2]{boalch_2014_geometry_and_braiding_of_stokes_data_fission_and_wild_character_varieties},
working on a fixed pointed curve as we do in this text,
now states that there are Poisson/symplectic fibre bundles $\ul{\mc M}_{\on B} \to \bm B(\bm\phi)$ of (untwisted) wild character varieties on top of each stratum,
equipped with complete flat Poisson/symplectic connections.
Therefore,
the pure local WMCG,
i.e.,
$\pi_1 \bigl( \bm B(\bm\phi) \bigr) \simeq \Gamma \bigl( \wh Q \bigr)$,
acts on each fibre thereof,
in (algebraic) Poisson/symplectic fashion---%
upon basing at any point $(A_1,\dc,A_s) \in \bm B(\bm\phi)$.

\begin{rema}
	A different question is to \emph{extend} the very definition of isomonodromic deformations across the strata,
	e.g.,
	coalescing the eigenvalues of the coefficients of $\wh Q$ when $G = \GL_m(\mb C)$;
	cf.~\cite{cotti_dubrovin_guzzetti_2019_isomonodromy_deformations_at_an_irregular_singularity_with_coalescing_eigenvalues,cotti_2021_degenerate_riemann_hilbert_birkhoff_problems_semisimplicity_and_convergence_of_wdvv_potentials,sabbah_2021_integrable_deformations_and_degenerations_of_some_irregular_singularities,guzzetti_2022_isomonodromic_deformations_along_a_stratum_of_the_coalescence_locus}.
	In general,
	geometrically,
	this might involve some `wonderful compactification'~\cite{deconcini_procesi_1995_wonderful_models_of_subspace_arrangements} of hyperplane complements---%
	via the explicit descriptions of this text.
\end{rema}

\subsection{Recap of the full/nonpure untwisted case}

But moreover,
the above stratification is \emph{compatible} with the (order-preserving) action $\phi \mt g(\phi)$ of the Weyl group on Levi subsystems,
and taking out this action is the same as studying admissible deformations of irregular classes.

\subsubsection{}

In more detail,
suppose again that $s = 1$.
Then $W = W(G,T)$ acts on $\mf t$ by permuting the strata $\bm B(\phi) \sse \mf t$ (in continuous fashion,
preserving the highest/lowest one).
Moreover,
the action is `equivariant',
i.e.,
\begin{equation}
	g \bigl( \bm B(\phi) \bigr)
	= \bm B \bigl( g(\phi) \bigr),
	\qquad g \in W,
	\quad \phi \in \mc L(\Phi).
\end{equation}
Then one can prove that:
(i) the quotient set $\ol{\mc L}(\Phi) \ceqq \mc L(\Phi) \bs W$ has a well-defined (ranked) poset structure,
declaring that $\ol \phi \leq \ol \phi'$ if $\phi \leq \phi' \in \mc L(\Phi)$,
and denoting a Weyl-orbit of Levi subsystems by $\ol\phi \ceqq W.\phi$;\fn{
	This uses the fact that the elements of $W$ have finite order,
	cf.~\cite[Prop.~6.12]{williams_2023_a_survey_of_congruence_and_quotients_of_partially_ordered_sets},
	and that the rank function $\mc L(\Phi) \to \mb Z_{> 0}$ is $W$-invariant;
	beware however that the lattice structure is broken.}~(ii) the subgroup $N_W(\mf t_\phi) \sse W$ of~\eqref{eq:stratum_stabilizer} coincides with the setwise stabilizer---%
in $W$---%
of the stratum $\bm B(\phi) \sse \mf t_\phi$---%
through $\phi$,
cf.~Lem.~\ref{lem:about_setwise_stabilizers};\fn{
	By the same lemma,
	it is also the same as the stabilizer $W^\phi \ceqq \set{ g \in W | g(\phi) = \phi }$.}~(iii) there is a topological embedding $\bm B(\phi) \bs N_W(\mf t_\phi) \hra \mf t \bs W$,
which corresponds to the subspace of $W$-orbits intersecting the stratum;
and (iv) the topological quotient $\mf t \bs W$ carries a \emph{quotient stratification}
\begin{equation}
	\label{eq:quotient_stratification_1}
	\mf t \bs W
	= \bigcup_{\ol{\mc L}(\Phi)} \bigl( \bm B \bigl( \, \ol\phi \, \bigr) \bigr),
	\qquad \bm B\bigl( \, \ol\phi \, \bigr)
	\ceqq \bm B(\phi) \bs W(\phi),
\end{equation}
invoking the quotient group $N_W(\mf t_\phi) \thra W(\phi)$---%
i.e.,
the relative Weyl group of $(\mf g,\mf t,\phi)$,
cf.~\eqref{eq:effective_weyl_group} and~Lem.-Def.~\ref{lem:relative_weyl_group}.

If (again) $\wh Q = Aw^{-1}$ is an irregular type such that $\phi = \phi_A$,
it follows that $\bm B\bigl( \, \ol\phi \, \bigr) \sse \mf t \bs W$ can be identified with the space $\bm B \bigl( \wh\Theta \bigr)$ of admissible deformations of the irregular class $\wh\Theta \ceqq \wh\Theta \bigl( \wh Q \bigr)$.

\begin{rema}
	\label{rmk:w_orbits}

	The caveat is that the topological subspace on the right-hand side of~\eqref{eq:quotient_stratification_1} is well-defined,
	i.e.,
	independent of the choice of representative $\phi$ for a $W$-orbit in $\mc L(\Phi)$.
	Indeed,
	one has
	\begin{equation}
		\bm B\bigl( \, \ol\phi \, \bigr)
		= \bigl( W.\bm B(\phi) \bigr) \bs W,
		\qquad W.\bm B(\phi)
		\ceqq \bigcup_W  g \bigl( \bm B(\phi) \bigr)
		= \bigcup_W  \bm B \bigl( g(\phi) \bigr)  \sse \mf t.
	\end{equation}
	The latter is a disjoint union of finitely many copies of $\bm B(\phi)$---%
	as many as the subgroup index $\bigl[ W \cl N_W(\mf t_\phi) \bigr] \geq 1$.
\end{rema}

\subsubsection{}

The same arguments apply verbatim to the root-valuation stratification of Def.~\ref{def:root_stratification}.
Namely,
$W$ acts diagonally both on $\mf t^s$ and $\mc L(\Phi)^{\leq s}$,
satisfying the analogous compatibilities,
and so:

\begin{defi}
	Let $s \geq 1$ be an integer.
	The \emph{full/nonpure root-valuation stratification} (of $\mf t^s \bs W$) is defined by the topological subspaces
	\begin{equation}
		\bm B  \bigl( \,\ol{\bm\phi} \, \bigr)
		\ceqq \bm B (\bm\phi) \bs W(\bm\phi) \sse \mf t^s \bs W,
		\qquad \bm\phi \in \mc L_\Phi^{\leq s},
	\end{equation}
	in the notation of~\eqref{eq:final_galois_covering},
	and letting $\ol{\bm\phi} \ceqq W.\bm\phi$.
	(The finite index set is the quotient poset $\ol{\mc L}^{\leq s}(\Phi) \ceqq \mc L(\Phi)^{\leq s} \bs W$.)
\end{defi}

\subsubsection{}

Again,
this stratification of the space of untwisted/unramified irregular classes corresponds precisely to considering their admissible deformations.
Therefore,
there are also Poisson/symplectic dynamics of the full/nonpure local WMCG,
i.e.,
$\pi_1 \bigl( \bm B \bigl( \,\ol{\bm\phi} \, \bigr) \bigr) \simeq \ol\Gamma \bigl( \wh\Theta \bigr)$,
on the wild character varieties.
(Recall that the \emph{unframed} de Rham/Betti moduli spaces only depend on the choice of irregular classes at each marked point,
cf.~\cite[Rmk.~10.6]{boalch_2014_geometry_and_braiding_of_stokes_data_fission_and_wild_character_varieties}.)

\subsection{Pure twisted case}

Let us now instead take any ramification $r \geq 1$.

\subsubsection{}

Suppose first that $s = 1$,
and choose a group element $g \in W$.
Def.~\ref{def:twisted_g_admissible_deformations} led us to consider the eigenspace $\mf t(g,\zeta_r) \sse \mf t$.
If we now regard it as a \emph{closed} topological subspace of $\mf t$,
then:

\begin{enonce}{Lemma-Definition}
	\label{lem:pure_g_twisted_strata}

	Consider the subposet of $\mc L(\Phi)$ with underlying set
	\begin{equation}
		\label{eq:g_twisted_levi_subposet}
		\mc L_{g,r}(\Phi)
		\ceqq \Set{ \phi \in \mc L(\Phi) | \bm B_{g,r}(\phi)  \neq \vn},
	\end{equation}
	where in turn
	\begin{equation}
		\label{eq:pure_g_twisted_stratum}
		\bm B_{g,r}(\phi)
		\ceqq \bm B(\phi) \cap \mf t(g,\zeta_r) \sse \mf t(g,\zeta_r).
	\end{equation}
	Then the subspaces~\eqref{eq:pure_g_twisted_stratum} define---%
	on $\mf t(g,\zeta_r)$---%
	a \emph{substratification} of the Levi stratification of $\mf t$,
	indexed by~\eqref{eq:g_twisted_levi_subposet}.
\end{enonce}

\begin{proof}
	This readily follows from the definition of (sub)stratifications,
	cf.~\cite[Lem.~A.2.3]{calaque_felder_rembado_wentworth_2024_wild_orbits_and_generalised_singularity_modules_stratifications_and_quantisation}.
\end{proof}

\begin{rema}
	The inclusion $\mc L_{g,r}(\Phi) \sse \mc L(\Phi)$ can be proper.
	Notably,
	if $g$ is \emph{not} a regular element of $W$,
	then (by definition) $\bm B_{g,r}(\vn) = \vn$ for all $r \geq 1$.
\end{rema}

\subsubsection{}

Analogously to the untwisted situation,
the locally-closed subspace~\eqref{eq:pure_g_twisted_stratum} coincides with the space $\bm B_{g,r} \bigl( \wh Q \bigr)$ of $(g,r)$-admissible deformations of any of its elements,
viz.,
of any $r$-Galois-closed irregular type of the form $\wh Q = A w^{-1}$,
such that:
(i) $g$ generates its $r$-Galois-orbit;
and (ii) $\phi_A = \phi$.
(Cf.~\eqref{eq:deformations_are_intersections}.)

In addition,
recall that in \S~\ref{sec:no_marking} we established that $\bm B_r \bigl( \wh Q \bigr) = \bm B_{g,r} \bigl( \wh Q \bigr)$ for any element $g \in W$ generating the $r$-Galois-orbit.
Hence,
overall,
the $r$-admissible deformations of $r$-Galois-closed irregular types also correspond to a finite stratification of a finite-dimensional complex vector space---%
into hyperplane complements.

\subsubsection{}
\label{sec:disconnected_strata}

But it is also possible to avoid fixing an element $g \in W$ from the start,
thereby stratifying a larger subspace of $\mf t$.
This is helpful to take quotients later on,
with an important caveat:
the strata will have several connected components,
and considering admissible deformations of $r$-Galois-closed irregular types corresponds to selecting one.

Namely,
choose a set $\set{f_1,\dc,f_m}$ of algebraically-independent generators of the invariant subring $\mb C[\mf t]^W \sse \mb C[\mf t]$,
of degrees $d_i \ceqq \deg(f_i) \geq 1$,
for $i \in \set{1,\dc,m}$.
(So that $m = \dim_{\mb C}(\mf t)$ is the rank of $G$,
and the integers $d_i$ are the degrees of $(\mf t,W)$,
etc.)\fn{
There are as many \emph{linear} generators as the complex dimension of $\mf Z(\mf g) \sse \mf t$,
as well as a factorization $\mb C[\mf t]^W \simeq \mb C \bigl[ \mf Z(\mf g) \bigr] \ots_{\mb C}[\mf t']^W$,
where $\mf t' = \mf t \cap \mf g'$ is the corresponding Cartan subalgebra of the semisimple part $\mf g' = [\mf g,\mf g]$---%
on which $W$ acts in \emph{essential} fashion,
cf.~\S~\ref{sec:background}.}~Consider also the vanishing hypersurfaces $V_i = V(f_i) \ceqq f_i^{-1}(0) \sse \mf t$,
and finally the intersection
\begin{equation}
	\label{eq:hypersurface_intersection}
	Y_r
	\ceqq \bigcap_{r \nmid d_i} V_i \sse \mf t.
\end{equation}
(Note that $Y_1 = \mf t$,
by intersecting over the empty set,
compatibly with \S~\ref{sec:untwisted_stratifications}.)

Again one can restrict the Levi stratification onto~\eqref{eq:hypersurface_intersection},
by considering the subposet of $\mc L(\Phi)$ on the subset:
\begin{equation}
	\label{eq:twisted_levi_subposet}
	\mc L_r(\Phi)
	\ceqq \Set{ \phi \in \mc L_\Phi | \bm B_r(\phi) \neq \vn },
\end{equation}
where in turn
\begin{equation}
	\label{eq:pure_twisted_stratum}
	\bm B_r(\phi)
	\ceqq \bm B(\phi) \cap Y_r \sse Y_r.
\end{equation}
Indeed,
since $Y_r \sse \mf t$ is closed,
the subspaces~\eqref{eq:pure_twisted_stratum} define---%
on $Y_r$---%
a substratification of the Levi stratification of $\mf t$,
indexed by~\eqref{eq:twisted_levi_subposet}.
(As in Lem.-Def.~\ref{lem:pure_g_twisted_strata}.)

Noting that
\begin{equation}
	\mc L_{g,r}(\Phi) \sse \mc L_r(\Phi),
	\quad \bm B_{g,r}(\phi) \sse \bm B_r(\phi),
	\qquad g \in W,
\end{equation}
one can then prove the following:

\begin{lemm}
	\label{lem:disconnected_stratum}

	Choose a Levi subsystem $\phi \in \mc L_r(\Phi)$.
	Then
	\begin{equation}
		\pi_0 \bigl( \bm B_r(\phi) \bigr)
		= \Set{ \bm B_{g,r}(\phi) | \phi \in \mc L_{g,r}(\Phi) }.
	\end{equation}
\end{lemm}

\begin{proof}[Proof postponed to~\ref{proof:lem_disconnected_stratum}]
\end{proof}

\begin{rema}
	Again one might have $\mc L_r(\Phi) \ssne \mc L(\Phi)$.
	E.g.,
	suppose that $r$ is larger than the largest degree of $(\mf t,W)$.
	Then~\eqref{eq:hypersurface_intersection} is the vanishing locus of \emph{all} the invariant regular functions,
	i.e.,
	the origin,
	and so $\bm B_r(\phi) \neq \vn$ if and only if $\phi = \Phi$.
\end{rema}
\subsubsection{}

Note that~\eqref{eq:eigenspace_union} means that~\eqref{eq:hypersurface_intersection} matches up with the space $\wh{\mc{IT}}^{\leq 1}_r \sse \mf t$,
of $r$-Galois-closed irregular types of irregularity $s = 1$,
as per \S~\ref{sec:setup}.
Therefore,
Lem.~\ref{lem:disconnected_stratum} equivalently states that the $r$-admissible deformation classes are precisely the connected components of the intersection of $Y_r$ with suitable complex hyperplane complements.

Then we can draw a final conclusion about fundamental groups:

\begin{coro}
	\label{cor:pure_local_wmcg_from_stratum}

	Choose a Levi subsystem $\phi \in \mc L_r(\Phi)$ such that Howlett's twist $W(\phi) \thra \wt W(\phi)$ is \emph{trivial},
	as per Prop.~\ref{prop:howlett}.
	Moreover,
	let $\wh Q = Aw^{-1}$ be an $r$-Galois-closed irregular type determined by a vector $A \in \mf t$ with Levi annihilator $\phi_A = \phi$,
	and choose an element $g \in W$ generating the $r$-Galois-orbit of $\wh Q$ (so that actually $\phi \in \mc L_{g,r}(\Phi)$).
	Then:
	\begin{enumerate}
		\item
		      the space $\bm B_r(\phi)$ has $\bigl[ W(\phi) \cl Z_{W,\phi}(g) \bigr] \geq 1$ connected components,
		      in the notation of~\eqref{eq:effective_weyl_group}--\eqref{eq:centralizer_in_quotient};

		\item
		      and for any point $A' \in \bm B_r(\phi)$ there is a (noncanonical) group isomorphism $\pi_1 \bigl( \bm B_r(\phi),A' \bigr) \simeq \Gamma_r \bigl( \wh Q \bigr)$.
	\end{enumerate}
\end{coro}

\begin{proof}
	Consider the following abstract situation.
	A finite (discrete) group $H$ acts in continuous fashion on a topological space $S$,
	whence a functorially-induced $H$-action on the totally-disconnected space $\pi_0(S)$.
	If the latter is \emph{transitive},
	then:
	\begin{enumerate}
		\item
		      $\abs{ \pi_0(S) } = [ H \cl H' ]$,
		      where $S' \sse S$ is any connected component,
		      and the subgroup $H' \ceqq N_H(S') \sse H$ is the setwise $H$-stabilizer of $S'$;

		\item
		      and for any choice of a \emph{pointed} component $(S',x')$,
		      and for any other point $x \in S$,
		      there is a group isomorphism $\pi_1(S',x') \simeq \pi_1(S,x)$ obtained by mapping the component of $x$ homeomorphically onto $S'$.
	\end{enumerate}

	In our context,
	consider the space $S \ceqq \bm B_r(\phi)$,
	and its topological subspace $S' \ceqq \bm B_r \bigl( \wh Q \bigr) = \bm B_{g,r}(\phi)$.
	By Lem.~\ref{lem:disconnected_stratum},
	the latter is a component of $S$.
	Now let $H \ceqq W(\phi)$ be the relative Weyl group of $(\mf g,\mf t,\phi)$,
	as in Lem.~\ref{lem:relative_weyl_group}:
	since $Y_r \sse \mf t$ is $W$-invariant,
	it follows from Lem.~\ref{lem:about_setwise_stabilizers} that $H$ is the setwise stabilizer of $S \sse \mf t$ in $W$,
	modulo the pointwise stabilizer,
	and so in particular it acts (freely) thereon.
	Moreover,
	Lem.~\ref{lem:restricted_commutators} states that the setwise stabilizer of $S'$ in $H$ is precisely the subgroup $H' = Z_{W,\phi}(g) \sse W(\phi)$.

	It only remains to see that there is a unique $H$-orbit of connected components of $S$,
	i.e.,
	that all the $H$-orbits have nonempty intersection with $S'$.
	Choose thus any other element $g' \in W$ such that $\phi \in \mc L_{g',r}(\Phi)$:
	the subtlety is that $g'$ need \emph{not} generate the $r$-Galois orbit of $\wh Q$.
	Nonetheless,
	by hypothesis,
	$H$ acts as a real reflection group on $\mf t_\phi$,
	and moreover $\mf t_\phi(g_\phi,\zeta_r) \sse \mf t_\phi$ is \emph{maximal} amongst the $\zeta_r$-eigenspaces of elements of $H$ (cf.~the proof of Thm.~\ref{thm:complex_refl_groups_from_gauge}).
	Therefore,
	by~\cite[Thm.~3.4]{springer_1974_regular_elements_of_finite_reflection_groups}:
	(i) the eigenspace $\mf t_\phi(g'_\phi,\zeta_r)$ is contained in a maximal one;
	and (ii) there exists $h_\phi \in H$ mapping that maximal eigenspace isomorphically onto $\mf t_\phi(g_\phi,\zeta_r)$.
	It follows that $h_\phi(A') \in S'$,
	for any $A' \in \bm B_{g',r}(\phi)$,
	noting that any lift $h \in N_W(\mf t_\phi)$ (of $h_\phi$) preserves $\mf t_\phi \sse \mf t$ and permutes the set of `complementary' root-hyperplanes $\set{ H_\alpha | \alpha \in \Phi \sm \phi } \sse \mb P(\mf t^{\dual})$---%
	cf.~again Lem.~\ref{lem:about_setwise_stabilizers}.
\end{proof}

\begin{rema}
	Notably,
	in the quasi-generic case where $\phi = \vn$,
	certainly $W(\phi) = W$ acts as a reflection group on $\mf t_\phi = \mf t$.
	The corresponding stratum of $Y_r$ is the space of all the regular $\zeta_r$-eigenvectors of all the (regular) elements of $W$---%
	of order $r$,
	necessarily.
	The connected components are the subspaces of $\zeta_r$-eigenvalues of each regular element $g \in W$,
	which are transitively permuted by the $W$-action,
	and stabilized by the centralizer subgroups $Z_W(g) \sse W$.
	(The isomorphism class of these complex reflection groups only depends on the integer $r \geq 1$,
	cf.~\cite{springer_1974_regular_elements_of_finite_reflection_groups}.)
\end{rema}

\subsubsection{}

In conclusion,
the `crystallographic' strata of $Y_r$ consist of finitely-many disjoint homeomorphic copies of $r$-admissible deformation spaces of $r$-Galois-closed irregular types,
permuted by the action of a real reflection group.

To treat the general case,
allow now for any irregularity $s \geq 1$,
and choose again $g \in W$.
Let then
\begin{equation}
	\label{eq:eigenspace_product}
	\mf t^{\leq s}(g,r) \ceqq \prod_{i = 1}^s  \mf t \bigl( g,\zeta_r^i \bigr)  \sse \mf t^s,
\end{equation}
and generalize Lem.-Def.~\ref{lem:pure_g_twisted_strata} as follows:

\begin{enonce}{Proposition-Definition}
	\label{lem:pure_g_twisted_strata_several_coeff}

	Consider the following subposet of $\mc L^{\leq s}(\Phi)$:
	\begin{equation}
		\label{eq:g_twisted_levi_subposet_several_coeff}
		\mc L^{\leq s}_{g,r}(\Phi)
		\ceqq \Set{ \bm\phi \in \mc L^{\leq s}(\Phi) | \bm B_{g,r}(\bm\phi) \neq \vn },
	\end{equation}
	where in turn
	\begin{equation}
		\label{eq:pure_g_twisted_stratum_several_coeff}
		\bm B_{g,r}(\bm\phi)
		= \bm B_{g,r}^{\leq s}(\bm\phi)
		\ceqq \bm B(\bm\phi) \cap \mf t^{\leq s}(g,r) \sse \mf t^{\leq s}(g,r),
	\end{equation}
	in the notation of~\eqref{eq:root_stratum}.
	Then the subspaces~\eqref{eq:pure_g_twisted_stratum_several_coeff} define---%
	on $\mf t^{\leq s}(g,r)$---%
	a \emph{substratification} of the pure root-valuation stratification of $\mf t^s$,
	indexed by~\eqref{eq:g_twisted_levi_subposet_several_coeff}.
\end{enonce}

\begin{proof}
	Again,
	note that~\eqref{eq:eigenspace_product} is a vector subspace of $\mf t^s$,
	and so it is closed therein;
	cf.~\cite[Lem.~A.2.3]{calaque_felder_rembado_wentworth_2024_wild_orbits_and_generalised_singularity_modules_stratifications_and_quantisation}.
\end{proof}

\begin{rema}
	The highest/lowest strata now are determined by the greatest/lowest element of $\mc L^{\leq s}_{g,r}(\Phi) \sse \mc L^{\leq s}(\Phi)$.
	E.g.,
	if $g$ has a regular eigenvector of eigenvalue $\zeta_r^s \in \mb C^{\ts}$,
	then $\pmb \vn \ceqq (\vn,\dc,\vn) = \max \mc L^{\leq s}(\Phi)$ lies in the subposet,
	and so the highest stratum is as in Prop.~\ref{prop:generic_pure_twisted_deformation_space}.
	(This is the quasi-generic case.)
\end{rema}

\subsubsection{}

Finally,
the pure $r$-ramified local WMCG,
i.e.,
$\pi_1 \bigl( \bm B_{g,r} (\bm\phi) \bigr) \simeq \Gamma_r \bigl( \wh Q \bigr)$,
acts on the \emph{twisted} wild character varieties~\cite{boalch_yamakawa_2015_twisted_wild_character_varieties} (cf.~\cite[Cor.~1.2]{boalch_doucot_rembado_2025_twisted_local_wild_mapping_class_groups_configuration_spaces_fission_trees_and_complex_braids}).

But just as in \S~\ref{sec:disconnected_strata},
it is helpful to consider a $W$-invariant setup.
For any integer $s \geq 1$,
let thus
\begin{equation}
	\label{eq:generalized_hypersuface_intersection}
	Y_r^{\leq s}
	\ceqq \bigcup_W  \mf t^{\leq s}(g,r)
	= \bigcup_W \Bigl( \, \prod_{i = 1}^s \mf t \bigl( g,\zeta_r^i \bigr) \Bigr) \sse \mf t^s,
\end{equation}
in the notation of~\eqref{eq:eigenspace_product}---%
and in a generalization of~\eqref{eq:hypersurface_intersection}.
The diagonal $W$-action preserves it,
as
\begin{equation}
	h\bigl( \mf t(g,\zeta) \bigr)
	= \mf t(hgh^{-1},\zeta) \sse \mf t,
	\qquad g,h \in W,
	\quad \zeta \in \mb C^{\ts}.
\end{equation}
Indeed,
$Y_r^{\leq s}$ can be identified with the subspace $\wh{\mc{IT}}^{\leq s}_r$ of irregularity-bounded $r$-Galois-closed irregular types.

Moreover,
the subspace~\eqref{eq:generalized_hypersuface_intersection} is a finite union of vector subspaces of $\mf t^s$,
and so it is closed.
Then introduce again a subposet of Levi filtrations by
\begin{equation}
	\label{eq:twisted_levi_filtrations_subposet}
	\mc L_r^{\leq s}(\Phi)
	\ceqq \Set{ \bm\phi \in \mc L^{\leq s}(\Phi) | \bm B_r(\bm\phi) \neq \vn },
\end{equation}
where in turn
\begin{equation}
	\label{eq:pure_twisted_root_stratum_general}
	\bm B_r \bigl( \bm\phi \bigr)
	= \bm B_r^{\leq s} (\bm\phi)
	\ceqq \bm B (\bm\phi) \cap Y_r^{\leq s} \sse Y_r^{\leq s},
\end{equation}
in the notation of~\eqref{eq:root_stratum}.
As usual,
the latter define---%
on $Y_r^{\leq s}$---%
a substratification of the stratification of $\mf t^s$,
which might thus be called the \emph{pure} $r$-\emph{ramified root-valuation stratification}.

Again,
the $r$-admissible deformation spaces of $r$-Galois-closed irregular types correspond precisely to the connected components of each stratum of $Y_r^{\leq s}$:

\begin{prop}
	\label{prop:disconnected_stratum_general}

	Choose a Levi filtration $\bm\phi \in \mc L_r^{\leq s}(\Phi)$.
	Then
	\begin{equation}
		\pi_0 \bigl( \bm B_r (\bm\phi) \bigr)
		= \Set{ \bm B_{g,r}(\bm\phi) | \bm\phi \in \mc L_{g,r}^{\leq s}(\Phi) },
	\end{equation}
	in the notation of~\eqref{eq:g_twisted_levi_subposet_several_coeff}--\eqref{eq:pure_g_twisted_stratum_several_coeff}.
\end{prop}

\begin{proof}
	The proof~\ref{proof:lem_disconnected_stratum} extends verbatim,
	establishing that:
	(i) the topological subspaces $\bm B_{g,r}(\bm\phi) = \bm B_r(\bm\phi) \cap \mf t^{\leq s}(g,r) \sse \bm B_r (\bm\phi)$ are clopen and disjoint as $g$ varies in $W$;
	(ii) when nonempty,
	they are connected (by Thm.-Def.~\ref{thm:general_pure_twisted_deformation_space});
	and (iii) they are nonempty if and only if $\bm\phi \in \mc L_{g,r}(\bm\phi)$---%
	by definition.
\end{proof}

\subsubsection{}

Again,
there are nested obstructions to ensure that the connected components of the strata of $Y_r^{\leq s}$ are all transitively permuted by the action of a (reflection) group.
If the latter holds,
there are noncanonical group isomorphisms $\pi_1 \bigl( \bm B_r(\bm\phi) \bigr) \simeq \Gamma_r \bigl( \wh Q \bigr)$,
generalizing Cor.~\ref{cor:pure_local_wmcg_from_stratum}~(2.).

\subsection{Full/nonpure twisted case}

Finally,
we take out one last time the action of the Weyl group,
for any ramification $r \geq 1$.

\subsubsection{}

One might in principle:
(i) fix a group element $g \in W$;
(ii) consider the setwise stabilizer $N_W(g,\zeta_r) \sse W$ of its $\zeta_r$-eigenspace (as in~\eqref{eq:eigenspace_normalizer}),
and (iii) observe that the subposet~\eqref{eq:g_twisted_levi_subposet} is invariant under the action of the latter;
etc.
However,
it seems cleaner to take a viewpoint which does \emph{not} immediately break the $W$-action.

Thus,
in the setting of \S~\ref{sec:disconnected_strata},
note that the subposet~\eqref{eq:twisted_levi_subposet} is $W$-invariant---%
because $Y_r \sse \mf t$ is.
So there is a well-defined \emph{subquotient} of the Levi lattice,
viz.,
the quotient poset $\ol{\mc L}_r(\Phi) \ceqq \mc L_r(\Phi) \bs W$.
Letting also $\ol Y_r \ceqq Y_r \bs W$,
it follows that:

\begin{enonce}{Lemma-Definition}
	\label{lem:full_twisted_root_stratification_1_coeff}

	The following subspaces define a topological stratification of $\ol Y_r$,
	indexed by $\ol{\mc L}_r(\Phi)$:
	\begin{equation}
		\bm B_r ( \,\ol\phi \,)
		\ceqq \bm B_r(\phi) \bs W(\phi),
		\qquad \phi \in \mc L_r(\Phi),
	\end{equation}
	in the notation of~\eqref{eq:pure_twisted_stratum}.
\end{enonce}

\begin{proof}
	Via~\cite[Prop.-Def.~A.3.7]{calaque_felder_rembado_wentworth_2024_wild_orbits_and_generalised_singularity_modules_stratifications_and_quantisation},
	it remains to prove that $N_W(\mf t_\phi) \sse W$ is (also) the setwise stabilizer of $\bm B_r(\phi) \sse Y_r$,
	and that $W_{\mf t_\phi} \sse N_W(\mf t_\phi)$ is the subgroup acting trivially thereon:
	due to the $W$-invariance of $Y_r$,
	this follows as usual from Lemm.~\ref{lem:about_setwise_stabilizers}--\ref{lem:about_pointwise_stabilizers}.
	(The compatibility of the restricted $W$-action is automatic.)
\end{proof}

\begin{theo}
	\label{thm:deformations_as_quotient_strata}

	Choose an element $g \in W$,
	and an eigenvector $A \in \mf t(g,\zeta_r)$;
	let $\phi \ceqq \phi_A \in \mc L(\Phi)$ be the Levi annihilator of $A$.
	(By definition,
	the latter lies in $\mc L_{g,r}(\Phi)$.)
	Then:
	\begin{enumerate}
		\item
		      there is a \emph{clopen} (= closed-and-open) embedding
		      \begin{equation}
			      \label{eq:embedding_quotient_stratum}
			      \ol\iota_g \cl \bm B_r \bigl( \wh\Theta \bigr) \lhra \bm B_r (\, \ol\phi \,),
			      \qquad \wh\Theta
			      = \wh\Theta \bigl( \wh Q \bigr),
			      \quad \wh Q \ceqq Aw^{-1};
		      \end{equation}

		\item
		      and if Howlett's twist $W(\phi) \thra \wt W(\phi)$ is trivial,
		      as per Prop.~\ref{prop:howlett},
		      then~\eqref{eq:embedding_quotient_stratum} is a \emph{homeomorphism}.
	\end{enumerate}
\end{theo}

\begin{proof}
	Lem.~\ref{lem:induced_homeo} implies that $S'$ is a (partial) topological slice for the $H$-action on $S$,
	where:
	\begin{equation}
		S'
		= S'_g
		\ceqq \bm B_{g,r} \bigl( \wh Q \bigr),
		\qquad S \ceqq \bm B_r (\phi),
		\qquad H \ceqq W(\phi).
	\end{equation}
	Indeed,
	we have already established that:
	(i) $S' \sse S$ is clopen (cf.~Lem.~\ref{lem:disconnected_stratum});
	(ii) the subgroup $Z_{W,\phi}(g) \sse W(\phi) = H$ coincides with the setwise stabilizer $H' \ceqq N_H(S')$ of $S'$ in $H$ (cf.~the proof of Cor.~\ref{cor:pure_local_wmcg_from_stratum});
	and (iii) if $W(\phi) = W'(\phi)$ then all the $H$-orbits intersect $S'$ (cf.~again loc.~cit.).

	The conclusion follows from the homeomorphism $\bm B_r \bigl( \wh\Theta \bigr) \simeq \bm B_r \bigl( \wh Q \bigr) \bs Z_{W,\phi}(g)$.
\end{proof}

\begin{lemm}
	\label{lem:induced_homeo}

	Let $S$ be a topological space equipped with the continuous action of a \emph{finite} (discrete) group $H$,
	and consider a topological subspace $S' \sse S$.
	Suppose that all the \emph{nonempty} intersections $\mc O \cap S' \sse S'$ are orbits for the setwise stabilizer $H' \ceqq N_H(S)$,
	where $\mc O \sse S$ is any $H$-orbit.
	Then:
	\begin{enumerate}
		\item
		      if $S'$ if closed (resp.,
		      open),
		      then the composition $S' \hra S \thra S \bs H$ induces a closed (resp.,
		      open) topological embedding
		      \begin{equation}
			      \label{eq:closed_embedding}
			      \ol\iota \cl S' \bs H' \lhra S \bs H;
		      \end{equation}

		\item
		      and if moreover $S'$ intersects all the $H$-orbits,
		      then~\eqref{eq:closed_embedding} is a homeomorphism.
	\end{enumerate}
\end{lemm}

\begin{proof}[Proof postponed to~\ref{proof:lem_induced_homeo}]
\end{proof}

\subsubsection{}

Notably,
in the notation of Thm.~\ref{thm:deformations_as_quotient_strata},
suppose that we are in the quasi-generic case.
Thus,
$A$ is a regular (eigen)vector of $g$,
and the hypotheses hold:
the resulting homeomorphism can be viewed as a rewriting of Lem.~\ref{lem:deformations_as_invariants}.
Moreover,
this will always work in type $A$,
$B$,
and $C$,
even in the subregular case.\fn{
	In type $D$,
	one might perhaps instead stratify the spaces of \emph{pseudo-irregular} classes introduced in \S~\ref{sec:pseudo_irr_classes}.}

In any event,
Thm.~\ref{thm:deformations_as_quotient_strata} states that certain strata of $\ol Y_r \hra \mf t \bs W$ correspond to the isomonodromic deformations of twisted/ramified irregular-singular connections on principal $G$-bundles.
And moreover,
clearly:

\begin{coro}
	\label{thm:wmcg_from_stratifications}

	In the notation of Thm.~\ref{thm:deformations_as_quotient_strata},
	if Howlett's twist is trivial then there is a \emph{canonical} group isomorphism
	\begin{equation}
		\pi_1 \bigl( \bm B_r (\, \ol{\phi}\,),\ol A' \bigr) \simeq \ol\Gamma_r \bigl( \wh\Theta \bigr),
	\end{equation}
	for any base point $\ol A' = W(\phi).A'$.
\end{coro}

\begin{proof}
	The homeomorphism~\eqref{eq:embedding_quotient_stratum} is canonical.
\end{proof}

\subsubsection{}

The difference with Cor.~\ref{cor:pure_local_wmcg_from_stratum} is precisely that all connected components are identified in the quotient.
In general,
reasoning as in the proof~\ref{proof:lem_disconnected_stratum} still shows that the embeddings~\eqref{eq:embedding_quotient_stratum} yield all the connected components of each stratum.

\subsubsection{}

The general case now goes as follows,
for any irregularity $s \geq 1$:

\begin{enonce}{Proposition-Definition}
	Let
	\begin{equation}
		\ol Y_r^{\leq s} \ceqq Y_r^{\leq s} \bs W,
		\qquad \ol{\mc L}_r^{\leq s}(\Phi) \ceqq \mc L_r^{\leq s}(\Phi) \bs W,
	\end{equation}
	in the notation of~\eqref{eq:generalized_hypersuface_intersection} and~\eqref{eq:twisted_levi_filtrations_subposet}.
	Then there is a (quotient) stratification of the former,
	indexed by the latter,
	via
	\begin{equation}
		\bm B_r (\, \ol{\bm\phi} \,)
		\ceqq \bm B_r \bigl( \bm\phi \bigr) \bs W(\bm\phi),
		\qquad \bm\phi \in \mc L_r^{\leq s}(\Phi),
	\end{equation}
	in the notation of~\eqref{eq:final_galois_covering}.
\end{enonce}

(It might be called the \emph{full/nonpure} $r$-\emph{ramified root-valuation stratification}.)

\begin{proof}
	Analogously to Lem.-Def.~\ref{lem:full_twisted_root_stratification_1_coeff},
	and in the notation of Thm.-Def.~\ref{thm:full_general_twisted_deformation_space},
	it suffices to show that $N_W(\mf t_{\bm \phi}) \sse W$ is the setwise stabilizer of each stratum,
	and that the normal subgroup $W_{\mf t_{\phi_1}} \sse N_W(\mf t_{\bm\phi})$ acts trivially thereon:
	in view of the $W$-invariance of $Y_r^{\leq s}$,
	this follows from the proof of Thm.-Def.~\ref{thm:full_general_twisted_deformation_space}.
\end{proof}

\begin{prop}[cf.~Thm.~\ref{thm:deformations_as_quotient_strata}]
	\label{prop:deformations_as_quotient_strata_general}

	Choose an element $g \in W$,
	and an irregular type $\wh Q$ whose $r$-Galois-orbit is generated by $g$;
	let $\bm\phi = \bm\phi_{\wh Q} \in \mc L^{\leq s}(\Phi)$ be the Levi filtration determined by the nested Levi annihilators of the coefficients of $\wh Q$.
	(By definition,
	the latter lies in $\mc L^{\leq s}_{g,r}(\Phi)$.)
	Then there is a clopen topological embedding
	\begin{equation}
		\label{eq:deformation_space_embedding}
		\ol\iota_g \cl \bm B_r \bigl( \wh\Theta \bigr) \lhra \bm B_r (\, \ol{\bm\phi} \,),
		\qquad \wh\Theta
		= \wh\Theta \bigl( \wh Q \bigr).
	\end{equation}
\end{prop}

\begin{proof}
	We use the first statement of Lem.~\ref{lem:induced_homeo},
	taking
	\begin{equation}
		S'
		= S'_g
		\ceqq \bm B_{g,r} \bigl( \wh Q \bigr),
		\qquad S
		\ceqq \bm B_r (\bm\phi),
		\qquad H
		\ceqq W(\bm\phi).
	\end{equation}

	Indeed:
	(i) $S' \sse S$ is clopen (cf.~Prop.~\ref{prop:disconnected_stratum_general});
	and (ii) the setwise stabilizer $H' = N_H(S')$ is the subgroup $Z_{W,\bm\phi}(g) \sse W(\bm\phi)$,
	in the notation of~\eqref{eq:general_galois_covering} (cf.~the proof of Thm.-Def.~\ref{thm:full_general_twisted_deformation_space}).
	The conclusion follows from the homeomorphism $\bm B_r \bigl( \wh\Theta \bigr) \simeq \bm B_r \bigl( \wh Q \bigr) \bs Z_{W,\bm\phi}(g)$.
\end{proof}

\subsubsection{}

Finally,
whenever the closed topological subspace $\bm B_{g,r} \bigl( \wh Q \bigr) \sse \bm B_r (\bm\phi)$ meets \emph{all} the $W(\bm\phi)$-orbits,
then there is moreover a canonical homeomorphism $\bm B_r \bigl( \wh\Theta \bigr) \simeq \bm B_r ( \,\ol{\bm\phi} \,)$,
whence a canonical group isomorphism of $\pi_1 \bigl( \bm B_r (\, \ol{\bm\phi} \,) \bigr)$ with a full/nonpure $r$-ramified local WMCG---%
for \emph{any} choice of base point.

\section{Outlook}
\label{sec:outlook}

\subsection{Selected future directions}

\subsubsection{}

As mentioned in Rmkk.~\ref{rmk:forest_bc} +~\ref{rmk:forest_d},
collecting fission trees at marked points $\bm x \sse X$ (on a projective curve) makes it possible to attach \emph{fission forests} $\bm F = \coprod_{\bm x} \mc T_x$ to irregular-singular connections on principal bundles with classical simple structure group.
Together with the genus $g \geq 0$ of $X$,
such forests control the admissible deformations of the wild curve $\bm X = (X,\bm x,\bm\Theta)$ underlying the connection.
One should now define the corresponding moduli stacks $\mc{WM}_{g,\bm F}$ of wild curves,
whose Deligne--Mumford condition~\cite{deligne_mumford_1969_the_irreducibility_of_the_space_of_curves_of_given_genus} generalizes the usual requirement that the quantity $2 - 2g - n$ be negative---%
where again $n \ceqq \abs{\bm x} \geq 0$ is the number of marked points.

\begin{rema}
	\label{rmk:stacks}

	In this viewpoint,
	this text deals with the fibres of a natural forgetful fibration $\mc{WM}_{g,\bm F} \thra \mc M_{g,n}$,
	so that $\mc W\Gamma_{g,\bm F} \ceqq \pi_1(\mc{WM}_{g,\bm F})$ is the \emph{global} WMCG,
	extending the usual mapping class group $\Gamma_{g,n} \ceqq \pi_1 \bigl( \mc M_{g,n} \bigr)$ by the `local' ones defined in \S~\ref{sec:tglwmcgs}:
	cf.~\cite{doucot_rembado_tamiozzo_2024_moduli_spaces_of_untwisted_wild_riemann_surfaces} for a precise theorem in the untwisted/unramified case for any structure group,
	and note (again) that the complex dimension of $\mc M_{g,\bm F}$ leads to a generalization of Riemann's count~\cite{riemann_1857_theorie_der_abelschen_functionen}---%
	of $3g - 3$,
	for the number of moduli of complex structures on a genus-$g$ closed oriented topological surface.
\end{rema}

\subsubsection{}

As already mentioned,
we wish to study in more detail the Poisson/symplectic actions of WMCGs on the wild character varieties (cf.~\cite[Exmp.~9.1]{doucot_rembado_tamiozzo_2022_local_wild_mapping_class_groups_and_cabled_braids}),
possibly twisted,
relating in particular with the approach of~\cite{paul_ramis_2015_dynamics_on_wild_character_varieties,
	klimes_2024_wild_monodromy_of_the_fifth_painleve_equation_and_its_action_on_wild_character_variety}.

\subsubsection{}

In the setup of Rmk.~\ref{rmk:twisted_loop_algebra},
one might relate this text with~\cite{vinberg_1976_the_weyl_group_of_a_graded_lie_algebra},
considering the cyclic grading $\mf g = \bops_{i = 1}^r \mf g(\dot{\bm\varphi},\zeta_r^i)$---%
determined by the eigenspace decomposition of the finite-order automorphism $\dot{\bm\varphi}$.

\section*{Acknowledgements}

Besides P.~Boalch and M.~Tamiozzo,
who have collaborated on this project since its very inception,
we also thank---%
in alphabetical order---%
the following mathematicians for helpful discussions and for listening/answering to some of our questions:
C.~Bonnafé,
C.~Dupont,
R.~Fioresi,
J.~Michel,
D.~Schwein,
and V.~Toledano Laredo.

\appendix

\section{Some background notion/notation}
\label{sec:background}

\subsection{Complex reflection groups}

Let $V$ be a finite-dimensional complex vector space.
A \emph{(complex) reflection} in $V$ is a nontrivial finite-order element $g \in \GL_{\mb C}(V)$ which acts as the identity on a%
---\emph{reflecting}---%
hyperplane $H \sse V$.\fn{
	These are a.k.a. `pseudoreflections',
	to distinguish them from (order-$2$) reflections in real vector spaces.
	But also as `unitary' reflections,
	up to choosing a $g$-stable inner product on $V$.}~A subgroup $W' \sse \GL_{\mb C}(V)$ generated by reflections is a \emph{complex reflection group}:
in this paper we always tacitly assume that $W'$ is finite.
A complex reflection group $(V,W')$ is \emph{essential} if $0 \in V$ is the only $W'$-invariant vector,
and it is more generally \emph{irreducible} if there are no $W'$-invariant subspaces:
a quotient of $V$ always carries an essential reflection representation of $W'$,
which in turn splits into finitely-many irreducible components.
The \emph{rank of} $(V,W')$ is $m \ceqq \dim_{\mb C}(V)$.

The (finite) set of the hyperplanes of $V$ which are fixed by some reflection contained in $W'$ is the \emph{reflection arrangement of} $(V,W')$.
The complement (in $V$) of the union of the reflecting hyperplanes is the $W'$-\emph{regular part of} $V$,
denoted by $V_{\reg} = V_{\reg,W'} \sse V$.
Every \emph{regular} vector $A \in V_{\reg}$ has trivial stabilizer in $W'$ (cf.~\cite[Prop.~4.1]{springer_1974_regular_elements_of_finite_reflection_groups}),
and an element $g \in W'$ is said to be \emph{regular} if it admits a regular eigenvector.

Let $V' \sse V$ be a vector subspace.
The \emph{parabolic subgroup of} $(V,W')$ \emph{associated to} $V'$ is
\begin{equation}
	\label{eq:abstract_parabolic_subgroup}
	W'_{V'}
	\ceqq \Set{ g \in W' | \eval[1]g_{V'} = \Id_{V'} },
\end{equation}
and it is generated by the reflections (of $W'$) about the reflecting hyperplanes $H \sse V$ such that $V' \sse H$~\cite{steinberg_1964_differential_equations_invariant_under_finite_reflection_groups},
cf.~\cite{lehrer_2004_a_new_proof_of_steinberg_s_fixed_point_theorem}.
The \emph{parabolic subgroups of} $(V,W')$ are the subgroups of the form $W'_{V'} \sse W'$,
as $V'$ ranges amongst the vector subspaces of $V$.
The group~\eqref{eq:abstract_parabolic_subgroup} is normalized by the \emph{setwise stabilizer}
\begin{equation}
	\label{eq:abstract_setwise_stabilizer}
	N_{W'}(V') \sse \Set{ g \in W' | g(V') \sse V' }.
\end{equation}

A complex reflection group $(V,W')$ is a \emph{real reflection group} if $V$ admits a $W'$-invariant $\mb R$-form,
i.e.,
if there exists a vector subspace $V' \sse V$ over $\mb R$ such that
\begin{equation}
	\label{eq:real_form}
	V' \ots_{\mb R} \mb C
	\simeq V, \quad g(V') \sse V',
	\qquad g \in W'.
\end{equation}
Then $W'$ is generated by reflections of order $2$,
and it is a (finite) Coxeter group.
(Beware that there are complex reflection groups generated by reflections of order $2$ which do \emph{not} admit an $\mb R$-form;
they are all `spetsial',
in the sense of~\cite{malle_1998_spetses}.)

A real reflection group $(V,W')$ is a \emph{Weyl group} if $V$ admits a $W'$-invariant $\mb Q$-form,
analogously to~\eqref{eq:real_form}: in this case,
we will denote it by $W$.
Then there exists a split reductive Lie algebra $(\mf g,\mf t)$ over $\mb C$ such that:
(i) one can take $V \ceqq \mf t$;
and (ii) the group $W = W(\mf g,\mf t)$ is generated by the reflections about the kernels of the roots $\alpha \in \Phi = \Phi(\mf g,\mf t) \sse \mf t^{\dual}$,
which negate the corresponding coroot $\alpha^{\dual} \in \Phi^{\dual} \sse V$.\fn{
	\label{fn:weyl_actions}
	The action on $\mf t$ is more immediately relevant for us,
	and the notation will \emph{not} distinguish the $W$-actions on $\mf t$ and $\mf t^{\dual}$;
	recall that they are mutually contragredient (= inverse-transpose) representations~\cite[Chp.~VI, \S~1]{bourbaki_1968_elements_de_mathematiques_fascicule_xxxvii_chapitres_iv_v_vi}.}~It follows that $\mf g$ is semisimple (resp.,
simple) if and only if $(V,W)$ is essential (resp.,
irreducible),
noting that $W$ acts trivially on the centre $\mf Z(\mf g) \sse \mf g$,
and that it splits along any decomposition of $\mf g$ into Lie-ideals.

In the setting just above,
denote by $\mf g' \ceqq [\mf g,\mf g] \sse \mf g$ the derived Lie subalgebra,
i.e.,
the semisimple part of $\mf g$.
We tacitly identify the Weyl groups of $(\mf g,\mf t)$ and $(\mf g',\mf t')$,
in the vector-space splitting $\mf t = \mf Z(\mf g) \ops \mf t'$,
where $\mf t' \ceqq \mf g' \cap \mf t$---%
a Cartan subalgebra of $\mf g'$.
The root systems $\Phi$ and $\Phi' = \Phi(\mf g',\mf t')$ consists of one and the same finite set spanning $(\mf t')^{\dual}$,
and the latter is identified with the annihilator of the centre.
Finally,
we will also consider the groups $\Aut(\Phi') \sse \Aut(\Phi) \simeq \GL_{\mb C} \bigl( \mf Z(\mf g) \bigr) \ts \Aut(\Phi')$ of automorphisms of the root systems,
i.e.,
of $\mb C$-linear automorphisms of $\mf t' \sse \mf t$ (respectively) preserving the finite set of roots.\fn{
	And so also the Cartan integers; and analogously for the dual root system $\Phi^{\dual} \sse \mf t$.
	Again,
	there is a group isomorphism $\Aut(\Phi) \simeq \Aut(\Phi^{\dual})$ by which we view one single group as operating on two vector spaces,
	just as for the Weyl (sub)group---%
	cf.~Fn.~\ref{fn:weyl_actions}.}~They contain the Weyl group as a normal subgroup,
commuting with the `central' part of $\Aut(\Phi)$.

\subsubsection{}

Let again $(V,W')$ be a complex reflection group,
and denote by $\mb C[V] \ceqq \Sym V^{\dual}$ the $\mb C$-algebra of regular/polynomial functions on $V$%
---viewed as a complex affine space.
It is well-known~\cite{shepard_todd_1954_finite_unitary_reflection_groups,
	chevalley_1955_invariants_of_finite_groups_generated_by_reflections,
	bourbaki_1968_elements_de_mathematiques_fascicule_xxxvii_chapitres_iv_v_vi} that the subring $\mb C[V]^{W'} \sse \mb C[V]$,
of $W'$-invariant polynomial functions,
is itself a polynomial ring generated by $k$ algebraically-independent homogeneous functions $f_1,\dc,f_k \in \mb C[V]^{W'}$,
whose degrees $d_1,\dc,d_k \geq 1$%
---ordered in increasing fashion---%
are intrinsically determined by $(V,W')$:
these are the \emph{degrees of} $(V,W')$,
and one has $\abs{W'} = \prod_{i = i}^k d_i$.
(This property characterizes complex reflection groups.)

Let us assume for simplicity that $(V,W')$ is irreducible.
Then:
(i) the number $k$ of generators equals the rank $m$;
(ii) the number of reflections contained in $W'$ equals the sum of the degrees minus the rank;
(iii) there exists a $W'$-invariant $\mb R$-form of $V$ if and only if $2$ is a degree;
(iv) the centre of $(V,W')$ is cyclic,
of order equal to the GCD of the degrees;
and (v) the number of reflecting hyperplanes of $(V,W')$ equals the sum of the \emph{codegrees} $d_1^*,\dc,d_k^* \in \mb Z_{> 0}$ plus the rank.
In turn,
the latter are a shift of the \emph{coexponents}~\cite[\S~1.A]{broue_malle_rouquier_1998_complex_reflection_groups_braid_groups_hacke_algebras} (cf.~\cite{orlik_solomon_1980_unitary_reflection_groups_and_cohomology}),
and in the setting of regular Springer theory these are determined---%
from those of $W'$---%
in~\cite[Thm.~2.8]{denef_loeser_1995_regular_elements_and_monodromy_of_discriminants_of_finite_reflection_groups}.
(This is helpful when computing the number of generators of the corresponding braid groups.)

\subsection{Reflection cosets}
\label{sec:reflection cosets}

Let again $V$ be a finite-dimensional complex vector space,
and refer to~\cite[Def.~3.1 + Lem.~3.2]{broue_malle_michel_1999_towards_spetses_i} and~\cite[Deff.~1.6 + 1.7]{broue_malle_michel_2014_split_spetses_for_primitive_reflection_groups} for the following terminology.
(For our purposes,
much of this could be rephrased over the algebraic number field $\mb Q(\zeta_r) \sse \mb C$,
for a fixed ramification $r \geq 1$ as above.)

A \emph{complex reflection coset} (resp.,
a \emph{real reflection coset},
resp.,
a \emph{rational reflection coset}) is a pair $\mb G = (V,gW')$,
where:
(i) $(V,W')$ is a complex reflection group (resp.,
a real reflection group,
resp.,
a Weyl group);
and (ii) $g W' \sse \GL_{\mb C}(V)$ is a coset of $W'$ through a finite-order element $g \in \GL_{\mb C}(V)$ normalizing $W'$.
(But $g$ is \emph{not} part of the data.)
If $W' = gW'$,
one says that the coset is \emph{split},
else it is \emph{nonsplit}.\fn{
	The terminology of `untwisted' (resp.,
	`twisted') is also in use for split (resp.,
	nonsplit) reflection cosets;
	but we avoid overloading the adjective in this text.
	Note also that \emph{right} cosets are typically used in the literature,
	and that \emph{irreducible} reflection cosets are classified in~\cite[Prop.~3.13]{broue_malle_michel_1999_towards_spetses_i}.}~The class of $gW'$ in the quotient group $N_{\GL_{\mb C}(V)}(W') \bs W'$ is the \emph{twist of} $\mb G$,
whose order is denoted by $\delta_{\mb G} \geq 1$.

A \emph{reflection subcoset of} $\mb G$ is a reflection coset of the form $\mb G' = \bigl( V',\eval[1]{(gg')}_{V'} W'' \bigr)$,
where:
(i) $V' \sse V$ is a vector subspace;
(ii) $W''$ is a subgroup of $N_{W'}(V') \bs W'_{V'}$,
acting on $V'$ as a reflection group,
where $W_{V'} \sse N_{W'}(V')$ are the pointwise/setwise stabilizers of $V'$ (cf.~\eqref{eq:abstract_parabolic_subgroup}--\eqref{eq:abstract_setwise_stabilizer});
and (iii) $g' \in W'$ is an element such that $gg' \in gW'$ has finite order,
stabilizes $V'$,
and normalizes $W''$.

A \emph{Levi subcoset of} $\mb G$ is a reflection subcoset of the form $\mb L = (V,gg'W'')$,
where:
(i) $W'' \sse W'$ is a parabolic subgroup;
and (ii) $g' \in W'$ is an element such that $gg' \in gW'$ has finite order,
and normalizes $W''$.

\subsection{Classical Weyl groups}

It is useful to base all the classical Weyl groups on type $A$,
as follows.

\subsubsection{}
\label{sec:background_type_A}

For an integer $m \geq 1$,
consider the complex vector space $V^+_m \ceqq \mb C^m$,
equipped with the canonical basis $(e_1,\dc,e_m)$ indexed by the set $\ul m^+ \ceqq \set{1,\dc,m}$.
The type-$A$ Weyl group $W_A(m)$ can (and will) be identified with the symmetric group $\mf S_m = \mf S^+_m$ of $\ul m^+$,
acting on $V^+_m$ by permuting the coordinates in the given basis.
This reflection representation is \emph{not} essential,
and it corresponds to the (nonsemisimple) general linear Lie algebra $\mf g = \mf{gl}_m(\mb C)$,
identifying $V^+_m$ with the standard Cartan subalgebra.
Furthermore,
in this identification,
acting on traceless matrices yields an irreducible reflection group of rank $m-1$,
abusively denoted the same:
it corresponds to the special linear Lie subalgebra $\mf{sl}_m(\mb C) \sse \mf g$.

For an integer $d \geq 2$,
a $d$-\emph{cycle} is an element $c^+ \in W_A(m)$%
---of order $d$---%
with a single nontrivial orbit in $\ul m^+$,
of cardinality $d$,
which is its \emph{support}.
We will write
\begin{equation}
	\label{eq:type_A_cycle}
	c^+
	= ( a_1 \mid \dm \mid a_d ),
	\qquad a_1,\dc,a_d \in \ul m^+ \text{ distinct},
\end{equation}
to denote the $d$-cycle mapping $a_i \mt a_{i+1}$ for $i \in \mb Z \bs d \mb Z$%
---supported on the subset $\set{a_1,\dc,a_d} \sse \ul m^+$.
The $2$-cycles are also called \emph{transpositions},
and generate $W_A(m)$.

Any element $g \in W_A(m)$ can be decomposed into a product of cycles with pairwise disjoint supports,
and this decomposition is unique up to reordering the (commuting) factors.
Two elements of $W_A(m)$ are conjugated if and only if they have the same \emph{cycle-type},
i.e. the same number of $d$-cycles in any factorization into disjoint cycles,
for all $d \geq 2$ (this means looking at partitions/Young diagrams,
cf.~\cite[Prop.~23]{carter_1972_conjugacy_classes_in_the_weyl_group} and the classical work~\cite{schur_1905_ueber_die_darstellung_der_endlichen_gruppen_durch_gebrochen_lineare_substitutionen,
	young_1930_on_quantitative_substitutional_analysis}).

\subsubsection{}
\label{sec:background_type_BCD}

Consider now a second copy $V^-_m$ of $\mb C^m$,
with basis $(e_{-1},\dc,e_{-m})$.
Again,
the symmetric group $\mf S_m^\pm \simeq W_{2m}(A)$ of permutations of the set $\ul m^\pm \ceqq \set{\pm 1,\dc,\pm m}$ acts on $V^\pm_m \ceqq V^+_m \ops V^-_m \simeq \mb C^{2m}$.
The two other classical (irreducible) Weyl groups are subgroups thereof,
consisting of `signed' permutations.
Namely:
\begin{enumerate}
	\item
	      the rank-$m$ Weyl group of type $BC$,
	      i.e.,
	      the (hyperoctahedral) group of symmetries of an $m$-(hyper)cube,
	      can be defined as
	      \begin{equation}
		      \label{eq:weyl_type_BC}
		      W_{BC}(m)
		      \ceqq \Set{ g \in \mf S^\pm_m | g(i) + g(-i) = 0 \text{ for } i \in \ul m^\pm }
		      \simeq \mf S_m \lts \bigl( \mb Z^{\ts} \bigr)^{\! m};
	      \end{equation}

	\item
	      and the rank-$m$ Weyl group of type $D$,
	      i.e.,
	      the group of symmetries of an $m$-demi(hyper)cube,
	      as the (normal) index-$2$ subgroup
	      \begin{equation}
		      \label{eq:weyl_type_D}
		      W_D(m)
		      \ceqq \Set{ g \in W(BC_m) | \prod_{i = 1}^m g(i) > 0 }.
	      \end{equation}
\end{enumerate}

Then the groups~\eqref{eq:weyl_type_BC} and~\eqref{eq:weyl_type_D} act irreducibly on the subspace
\begin{equation}
	\label{eq:classical_cartan}
	\wt V_m
	\ceqq \Set{ \sum_{i \in \ul m^\pm} \lambda_i e_i \in V^\pm_m | \lambda_i + \lambda_{-i} = 0 \text{ for } i \in \ul m^\pm  } \simeq \mb C^m,
\end{equation}
which can (and will) be identified with the Cartan subalgebras of diagonal matrices inside the classical Lie algebras of type $B_m$,
$C_m$,
and $D_m$,
viz.,
respectively,
$\so_{2m+1}(\mb C)$,
$\mf{sp}_{2m}(\mb C)$,
and $\so_{2m}(\mb C)$%
---cf.~\cite{schoenenberg_2021_cartan_subalgebras_of_classical_lie_algebras_are_diagonal_matrices}.

For an integer $d \geq 2$,
a \emph{positive} $d$-\emph{cycle} is an element $\wt c \in \mf S^\pm_m$%
---of order $d$---%
of the form
\begin{equation}
	\label{eq:positive_cycle}
	\wt c
	= c^+c^-,
	\qquad c^+
	= ( a_1 \mid \dm \mid a_d), \quad c^-
	= ( -a_1 \mid \dm \mid -a_d ),
\end{equation}
for distinct elements $a_1,\dc,a_d \in \ul m^+$,
extending the notation of~\eqref{eq:type_A_cycle}.
A \emph{negative} $d$-\emph{cycle} instead is an element%
---of order $2d$---%
of the form
\begin{equation}
	\label{eq:negative_cycle}
	\wt c
	= ( a_1 \mid \dm \mid a_d \mid -a_1 \mid \dm \mid -a_d ).
\end{equation}
In both cases,
the \emph{support of} $\wt c$ is the subset $\set{\pm a_1,\dc,\pm a_d} \sse \ul m^\pm$.
The positive $2$-cycles (resp.~negative $1$-cycles) are also called \emph{positive transpositions} (resp.~\emph{negative transpositions});
the positive transpositions generate a copy of $W_A(m) \simeq \mf S_m \sse \mf S^\pm_m$.

Any element $g \in W_{BC}(m)$ can be decomposed into a product of disjoint positive/negative cycles,
unique up to reordering.
Two elements are conjugated if and only if they have the same \emph{signed cycle-type},
i.e.,
the same number of positive/negative $d$-cycles in any such factorization,
for all $d \geq 2$~\cite[Prop.~24]{carter_1972_conjugacy_classes_in_the_weyl_group} (cf.~the classical work~\cite{specht_1937_darstellungstheorie_der_hyperoktaedergruppe},
as well as Young's,
for the representation theory).

Finally,
any element $g \in W_D(m)$ can be decomposed into disjoint positive/negative cycles,
unique up to reordering,
having an \emph{even} number of negative ones.
In this case,
the signed cycle-type classifies conjugacy classes which do \emph{not} contain an even product of positive cycles;
conversely,
there are two conjugacy classes of elements of this type~\cite[Prop.~25]{carter_1972_conjugacy_classes_in_the_weyl_group}%
---and cf.~again Young's work~\cite{young_1930_on_quantitative_substitutional_analysis} for the irreducible representations.
(This basic issue propagates into \S~\ref{sec:D_trees}.)

\subsection{Classical root systems}
\label{sec:root_systems_background}

Finally,
we also use the roots corresponding to the above reflection groups.

\subsubsection{}

In the notation of \S\S~\ref{sec:background_type_A}--\ref{sec:background_type_BCD},
for $i \in \ul m^+$ denote by $\alpha_i \ceqq e_i^{\dual} \in (V^+_m)^{\dual} \simeq \mb C^m$ the (co)vectors of the dual basis.
Then the root system of type $A_{m-1}$ (and rank $m-1$) is the finite set
\begin{equation}
	\label{eq:type_A_roots}
	\Phi_A(m-1)
	\ceqq \Set{ \alpha_{ij} | i \neq j \in \ul m^+ } \sse (V^+_m)^{\dual},
	\qquad \alpha_{ij} \ceqq \alpha_i - \alpha_j.
\end{equation}
Let us extend this notation by $\alpha_i \ceqq e_i^{\dual} \in (V^\pm_m)^{\dual} \simeq \mb C^{2m}$,
for $i \in \ul m^\pm$.
Upon restriction to~\eqref{eq:classical_cartan},
one has the identities $\alpha_i + \alpha_{-i} = 0$,
for $i \in \ul m^\pm$.
Then the (nonreduced) root system of type $BC_m$ is
\begin{equation}
	\label{eq:type_BC_roots}
	\Phi_{BC}(m)
	\ceqq \Set{ \alpha_{ij}, \alpha_i, 2\alpha_i | i \neq j \in \ul m^\pm } \sse \bigl( \wt V_m \bigr)^{\!\dual}.
\end{equation}
Its Weyl group is isomorphic to that of its (reduced) root subsystems of type $B_m/C_m$:
\begin{equation}
	\label{eq:type_b_c_roots}
	\Phi_B(m)
	\ceqq \Set{ \alpha_{ij}, \alpha_i }_{i \neq j},
	\Phi_C(m) \ceqq \Set{ \alpha_{ij}, 2\alpha_i }_{i \neq j} \sse \Phi_{BC}(m).
\end{equation}
(So in this text we tacitly identify $W_{BC}(m) \simeq W_B(m) \simeq W_C(m)$.)
Finally,
the root system of type $D_m$ is
\begin{equation}
	\label{eq:type_d_roots}
	\Phi_D(m)
	\ceqq \Set{ \alpha_{ij} | i \neq j \in \ul m^\pm } = \Phi_B(m) \cap \Phi_C(m).
\end{equation}
(The fact that it is stable under the action of $W_{BC}(m)$ is relevant in \S\S~\ref{sec:type_D} +~\ref{sec:D_trees}.)

The Weyl-group action on the roots now amounts to the permutation action on the indices,
i.e.,
e.g.,
for $g \in \mf S^\pm_m$ one has
\begin{equation}
	\label{eq:action_on_roots}
	g(\alpha_{ij})
	= \alpha_{kl},
	\qquad i \neq j, k \neq l \in \ul m^\pm,
	\quad g(i)
	= k,
	\quad g(j)
	= l.
\end{equation}
The same holds for the inverse-transpose action on the coroots,
viz.,
the vectors $e_i, 2e_i, e_{ij} \ceqq e_i - e_j \in \wt V_m$ (satisfying $e_i + e_{-i} = 0$,
for $i \in \ul m^\pm$).

\begin{rema}
	If one insists in using positive indices:
	compared to type $A$,
	in type $D$ we add roots of the form $\pm (\alpha_i + \alpha_j)$,
	for $i \neq j \in \ul m^+$,
	and moreover in type $B$ (resp.~type $C$) also those of the form $\pm \alpha_i$ (resp.~$\pm 2\alpha_i$).
\end{rema}

\section{Quasi-generic exceptional types}
\label{sec:exceptional_types}

\subsection{Springer's reflection groups}

The tables of~\cite[\S~5.4]{springer_1974_regular_elements_of_finite_reflection_groups} provide the degrees of the%
---irreducible~\cite[Cor.~2.9]{denef_loeser_1995_regular_elements_and_monodromy_of_discriminants_of_finite_reflection_groups}---%
complex reflection groups which arise from centralizers of regular elements $g \in W$,
which now have a modular interpretation in twisted/ramified $2d$ meromorphic gauge theory.

Importantly,
the isomorphism classes of the groups $Z_W(g) \sse W$ are uniquely determined by the order $r \geq 1$ of $g$,
which must divide one of degrees of $W$.
Neglecting Coxeter elements,
whose centralizer is always cyclic of order equal to the Coxeter number of $\mf g$,
this yields at most $30$ additional isomorphism classes amongst all exceptional types:
$2$ in type $G_2$,
$5$ in type $F_4$,
$6$ in type $E_6$,
$6$ in type $E_7$,
and $11$ in type $E_8$.
(According to Carter~\cite{carter_1972_conjugacy_classes_in_the_weyl_group},
the conjugacy classes of the Weyl group of type $F_4$ can be extracted from~\cite{wall_1963_on_the_conjugcy_classes_in_the_unitary_symplectic_and_orthogonal_groups},
and those of type $E$ from~\cite{frame_1951_the_classes_and_representations_of_the_groups_of_27_lines_and_28_bitangents}.)

\section{Lifting Springer theory}
\label{sec:bessis_literature}

\subsection{Finite complex reflection arrangements are \texorpdfstring{$K(\pi,1)$}{K(pi,1)}}

The article~\cite{bessis_2015_finite_complex_reflection_arrangements_are_k_pi_1} proves that the universal cover of $V_{\reg,W'} \sse V$ is contractible,
for any (finite,
irreducible) complex reflection group $(V,W')$.
This had been a long-standing conjecture~\cite{brieskorn_1973_sur_les_groupes_de_tresses_d_apres_v_i_arnold,
	orlik_terao_1992_arrangements_of_hyperplanes},
previously proven in all but six exceptional cases (cf.~\cite{fadell_neuwirth_1962_configuration_spaces,
	deligne_1972_les_immeubles_des_groupes_de_tresses_generalises,
	nakamura_1983_a_note_on_the_k_pi_1_property_of_the_orbit_space_of_the_unitary_reflection_group_g_m_l_n,
	orlik_solomon_1988_discriminants_in_the_invariant_theory_of_reflection_groups};
and~\cite{paolini_salvetti_2021_proof_of_the_k_pi_1_conjecture_for_affine_artin_groups} in the affine case).
More precisely,
the point was to treat the examples of
\begin{equation}
	W' \in \set{G_{24},G_{27},G_{29},G_{31},G_{33},G_{34}},
\end{equation}
in Shephard--Todd's classification.
It turns out that all of them,
but $G_{31}$,
are well-generated,\fn{
	I.e.,
	they admit a number of generating reflections equal to their rank.
}~and~\cite{bessis_2015_finite_complex_reflection_arrangements_are_k_pi_1} first establishes the result under this hypothesis.

As far as this text is concerned (notably \S~\ref{sec:braid_springer_theory}),
the important facts are that:
(i) $G_{31}$ can be realized as the centralizer of a regular element in $G_{37}$,
i.e.,
the (well-generated) Weyl group of type $E_8$;
(ii) Thm.~0.3 of op.~cit.~proves that the $K(\pi,1)$ property is inherited under this operation;
and (iii) Thm.~12.4 of op.~cit.~establishes properties of the braid group of the corresponding reflection arrangement,
which can be seen as a lift of Springer theory through the augmentation group morphism,
cf.~\cite[Chp.~II]{broue_2001_reflection_groups_braid_groups_hecke_algebras_finite_reductive_groups},
~\cite[Chp.~III \S~18]{broue_2008_introduction_to_complex_reflection_groups_and_their_braid_groups},
and~\cite[\S~5.3.3]{broue_1988_introduction_to_complex_reflection_groups_and_their_braid_groups}.

\section{Deferred proofs}
\label{sec:missing_proofs}

\subsection{Proof of Lem.~\ref{lem:primitive_irregular_types}}
\label{proof:lem_primitive_irregular_types}

Consider the subset
\begin{equation}
	E \bigl( \wh Q \bigr)
	\ceqq \Set{ i \in \mb Z_{\geq 1} | A_i \neq 0 } \sse \set{1,\dc,s},
\end{equation}
of degrees corresponding to the nonvanishing coefficients of~\eqref{eq:explicit_irregular_type}.
It follows that the number $r'$ of elements in the Galois-orbit~\eqref{eq:monodromy} equals $\bigvee_{E(\wh Q)} \bigl( r \slash (r \wdg i) \bigr)$.
Moreover,
$r$ is a multiple of $r'$,
and the integer $\wt r \ceqq r \slash r'$ divides $\bigwedge E \bigl( \wh Q \bigr)$.
Hence,
the irregular type $\wh Q' \ceqq \sum_{E (\wh Q)} A_i w^{-i/\wt r}$ is:
(i) untwisted;
(ii) $r'$-Galois-closed;
and (iii) primitive.

\subsection{Proof of Lem.~\ref{lem:about_setwise_stabilizers}}
\label{proof:lem_about_setwise_stabilizers}

\begin{description}

	\item[(1.) $\Rightarrow$ (2.)]
	      Suppose---%
	      by contradiction---%
	      that $\beta \ceqq g(\alpha) \in \Phi \sm \phi$ for some $\alpha \in \phi$,
	      and choose any element $A' \in \bm B \bigl( \wh Q \bigr)$.
	      Then,
	      by hypothesis,
	      \begin{equation}
		      \Braket{ \alpha,g(A') }
		      = \braket{ \beta,A' }
		      \neq 0,
	      \end{equation}
	      so that $g(A') \not\in \mf t_\phi$.

	\item[(2.) $\Leftrightarrow$ (3.)]
	      Clear,
	      as $g$ is bijective.

	\item[(3.) $\Rightarrow$ (4.)]
	      Observe that $g$ permutes the root hyperplanes via
	      \begin{equation}
		      \label{eq:wall_permutation}
		      g (H_\alpha)
		      = H_{g.\alpha},
		      \qquad \alpha \in \Phi,
	      \end{equation}
	      in the notation of~\eqref{eq:regular_cartan}.

	\item[(4.) $\Rightarrow$ (5.)]
	      Obvious.

	\item[(5.) $\Rightarrow$ (1.)]
	      The fact that (2.) $\Rightarrow$ (1.) also follows from~\eqref{eq:wall_permutation},
	      whence (1.) $\Leftrightarrow$ (2.).
	      Finally,
	      if (again) by contradiction $g(\phi) \neq \phi$,
	      then $g(A') \not\in \mf t_\phi$---%
	      and a fortiori $A' \not\in \bm B \bigl( \wh Q \bigr)$.
\end{description}

\subsection{Proof of Lem.~\ref{lem:about_pointwise_stabilizers}}
\label{proof:lem_about_pointwise_stabilizers}

\begin{description}
	\item[(1.) $\Rightarrow$ (2.)]
	      Clear,
	      as $\bm B \bigl( \wh Q \bigr) \sse \mf t_\phi$.

	\item[(2.) $\Rightarrow$ (3.)]
	      Obvious.

	\item[(3.) $\Rightarrow$ (1.)]
	      Choose a lift $\wt g \in N_G(T)$ of $g$.
	      By hypothesis $\Ad_{\wt g}(A') = A'$,
	      so that $\wt g$ lies in the (connected) reductive algebraic subgroup $L \sse G$ centralizing $A'$ (cf.~\eqref{eq:levi_subgroup}),
	      which contains $T$.
	      It follows that $\wt g \in N_G(T) \cap L = N_L(T)$,
	      so that $g = \wt g \, T$ actually lies in the `relative' Weyl group of the (split) pair $(L,T)$ (cf.~Lem.-Def.~\ref{lem:relative_weyl_group}).
	      Therefore,
	      $g$ is a product of reflections along the root hyperplanes of the root subsystem $\Phi(\mf l,\mf t) \sse \Phi$,
	      where $\mf l \ceqq \Lie(L)$ is precisely as in~\eqref{eq:levi_subalgebra}:
	      but by construction $\Phi(\mf l,\mf t) = \phi$,
	      and in turn the reflection about $H_\alpha \sse \mf t$ acts as the identity on $\mf t_\phi \sse H_\alpha$,
	      for all $\alpha \in \phi$.
\end{description}

\subsection{Proof of Lem.~\ref{lem:no_marking}}
\label{proof:lem_no_marking}

By Lem.~\ref{lem:about_setwise_stabilizers},
the kernel of $\phi$ is a stable subspace for both $g$ and $g'$,
i.e.,
$g,g' \in N_W(\mf t_\phi)$.
Moreover,
by hypothesis,
one has $g^{-1} g' (A) = A$:
the conclusion follows from Lem.~\ref{lem:about_pointwise_stabilizers},
since $(g^{-1}g')_{\phi} = g_\phi^{-1} g'_\phi$ acts as the identity on $\mf t_\phi$.

\subsection{Proof of Lem.~\ref{lem:restricted_commutators}}
\label{proof:lem_restricted_commutators}

It is quite clear that (1.) $\Rightarrow$ (2.) $\Rightarrow$ (3.);
let us prove that the last condition implies the first one.

Denote by $h \ceqq g^{-1} (g')^{-1} g g' \in W$ the commutator of $g$ and $g'$,
which preserves $\mf t_\phi \sse \mf t$.
Moreover,
its restriction $h_\phi$%
---thereon---%
coincides with the commutator of $g_\phi, g'_\phi \in W(\phi)$.
Then one has
\begin{equation}
	h_\phi(A)
	= \zeta g_\phi^{-1} (g'_\phi)^{-1} g_\phi'(A) = \zeta g_\phi^{-1} (A) = A,
\end{equation}
and one concludes by Lem.~\ref{lem:about_pointwise_stabilizers}.

\subsection{Proof of Lem.~\ref{lem:relative_weyl_group}}
\label{proof:lem_relative_weyl_group}

We first construct the isomorphism $W(\phi) \lxra{\simeq} W_L$.
By Lem.~\ref{lem:about_setwise_stabilizers},
any element $g \in N_W(\mf t_\phi)$ preserves $\phi \sse \Phi$%
---in its action on $\mf t^{\dual}$.
If we let $\wt g \in N_G(T)$ be a lift of $g$ to $G$,
it follows that $\Ad_{\wt g}(\mf l) \sse \mf l$,
in the notation of~\eqref{eq:levi_subalgebra}.
Thus,
if $C_{\wt g} \cl G \to G$ is the conjugation action of $\wt g$,
one has $C_{\wt g}(e^{\mf l}) \sse L$,
whence $C_{\wt g}(L) \sse L$ since $L$ is connected%
---and thus generated by $e^{\mf l} \sse L$.
Moreover,
the coset $g_L \ceqq \tilde g L \in W_L$ does \emph{not} depend on the choice of the lift,
because $T \sse L$.
This yields a well-defined function $F = F_{\phi} \cl N_W(\mf t_\phi) \to W_L$,
which is tautologically a group morphism.
Now suppose that $g \in W_{\mf t_\phi} \sse N_W(\mf t_\phi)$.
Then $\wt g \in N_L(T) \sse L$ by Lem.~\ref{lem:about_pointwise_stabilizers},
and so $g_L \in W_L$ is trivial.
Conversely,
if $g \in N_W(\mf t_\phi)$ is such that the coset $g_L$ is trivial,
then $\wt g$ actually lies in $L = G^A$,
and so $g(A) = A$;
the same lemma also implies that $g \in W_{\mf t_\phi}$.
Overall,
there is an exact group sequence $1 \to W_{\mf t_\phi} \to N_W(\mf t_\phi) \xra{F}{} W_L$.
To prove surjectivity,
choose any element $g \in W_L$,
and lift it to an element $\wt g \in N_G(L)$.
It follows that $\Ad_{\wt g} \in \Aut(\mf g)$ restricts to an automorphism of $\mf l$,
and so it maps the Cartan subalgebra $\mf t \sse \mf l$ to another Cartan subalgebra $\mf t' \ceqq \Ad_{\wt g}(\mf t)$.
But all the Cartan subalgebras of $\mf l$ are conjugated,
since $\mf l$ is reductive~\cite[Thm.~2.1.11]{collingwood_mcgovern_1993_nilpotent_orbits_in_semisimple_lie_algebras};\fn{
	Loc.~cit.~is phrased for the Adjoint group of $\mf l$,
	i.e.,
	for the projectification $P(L) \ceqq L \bs Z(L)$.
	Incidentally,
	note that its Lie algebra can be identified with the quotient $\mf l \bs \mf t_\phi$.}~thus,
there exists an element $\wt g' \in L$ such that $\Ad_{\wt g'}(\mf t') = \mf t$,
and in turn
\begin{equation}
	\Ad_{\wt g''}(\mf t)
	= \mf t,
	\qquad \wt g''
	\ceqq \wt g \, \wt g' \in N_G(L).
\end{equation}
Again one has $C_{\wt g''}(T) \sse T$,
and so the element $g'' \ceqq \wt g'' T \in W$ is well-defined.
The corresponding permutation action on the roots now preserves $\phi \sse \Phi$,
and in the end $F(g'') = g \in W_L$.

As for the other group isomorphism,
choose again an element $g \in N_W(\mf t_\phi) \sse W$.
Reasoning as above proves that any lift $\wt g$ of $g$ lies in $N_G(T) \cap N_G(T_\phi) \sse G$,
and so up to the identification $W(\phi) \simeq W_L$ (which was just established) there is an inclusion $W_L \sse N_G(T_\phi) \bs L = N_G(T_\phi) \bs Z_G(T_\phi)$.
The converse follows from the inclusion $N_G(T_\phi) \sse N_G(L)$,
which is true for abstract reasons:
if $\wt g \in G$ normalizes a subgroup of $G$,
then it also normalizes the centralizer of that subgroup.

\subsection{Proof of Lem.~\ref{lem:setwise_stabilizer_equal_normalizer}}
\label{proof:lem_setwise_stabilizer_equal_normalizer}

For all $g \in N_W(\mf t_\phi)$ one has $g W_{\mf t_\phi} g^{-1} \sse W_{\mf t_\phi}$ (which is implicit in~\eqref{eq:effective_weyl_group}),
and the point is proving the opposite inclusion.

Let thus $g \in W$ be an element such that $g W_{\mf t_\phi}g^{-1} \sse W_{\mf t_\phi}$.
A priori $W_{\mf t_\phi}$ is generated by the reflections of $W$ which act as the identity on $\mf t_\phi$,
but here we rather identify it with the Weyl group of $(\mf l_\phi,\mf t)$:
hence,
$W_{\mf t_\phi}$ is generated by the reflections $\sigma_\alpha \in W$ which correspond to the roots $\alpha \in \phi$.
If we choose such a root,
by hypothesis $g\sigma_\alpha g^{-1} = \sigma_{g(\alpha)} \in W_{\mf t_\phi}$,
and it follows that $\beta \ceqq g(\alpha) \in \phi$.
Indeed,
if (by contradiction) $\beta \in \Phi \sm \phi$,
then $\mf t_\phi \ssne H_\beta$,
because $\phi$ is Levi.
Thus,
one has $g(\phi) \sse \phi$%
---and $g(\Phi \sm \phi) \sse \Phi \sm \phi$---,
and so $g$ preserves the kernel of $\phi$ (and the elements in generic positions therein,
cf.~Lem.~\ref{lem:about_setwise_stabilizers}).

\subsection{Proof of Lem.~\ref{lem:reduced_reflections}}
\label{proof:lem_reduced_reflections}

First,
since $\mf t_\phi \nsse \bigcup_{\Phi \sm \phi} H_\alpha$ (which would fail for nonlevi subsystems),
one has $\alpha^{\dual}_\phi \neq 0$.
Then,
by construction,
there exists a nonvanishing number $c = c_\alpha \in \mb C$ such that
\begin{equation}
	\bigl( \alpha^{\dual}_\phi \mid Y \bigr)
	= \bigl( \alpha^{\dual} \mid Y \bigr)
	= c \Braket{ \alpha, Y },
	\qquad \alpha \in \Phi \sm \phi,
	\quad Y \in \mf t_\phi.
\end{equation}
Thus indeed $Y$ is $( \cdot \mid \cdot)$-orthogonal to $\alpha^{\dual}_\phi$ if and only if it lies in $\ker (\alpha) \cap \mf t_\phi$.

\subsection{Proof of Lem.~\ref{lem:restricted_reflection_group}}
\label{proof:lem_restricted_reflection_group}

We now use the general (nonsplit) version of~\cite[Thm.~1.1 + Thm.~A]{lehrer_springer_1999_reflection_subquotients_of_unitary_reflection_groups},
by showing that $g_\phi$ normalizes $G(\phi)$.

To this end,
if $g \in N_W(\mf t_\phi)$ then $g(\phi) \sse \phi$ and $g(\Phi \sm \phi) \sse \Phi \sm \phi$,
by Lem.~\ref{lem:about_setwise_stabilizers}.
Moreover,
for $\alpha \in \Phi \sm \phi$ one has
\begin{equation}
	g_\phi(\alpha^{\dual}_\phi)
	= \beta^{\dual}_\phi \in \mf t_\phi,
	\qquad \beta \ceqq  g(\alpha),
\end{equation}
because the inner product on $\mf t$ is $W$-invariant,
and the subspace $\mf t_\phi \sse \mf t$ is $g$-stable.
Finally,
$g_\phi$ acts on $\mf t_\phi$ by preserving the restricted inner product,
and so
\begin{equation}
	g_\phi \sigma_\alpha(\phi) g_\phi^{-1}
	= \sigma_{\beta} (\phi) \in \GL_{\mb C}(\mf t_\phi).
\end{equation}
The conclusion follows,
since $g_\phi$ permutes the generators of~\eqref{eq:relative_reflection_group}.

\subsection{Proof of Lem.~\ref{lem:reduction_to_generic_type_A}}
\label{proof:lem_reduction_to_generic_type_A}

Up to choosing a suitable base $\Delta \sse \Phi_A(m)$ of simple roots (namely,
such that $\Delta \cap \phi \sse \phi$ is a base of simple roots for $\phi$,
cf.~Prop.-Def.~\ref{prop:howlett}),
looking at the Dynkin diagram of $(\mf g,\mf t,\Delta)$ shows that $\phi$ splits into a disjoint union of irreducible type-$A$ root systems.\fn{
	\label{fn:missing_levis}
	Beware however that in general \emph{not} all the Levi subsystems can be described in terms of subdiagrams of the Dynkin diagram in a single chosen base of $\Phi$.
	E.g.,
	if we choose the standard one $\Delta = \set{\theta_1,\theta_2} \ceqq \set{\alpha_{12},\alpha_{23}}$ for $\Phi_A(3)$ (in the notation of~\eqref{eq:type_A_roots}),
	then we miss the `non-block-diagonal' Levi subsystem $\phi = \set{\pm \alpha_{13}} \sse \Phi_A(3)$,
	corresponding to matrices of the form
	$\begin{pmatrix}
			\ast & 0    & \ast \\
			0    & \ast & 0    \\
			\ast & 0    & \ast
		\end{pmatrix} \in \gl_3(\mb C)$.
	This is relevant for the definition of \emph{pointed} irregular types,
	cf.~Rmk.~\ref{rmk:pointed_dominance}.}~Moreover,
the restricted set of roots $\set{ \alpha_\phi | \alpha \in \Phi_A(m) \sm \phi} \sse \mf t_\phi^{\dual}$ is a root system of type $A$,
cf.,
e.g.,
~\cite[\S~6]{doucot_rembado_tamiozzo_2022_local_wild_mapping_class_groups_and_cabled_braids}:
it follows that $G(\phi)$ is the symmetric group generated by the reflections about the diagonals of $\mf t_\phi$,
and by hypothesis the eigenvector $A$ is out of them all.

The second statement follows from the fact that $W(\phi) \sse G(\phi)$.
In turn,
this is a consequence of the description of $N_W(\mf t_\phi)$ as a wreath product of symmetric groups,
cf.~\cite[\S~4]{doucot_rembado_2025_topology_of_irregular_isomonodromy_times_on_a_fixed_pointed_curve},
as well as the classification at~\cite[pp.~8--9]{howlett_1980_normalizers_of_parabolic_subgroups_of_reflection_groups}.

\subsection{Proof of Lem.~\ref{lem:reduction_to_generic_type_BC}}
\label{proof:lem_reduction_to_generic_type_BC}

Looking again at Dynkin diagrams,
any Levi subsystem $\phi \sse \Phi_{B/C}(m)$  has at most one component isomorphic to a root system of type $B/C$,
and then several components of type $A$ (cf.~also~\cite[\S~9]{rembado_2024_a_colourful_classification_of_quasi_root_systems_and_hyperplane_arrangements}).
Moreover,
it is shown,
e.g.,
in~\cite[\S~7]{doucot_rembado_tamiozzo_2022_local_wild_mapping_class_groups_and_cabled_braids},
that the hyperplane complement~\eqref{eq:pure_untwisted_nongeneric_deformation_space_1_coeff} is always a complete arrangement of type $BC$,\fn{
	In this case,
	however,
	the restricted set of complementary roots is \emph{not} always a root system,
	cf.~Rmk.~\ref{sec:generalized_root_systems}.
	Rather,
	it `interpolates' between $\Phi_B(m_\phi)$ and $\Phi_{BC}(m_\phi)$,
	in the notation of~\eqref{eq:type_BC_roots},
	cf.~\cite[Thm.~7.1]{doucot_rembado_tamiozzo_2022_local_wild_mapping_class_groups_and_cabled_braids}.}~whence the first statement.

The second statement essentially follows from the classification at~\cite[pp.~9--10]{howlett_1980_normalizers_of_parabolic_subgroups_of_reflection_groups},
but we will provide a complete argument:
set simply $W \ceqq W_{BC}(m)$.
The point is showing that any signed permutation of $\ul m^\pm$,
preserving the subset $\phi \sse \Phi_{B/C}(m)$ (for the action~\eqref{eq:action_on_roots}),
restricts on $\mf t_\phi \simeq \wt V_{m_\phi}$ to a signed permutation of $\ul m^\pm_\phi$.
To this end,
one can assume that $\phi$ has no (irreducible) component of type $B/C$,
since that would be preserved by $g$,
and the corresponding factor acts trivially upon restriction.
Then $g$ can permute the type-$A$ components of $\phi$ of equal rank,\fn{
	Including the `trivial components':
	cf.~\cite{doucot_rembado_tamiozzo_2022_local_wild_mapping_class_groups_and_cabled_braids,
		rembado_2024_a_colourful_classification_of_quasi_root_systems_and_hyperplane_arrangements},
	and recall that the set of type-$A$ components of $\phi$ yields a $\mb C$-basis of $\mf t_\phi$.}~and furthermore it can act by a signed permutation within each of them:
the former block-permutation operation corresponds to a standard (positive) permutation on the coordinates of vectors of $\mf t_\phi$,
which by the above lies in $G(\phi)$;
thus,
we conclude by proving that the latter action also corresponds to a signed permutation---%
after restriction.
Now,
up to conjugation by $W$,
one can consider a type-$A$ root subsystem of the form $\phi_l \ceqq \set{ \alpha_{ij} | i,j \in \ul l^+ } \sse \Phi_{B/C}(m)$,
for an integer $l \leq m$,
in the notation of~\eqref{eq:type_A_roots}.
Then a signed permutation $g \in W_{BC}(l) \sse W$ preserves $\phi_l$ if and only if $g(i)g(j) > 0$ for all $i \neq j \in \ul l^+$,
viz.,
if and only if $g(\ul l^+) \sse \pm \ul l^+$.
The subgroup $W^\pm_l \sse \mf S^\pm_l$ of such permutations fits into a short exact sequence
\begin{equation}
	\label{eq:normalizer_sequence_type_BC}
	1 \lra \mf S_l \lra W^\pm_l \lra \mb Z^{\ts} \lra 1,
\end{equation}
where the surjection is obtained by mapping $g \mt \sgn \bigl( g(1) \bigr)$.
The sequence~\eqref{eq:normalizer_sequence_type_BC} splits by mapping $\pm 1 \mt ( g \cl i \mt \pm i )$,
and the image of this section is a central subgroup of $W^\pm_l$.
Hence,
there is a group isomorphism $W^\pm_l \simeq \mf S_l \ts \mb Z^{\ts}$,
and the leftmost factor corresponds to the Weyl group of the type-$A$ component,
which acts trivially on $\mf t_\phi$ as in the previous section.
Finally,
the sign-swapping permutation corresponds precisely to inverting the sign of a coordinate,
i.e.,
to a negative transposition of the Weyl group $G(\phi)$%
---of type $BC_{m_\phi}$.

\subsection{Proof of Lem.~\ref{lem:even_odd_roots_of_1}}
\label{proof:lem_even_odd_roots_of_1}

For the first statement,
clearly $(\pm \zeta_r^l)^{2r} = 1$ for $l \in \set{1,\dc,r}$.
Conversely,
if $\zeta^{2r} = 1$ for some $\zeta \in \mb C^{\ts}$,
then $\zeta^r \in \set{ \pm 1 }$;
if $\zeta^r = 1$ we are done,
else,
if $r$ is odd:
\begin{equation}
	1
	= -\zeta^r
	= (-1)^r \zeta^r
	= (-\zeta)^r.
\end{equation}

For the second statement,
if $r = 2r'$ then $\zeta_r^l = \zeta_r^{l-r'}\zeta_r^{r'}$,
and $\zeta_r^{r'} \in \set{\pm 1}$ since it squares to $1$;
and it cannot be equal to $1$,
because $\zeta_r$ is primitive.

For the third statement,
compute
\begin{equation}
	-\zeta_r
	= e^{\pi \sqrt{-1}} \zeta_r
	= e^{2\pi\sqrt{-1} \cdot l \slash (2r)} \in \mb C^{\ts},
	\qquad l \ceqq r+2,
\end{equation}
and so the order equals the quotient of the division of $2r$ by $d \ceqq (2r) \wdg l$.
Taking $\mb Z$-linear combinations shows that $d \mid 4$,
and so $d \in \set{1,2,4}$.
Now,
if $r$ is odd,
so is $l$,
whence $d = 1$.
Conversely,
if $r$ is even,
then:
\begin{enumerate}
	\item
	      either $4 \mid r$,
	      so that $r = 4r'$ for an integer $r' \geq 1$ and $d = (8r') \wdg (4r' + 2) = 2$;

	\item
	      or $r \equiv 2 \pmod 4$,
	      so that $r = 4r' + 2$,
	      and $d = (8r' + 4) \wdg (4r' + 4) = 4$.
\end{enumerate}

\subsection{Proof of Lem.~\ref{lem:reduction_to_generic_type_D}}
\label{proof:lem_reduction_to_generic_type_D}

Looking again at Dynkin diagrams,
all Levi subsystems $\phi \sse \Phi_D(m)$ have at most one irreducible type-$D$ component,
and several type-$A$ ones:
let us suppose that a type-$D$ component appears.
Then it is shown,
e.g.,
in~\cite[\S~8]{doucot_rembado_tamiozzo_2022_local_wild_mapping_class_groups_and_cabled_braids} that~\eqref{eq:pure_untwisted_nongeneric_deformation_space_1_coeff} is the complement of a reflection arrangement of type $BC$,\fn{
	Again,
	the set of restricted roots is \emph{not} in general a root system,
	cf.~\cite[Thm.~8.1]{doucot_rembado_tamiozzo_2022_local_wild_mapping_class_groups_and_cabled_braids}.}~and now the proof~\ref{proof:lem_reduction_to_generic_type_BC} applies verbatim.
(It does not matter whether there are constraints on the signed permutation \emph{before} restriction,
since $G(\phi)$ consists precisely of all the signed permutations in dimension $m_\phi$.)

\subsection{Proof of Lem.~\ref{lem:twisted_centralizer_type_D}}
\label{proof:twisted_centralizer_type_D}

The first statement is clear,
as by hypothesis $A$ lies in the regular part of $G(\phi) \simeq W_D(m_\phi)$.

For the second statement,
the main point is that $g_\phi \notin W_D(m_\phi)$ is possible,
leading to a nonsplit reflection coset.
Namely,
the proof~\ref{proof:lem_reduction_to_generic_type_BC} shows in particular that $W(\phi) \sse W_{BC}(m_\phi)$;
and it also indicates how to find group elements $g \in W_D(m)$ such that $g_\phi \in \mf S^\pm_{m_\phi}$ is a signed permutation with \emph{any} number of negative cycles,
so that one can have $W(\phi) = W_{BC}(m_\phi)$.

Namely,
suppose that $m$ is even:
say $m = 2m'$ for an integer $m' \geq 1$.
Consider the (nonregular) vector
\begin{equation}
	A
	\ceqq (1,1,2,2,\dc,m',m',-1,-1,-2,-2,\dc,-m',-m') \in \wt V_m.
\end{equation}
Its Levi annihilator is isomorphic to $\Phi_A(2)^{\ops m'} \sse \Phi_D(m)$,
and its stratum is a copy of $\mc M(2,m') \sse \mb C^{m'} \simeq \mf t_\phi$,
in the notation of~\eqref{eq:standard_complement_type_D}.
Now permuting the components of $\phi$ induces the whole of the action of $W_A(m') \simeq \mf S_{m'}$ on $\mf t_\phi$.
Moreover,
the following products of negative transpositions restrict to the sign-swap for each coordinate of the (canonical) basis of $\mf t_\phi$:
\begin{equation}
	g_i
	\ceqq (2i-1 \mid 1-2i)(2i \mid -2i) \in W_D(m),
	\qquad i \in \set{1,\dc,m'}.
\end{equation}
In conclusion,
one has $W(\phi) \simeq \mf S_{m'} \wr \mb Z^{\ts} \simeq W_{BC}(m')$.

\subsection{Proof of Lem.~\ref{lem:full_gives_pointed_BC}}
\label{proof:lem_full_gives_pointed_BC}

We must prove that any Weyl-orbit of full irregular types contains a pointed one.

To this end,
fix integers $n_i, r_i \geq 1$ and consider a full irregular type $\wt Q_i$,
of rank $n_i \cdot r_i$.
If the Galois-orbit of $\wt Q_i$ is generated by $g^+_i \ceqq \prod_{j = 1}^{n_i} \wt c_{ij} \in W_{BC}(n_i \cdot r_i)$,
in the notation of~\eqref{eq:nonspecial_positive_cycles},
then the former is a concatenation of lists
\begin{equation}
	\wt Q_i
	= \bigl( \, \wt l^{(+,0)},\dc,\wt l^{(+,r_i-1)},-\wt l^{(+,0)},\dc,-\wt l^{(+,r_i-1)} \bigr),
\end{equation}
where $\wt l^{(+,j)} = \bigl( \wt q_1^{(j)},\dc,\wt q_{n_i}^{(j)} \bigr)$ for $j \in \mb Z \bs r_i \mb Z$,
and for suitable $r_i$-ramified exponential factors $\wt q_1,\dc,\wt q_{n_i}$---%
with tame/nonspecial Stokes circles.
Then there exists a group element
\begin{equation}
	h_i \in \mf S_{n_i} \simeq W_A(n_i) \sse W_{BC}(n_i),
\end{equation}
embedded diagonally into $W_{BC}(n_i \cdot r_i)$,
such that $h_i \bigl( \wt Q_i \bigr)$ will be pointed:
it suffices to permute the `base' list $\wt l^{(0)}$ so that the result can be split into maximal sublists of \emph{identical} exponential factors.\fn{
	\label{fn:compatible_pit}
	The lists $\wt l^{(+,j)}$ can also be cyclically permuted by an element of $W_A(r_i)$ which preserves the partition $\set{1,\dc,n_i \cdot r_i} = \set{1,\dc,n_i} \cup \dm \cup \set{(r_i-1)n_i + 1,\dc, n_i \cdot r_i}$,
	using the block-permutation embedding $\mf S_{r_i} \hra \mf S_{n_i \cdot r_i}$.
	This leads to `compatible' pointed irregular types:
	cf.~Def.~\ref{def:compatible_pit}.}

Analogously,
given integers $m_i,r_i' \geq 1$,
consider a full irregular type $\wt Q_i$ of rank $m_i \cdot r_i' \geq 1$.
If its Galois-orbit is generated by $g^-_i \ceqq \prod_{j = 1}^{m_i} \wt c_{ij} \in W_{BC}(m_i \cdot r_i')$,
in the notation of~\eqref{eq:special_negative_cycles},
then
\begin{equation}
	\wt Q_i
	= \bigl( \, \wt l^{(-,0)},\dc,\wt l^{(-,r'_i-1)},- \wt l^{(-,0)},\dc,-\wt l^{(-,r'_i-1)} \bigr),
\end{equation}
where $\wt l^{(-,j)} = \bigl( \wt q_1^{(j)},\dc,\wt q_{m_i}^{(j)} \bigr)$ for $j \in \mb Z \bs r'_i \mb Z$,
and for suitable exponential factors $\wt q_1,\dc,\wt q_{m_i}$---%
with special Stokes circles.
Again,
up to acting by the Weyl subgroup of type $A_{m_i}$,
one finds a pointed irregular type (and cf.~again Fn.~\ref{fn:compatible_pit}).

Now observe that if $g \in W_{BC}(m)$ generates the Galois-orbit of a full irregular type $\wt Q$,
then $g' \ceqq hgh^{-1}$ will generate that of $\wt Q' \ceqq h \bigl( \wt Q \bigr)$,
for any other element $h \in W_{BC}(m)$.
Then the conclusion follows from~\cite[Prop.~24]{carter_1972_conjugacy_classes_in_the_weyl_group},
since:
(i) the element $g$ can be decomposed into disjoint positive/negative cycles;
and (ii) it is conjugate to an element whose cycles are precisely of the form~\eqref{eq:nonspecial_positive_cycles}--\eqref{eq:special_negative_cycles}.
(The latter \emph{fails} in type $D$,
leading to the notion of `pointed quasi-irregular types' of~\S~\ref{sec:pointed_types_D}.)

\subsection{Proof of Lem.~\ref{lem:type_bc_levels}}
\label{proof:lem_type_bc_levels}

Write $I = \braket q$,
so that $q - a z^{-k}$ only has less singular terms,
for a suitable complex number $a \neq 0$.

If $k$ is an integer,
then either:
(i) $I$ is unramified and $L_A(I) = \vn$;
or (ii) $I$ is ramified and the Galois conjugates of $q$ have the same leading term $az^{-k}$.
In the latter case,
the terms of exponent $k$ cancel in the differences $q^{(i)} - q^{(j)}$,
whence $\max \bigl( L_A(I) \bigr) < k$.

Conversely,
if $k$ is not an integer then $I$ is ramified.
Writing $r_0 \ceqq \on{den}(k)$,
the leading terms of the Galois conjugates of $q$ are all the monomials of the form $a \zeta \cdot z^{-k}$,
with $\zeta^{r_0} = 1$.
Thus,
if $\zeta$ and $\zeta'$ are two distinct $r_0$-th roots of $1$,
the monomial $a(\zeta - \zeta') \cdot z^{-k}$ appears as the leading term of a difference $q^{(i)} - q^{(j)}$,
whence $k\in L_A(I)$.

\subsection{Proof of Prop.~\ref{prop:admissible_inconsequential_bc}}
\label{proof:prop_admissible_inconsequential_bc}

Let $k\in \on{Adm}_{BC}(\bm L)$ be an admissible exponent of type $BC$ for $\bm L$.
If $\bm S \neq \vn$, i.e.,
if $\bm S = \set{k_0}$ is a singleton,
then $k \leq k_0$.
Moreover,
$k$ must be $A$-admissible for $\bm L_A$.
Finally,
if $\bm L^+ \neq \vn$,
i.e.,
if $\bm L^+ = \set{l}$ is a singleton,
then $k$ \emph{cannot} be a good breaking of $\bm L_A$ with $k > l$ (otherwise $\bm L^+ = \set{k}$),
so that $k$ satisfies the desired conditions.

Conversely,
suppose that $k$ satisfies the conditions of the statement,
and consider a Stokes-circle-up-to-sign $I$ with set of exponents $E(\pm I) = \bm L \cup \set{k}$:
the previous computations of level data now imply $L_{BC}(\pm I) = \bm L$.

\subsection{Proof of Lem.~\ref{lem:exterior_slopes_two_circles_bc}}
\label{proof:lem_exterior_slopes_two_circles_bc}

A nonzero slope
\begin{equation}
	S_{ij}^\pm
	\ceqq \on{slope} \bigl( q^{(i)} \pm \wt q^{(j)} \bigr),
	\qquad (i,j) \in \mb Z \bs r \mb Z \ts \mb Z \bs \tilde r \mb Z,
\end{equation}
satisfies (exactly) one the following:
(i) either $S_{ij}^\pm > f_{q, \wt q}$,
whence $S_{ij}^\pm$ is equal to an interior slope,
i.e.,
a $BC$-level,
for $q_c = \wh Q_c$;
or (ii) $S_{ij}^\pm \leq f_{q, \wt q}$,
whence $S_{ij}^\pm = f_{q, \wt q}$.
(Else $\braket{\pm q}$ and $\braket{\pm\wt q}$ would have the same truncation at height $f_{q, \wt q}$).

\subsection{Proof of Prop.~\ref{prop:types_of_fission_BC}}
\label{proof:prop_types_of_fission_BC}

By definition,
one has $f_{q,\wt q} = k$ if and only if
\begin{equation}
	q^{(i)} \neq \pm \wt q^{(j)},
	\qquad (i,j) \in \mb Z \bs r \mb Z \ts \mb Z \bs \tilde r \mb Z.
\end{equation}
Now the idea is to use the fact that for any such pair $(i,j)$ one has $q^{(i)} = \pm \wt q^{(j)}$ if and only if
\begin{equation}
	q_c^{(i)}
	= \pm q_c^{(j)}
	\quad\text{ and }\quad
	a \cdot \exp \bigl(2\pi \sqrt{-1} \cdot ik \bigr)
	= \pm \wt a \cdot \exp \bigl(2\pi \sqrt{-1} \cdot jk \bigr),
\end{equation}
writing $q_c^{(i)} \ceqq \sigma^i(q_c)$.\fn{
	Which coincides with the common part of $\bigl( q^{(i)},\wt q^{(i)} \bigr)$,
	choosing as usual $i \in \mb Z \bs \ul r \mb Z$,
	in the notation $\ul q^{(i)}$,
	for any exponential factor $\ul q$ of ramification $\ul r \ceqq \on{ram}(\ul q)$.}
The former condition is controlled by $BC$-level data,
considered above,
so we focus on the latter.

Consider case (I),
where $q_c$ is nonspecial.
Then $q_c^{(i)} + q_c^{(j)} \neq 0$ for all $i,j$,
and $q_c^{(i)} - q_c^{(j)} = 0$ if and only if $i \equiv j \pmod{r_c}$.
In the latter case,
choose an integer $l$ such that $j = i+l r_c$:
the coefficient of exponent $k$ in $q^{(i)} - \wt q^{(j)}$ is
\begin{equation}
	\exp \bigl(2\pi \sqrt{-1} \cdot ik \bigr) \cdot \bigl( a - \wt a \cdot \exp \bigl(2\pi \sqrt{-1} \cdot lr_ck \bigr) \bigr) \in \mb C,
\end{equation}
which vanishes when
\begin{equation}
	a
	= \wt a \cdot \exp \bigl(2\pi \sqrt{-1} \cdot lr_ck \bigr).
\end{equation}
Now note that
\begin{equation}
	r_c k
	= \frac{\rho n}N,
	\qquad \rho
	\ceqq \frac{r_c}{d \wdg r_c},
\end{equation}
and that $(\rho n) \wdg N = 1$.
If we denote---%
again---%
by $\mb Z \bs N \mb Z \simeq \mu_N$ the group of $N$-th roots of $1$ in $\mb C$,
if follows that
\begin{equation}
	\label{eq:conjugates_when_fission}
	\Set{ \exp \bigl(2\pi \sqrt{-1} \cdot lr_ck \bigr) | l \in \mb Z \bs r_c \mb Z }
	= \mu_N.
\end{equation}
In conclusion,
$f_{q, \wt q} = k$ if and only if $a \neq \wt a \cdot \zeta$ for all $\zeta \in \mu_N$;
in particular,
if $f_{q, \wt q} = k$ then we cannot have $a = \wt a =0$.

Now consider case (II),
where $q_c$ is special.
As above one has $q_c^{(i)} - q_c^{(j)} = 0$ if and only if $i \equiv j \pmod{r_c}$,
and $q^{(i)} - \wt q^{(j)} \neq 0$ if and only if  $a \notin \wt a \cdot \mu_N \sse \mb C^{\ast}$.
However,
because of specialness,
one might have $q_c^{(i)} + q_c^{(j)} = 0$,
and this happens precisely when $j - i \equiv \frac{r_c}2 \pmod{r_c}$.
In this case,
choose an integer $l$ such that $j = i + (l + \frac 1 2)r_c$:
the coefficient of exponent $k$ in $q^{(i)} + \wt q^{(j)}$ is
\begin{equation}
	\exp \bigl(2\pi \sqrt{-1} \cdot ik \bigr) \cdot \bigl( a + \wt a \cdot \exp\bigl(2\pi \sqrt{-1} \cdot (l + 1 \slash 2)r_ck \bigr) \bigr) \in \mb C,
\end{equation}
which vanishes when
\begin{equation}
	a
	= -\wt a \cdot \exp \bigl(2\pi \sqrt{-1} \cdot (l + 1 \slash 2)r_ck \bigr)
	= -\wt a \cdot \exp \bigl(2\pi \sqrt{-1} \cdot lr_ck \bigr) \cdot e^{\pi \sqrt{-1} \cdot r_ck}.
\end{equation}
By~\eqref{eq:conjugates_when_fission},
this implies that $q^{(i)} + \wt q^{(j)} \neq 0$,
for all $i,j$ such that $q_c^{(i)} + q_c^{(j)} = 0$,
if and only if
\begin{equation}
	\label{eq:condition_fission_+}
	a \notin \bigl( -\wt a \cdot e^{\pi \sqrt{-1} \cdot r_ck} \bigr) \cdot \mu_N \sse \mb C^{\ast}.
\end{equation}
There are now two subcases.
\begin{enumerate}
	\item
	      If $N > 1$,
	      then,
	      since $d = Nr_c$,
	      one has $e^{\pi\sqrt{-1} \cdot r_ck} = e^{\pi\sqrt{-1} \cdot \frac n N}$.
	      Furthermore,
	      since $q_c$ is special,
	      $r_c$ is even,
	      so that $d$ is even;
	      thus,
	      $n$ is odd (since $n \wdg d = 1$),
	      so that $e^{\pi\sqrt{-1} \cdot \frac n N} \in \mu_{2N}$ has nontrivial class modulo $\mu_N \sse \mu_{2N}$,
	      and the condition \eqref{eq:condition_fission_+} yields
	      \begin{equation}
		      a \notin \bigl( - \wt a \cdot e^{\pi\sqrt{-1} \cdot \frac 1 N} \bigr) \cdot \mu_N
		      \quad\Longleftrightarrow
		      \quad a \notin
		      \begin{cases}
			      -\wt a \cdot \mu_N,                                         & \quad N \text{ odd},  \\
			      -\wt a \cdot e^{\pi \sqrt{-1} \cdot \frac 1 N} \cdot \mu_N, & \quad N \text{ even}.
		      \end{cases}
	      \end{equation}
	      If both $a$ and $\wt a$ are nonzero,
	      this corresponds to the case (II.2b.B) of the statement.

	\item
	      If $N = 1$,
	      then $d$ divides $r_c$,
	      i.e.,
	      $r_c = dd'$ for some integer $d' \geq 1$,
	      and one has $e^{\pi\sqrt{-1} \cdot r_ck} = e^{\pi \sqrt{-1} \cdot d' n}$.
	      There are now two (mutually-exclusive) subsubcases:
	      \begin{enumerate}
		      \item If $d'$ is even,
		            then $e^{\pi\sqrt{-1} \cdot d' n} = 1$,
		            so the condition \eqref{eq:condition_fission_+} yields $a \neq -\wt a$.
		            In this case,
		            $k$ is a breaking of specialness for $E(q_c)$,
		            so it is \emph{not} an inconsequential exponent.
		            If both $a$ and $\wt a$ are nonzero,
		            this corresponds to the subcase (II.2b.A) of the statement.

		      \item If instead $d'$ is odd,
		            then,
		            since $r_c$ is even,
		            $d$ must be even;
		            again $n$ is odd and $e^{\pi\sqrt{-1} \cdot d' n} = -1$.
		            The condition \eqref{eq:condition_fission_+} yields $a \neq \wt a$.
		            In this case,
		            $k$ is \emph{not} a breaking of specialness,
		            so it is an inconsequential exponent.
		            This corresponds to the case (II.1) of the statement.
	      \end{enumerate}
\end{enumerate}
Moreover,
putting together the conditions coming from sum/differences $q^{(i)} \pm \wt q^{(j)}$,
and using the fact that if $N$ is even then $\mu_N \cup \bigl( e^{\pi \sqrt{-1} \cdot \frac 1 N} \cdot \mu_N \bigr) = \mu_{2N}$,
we obtain the desired statements for all remaining subcases of (II).

Consider now case (III),
where $q_c=0$.
Now $q \neq 0$ if and only if $a \neq 0$,
in which case $r = d = N$.
Similarly $\wt q \neq 0$ if and only if $\wt a \neq 0$,
whence $\wt r = d = N$.
It is clear that if $f_{q, \wt q} = k$ then $a =\wt a =0$ is impossible.
Moreover,
if $a = 0$ then $f_{q,\wt q} = k$ if and only $\wt a \neq 0$.
In instead $a \neq 0 \neq \wt a$ then for any $i,j \in \set{0, \dc, d-1}$ the coefficient of exponent $k$ in $q^{(i)} \pm \wt q^{(j)}$ is
\begin{equation}
	a \cdot \exp \Bigl(2\pi \sqrt{-1} \cdot \frac{in}d \Bigr) \pm \wt a \cdot \exp \Bigl(2 \pi\sqrt{-1} \cdot \frac{jn}d \Bigr),
\end{equation}
and in turn $f_{q, \wt q} = k$ if and only if we do not have
\begin{equation}
	a \neq \wt a \cdot \zeta,
	\qquad \zeta\in \pm \mu_N,
\end{equation}
and subcase (III.b) follows.

The final statements about the exponent $k$ follow from the following observations:
\begin{enumerate}
	\item
	      if $k$ is an inconsequential exponent for $L_{BC}(q_c)$,
	      then in particular $N = 1$,
	      and $k$ is inconsequential for $q$ and $\wt q$,
	      regardless of whether either of $a, \wt a$ vanishes;
	\item
	      otherwise:
	      \begin{enumerate}
		      \item
		            if either of $a, \wt a$ vanishes,
		            e.g.,
		            $a = 0$,
		            then $\wt a \neq 0$,
		            so that $k$ is a $BC$-level of $\wt q$ (and conversely if $\wt a = 0$);

		      \item
		            and if both of $a,\wt a$ are nonzero,
		            then $k$ is a $BC$-level of both $q$ and $\wt q$. \qedhere
	      \end{enumerate}
\end{enumerate}

\subsection{Proof of Cor.~\ref{cor:weyl_groups_are_the_same}}
\label{proof:cor_weyl_groups_are_the_same}

Rmk.~\ref{rmk:config_spaces_are_the_same} provides homeomorphisms
\begin{equation}
	F \cl \bm B_r \bigl( \wh Q \bigr) \lxra{\simeq} \bm B_{BC,r}(\dot Q),
	\qquad \ol F \cl \bm B_r \bigl( \wh\Theta \bigr) \lxra{\simeq} \bm B_{BC,r}(\Theta),
\end{equation}
taking the irregular classes $\wh\Theta \ceqq \wh\Theta \bigl( \wh Q)$ and $\Theta \ceqq \Theta(\dot Q)$.
Moreover,
the orbits of the setwise stabilizer of $\bm B_r \bigl( \wh Q \bigr)$ in $W$ match up (under $F$) with the orbits of $W_{BC}(\mc T)$:
this follows from~\eqref{eq:transformed_pointed_irr_type},
together with the remaining statements of Thm.~\ref{thm:full_wmcg_from_tree_type_BC}.
Therefore,
recalling that $Z_{W,\bm\phi}(r)$ is the setwise-modulo-pointwise stabilizer of $\bm B_r \bigl( \wh Q \bigr)$ in $W$,
there is a commutative diagram of continuous maps
\begin{equation}
	\begin{tikzcd}
		\bm B_r \bigl( \wh Q \bigr) \ar{d} \ar{r}{F} & \bm B_{BC,r}(\dot Q) \ar{d} \\
		\bm B_r \bigl( \wh\Theta \bigr) \ar{r}[swap]{\ol F} & \bm B_{BC,r}(\Theta)
	\end{tikzcd},
\end{equation}
involving the canonical projections.
The conclusion follows from the fact that isomorphic Galois coverings have isomorphic groups of deck transformations---%
upon conjugating by $F$,
in our setting.

\subsection{Proof of Prop.~\ref{prop:form_type_D_irreg_classes}}
\label{proof:prop_form_type_D_irreg_classes}

For the first statement,
let $Q$ be a full $D_m$-irregular type.
If $I$ is a Stokes circle,
by Galois-closedness,
we can argue as in the proof of Prop.~\ref{prop:form_type_BC_irregular_class} to define multiplicities $n_Q(\pm I) \in \frac 1 2 \mb Z_{> 0}$,
and we must show that the following linear combination is $D$-compatible,
in the sense of Def.~\ref{def:type_D_pseudo_irreg_class}~(4.):
\begin{equation}
	\label{eq:pseudo_irr_class}
	\wt\Theta
	\ceqq \sum_{\pm I\in \mc S \slash \mb Z^{\ts}} n_Q(\pm I) \cdot (\pm I).
\end{equation}

Let us thus assume that $n_Q \bigl( \braket 0 \bigr) = 0$,
i.e.,
that $\wt\Theta$ does \emph{not} contain the tame circle.
Denote by $\pm I_1, \dc, \pm I_a$ the active nonspecial Stokes-circles-up-to-sign of $Q$,
and by $\pm J_1, \dc, \pm J_b$ the active special Stokes-circles-up-to-sign;
for $i \in \set{1,\dc,a}$,
let $r_i \ceqq \on{ram}(q_i)$ and $m_i \ceqq m_Q(\pm I_i)$,
and for $\iota \in \set{1, \dc, b}$,
let $r_\iota \ceqq \on{ram}(q_\iota)$ and $m_\iota \ceqq m_Q(\pm J_\iota)$.
Up to acting by the type-$A$ Weyl subgroup $\mf S_m \simeq W_A(m) \sse W_D(m)$ (cf.~the proof~\ref{proof:lem_full_gives_pointed_BC}),
we may assume that $Q$ is a concatenation of lists
\begin{equation}
	Q
	= (l_1, \dc, l_a, \ell_1, \dc, \ell_b, -l_1, \dc, -l_1, \dc, -l_a, -\ell_1, \dc, -\ell_b),
\end{equation}
where:
\begin{enumerate}
	\item
	      one has
	      \begin{equation}
		      l_i
		      = \bigl( \varepsilon^{(1)}_{i,0} q_i^{(0)}, \dc, \varepsilon^{(1)}_{i,r_i-1}q_i^{(r_i - 1)}, \dc, \varepsilon^{(m_i)}_{i,0}q_i^{(0)}, \dc, \varepsilon^{(m_i)}_{i,r_i-1}q_i^{(r_i - 1)} \bigr),
	      \end{equation}
	      for $i \in \set{1,\dc,a}$,
	      where $\varepsilon_{i,j}^{(k)} \in \mb Z^{\ts}$ for $(j,k) \in \set{ 0, \dc, r_i-1} \ts \set{ 1, \dc, m_i}$;

	\item
	      and
	      \begin{equation}
		      \ell_\iota
		      =
		      \bigl( \varepsilon^{(1)}_{\iota,0} q_{\iota}^{(0)}, \dc, \varepsilon^{(1)}_{\iota,r_\iota \slash 2 - 1}q_{\iota}^{(r_\iota - 1)}, \dc, \varepsilon^{(m_i)}_{\iota,0}q_{\iota}^{(0)}, \dc, \varepsilon^{(m_\iota)}_{\iota,r_\iota - 1} q_{\iota}^{( r_\iota \slash 2 - 1)} \bigr),
	      \end{equation}
	      for $\iota \in \set{ 1, \dc, b}$,
	      where $\varepsilon_{\iota,j}^{(k)} \in \mb Z^{\ts}$ for $(j,k) \in \set{ 0, \dc, r_\iota \slash 2 - 1} \ts \set{1, \dc, m_\iota}$.
\end{enumerate}
The signs are uniquely determined,
and the fact that $Q$ is $W_D(m)$-Galois-closed restricts them.
Indeed, the monodromy of $Q$ is obtained by concatenating those of $l_i$ and $\ell_{\iota}$,
which read
\begin{equation}
	\sigma(l_i)
	= \bigl( \varepsilon^{(1)}_{i,0} q_i^{(1)}, \dc, \varepsilon^{(1)}_{i,r_i-1} q_i^{(0)}, \dc, \varepsilon^{(m_i)}_{i,0} q_i^{(1)}, \dc, \varepsilon^{(m_i)}_{i,r_i-1} q_i^{(0)} \bigr),
	\qquad i \in \set{1,\dc,a},
\end{equation}
and
\begin{equation}
	\sigma(\ell_\iota)
	= \bigl( \varepsilon^{(1)}_{\iota,0} q_{\iota}^{(1)}, \dc, \varepsilon^{(1)}_{\iota, r_\iota \slash 2 - 1} q_{\iota}^{(r_\iota)}, \dc, \varepsilon^{(m_\iota)}_{\iota,0} q_{\iota}^{(1)}, \dc, \varepsilon^{(m_\iota)}_{i,r_\iota - 1} q_{\iota}^{(r_\iota \slash 2)} \bigr),
	\qquad \iota \in \set{1,\dc,b}.
\end{equation}
In these expressions,
for nonspecial Stokes circles,
the sets of signs in front of the exponential factors are the same for $l_i$ and $\sigma(l_i)$.
Conversely,
for special circles,
since $q_i^{( r_i \slash 2) } + q_i^{(0)} = 0$,
there are precisely $m_\iota$ sign-changes between the set of signs in front of the exponential factors in $\ell_i$ and $\sigma(\ell_i)$.
Galois-closedness then implies that there is an \emph{even} number of sign-changes between $Q$ and $\sigma(Q)$,
i.e.,
that $\sum_{\iota=1}^b m_\iota$ is even.
This is equivalent to $\sum_{\pm I\in \mc S \slash \mb Z^{\ts}} n_Q(\pm I) \in \mb Z$,
proving the first statement.

For the second statement,
let $\wt\Theta$ be a pseudo-$D_m$-irregular class.
If $\wt \Theta$ contains the tame circle $\braket 0$,
then for any irregular types $Q$ and $Q'$ with pseudo-irregular class $\wt\Theta$ we have $\Theta(Q) = \Theta(Q')$.
Indeed,
both $Q$ and $Q'$ contain the vanishing exponential factor $q = 0$,
hence are not modified if we perform a sign-change on this exponential factor.
Thus,
there exists an element $g \in W_D(m)$ such that $Q' = g\cdot Q$.

Finally,
assume (again) that $\wt\Theta$ does \emph{not} contain the tame circle.
Write as above
\begin{equation}
	\wt\Theta
	= \sum_{i=1}^a n_i \cdot (\pm I_i) + \sum_{\iota=1}^b n_\iota \cdot (\pm J_\iota),
\end{equation}
where $\pm I_1,\dc,\pm I_a$ (resp.,
$\pm J_1,\dc,\pm J_b$) are the active nonspecial Stokes-circles-up-to-sign (resp.,
the special ones).
Choose representative exponential factors $q_i$ and $q_{\iota}$ for all circles,
and consider a full $D_m$-irregular type $Q = (q'_1, \dc, q'_m, -q'_1, \dc,-q'_m)$ with pseudo-irregular class $\wt\Theta$.
Then,
for $k \in \set{1, \dc, m}$:
\begin{enumerate}
	\item
	      if $\braket{q_k}$ is nonspecial,
	      there exists a unique pair $(i,j) \in \set{ 1, \dc, a } \ts \set{ 0,\dc, r_i-1 }$,
	      and a unique sign $\varepsilon_k(Q)$,
	      such that $q_k = \varepsilon_k(Q) q_i^{(j)}$;

	\item
	      and if $\braket{q_k}$ is special,
	      there exists a unique pair $(\iota,j) \in \set{ 1, \dc, b } \ts \set{ 0,\dc, \frac{r_\iota}2-1 }$,
	      and a unique sign $\varepsilon_k(Q)$,
	      such that $q_k = \varepsilon_k(Q) q_{\iota}^{(j)}$.
\end{enumerate}
Now set
\begin{equation}
	\varepsilon(Q)
	\ceqq \prod_{k = 1}^m \varepsilon_k(Q) \in \mb Z^{\ts},
\end{equation}
which yields a function
\begin{equation}
	\label{eq:sign_function}
	\varepsilon \cl \mc Q_{\wt\Theta} \lra \mb Z^{\ts},
\end{equation}
denoting by $\mc Q_{\wt\Theta}$ the set of full $D_m$-irregular types with pseudo-irregular class $\wt\Theta$.
If $Q,Q' \in \mc Q_{\wt\Theta}$,
the equality $\Theta(Q) = \Theta(Q')$---%
of their irregular classes---%
holds if and only if $\varepsilon(Q) = \varepsilon(Q')$,
as the latter is satisfied if and only if there is an even number of sign-changes between the multisets of signs $\set {\varepsilon_1(Q), \dc, \varepsilon_m(Q)}$ and $\set{\varepsilon_1(Q'), \dc, \varepsilon_m(Q')}$.
Therefore,
denoting by $\ol{\mc Q}_{\wt\Theta}$ the set of $D_m$-irregular classes with pseudo-$D_m$-irregular class $\wt\Theta$,
the function~\eqref{eq:sign_function} induces a bijection $\ol\varepsilon \cl \ol{\mc Q}_{\wt\Theta} \lxra{\simeq} \mb Z^{\ts}$.
Finally,
swapping the sign of any \emph{odd} number of exponential factors induces well-defined involutions
\begin{equation}
	\label{eq:sign_involution}
	Q \lmt -Q \cl \mc Q_{\wt\Theta} \lra \mc Q_{\wt\Theta},
	\qquad \Theta \lmt -\Theta \cl \ol{\mc Q}_{\wt\Theta} \to \ol{\mc Q}_{\wt\Theta},
\end{equation}
such that
\begin{equation}
	\varepsilon(Q) + \varepsilon(-Q)
	= 0
	= \ol\varepsilon(\Theta) + \ol\varepsilon(-\Theta) \in \mb Z^{\ts}.
\end{equation}
The latter makes $\ol{\mc Q}_{\wt\Theta}$ into a principal homogeneous set for the 2-element group $\mb Z^{\ts}$.

\subsection{Proof of Cor.~\ref{cor:enhancements}}
\label{proof:cor_enhancements}

If $\dot Q$ is a pointed quasi-irregular type containing the tame circle,
then it admits a unique enhancement which is an enhanced pointed irregular type---%
choosing $\varepsilon = 1$ in Lem.~\ref{lem:enhancements}~(1.).

Otherwise,
there are two (mutually-exclusive) situations:
\begin{enumerate}
	\item
	      either $\dot Q$ is a pointed irregular type,
	      so that the unique enhancement $\ul{\dot Q}$ of $\dot Q$---%
	      as per Lem.~\ref{lem:enhancements}~(.2)---%
	      satisfies $\varepsilon(\ul{\dot Q}) = 1$,
	      and $\ul{\dot Q}$ is an enhanced pointed irregular type;

	\item
	      or $\dot Q$ is \emph{not} a pointed irregular type,
	      whence $\varepsilon(\ul{\dot Q}) = -1$ in the notation of Lem.~\ref{lem:enhancements},
	      and there is no enhancement of $\dot Q$ into an enhanced pointed irregular type. \qedhere
\end{enumerate}

\subsection{Proof of Lem.~\ref{lem:equivalence_enhanced_non_enhanced}}
\label{proof:lem_equivalence_enhanced_non_enhanced}

Consider:
(i) an integer $p \geq 1$;
(ii) two pointed irregular types
\begin{equation}
	\dot Q
	= \bigl( (m_1, q_1),\dc, (m_p, q_p) \bigr),
	\qquad \dot Q'
	= \bigl( (m_1, q'_1), \dc, (m_p, q'_p) \bigr);
\end{equation}
and (iii) their associated enhanced pointed irregular types
\begin{equation}
	\ul{\dot Q}
	= \bigl( (m_1, q_1,\varepsilon_1), \dc, (m_p, q_p, \varepsilon_p) \bigr),
	\qquad \ul{\dot Q}'
	= \bigl( (m_1, q'_1,\varepsilon'_1), \dc, (m_p, q'_p, \varepsilon'_p) \bigr).
\end{equation}
Assume that $\dot Q \sim_D \dot Q'$:
we must show that $\varepsilon_i = \varepsilon'_i$ for $i \in \set{1, \dc, p}$.

To this end,
let $\dot{\mc T} \ceqq \dot{\mc T}(\dot Q)$ and $\dot{\mc T}' \ceqq \dot{\mc T}(\dot Q')$ be the labelled fission trees of $\dot Q$ and $\dot Q'$,
respectively,
as in \S~\ref{sec:trees_from_d_irr_types_classes}.
Moreover,
for $i \in \set{1, \dc, p}$ denote by $\ul{\bm L}_i = (\bm L_i, \varepsilon_i)$ and $\ul{\bm L}'_i=(\bm L'_i, \varepsilon'_i)$ the enhanced $D$-level data of the $i$-th full branch of $\dot{\mc T}$ and $\dot{\mc T}'$.
Since $\dot Q$ and $\dot Q'$ are mutual $D$-admissible deformations,
one has $\bm L_i = \bm L'_i$;
and moreover,
if $\bm L_i \notin \set{\vn_{BC},\vn_D}$,
then the very definition of enhancements implies that $\varepsilon_i = \varepsilon'_i$.
Assume therefore that there exists $i_0 \in \set{1,\dc,p}$ such that $\bm L_{i_0} \in \set{\vn_{BC}, \vn_D}$,
and consider the following (mutually-exclusive) possibilities.

\begin{enumerate}
	\item
	      Either $\bm L_{i_0} = \vn_{BC}$\fn{
		      Recall that there is then a \emph{unique} such $i_0$.}~and $\bm L_j \neq \vn_D$ for all $j \in \set{1, \dc, p}$,
	      whence $\varepsilon_j = \varepsilon'_j$ for $j \neq i$.
	      We conclude by the global sign condition,
	      i.e.,
	      the identities $\varepsilon_1 \dm \varepsilon_p = 1 = \varepsilon'_1 \dm \varepsilon'_p$.

	\item
	      Or $\bm L_{i_0} = \vn_{BC}$ and $\bm L_j = \vn_D$ for some $j \neq i_0$.
	      It follows that $q_{i_0} = 0$,
	      whence $\on{slope}(q_j - q_{i_0}) = \on{slope}(q_j)$ is integer (resp.,
	      half-integer) if $\varepsilon_j = 1$ (resp.,
	      if $\varepsilon_j = -1$).
	      The same holds for $\on{slope}(q'_j) = \on{slope}(q'_j - q'_{i_0})$ and $\varepsilon'_j$,
	      so the equality $\varepsilon_j = \varepsilon'_j$ follows from the hypothesis that $\on{slope}(q_j - q_{i_0}) = \on{slope}(q'_j - q'_{i_0})$,
	      and again we conclude by the global sign condition.

	\item
	      Or $\bm L_{i_0} = \vn_D$ and $\bm L_j \neq \vn_{BC}$ for all $j \in \set{1,\dc,p}$,
	      whence the full $D$-branches $\mc B_{i_0}$ and $\mc B'_{i_0}$ (of $\dot{\mc T}$ and $\dot{\mc T}'$,
	      respectively) are \emph{hybrid},
	      and we conclude by establishing the identity $\varepsilon_{i_0} = \varepsilon'_{i_0}$.
	      To this end,
	      one can show that the corresponding hybridation vertices $v \in \mc B_{i_0}$ and $v' \in \mc B'_{i_0}$ have the same height/fission type,
	      and consider the following subcases.
	      \begin{enumerate}
		      \item
		            If the fission at $v$ is of type (IV.a),
		            as per Def.~\ref{def:fission_tree_type_D},
		            then the first nonempty descendent-vertex $w$ of $v$---%
		            in $\mc B_{i_0}$---%
		            is indirectly mandatory.
		            If $k \neq i_0$ is such that the $k$-th leaf of $\dot{\mc T}$ is a descendant-vertex of the empty sibling-vertex of $w$,
		            then $\on{slope}(q_{i_0} - q_k) = \on{slope}(q_{i_0}) = h(w)$.
		            Analogously,
		            the fission at $v'$ is of type (IV.a),
		            the first nonempty vertex $w'$ of $\mc B'_{i_0}$ below $v'$ is indirectly mandatory,
		            and by hypothesis
		            \begin{equation}
			            h(w)
			            = \on{slope}(q_{i_0} - q_k)
			            = \on{slope}(q'_{i_0} - q'_k)
			            = h(w').
		            \end{equation}
		            The conclusion follows,
		            since $h(w)$ is integer (resp.,
		            half-integer) if $\varepsilon_{i_0} = 1$ (resp.,
		            if $\varepsilon_{i_0} = -1$),
		            and analogously for the pair $\bigl( h(w'),\varepsilon'_{i_0} \bigr)$.

		      \item
		            Otherwise,
		            the fission at $v$ is of type (IV.b).
		            Let then $w_{i_0}$ be the first nonempty vertex of $\mc B_{i_0}$ below $v$,
		            and denote by $w_k$ a sibling-vertex of $w_{i_0}$,
		            with $k \neq i_0$,
		            such that the $k$-th leaf of $\dot{\mc T}$ is a descendant-vertex of $w_k$.
		            Then:
		            \begin{equation}
			            \on{slope}(q_{i_0} - q_k)
			            = h(w_{i_0})
			            = h(w_k).
		            \end{equation}
		            But there is the same fission pattern at $v'$:
		            if $w'_{i_0} \in \mc B'_{i_0}$ is the first nonempty vertex below $v'$,
		            and its sibling-vertex $w'_k$ is an ancestor-vertex of the $k$-th leaf of $\dot{\mc T}'$,
		            then
		            \begin{equation}
			            \on{slope}(q'_{i_0} - q'_k)
			            = h(w'_{i_0})
			            = h(w'_k).
		            \end{equation}
		            We conclude as above.
	      \end{enumerate}
\end{enumerate}

\subsection{Proof of Lem.~\ref{lem:type_d_level_data}}
\label{proof:lem_type_d_level_data}

By definition of level data,
one has
\begin{equation}
	L_{BC}(\dot Q)
	= \wt L_D(\dot Q) \cup \Set{ \on{slope}(q) }.
\end{equation}
Thus,
it is enough to understand whether $\on{slope}(q)$ is a $D$-level of $\dot Q$.

To this end,
suppose that $r \ceqq \on{ram}(q) = 1$ and $\ul m = 1$.
Then $Q = (q,-q)$,
and there are \emph{no} slopes of the form $\on{slope} \bigl( q^{(i)} + q^{(j)} \bigr)$ with $i \neq j$,
whence $\wt L_D(Q) = \vn$.
If instead $\ul m > 1$,
then $\on{slope}(q + q) = \on{slope}(q)$ \emph{does} appear amongst the numbers $\on{slope} \bigl( q^{(i)} + q^{(j)} \bigr)$ (taking,
e.g.,
$i = 1$ and $j = 2$),
whence $\on{slope}(q) \in \wt L_D(q)$ and $\wt L_D(\dot Q) = L_{BC}(\dot Q)$.
Similarly,
if:
(i) $r = 2$;
(ii) $q$ is special;
and (iii) $\ul m = 1$;
then $q$ is of the form
\begin{equation}
	q
	= \sum_{i=1}^s a_i z^{-i-\frac 1 2},
	\qquad a_i \in \mb C,
	\quad a_s \neq 0.
\end{equation}
Therefore $Q = (q,-q)$,
and again there are \emph{no} slopes of the form $\on{slope} \bigl( q^{(i)} + q^{(j)} \bigr)$ with $i \neq j$.

Otherwise $Q$ is of rank $m > 1$.
Since $\on{slope}(q^{(1)}) = \on{slope}(q^{(2)}) = \on{slope}(q) \eqqc k$,
necessarily either $\on{slope} \bigl( q^{(1)} + q^{(2)} \bigr) = k$ or $\on{slope} \bigl( q^{(1)} - q^{(2)} \bigr) = k$,
whence $\on{slope}(q) \in \wt L_D(q)$ and $\wt L_D(\dot Q) = L_{BC}(\dot Q)$.

\subsection{Proof of Lem.~\ref{lem:outer_automorphisms}}
\label{proof:lem_outer_automorphisms}

The first statement follows from:
(i) the fact any Lie-algebra automorphism $\mf g \to \mf g$ preserves both the centre and the derived subalgebra;
(ii) the Lie-algebra splitting $\mf g = \mf Z(\mf g) \ts \mf g'$;
and (iii) the observation that the Lie-algebra automorphisms of the centre are just $\mb C$-linear automorphisms.

The remaining statements are standard (when working over $\mb C$),
cf.~\cite[Prop.~D.40]{fulton_harris_1991_representation_theory}.

\subsection{Proof of Lem.~\ref{lem:full_twist_preserves_centralizer}}
\label{proof:lem_full_twist_preserves_centralizer}

Suppose first that $\wh Q = Aw^{-1}$ for some $A \in \mf t$:
then we impose that $\zeta_r A = \dot{\bm \varphi}(A) \in \mf t$.
Choose now an element $Y \in \mf g^A \sse \mf g$,
and compute
\begin{equation}
	\bigl[ \dot{\bm\varphi}(Y),A \bigr]
	= \dot{\bm\varphi} \bigl( [Y,\dot{\bm\varphi}^{-1}(A)] \bigr)
	= \zeta_r^{-1}\dot{\bm\varphi} \bigl( [Y,A] \bigr) = 0.
\end{equation}
The statement follows from the usual Lie correspondence,
since $L = G^A \sse G$ is connected with Lie algebra $\mf l = \mf g^A$.

In the general case where $Q = \sum_{i = 1}^s A_iw^{-i}$,
just iterate the same argument starting from the leading coefficient $A_s \in \mf t$,
proving that $\dot{\bm\varphi} (\mf g^{A_i}) \sse \mf g^{A_i}$ for $i \in \set{1,\dc,s}$,
etc.

\subsection{Proof of Lem.~\ref{lem:doubly_twisted_setwise_stabilizer}}
\label{proof:lem_doubly_twisted_setwise_stablizer}

One has $\mf Z(\mf g) \sse \mf t_\phi$,
so that $f$ always preserves $\mf t_\phi$.
Moreover,
identifying the elements of $\Phi' \sse (\mf t')^{\dual}$ with the restriction of the elements of $\Phi \sse \mf t^{\dual}$ onto $\mf t' \sse \mf t$,
one has
\begin{equation}
	\mf t_\phi
	= \mf Z(\mf g) \ops \ker(\phi') \sse \mf Z(\mf g) \ops \mf t',
	\qquad \phi'
	\ceqq \Set{ \eval[1]{\alpha}_{\mf t'} | \alpha \in \phi } \sse \Phi'.
\end{equation}
Hence,
it is enough to prove the statement when $\mf g$ is semisimple,
so that $\bm g = \bm g'$,
and $\Phi = \Phi'$,
etc.

Now recall that an element $\bm g \in \Aut(\Phi)$ preserves the Cartan integers and permutes the roots.
Therefore,
it permutes the root hyperplanes in the same fashion:
\begin{equation}
	\bm g (H_\alpha)
	= H_{\bm g(\alpha)} \sse \mf t,
	\qquad \alpha \in \Phi.
\end{equation}
(Cf.~\eqref{eq:wall_permutation};
we do \emph{not} distinguish the $\Aut(\Phi)$-actions on $\mf t$ and $\mf t^{\dual}$.)
Now one can conclude as in the proof~\ref{proof:lem_about_setwise_stabilizers}.

\subsection{Proof of Lem.~\ref{lem:no_doubly_twisted_marking}}
\label{proof:lem_no_doubly_twisted_marking}

The proof~\ref{proof:lem_no_marking} applies essentially verbatim (the point is that we twist both elements of the Weyl group by one and the same outermorphism).

In more details,
by the first statement of Prop.-Def.~\ref{prop:pure_doubly_twisted_def_space_general_1_coeff},
one has $\dot \varphi g', \dot\varphi g'' \in N_{\GL_{\mb C}(\mf t)}(\mf t_\phi)$,
and moreover $\dot\varphi g'(A) = \dot\varphi g''(A)$.
Deleting $\dot\varphi$,
the latter yields $g'(A) = g''(A)$,
and so $g'_\phi = g''_\phi$ by Lem.~\ref{lem:about_pointwise_stabilizers}.
It follows that $\dot\varphi g'$ and $\dot\varphi g''$ coincide upon restriction to $\mf t_\phi \sse \mf t$.

\subsection{Proof of Lem.~\ref{lem:doubly_twisted_restricted_reflection_group}}
\label{proof:lem_doubly_twisted_restricted_reflection_group}

One has $f \in Z_{\GL_{\mb C}(\mf t)} \bigl( G(\phi) \bigr)$,
because the relative reflection group acts trivially on the centre,
and---%
conversely---%
$f$ acts trivially on $\mf t'_\phi \ceqq \mf t_\phi \cap \mf t'$.
Then the proof~\ref{proof:lem_restricted_reflection_group} extends verbatim,
up to replacing the $W$-invariant inner product of Lem.~\ref{lem:reduced_reflections} with an $\Aut(\Phi')$-invariant one.\fn{
	Again,
	one can choose any inner product on the centre.
	But of course one can also find an $f$-invariant one,
	up to averaging over the cyclic subgroup generated by the (finite-order) element $f$.}

\subsection{Proof of Lem.~\ref{lem:disconnected_stratum}}
\label{proof:lem_disconnected_stratum}

In view of~\cite[Prop.~3.2~(i)]{springer_1974_regular_elements_of_finite_reflection_groups},
one has the identity
\begin{equation}
	\label{eq:eigenspace_union}
	Y_r
	= \bigcup_W  \mf t(g,\zeta_r),
\end{equation}
of topological subspaces of $\mf t$.
(It would be enough to only consider the \emph{maximal} eigenspaces,
cf.~the proofs of Thmm.~\ref{thm:complex_refl_groups_from_gauge} +~\ref{thm:deformations_as_quotient_strata}.)
Distributing unions/intersections then yields
\begin{equation}
	\label{eq:disjoint_unions}
	\qquad \bm B_r(\phi)
	= \bigcup_W  \bm B_{g,r}(\phi),
\end{equation}
in the notation of Lem.-Def.~\ref{lem:pure_g_twisted_strata}.\fn{
	Incidentally,
	one also has $\mc L_r(\Phi) = \bigcup_W \bigl( \mc L_{g,r}(\Phi) \bigr)$.}~Now the content of \S~\ref{sec:no_marking} can be rephrased by saying that if $g,g' \in W$ satisfy $\bm B_{g,r}(\phi) \cap \bm B_{g',r}(\phi) \neq \vn$,
then $\bm B_{g,r}(\phi) = \bm B_{g',r}(\phi)$.
In particular,
the union~\eqref{eq:disjoint_unions} is \emph{disjoint}.

Moreover,
since by construction
\begin{equation}
	\bm B_{g,r}(\phi)
	= \bm B(\phi) \cap \mf t(g,\zeta_r)
	= \bm B(\phi) \cap Y_r \cap \mf t(g,\zeta_r)
	= \bm B_r(\phi) \cap \mf t(g,\zeta_r),
\end{equation}
it follows that $\bm B_{g,r}(\phi)$ is closed in the topological subspace $\bm B_r(\phi) \sse \mf t$,
for any $g \in W$.
Taking complements,
the (finite) disjoint union~\eqref{eq:disjoint_unions} also implies that it is open.

Given now a connected component $\mc C \sse \bm B_r(\phi)$,
the \emph{clopen} (= closed-and-open) subspace $\bm B_{g,r}(\phi)$ must either contain $\mc C$,
or be disjoint from $\mc C$,
so that $\bm B_{g,r}(\phi)$ is a union of connected components of $\bm B_r(\phi)$.
The conclusion follows by noting that:
(i) $\bm B_{g,r}(\phi)$ is connected whenever it is nonempty (by~\eqref{eq:pure_general_twisted_deformation_space_1_coeff},
it is a complex hyperplane complement);
and (ii) by definition $\bm B_{g,r}(\phi)$ is nonempty if and only if $\phi \in \mc L_{g,r}(\Phi)$,
cf.~\eqref{eq:g_twisted_levi_subposet}.

\subsection{Proof of Lem.~\ref{lem:induced_homeo}}
\label{proof:lem_induced_homeo}

The second statement readily implies the first one,
as the induced map~\eqref{eq:closed_embedding} is then surjective,
whence a continuous closed (resp.,
open) bijection.

For the first statement,
invoking the canonical projections and the embedding $\iota \cl S' \hra S$,
there is a diagram of continuous maps
\begin{equation}
	\begin{tikzcd}
		S' \ar{r}{\iota} \ar{d}[swap]{\pi_{S'}} & S \ar{d}{\pi_S} \\
		S' \bs H' \ar{r}[swap]{\ol\iota} & S \bs H.
	\end{tikzcd}.
\end{equation}
Moreover,
by hypothesis,
$\ol\iota$ is injective.

Now suppose that $S'$ is closed.
To prove that $\ol\iota$ is also closed,
choose a closed subset $V' \sse S' \bs H'$ and let $\wt V' \ceqq \pi^{-1}_{S'}(V')$;
the latter is closed in $S'$,
and so also in $S$.
Then note that $\pi_{S'} \bigl( \wt V' \bigr) = V'$ and---%
using the diagram---%
compute
\begin{equation}
	\ol\iota(V')
	= \ol\iota \bigl( \pi_{S'} \bigl( \wt V' \bigr) \bigr)
	= \pi_S \bigl( \wt V' \bigr) \sse S \bs H.
\end{equation}
It follows that $\pi_S^{-1} \bigl( \ol\iota (V') \bigr) = \bigcup_H g(V') \sse S$,
which is closed---%
and so $\ol\iota(V')$ as well.
The same exact argument shows that $\ol\iota(U') \sse S \bs H$ is open if both $S' \sse S$ and $U' \sse S' \bs H'$ are open.\fn{
	And it does \emph{not} require $H$ to be finite---%
	at the last step.}

\bibliographystyle{amsplain}
\bibliography{/home/gabriele/Desktop/bibliography_macros/bibliography}

\end{document}